\documentclass[12pt, leqno]{amsart}  

\setlength{\textwidth}{16.3cm} \setlength{\textheight}{20.5cm}
\setlength{\oddsidemargin}{0.0cm} \setlength{\evensidemargin}{0.0cm}
%\sloppy
\setcounter{tocdepth}{3}

\usepackage{graphicx}
\usepackage{amssymb,amsmath,amsthm,amscd,accents}
\usepackage{stackengine,scalerel}
\usepackage{mathrsfs}
\usepackage{lscape}
\usepackage{enumerate}
\usepackage{upgreek}
\usepackage{stmaryrd}
\usepackage[usenames,dvipsnames]{color}
\usepackage[colorlinks=true, pdfstartview=FitV,
 linkcolor=blue,citecolor=blue,urlcolor=blue]{hyperref}
\usepackage{tikz}
\tikzset{dynkdot/.style={circle,draw,scale=.38}}
\usetikzlibrary{arrows.meta,arrows}
\usepackage{bm}
\usepackage{marginnote}
\usepackage{verbatim}
\usepackage[normalem]{ulem}  % for strikeout \sout
\usepackage{xspace}

\usepackage[all]{xy}
%\CompileMatrices %to fasten the compileation

\allowdisplaybreaks[3]

\newcommand{\arxiv}[1]{\href{http://arxiv.org/abs/#1}{\texttt{arXiv:#1}}}

\newcommand{\nc}{\newcommand}
\numberwithin{equation}{section}

%\newenvironment{red}{\relax\color{red}}{\relax}
%\newenvironment{blue}{\relax\color{blue}}{\hspace*{.5ex}\relax}
%\newenvironment{purple}{\relax\color{blue}}{\hspace*{.5ex}\relax}

%\newenvironment{magenta}{\relax\color{magenta}}{\hspace*{.5ex}\relax}
%\newenvironment{jaune}{\relax\marginnote{\ber \scalebox{.6}{\sc{To be deleted}} \er}
%  \color{Orchid}}{\hspace*{.5ex}\relax}

%\newcommand{\bep}{\begin{purple}}
%\newcommand{\eep}{\end{purple}}

%\newcommand{\ber}{\begin{red}}
%\newcommand{\er}{\end{red}}
%\nc{\bj}{\begin{jaune}}
%\nc{\ej}{\end{jaune}}
%\newcommand{\beb}{\begin{blue}}
%\newcommand{\eb}{\end{blue}}
%\newcommand{\bgr}{\begin{jaune}}
%\newcommand{\egr}{\end{jaune}}
%\newcommand{\bem}{\begin{magenta}}
%\newcommand{\eem}{\end{magenta}}

\nc{\eq}{\begin{myequation}} \nc{\eneq}{\end{myequation}}
\nc{\eqn}{\begin{myequationn}} \nc{\eneqn}{\end{myequationn}}

%\newcommand{\berm}[1]{\begin{red}{}\marginnote{\fbox{\scshape\lowercase{M}}}%
%#1}  % Masaki

%\newcommand{\bero}[1]{\begin{red}{}\marginnote{\fbox{\scshape\lowercase{O}}}%
%#1}

%\newcommand{\berE}[1]{\begin{red}{}\marginnote{\fbox{\scshape\lowercase{E}}}%
%#1}

%\newcommand{\berMH}[1]{\begin{red}{}\marginnote{\fbox{\scshape\lowercase{MH}}}%
%#1}  % Myungho

%\nc{\cmtm}[1]{\color{Bittersweet}\marginnote{\fbox{M}}{#1}}

%\newcommand{\cmtm}[1]{\marginnote{\scalebox{.70}%
%{\parbox{15ex}{\ber\fbox{M}\;{#1}\er}}}}

%\newcommand{\cmto}[1]{\marginnote{\scalebox{.70}%
%{\parbox{15ex}{\ber\fbox{O}\;{#1}\er}}}}

%\newcommand{\cmtMH}[1]{\marginnote{\scalebox{.70}%
%{\parbox{15ex}{\ber\fbox{MH}\;{#1}\er}}}}

%\newcommand{\cmtE}[1]{\marginnote{\scalebox{.70}%
%{\parbox{15ex}{\ber\fbox{E}\;{#1}\er}}}}

\nc{\hs}{\hspace*}
\nc{\ms}{\mspace}
\nc{\st}[1]{\{{#1}\}}

\nc{\qR}[1]{\ttq_{\mspace{-2mu}\raisebox{-.8ex}{${\scriptstyle{#1}}$}}}

\theoremstyle{plain}
\newtheorem{lemma}{Lemma}[section]
\newtheorem{proposition}[lemma]{Proposition}
\newtheorem{theorem}[lemma]{Theorem}

\newtheorem{corollary}[lemma]{Corollary}
\newtheorem{conjecture}[lemma]{Conjecture}

\theoremstyle{definition}
\newtheorem{remark}[lemma]{Remark}

\newtheorem{definition}[lemma]{Definition}

\newtheorem{condition}[lemma]{Condition}

\nc{\Def}{\begin{definition}}
\nc{\edf}{\end{definition}}
\nc{\mTh}{\begin{mtheorem}}
\nc{\enmth}{\end{mtheorem}}
\nc{\Th}{\begin{theorem}}
\nc{\enth}{\end{theorem}}
\nc{\Prop}{\begin{proposition}}
\nc{\enprop}{\end{proposition}}
\nc{\Lemma}{\begin{lemma}}
\nc{\enlemma}{\end{lemma}}
\nc{\Cor}{\begin{corollary}}
\nc{\encor}{\end{corollary}}
\nc{\Rem}{\begin{remark}}
\nc{\enrem}{\end{remark}}
\nc{\Conj}{\begin{conjecture}}
\nc{\enconj}{\end{conjecture}}

\nc{\ledot}{\mathrel{\le\ms{-11mu}\raisebox{.2ex}{$\cdot$}}}
\nc{\predot}{\mathrel{\preceq\ms{-9mu}\raisebox{.35ex}{$\centerdot$}}}

%%%%%%%%%%%%%%%%%%%%%%%%FM characters %%%%%%%%%%%%%

\newcommand{\Zq}{{\Z[q^{\pm 1/2}]}}

%%%%%%%%%%%%

%%%%%%%%%%%%%%%%%%%change%%%%%%%%%%%%%%%%%%%%%%%%
\renewcommand{\le}{\leqslant}
\renewcommand{\ge}{\geqslant}
\renewcommand{\preceq}{\preccurlyeq}

%%%%%%%%%%%%%%%%%%%%%%%%% math operations %%%%%%%%%%%%%%%%%%%%
\newcommand{\head}{{\operatorname{hd}}}

\newcommand{\Ker}{{\operatorname{Ker}}}

\newcommand{\sotimes}{\mathop{\mbox{\normalsize$\bigotimes$}}\limits}

\newcommand{\hconv}{\mathbin{\scalebox{.9}{$\nabla$}}}
\newcommand{\sconv}{\mathbin{\scalebox{.9}{$\Delta$}}}

\newcommand{\seteq}{\mathbin{:=}}
\newcommand{\conv}{\mathop{\mathbin{\mbox{\large $\circ$}}}}
\newcommand{\soplus}{\mathop{\mbox{\normalsize$\bigoplus$}}\limits}
\newcommand{\stens}{\mathop{\mbox{\normalsize$\bigotimes$}}\limits}
\newcommand{\sbcup}{\mathop{\mbox{\normalsize$\bigcup$}}\limits}
\newcommand{\ev}{{\operatorname{ev}}}
\newcommand{\tens}{\mathop\otimes}
\newcommand{\Lto}{\longrightarrow}

%%%%%%%%%%%%%%%%%%%%%%%%% Categories %%%%%%%%%%%%%%%%%%%%%%%%%%

\newcommand{\gmod}{\text{-}\mathrm{gmod}}

%%%%%%%%%%%%cluster algebra %%%%%%%%%%%%%%%%%%%%%%%%%%%%%%
\newcommand{\ex}{\mathrm{ex}}
\newcommand{\fr}{\mathrm{fr}}

%%%%%%%%%%%%%%%%%%%%%%%%% Lie algebras %%%%%%%%%%%%%%%%%%%%%%%%%%
\newcommand{\g}{\mathfrak{g}}

%%%%%%%%%%%%%%%%%%%%%%%%% Basic Rings %%%%%%%%%%%%%%%%%%%%%%%%%%

\newcommand{\C}{\mathbb{C}}
\newcommand{\Q}{\mathbb{Q}}
\newcommand{\Z}{\mathbb{Z}\ms{1mu}}

%%%%%%%%%%%%%%%%%%%%%%%%% abbr. (var) greek letters %%%%%%%%%%%%%%%%%%%%%%%%%%
\newcommand{\al}{{\ms{1mu}\alpha}}
\newcommand{\ep}{\epsilon}
\newcommand{\la}{\lambda}
\newcommand{\be}{{\ms{1mu}\beta}}
\newcommand{\ga}{\gamma}

\newcommand{\La}{\Lambda}
\newcommand{\Li}{\Lambda^\infty}
\newcommand{\vph}{\varphi}

%%%%%%%%%%%%%%%%%%%%%%%%% abbr. Upgreek letters %%%%%%%%%%%%%%%%%%%%%%%%%%

\newcommand{\UpLa}{\Uplambda}

\newcommand{\upal}{\upalpha}
\newcommand{\upbe}{\upbeta}
\newcommand{\upga}{\upgamma}

%%%%%%%%%%%%%%%%%%%%%%%%%%%%math symbols%%%%%%%%%%%%%%%%%%%%%%
\newcommand{\wt}{{\rm wt}}
\newcommand{\ch}{{\rm ch}}
\newcommand{\rev}{{\rm rev}}

\newcommand{\cl}{{\rm cl}}

\newcommand{\het}{{\rm ht}}

\newcommand{\de}{\mathfrak{d}}
\newcommand{\Dynkin}{\triangle}  

\newcommand{\im}{\imath}
\newcommand{\jm}{\jmath}
\newcommand{\oim}{{\overline{\im}}}
\newcommand{\ojm}{{\overline{\jm}}}

\newcommand{\ii}{ \textbf{\textit{i}}}

%%%%%%%%%%%%%%%%%%%%%%%%%%%%%% HOM  %%%%%%%%%%%%%%%%%%%%%%%%%%%%%%%%
\newcommand{\Hom}{\operatorname{Hom}}
\newcommand{\Deg}{\operatorname{Deg}}

%\newcommand{\Di}{\Deg^\infty}

%%%%%%%%%%%%%%%%%%%%%%%%%%%%%%%bold-italic charcaters %%%%%%%%%%%%%%%%%%%%%%%%

%%%%%%%%%%%%%%%%%%%%%%% tilded characters %%%%%%%%%%%%%%%%%%%%%%%%%%%%%%%%%%%

\newcommand{\tB}{\widetilde{B}}

\newcommand{\tX}{\widetilde{X}}
\newcommand{\tY}{\widetilde{Y}}
\newcommand{\tZ}{\widetilde{Z}}

\newcommand{\ta}{\widetilde{a}}
\newcommand{\tb}{\widetilde{b}}

\newcommand{\te}{\widetilde{e}}
\newcommand{\tf}{\widetilde{f}}

\newcommand{\tp}{\widetilde{p}\ms{3mu}}

%%%%%%%%%%%%%%%%%%%%%%% hatted characters %%%%%%%%%%%%%%%%%%%%%%%%%%%%%%%%%%%
\newcommand{\hA}{\widehat{A}}
\newcommand{\hB}{\widehat{B}}

\newcommand{\hd}{\mathrm{hd}}

%%%%%%%%%%%%%%%%%%%%%%% overlined characters %%%%%%%%%%%%%%%%%%%%%%%%%%%%%%%%%%%

\newcommand{\ocalD}{\overline{\calD}}
\newcommand{\obPhi}{{}^{\circ}\bPhi}

%%%%%%%%%%%%%%%%%%%%%%% underlined characters %%%%%%%%%%%%%%%%%%%%%%%%%%%%%%%%%%%

\newcommand{\uC}{\underline{C}}

\newcommand{\uL}{\underline{L}}
\newcommand{\uM}{\underline{M}}

\newcommand{\uw}{\underline{w}}

%%%%%%%%%%%%%%%%%%%%%%% fraked characters %%%%%%%%%%%%%%%%%%%%%%%%%%%%%%%%%%%
\newcommand{\frakA}{\mathfrak{a}}

\newcommand{\frakC}{\mathfrak{C}}

\newcommand{\frakH}{\mathfrak{H}}

\newcommand{\frakc}{\mathfrak{c}}

\newcommand{\tfrakc}{\widetilde{\frakc}}

%%%%%%%%%%%%%%%%%%%%%%% sfed characters %%%%%%%%%%%%%%%%%%%%%%%%%%%%%%%%%%%

\newcommand{\sfI}{\mathsf{I}}
\newcommand{\sfJ}{\mathsf{J}}
\newcommand{\sfK}{\mathsf{K}}

\newcommand{\sfM}{\mathsf{M}}

\newcommand{\sfP}{\mathsf{P}}
\newcommand{\sfQ}{\mathsf{Q}}

\newcommand{\sfS}{\mathsf{S}}
\newcommand{\sfT}{\mathsf{T}}

\newcommand{\sfV}{\mathsf{V}}
\newcommand{\sfW}{\mathsf{W}}

\newcommand{\sfg}{\mathsf{g}}

\newcommand{\sfn}{\mathsf{n}}

\newcommand{\sfs}{\mathsf{s}}

%%%%%%%%%%%%%%%%%%%%%%% bbed characters %%%%%%%%%%%%%%%%%%%%%%%%%%%%%%%%%%%
\newcommand{\bbA}{\mathbb{A}}
\newcommand{\bbB}{\mathbb{B}}

\newcommand{\bbD}{\mathbb{D}}

\newcommand{\bbF}{\mathbb{F}}
\newcommand{\bbG}{\mathbb{G}}

\newcommand{\bbK}{\mathbb{K}}

\newcommand{\bbT}{\mathbb{T}}

\newcommand{\obbA}{{}^\circ \hspace{-.1em}\bbA}

\newcommand{\obbK}{{}^\circ \hspace{-.1em}\bbK}

%%%%%%%%%%%%%%%%%%%%%%% boldsymbol characters %%%%%%%%%%%%%%%%%%%%%%%%%%%%%%%%%%%

\newcommand{\bsa}{{\boldsymbol{a}}}
\newcommand{\bsb}{{\boldsymbol{b}}}

\newcommand{\bmu}{{\boldsymbol{\mu}}}

\newcommand{\bPhi}{{\boldsymbol{\Phi}}}

%%%%%%%%%%%%%%%%%%%%%%% bfed characters %%%%%%%%%%%%%%%%%%%%%%%%%%%%%%%%%%%
\newcommand{\bfA}{{ \Z[q^{\pm1/2}]  }}

\newcommand{\bfG}{\mathbf{G}}

\newcommand{\bfL}{\mathbf{L}}

\newcommand{\bfP}{\mathbf{P}}

\newcommand{\tbfG}{\widetilde{\bfG}}

\newcommand{\bfa}{\mathbf{a}}
\newcommand{\bfb}{\mathbf{b}}

\newcommand{\bfk}{\mathbf{k}}

\newcommand{\bfx}{\mathbf{x}}

\newcommand{\tbfx}{\widetilde{\bfx}}

%%%%%%%%%%%%%%%%%%%%%%% caled characters %%%%%%%%%%%%%%%%%%%%%%%%%%%%%%%%%%%
\newcommand{\calA}{\mathcal{A}}

\newcommand{\calC}{\mathcal{C}}
\newcommand{\calD}{\mathcal{D}}
\newcommand{\calE}{\mathcal{E}}
\newcommand{\calF}{\mathcal{F}}
\newcommand{\calG}{\mathcal{G}}
\newcommand{\calH}{\mathcal{H}}
\newcommand{\calI}{\mathcal{I}}

\newcommand{\calK}{\mathcal{K}}
\newcommand{\calL}{\mathcal{L}}
\newcommand{\calM}{\mathcal{M}}
\newcommand{\calN}{\mathcal{N}}

\newcommand{\calQ}{\mathcal{Q}}
\newcommand{\calR}{\mathcal{R}}

\newcommand{\calU}{\mathcal{U}}

\newcommand{\calY}{\mathcal{Y}}

%%%%%%%%%%%%%%%%%%%%%%% hatted caled characters %%%%%%%%%%%%%%%%%%%%%%%%%%%%%%%%%%%
\newcommand{\hcalA}{\widehat{\calA}}

%%%%%%%%%%%%%%%%%%%%%%% tilded caled characters %%%%%%%%%%%%%%%%%%%%%%%%%%%%%%%%%%%

%%%%%%%%%%%%%%%%%%%%%%% scred characters %%%%%%%%%%%%%%%%%%%%%%%%%%%%%%%%%%%
\newcommand{\scrA}{\mathscr{A}}

\newcommand{\scrC}{\mathscr{C}}
\newcommand{\scrD}{\mathscr{D}}

\newcommand{\scrS}{\mathscr{S}}

%%%%%%%%%%%%%%%%%%%%%%% tted characters %%%%%%%%%%%%%%%%%%%%%%%%%%%%%%%%%%%

\newcommand{\ttB}{\mathtt{B}}

\newcommand{\ttP}{\mathtt{P}}
\newcommand{\ttQ}{\mathtt{Q}}

\newcommand{\ttS}{\mathtt{S}}

\newcommand{\ttb}{\mathtt{b}}

\newcommand{\ttl}{\mathtt{l}}

\newcommand{\ttq}{\mathtt{q}}
\newcommand{\ttr}{\mathtt{r}}
\nc{\bg}{\sigma}%braid generator

\newcommand{\ttx}{\mathtt{x}}
\newcommand{\tty}{\mathtt{y}}
\newcommand{\ttz}{\mathtt{z}}

%%%%%%%%%%%%%%%%%%%%%%% rmed characters %%%%%%%%%%%%%%%%%%%%%%%%%%%%%%%%%%%

\newcommand{\rmE}{\mathrm{E}}

\newcommand{\rmG}{\mathrm{G}}

\newcommand{\rmL}{\mathrm{L}}

\newcommand{\rmP}{\mathrm{P}}
\newcommand{\rmQ}{\mathrm{Q}}

%%%%%%%%%%%%%%%%%%%%%%% rmed characters %%%%%%%%%%%%%%%%%%%%%%%%%%%%%%%%%%%

\newcommand{\hrmL}{\widehat{\rmL}}

\newlength{\mylength}
\setlength{\mylength}{\textwidth} \addtolength{\mylength}{-20ex}

\newcommand*{\para}{%
  \rlap{\rotatebox{-30}{\rule[.05ex]{.4pt}{.77em}}}%
  \kern.04em%
  \rlap{\kern.36em\raisebox{0.649519052835em}{\rule{.6em}{.4pt}}}%
  \rule{.6em}{.4pt}\kern-.04em%
  \rotatebox{-30}{\rule[.05ex]{.4pt}{.77em}}}

%%%%%%%%%%%%%%%%%%%%%%%%%%%%%%% R-matices %%%%%%%%%%%%%%%%%%%%%%%%%%%%%%%%%
%\newcommand{\Rnorm}{R^{{\rm norm}}}
\newcommand{\Rr}{\mathbf{r}}

\newcommand{\hDynkin}{{\widehat{\Dynkin}}}

\newcommand{\isoto}[1][]{\mathop{\xrightarrow%
[{\raisebox{.3ex}[0ex][.3ex]{$\scriptstyle{#1}$}}]%
{{\raisebox{-.6ex}[0ex][-.6ex]{$\mspace{2mu}\sim\mspace{2mu}$}}}}}

%%%%%%%%%%%%%%%%%%%%%%%%%% re-package[]%%%%%%%%%%%%%%%%%%%

\newcommand{\rl}{\sfQ}
\newcommand{\weyl}{\sfW}
\newcommand{\lan}{\langle}
\newcommand{\ran}{\rangle}

\newcommand{\Aqn}{\calA_q(\sfn)}

\newcommand{\qt}[1]{\quad\text{#1}}
\newcommand{\qtq}[1][{and}]{\quad\text{{#1}}\quad}

\newcommand{\qtoq}[1][{or}]{\quad\text{{#1}}\quad}

%%%%%%%%%%%%%%%%%%%%%%%%%% re-package of enumerations%%%%%%%%%%%%%%%%%%%
\newcommand{\ee}{\end{enumerate}}
\newcommand{\bitem}{\begin{itemize}}
\newcommand{\eitem}{\end{itemize}}
\newcommand{\ben}{\begin{enumerate}[{\rm (1)}]}
\newcommand{\bnum}{\begin{enumerate}[{\rm (i)}]}
\newcommand{\bnump}{\begin{enumerate}[{\rm (i)$'$}]}
\newcommand{\bna}{\begin{enumerate}[{\rm (a)}]}
\newcommand{\bnA}{\begin{enumerate}[{\rm (A)}]}
\newcommand{\bc}{\begin{cases}}
\newcommand{\ec}{\end{cases}}
\newcommand{\ba}{\begin{array}}
\newcommand{\ea}{\end{array}}
\newcommand{\noi}{\noindent}
\newcommand{\snoi}{\smallskip \noindent}
\newcommand{\mnoi}{\medskip \noindent}

\newenvironment{myequation}
{\relax\setlength{\arraycolsep}{1pt}\begin{eqnarray}}
{\end{eqnarray}}
\newenvironment{myequationn}
{\relax\setlength{\arraycolsep}{1pt}\begin{eqnarray*}}
{\end{eqnarray*}}

\nc{\eqs}[1]{\underset{\raisebox{.4ex}[.7ex][0ex]{$\scriptstyle{#1}$}}{=}}

%%%%%%%%%%%%%%%%%%%%%%%%%% re-package of each paper%%%%%%%%%%%%%%%%%%%
\newcommand{\Uqpg}{U_q'(\g)}
\newcommand{\Dcan}{{\bbD_\can}}
\newcommand{\DQ}{{\bbD_\calQ}}
\newcommand{\SC}{\sfS(\frakC)}

\newcommand{\rdual}{\scrD}
\newcommand{\ldual}{\scrD^{-1}}

\newcommand{\Ang}[1]{  \bigl\lan #1 \bigr\ran  }
\newcommand{\ang}[1]{\langle#1\rangle}
\newcommand{\aform}[1]{  ({ #1 })_\sfn  }

\newcommand{\bone}{\mathbf{1}}

\newcommand{\Es}{\rmE^\star}
\newcommand{\Esn}[1]{\rmE^{\star \hspace{0.1ex} #1}}

\newcommand{\hBi}{\hB(\infty)}
\newcommand{\LuphA}{\hLup}

\newcommand{\Gup}{\rmG^{{\rm up}}}
\newcommand{\Lup}{\rmL^{{\rm up}}}
\newcommand{\hLup}{\hrmL^{{\rm up}}}
\newcommand{\LupA}{\Lup\left(\Azn\right)}
\newcommand{\Azn}{\calA_\bfA(\sfn)}

\newcommand{\Iset}[1]{  \{ #1 \}_{i \in I} }
\newcommand{\Cg}{{\scrC_\g}}
\newcommand{\CgD}{{\scrC^\bbD_\g}}

\newcommand{\Cgz}{{\scrC^0_\g}}
\newcommand{\Csgz}{{\scrC^0_{\sfg^{(1)}}}}

\newcommand{\Dpair}{{\bbD,\hwc}}

\newcommand{\longepito}[1][]{\xymatrix@C=4ex{{}\ar@{->>}[r]^{#1}&{}}}
\newcommand{\Mn}{\mathbf{M}\ms{1mu}}
\newcommand{\hAform}[1]{\bl #1\br_{\hcalA}\ms{1mu}}
\newcommand{\hAm}[1]{  \hcalA[#1]  }
\newcommand{\hAmz}[1]{  \hcalA[#1]_\Zq  }
\newcommand{\hAz}{  \hcalA_\Zq  }
\newcommand{\bl}{\bigl(}
\newcommand{\br}{\bigr)}
\newcommand{\iinI}{ \im \in \sfI}
\newcommand{\iset}[1]{  \{ #1 \}_{\iinI}  }

\newcommand{\one}{{\bf{1}}}

\newcommand{\dpar}[1]{  ( \ms{-2mu} ( #1 ) \ms{-2mu} )  }
\newcommand{\dbracket}[1]{  [ \ms{-2mu}[ #1 ]\ms{-2mu}]  }

\newcommand{\Runiv}{\mathrm{R}^{\ms{1mu}\mathrm{univ}}}
\newcommand{\Rren}{\mathrm{R}^{\ms{1mu}\mathrm{ren}}} 
\newcommand{\oprod}{\prod^{\xrightarrow{}}}
\newcommand{\rprod}{\prod^{\xleftarrow{}}}
  \newcommand{\otens}{\stens\limits^{\xrightarrow{}}}
\newcommand{\Qdatum}{(\Dynkin,\sigma,\xi)} 

\newcommand{\usfwc}{{\uw_\circ}}

\newcommand{\usfwcp}{{\uw'_\circ}}
\newcommand{\hwc}{{\widehat{\uw}_\circ}}
\newcommand{\hwcp}{{\hwc'}}

\newcommand{\hwci}[1]{{\widehat{\uw}_{\circ #1}}}

\newcommand{\uii}{{\underline{\boldsymbol{\im}}}}

\newcommand{\tuii}{{\widetilde{\uii} }}
\newcommand{\tujj}{{\widetilde{\ujj} }}
\newcommand{\akete}[1][0ex]{\rule[{#1}]{0ex}{1ex}}
\nc{\ake}[1][2ex]{\rule[-.5ex]{0ex}{#1}}
\newcommand{\dtens}{\mathop{\mbox{\scalebox{1.1}{\akete[-.6ex]\normalsize$\bigotimes$}}}\limits}
\newcommand{\ujj}{{\underline{\boldsymbol{\jm}}}}
\newcommand{\TT}{ \textbf{\textit{T}}}
\newcommand{\FF}{ \ttP}

\newcommand{\Seq}{\mathrm{Seq}}
\newcommand{\PBW}{\mathrm{PBW}}

\newcommand{\kk}{{\Q(q^{1/2})}}

\newcommand{\Mseed}{{\sfS=(\{\sfM_i\}_{i\in\sfJ}, \tB;\sfJ,\sfJ_\ex)  }}

\newcommand{\Mpseed}{{\sfS^*=(\{\sfM_i\}_{i\in\sfJ^*}, \tB^*;\sfJ^*,\sfJ^*_\ex)  }}
\newcommand{\mseed}{{(\{\sfM_i\}_{i\in\sfJ}, \tB;\sfJ,\sfJ_\ex)  }}

\nc{\col}{\colon}
\nc{\ord}{\mathrm{ord}}
\nc{\catQ}{\scrC_\calQ}
\nc{\catD}{\scrC_\bbD}
\nc{\vs}{\vspace*}
\nc{\D}{\mathscr{D}}
\nc{\Proof}{\begin{proof}}
  \nc{\QED}{\end{proof}}

\nc{\tLa}{\widetilde{\Lambda}}
\nc{\ro}{{\rm(}}
\nc{\rf}{{\rm)}\xspace}
\nc{\Aut}{\mathrm{Aut}}
\nc{\can}{\mathrm{can}}
\nc{\Dc}{{\bbD_{\can}}}
\nc{\Cgo}{\scrC_{\mathfrak{g}}^{0}}
\nc{\bb}{\mathtt{b}}
\nc{\catC}{\scrC}
\nc{\braid}{\ttB}
\nc{\Ass}[1][\bbD]{\calE_{#1}}
\nc{\Ci}{C^\uii}
\nc{\Cj}{C^\ujj}
% \nc{\condi}{with $-\infty\le l\le 1$ and $0\le r\le+\infty$\xspace}
% \nc{\condi}{with $[l.r]\cap\st{0,1}\not=\emptyset$\xspace}
\nc{\condi}[1][K]{with $#1\cap\st{0,1}\not=\emptyset$\xspace}
\nc{\Di}[1][{\uii}]{{\bbD,{#1}}}
\nc{\Vi}[1][\uii]{V^{#1}}
\nc{\Pii}[1][\uii]{P^{#1}}
\nc{\Vdi}[1][{\Di}]{V^{#1}}
\nc{\Pdi}[1][{\Di}]{P^{#1}} 
\nc{\PBix}[1][{[l,r]}]{\Z_{\ge0}^{\oplus{#1}}}
\nc{\mcf}[1][{$[a,b]$}]{maximal commuting family of $i$-boxes in {#1}\xspace}
\nc{\nn}{\nonumber}
\nc{\bwr}{\mbox{\large$\wr$}}
\newcommand{\tch}{\widetilde{\ch}}
\nc{\tchDcan}{\ms{2mu}\tch_{\mspace{.1mu}\raisebox{-.4ex}{${\scriptstyle{\Dcan}}$}}}
\nc{\monoto}[1][]{\xymatrix@C=2ex{\ar@{>->}[r]^-{{#1}}&}\ms{-8mu}}

\usepackage[colorinlistoftodos]{todonotes}

\setlength{\marginparwidth}{1.5cm}

\title[Monoidal categorification III]{Monoidal categorification and \\  quantum affine algebras III}

\author[M. Kashiwara]{Masaki Kashiwara}
\thanks{The research of M.\ Kashiwara
	was supported by Grant-in-Aid for Scientific Research (B)  23K20206,  
	Japan Society for the Promotion of Science.}
\address[M. Kashiwara]{%
Kyoto University Institute for Advanced Study, Research Institute
for Mathematical Sciences, Kyoto University, Kyoto 606-8502, Japan
}
\email[M. Kashiwara]{masaki@kurims.kyoto-u.ac.jp}

\author[M. Kim]{Myungho Kim}
\address[M. Kim]{Department of Mathematics, Kyung Hee University, Seoul 02447, Korea}
\email[M. Kim]{mkim@khu.ac.kr}
\thanks{The research of M.\ Kim was supported by the National Research Foundation of
Korea (NRF) Grant funded by the Korea government(MSIT)
(NRF-2020R1A5A1016126).}

\author[S.-j. Oh]{Se-jin Oh}
\thanks{ The research of S.-j.\ Oh was supported by the National Research Foundation of
	Korea (NRF) Grant funded by the Korea government(MSIT) (NRF-2022R1A2C1004045).}
\address[S.-j. Oh]{ Department of Mathematics, Sungkyunkwan University, Suwon, South Korea}
\email[S.-j. Oh]{sejin092@gmail.com}

\author[E. Park]{Euiyong Park}
\thanks{The research of E.\ Park was supported by the National Research Foundation of Korea (NRF) Grant funded by the Korea Government(MSIT)(RS-2023-00273425 and NRF-2020R1A5A1016126).}
\address[E. Park]{Department of Mathematics, University of Seoul, Seoul 02504, Korea}
\email[E. Park]{epark@uos.ac.kr}

\keywords{Quantum affine algebra, Monoidal categorification,
R-matrices, Cluster algebra}

\subjclass[2010]{17B37, 13F60, 18D10}

\date{September 18, 2025}

%\date{\today}

\begin{document}

\begin{abstract}
Let $U_q'(\g)$ be an arbitrary quantum affine algebra of either untwisted or twisted type, and let $\Cgz$ be its Hernandez-Leclerc category. We denote by $\ttB$  the braid group determined by the simply-laced finite type Lie algebra $ \sfg$ associated with $U_q'(\g)$.
For any complete duality datum $\bbD$ and any sequence $ \uii $ of simple roots of $\sfg$, we 
construct the corresponding affine cuspidal modules and affine determinantial modules and study their key properties including T-systems.
Then, for any element $\ttb$ of the positive braid monoid  $\ttB^+$,
we introduce a distinguished subcategory $\scrC_\g^\bbD(\ttb)$ of $\Cgz$ categorifying the specialization of the bosonic extension $\hcalA (\ttb)$
at $q^{1/2}=1$ and investigate its  properties including the categorical PBW structure. We finally prove that the subcategory $\scrC_\g^\bbD(\ttb)$ provides a monoidal categorification of the (quantum) cluster algebra $\hcalA(\ttb)$, which significantly generalizes the earlier monoidal categorification developed by the authors.
\end{abstract}

\maketitle
\tableofcontents

\setcounter{section}{-1}
\section{Introduction}

This is the third paper in our series on \emph{monoidal categorifications} for cluster algebras arising from quantum affine algebras (\cite{KKOP20,KKOP24A}). 
Let $\Cgz$ be the \emph{Hernandez-Leclerc} category of a quantum affine algebra $U_q'(\g)$ which is a certain distinguished monoidal subcategory of the category $\Cg$ of finite-dimensional integrable  $U_q'(\g)$-modules (see 
\cite{HL10, HL16} and see also \cite{KKKOIV, KO19, OT19}, and
Section \ref{subsec: HL category}). 
The category $\Cgz$ possesses a rich and interesting structure including the rigidity, and  has been actively studied since its introduction. 
The category $\Cgz$ lies at the heart of the representation theory for $U_q'(\g)$ and is deeply connected with  various research areas including cluster algebras (see \cite{CP95, CH10, HL21} and references therein). 
The cluster algebraic approach to various subcategories of $\Cgz$ was introduced by Hernandez and Leclerc (\cite{HL10, HL16}). It turns out that the (quantum) Grothendieck rings of various distinguished subcategories  $\scrC_\ell$ and $\scrC_\g^-$, called the \emph{Hernandez-Leclerc subcategories}, of $\Cgz$ have (quantum) cluster algebra structures whose initial seeds arise from \emph{Kirillov-Reshetikhin} modules. Hernandez-Leclerc introduced the notion of monoidal categorification and studied subcategories  $\scrC_\ell$ and $\scrC_\g^-$ in the viewpoint of cluster algebras at the categorical level, which shed light on remarkable structural features of the Hernandez-Leclerc categories (see \cite{HL10, HL16} and see also \cite{FHOO, FHOO2, KKOP20,KKOP24A,Nak11}).

The \emph{quantum Grothendieck ring} $\calK_{\g;t} $ of $\Cgz$ defined via the \emph{$(q,t)$-characters} of modules in $\Cgz$ (\cite{Her04, Nak04, VV03}) has been studied from the ring-theoretic viewpoint.  
A ring presentation of $\calK_{\g;t} $ is discovered by Hernandez-Leclerc (\cite{HL15})  for simply-laced types and later by Fujita-Hernandez-Oh-Oya (\cite{FHOO}) 
for the remaining types.  This gives rise to the \emph{bosonic extension} $\hcalA$, which  is the associative $\Q(q^{\pm {1}/ {2}})$-algebra with infinitely many 
generators $f_{j,m}$ satisfying  the quantum Serre and the bosonic relations determined by  a \emph{generalized symmetrizable Cartan matrix} $C$ (see \cite{JLO1, KKOP24, OP24}  and see also Section \ref{Sec: BE}). 
The bosonic extensions $\hcalA$ can be understood as a vast generalization of the quantum Grothendieck rings $\calK_{\g;t} $ since it is known that  $\calK_{\g;t} $ are  isomorphic to $\hcalA$ of simply-laced finite types (\cite{FHOO, HL15}). 
For each $k\in \Z$, the subalgebra $ \hcalA[k]$  of $\hcalA$ generated by the generators $ f_{j,k}$ is isomorphic to the \emph{quantum unipotent  coordinate ring} $\calA_q(\sfn)$ associated with $C$. Thus the bosonic extension $\hcalA$ can be understood as an \emph{affinization} of  $\calA_q(\sfn)$. 

Let $ \ttB$ be the \emph{generalized braid group} (also called \emph{Artin--Tits group}) associated with $C$ and $\ttB^+$ its positive submonoid of $\ttB$. In the sequel, we simply call it the braid group. 
It was shown in \cite{JLO2, KKOP21B, KKOP24B} that there exist  the \emph{braid group actions} $\TT_j$ on $\hcalA$ which coincide with Lusztig's braid symmetries (\cite{LusztigBook, Lusztig96}) in each local pieces $\hcalA[k]$.
For any element $\ttb \in \ttB^+$,  the braid group actions $\TT_j$  lead us to the distinguished subalgebra $\hcalA (\ttb)$ of $\hcalA $  with the \emph{PBW theory} (\cite{KKOP24B,OP24}).
For each expression sequence $ \uii$ of  $\ttb$, the PBW root vectors are constructed by applying $\TT_j$ along $\uii$ and PBW monomials form a $\Zq$-linear basis of $\hcalA_\Zq(\ttb)$. 
Note that any arbitrary sequence $\uii$ can be understood as an expression of some element $\ttb \in \ttB^+$ since there is no quadratic defining relations in the braid group $\ttB$.

The \emph{global basis theory} for $\hcalA( \ttb )$ was established by the authors in \cite{KKOP24}. The global basis $\bfG$ of $\hcalA$ is a distinguished basis of the $\Zq$-lattice $\hcalA_\Zq$ of $\hcalA$. 
The global basis $\bfG$ has properties similar to  the \emph{upper global basis} (or \emph{dual canonical basis}) of $ \calA_q(\sfn)$ (see \cite{Kashiwarabook, LusztigBook} and references therein) and is parameterized by the \emph{extended crystal} $\hBi$ (\cite{KP22}). Thus the global basis of $\hcalA$ is denoted by  $\bfG = \{\rmG(\bfb) \ | \ \bfb\in\hBi\}$.  Note that the extended crystal $\hBi$ is an affinization of the infinite crystal $B(\infty)$. 
It was shown that $\bfG$ is invariant under the actions of $\TT_j$ and is compatible with the subalgebra $\hcalA (\ttb)$, i.e., the intersection $  \bfG (\ttb) := \bfG \cap \hcalA (\ttb)$ becomes a basis of the $\Q(q^{1/2})$-vector space $ \hcalA (\ttb)$ (\cite{KKOP24B}). In the case that $\hcalA \simeq \calK_{\g;t} $, the \emph{normalized} global basis $\tbfG$, which is the same as $\bfG$ up to multiples of $q^{1/2}$,  coincides with the set of the $(q,t)$-characters of simple modules in $\Cgz$ (\cite{KKOP24}), which tells us that the braid symmetries $\TT_j$ permute the set of the isomorphic classes of simple modules in $\Cgz$.

\smallskip

Meanwhile, the categorical \emph{PBW theory} for $\Cgz$ was developed by the authors (\cite{KKOP23P}) using  the \emph{quantum generalized  Schur-Weyl duality} (\cite{KKK18}). 
Let $ \sfg$ be the simply-laced finite type Lie algebra associated with the quantum affine algebra $U_q'(\g)$, and let $I$ and $ \sfI$ denote the index sets of
simple roots of $U_q'(\g)$ and $\sfg$, respectively (see Section \ref{subsec: Q-datum} for their precise definition). We denote by $\ttB = \langle  \bg_\im^{\pm1} \mid\im \in \sfI \rangle$ the braid group associated with the Lie algebra $ \sfg$. For  a \emph{complete duality datum} $\bbD =\{ L^\bbD_\im \}_{\im \in \sfI } \subset \Cgz$ and a
\emph{locally reduced {\rm (see Definition~\ref{def:seq})} sequence} 
$\uii=( \ldots, \im_{-1}, \im_{0}, \im_{1}, \ldots) $ of $\sfI$, the authors introduced the \emph{affine cuspidal modules} $C^{\bbD,\uii}_k$ and proved that 
there exist distinguished monoidal subcategories $\Cg^{[a,b], \bbD, \uii}$ for any intervals $[a,b]$ and that  the \emph{standard modules} (ordered tensor products of affine cuspidal modules) produce all simple modules   of $\Cg^{[a,b], \bbD, \uii}$   with the unitriangularity  property. Hernandez-Leclerc subcategories $\scrC_\ell$ and $\Cg^-$ appear as special cases of  the subcategories $\Cg^{[a,b], \bbD, \uii}$. 
It was conjectured in \cite{KKOP21B} that there exist  monoidal  exact autofunctors   $\mathcal{T}_\im$ ($ \im \in \sfI$) on the category $\Cgz$
which  categorify Lusztig's braid symmetries in each local piece $\scrD^k(\scrC_\bbD)$, where $\scrD$ denotes the right dual functor of $\Cgz$ and  $\scrC_\bbD$ is the subcategory of $\Cgz$ generated by $\bbD =\{ L^\bbD_\im \}_{\im \in \sfI } $. 
If the conjectural functors $\mathcal{T}_\im$ exist, then the affine cuspidal modules  $C^{\bbD,\uii}_k$ can be constructed by applying $\mathcal{T}_\im$ along the locally reduced sequence $\uii$.
Note that, in the case where $ \calK_{\g;t} \simeq \hcalA $, the quantum Grothendieck ring of $\scrD^k(\scrC_\bbD)$ is isomorphic to $\hcalA[k]$ and 
the braid group actions $\TT_\im$ on $\hcalA$ can be viewed as a ring-theoretic shadow of the conjectural functors $\mathcal{T}_\im$ on $\Cgz$.

For a complete duality datum $\bbD$ arising from a $\rmQ$-datum $\calQ $ (see Section \ref{subsec: Q-datum}) and a locally reduced sequence $\uii$, the category $\Cg^{[a,b], \bbD, \uii}$ provides a monoidal categorification of the Grothendieck ring $K(\Cg^{[a,b], \bbD, \uii})$ (see \cite{KKOP24A}).
The proof for the monoidal categorification is heavily based on the integer-valued invariants $\La$, $\de$, etc., arising from $R$-matrices (\cite{KKOP20}), which are a quantum affine counterpart of the same invariants in \emph{quiver Hecke algebras} (\cite{KKKO18}). 
The key ingredients for the monoidal categorification  are  \emph{affine determinantial modules} and \emph{$i$-boxes}. 
The affine determinantial modules $M^{\bbD,\uii}[a,b]$ are distinguished simple $U_q'(\g)$-modules determined by $\{ C^{\bbD,\uii}_k \}_{k\in \Z}$, which generalize Kirillov-Reshetikhin modules (see Section \ref{Sec: adm} for precise definition). The modules $M^{\bbD,\uii}[a,b]$ are  quantum affine analogues of the \emph{determinantial modules} (\cite{KKKO18}) over quiver Hecke algebras that categorify \emph{quantum unipotent minors}, and they have remarkable short exact sequences viewed as a vast  generalization of  \emph{T-systems} among Kirillov-Reshetikhin modules (\cite{Her06, Her10, Nakajima03II}).
These short exact sequences, which are also called $T$-systems, can be understood as the quantum affine counterpart of the quantum determinantial identities among quantum unipotent minors (\cite{GLS11,GLS13}) via generalized Schur-Weyl duality. 
The $i$-boxes are intervals that end with the same color, which provide a combinatorial skeleton for affine determinantial modules. 
An  \emph{admissible chain} $\frakC$ of $i$-boxes associated with a locally reduced sequence $\uii$ yields a monoidal seed of  $\Cg^{[a,b], \bbD, \uii}$, 
and certain combinatorial actions on $\frakC$, called \emph{box moves}, explain the mutations given by $T$-systems of affine determinantial modules.   
Thus the $i$-boxes allow us to give a monoidal seed for  $\Cg^{[a,b], \bbD, \uii}$ in a combinatorial viewpoint.

It would be natural and interesting to ask how the category $\Cg^{[a,b], \bbD, \uii}$ can be generalized to \emph{arbitrary} choices of $\bbD$ and $\uii$ without losing its categorical features. 
In the case for locally reduced sequences $\uii$,  the quantum Grothendieck ring of $\Cg^{[a,b], \bbD, \uii}$ is isomorphic to the subalgebra $\hcalA(\ttb)$ for some element $\ttb\in \ttB^+$.
This also leads us to the question: whether there exist the categories  $\scrC_\g^\bbD(\ttb)$ associated with \emph{arbitrary} elements $\ttb \in \ttB^+$ and whether they enjoy the same categorical properties such as the PBW theory and monoidal categorifications for $ \hcalA(\ttb)$. 

\smallskip 

In this paper, we answer these questions by
introducing  a distinguished subcategory $\scrC_\g^\bbD(\ttb)$ of the Hernandez-Leclerc category $\Cgz$ for an \emph{arbitrary} complete
duality datum $\bbD$ and an \emph{arbitrary} element $\ttb \in \ttB^+$. We then prove that the subcategory $\scrC_\g^\bbD(\ttb)$ provides a monoidal categorification of the algebra $\hcalA(\ttb)$, 
which significantly generalizes the earlier monoidal categorification given by the authors in \cite{KKOP24A}. The main results of the paper can be summarized as follows: let $U_q'(\g)$ be an \emph{arbitrary} quantum affine algebra of either untwisted or twisted type, and choose \emph{any} complete duality datum $\bbD$ and \emph{any} expression sequence $ \uii =(\im_1,\ldots,\im_r) $ of an element $\ttb \in \ttB^+$. 
\bnum
\item 
We introduce affine cuspidal modules $C^{\bbD,\uii}_k$ and affine determinantial modules $M^{\bbD,\uii}[a,b]$, and show that they enjoy the same categorical properties as those in the case for locally reduced expression sequences. 
Moreover the affine determinantial modules $M^{\bbD,\uii}[a,b]$ satisfy a $T$-system, in which the $i$-boxes play the same combinatorial role as in the locally reduced cases.

\item 
We introduce the subcategory $\scrC_\g^\bbD(\ttb)$ and develop its categorical PBW  theory. For each expression $\uii=(\im_1,\ldots,\im_r)$ of $\ttb$, we construct  standard modules as ordered tensor products of the affine cuspidal modules $C^{\bbD,\uii}_k$ ($k\in [1,r]$).
We then show that all simple modules can be obtained by taking the head of standard modules, which yield \emph{PBW data} parameterizing simple modules in $\scrC_\g^\bbD(\ttb)$. Moreover the unitriangularity between standard modules and simple modules holds, which generalizes the results in \cite{KKOP23P}.
The Grothendieck ring $K(\scrC_\g^\bbD(\ttb))$ of the subcategory $\scrC_\g^\bbD(\ttb)$ coincides with the commutative algebra $\obbA(\ttb)$ obtained by specializing $\hcalA(\ttb)$ at $q^{1/2}=1$. 

\item 
For each admissible chain $\frakC=(\frakc_k)_{1\le k\le r}$ of $i$-boxes associated with $\uii$, we construct the monoidal seed $\sfS^\bbD (\frakC) $  using affine determinantial modules and combinatorics of $i$-boxes. We then prove that the monoidal seed $\sfS^\bbD (\frakC)$ is completely $\Uplambda$-admissible.
It turns out that T-systems are mutations and all monoidal seeds arsing from admissible chains of $i$-boxes are connected by T-systems. We finally obtain that the category $\scrC_\g^\bbD(\ttb)$ gives a monoidal categorification of the cluster algebra $\obbA(\ttb) \simeq K(\scrC_\g^\bbD(\ttb))$ with the initial monoidal seed $\sfS^\bbD (\frakC)$.  
This implies that $\hcalA_\bfA(\ttb)$ has a quantum cluster algebra structure, and all cluster variables and monomials are contained in the normalized global basis $\tbfG(\ttb)$ of $\hcalA_\bfA(\ttb)$.
As a consequence, when $\g$ is of untwisted affine type and $\bbD$ arises from a $\rmQ$-datum $\calQ $,
the category $\scrC_\g^\bbD(\ttb)$ gives a monoidal categorification of the quantum Grothendieck ring $\calK_t(\scrC_\g^{\bbD}(\ttb)) $  and all cluster variables and monomials are the $(q,t)$-characters of simple modules.
\ee

\smallskip
One of key ingredients for the main results is the \emph{interplay} between the bosonic extension $\hcalA$ and the category $\Cgz$. Proposition \ref{prop: homomorphism from hA to Cg} says that, for any complete duality datum $\bbD$, there exists a unique $\Z$-algebra homomorphism 
$$
\bPhi_\bbD \col\hAz \Lto K(\Cgz),
$$ which is compatible with the Schur-Weyl duality functor $\calF_\bbD$ associated with $\bbD$ and the right dual functor $\rdual$. The specialization $\obbA$ of $\hAz$ at $q^{1/2}=1$ is a commutative algebra and the homomorphism $\bPhi_\bbD$ induces an isomorphism $\obPhi_\bbD \colon \obbA \isoto K(\Cgz)$ under the specialization at $q^{1/2}=1$ (see Theorem \ref{thm: obbA K(Cgz)}).
When $ \g$ is of untwisted affine type and the duality datum $\bbD_\calQ$ arises from a $\rmQ$-datum $\calQ $, there is an isomorphism
$\Psi_{\bbD_\calQ} \colon\hAz\isoto\calK_{\g;t}$  between the bosonic extension $\hAz$ and the quantum Grothendieck ring $\calK_{\g;t} $ of $\Cgz$ such that $\ev_{t=1} \circ \Psi_{\bbD_\calQ} = \bPhi_{\bbD_\calQ}$.  Under $\Psi_{\bbD_\calQ}$, the normalized global basis $\tbfG$ of $\hAz$ coincides with the $(q,t)$-characters of simple modules in $\calK_{\g;t}$ (\cite{KKOP24}). 
Hence, in the general case, i.e., $\g$ and $\bbD$ are \emph{arbitrary},
the algebra $ \hAz$ and the global basis $\bfG$ take over the roles of the quantum Grothendieck ring $\calK_{\g;t} $ and the $(q,t)$-characters of simple modules in $\Cgz$ and the homomorphism $\bPhi_\bbD$ generalizes the specialization of the $(q,t)$-characters of simple modules at $t^{1/2}=1$.  
From this perspective, we introduce the notions of \emph{$\bbD$-quantizable} and \emph{$\bbD$-categorifiable} associated with $\bPhi_\bbD$ (see Definition \ref{def: strong real}), which reflect the correspondence between the $(q,t)$-characters and the $q$-characters of simple modules under the specialization at $t^{1/2}=1$ (see Definition \ref{def: quantizable} and Lemma \ref{lem: chi quantizable}).

Under the homomorphism $\bPhi_\bbD$ together with the global basis $\bfG$, the braid group actions $\TT_\im$ on $\hcalA$ can be \emph{partially} lifted to the category $\Cgz$ for certain family of simple modules. This allows us to overcome the absence of the conjectural monoidal autofunctors  $\mathcal{T}_\im$ on  $\Cgz$ for our purpose. 
We investigate the braid group actions $\TT_\im$ with the actions $\scrS_\im $ on the set of strong duality data introduced in \cite[Section 5.3]{KKOP23P} (see Proposition \ref{prop: comm diagram} and Corollary \ref{cor: dia comm repeat}) and show that, for any simple module $M$ in $\scrD^n (\scrC_\bbD)$ and $\ttb \in \ttB$, there exists a simple module $\TT_\ttb (M)$ compatible with  $\bPhi_\bbD$ and $\bfG$, i.e., more precisely $\TT_\ttb (M)$ is \emph{$\bbD$-definable} (see Definition \ref{def: strong real} and Lemma \ref{lem:Treal}). 
This reveals the interplay between the global basis $\bfG$ and the set of simple modules under the homomorphism $\bPhi_\bbD$. 
We further study the head of tensor products and the integer-valued invariants $\La$ and $\de$ related to the braid group actions $\TT_\ttb$ on the simple modules in $\scrD^n(\scrC_\bbD)$, which provide the base for the categorical PBW theory and the monoidal categorification.

\medskip
The strategy of the proof of our main result is  
the reduction of the properties for an arbitrary sequence to those
for a locally reduced sequence.
For an arbitrary sequence $ \uii =(\im_1,\ldots,\im_r) $ of $\sfI$, we construct the affine cuspidal modules $C^{\bbD,\uii}_k$ by applying the braid group actions $\TT_\im$ along the sequence $\uii$ (see \eqref{eq: Ck}), and define the affine determinantial module $M^{\bbD,\uii}[a,b]$ by taking the head of the ordered tensor product of $C^{\bbD,\uii}_k$ along $\uii$ (see Definition \ref{def:affdet}). We then prove  that, if $\uii$ is obtained from another sequence
 $\ujj$ via a commutation move or a braid move, then affine cuspidal modules $C^{\bbD,\uii}_k$ and determinantial modules $M^{\bbD,\uii}[a,b]$ for $\uii$ have the same properties as those for $\ujj$. This yields that $C^{\bbD,\uii}_k$ and $M^{\bbD,\uii}[a,b]$ have the same categorical properties, including T-systems, as in the case of locally reduced sequence dealt in the previous work \cite{KKOP24A} by authors (see Theorem \ref{thm: every sequence good}).

\medskip
We give a closed formula for computing the $\La$-values between affine determinantial modules that commute with each other in terms of weights for $\sfg$ (Corollary \ref{cor: w-pairing}). This formula relates the $\La$-values to the exponents of $t$ between the $(q,t)$-characters of Kirillov-Reshetikhin modules computed in \cite{FHOO, FHOO2} (see Lemma \ref{lem: L up to}), which allows us to use the same formula for $\La$-matrices in the quantum torus (see Section \ref{subsec: Construction}).

Applying the same arguments given in \cite{KKOP23P}, we define the monoidal subcategory $\scrC_\g^{\bbD}(\ttb) \subset \Cgz$ categorifying $\obbA(\ttb)$ by using $C^{\bbD,\uii}_k$, where $\ttb = \bg_{\im_1} \cdots \bg_{\im_r} \in \ttB^+$
 and $\uii=(\im_1,\ldots,\im_r)$, 
and build the PBW theory for $\scrC_\g^{\bbD}(\ttb)$ (see Section \ref{subsec: subcat C(b)}). 
The PBW theory explains that the determinantial modules $M^{\bbD,\uii}[a,b]$ are contained in $\scrC_\g^{\bbD}(\ttb)$. Theorem \ref{thm: admissible seed} and Theorem \ref{thm: box and mutation} tell us that $M^{\bbD,\uii}[a,b]$ form a completely $\Uplambda$-admissible monoidal seed together with the combinatorics of $i$-boxes following the arguments developed in \cite{KKOP24A, KK24}. We finally prove that $\scrC_\g^\bbD(\ttb)$ gives a monoidal categorification of the cluster algebra $\obbA(\ttb) \simeq K(\scrC_\g^\bbD(\ttb))$ (see Theorem \ref{thm: mCat of cluster}) and $\hcalA_\bfA(\ttb)$ has a quantum cluster algebra structure (see Theorem \ref{thm: q cluster structure}). 
In the case where $\g$ is of untwisted affine type and $\bbD$ arises from a $\rmQ$-datum $\calQ $,
the category $\scrC_\g^\bbD(\ttb)$ gives a monoidal categorification of the quantum Grothendieck ring $\calK_t(\scrC_\g^{\bbD}(\ttb)) $ (see Theorem \ref{Thm: MC for Kt}).
 We remark that the quantum cluster algebra structure of $\hcalA_\bfA(\ttb)$ and its categorification are also studied by Qin in a different approach (\cite{Qin24A, Qin24B}).  
It would be interesting to ask how deeply $\hcalA(\ttb)$ and $\scrC_\g^\bbD(\ttb)$ are related to the cluster algebra structures arising from braid varieties (\cite{CGGLSS25,GLSbS22, GLSb23}). 

\smallskip

This paper is organized as follows. 
In Section \ref{Sec: preliminaries}, we briefly review the necessary backgrounds on quantum affine algebras and their representation theory.
In Section \ref{Sec: SW duality}, we recall the generalized Schur-Weyl duality and its related subjects.
In Section \ref{Sec: BE sc}, we review the bosonic extensions $\hcalA$ and investigate their key features including the notions of quantizability and categorifiability.
Section \ref{Sec: adm ac} and Section \ref{sec: generalization} are devoted to developing affine cuspidal and determinantial modules and their key properties including T-systems, and to building the PBW theory for $\scrC_\g^\bbD(\ttb)$. 
In Section \ref{Sec: quantizability}, we investigate the $\bbD$-quantizability with the quantum Grothendieck ring of $\Cgz$. Section \ref{Sec: QCA} explains the notion of quantum cluster algebras, and Section \ref{Sec: MS} and Section \ref{Sec: MC for qca} are devoted to proving that $\Cg(\ttb)$ provides a monoidal categorification.

\vskip 1em 

\textbf{Acknowledgments} The second, third and fourth authors gratefully acknowledge for the hospitality of RIMS (Kyoto University) during their visit in 2025.

\vskip 2em

\section{Preliminaries} \label{Sec: preliminaries}
In this section, we will briefly review basic stuff on the quantum affine algebras $U_q'(\g)$ and their representation theory.
Then we will recall the $\Z$-invariants related to $R$-matrices and root modules.
We refer~\cite{Kas02,KKOP20,KKOP21,KKOP22,KKOP23P,KKOP24A} for more details.

\subsection{Convention}
Throughout this paper, we use the following convention.
\bnum
\item For a statement $\ttP$, we set $\delta(\ttP)$ to be $1$ or $0$ depending on whether $\ttP$ is true or not. In particular, we set $\delta_{i,j}=\delta(i=j)$.
\item A ring is always unital.
  \item For a ring $A$, we denote by $A^\times$ the group of invertible elements.
  
\item For a totally ordered set $J = \{ \cdots < j_{-1} < j_0 < j_1 < j_2 < \cdots \}$, write
\eqn
&&\oprod_{j \in J} A_j \seteq \cdots A_{j_2}A_{j_1}A_{j_0}A_{j_{-1}}A_{j_{-2}} \cdots,\\
&&\rprod_{j \in J} A_j \seteq \cdots A_{j_{-2}}A_{j_{-1}}A_{j_0}A_{j_{1}}A_{j_{2}} \cdots,\\
&&\otens_{j \in J} A_j \seteq \cdots\tens A_{j_{2}}\tens A_{j_{1}}\tens A_{j_0}\tens
A_{j_{-1}}\tens A_{j_{-2}}\tens \cdots.
\eneqn
\item For $a,b \in \Z \sqcup \{ \pm\infty\}$, an \emph{interval} $[a,b]$ is the set of integers between $a$ and $b$:
$$
[a,b] \seteq \{ k \in \Z \ |  \ a \le k \le b\}.
$$
If $a>b$, we understand $[a,b]=\emptyset$.
\item For $k\in\Z$ let us denote by $\upsigma_k\in\Aut(\Z)$ the
transposition of $k$ and $k+1$.  
\item  For an interval $[a,b]$, we set
  $A^{[a,b]}$ to be the product of copies of a set $A$ indexed by $[a,b]$,  and for a monoid
  commutative $S$
$$
S^{\;\oplus [a,b]} \seteq \{ (c_a,\ldots,c_b) \ | \ c_k \in S\text{ and  $c_k=0$ except for finitely many $k$'s} \}. 
$$
\item For a vector space $V$ and an interval $[a,b]$
$$
V^{\tens [a,b]} \seteq V_{b} \otimes V_{b-1} \otimes \cdots \otimes V_a.
$$
where $V_k$ denotes the copy of $V$ for each $k \in \Z$.
\item For a set $S$, $|S|$ denotes the cardinality of $S$.
\item Let $\bsa=(a_j )_{ \ j \in J }$ be a family parameterized by an index set $J$. Then for any $j \in J$, we set $(\bsa)_j \seteq a_j$.
  \ee

\subsection{Quantum affine algebras} Let $q$ be an indeterminate. We take the algebraic closure of $\C(q)$ in $\bigcup_{m>0} \C\dpar{q^{1/m}}$ as a base field $\bfk$.
Let $(C,P,\Uppi ,P^\vee,\Uppi^\vee)$ be an \emph{affine Cartan datum} consisting of an \emph{affine Cartan matrix} $C=(C_{i,j})_{i,j \in I}$
with an index set $I$, a \emph{weight lattice} $P$, a set of \emph{simple roots} $\Uppi =\{\upal_i \}_{i \in I} \subset P$, 
a \emph{coweight lattice} $P^\vee \seteq \Hom_{\Z}(P,\Z)$ and a set of
\emph{simple coroots} $\{ h_i \}_{i \in I} \subset P^\vee$. The datum satisfies
$\Ang{h_i,\upal_j}=C_{i,j}$ for all $i,j\in I$, where $\Ang{ \ ,\ }\col P^\vee \times P \to \Z$
is the canonical pairing. We choose $\Iset{\UpLa_i}$ such that $\Ang{h_j,\UpLa_i}=\delta_{i,j}$ for $i,j \in I$ and call them
the \emph{fundamental weights}.

We take the \emph{imaginary root} $\updelta=\sum_{i\in I}u_i\upal_i$ and the \emph{central element} $c=\sum_{i \in I}c_i h_i$ such that
$ \{ \la \in \bigoplus_{i\in I} \Z \upal_i \ | \ \Ang{h_i,\la}=0 \text{ for every } i\in I \} =\Z\updelta$ and $ \{ h \in \bigoplus_{i\in I} \Z h_i \ | \ \Ang{h,\upal_i}=0 \text{ for every } i\in I \} =\Z c$.   We
choose $\uprho \in P$ (resp.\ $\uprho^\vee \in P^\vee$) such that $\lan
h_i,\uprho \ran=1$ (resp.\ $\lan \uprho^\vee,\upal_i\ran =1$) for all $i \in I$ and set $p^* \seteq (-1)^{\ang{\uprho^\vee,\updelta}}q^{\ang{c,\uprho}}$

Let us take a non-degenerate symmetric bilinear form $( \ , \ )$ on $P$ such that
$$
\ang{h_i,\la} = \dfrac{2(\upal_i,\la)}{(\upal_i,\upal_i)} \qtq (\updelta,\la)=\ang{c,\la}
\qt{ for any } \la \in P. 
$$
Note that $DC$ is symmetric for the diagonal matrix 
$D = {\rm diag}( d_i \seteq (\upal_i,\upal_i)/2 \ | \ i \in I )$. 
We set $q_i \seteq q^{d_i}$ and define
$$
[n]_{i} \seteq \dfrac{q_i^n-q_i^{-n} }{q_i-q_i^{-1}}, \quad   [n]_{i} ! \seteq \prod_{k=1}^n [k]_{i}
\qtq \begin{bmatrix} m \\ n     \end{bmatrix}_{q} \hspace{-1ex} \seteq
\dfrac{[m]_{i}!}{[n]_{i}!\,[m-n]_{i}!},
$$
for $i \in I$ and $m \ge n \in \Z_{\ge 0}$.

We denote by $\g$ and $U_q(\g)$ the \emph{affine Kac-Moody} algebra and the \emph{quantum group} associated with $(C,P,\Uppi,P^\vee,\Uppi^\vee)$, respectively.
Recall that $U_q(\g)$ is generated by Chevalley generators $e_i,f_i$ $(i \in I)$ and $q^h$ $(h \in P^\vee)$.

We will use the convention in~\cite[\S 2.1]{KKOP24A} to choose $0 \in I$ and set $I_0 \seteq I \setminus \{ 0 \}$. 
We define $\g_0$ to be the subalgebra of $\g$ generated by the Chevalley generators
$e_i,f_i$ and $h_i$ $(i \in I_0 )$. 
Throughout this paper,
we denote by $\Dynkin =(\Dynkin_0,\Dynkin_1)$ the Dynkin diagram of finite type $\g_0$
consisting of the set of vertices $\Dynkin_0$ and the set of edges $\Dynkin_1$
of $\Dynkin$, respectively (see Figure~\ref{Fig:unf} below for Dynkin diagrams of classical finite types). For indices $i,j \in \Dynkin_0=I_0$, we denote by $d(i,j)$ the \emph{distance} between $i$ and $j$ in $\Dynkin$.  

\smallskip 

We denote by $U_q'(\g)$ the subalgebra of $U_q(\g)$ generated by $e_i,f_i,t_i^{\pm 1}$
 $(i\in I)$, where $t_i = q_i^{h_i}$, and call it the \emph{quantum affine algebra} (see \cite[\S 2.1]{Kas02} for more details). 

\smallskip

Set $P_\cl \seteq P/\Z\updelta$ and call it \emph{the classical weight lattice}. Let
$\cl\col P \to P_\cl$ be the canonical projection. Then $P_\cl = \bigoplus_{i\in I} \cl(\UpLa_i)$. Set $P_\cl^0 \seteq \{ \la \in P_\cl \ | \  \ang{c,\la}=0 \} \subset P_\cl$.   

A $\Uqpg$-module $M$ is said to be \emph{integrable} if {\rm (a)} $M$ has a weight space
decomposition $M = \bigoplus_{\la \in P_\cl}M_\la$ where $M_\la \seteq \{ u \in M \ | \ t_iu=q_i^{\ang{h_i,\la}}u \text{ for all } i\in I \}$, and {\rm (b)} the actions of $e_i$ and $f_i$  on $M$ are locally nilpotent for any $i \in I$.  We denote by
$\scrC_\g$ the abelian monoidal category of finite-dimensional  
integrable modules over
$\Uqpg$. 

\smallskip

Let $z$ be an indeterminate.  For a $\Uqpg$-module $M$, let us denote by $M_z$ the module $\bfk[z^{\pm1}] \tens M$ with the action of $\Uqpg$ given by
$$
e_i(u_z) = z^{\delta_{i,0}}(e_iu)_z, \quad f_i(u_z) =
z^{-\delta_{i,0}}(f_iu)_z, \quad t_i(u_z) =(t_iu)_z.
$$
Here, for $u \in M$, we denote by $u_z$ the element $1 \tens u \in \bfk[z^{\pm 1}]\tens M$. 
For $x \in \bfk^\times$, we define $M_x \seteq M_z/(z-x)M_z$ and call $x$
a \emph{spectral parameter} of $M_x$. Note that $M_x \in \scrC_\g$ for $M \in \scrC_\g$.

\smallskip

For $i \in I_0$, we set
$$
\upvarpi_i \seteq \gcd(c_0,c_i)^{-1}\cl(c_0\UpLa_i-c_i\UpLa_0) \in P^0_\cl.
$$
Then there exists a unique simple module $V(\upvarpi_i)$ in $\Cg$, called the \emph{$i$-th fundamental representation} of weight $\upvarpi_i$ satisfying certain properties (see, \cite[\S 5.2]{Kas02}). We also call $V(\upvarpi_i)_a$ $(a \in\bfk^\times)$ a fundamental representation. 

\smallskip

For simple modules $M$ and $N$ in $\Cg$, we say that $M$ and $N$ \emph{commute}
if $M \tens N \simeq N \tens M$. We also say that they \emph{strongly commute}
if $M \tens N$ is simple. Note that $M$ and $N$ commutes as soon as they strongly commute. We say that a simple module $L$ is \emph{real} if $L$ strongly commutes with itself.
We say that a simple module $L$ is \emph{prime} if there exist no non-trivial modules
$M_1$ and $M_2$ such that $L \simeq M_1 \tens M_2$. 

\smallskip

Note that the category $\Cg$ is \emph{rigid}; i.e., every module $M$ has a right dual $\scrD M$ and a left dual $\ldual M$. 
Thus we have the evaluation morphisms
$$
M \otimes \rdual M \to \bone, \quad  \ldual M \otimes  M \to \bone,
$$
and the co-evaluation morphisms
$$
\bone \to \rdual M \tens M, \quad  \bone \to  M \tens \ldual M.
$$
Here $\mathbf{1}$ denotes the trivial representation.

\subsection{$R$-matrices and $\Z$-invariants} For modules $M$ and $N \in \Cg$, there exists $\bfk\dpar{z} \tens \Uqpg$-module isomorphism
$$
\Runiv_{M,N_z} \col \bfk\dpar{z} \tens_{\bfk[z^{\pm 1}]} (M \tens N_z) \to \bfk\dpar{z} \tens_{\bfk[z^{\pm 1}]} (N_z \tens M)
$$
satisfying certain properties (see \cite{Kas02} for more details). We call $\Runiv_{M,N_z}$ the \emph{universal $R$-matrix} of $M$ and $N$. 

For modules  $M$ and $N \in \Cg$, we say that $\Runiv_{M,N_z}$ is \emph{rationally renormalizable} if there exists $c_{M,N}(z) \in \bfk\dpar{z}^\times$
such that 
\bnum
\item $\Rren_{M,N_z} \seteq c_{M,N}(z)\Runiv_{M,N_z}\col M \otimes N_z \to N_z \otimes M$ and 
\item $\Rren_{M,N_z}|_{z =x}$ does not vanish for any $x \in \bfk^\times$. 
  \ee
The function $c_{M,N}(z)$ is unique up to a multiple of $\bfk[z^{\pm1}]^\times$.  

In this case, 
we write $\Rr_{M,N}\seteq \Rren_{M,N_z}|_{z =1}$
and call it the \emph{$R$-matrix}. 
Note that $\Runiv_{M,N_z}$ is  
rationally renormalizable for simple modules $M,N \in \Cg$.  

We set $\tp \seteq p^{*2} = q^{2\ang{c,\uprho}}$ and
$$
\vph(z) \seteq \prod_{s \in \Z_{\ge 0}} (1-\tp^s z) = \sum_{n=0}^\infty \dfrac{(-1)^n \tp^{n(n-1)/2}}{\prod_{k=1}^n (1-\tp^k)}z^n \in \bfk\dbracket{z}.
$$

We define the multiplicative subgroup $\calG$ in $\bfk\dpar{z}^\times$ containing $\bfk(z)^\times$ as follows:
\begin{align*}
\calG \seteq \left\{ 
cz^m \prod_{a \in \bfk^\times} \vph(az)^{\eta_a} \left| 
\begin{matrix}
c \in \bfk^\times, \ m \in \Z \\
\ \eta_a \in \Z \text{ vanishes except finitely many $a$'s}
\end{matrix}
\right.
\right\}. 
\end{align*}
Then it is proved in~\cite{KKOP20} that $c_{M,N}(z)$ is contained in $\calG$ for any rationally renormalizable $\Runiv_{M,N_z}$.

In~\cite[Section 3]{KKOP20}, the following group homomorphisms
are introduced 
$$
\Deg\col \calG \to \Z \qtq \Deg^\infty\col \calG \to \Z
$$
defined by 
$$
\Deg(f(z)) \seteq \sum_{a \in \tp^{\Z_{\le 0}}} \eta_a - \sum_{a \in \tp^{\Z_{> 0}}} \eta_a \qtq 
\Deg^\infty(f(z)) \seteq \sum_{a \in \tp^{\Z} } \eta_a
$$
for $f(z) = cz_m \prod_{a\in\bfk^\times} \vph(az)^{\eta_a} \in \calG$. Here  $\tp^S \seteq \{ \tp^k \ | \ k\in S\}$ for a subset $S$ of $\Z$.  

\begin{definition}[{\cite[Definition 3.6, 3.14]{KKOP20}}] Let $M,N \in \Cg$. 
\ben
\item If $\Runiv_{M,N_z}$ is rationally renormalizable, we define the integers $\La(M,N)$ and $\La^\infty(M,N)$ by
$$
\La(M,N)=\Deg(c_{M,N}(z)) \qtq \Li(M,N)=\Deg^\infty(c_{M,N}(z)). 
$$
\item For simple modules $M$ and $N$ in $\Cg$, we define the integer $\de(M,N)$ by 
$$
\de(M,N) = \dfrac{1}{2}\big(\La(M,N)+\La(\scrD^{-1}M,N)\big). 
$$
\ee
\end{definition}

\begin{proposition}[{\cite{KKOP20,KKOP23P}}]\label{prop: de property 1}
Let $M$ and $N$ be simple modules in $\Cg$.
\bnum
\item We have $\de(M,N) \in \Z_{\ge0}$ and $ \de(M,N)=\frac{1}{2}\big(\La(M,N)+\La(N,M)\big)=\de(N,M)$.
\item  Assume that one of $M$ and $N$ is real. Then $M$ and $N$ strongly commute if and only if $\de(M,N)=0$. 
\item \label{it: La de} $\La(M,N) = \sum_{k\in \Z}(-1)^{k+\delta(k<0)}\de(M,\rdual^kN)$ and $\Li(M,N) = \sum_{k\in \Z}(-1)^{k}\de(M,\rdual^kN)$.
\item $\La(M,N)=\La(\ldual N,M)=\La(N,\rdual M)$. 
\ee 
\end{proposition}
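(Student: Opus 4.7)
\medskip
\noindent
\textbf{Proof proposal.}
The four items are interlocked, and I would establish them in the order (iv), (i), (iii), (ii), using as the common input the canonical factorization
\[
c_{M,N}(z) \;=\; c\,z^{m}\!\prod_{a\in\bfk^{\times}}\vph(az)^{\eta_{a}(M,N)}\in\calG,
\]
together with the behavior of $c_{M,N}$ under rigid duality. The quantities $\La$ and $\Li$ read off the exponents $\eta_{a}$ via $\Deg$ and $\Deg^{\infty}$, so every claim reduces to a bookkeeping statement about the indices $a\in\tp^{\Z}$ where these exponents are supported.

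For (iv) I would use rigidity of $\Cg$. The evaluation $M\tens\rdual M\to\bone$ and its co-evaluation produce compatibilities of the universal $R$-matrices of $M$, $\rdual M$ with the braiding of $N_{z}$, showing that the renormalizers satisfy an identity of the form $c_{M,N}(z)\,c_{\rdual M,N}(z)=\text{(polynomial)}\in\bfk[z^{\pm1}]^{\times}\cdot\text{shifts}$. Applying $\Deg$ and using the group-homomorphism property turns this into $\La(M,N)=\La(N,\rdual M)$; the dual identity $\La(M,N)=\La(\ldual N,M)$ is obtained symmetrically.

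For (i), substituting $M\rightsquigarrow\scrD^{-1}M$ in (iv) yields $\La(\scrD^{-1}M,N)=\La(N,M)$, so
\[
\de(M,N)=\tfrac{1}{2}\bigl(\La(M,N)+\La(N,M)\bigr),
\]
which is manifestly symmetric. Non-negativity follows from the interpretation of $\de(M,N)$ as the order of a genuine morphism at $z=1$: one shows that an appropriate product of $\Rnorm_{M,N_z}$ and $\Rnorm_{\scrD^{-1}M,N_z}$ is a well-defined morphism in $\Cg$ whose $z$-order is automatically non-negative, and equals $2\de(M,N)$ by direct computation.

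For (iii) the crucial input is that the right dual $\rdual$ acts on the spectral parameter by a fixed shift: $c_{M,\rdual N}(z)$ is obtained from $c_{M,N}(z)$ by multiplying the indices $a$ by $\tp$ (up to a factor in $\bfk[z^{\pm1}]^{\times}$ that is killed by $\Deg$). Writing $\eta_{a}^{(k)}\seteq\eta_{a}(M,\rdual^{k}N)=\eta_{\tp^{-k}a}(M,N)$, the symmetric expression from (i) gives
\[
\de(M,\rdual^{k}N)=\tfrac{1}{2}\!\!\sum_{a\in\tp^{\Z}}\!\!\bigl(\mathrm{sgn}^{\le0}(a)+\mathrm{sgn}^{\le0}(\tp\,a)\bigr)\eta_{a}^{(k)},
\]
where $\mathrm{sgn}^{\le 0}(a)=\pm 1$ according to whether $a\in\tp^{\le 0}$ or $a\in\tp^{>0}$. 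Re-indexing by the $\tp$-valuation $v$ of $a$ and summing over $k\in\Z$ with coefficients $(-1)^{k+\delta(k<0)}$ telescopes all contributions except those with $v=0$, producing precisely $\La(M,N)=\sum_{a\in\tp^{\le0}}\eta_{a}-\sum_{a\in\tp^{>0}}\eta_{a}$. Using instead the unsigned counterpart $(-1)^{k}$ telescopes to the total $\sum_{a}\eta_{a}=\Li(M,N)$.

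For (ii) the forward direction is immediate: if $M\tens N$ is simple, then $\Rr_{M,N}$ is an isomorphism, which forces $\eta_{1}(M,N)=\eta_{1}(N,M)=0$ for indices contributing to the defect, hence $\La(M,N)+\La(N,M)=0$ and, by~(i), $\de(M,N)=0$. For the converse, assume $N$ is real and $\de(M,N)=0$. Consider the commutative square of $R$-matrices
\[
\begin{array}{c}
M\tens N\tens N\To \Rr_{M,N}\tens 1\;N\tens M\tens N\To 1\tens\Rr_{M,N}\;N\tens N\tens M
\end{array}
\]
and use reality of $N$ (which gives $\Rr_{N,N}\ne 0$ and injective up to scalar on each tensorand). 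Combined with the identity of~(i) applied to $(M,N)$ and $(M,N\tens N)$, the vanishing $\de(M,N)=0$ forces $\Rr_{M,N}$ to be an isomorphism, so $M\tens N\simeq N\tens M$; simplicity of $M\tens N$ then follows from the standard argument that the image of an invertible $R$-matrix between a simple and a real simple module is simple.

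\medskip
The main technical obstacle is the telescoping in (iii): one must match the exponents $\eta_{a}$ against the $(-1)^{k+\delta(k<0)}$ sign pattern, which has the asymmetry that both $k=0$ and $k=-1$ carry coefficient $+1$. This asymmetry matches exactly the boundary between $\tp^{\le 0}$ and $\tp^{>0}$ in the definition of $\Deg$, but the verification requires care when tracking the $\rdual$-induced shift of the index $a$ through both summands of $\de$.
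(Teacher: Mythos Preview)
The paper does not supply its own proof---the proposition is cited from \cite{KKOP20,KKOP23P}. Judged on its own merits, your outline for (i) and (iii) follows the standard route and is essentially correct; the telescoping in (iii) works once one has the precise shift $c_{M,\rdual N}(z)\equiv c_{M,N}(\tp z)\bmod\bfk(z)^{\times}$.

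There are, however, genuine gaps in (iv) and (ii).

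In (iv), the identity you write, $c_{M,N}(z)\,c_{\rdual M,N}(z)\in\bfk[z^{\pm1}]^{\times}$, yields under $\Deg$ only $\La(M,N)+\La(\rdual M,N)=0$, which is \emph{not} the desired $\La(M,N)=\La(N,\rdual M)$: the two differ unless $\La(\rdual M,N)+\La(N,\rdual M)=0$, i.e.\ $\de(\rdual M,N)=0$, which is false in general. The correct mechanism is transposition rather than cancellation: rigidity gives a natural isomorphism $\Hom(M\tens N_z,N_z\tens M)\simeq\Hom(N_z\tens\rdual M,\rdual M\tens N_z)$ carrying $\Runiv_{M,N_z}$ to $\Runiv_{N_z,\rdual M}$, whence $c_{M,N}(z)\equiv c_{N,\rdual M}(z)$ up to $\bfk[z^{\pm1}]^{\times}$ and $\La(M,N)=\La(N,\rdual M)$ directly.

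In (ii), the forward argument is incorrect: $\Rren_{M,N_z}$ is \emph{by construction} nonvanishing at $z=1$, so invertibility of $\Rr_{M,N}$ says nothing about any individual exponent $\eta_{a}$, and certainly does not force $\La(M,N)+\La(N,M)=0$ (which involves all $\eta_{a}$ with $a\in\tp^{\Z}$). The actual link passes through the composition $\Rren_{N_z,M}\circ\Rren_{M,N_z}$, which equals $f(z)\cdot\mathrm{id}_{M\tens N_z}$ since $M\tens N_z$ is simple over $\bfk(z)$; one computes that the zero-order of $f$ at $z=1$ is $2\de(M,N)$, so $\de(M,N)=0$ iff $\Rr_{N,M}\Rr_{M,N}$ is a nonzero scalar, iff both $\Rr_{M,N}$ and $\Rr_{N,M}$ are isomorphisms. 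Your converse is also incomplete: even granting that $\Rr_{M,N}$ is an isomorphism, you must still deduce that $M\tens N$ is simple, and your square with $N\tens N$ does not supply this. This is precisely where reality enters: with one factor real, $M\hconv N=\mathrm{Im}(\Rr_{M,N})$ is simple and occurs with multiplicity one in $M\tens N$ (Proposition~\ref{prop: de properties}), so if $\Rr_{M,N}$ is surjective then $N\tens M=M\hconv N$ is already simple.
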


\begin{lemma} [{\cite[Corollary 3.18]{KKOP21}}] \label{lem: decrease}
Let $L$ be a real simple module and let $M$ be a module in $\Cg$. 
Let $n \in \Z_{\ge0}$ and assume that any simple subquotient $S$ of $M$ satisfies $\de(L,S)\le n$. 
Then any simple subquotient $K$ of $L \tens M$ satisfies $\de(L,K)<n$. In particular, any simple subquotient of $L^{\tens n}\tens M$ 
strongly commutes with $L$. 
\end{lemma}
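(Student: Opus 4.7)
The plan is to reduce the statement to the case of simple $M$, and then to prove the core inequality $\de(L,K)\le k-1$ for any composition factor $K$ of $L\otimes S$ when $S$ is simple with $\de(L,S)=k\ge1$. Iterating will then deliver the ``in particular'' part.

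First I would make the length reduction. Given any short exact sequence $0\to M'\to M\to M''\to 0$ in $\Cg$ with $M',M''$ of smaller length, tensoring with $L$ is exact (all modules are finite-dimensional over $\bfk$), so every simple subquotient of $L\tens M$ is a simple subquotient of $L\tens M'$ or $L\tens M''$. The hypothesis on $\de(L,S)$ is inherited by any submodule and quotient of $M$, so induction on length reduces everything to the case where $M=S$ is simple.

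Next, for the simple case, I would prove by induction on $k=\de(L,S)$ that every composition factor $K$ of $L\tens S$ satisfies $\de(L,K)\le k-1$ (the base $k=0$ gives $L\tens S$ simple, which is handled separately via Proposition~\ref{prop: de property 1}(2) and additivity of $\Lambda$ against a commuting factor, yielding a composition factor $K=L\tens S$ which stays within the bound ``$<n$'' since $n\ge1$). For $k\ge1$ the main tool is the renormalized $R$-matrix $\Rr_{L,S}\col L\tens S\to S\tens L$, together with the fact that realness of $L$ makes $\Rr_{L,L}$ an isomorphism onto the simple module $L\tens L$. I would consider the two factorizations of $\Rr_{L\tens L,S}$ and of $\Rr_{L,L\tens S}$ through iterated $R$-matrices, extract from them the key identity relating $\de(L,K)$ to $\de(L,S)$ via the formulas in Proposition~\ref{prop: de property 1}(3) and (4), and use the relation
\[
\Lambda(L,L\tens S)=\Lambda(L,L)+\Lambda(L,S)=\Lambda(L,S),
\]
which holds since $L$ is real (so $\Lambda(L,L)=0$) to ensure the bookkeeping of $\de$-values is consistent. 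A composition factor $K$ of $L\tens S$ gives rise, through these $R$-matrix factorizations and the simplicity of $L\tens L$, to a composition factor of $L\tens(L\tens S)$ whose pole behaviour at the spectral parameter forces the drop $\de(L,K)\le k-1$.

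Once the base inequality is established, the ``in particular'' statement follows by iteration. I would set $M_0:=M$ and $M_j:=L\tens M_{j-1}=L^{\tens j}\tens M$, and apply the main statement inductively: every simple subquotient of $M_j$ has $\de(L,\cdot)\le n-j$. Taking $j=n$ gives $\de(L,K)=0$ for every simple subquotient $K$ of $L^{\tens n}\tens M$, which by Proposition~\ref{prop: de property 1}(2) together with the realness of $L$ is exactly strong commutation with $L$.

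The hard part will be the simple case $M=S$, specifically propagating the bound $\de(L,K)\le k-1$ to \emph{every} composition factor of $L\tens S$, not merely the head or socle. For the head $\hd(L\tens S)=\operatorname{im}\Rr_{L,S}$ the drop is a direct consequence of the definition of $\de$ via the pole order of the normalized $R$-matrix; the difficulty is that a generic composition factor need not be of this form. The argument must exploit the factorization $\Rr_{L,L\tens S}=(\Rr_{L,S}\tens L)\circ(L\tens\Rr_{L,L})$ and the realness of $L$ (making $\Rr_{L,L}$ invertible) to transfer the head-type estimate through any composition factor, and this is where the full technical machinery of invariants $\Lambda,\Li,\de$ from \cite{KKOP20,KKOP21} is essential.
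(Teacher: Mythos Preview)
The paper does not supply its own proof; the lemma is quoted from \cite[Corollary~3.18]{KKOP21}, so there is nothing in this paper to compare against directly.

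Your reduction to simple $M$ by induction on length, and the iteration giving the ``in particular'' clause, are correct and standard. The problem lies entirely in the simple case $M=S$ with $k=\de(L,S)\ge1$, and there your proposal has a genuine gap. You assert that for the head $L\hconv S$ the drop $\de(L,L\hconv S)\le k-1$ is ``a direct consequence of the definition of $\de$ via the pole order of the normalized $R$-matrix,'' but this is not so: from the normal triple $(L,L,S)$ one gets only $\Lambda(L,L\hconv S)=\Lambda(L,S)$, which by itself yields $\de(L,L\hconv S)\le k$, not $<k$. The factorizations of $\Rr_{L\tens L,S}$ and $\Rr_{L,L\tens S}$ you invoke merely identify images and kernels as $L\tens(-)$ of pieces of $L\tens S$; they do not produce a bound on $\de(L,K)$ for an arbitrary composition factor $K$. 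Your description of the ``hard part'' is too vague to constitute an argument.

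The argument that actually works treats all composition factors uniformly, so the difficulty you anticipate does not arise. Since $[L\tens S]=[S\tens L]$ in the Grothendieck ring, $K$ is a composition factor of both $L\tens S$ and $S\tens L$. Applying Proposition~\ref{prop:normalsimple}\,(i) to the normal triple $(L,L,S)$ gives $\Lambda(L,K)\le\Lambda(L,S)$; applying (ii) to the normal triple $(S,L,L)$ gives $\Lambda(K,L)\le\Lambda(S,L)$. Adding yields $\de(L,K)\le\de(L,S)=k$. For strictness, the equality cases (via \cite[Theorem~4.11]{KKOP20}, exactly as in the proof of Proposition~\ref{prop:normalsimple}) force respectively $K\simeq L\hconv S$ and $K\simeq S\hconv L\simeq L\sconv S$; if both held then $L\tens S$ would be simple by Proposition~\ref{prop: de properties}\,\ref{it: head=socle}, contradicting $k\ge1$. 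Hence at least one inequality is strict and $\de(L,K)\le k-1$.

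A minor side remark: your inductive statement ``$\de(L,K)\le k-1$'' is false at $k=0$ (there $K=L\tens S$ is simple with $\de(L,K)=0$). The lemma's first conclusion implicitly requires $n\ge1$, and you should separate the $k=0$ contribution from the induction rather than folding it into a nonexistent base case.
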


For simple modules $M$ and $N$ in $\Cg$, $M\hconv N$ and $M \sconv N$ denote the head and the socle of $M \tens N$, respectively.

\begin{proposition}[{\cite{KKKO18,KKOP20,KKOP23P,KKOP24A}}] \label{prop: de properties}
  Let $M$ and $N$ be simple modules in $\Cg$ such that one of them is real.
  Then, we have
\bna
\item $\Hom(M \tens N,N\tens M)=\bfk \; \Rr_{M,N}$,
\item $M \hconv N$ and $N \hconv M$ are simple modules in $\Cg$. Moreover $M \hconv N \simeq {\rm Im}(\Rr_{M,N}) \simeq N \sconv M$, 
\item 
  $M \hconv N$, as well as $M \sconv N$,
  appears once in the composition series of $M \tens N$.
\item\label{it: head=socle}
  $M \tens N$ is simple if and only if $M \hconv N \simeq M \sconv N$,
  
\item \label{it: d=1 length2} If $\de(M,N)=1$, we have an exact sequence 
$$
0 \to  M \sconv N \to M \tens N \to M \hconv N \to 0.
$$
\ee

\snoi
Assume further that $M$ and $N$ are real. Then, we have
\bna
\item[{\rm (f)} ] If $\de(M,N) \le 1$, $M \hconv N$ is real,
\item[{\rm (g) }] If $M$ commutes with $M \hconv N$, then $M \hconv N$ is real. 
\ee 
\end{proposition}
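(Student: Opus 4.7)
The unifying thread in the plan is the renormalized $R$-matrix $\Rr_{M,N}\col M\tens N\to N\tens M$. Since one of $M,N$ is real, $\Runiv_{M,N_z}$ admits a rational renormalization whose specialization at $z=1$ is a nonzero homomorphism, and the image $\mathrm{Im}(\Rr_{M,N})$ will turn out to be simple and to realize simultaneously $M\hconv N$ and $N\sconv M$. From this single structural fact I would deduce (a)--(d) in a standard way along the lines of \cite{KKKO18,KKOP20}, then specialize to extract (e), and finally analyze reality in (f)--(g).

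First, for (a), I would show that $\Hom(M\tens N,N\tens M)$ is one-dimensional by lifting any nonzero $\vph$ to the $z$-parameter family $M\tens N_z\to N_z\tens M$: the reality hypothesis makes the generic fiber simple, so such a lift is automatically a scalar multiple of $\Runiv_{M,N_z}$, and specializing recovers $\Rr_{M,N}$ up to $\bfk^\times$. Item (b) follows immediately: $\Ker(\Rr_{M,N})$ is the unique maximal submodule of $M\tens N$, so $\mathrm{Im}(\Rr_{M,N})\simeq M\hconv N$, and dually the same module equals $N\sconv M$; simplicity of the image is a byproduct of its identification with the head. For (c) I would combine (a) with Frobenius reciprocity against left/right duals (using the rigidity of $\scrC_\g$) to get multiplicity one. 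For (d), the forward direction is tautological; conversely, if $M\hconv N\simeq M\sconv N$, then the unique head-constituent and the unique socle-constituent of $M\tens N$ are isomorphic, so by (c) they are the only constituent, forcing simplicity.

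For (e), given $\de(M,N)=1$, I would invoke the relation $\La(M,N)+\La(N,M)=2\de(M,N)=2$ from Proposition~\ref{prop: de property 1} together with the interpretation of $\La$ as a length-type invariant: this pins down the composition length of $M\tens N$ to exactly two, and (b)--(c) then force the stated short exact sequence.

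Finally, for (f) and (g) set $L\seteq M\hconv N$; the task is to show $L\tens L$ is simple, equivalently $\de(L,L)=0$. In case (g), the assumption that $M$ commutes with $L$ together with the reality of $M$ implies $\de(M,L)=0$ by Proposition~\ref{prop: de property 1}~(ii), and combining this with the realness of $N$ propagates strong commutation through $L=M\hconv N$ to $L\tens L$. In case (f), the input $\de(M,N)\le 1$ has to be converted into $\de(L,L)=0$; the standard device here is Lemma~\ref{lem: decrease} applied to $M^{\tens 2}\tens N^{\tens 2}$, combined with the short exact sequence from (e), letting one realize $L$ as a simple subquotient of an iterated head and bound $\de(L,L)$ downward. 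The main obstacle, and the technical heart of the proof, is precisely the reality claim (f): one has to ensure that the bookkeeping with $\La$ and $\de$ does not degrade across the two successive head-operations, which requires the careful inductive unwinding of tensor products carried out in \cite{KKKO18,KKOP20,KKOP23P}.
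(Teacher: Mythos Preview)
The paper does not prove this proposition; it is quoted from \cite{KKKO18,KKOP20,KKOP23P,KKOP24A} without argument, so there is no in-paper proof to compare against. Measured against those references, your outline for (a)--(d) and (g) follows the standard route and is correct in spirit.

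The gap is in (e). You assert that $\La(M,N)+\La(N,M)=2$ ``together with the interpretation of $\La$ as a length-type invariant'' forces composition length two. But $\La(M,N)$ is defined as $\Deg(c_{M,N}(z))$, a pole-order count for the renormalization factor of the universal $R$-matrix, and carries no a priori information about composition length; there is no inequality of the form $\mathrm{length}(M\tens N)\le \de(M,N)+1$ built into the definitions. The actual argument in the references is not a numerical bound: one analyzes the composite $\Rr_{N,M}\circ\Rr_{M,N}$ on the $z$-deformed module and uses that its order of vanishing at $z=1$ equals $2\de(M,N)=2$ to conclude that $\Ker(\Rr_{M,N})$ is simple (equivalently, every simple subquotient of $M\tens N$ other than the head already lies in the socle), after which (c) finishes. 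Your sketch for (f) is vague but at least gestures toward the right tools (Lemma~\ref{lem: decrease} and iterated heads); the justification you give for (e) invokes a property of $\La$ that simply does not hold.
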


The following lemma is a dual version of \cite[Lemma 2.24]{KKOP24A}.

\begin{lemma} \label{lem: Lj Mj}
Let $L_j$ and  $M_j$ be real simple modules $(j=1,2)$.
Assume that
\bnum\item
$M_j\hconv L_j$ commutes with $L_k$ for $j,k=1,2$,
\item $L_1$ and $L_2$ commute.
\ee
Then we have the followings:
\bna
\item  $M_j\hconv L_j$ is real for $j=1,2$.
\item  If $\de(\ldual L_j,M_1)=0$ for $j=1,2$, then
$$(M_1\hconv M_2) \hconv (L_1\tens L_2) \simeq   M_1 \hconv \big(M_2 \hconv (L_1\tens L_2)\big)
\simeq (M_1\hconv L_1)\hconv (M_2\hconv L_2).$$
\item 
Assume that $\de(\ldual L_j,M_k)=0$ for $j,k=1,2$.
Then $M_1$ and $M_2$ commute if and only if $M_1\hconv L_1$ and $M_2\hconv L_2$ commute.
\ee
\end{lemma}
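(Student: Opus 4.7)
The plan is to mirror the proof of \cite[Lemma 2.24]{KKOP24A}, dualizing everything through the symmetry $M\hconv N\simeq N\sconv M$ of Proposition~\ref{prop: de properties}(b) together with the identities $\La(M,N)=\La(\ldual N,M)=\La(N,\rdual M)$ of Proposition~\ref{prop: de property 1}(iv). The abstract tools needed -- Propositions~\ref{prop: de property 1} and~\ref{prop: de properties}, and Lemma~\ref{lem: decrease} -- are all already available above. For part~(a), taking $k=j$ in hypothesis~(i) shows that the real simple module $L_j$ commutes with $M_j\hconv L_j$. Rewriting $M_j\hconv L_j\simeq L_j\sconv M_j$ via Proposition~\ref{prop: de properties}(b) and applying the socle-version of Proposition~\ref{prop: de properties}(g) -- obtained by transporting~(g) through the monoidal autoequivalence $\rdual$, which swaps heads and socles -- yields the reality of $M_j\hconv L_j$.

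For part~(b), I would first use~(a) to know that every head product appearing is a real simple module, so that iterated heads behave as in Proposition~\ref{prop: de properties}. The vanishing $\de(\ldual L_j,M_1)=0$ unpacks, through Proposition~\ref{prop: de property 1}(ii) and the identity $\La(\ldual L_j,M_1)=\La(M_1,L_j)$, into strong commutation of $M_1$ with $\ldual L_j$. The first isomorphism is then an associativity statement for taking heads: both $(M_1\hconv M_2)\hconv(L_1\tens L_2)$ and $M_1\hconv\bl M_2\hconv(L_1\tens L_2)\br$ coincide with the simple head of the four-fold tensor $M_1\tens M_2\tens L_1\tens L_2$. For the second isomorphism, I would construct a nonzero morphism
\[
 M_1\tens M_2\tens L_1\tens L_2 \longrightarrow (M_1\tens L_1)\tens(M_2\tens L_2)
\]
by composing $R$-matrices that slide $L_1$ past $M_2$; this factorization is licensed precisely by the strong commutation $\de(\ldual L_1,M_2)=0$, which guarantees that the composition does not vanish. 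Passing to heads on both sides and using~(i),~(ii) and the reality obtained in~(a), one checks that both orderings produce the same simple quotient, establishing the asserted isomorphism.

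Part~(c) follows from~(b) applied with the indices $(1,2)$ and then with them exchanged: the two applications identify $(M_1\hconv L_1)\hconv(M_2\hconv L_2)$ and $(M_2\hconv L_2)\hconv(M_1\hconv L_1)$ with $(M_1\hconv M_2)\hconv(L_1\tens L_2)$ and $(M_2\hconv M_1)\hconv(L_2\tens L_1)$ respectively, and the latter pair is isomorphic exactly when $M_1\hconv M_2\simeq M_2\hconv M_1$ (using hypothesis~(ii)). Proposition~\ref{prop: de properties}(d), combined with $M\hconv N\simeq N\sconv M$, then translates this head-commutation on one side into the commutation of $M_1$ and $M_2$, and on the other side into the commutation of $M_1\hconv L_1$ and $M_2\hconv L_2$.

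The main obstacle will be step~(b): producing the sliding morphism of tensor factors and checking that it survives to the head. This is the single place at which the hypothesis $\de(\ldual L_j,M_1)=0$ is genuinely used, since it is precisely the obstruction to sliding $L_1$ past $M_2$ inside the tensor product. Once this nonvanishing is secured, the remaining verifications reduce to routine bookkeeping with Propositions~\ref{prop: de property 1} and~\ref{prop: de properties}.
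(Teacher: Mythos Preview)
Your overall strategy---dualizing the proof of \cite[Lemma~2.24]{KKOP24A}---matches the paper exactly; the paper gives no proof and simply remarks that this lemma is the dual version of that result.

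There is, however, a slip in your sketch of~(b): you invoke ``$\de(\ldual L_1,M_2)=0$'' to license sliding $L_1$ past $M_2$, but this is \emph{not} among the hypotheses of~(b), which only assumes $\de(\ldual L_j,M_1)=0$ for $j=1,2$ (the symmetric condition involving $M_2$ appears only in~(c)). The hypothesis on $M_1$ is instead what makes $(M_1,M_2,L_1\tens L_2)$ normal via Lemma~\ref{lem: normal seq d}\,\eqref{it: normal 1}\,(iii), since $\de(M_1,\ldual(L_1\tens L_2))=\de(\rdual M_1,L_1\tens L_2)=0$; this yields the first isomorphism. For the second, no slide of $L_1$ past $M_2$ is needed: using hypothesis~(ii) and Lemma~\ref{lem: normal seq d}\,\eqref{it: normal 1}\,(ii) one has $M_2\hconv(L_1\tens L_2)\simeq(M_2\hconv L_2)\hconv L_1\simeq L_1\tens(M_2\hconv L_2)$ by hypothesis~(i), and then $(M_1,L_1,M_2\hconv L_2)$ is again normal by Lemma~\ref{lem: normal seq d}\,\eqref{it: normal 1}\,(ii), since $L_1$ commutes with the real module $M_2\hconv L_2$ from part~(a). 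Once you correct where the hypothesis is applied, your plan goes through and is indeed the dualization the paper has in mind.
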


\begin{lemma}[{\cite[Corollary 3.13]{KKKO15}}] \label{lem: recover}
Let $L$ be a real simple module and $X$ a simple module. 
\begin{align*}
(L \hconv X) \hconv \rdual L \simeq X, \quad     \ldual L \hconv (X \hconv L) \simeq X, \\
L \hconv (X \hconv \rdual L) \simeq X, \quad     (\ldual L \hconv X) \hconv L \simeq X. 
\end{align*}    
\end{lemma}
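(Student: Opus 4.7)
The plan is to treat all four identities in a uniform way by constructing, for each one, an explicit nonzero morphism from the tensor product to $X$ and then invoking the unique simple head property. First I observe that since $L$ is real simple, both $\rdual L$ and $\ldual L$ are real simple as well (the duality functors $\rdual$ and $\ldual$ are monoidal auto-equivalences of $\Cg$ up to an appropriate twist). Thus in each of the four cases the outer tensor product appearing on the left-hand side is of the form (simple) $\otimes$ (real simple) or (real simple) $\otimes$ (simple), so by Proposition~\ref{prop: de properties}(b) it admits a \emph{unique} simple head, namely the $\hconv$-product in question.

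I focus on the first identity; the other three are formally parallel. By Proposition~\ref{prop: de properties}(b) applied to the real simple $L$, the canonical surjection $L\tens X \twoheadrightarrow L\hconv X$ factors through an embedding
$\iota \col L\hconv X \simeq X \sconv L \hookrightarrow X\tens L$
obtained as the image of the $R$-matrix $\Rr_{L,X}$. Applying the standard tensor-hom adjunction for the right dual pair $(L,\rdual L)$, namely
$\Hom\bigl((L\hconv X)\tens \rdual L,\ X\bigr) \simeq \Hom\bigl(L\hconv X,\ X\tens L\bigr),$
the nonzero morphism $\iota$ corresponds to the morphism
\[
\phi \col (L\hconv X)\tens \rdual L \xrightarrow{\ \iota\tens 1\ } X\tens L\tens \rdual L \xrightarrow{\ 1_X\tens \epsilon\ } X,
\]
where $\epsilon\col L\tens\rdual L\to\bone$ is evaluation. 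Since the adjunction is an isomorphism of Hom spaces and $\iota\neq 0$, we have $\phi\neq 0$, and simplicity of $X$ forces $\phi$ to be surjective. Because $(L\hconv X)\tens \rdual L$ has a unique simple head $(L\hconv X)\hconv \rdual L$, the surjection $\phi$ identifies $X$ with this head, proving $(L\hconv X)\hconv \rdual L \simeq X$.

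For the remaining three identities I would construct the analogous surjections using the appropriate evaluation or coevaluation and the corresponding embedding coming from the $R$-matrix:
\[
\ldual L\tens (X\hconv L)\hookrightarrow \ldual L\tens L\tens X \to X,\quad
L\tens (X\hconv\rdual L)\hookrightarrow L\tens \rdual L\tens X \to X,
\]
\[
(\ldual L\hconv X)\tens L\hookrightarrow X\tens \ldual L\tens L \to X,
\]
each obtained by pre-composing an $R$-matrix embedding with the relevant evaluation map, and each non-vanishing by the same adjunction argument applied to the appropriate dual pair $(L,\rdual L)$ or $(\ldual L, L)$. In every case $X$ becomes a simple quotient of a tensor product with a unique simple head, so the isomorphism with that head follows at once.

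The only genuinely delicate point is verifying that the composition $\phi$ is nonzero, which is the reason the argument is phrased through adjunction rather than by inspection of kernels; once this is settled the conclusion is immediate from Proposition~\ref{prop: de properties}(b) and the fact that both $\rdual L$ and $\ldual L$ inherit reality from $L$.
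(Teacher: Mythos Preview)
Your proof is correct and follows the standard argument; the paper itself does not supply a proof here but simply cites \cite[Corollary~3.13]{KKKO15}, whose argument is essentially the one you give: use the embedding $L\hconv X\simeq X\sconv L\hookrightarrow X\tens L$ coming from the $R$-matrix together with the tensor--hom adjunction for duals to produce a nonzero (hence surjective) map to the simple module $X$, and then invoke uniqueness of the simple head (Proposition~\ref{prop: de properties}(b)). The same scheme handles all four identities, as you indicate.
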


\begin{lemma} \label{lem: LX LY}
  Let $X,Y$ be simple module such that one of them is real and let
  $L$ be a real simple module. We assume that
one of $L \hconv X$ and $L \hconv Y$ is real, $\de(X,Y)=0$, $\de(L,L\hconv X)=0$ and $\de(L,L\hconv Y)=0$. Then we have
$$\de(L \hconv X,L\hconv Y)=0.$$
\end{lemma}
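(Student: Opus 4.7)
Set $A := L \hconv X$ and $B := L \hconv Y$, so our goal is $\de(A,B)=0$. Since one of $A,B$ is real by hypothesis, Proposition~\ref{prop: de property 1}(ii) reduces this to showing $A\tens B$ is simple, which by Proposition~\ref{prop: de properties}(\ref{it: head=socle}) is equivalent to $A\hconv B \simeq A\sconv B$. Invoking Proposition~\ref{prop: de properties}(b) in the form $A\sconv B \simeq B\hconv A$, it suffices to prove
\[
A\hconv B \;\simeq\; B\hconv A.
\]

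The plan is to identify both sides with the simple head of a common four-fold tensor product. Set $W := L\tens L\tens X\tens Y$. Since $L$ is real, $L^{\tens 2}$ is a real simple module; and since $\de(X,Y)=0$ with one of $X,Y$ real, $X\tens Y$ is simple by Proposition~\ref{prop: de property 1}(ii). Thus $W \simeq L^{\tens 2}\tens(X\tens Y)$ is a tensor product of two simple modules with one of them real, so by Proposition~\ref{prop: de properties} its head
$K := L^{\tens 2}\hconv (X\tens Y)$
is a \emph{simple} module. Moreover the $R$-matrix $X\tens Y \isoto Y\tens X$ is an isomorphism (strong commutation), so $W \simeq L\tens L\tens Y\tens X$ and therefore the head of the latter is also $K$.

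Next, combine the canonical surjections $L\tens X \twoheadrightarrow A$ and $L\tens Y \twoheadrightarrow B$ with the isomorphism $L\tens A \simeq A\tens L$ furnished by $\de(L,A)=0$ (using that $L$ and $A$ are real simples that strongly commute). This produces a chain
\[
W \;=\; L\tens L\tens X\tens Y \;\twoheadrightarrow\; L\tens A\tens Y \;\isoto\; A\tens L\tens Y \;\twoheadrightarrow\; A\tens B,
\]
exhibiting $A\tens B$ as a quotient of $W$. Consequently the simple head $A\hconv B$ is a nonzero quotient of the simple module $K$, hence $A\hconv B \simeq K$. Running the symmetric chain through $L\tens L\tens Y\tens X$, using $\de(L,B)=0$ to replace $L\tens B\simeq B\tens L$, yields $B\hconv A \simeq K$ as well. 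Therefore $A\hconv B \simeq K \simeq B\hconv A$, which completes the reduction.

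The conceptual content is the simultaneous use of \emph{two} kinds of commutation: the strong commutation of $X$ with $Y$ lets us freely swap the last two tensor factors of $W$ so that both $A\tens B$ and $B\tens A$ emerge as quotients of \emph{isomorphic} ambient modules, while the commutation of $L$ with $A$ and $B$ is what permits the interleaving step $L\tens A\tens Y \isoto A\tens L\tens Y$ (and its analogue for $B$) in the surjective chain. The main delicate point is precisely this interleaving: one cannot directly realize $A\tens B$ as a quotient of $L\tens X\tens L\tens Y$ with a clean simple head, so routing the argument through $W = L^{\tens 2}\tens X\tens Y$ via the commutation $L\tens A \simeq A\tens L$ is essential for the head computation to produce a single simple module $K$ on both sides.
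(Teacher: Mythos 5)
Your proposal is correct and follows essentially the same route as the paper's proof: both identify $(L\hconv X)\hconv(L\hconv Y)$ and $(L\hconv Y)\hconv(L\hconv X)$ with the common simple head of $L^{\tens 2}\tens X\tens Y \simeq L^{\tens 2}\tens Y\tens X$ and then conclude via Proposition~\ref{prop: de properties}\;\eqref{it: head=socle}. You spell out the intermediate surjection chain and the interleaving step $L\tens A \simeq A\tens L$ explicitly, which the paper leaves tacit.
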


\begin{proof}
By the assumption, $L^{\otimes2} \tens X \tens Y \simeq L^{\otimes2} \tens Y \tens X$ and $L^{\otimes2} \tens X \tens Y$ have simple heads. 
On the other hand we have the following surjections
$$
(L \hconv X) \hconv (L \hconv Y ) \twoheadleftarrow L^{\otimes2} \tens X \tens Y \simeq L^{\otimes2} \tens Y \tens X \twoheadrightarrow (L \hconv Y) \hconv (L \hconv X ).
$$
Hence $(L \hconv X) \hconv (L \hconv Y ) \simeq (L \hconv Y) \hconv (L \hconv X)$. Then the assertion follows from Proposition~\ref{prop: de properties}~\eqref{it: head=socle}. 
\end{proof}

\begin{definition}
A sequence $\uL=(L_1,\ldots,L_r)$ of simple modules is called a \emph{normal sequence} if the composition of $R$-matrices 
\begin{align*}
    \Rr_{L_1,\ldots,L_r} & \seteq
\displaystyle\prod_{1\le i <k \le r} \Rr_{L_i,L_k}
=(\Rr_{L_{r-1},L_r})  \circ \cdots \circ (\Rr_{L_{2},L_r}\circ
\cdots \circ \Rr_{L_{2},L_{3}})  \circ (\Rr_{L_1,L_r} \circ \cdots
\circ \Rr_{L_1,L_{2}}) \\
& : L_1 \tens \cdots\tens L_r \to L_r \tens \cdots \tens L_1 \text{ does not vanish}. 
\end{align*}
\end{definition}

An ordered sequence of simple modules $\uL = (L_1,L_{2}\ldots,L_r)$ in $\Cg$ is called \emph{almost real}, if all $L_i$ $(1 \le i \le r)$ are real except for at most one. 

\begin{lemma} [{\cite{KK19,KKOP23L}}]\label{lem: normal property} 
Let $\uL=(L_1,\ldots,L_r)$ be an almost real sequence. If $\uL$ is normal, then the image of $\Rr_{\uL}$ is simple and coincides with the head of $L_1 \tens \cdots \tens L_r$    
and also with the socle of $L_r \tens \cdots \tens L_1$.
Moreover, the following conditions are equivalent. 
\bna
\item
  $\uL$ is normal,
  \item 
  $\uL'=(L_{2},\ldots,L_r)$ is a normal sequence and $\La\big(L_1,{\rm Im}(\Rr_{\uL'})\big) = \sum_{k=2}^{r} \La(L_1,L_k)$.
\item  
  $\uL''=(L_1,\ldots,L_{r-1})$ is a normal sequence and $\La\big({\rm Im}(\Rr_{\uL''}),L_r\big) = \sum_{k=1}^{r-1} \La(L_k,L_r)$.
\ee
\end{lemma}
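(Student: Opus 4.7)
The plan is to proceed by induction on $r$. The base case $r=2$ is immediate: by Proposition~\ref{prop: de properties}(b), ${\rm Im}(\Rr_{L_1,L_2})=L_1\hconv L_2$ is simple and equals the head of $L_1\tens L_2$ and the socle of $L_2\tens L_1$, and both conditions~(b) and~(c) are vacuous.

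For $r\ge 3$, the idea is to peel off $L_1$ on the left. Using the defining formula for $\Rr_\uL$, I will factor
\[
\Rr_\uL=(\Rr_{\uL'}\tens \mathrm{id}_{L_1})\circ \Rr^{\mathrm{step}},
\]
where $\uL'=(L_2,\ldots,L_r)$ and $\Rr^{\mathrm{step}}\col L_1\tens L_2\tens\cdots\tens L_r\to L_2\tens\cdots\tens L_r\tens L_1$ is the composition $\Rr_{L_1,L_r}\circ\cdots\circ \Rr_{L_1,L_2}$ (with identity factors inserted on the inactive tensorands). Under the hypothesis that $\uL$ is normal, $\Rr_\uL\ne 0$ forces $\Rr_{\uL'}\ne 0$, so $\uL'$ is normal; by the inductive hypothesis $S'\seteq{\rm Im}(\Rr_{\uL'})=L_2\hconv\cdots\hconv L_r$ is simple, equals the head of $L_2\tens\cdots\tens L_r$, and equals the socle of the reverse product. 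Since $\uL$ is almost real, one of $L_1,S'$ is real, and Proposition~\ref{prop: de properties}(a) gives $\Hom(L_1\tens S',S'\tens L_1)=\bfk\cdot\Rr_{L_1,S'}$. The factorization therefore yields ${\rm Im}(\Rr_\uL)={\rm Im}(\Rr_{L_1,S'})=L_1\hconv S'$. By Proposition~\ref{prop: de properties}(c) this simple module appears with multiplicity one in $L_1\tens S'$, hence with multiplicity one in $L_1\tens\cdots\tens L_r$; being both a quotient of the latter and a submodule of $L_r\tens\cdots\tens L_1$, it coincides with the head of the former and the socle of the latter.

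For the equivalence (a)$\Leftrightarrow$(b) I will compare $\Rr^{\mathrm{step}}$ with the renormalized $R$-matrix $\Rr_{L_1,L_2\tens\cdots\tens L_r}$. The coproduct property of the universal $R$-matrix asserts that $\Runiv_{L_1,(L_2\tens\cdots\tens L_r)_z}$ coincides with the stepwise composition of the $\Runiv_{L_1,(L_k)_z}$ up to a scalar in $\bfk\dpar{z}^\times$; passing to renormalizations, the corresponding ratio $c_{L_1,L_2\tens\cdots\tens L_r}(z)\cdot\prod_{k=2}^{r}c_{L_1,L_k}(z)^{-1}$ sits in $\calG$, and by the group-homomorphism property of $\Deg$ its value is $\La(L_1,L_2\tens\cdots\tens L_r)-\sum_{k=2}^{r}\La(L_1,L_k)$. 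Combined with the fact that $\Rr_{L_1,L_2\tens\cdots\tens L_r}$ factors through $\Rr_{L_1,S'}$ without an additional degree shift, because $S'$ appears with multiplicity one in $L_2\tens\cdots\tens L_r$ under the normality of $\uL'$, the nonvanishing of $\Rr_\uL$ becomes equivalent to $\Rr_{\uL'}\ne 0$ together with $\La(L_1,S')=\sum_{k=2}^{r}\La(L_1,L_k)$, which is precisely condition (b). The equivalence (a)$\Leftrightarrow$(c) follows by the dual argument, peeling off $L_r$ on the right and using $\uL''=(L_1,\ldots,L_{r-1})$ in place of $\uL'$.

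The main obstacle I anticipate is the degree bookkeeping in $\calG$: tracking how the coproduct factorization of $\Runiv$ interacts with the choice of renormalization factors $c_{M,N}(z)$, and establishing $\La(L_1,L_2\tens\cdots\tens L_r)=\La(L_1,S')$ in a way compatible with the inductive hypothesis. This amounts to verifying that, whenever $\uL'$ is normal, replacing $L_2\tens\cdots\tens L_r$ by its head $S'$ introduces no spurious factors of $\vph(az)$ into the renormalization. The group-homomorphism property $\Deg\col\calG\to\Z$ and the almost-real hypothesis, which guarantees one-dimensionality of the relevant Hom-spaces at each stage, are the two ingredients that render this bookkeeping tractable.
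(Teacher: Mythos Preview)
The paper does not supply its own proof of this lemma; it is quoted from \cite{KK19,KKOP23L}. Your plan follows the standard inductive approach of those references, and the architecture (peel off $L_1$, reduce to $\Rr_{L_1,S'}$, track $\La$ via $\Deg$) is the right one. Two points need repair.

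First, to conclude ${\rm Im}(\Rr_{\uL})=L_1\hconv S'$ you need \emph{both} factorizations of $\Rr_{\uL}$. The one you wrote, $(\Rr_{\uL'}\tens\mathrm{id}_{L_1})\circ\Rr^{\mathrm{step}}$, gives only ${\rm Im}(\Rr_{\uL})\subset S'\tens L_1$. You also need the Yang--Baxter companion $\Rr_{\uL}=\Rr^{\mathrm{step}'}\circ(\mathrm{id}_{L_1}\tens\Rr_{\uL'})$, which shows that $\Rr_{\uL}$ factors through the quotient $L_1\tens S'$. Only after both inclusions does one-dimensionality of $\Hom(L_1\tens S',S'\tens L_1)$ pin down the image as $L_1\hconv S'$.

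Second, the inference ``multiplicity one in $L_1\tens S'$, hence multiplicity one in $L_1\tens\cdots\tens L_r$'' is backwards: multiplicity in a quotient is a lower bound, not an upper bound. More to the point, even multiplicity one would not by itself force the head to be simple. The argument in the references uses rigidity: any simple quotient $T$ of $L_1\tens M$ gives, by adjunction, a nonzero map $M\to\rdual L_1\tens T$, hence a nonzero map from $\hd(M)=S'$, hence $T\simeq\hd(L_1\tens S')=L_1\hconv S'$; dually for the socle. Note also that the multiplicity-one statement you invoke is precisely Proposition~\ref{prop:normalsimple}\,\eqref{itsimple}, which in this paper is proved \emph{after} and \emph{using} Lemma~\ref{lem: normal property}, so you cannot cite it here.

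For (a)$\Leftrightarrow$(b), your degree-bookkeeping idea is correct, but you should work directly with $\La(L_1,S')$ rather than with $\La(L_1,L_2\tens\cdots\tens L_r)$. The relevant input (see \cite[Lemma~4.3, Corollary~4.2]{KKOP20}) is that for the simple subquotient $S'$ of $L_2\tens\cdots\tens L_r$ one always has $\La(L_1,S')\le\sum_{k=2}^{r}\La(L_1,L_k)$, with equality exactly when the induced map $L_1\tens S'\to S'\tens L_1$ (to which $\Rr_{\uL}$ descends) is nonzero. That equivalence is (a)$\Leftrightarrow$(b).
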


\Prop\label{prop:normalsimple}
Let $\uL=(L_1,\ldots,L_r)$ be an almost real normal sequence.
\bnum
\item
  Any simple subquotient $S$ of \/
  $L_{2}\tens\cdots\tens L_r$ satisfies
  $\La(L_1,S)\le\sum_{k=2}^r\La(L_1,L_k)$.
  \item
Any simple subquotient $S$ of \/
  $L_{1}\tens\cdots\tens L_{r-1}$
  satisfies $\La(S,L_r)\le\sum_{k=1}^{r-1}\La(L_k,L_r)$
\item $\hd(L_{1}\tens\cdots\tens L_r)$ appears only once in the composition series of 
  $L_{1}\tens\cdots\tens L_r$.\label{itsimple}
  \ee
  \enprop
  \Proof
  (i) and (ii) are known in \cite[Corollary 4.2]{KKOP20}. 
  Let us prove (iii).
  We shall argue by induction on $r$.
  Either $L_1$ or $L_r$ is real.
  Since the other case can be proved similarly,
  we assume that $L_1$ is real.
  Set $K=L_1\tens\cdots\tens L_r$,
  $K'=L_2\tens\cdots\tens L_r$,
  and   $L=\hd(L_1\tens\cdots\tens L_r)$,
  $L'=\hd(L_2\tens\cdots\tens L_r)$,

  \mnoi
(1)\  First let us show that $L$ does not appear in the composition series of 
$L_1\tens \Ker(K'\to L')$.
If it appears, then there exists a simple subquotient
$S$ of $\Ker(K'\to L')$ such that $L$ appears as a simple subquotient of
$L_1\tens S$. 
Hence we have
$$\La(L_1, L)\le \La(L_1, S)\le \La(L_1,K')\le \La(L_1, L).$$
Thus we have 
$$\La(L_1, L)=\La(L_1,S) = \La(L_1,L_1 \hconv S)$$ 
and hence
$L\simeq L_1\hconv S$ by \cite[Theorem 4.11]{KKOP20}. Here the second equality holds by~\cite[Corollary 3.20, Lemma 4.3]{KKOP20}.  Since $L\simeq L_1\hconv L'$, we have
$L'\simeq S$ by Lemma~\ref{lem: recover}. 
By the induction hypothesis, $L'$ cannot appear as a simple subquotient of 
$\Ker(K'\to L')$. It is a contradiction.

\mnoi
(2)\ Since $L\simeq L_1\hconv L'$ appears only once in the composition series of $L_1\tens L'$,
we are done.
  \QED

\begin{lemma} [{\cite[Lemma 4.3 and 4,17]{KKOP20} and \cite[Lemma 2.24]{KKOP23P}}] \label{lem: normal seq d}
Let $L,M,N$ be simple modules in $\Cg$ that are all real except for at most one.
\bna
\item \label{it: normal 1}
Assume that one of the following conditions holds:
\bnum
\item $\de(L,M)=0$ and $L$ is real,
\item $\de(M,N)=0$ and $N$ is real,
\item $\de(L,\ldual N)=\de(\rdual L,N)=0$ and $L$ or $N$ is real,
\ee
then $(L,M,N)$ is a normal sequence.
\item Assume that $L$ is real. 
\bnum
\item $(L,M,N)$ is normal if and only if $(M,N,\rdual L)$ is normal.
\item $\de(L,M\hconv N)=\de(L,M)+\de(L,N)$ if and only if $(L,M,N)$ and $(M,N,L)$ are normal. 
\ee
\ee
\end{lemma}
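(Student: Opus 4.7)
The plan is to reduce each assertion to the numerical characterization of normality from Lemma~\ref{lem: normal property}: the triple $(L,M,N)$ is normal if and only if $\La(L,\,M\hconv N)=\La(L,M)+\La(L,N)$, equivalently, if and only if $\La(L\hconv M,\,N)=\La(L,N)+\La(M,N)$. The $\le$-inequalities in both formulas hold automatically for almost real triples (as they come from a composition of R-matrices passing through a simple subquotient, together with additivity of $\Deg$), so what needs verification in each case is equality.

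For part~(a)(i), Proposition~\ref{prop: de property 1}(ii) gives $L\tens M\simeq M\tens L$, so $\Rr_{L,M}$ is an isomorphism and the composition $\Rr_{M,N}\circ\Rr_{L,N}\circ\Rr_{L,M}$ is manifestly nonzero, proving normality directly. Case~(ii) is dual, using the second characterization of Lemma~\ref{lem: normal property} based on the rightmost factor. For~(iii), I expand both $\La(L,M\hconv N)$ and $\La(L,M)+\La(L,N)$ via the alternating $\de$-series of Proposition~\ref{prop: de property 1}(\ref{it: La de}); the vanishing hypotheses $\de(L,\ldual N)=\de(\rdual L,N)=0$ kill precisely the $\de$-terms involving the nearest duality shifts of $N$, while the identity $\La(X,Y)=\La(Y,\rdual X)$ from Proposition~\ref{prop: de property 1}(iv) and Lemma~\ref{lem: decrease} match up the remaining terms in the two expansions.

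For (b)(i), I apply the duality identity $\La(L,X)=\La(X,\rdual L)$ of Proposition~\ref{prop: de property 1}(iv) to both sides of the normality equality for $(L,M,N)$: the relation $\La(L,M\hconv N)=\La(L,M)+\La(L,N)$ becomes $\La(M\hconv N,\rdual L)=\La(M,\rdual L)+\La(N,\rdual L)$, which is exactly the normality of $(M,N,\rdual L)$ by Lemma~\ref{lem: normal property}(c). The converse is identical.

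For (b)(ii), apply $2\de(L,X)=\La(L,X)+\La(X,L)$ with $X\in\{M\hconv N,\,M,\,N\}$. The assumed equality $\de(L,M\hconv N)=\de(L,M)+\de(L,N)$ rewrites as
$$\La(L,M\hconv N)+\La(M\hconv N,L)=\bigl(\La(L,M)+\La(L,N)\bigr)+\bigl(\La(M,L)+\La(N,L)\bigr).$$
Since $\La(L,M\hconv N)\le\La(L,M)+\La(L,N)$ and $\La(M\hconv N,L)\le\La(M,L)+\La(N,L)$ both hold in general, the sum being an equality forces each summand to be an equality, which by Lemma~\ref{lem: normal property} is precisely the normality of $(L,M,N)$ and of $(M,N,L)$. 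The reverse implication simply adds the two normality equalities. The main obstacle will be the careful bookkeeping in~(a)(iii), where I must verify that the $\de$-terms outside the immediate duality neighbourhood of $N$ cancel in sign-coherent pairs between the two expansions of Proposition~\ref{prop: de property 1}(\ref{it: La de}); this is the numerically subtlest point, and is where the reality hypothesis on $L$ or $N$ enters essentially through Proposition~\ref{prop: de property 1}(ii).
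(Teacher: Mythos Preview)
The paper does not prove this lemma; it is quoted from \cite{KKOP20} and \cite{KKOP23P} without argument, so there is nothing to compare against and I assess your sketch on its own merits.

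Your arguments for (b)(i) and (b)(ii) are clean and correct. For (b)(ii) you are implicitly using the unconditional inequalities $\La(L,M\hconv N)\le\La(L,M)+\La(L,N)$ and $\La(M\hconv N,L)\le\La(M,L)+\La(N,L)$; these hold because $c_{L,\,M\tens N}(z)=c_{L,M}(z)\,c_{L,N}(z)$ and any simple subquotient $S$ of $M\tens N$ satisfies $\La(L,S)\le\La(L,M\tens N)$, but you should state and cite this rather than fold it into ``hold automatically''. In (a)(i) the phrase ``manifestly nonzero'' is not justified: after inverting $\Rr_{L,M}$ you still need $(\Rr_{M,N}\tens 1_L)\circ(1_M\tens\Rr_{L,N})$ to be nonzero, and two nonzero maps can compose to zero. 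The fix is to argue exactly as you do in (b): since $L\tens M$ is simple one has $L\hconv M=L\tens M$, and then $\La(L\tens M,N)=\La(L,N)+\La(M,N)$ by multiplicativity of $c$, so Lemma~\ref{lem: normal property}(c) gives normality. Case (a)(ii) is dual.

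The genuine gap is (a)(iii). Expanding $\La(L,M\hconv N)$ via Proposition~\ref{prop: de property 1}(\ref{it: La de}) produces terms $\de\bl L,\rdual^k(M\hconv N)\br$, and there is no mechanism to separate the $M$-contribution from the $N$-contribution inside these; the hypothesis $\de(\rdual L,N)=0$ constrains the series for $\La(L,N)$, not the series for $\La(L,M\hconv N)$. Lemma~\ref{lem: decrease} is about iterated tensoring with a fixed real $L$ and does not bear on this comparison. A short correct route uses what you have already established: if $L$ is real, apply (b)(i) to reduce $(L,M,N)$ to $(M,N,\rdual L)$, which is normal by (a)(ii) since $\de(N,\rdual L)=0$ and $\rdual L$ is real; if instead $N$ is real, run the dual of (b)(i) (same proof, using $\La(X,N)=\La(\ldual N,X)$) to reduce to $(\ldual N,L,M)$, which is normal by (a)(i) since $\de(\ldual N,L)=0$ and $\ldual N$ is real.
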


\begin{lemma} [{\cite[Corollary 2.25]{KKOP23P}}]\label{lem: de=de}
Let $L,M$ be real simple modules and $X$ a simple module.
\bnum
\item \label{it: de=de 1}
If $\de(L,M)=\de(\rdual L,M)=0$, then we have $\de(L,X\hconv M)=\de(L,X)$.
\item  \label{it: de=de 2} If $\de(L,M)=\de(\ldual L,M)=0$, then we have $\de(L,M\hconv X)=\de(L,X)$.
\ee
\end{lemma}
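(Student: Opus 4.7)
My plan is to reduce both statements to the multiplicativity identity
\[
\de(L, A \hconv B) = \de(L, A) + \de(L, B)
\]
provided by Lemma~\ref{lem: normal seq d}~(b)(ii), and then verify its hypotheses using the sufficient conditions for normality in Lemma~\ref{lem: normal seq d}~(a). Once the two required triples are shown to be normal, the assertion follows immediately: since $\de(L, M) = 0$ in both (i) and (ii), the $\de(L,M)$ summand vanishes and the multiplicativity collapses to $\de(L,X)$.

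The single preliminary observation I would record first is the shift-invariance
\[
\de(\rdual A, \rdual B) = \de(A, B), \qquad \text{equivalently}\qquad \de(\rdual A, B) = \de(A, \ldual B).
\]
This is immediate from Proposition~\ref{prop: de property 1}~(iv): iterating $\La(M,N) = \La(\ldual N, M) = \La(N, \rdual M)$ and using $\ldual \rdual \simeq \mathrm{id}$ on simples gives $\La(\rdual A, \rdual B) = \La(\ldual\rdual B, \rdual A) = \La(B, \rdual A) = \La(A,B)$, and symmetrization yields the $\de$-version.

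For part (i), I would apply the multiplicativity with $(A,B)=(X,M)$. The sequence $(X, M, L)$ is normal by Lemma~\ref{lem: normal seq d}~(a)(ii), since $\de(M,L) = \de(L,M) = 0$ and $L$ is real. The sequence $(L, X, M)$ is normal by Lemma~\ref{lem: normal seq d}~(a)(iii): the condition $\de(\rdual L, M) = 0$ is a hypothesis, and the companion condition $\de(L, \ldual M) = 0$ is obtained from it by the shift-invariance. For part (ii), I would apply the multiplicativity with $(A,B)=(M,X)$. The sequence $(L, M, X)$ is normal by Lemma~\ref{lem: normal seq d}~(a)(i) using $\de(L,M) = 0$ and reality of $L$. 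The sequence $(M, X, L)$ is normal by Lemma~\ref{lem: normal seq d}~(a)(iii): the condition $\de(M, \ldual L) = \de(\ldual L, M) = 0$ is given, while $\de(\rdual M, L)$ equals $\de(M, \ldual L) = 0$ again by shift-invariance combined with symmetry.

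There is no real obstacle: the shift-invariance identity is the only step that is not purely formal, and it is forced by Proposition~\ref{prop: de property 1}~(iv). Everything else is a careful matching of the roles $L,M,N$ in Lemma~\ref{lem: normal seq d}~(a) against the triples $(L,X,M),(X,M,L),(L,M,X),(M,X,L)$ that arise when applying the multiplicativity identity in the two orderings dictated by (i) and (ii).
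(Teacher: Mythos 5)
Your proof is correct. The key identity $\de(\rdual A,B)=\de(A,\ldual B)$ that you extract from Proposition~\ref{prop: de property 1}~(iv) is what allows the two hypotheses in each part to be converted into exactly the pair of conditions demanded by Lemma~\ref{lem: normal seq d}~(a)(iii) for the relevant triple, and once the two normality statements are in hand, (b)(ii) immediately collapses to the claimed equality because $\de(L,M)=0$. Since the paper simply cites this lemma from \cite{KKOP23P} without reproducing a proof, there is no internal argument to compare against; your reduction through Lemma~\ref{lem: normal seq d} is the natural route given the toolbox presented in Section~1, and the role-matching between $(L,X,M)$, $(X,M,L)$, $(L,M,X)$, $(M,X,L)$ and the hypotheses (a)(i)--(iii) is carried out correctly, with the "all real except at most one'' requirement automatically satisfied because $L$ and $M$ are real.
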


\begin{definition} [{\cite{KKOP23P,KKOP24A}}] Let $(M,N)$ be an ordered pair of simple modules in $\Cg$. 
\ben
\item We call the pair \emph{unmixed} if 
$$ \de(\rdual M,N)=0$$
and \emph{strongly unmixed} if 
$$ \de(\rdual^k M,N)=0 \qt{for any } k \in \Z_{>0}.$$
\item An almost real sequence $\uM=(M_1,\ldots,M_r)$ is said to be \emph{$($strongly$)$ unmixed} if $(M_i,M_k)$ is (strongly) unmixed for all
  $1\le i <k \le r$.  
\ee 
\end{definition}

\begin{proposition} [{\cite{KKOP23P}}] \label{prop: Unmix normal}
  \hfill
\bnum
\item \label{it: Li La} For a strongly unmixed pair $(M,N)$ of simple modules,
  we have
$$ \Li(M,N) = \La(M,N).$$ 
\item \label{it: unmix normal} Any unmixed almost real
  sequence $\uM=(M_1,\ldots,M_r)$ is normal. 
\item For a strongly unmixed almost real sequence $\uM=(M_1,\ldots,M_r)$, 
the pair $$
\big( \head(M_1\tens\cdots\tens M_j), \head(M_{k}\tens\cdots\tens M_r) \big)
$$
is strongly unmixed for any $1< j<k \le r$. 
\ee
\end{proposition}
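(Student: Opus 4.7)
The plan is to prove part (i) by direct manipulation of the two series defining $\La$ and $\Li$, and to prove parts (ii)--(iii) by a joint induction on the length $r$, using Lemmas~\ref{lem: normal seq d} and~\ref{lem: de=de} to propagate unmixedness through $\hconv$-products. For (i), Proposition~\ref{prop: de property 1}(iii) gives
$$
\Li(M,N)-\La(M,N)=2\sum_{k<0}(-1)^{k}\de(M,\rdual^{k}N),
$$
since the signs agree for $k\ge 0$. Rewriting $\de(M,\rdual^{-m}N)=\de(\rdual^{m}M,N)$ via the invariance $\de(A,B)=\de(\rdual^{m}A,\rdual^{m}B)$ induced by the monoidal auto-equivalence $\rdual$ of $\Cg$, and then invoking strong unmixedness at each positive shift $m=-k>0$, makes every summand vanish, which proves (i).

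For (ii), I would induct on $r$, with $r\le 2$ trivial since $R$-matrices between simple modules are non-zero. In the inductive step, Lemma~\ref{lem: normal property} reduces the claim to the equality
$$
\La\bigl(M_{1},\head(M_{2}\tens\cdots\tens M_{r})\bigr)=\sum_{k=2}^{r}\La(M_{1},M_{k}),
$$
given normality of $(M_{2},\dots,M_{r})$, which is the inductive hypothesis. One direction is Proposition~\ref{prop:normalsimple}(i); for the reverse I would peel off $M_{2}$ via Lemma~\ref{lem: normal seq d}(b)(ii) applied to the triple $\bigl(M_{1},M_{2},\head(M_{3}\tens\cdots\tens M_{r})\bigr)$, whose normality is checked through Lemma~\ref{lem: normal seq d}(a)(iii) together with the identity $\de(M_{1},\ldual M_{l})=\de(\rdual M_{1},M_{l})=0$. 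The main obstacle here is to ensure that the head $\head(M_{3}\tens\cdots\tens M_{r})$ inherits enough unmixedness with respect to $M_{1}$; this is precisely the content of (iii) for the shorter sub-sequence $(M_{1},M_{3},\dots,M_{r})$, which justifies organizing the argument as a joint induction with (iii).

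For (iii), set $P_{j}\seteq\head(M_{1}\tens\cdots\tens M_{j})$ and $P_{k,r}\seteq\head(M_{k}\tens\cdots\tens M_{r})$; both are simple by (ii), and normality gives the factorizations $P_{j}\simeq P_{j-1}\hconv M_{j}$ and $P_{k,r}\simeq P_{k,r-1}\hconv M_{r}$. The goal is $\de(\rdual^{n}P_{j},P_{k,r})=0$ for every $n>0$, which I would obtain by two nested inductions. The inner induction on $j$ establishes the base case $\de(\rdual^{n}P_{j},M_{l})=0$ for $n>0$ and $l>j$, starting from $j=1$ (strong unmixedness) and applying Lemma~\ref{lem: de=de}(ii) to $L=M_{l}$, $M=\rdual^{n}P_{j-1}$, $X=\rdual^{n}M_{j}$, where the two required vanishings sit at positive shifts $n$ and $n+1$. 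The outer induction on the number of factors of $P_{k,r}$ then peels $M_{r}$ off the right by Lemma~\ref{lem: de=de}(i) applied to $L=\rdual^{n}P_{j}$, $X=P_{k,r-1}$, $M=M_{r}$, again only using the base case at the positive shifts $n$ and $n+1$. The crucial feature making everything go through is that every application of Lemma~\ref{lem: de=de} involves only positive shifts, so the boundary $n=1$ causes no difficulty; otherwise the term $\de(M_{i},M_{l})$ (not controlled by strong unmixedness) would intrude and block the induction.
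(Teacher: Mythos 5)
Your part (i) is fine: the needed identity $\de(\rdual A,\rdual B)=\de(A,B)$ follows from Proposition~\ref{prop: de property 1}(iv) even though $\rdual$ is anti-monoidal rather than monoidal, and strong unmixedness then kills the negative-shift terms exactly as you say.

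The argument for (ii), however, has a genuine gap. You invoke (iii) for the sub-sequence $(M_1,M_3,\ldots,M_r)$ in order to conclude that the pair $(M_1,\head(M_3\tens\cdots\tens M_r))$ is unmixed, but (iii) is hypothesized on a \emph{strongly} unmixed sequence, while (ii) assumes only unmixedness; a sub-sequence of an unmixed sequence is unmixed, not strongly unmixed, so (iii) is unavailable. (Moreover (iii) requires $j>1$, so it never produces a statement about a pair whose first entry is a single $M_1$.) As written, your argument only establishes (ii) for strongly unmixed sequences. The repair is to peel from the \emph{right} instead of the left: set $H'\seteq\head(M_2\tens\cdots\tens M_{r-1})$, so $\head(M_2\tens\cdots\tens M_r)\simeq H'\hconv M_r$, and consider the triple $(M_1,H',M_r)$. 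Lemma~\ref{lem: normal seq d}(a)(iii) applies to this triple using only $\de(M_1,\ldual M_r)=\de(\rdual M_1,M_r)=0$, which is exactly the unmixedness of the pair $(M_1,M_r)$ that the hypothesis of (ii) hands you directly; no information about heads and no appeal to (iii) is needed. Normality of $(M_1,H',M_r)$ together with Lemma~\ref{lem: normal property}(b) gives $\La(M_1,H'\hconv M_r)=\La(M_1,H')+\La(M_1,M_r)$, and the inductive hypothesis for $(M_1,\ldots,M_{r-1})$ gives $\La(M_1,H')=\sum_{k=2}^{r-1}\La(M_1,M_k)$; combining and applying Lemma~\ref{lem: normal property}(b) to $\uM$ finishes (ii), and also decouples (ii) from (iii), so no joint induction is required. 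Your double-induction plan for (iii) is sound in outline (with $\rdual(A\hconv B)\simeq\rdual A\hconv\rdual B$ justified by Proposition~\ref{prop: de properties}(b) and the fact that $\rdual$ takes heads to socles while reversing tensor order); the one point to watch there is that Lemma~\ref{lem: de=de} demands that both $L$ and $M$ be real, and in your applications $M=\rdual^n P_{j-1}$ or $L=\rdual^n P_j$ need not be real when the unique non-real factor of $\uM$ lies among $M_1,\ldots,M_j$, so a short case split on the position of the non-real factor is required.
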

 
\begin{lemma}[{\cite[Lemma 6.11]{KKOP23P}}] \label{Lem: LMLN MN}
Let $L,M,N$ be simple modules in $\Cg$ and assume that $L$ is real.
\bnum
\item \label{it: left}
If $(L,M)$ and $(L,N)$ are strongly unmixed and $L \hconv N$ appears in $L \tens M$ as a subquotient, then we have $M \simeq N$.
\item \label{it: right}
If $(M,L)$ and $(N,L)$ are strongly unmixed and $N \hconv L$ appears in $M \tens L$ as a subquotient, then we have $M \simeq N$.
\ee
\end{lemma}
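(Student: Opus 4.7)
The two parts are proved by parallel arguments with the roles of left and right dual (and the order of the tensor product) interchanged, so I concentrate on part~(i); part~(ii) follows by an analogous argument.

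The goal reduces to establishing the isomorphism $L\hconv N\simeq L\hconv M$: once this is known, Lemma~\ref{lem: recover} yields
$$M\simeq \ldual L\hconv(L\hconv M)\simeq \ldual L\hconv(L\hconv N)\simeq N.$$
By Proposition~\ref{prop: Unmix normal}(ii) the strongly unmixed pairs $(L,M)$ and $(L,N)$ are normal, so $L\hconv M$ and $L\hconv N$ are simple (Proposition~\ref{prop: de properties}(b)), and each appears with multiplicity one in $L\tens M$ and $L\tens N$ respectively (Proposition~\ref{prop: de properties}(c)).

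The plan is to produce a nonzero morphism $f\col(L\hconv N)\tens\rdual L\to M$. Since $(L\hconv N)\tens\rdual L$ has simple head $(L\hconv N)\hconv\rdual L\simeq N$ by Lemma~\ref{lem: recover}, and $M$ is simple, any such $f$ factors through the head, which forces $M\simeq N$. By the rigidity adjunction,
$$\Hom\bigl((L\hconv N)\tens\rdual L,\;M\bigr)\simeq\Hom(L\hconv N,\;M\tens L),$$
so the task becomes equivalent to embedding the simple module $L\hconv N$ into $M\tens L$, i.e.\ placing it inside $\soc(M\tens L)$.

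To produce this embedding I use that the $R$-matrix $\Rr_{L,M}\col L\tens M\to M\tens L$ has image $L\hconv M$ (Proposition~\ref{prop: de properties}(b)); in particular $L\tens M$ and $M\tens L$ share composition factors with multiplicities, so $L\hconv N$ appears as a composition factor of $M\tens L$ as well. Strong unmixedness of both $(L,M)$ and $(L,N)$ supplies the sharp identity $\La=\Li$ of Proposition~\ref{prop: Unmix normal}(i); combining this with Proposition~\ref{prop: de property 1}(iii)--(iv), the normality bounds of Proposition~\ref{prop:normalsimple}, Lemma~\ref{lem: normal property}, and the $\de$-monotonicity of Lemma~\ref{lem: decrease}, one localizes $L\hconv N$ at the socle of $M\tens L$, producing the required embedding.

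\textbf{Main obstacle.} The delicate point is this final localization step: showing that $L\hconv N$ sits in $\soc(M\tens L)$ rather than strictly deeper in the radical filtration. Strong unmixedness (as opposed to mere unmixedness) is essential here, because only the sharp identity $\Li=\La$ it guarantees yields the tight control on extremal $\La$-invariants among simple composition factors which is needed to pin down the position of $L\hconv N$.
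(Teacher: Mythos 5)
Your framework is set up correctly, but the reduction you make is circular: since $\soc(M\tens L)=M\sconv L\simeq L\hconv M$ (Proposition~\ref{prop: de properties}), the task "embed $L\hconv N$ in $\soc(M\tens L)$" is literally the assertion $L\hconv N\simeq L\hconv M$, which by Lemma~\ref{lem: recover} is equivalent to $M\simeq N$. The adjunction detour through $(L\hconv N)\tens\rdual L$ and the observation that $L\tens M$ and $M\tens L$ share composition factors therefore only restate the goal in a different form; all the actual content is in the "localization step" that you flag as the main obstacle, and that step is left unproved.

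The tools you list do not suffice to close that gap. The bounds in Proposition~\ref{prop:normalsimple} and the $\de$-monotonicity of Lemma~\ref{lem: decrease} give one-sided inequalities such as $\de(L,L\hconv N)<\de(L,M)$ and $\La(L,L\hconv N)\le\La(L,M)$, but they do not by themselves force $L\hconv N$ into the extremal position. What is missing is a mechanism that converts the hypothesis "$L\hconv N$ is a subquotient of $L\tens M$" into an \emph{equality} of $\La$-invariants, and this is precisely where strong unmixedness is used: one invokes the block decomposition (Theorem~\ref{thm: blcok decom}) to see that $L\hconv N$ and $L\tens M$ lie in the same block, so $\wt_\calQ(N)=\wt_\calQ(M)$; by Theorem~\ref{thm: wtpairing Lainf} this gives $\Li(L,N)=\Li(L,M)$; strong unmixedness (Proposition~\ref{prop: Unmix normal}(i)) then yields $\La(L,N)=\La(L,M)$; finally $\La(L,L\hconv X)=\La(L,X)$ for $L$ real (\cite[Corollary~3.20, Lemma~4.3]{KKOP20}) together with the extremality characterization of the head among simple subquotients of $L\tens M$ (\cite[Theorem~4.11]{KKOP20}, used inside the proof of Proposition~\ref{prop:normalsimple}(iii)) forces $L\hconv N\simeq L\hconv M$. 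Neither Theorem~\ref{thm: blcok decom} nor Theorem~\ref{thm: wtpairing Lainf} appears in your toolkit, and without the equality $\La(L,N)=\La(L,M)$ they supply, the argument does not go through.
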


\subsection{Root modules} We say that a real simple module $L$ is  
a \emph{root module} if
\begin{align} \label{eq: root module}
\de(L,\rdual^{k}(L)) =\delta(k = \pm 1) \qt{ for any } k \in \Z.     
\end{align}

\begin{lemma} [{\cite[Lemma 3.4]{KKOP23P}}] \label{lem: -1 by root}
Let $L$ be a root module and let $X$ be a simple module such that 
$\de(L,X) >0$.  Then we have
\bnum
\item $\de(L,L\hconv X) = \de(L,X)-1$ and $\de(\ldual L,L\hconv X) = \de(\ldual L,X)$,
\item $\de(L,X\hconv L) = \de(L,X)-1$ and $\de(\rdual L,X\hconv L) = \de(\rdual L,X)$.
\ee
Thus we have
\begin{align} \label{eq: de L,X}
  \de(L,L^{\otimes n} \hconv Y) =  \de(L, Y \hconv L^{\otimes n}) = \max(\de(L,Y)-n,0)\end{align}
for any simple module $Y$ and $n\in\Z_{\ge0}$.
\end{lemma}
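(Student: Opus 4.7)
The strategy is to prove part (i) first using the recovery identity $X \simeq (L\hconv X)\hconv \rdual L$ from Lemma~\ref{lem: recover}, then to obtain part (ii) by the symmetric argument based on the alternative recovery $X \simeq \ldual L \hconv (X \hconv L)$ (noting that $\ldual L$ and $\rdual L$ are again root modules, since $\de(\rdual L, \rdual^k \rdual L) = \de(L, \rdual^k L) = \delta(k = \pm 1)$), and finally to deduce the last display by induction on $n$.

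For (i), the upper bound $\de(L, L\hconv X) \le \de(L, X) - 1$ is immediate from Lemma~\ref{lem: decrease}: applied to the real module $L$ and $M = X$, every simple subquotient of $L\tens X$, in particular $L\hconv X$, has $\de(L,\cdot) < \de(L, X)$. For the matching lower bound together with the second equality, set $Y := L\hconv X$ so that $X \simeq Y \hconv \rdual L$. The root condition $\de(L, \rdual^k L) = \delta(k = \pm 1)$ supplies the crucial vanishings $\de(L, L) = \de(\rdual L, \rdual L) = 0$ and $\de(\ldual L, \rdual L) = \de(L, \rdual^2 L) = 0$, while $\de(L, \rdual L) = 1$. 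By Lemma~\ref{lem: normal seq d}(a)(iii), the first batch of vanishings yields the normality of $(L, Y, \rdual L)$ and $(\ldual L, Y, \rdual L)$; the companion normalities $(Y, \rdual L, L)$ and $(Y, \rdual L, \ldual L)$ required by the biconditional in Lemma~\ref{lem: normal seq d}(b)(ii) are obtained by the cyclic reduction of Lemma~\ref{lem: normal seq d}(b)(i). Applying Lemma~\ref{lem: normal seq d}(b)(ii) then gives
\begin{align*}
\de(L, X) &= \de(L, Y \hconv \rdual L) = \de(L, Y) + \de(L, \rdual L) = \de(L, Y) + 1, \\
\de(\ldual L, X) &= \de(\ldual L, Y \hconv \rdual L) = \de(\ldual L, Y) + \de(\ldual L, \rdual L) = \de(\ldual L, Y),
\end{align*}
which, combined with the upper bound, proves both equalities of (i).

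Part (ii) is obtained by the mirror argument: with $Z := X \hconv L$ and the recovery $X \simeq \ldual L \hconv Z$, the analogous vanishings $\de(\rdual L, \ldual L) = \de(L, \ldual^2 L) = 0$ and $\de(L, \ldual L) = 1$, together with the parallel normal-sequence analysis of $(\rdual L, \ldual L, Z)$ and its companion, yield $\de(L, X) = \de(L, Z) + 1$ and $\de(\rdual L, X) = \de(\rdual L, Z)$, which are the two equalities of (ii). The final display is then proved by induction on $n$: the case $n = 0$ is trivial; for $n \ge 1$, if $\de(L, Y) \ge 1$, iterate (i) using the reassociation $L^{\tens n}\hconv Y \simeq L \hconv (L^{\tens(n-1)}\hconv Y)$ (justified by normality of iterated $L$-tensor products via Proposition~\ref{prop: Unmix normal}); if $\de(L, Y) = 0$, then $L$ and $Y$ strongly commute by Proposition~\ref{prop: de property 1}(ii), so $L^{\tens n}\hconv Y = L^{\tens n}\tens Y$ still strongly commutes with $L$, matching $\max(\de(L, Y) - n, 0) = 0$.

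The main technical obstacle is verifying the companion normality conditions required by Lemma~\ref{lem: normal seq d}(b)(ii) in the cases where the direct root-module vanishings alone do not suffice (namely the triples involving $\ldual L$ next to $L$, blocked by $\de(\ldual L, L) = 1$). This is overcome by combining the cyclic symmetry of Lemma~\ref{lem: normal seq d}(b)(i), which moves a real simple module between the first and last slot (with a $\rdual$-shift), with sharp $\La$-bounds from Proposition~\ref{prop:normalsimple} applied to the normal sequences already established, so that the needed equalities of $\de$-values can be extracted from the matching equalities of the $\La$-bounds.
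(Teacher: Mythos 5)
The central claim of your second paragraph --- that Lemma~\ref{lem: normal seq d}(a)(iii) establishes the normality of $(\ldual L, Y, \rdual L)$ --- is incorrect, and your own final paragraph contradicts it. Applied to this triple, condition (a)(iii) requires $\de(\ldual L, \ldual\rdual L) = \de(\ldual L, L) = 0$ and $\de(\rdual\ldual L, \rdual L) = \de(L, \rdual L) = 0$; for a root module \emph{both} equal $1$, and neither appears among the vanishings you list (which are $\de(L,L) = \de(\rdual L, \rdual L) = \de(\ldual L, \rdual L) = 0$, serving $(L, Y, \rdual L)$ and $(\ldual^2 L, Y, \rdual L)$ but not $(\ldual L, Y, \rdual L)$). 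Your last paragraph correctly names this as the obstruction, so the earlier derivation cannot stand as stated.

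This is not a repairable slip within the method you sketch, because the missing normality is \emph{equivalent} to the assertion you are trying to prove. By Lemma~\ref{lem: normal property}, $(\ldual L, Y, \rdual L)$ is normal iff $\La(\ldual L, X) = \La(\ldual L, Y) + \La(\ldual L, \rdual L)$. Combined with $\La(L, X) = \La(L, Y) + \La(L, \rdual L)$, which \emph{does} follow from the established normality of $(L, Y, \rdual L)$, and using $\La(L, \rdual L) + \La(\ldual L, \rdual L) = \La(L, \rdual L) + \La(\rdual L, L) = 2\de(L, \rdual L) = 2$, this is exactly the equality $\de(L, X) = \de(L, Y) + 1$, i.e.\ the first assertion of part~(i). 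So invoking Lemma~\ref{lem: normal seq d}(b)(ii) --- an iff --- merely rephrases the goal. Proposition~\ref{prop:normalsimple} does not close the circle either: it yields $\La$-bounds only for sequences \emph{already known} to be normal, and the sequences you can control --- $(L, Y, \rdual L)$ and, after fixing the slip, $(\ldual^2 L, Y, \rdual L)$ --- govern $\La(L, X)$ and $\La(\ldual^2 L, X)$, not the $\La(\ldual L, X)$ term you actually need. Some genuinely new input (e.g.\ a subquotient-monotonicity of $\La$ along $\tens\,\rdual L$ that does not presuppose normality, which the present paper does not state) is required; as written, the lower bound in part~(i), and hence the whole lemma, is unproved. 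A smaller inaccuracy in the same spirit: the reassociation $L^{\tens n}\hconv Y \simeq L\hconv(L^{\tens(n-1)}\hconv Y)$ cannot be justified by Proposition~\ref{prop: Unmix normal}, since unmixedness of $(L,\ldots,L,Y)$ would require $\de(\rdual L, L) = 0$, again contradicting the root condition $\de(L,\rdual L)=1$; the reassociation holds, but must instead be established via Lemma~\ref{lem: normal seq d}(a)(i) and an induction through Lemma~\ref{lem: normal property}.
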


\begin{lemma}[{\cite[Lemma 3.8 and 3.9]{KKOP23P}}] \label{lem: roots and dual}
Let $L$ and $L'$ be root modules satisfying 
$$
\de(\rdual^k L,L') =\delta(k =0 ) \qt{ for } k\in \Z.
$$Then, we have
\bnum
\item $L \hconv L'$ is a root module,
\item $\de(\rdual^k L,L\hconv L')=\delta(k=1)$ and $\de(\rdual^k L,L'\hconv L)=\delta(k=-1)$.
\ee
\end{lemma}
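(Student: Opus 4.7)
The strategy is to prove (ii) first and then deduce (i) via the identity $\rdual(L\hconv L')\simeq\rdual L\hconv\rdual L'$. This identity is obtained by applying the exact right-dual functor to $0\to K\to L\tens L'\to L\hconv L'\to 0$: the simple module $\rdual(L\hconv L')$ embeds in $\rdual L'\tens\rdual L$, and hence lies in its socle $\rdual L'\sconv\rdual L\simeq\rdual L\hconv\rdual L'$ by Proposition~\ref{prop: de properties}(b). I will also freely use the symmetric form $\de(\rdual^m L',L)=\delta(m=0)$ of the hypothesis, obtained from $\de$-symmetry and $\rdual$-equivariance.

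For part (ii), I split on $k$. For $k=0$ and $k=-1$ Lemma~\ref{lem: -1 by root}(i) applies directly: $\de(L,L\hconv L')=\max(\de(L,L')-1,0)=0$ and $\de(\ldual L,L\hconv L')=\de(\ldual L,L')=0$. For $|k|\ge 2$, both $\de(\rdual^k L,L)$ and $\de(\rdual^k L,L')$ vanish (by root-modularity and by hypothesis), and the sequences $(\rdual^k L,L,L')$ and $(L,L',\rdual^k L)$ are normal via conditions (i)/(ii) of Lemma~\ref{lem: normal seq d}; hence Lemma~\ref{lem: de=de}(ii) yields $0$. The crucial case is $k=1$: the same decomposition gives $\de(\rdual L,L)+\de(\rdual L,L')=1+0=1$, provided $(\rdual L,L,L')$ and $(L,L',\rdual L)$ are normal. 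The second sequence is handled by condition (ii) of Lemma~\ref{lem: normal seq d}; the first requires condition (iii), whose hypotheses reduce to $\de(\rdual^2 L,L')=0$, which holds by hypothesis. The twin identity $\de(\rdual^k L,L'\hconv L)=\delta(k=-1)$ is proved by the analogous argument using Lemma~\ref{lem: -1 by root}(ii).

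For part (i), Proposition~\ref{prop: de properties}(f) yields that $L\hconv L'$ is real since $\de(L,L')=1$. The identity $\de(\rdual^k(L\hconv L'),L\hconv L')=\delta(k=\pm 1)$ is invariant under $k\leftrightarrow-k$ by $\de$-symmetry combined with $\rdual$-equivariance, so it suffices to treat $k\ge 0$. The case $k=0$ is realness; for $k\ge 1$ I write $\rdual^k(L\hconv L')=\rdual^k L\hconv\rdual^k L'$ and apply Lemma~\ref{lem: de=de}(ii) with $L_*=L\hconv L'$, reducing the value to $\de(L\hconv L',\rdual^k L)+\de(L\hconv L',\rdual^k L')$. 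The first summand equals $\delta(k=1)$ by part (ii). For the second summand: when $k=1$, Lemma~\ref{lem: -1 by root}(ii) gives $\de(\rdual L',L\hconv L')=\de(\rdual L',L)=0$; when $k\ge 2$, both $\de(\rdual^k L',L)$ and $\de(\rdual^k L',L')$ vanish and Lemma~\ref{lem: de=de}(ii) again produces $0$. The sum collapses to $\delta(k=1)$, as required.

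The main obstacle will be systematically verifying the normality hypotheses fed into Lemma~\ref{lem: de=de}(ii). Whenever some relevant $\de$ vanishes, conditions (i) or (ii) of Lemma~\ref{lem: normal seq d} apply trivially; the delicate cases are exactly the boundary ones ($k=\pm 1$ in (ii) and $k=1$ in (i)), where several $\de$-values are simultaneously positive and condition (iii) must be invoked. Its hypotheses $\de(L,\ldual N)=\de(\rdual L,N)=0$ decode into ``distance-two'' vanishings of the form $\de(\rdual^{\pm 2}L,L')=0$, all furnished by the hypothesis. Once this bookkeeping is laid out, every case reduces to elementary arithmetic.
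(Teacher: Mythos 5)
Your overall strategy is sound—deduce (ii) first, then use the identity $\rdual(L\hconv L')\simeq\rdual L\hconv\rdual L'$ (which you derive correctly) to attack (i)—and the case analysis is organized in the right way. However, there are two issues you should tighten up.

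First, a persistent misattribution: the additivity $\de(L_*,M\hconv N)=\de(L_*,M)+\de(L_*,N)$ that you invoke repeatedly is Lemma~\ref{lem: normal seq d}\,(b)(ii), which requires normality of both $(L_*,M,N)$ and $(M,N,L_*)$; it is \emph{not} Lemma~\ref{lem: de=de}\,(ii), which is a different statement with a different hypothesis (two vanishing $\de$'s at adjacent dual shifts) and whose conclusion does not have the form of a sum.

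Second, and more substantively, your final paragraph asserts that every normality hypothesis you need ``decodes into distance-two vanishings of the form $\de(\rdual^{\pm 2}L,L')=0$, all furnished by the hypothesis.'' That is not quite true at the delicate point of part (i), namely $k=1$. There you need $(L\hconv L',\,\rdual L,\,\rdual L')$ to be normal in order to split $\de\bl L\hconv L',\,\rdual L\hconv\rdual L'\br$ as a sum. Conditions (i) and (ii) of Lemma~\ref{lem: normal seq d}\,(a) both fail, since $\de(L\hconv L',\rdual L)=1$ and $\de(\rdual L,\rdual L')=\de(L,L')=1$. Condition (iii) decodes (using $\rdual$-invariance of $\de$) to $\de(L\hconv L',\,L')=0$, which is \emph{not} one of the hypothesised $\de(\rdual^{\pm 2}L,L')=0$. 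This vanishing is true, but it needs its own one-line justification: apply Lemma~\ref{lem: -1 by root}\,(ii) with $L'$ as the root module and $X=L$, giving $\de(L',L\hconv L')=\de(L',L)-1=0$. Once that is inserted, the split formula applies and $\de\bl L\hconv L',\,\rdual(L\hconv L')\br=1+0=1$ as desired. The analogous care is needed at $k=-1$ of the twin identity in (ii), where condition (iii) for $(L',L,\ldual L)$ also requires the extra vanishing $\de(L,L)=0$ (realness of $L$) beyond the distance-two ones. These are minor additions, but since they are exactly the ``boundary'' cases you singled out as crucial, the proof as written is not complete without spelling them out.
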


\begin{proposition}[{\cite[Proposition 2.28]{KKOP24A}}]
Every fundamental representation is a root module.     
\end{proposition}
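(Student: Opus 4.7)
The plan is to verify the two defining conditions for $L \seteq V(\upvarpi_i)_a$ to be a root module: the reality of $L$, and the vanishing pattern $\de(L,\rdual^kL) = \delta(k=\pm 1)$ for all $k \in \Z$. Reality is classical: every fundamental representation is a real simple module in $\Cg$, which can be read off from the fact that the normalized $R$-matrix $\Rnorm_{V(\upvarpi_i),V(\upvarpi_i)}(z)$ has no pole at $z=1$. The case $k=0$ of the second condition then also follows automatically from Proposition~\ref{prop: de property 1}.

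Next, I would identify each iterated dual explicitly. Using the standard identification $\rdual V(\upvarpi_j)_c \simeq V(\upvarpi_{j^*})_{c p^*}$ and iterating, each $\rdual^k L$ is again a fundamental module of the form $V(\upvarpi_{i(k)})_{a(k)}$, with an explicit index $i(k) \in \Dynkin_0$ and spectral parameter $a(k) \in \bfk^\times$ obtained by iterating the duality. This reduces the task to computing $\de\bigl(V(\upvarpi_i)_a,\, V(\upvarpi_{i(k)})_{a(k)}\bigr)$ for each nonzero $k$.

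The crux of the argument is then to relate this invariant to the denominator $d_{i,j}(z)$ of the normalized $R$-matrix between two fundamental modules: $\de\bigl(V(\upvarpi_i)_a, V(\upvarpi_j)_b\bigr)$ is controlled by the order of vanishing of $d_{i,j}(z)$ at $z = b/a$ (the higher-order part being ruled out since two fundamental modules have multiplicity bounded by $1$ in the relevant pole). One then evaluates $d_{i,i(k)}(z)$ at $z = a(k)/a$ and checks that the vanishing has order exactly one when $|k| = 1$ and is zero otherwise. Together with the reality step, this delivers the required $\delta(k = \pm 1)$ pattern.

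The main technical obstacle is the type-by-type nature of the denominator computation: the explicit formulas for $d_{i,j}(z)$ depend on the affine type of $\g$ and must be verified separately for each untwisted and twisted case. However, these formulas have by now been tabulated across all affine types in prior work, and assembling them produces the required vanishing pattern uniformly, yielding the proposition in full generality.
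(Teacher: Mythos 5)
The proposal is correct in outline and takes essentially the same route as the proof in the cited reference \cite[Prop.~2.28]{KKOP24A}: note that fundamental representations are real, identify the iterated right duals $\rdual^k V(\upvarpi_i)_a$ as fundamental modules $V(\upvarpi_{i(k)})_{a(p^*)^k}$ via the standard duality formula, translate $\de$ into orders of vanishing of the normalized $R$-matrix denominators $d_{i,j}(z)$ at the resulting spectral parameters, and then verify the required vanishing pattern against the explicitly tabulated denominator formulas for each affine type. Two small caveats are worth flagging. First, the parenthetical remark that ``two fundamental modules have multiplicity bounded by $1$ in the relevant pole'' is phrased as a general principle, but it is not one: in non-simply-laced types the denominators $d_{i,j}(z)$ do have zeros of order $\ge 2$ at certain spectral parameters, and the fact that no such higher-order zero occurs at the specific points $(p^*)^{\pm 1}$ is exactly what the type-by-type inspection establishes; you should not invoke a uniform bound. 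Second, the precise relationship between $\de(M,N)$ and the denominator zero orders is a little more involved than ``the order of vanishing of $d_{i,j}$ at $z=b/a$'', since $\de(M,N)=\tfrac12\bigl(\La(M,N)+\La(\rdual^{-1}M,N)\bigr)$ and $\La$ is defined via the $\Deg$ functional applied to the renormalizing factor $c_{M,N}(z)$; the bookkeeping does come out to a statement about zero orders of denominators, but the proof should invoke the exact translation (as given in \cite{KKOP20} and the denominator papers) rather than paraphrase it. With those adjustments the plan carries through and matches the cited proof.
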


\section{Schur-Weyl dualities and their related subjects} \label{Sec: SW duality}
In this subsection, we recall the generalized Schur-Weyl duality functors, constructed in~\cite{KKK18},
and its related subjects including categorification of quantum unipotent coordinate rings by following~\cite{KKOP24A}.

\subsection{$\rmQ$-data} \label{subsec: Q-datum}
For each untwisted quantum affine algebra $U_q'(\g)$, we assign the finite simple Lie algebra $\sfg$ of symmetric type as follows:
\renewcommand{\arraystretch}{1.5}
\begin{align} \label{Table: root system}
\small
\begin{array}{|c||c|c|c|c|c|c|c|}
\hline
 \g  & A_n^{(1)} \ (n\ge1)  & B_n^{(1)} \ (n\ge2) & C_n^{(1)} \ (n\ge3)  & D_n^{(1)} \ (n\ge4) & E_{6,7,8}^{(1)} & F_{4}^{(1)} & G_{2}^{(1)}  \\ \hline
 \g_0  & A_n & B_n & C_n  & D_n & E_{6,7,8} & F_{4} & G_{2} \\ \hline
\sfg  & A_n & A_{2n-1}    & D_{n+1}   &  D_n & E_{6,\,7,\,8} & E_{6} & D_{4}  \\
\hline
\ord(\sigma)&1&2&2&1&1&2&3\\
\hline
\end{array}
\end{align}
Note that $\g_0 \ne \sfg$ when $\g_0$ is  not simply laced. 
Let $\sfI$ be the index set of simple roots  $\{\al_\im\}_{\im\in \sfI}$
of $\sfg$.
We denote by $\Phi^+_\sfg$ the set of positive roots of $\sfg$, by $\sfQ^\pm$ the positive (resp. negative) root lattice of $\sfg$ and
by $\sfP$ the weight lattice of $\sfg$. For any $\be = \sum_{\im \in \sfI} a_\im\al_\im \in \rl$, we set
$$
\het(\be) = \sum_{\im \in\sfI} |a_\im| \in \Z_{\ge 0}. 
$$

\smallskip

The Weyl group $\weyl$ of $\sfg$ is
generated by simple reflections $\{ s_\im \}_{\im \in \sfI}$ subject to
$$\text{(i) $s_\im^2= 1$ $(\im\in \sfI)$, (ii) $s_\im s_\jm = s_\jm s_\im$  if $d(\im,\jm)>1$,
and (iii) $s_\im s_\jm s_\im= s_\jm s_\im s_\jm$  if $d(\im,\jm)=1$.}$$
We call (ii) the \emph{commutation relations}, and 
(iii) the \emph{braid relations}.
We denote by $w_0$ the longest element of $\weyl$. Note that $w_0$ induces an involution $^*$ on $\sfI$
defined by $w_0(\al_\im)=-\al_{\im^*}$. 

\begin{remark}
We remark here that the finite simple Lie algebra  $\sfg$ corresponding to $\g$ in~\eqref{Table: root system}
can be understood as an \emph{unfolding} of $\g_0$ in the following sense: The Dynkin diagram $\Dynkin_{\g_0}$
of $\g_0$ can be obtained by folding the one of $\Dynkin_{\sfg}$ via a Dynkin diagram folding $\sigma={\rm id}$, $\vee$ or $\widetilde{\vee}$ on $\Dynkin_{\sfg}$ 
(see Figure~\ref{Fig:unf}).     
\end{remark}

\begin{figure}[ht]
\begin{center}
\begin{tikzpicture}[xscale=1.25,yscale=.7]
\node (A2n1) at (-0.2,4.5) {$(\mathrm{A}_{2n-1}, \vee)$};
\node[dynkdot,label={below:\footnotesize$n+1$}] (A6) at (4,4) {};
\node[dynkdot,label={below:\footnotesize$n+2$}] (A7) at (3,4) {};
\node[dynkdot,label={below:\footnotesize$2n-2$}] (A8) at (2,4) {};
\node[dynkdot,label={below:\footnotesize$2n-1$}] (A9) at (1,4) {};
\node[dynkdot,label={above:\footnotesize$n-1$}] (A4) at (4,5) {};
\node[dynkdot,label={above:\footnotesize$n-2$}] (A3) at (3,5) {};
\node (Au) at (2.5, 5) {$\cdots$};
\node (Al) at (2.5, 4) {$\cdots$};
\node[dynkdot,label={above:\footnotesize$2$}] (A2) at (2,5) {};
\node[dynkdot,label={above:\footnotesize$1$}] (A1) at (1,5) {};
\node[dynkdot,label={above:\footnotesize$n$}] (A5) at (5,4.5) {};
\path[-]
 (A1) edge (A2)
 (A3) edge (A4)
 (A4) edge (A5)
 (A5) edge (A6)
 (A6) edge (A7)
 (A8) edge (A9);
\path[-] (A2) edge (Au) (Au) edge (A3) (A7) edge (Al) (Al) edge (A8);
\path[<->,thick,blue] (A1) edge (A9) (A2) edge (A8) (A3) edge (A7) (A4) edge (A6);
\path[->, thick, blue] (A5) edge [loop below] (A5);
\def\Foffset{6.5}
\node (Bn) at (-0.2,\Foffset) {$\mathrm{B}_n$};
\foreach \x in {1,2}
{\node[dynkdot,label={above:\footnotesize$\x$}] (B\x) at (\x,\Foffset) {};}
\node[dynkdot,label={above:\footnotesize$n-2$}] (B3) at (3,\Foffset) {};
\node[dynkdot,label={above:\footnotesize$n-1$}] (B4) at (4,\Foffset) {};
\node[dynkdot,label={above:\footnotesize$n$}] (B5) at (5,\Foffset) {};
\node (Bm) at (2.5,\Foffset) {$\cdots$};
\path[-] (B1) edge (B2) (B2) edge (Bm) (Bm) edge (B3) (B3) edge (B4);
\draw[-] (B4.30) -- (B5.150);
\draw[-] (B4.330) -- (B5.210);
\draw[-] (4.55,\Foffset) -- (4.45,\Foffset+.2);
\draw[-] (4.55,\Foffset) -- (4.45,\Foffset-.2);
\draw[-,dotted] (A1) -- (B1);
\draw[-,dotted] (A2) -- (B2);
\draw[-,dotted] (A3) -- (B3);
\draw[-,dotted] (A4) -- (B4);
\draw[-,dotted] (A5) -- (B5);
\draw[|->] (Bn) -- (A2n1);
\node (Dn1) at (-0.2,0) {$(\mathrm{D}_{n+1}, \vee)$};
\node[dynkdot,label={above:\footnotesize$1$}] (D1) at (1,0){};
\node[dynkdot,label={above:\footnotesize$2$}] (D2) at (2,0) {};
\node (Dm) at (2.5,0) {$\cdots$};
\node[dynkdot,label={above:\footnotesize$n-2$}] (D3) at (3,0) {};
\node[dynkdot,label={above:\footnotesize$n-1$}] (D4) at (4,0) {};
\node[dynkdot,label={above:\footnotesize$n$}] (D6) at (5,.5) {};
\node[dynkdot,label={below:\footnotesize$n+1$}] (D5) at (5,-.5) {};
\path[-] (D1) edge (D2)
  (D2) edge (Dm)
  (Dm) edge (D3)
  (D3) edge (D4)
  (D4) edge (D5)
  (D4) edge (D6);
\path[<->,thick,blue] (D6) edge (D5);
\path[->,thick,blue] (D1) edge [loop below] (D1)
(D2) edge [loop below] (D2)
(D3) edge [loop below] (D3)
(D4) edge [loop below] (D4);
\def\Coffset{1.8}
\node (Cn) at (-0.2,\Coffset) {$\mathrm{C}_n$};
\foreach \x in {1,2}
{\node[dynkdot,label={above:\footnotesize$\x$}] (C\x) at (\x,\Coffset) {};}
\node (Cm) at (2.5, \Coffset) {$\cdots$};
\node[dynkdot,label={above:\footnotesize$n-2$}] (C3) at (3,\Coffset) {};
\node[dynkdot,label={above:\footnotesize$n-1$}] (C4) at (4,\Coffset) {};
\node[dynkdot,label={above:\footnotesize$n$}] (C5) at (5,\Coffset) {};
\draw[-] (C1) -- (C2);
\draw[-] (C2) -- (Cm);
\draw[-] (Cm) -- (C3);
\draw[-] (C3) -- (C4);
\draw[-] (C4.30) -- (C5.150);
\draw[-] (C4.330) -- (C5.210);
\draw[-] (4.55,\Coffset+.2) -- (4.45,\Coffset) -- (4.55,\Coffset-.2);
\draw[-,dotted] (C1) -- (D1);
\draw[-,dotted] (C2) -- (D2);
\draw[-,dotted] (C3) -- (D3);
\draw[-,dotted] (C4) -- (D4);
\draw[-,dotted] (C5) -- (D6);
\draw[|->] (Cn) -- (Dn1);
\node (E6desc) at (6.8,4.5) {$(\mathrm{E}_6, \vee)$};
\node[dynkdot,label={above:\footnotesize$2$}] (E2) at (10.8,4.5) {};
\node[dynkdot,label={above:\footnotesize$4$}] (E4) at (9.8,4.5) {};
\node[dynkdot,label={above:\footnotesize$5$}] (E5) at (8.8,5) {};
\node[dynkdot,label={above:\footnotesize$6$}] (E6) at (7.8,5) {};
\node[dynkdot,label={below:\footnotesize$3$}] (E3) at (8.8,4) {};
\node[dynkdot,label={below:\footnotesize$1$}] (E1) at (7.8,4) {};
\path[-]
 (E2) edge (E4)
 (E4) edge (E5)
 (E4) edge (E3)
 (E5) edge (E6)
 (E3) edge (E1);
\path[<->,thick,blue] (E3) edge (E5) (E1) edge (E6);
\path[->, thick, blue] (E4) edge[loop below] (E4); 
\path[->, thick, blue] (E2) edge[loop below] (E2); 
\def\Foffset{6.5}
\node (F4desc) at (6.8,\Foffset) {$\mathrm{F}_4$};
\foreach \x in {1,2,3,4}
{\node[dynkdot,label={above:\footnotesize$\x$}] (F\x) at (\x+6.8,\Foffset) {};}
\draw[-] (F1.east) -- (F2.west);
\draw[-] (F3) -- (F4);
\draw[-] (F2.30) -- (F3.150);
\draw[-] (F2.330) -- (F3.210);
\draw[-] (9.35,\Foffset) -- (9.25,\Foffset+.2);
\draw[-] (9.35,\Foffset) -- (9.25,\Foffset-.2);
\draw[|->] (F4desc) -- (E6desc);
\path[-, dotted] (F1) edge (E6)
(F2) edge (E5) (F3) edge (E4) (F4) edge (E2);

\node (D4desc) at (6.8,0) {$(\mathrm{D}_{4}, \widetilde{\vee})$};
\node[dynkdot,label={above:\footnotesize$1$}] (D1) at (7.8,.6){};
\node[dynkdot,label={above:\footnotesize$2$}] (D2) at (8.8,0) {};
\node[dynkdot,label={left:\footnotesize$3$}] (D3) at (7.8,0) {};
\node[dynkdot,label={below:\footnotesize$4$}] (D4) at (7.8,-.6) {};
\draw[-] (D1) -- (D2);
\draw[-] (D3) -- (D2);
\draw[-] (D4) -- (D2);
\path[->,blue,thick]
(D1) edge [bend left=0] (D3)
(D3) edge [bend left=0](D4)
(D4) edge[bend left=90] (D1);
\path[->, thick, blue] (D2) edge[loop below] (D2); 
\def\Goffset{1.8}
\node (G2desc) at (6.8,\Goffset) {$\mathrm{G}_2$};
\node[dynkdot,label={above:\footnotesize$1$}] (G1) at (7.8,\Goffset){};
\node[dynkdot,label={above:\footnotesize$2$}] (G2) at (8.8,\Goffset) {};
\draw[-] (G1) -- (G2);
\draw[-] (G1.40) -- (G2.140);
\draw[-] (G1.320) -- (G2.220);
\draw[-] (8.25,\Goffset+.2) -- (8.35,\Goffset) -- (8.25,\Goffset-.2);
\draw[|->] (G2desc) -- (D4desc);
\path[-, dotted] (D1) edge (G1) (D2) edge (G2);
\end{tikzpicture}
\end{center}
\caption{$(\Dynkin,\sigma)$ for non-simply-laced $\g_0$} \label{Fig:unf}
\end{figure}
Thus let us associate
$(\Dynkin,\sigma)$ for each untwisted quantum affine algebra $\Uqpg$ consisting of (i) the Dynkin diagram $\Dynkin=\Dynkin_{\sfg}$
of $\sfg$ and the Dynkin diagram automorphism $\sigma$ on $\Dynkin_{\sfg}$ yielding $\Dynkin_{\g_0}$. 
We also call the pair $(\Dynkin_\sfg,\sigma)$ the \emph{unfolding} of $\g_0$. 
Then the index set $I_0=\{i,j,\ldots\}$ of $\g_0$ can be considered as
the orbit space of $\Dynkin_0=\sfI=\{\im,\jm,\ldots\}$ under the action of $\sigma$. 
Hence we understand $I_0 \ni i = \oim$ the orbit of $\im \in \sfI$.

\begin{definition} [\cite{FO21}]
\bna
\item 
A \emph{height function} on $(\Dynkin,\sigma)$ is a function $\xi\col \Dynkin_0 \to \Z$
satisfying the following conditions (here we write $\xi_\im \seteq \xi(\im)$):
\bnum
\item Let $\im,\jm \in \Dynkin_0$ with $d(\im,\jm)=1$ and $d_\oim = d_\ojm$. Then we have
$|\xi_\im-\xi_\jm|=d_\oim=d_\ojm$. 
\item Let $i,j \in I_0$ with $d(i,j)=1$ and $d_i =1 < d_j =r \seteq\ord(\sigma)$. Then there exists a unique 
$\jm \in j$ such that $|\xi_\im-\xi_\jm| =1$ and $\xi_{\sigma^k(\jm)} = \xi_\jm +2k$ 
for any $1 \le k < r$, where $i=\{ \im \}$. 
\ee
\item Such a triple $\calQ = (\Dynkin,\sigma,\xi)$ is called a 
\emph{$\rmQ$-datum} for $\g$. 
\ee
\end{definition}

For a twisted quantum affine algebra $U_q'(\sfg^{(t)})$ $(t=2,3)$,
we take the  \emph{$\rmQ$-datum} of $U_q'(\sfg^{(t)})$ to be the same 
as the $\rmQ$-datum of $U_q'(\sfg^{(1)})$; e.g., the $\rmQ$-datum of $U_q'(A_n^{(t)})$ coincides with the $\rmQ$-datum of $U_q'(A_n^{(1)})$, and so on.  
\renewcommand{\arraystretch}{1.5}
\begin{align} \label{Table: root system 2}
\small
\begin{array}{|c||c|c|c|c|c|}
\hline
  \g=\sfg^{(t)}  & A_{2n}^{(2)} \ (n\ge1)  & A_{2n-1}^{(2)} \ (n\ge2) & D_{n+1}^{(2)} \ (n\ge3)  & E_6^{(2)} & D_{4}^{(3)}    \\ \hline
  \g_0  & B_n  & C_n \ (n\ge2) & B_{n} & F_4& G_2    \\ \hline
 \sfg  & A_{2n} & A_{2n-1}    & D_{n+1}   &  E_6 & D_{4}   \\
\hline
\ord(\sigma)&1&1&1&1&1\\
\hline
\end{array}
\end{align}
Thus we have assigned a $\rmQ$-datum to every quantum affine algebra $\Uqpg$. 

\medskip
Let $\calQ = (\Dynkin,\sigma,\xi)$ be a $\rmQ$-datum for $\g$. 
A vertex $\im \in \Dynkin_0$
is called a \emph{sink} of $\calQ$ if we have $\xi_\im < \xi_\jm$ for any $\jm \in \Dynkin_0$
with $d(\im,\jm)=1$. When $\im$ is a sink of $\calQ$, we define a new $\rmQ$-datum
$\sfs_\im \calQ = (\Dynkin,\sigma,\sfs_\im\xi)$ of $\g$ with  
\begin{align} \label{eq: siQ}
(\sfs_\im \xi)_\jm \seteq \xi_\jm + 2d_\oim\, \delta_{\im,\jm} \quad \text{ for any } \jm \in \Dynkin_0.    
\end{align}

\begin{definition}\label{def:seq}
 Let $\uii=(\im_l,\im_{l+1},\ldots,\im_r)$ $(l \le r \in \Z)$  be a sequence in $\sfI$.
\bnum
\item $\uii$ is said to be \emph{reduced} if   $w^{\uii} \seteq s_{\im_l} \cdots s_{\im_r}\in\weyl$ has length $r-l+1$.  
\item For a reduced sequence $\uii$ and $k\in[l,r]$, we set $w^\uii_{\le k} \seteq s_{\im_l} \cdots s_{\im_k}$ and $w^\uii_{< k} \seteq s_{\im_l} \cdots s_{\im_{k-1}}$.
\item $\uii$ is said to be \emph{locally reduced} if $ (\im_k,\ldots,\im_{k+s-1})$ is a reduced sequence for any
  $k\in[l,r]$
and $1 \le s \le \ell(w_0)$ such that $k+s-1 \le r$.
\item For a $\rmQ$-datum $\calQ$ of $\g$,  $\uii$ with $l=1$ is said to be \emph{$\calQ$-adapted} or \emph{adapted to $\calQ$} if $\im_k$ is a sink of
the $\calQ$-datum $\sfs_{\im_{k-1}}\cdots \sfs_{\im_2}\sfs_{\im_1}\calQ$ for all $1 \le k \le r$.  
\ee
\end{definition}

For a reduced sequence $\usfwc = (\im_1,\ldots,\im_\ell)$ of $w_0$, we can obtain a locally reduced sequence 
$$\hwc \seteq (\ldots,\im_{-1},\im_0,\im_1,\ldots) $$
defined as follows:
\begin{align} \label{eq: extension of reduced} 
\im_{m\pm \ell} = \im_m^{*} \qquad \text{for any $m\in \Z$}.
\end{align}

Then the following are known (see~\cite{FO21} for more details):
\bna
\item For a $\rmQ$-datum $\calQ=\Qdatum$, there exists a reduced sequence $\usfwc$ of $w_0$ adapted to $\calQ$. 
\item For a $\rmQ$-datum $\calQ=\Qdatum$, there exists a unique \emph{Coxeter element} $\tau_\calQ \in \weyl \rtimes \ang{\sigma} \subset\Aut(\sfP)$ satisfying certain compatibility with $\calQ$. 
\ee

Let $\xi$ be a height function on $(\Dynkin,\sigma)$. We define a quiver $\hDynkin^\sigma =(\hDynkin^\sigma_0,\hDynkin^\sigma_1)$ as follows:
\begin{align*}
\hDynkin^\sigma_0 & = \{ (\im,p) \in \Dynkin_0 \times \Z \ | \  p - \xi_\im \in 2d_\oim\Z \},  \allowdisplaybreaks\\
\hDynkin^\sigma_1 & = \{ (\im,p) \to (\jm,s) \ | \ (\im,p),(\jm,s) \in \hDynkin^\sigma_0, \ d(\im,\jm)=1, \ s-p =\min(d_\oim,d_\ojm) \}. 
\end{align*}

Each reduced sequence $\usfwc=(\im_1,\ldots,\im_\ell)
$ of $w_0$ gives a labeling of $\Phi^+_\sfg$ as follows:
$$
\Phi^+_\sfg = \{ \be^\usfwc_k  \seteq s_{\im_1}\cdots  s_{\im_{k-1}} \al_{\im_k} \ | \ 1 \le k \le \ell \}.
$$

It is well-known that the total order $<_{\usfwc}$ on $\Phi^+_\sfg$,
defined by $\be^\usfwc_a <_{\usfwc} \be^\usfwc_b$ for $a < b$, is \emph{convex} in the following sense: if $\al,\be \in \Phi^+_\sfg$ satisfy
$\al <_{\usfwc} \be$ and $\al+\be \in \Phi^+_\sfg$, then $\al <_\usfwc \al+\be <_\usfwc \be$. For a pair of positive roots $\al,\be \in \Phi^+_\sfg$ with $\al \le_\usfwc \be$ and  $\ga\seteq \al+\be \in \Phi^+_\sfg$,
the pair $(\al,\be)$ is called \emph{$\usfwc$-minimal} if there exists no pair of positive roots $\al',\be'\in \Phi^+_\sfg$ such that 
$$\al'+\be'=\ga \qtq \al <_{\usfwc} \al' <_{\usfwc} \ga <_{\usfwc} \be' <_{\usfwc} \be. $$
Note that, for each $\rmQ$-datum $\calQ=(\Dynkin,\sigma,\xi)$ of $\g$,
there exists a unique bijection $$\phi_\calQ \col \hDynkin^\sigma_0 \to \Phi^+_\sfg \times \Z,$$ which is defined 
by using $\tau_\calQ$ (see \cite{HL15,FO21} and \cite{KO23} also).

Let $\Qdatum$ be a $\rmQ$-datum for $\g$. We set $\Gamma^\calQ=(\Gamma^\calQ_0,\Gamma^\calQ_1)$ the full-subquiver of $\hDynkin^\sigma$ whose set $\Gamma^\calQ_0$ of vertices 
is given as follows:
$$
\Gamma^\calQ_0 \seteq \phi_\calQ^{-1}(\Phi^+_\sfg \times \{ 0 \}) \subset \hDynkin^\sigma_0. 
$$

\subsection{Hernandez-Leclerc subcategories} \label{subsec: HL category}
In this subsection, we briefly review several subcategories of $\Cg$.
Recall $\sfI$ and the quiver $\hDynkin^\sigma$ for each quantum affine algebra $\Uqpg$.

\smallskip

For each $(\im,p) \in \sfI \times \Z$, we assign the fundamental module $L( \im,p)$
by following~\cite[\S\,6.2]{KKOP24A}. Then it is known that the Serre
 monoidal  subcategory $\scrC_\g^0$ of $\Cg$, generated by
$\{ L( \im,p) \ | \ (\im,p) \in \hDynkin^\sigma_0 \}$, forms a \emph{skeleton} subcategory in the following sense: For every prime simple module $M$, 
there exist a $x \in \bfk^\times$ and a prime simple module $L \in \scrC_\g^0$ such that $M \simeq L_x$.

Let us take a $\rmQ$-datum $\calQ$ of $\g$.
We define for each $\be \in \Phi^+_\sfg$
\begin{align}
L^\calQ(\be) \seteq  L( \im,p) \qt{where $\phi_\calQ(\im,p)=(\be,0)$.}     
\end{align}
When $\be$ is a simple root $\al_\im$, we frequently write $L^\calQ_\im$ for $L^\calQ(\al_\im)$.  
\begin{theorem} [\cite{CM05,KKOP22,FO21}] \label{thm: blcok decom}
For a $\rmQ$-datum $\calQ$ of $\g$, the category $\Cgz$
admits a block decomposition:
$$\Cgz = \bigoplus_{\be \in \rl_\sfg} \; (\Cgz)_\be.$$
Moreover we have
\bnum
\item $L^\calQ(\be)$ belongs to $(\Cgz)_\be$ for any $\beta\in\Phi^+_\sfg$,  
\item For $\be,\be'\in \rl_\sfg$, if
  $M\in(\Cgz)_\be$ and $M'\in(\Cgz)_{\be'}$, then $M\tens M'\in
  (\Cgz)_{\be+\be'}$.
\ee
\end{theorem}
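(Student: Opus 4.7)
The plan is to produce the block decomposition by defining a weight map $\mu\col\Irr(\Cgz)\to\rl_\sfg$ via the bijection $\phi_\calQ$, and then showing that the fibers of $\mu$ are the blocks. This strategy combines the approaches of \cite{CM05} (simply-laced), \cite{KKOP22} (general untwisted and twisted), and \cite{FO21} ($\rmQ$-datum framework).

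First, for each simple $L\in\Cgz$, its highest $\ell$-weight (Drinfeld polynomial) is a monomial $\prod_k Y_{\im_k,p_k}^{n_k}$ in the fundamental $\ell$-weights indexed by $\hDynkin^\sigma_0$. Using $\phi_\calQ(\im,p)=(\be_{\im,p},\cdot)$, I would define
$$
\mu(L) \seteq \sum_{k} n_k\,\be_{\im_k,p_k} \in \rl_\sfg,
$$
and set $(\Cgz)_\be$ to be the full subcategory of modules all of whose simple subquotients $L$ satisfy $\mu(L)=\be$. Assertion (i) is then immediate: $L^\calQ(\be)=L(\im,p)$ with $\phi_\calQ(\im,p)=(\be,0)$ has highest $\ell$-weight $Y_{\im,p}$, so $\mu(L^\calQ(\be))=\be$. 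For (ii), the highest $\ell$-weight of $M\tens M'$ is the product of the highest $\ell$-weights of $M$ and $M'$, and a standard upper-triangularity argument for the tensor product with respect to the ordering on $\ell$-weights shows that every simple subquotient of $M\tens M'$ carries $\mu$-weight $\be+\be'$.

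The substantive content is the direct sum decomposition, equivalently the vanishing $\Ext^1_{\Uqpg}(L,L')=0$ for simple $L,L'\in\Cgz$ with $\mu(L)\neq\mu(L')$. This reduces to the assertion that $\mu$ is constant on each ``$\ell$-weight orbit''; namely, that $\mu(A_{\im,p})=0$ for every affine-simple $\ell$-root combination $A_{\im,p}$ that relates $\ell$-weights of a single simple module. This is precisely what the bijection $\phi_\calQ$ is designed to encode: it intertwines the combinatorics of $\ell$-weights with the Coxeter-like action $\tau_\calQ$ on $\Phi^+_\sfg\times\Z$, and the ``$(\be,k)$-vs-$(\be,k')$'' ambiguity corresponds exactly to the ambiguity in the $\ell$-root lattice. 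Hence $\mu$ descends from the $\ell$-weight lattice to $\rl_\sfg$.

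The main obstacle is this $\ell$-weight block analysis; it is the core technical content of \cite{CM05,KKOP22,FO21}, requiring a case-by-case verification of the $\tau_\calQ$-compatibility of $\phi_\calQ$ with the fundamental relations among $\ell$-weights. Once established, the block decomposition $\Cgz=\bigoplus_{\be\in\rl_\sfg}(\Cgz)_\be$ and the two assertions follow formally, as outlined above.
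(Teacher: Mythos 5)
The paper does not give a proof of this statement; it is cited from \cite{CM05,KKOP22,FO21}, and your outline captures the broad strategy of those works: extract a $\rl_\sfg$-valued invariant from the highest $\ell$-weight via $\phi_\calQ$, show it is additive under $\tens$ and constant on $\ell$-weight orbits, and deduce the block decomposition. However, your definition of $\mu$ contains a genuine error. You set $\mu(L)=\sum_k n_k\,\be_{\im_k,p_k}$ where $\phi_\calQ(\im_k,p_k)=(\be_{\im_k,p_k},\cdot)$, explicitly discarding the integer component of $\phi_\calQ$. The correct invariant must use that component as a sign: $\wt_\calQ(L)=\sum_k n_k\,(-1)^{m_k}\be_{\im_k,p_k}$ with $\phi_\calQ(\im_k,p_k)=(\be_{\im_k,p_k},m_k)$. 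Without the sign, your $\mu$ is always a nonnegative integral combination of positive roots, whereas the block decomposition must range over all of $\rl_\sfg$.

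To see the sign is forced by the structure you want: the evaluation $M\tens\scrD M\to\bone$ is nonzero, so $\bone$ is a simple quotient of $M\tens\scrD M$ and lies in the same block; since $\bone\in(\Cgz)_0$, part (ii) forces $\wt_\calQ(\scrD M)=-\wt_\calQ(M)$. In particular $\scrD L^\calQ(\be)\in(\Cgz)_{-\be}$, yet $\scrD L^\calQ(\be)$ is again a fundamental module whose highest $\ell$-weight is a single generator $Y_{\overline{\im'},p'}$, so your $\mu$ assigns it the positive root in the first component of $\phi_\calQ(\im',p')$, never a negative element of $\rl_\sfg$. The factor $(-1)^m$ exactly repairs this, and is mirrored in the paper's conventions: $\wt(f_{\im,m})=(-1)^{m+1}\al_\im$ in the bosonic extension, with $f_{\im,m}$ corresponding to $\scrD^m L^\calQ_\im$ under $\bPhi_{\bbD_\calQ}$, consistent with Theorem~\ref{thm: wtpairing Lainf} via $\Li(M,N)=\sum_k(-1)^k\de(M,\rdual^k N)$. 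The remainder of your outline, namely showing that $\wt_\calQ$ vanishes on the affine simple $\ell$-roots through the $\tau_\calQ$-compatibility of $\phi_\calQ$ and deducing the $\Ext^1$-vanishing between distinct fibers, does match the technical content of the references once this correction is made.
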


By Theorem~\ref{thm: blcok decom}, for an indecomposable module $M \in \Cgz$,
we set $\wt_\calQ(M) \seteq \be$ if $M \in (\Cgz)_\be$.

\begin{theorem} 
  [{\cite[Theorem 4.6]{KKOP22}, see also \cite[Theorem 6.16]{FO21}}]
  \label{thm: wtpairing Lainf}
For simple modules $L$ and $L'$ in $\Cgz$, we have  
$$
\La^\infty(L,L') = -(\wt_\calQ(L),\wt_\calQ(L')) \quad \text{ for any $\rmQ$-datum $\calQ$ of $\g$.}
$$   
\end{theorem}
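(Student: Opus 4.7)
\medskip\noindent
\textbf{Proof plan.}
The plan is to prove the identity by bi-additive extension and reduction to fundamental modules. First I would check that both sides of the claimed equality define bi-additive pairings on the Grothendieck ring $K(\Cgz)$. For the right-hand side, additivity in each argument is immediate from Theorem~\ref{thm: blcok decom}\,(ii): if $0\to M'\to M\to M''\to 0$ is exact in $\Cgz$, then $M$, $M'$ and $M''$ lie in the same block, so $\wt_\calQ(M)=\wt_\calQ(M')=\wt_\calQ(M'')$; combined with multiplicativity of $\wt_\calQ$ on tensor products (again from Theorem~\ref{thm: blcok decom}\,(ii)), this shows $-(\wt_\calQ(\cdot),\wt_\calQ(\cdot))$ descends to a $\Z$-bilinear form on $K(\Cgz)$. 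For the left-hand side, one uses Proposition~\ref{prop: de property 1}\,\eqref{it: La de}, namely $\Li(M,N)=\sum_{k\in\Z}(-1)^k\de(M,\rdual^kN)$, together with the known additivity of $\La$ (hence of $\de$) on short exact sequences in each variable; since only finitely many summands are nonzero for fixed simples $M,N$, this gives a well-defined bi-additive extension of $\La^\infty$ to $K(\Cgz)$.

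With both sides bi-additive, it suffices to verify the identity on a family of generators of $K(\Cgz)$ as an abelian group, and in fact on the fundamental modules $\{L(\im,p)\mid (\im,p)\in \hDynkin^\sigma_0\}$, since every simple in $\Cgz$ appears in the composition series of a tensor product of fundamentals. Thus the task reduces to showing
$$
\La^\infty\bigl(L(\im,p),L(\jm,s)\bigr) \;=\; -\bigl(\wt_\calQ L(\im,p),\,\wt_\calQ L(\jm,s)\bigr)
\qquad \text{for all } (\im,p),(\jm,s)\in\hDynkin^\sigma_0.
$$

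The next step is to convert the data of a fundamental module into a positive root via the bijection $\phi_\calQ\col\hDynkin^\sigma_0\isoto \Phi^+_\sfg\times\Z$. Writing $\phi_\calQ(\im,p)=(\al,m)$ and $\phi_\calQ(\jm,s)=(\be,n)$, one relates $L(\im,p)$ to $\rdual^mL^\calQ(\al)$ (and similarly for the other module) via the compatibility of $\rdual$ with the Coxeter element $\tau_\calQ$; the right-hand side becomes $-(\al,\be)$ thanks to Theorem~\ref{thm: blcok decom}\,(i) and the $\rdual$-invariance of $\wt_\calQ$ modulo the identification of blocks. Using Proposition~\ref{prop: de property 1}\,(iv) to slide $\rdual$ across the arguments of $\La$ and hence of $\Li$, one further reduces to the ``level-zero'' case
$$
\La^\infty\bigl(L^\calQ(\al),\,L^\calQ(\be)\bigr)\;=\;-(\al,\be)\qquad (\al,\be\in\Phi^+_\sfg).
$$

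The hard part will be the explicit computation of $\Li$ between two fundamental modules attached to positive roots in a fixed $\rmQ$-datum. My approach would be: express $\La(L^\calQ(\al),L^\calQ(\be))$ via denominators of normalized $R$-matrices between fundamentals, combine it with $\La(L^\calQ(\be),L^\calQ(\al))$ to get $\de$, and then sum $\sum_k(-1)^k\de(L^\calQ(\al),\rdual^kL^\calQ(\be))$. By convexity of the $\calQ$-adapted total order on $\Phi^+_\sfg$ and the AR-type combinatorics controlling the positions of the $\rdual$-shifts $\rdual^kL^\calQ(\be)$ on the quiver $\hDynkin^\sigma$, only finitely many $k$ contribute, and the contributions can be matched term by term with the expansion of $-(\al,\be)$ in the Cartan pairing of $\sfg$. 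Finally, independence of $\calQ$ is automatic: both sides descend to functions on $K(\Cgz)\times K(\Cgz)$, the left-hand side manifestly so, and the right-hand side because the intrinsic block labeling $(\Cgz)_\be$ only depends on the translation of $\be$ under the change of $\rmQ$-datum, which preserves the inner product $(\cdot,\cdot)$ on $\sfQ_\sfg$.
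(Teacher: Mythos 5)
The paper does not give a proof of this statement: it is recalled as a cited result from \cite{KKOP22} (Theorem 4.6) and \cite{FO21} (Theorem 6.16), so there is no proof in this paper to compare against. Judged on its own merits, your plan captures the right broad strategy (reduce to fundamental modules, translate via $\phi_\calQ$, compute with denominators of normalized $R$-matrices), but the key reduction step is not correctly justified.

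The problem is in the passage from ``both sides are bi-additive'' to ``it suffices to check on fundamentals.'' Fundamental modules are \emph{not} a $\Z$-basis of $K(\Cgz)$; they are polynomial generators. The $\Z$-basis is the set of classes of simples (or, dually, the monomials in fundamentals). To reduce to fundamentals you therefore need \emph{two} properties, not one: (i) both sides descend to $\Z$-bilinear forms on $K(\Cgz)$ (trivial for the naive bi-additive extension of $\La^\infty$ from simples), and (ii) both sides are compatible with the ring structure, i.e.\ $\La^\infty([M_1][M_2],[N])=\La^\infty([M_1],[N])+\La^\infty([M_2],[N])$. Property (ii) for the right-hand side is indeed Theorem~\ref{thm: blcok decom}(ii). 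But property (ii) for $\La^\infty$, interpreted through the bi-additive extension from simples, is the statement that $\sum_S m_S\,\La^\infty(S,N)=\La^\infty(M_1,N)+\La^\infty(M_2,N)$ where the sum runs over the simple composition factors $S$ of $M_1\tens M_2$ with multiplicity $m_S$. Equivalently, it says that $\La^\infty(L,N)$ depends only on the \emph{block} of $L$. This is a genuine, nontrivial lemma and is essentially the heart of the matter. Your justification---``the known additivity of $\La$ (hence of $\de$) on short exact sequences in each variable''---does not hold up: $\La$ and $\de$ are defined in the paper only for pairs of simple modules, there is no statement of additivity of $\La$ on short exact sequences in the preliminaries, and in fact the renormalizing coefficient $c_{M,N}(z)$ (from which $\La^\infty$ is defined) is only required to make $\Rren$ \emph{nonzero}, not nonvanishing on each summand, so additivity on direct sums already fails as a formal consequence. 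The hexagon identity gives multiplicativity of $c_{M,N}(z)$ and hence of $\Li$ under tensor products in each slot, but that computes the ``module-level'' $\Li(M_1\tens M_2,N)$, not the bi-additive extension from simples; the identification of the two is precisely the lemma you would need to isolate and prove.

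There is also a circularity risk worth flagging: Theorem~\ref{thm: blcok decom}(i)(ii), i.e.\ the labeling of blocks by $\rl_\sfg$ and its compatibility with $\tens$, is in the cited sources established essentially in tandem with (and often via) the $\La^\infty$ formula you are proving. If you invoke Theorem~\ref{thm: blcok decom} as a black box, you should make sure you are relying on a form of it (e.g.\ the Chari--Moura block decomposition plus an independent combinatorial definition of $\wt_\calQ$) that does not already presuppose the conclusion. The remaining steps---translating through $\phi_\calQ$, moving $\rdual$-shifts across the arguments of $\La$ using Proposition~\ref{prop: de property 1}(iv), and the denominator computation for pairs of fundamentals in a fixed $\rmQ$-datum---are sound in outline and are indeed how the explicit case is handled in \cite{FO21}.
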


\smallskip 

For $m\in \Z$,
we define $\scrC_\calQ[m]$
as the smallest subcategory of $\Cgz$ containing $\st{ \scrD^m L^\calQ_\im
  \mid \im \in \sfI}\sqcup 
\{ \mathbf{1} \}$ and stable by taking tensor products, subquotients and extensions.  We write $\scrC_\calQ$ for $\scrC_\calQ[0]$.
We call $\scrC_\calQ$ the \emph{heart subcategory} associated with  the $\rmQ$-datum $\calQ$. 
The subcategories  $\scrC_\g^0$
and  $\scrC_\calQ$ of $\Cg$, introduced so far, are also referred to as the \emph{Hernandez-Leclerc} subcategories. 
It is proved in \cite{Her10} that there exists an isomorphism
between the  Grothendieck rings of a twisted quantum affine algebra and the corresponding
simply-laced quantum affine algebra:
\begin{align} \label{eq: untwisted to twisted}
K(\scrC_{\sfg^{(1)}}^0) \isoto  K(\scrC_{\sfg^{(t)}}^0)  \ \ (t=2,3),   
\end{align}
where $K(\Cgz)$ denotes the Grothendieck ring of $\Cgz$. As a ring, $K(\Cgz)$ is isomorphic to the
\emph{commutative} ring of the polynomials in $\{ [L( \im,p)] \}$ (\cite{FR99}). 

\smallskip

\subsection{Duality data}

{\em In the sequel, $\sfg$ denotes always a simply-laced finite-dimensional simple Lie algebra and
  $\sfI$  the index set of simple roots of $\sfg$.
} 

Let $\bbD =\{ L^\bbD_\im \}_{\im \in \sfI } \subset \Cgz$
be a family of real root modules.

\begin{definition}
A family of real root modules $\bbD =\{ L^\bbD_\im \}_{\im \in \sfI } \subset \Cgz$ is said to be a \emph{strong duality datum in $\Cgz$} if
\begin{align*}
\de(L^\bbD_\im,\scrD^k L^\bbD_\jm)=\delta(k=0)\,\delta\bl d(\im,\jm)=1\br \quad \text{ for } \im \ne \jm.    
\end{align*} 
\end{definition}

It is well-known that the family of root modules $\bbD_\calQ\seteq\{ L^\calQ_\im \}_{\im \in \sfI}$ for a $\rmQ$-datum $\calQ$ of $\g$
is a strong duality datum.

\smallskip

Let $\bbD =\{ L_\im \}_{\im \in \sfI }$ be a strong duality datum in $\Cgz$. 
For any $\jm \in \sfI$, we set 
\begin{align} \label{Def: refl}
 \scrS_\jm (\bbD) \seteq  \{ \scrS^\bbD_\jm (L_\im)  \}_{\im\in \sfI}\qtq
 \scrS_\jm^{\star} (\bbD) \seteq  \{ \scrS^{\star}_\jm{}^\bbD (L_\im)  \}_{\im\in \sfI},
\end{align}
where 
$$
\scrS_\jm ^\bbD(L_\im) \seteq  
\begin{cases}
 \scrD L_\im  & \text{ if } \im=\jm, \\
L_\jm \hconv L_\im &  \text{ if }  d(\im,\jm)=1, \\
L_\im &  \text{ if } d(\im,\jm)>1,
\end{cases}
\qtq
\scrS^{\star}_\jm{}^\bbD (L_\im) \seteq  
\begin{cases}
 \scrD^{-1} L_\im  & \text{ if } \im=\jm, \\
L_\im \hconv L_\jm &  \text{ if }  d(\im,\jm)=1, \\
L_\im &  \text{ if } d(\im,\jm)>1.
\end{cases}
$$
It is easy see that $\scrS_\jm \circ \scrS^{\star}_\jm (\bbD)= \scrS^{\star}_\jm \circ \scrS_\jm(\bbD)=\bbD $
by using Lemma~\ref{lem: recover}.  Hence we also write $\scrS_\jm^{-1}$
for $\scrS^{\star}_\jm$.  

\begin{proposition} [{\cite[Proposition 5.9]{KKOP23P}}] \label{prop: strong to strong} 
Let $\bbD$ be a strong duality datum and $\jm\in \sfI$.
\bnum
\item $\scrS_\jm(\bbD)$ and $\scrS_\jm^{-1}(\bbD)$ are strong duality data in $\Cgz$.
\item  For any $m\in\Z$,   $\D^m\bbD\seteq\st{\D^mL_\im^\bbD}_{\im\in\sfI}$ is a strong duality datum.  
\ee
\end{proposition}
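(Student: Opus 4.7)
The plan is to verify the defining conditions of a strong duality datum directly, using the invariants $\La$, $\de$, and the root-module calculus developed above.

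For part (ii), the right-dual functor $\D$ is an auto-equivalence of $\Cgz$, so it preserves reality and the root-module property. Moreover, $\D$ preserves all $\de$-values in the sense that $\de(\D M, \D N) = \de(M, N)$ for simple $M, N$: this is a direct consequence of Proposition~\ref{prop: de property 1}(iv), since $\La(\D M, \D N) = \La(\ldual \D N, \D M) = \La(N, \D M) = \La(M, N)$, and $\de$ is the symmetrization of $\La$. It follows that each $\D^m L_\im$ is a real root module and, for $\im \ne \jm$, $\de(\D^m L_\im, \D^{m+k} L_\jm) = \de(L_\im, \D^k L_\jm) = \delta(k=0)\,\delta(d(\im, \jm)=1)$, which is exactly the strong-duality condition for $\D^m \bbD$.

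For part (i), since $\scrS_\jm^{-1} = \scrS_\jm^\star$, the two assertions reduce to one another by replacing $\D$ throughout with $\ldual$; I therefore focus on $\scrS_\jm(\bbD)$. Write $L'_\im \seteq \scrS_\jm^\bbD(L_\im)$. First I show each $L'_\im$ is a real root module. For $\im = \jm$, $L'_\jm = \D L_\jm$ is a real root module by part (ii); for $d(\im,\jm) > 1$, $L'_\im = L_\im$ and there is nothing to check. For $d(\im,\jm) = 1$, the strong-duality hypothesis gives $\de(\D^k L_\jm, L_\im) = \delta(k=0)$, so Lemma~\ref{lem: roots and dual}(i) shows $L'_\im = L_\jm \hconv L_\im$ is a root module, and Proposition~\ref{prop: de properties}(f) yields reality since $\de(L_\jm, L_\im) = 1$.

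Second, I must verify $\de(L'_\im, \D^k L'_{\im'}) = \delta(k=0)\,\delta(d(\im, \im')=1)$ for $\im \ne \im'$ and all $k \in \Z$, splitting into cases according to the positions of $\im, \im'$ relative to $\jm$. When one of $\im, \im'$ equals $\jm$, $\D$-equivariance of $\de$ reduces the computation to evaluating $\de(L_\jm, \D^{k'}(L_\jm \hconv L_{\im'}))$, which is handled by Lemma~\ref{lem: roots and dual}(ii) together with the known relation $\de(\D^k L_\jm, L_{\im'}) = \delta(k=0)\,\delta(d(\jm, \im')=1)$. When both $\im, \im' \ne \jm$, the argument combines Lemma~\ref{lem: -1 by root}, which lowers $\de(L_\jm, \cdot)$ by one under $\hconv$ with $L_\jm$, with Lemma~\ref{lem: de=de}, which drops an $\hconv$-factor orthogonal in both $L_\jm$ and $\D^{\pm 1} L_\jm$, together with the known $\de$-values among the $L_\im$ in $\bbD$.

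The main obstacle is the subcase where both $\im$ and $\im'$ are adjacent to $\jm$ but not to each other, so that $d(\im, \im') = 2$ with $\jm$ on the path between them. Then $L'_\im = L_\jm \hconv L_\im$ and $L'_{\im'} = L_\jm \hconv L_{\im'}$ are both non-trivial $\hconv$-products, and I must show $\de(L'_\im, \D^k L'_{\im'}) = 0$ for every $k$. The strategy is to apply Lemma~\ref{lem: de=de} to strip off the outer factors $L_\im$ and $L_{\im'}$, using the original strong-duality orthogonality $\de(\D^\ell L_\im, \D^{\ell'} L_{\im'}) = 0$ for all $\ell, \ell'$ (which holds because $d(\im, \im') \geq 2$), thereby reducing to $\de$-values between various shifts of $L_\jm$; these are then fully controlled by the root-module property $\de(L_\jm, \D^k L_\jm) = \delta(k = \pm 1)$ combined with one further application of Lemma~\ref{lem: -1 by root}.
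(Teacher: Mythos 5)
The paper does not prove this proposition at all; it is cited verbatim from \cite[Proposition 5.9]{KKOP23P}, so there is no in-text proof to compare against. Your approach — direct verification of the defining $\de$-conditions via the root-module calculus — is the natural one, and your argument for part (ii) and for most of part (i) is correct. In particular the $\D$-invariance of $\La$ and $\de$, the root-module property of $L_\jm\hconv L_\im$ via Lemma~\ref{lem: roots and dual}, and the casework for $\im=\jm$ or $d(\im',\jm)>1$ all go through as you describe.

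The gap is in the subcase you yourself flag as the main obstacle (both $\im,\im'$ adjacent to $\jm$, $d(\im,\im')=2$), specifically at shift $k=0$. There, Lemma~\ref{lem: de=de} cannot strip either factor: stripping $L_{\im'}$ from $L_\jm\hconv L_{\im'}$ via Lemma~\ref{lem: de=de}\;(1) requires $\de(L_\jm\hconv L_\im, L_{\im'})=0$, but one computes $\de(L_\jm\hconv L_\im, L_{\im'})=\de(L_\jm,L_{\im'})=1$; stripping $L_\jm$ via Lemma~\ref{lem: de=de}\;(2) requires $\de(\ldual(L_\jm\hconv L_\im),L_\jm)=0$, but $\de(\D L_\jm, L_\jm\hconv L_\im)=1$ by Lemma~\ref{lem: roots and dual}\;(ii). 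So the proposed reduction ``to $\de$-values between shifts of $L_\jm$, then Lemma~\ref{lem: -1 by root}'' does not produce $\de(L_\jm\hconv L_\im, L_\jm\hconv L_{\im'})$. The tool that actually closes this case is Lemma~\ref{lem: LX LY}, which you did not cite: take $L=L_\jm$, $X=L_\im$, $Y=L_{\im'}$; then $\de(X,Y)=0$ since $d(\im,\im')\ge 2$, and $\de(L,L\hconv X)=\de(L,L\hconv Y)=0$ by Lemma~\ref{lem: -1 by root}, so Lemma~\ref{lem: LX LY} gives $\de(L_\jm\hconv L_\im,L_\jm\hconv L_{\im'})=0$. (For $k\ne 0$ your stripping does work after minor bookkeeping — it reduces not to $\de$ of shifts of $L_\jm$ but to $\de(\D^k L_\jm, L_\jm\hconv L_{\im})$, which Lemma~\ref{lem: roots and dual}\;(ii) evaluates to $\delta(k=1)$, and the $k=1$ and $k=-1$ cases exchange under $\D$-invariance plus $\im\leftrightarrow\im'$.) Finally, your claim that the $\scrS_\jm^\star$ case ``reduces to one another by replacing $\D$ with $\ldual$'' is a hand-wave: the definitions of $\scrS_\jm$ and $\scrS_\jm^\star$ are not literally exchanged by applying $\D$ to $\bbD$, and the symmetric run of the argument (using the left/right variants of Lemmas~\ref{lem: -1 by root}, \ref{lem: roots and dual}, \ref{lem: de=de}) should at least be sketched.
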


For any $\jm\in\sfI$, we can regard $\scrS_\jm$ as an automorphism of
the set of isomorphism classes of strong duality data.

\Def Let $\bbD=\{ L^\bbD_\im \}_{\im \in \sfI }$ be a strong datum in $\Cgo$.
For an interval $[a,b]$ in $\Z$,
we define $\catD[a,b]$
as the smallest subcategory of $\Cgz$ containing $\st{ \scrD^m L^\bbD_\im
  \mid m\in[a,b],\im \in \sfI}\sqcup 
\{ \mathbf{1} \}$ and stable by taking tensor products, subquotients and extensions.  We write $\catD$, $\catD[m]$, $\catD{}_{,\ge m}$, $\catD{}_{,\le m}$
for $\catD[0,0]$, $\catD[m,m]$, $\catD{[m,+\infty]}$,
$\catD{[-\infty,m]}$, respectively.
\edf

When $\bbD =\bbD_\calQ$ for some $\rmQ$-datum $\calQ$ of $\g$, $\scrC_\bbD$ coincides with the heart subcategory $\scrC_\calQ$. Thus, for each strong datum $\bbD$, we also call $\scrC_\bbD$ a heart subcategory. 
 
\Rem
Let $\calQ$ be a $\rmQ$-datum of $\g$, and $\im\in\sfI$ a sink of
$\calQ$. 
Then we have
$$\scrS_\im\bbD_{\calQ}=\bbD_{\sfs_i\calQ }.$$
\enrem

\begin{definition}
A strong duality datum $\bbD$ of $\g$ is said to be \emph{complete} 
if, for each simple module $M \in \Cgz$,
there exist simple modules $M_k \in \scrC_\bbD$ $(k \in \Z)$  such that
\bna
\item $M_k \simeq \bone$  for all but finitely many $k$,
\item $M \simeq \head( \cdots \tens \rdual^2 M_2 \tens 
\rdual M_1 \tens M_0 \tens \ldual M_{-1}\tens \cdots)$.
\ee
\end{definition}

It is also known that $\bbD_\calQ$ is a complete duality datum
for any $\rmQ$-datum $\calQ$ of $\g$.

The multiplication induces an isomorphism 
\begin{align} \label{eq: iso Kz}
\stens_{m\in\Z}K(\scrC_\bbD[m]) \isoto K(\Cgz)    
\end{align}
for any complete duality datum $\bbD$ of $\g$ (see \cite[Theorem 6.10, Theorem 6.12]{KKOP23P}).

\begin{proposition} [{\cite[Theorem 6.3]{KKOP23P}}]\label{prop: complete to  complete} Let $\bbD=\{ L^\bbD_\im \}_{\im \in \sfI }$ be a complete duality datum in $\Cgz$ and $\jm\in \sfI$.
  Then $\scrS_\jm(\bbD)$ and $\scrS_\jm^{\star}(\bbD)$ are complete duality data
  in $\Cgz$.
\end{proposition}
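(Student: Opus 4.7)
The plan is to reduce completeness of $\scrS_\jm(\bbD)$ (and $\scrS_\jm^{\star}(\bbD)$) to a surjectivity statement at the Grothendieck-ring level, and then verify that surjectivity by working on the generators $[L_\im^\bbD]$ only. By Proposition~\ref{prop: strong to strong}, $\scrS_\jm(\bbD)$ is a strong duality datum, so the multiplication map
\[
\bigotimes_{k \in \Z} K(\scrC_{\scrS_\jm(\bbD)}[k]) \longrightarrow K(\Cgz)
\]
is a well-defined injective ring homomorphism. Conversely, the isomorphism \eqref{eq: iso Kz} for the complete datum $\bbD$ says the analogous map for $\bbD$ is already an isomorphism. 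So once I show that the image of the $\scrS_\jm(\bbD)$-map contains every class $[L_\im^\bbD]$ (and, by applying $\scrD^m$, every class $[\scrD^m L_\im^\bbD]$ generating $K(\scrC_\bbD[m])$), surjectivity of the $\scrS_\jm(\bbD)$-map follows, and then completeness will be deduced from surjectivity by a unitriangularity argument.

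The verification on generators splits into three cases. If $\im=\jm$, then $L_\jm^\bbD = \scrD^{-1} L_\jm^{\scrS_\jm(\bbD)}$ lies in $\scrC_{\scrS_\jm(\bbD)}[-1]$ by definition. If $d(\im,\jm)=1$, I would apply Lemma~\ref{lem: recover} to the real simple module $L_\jm^\bbD$ with $X=L_\im^\bbD$: since $L_\im^{\scrS_\jm(\bbD)} = L_\jm^\bbD \hconv L_\im^\bbD$ and $\scrD L_\jm^\bbD = L_\jm^{\scrS_\jm(\bbD)}$, we obtain
\[
L_\im^\bbD \simeq L_\im^{\scrS_\jm(\bbD)} \hconv L_\jm^{\scrS_\jm(\bbD)},
\]
which is the head of a tensor product of two modules in $\scrC_{\scrS_\jm(\bbD)}[0]$, hence lies in $\scrC_{\scrS_\jm(\bbD)}[0]$. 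Finally, if $d(\im,\jm)>1$, then $L_\im^\bbD = L_\im^{\scrS_\jm(\bbD)}$ is directly in $\scrC_{\scrS_\jm(\bbD)}[0]$. Applying the monoidal autoequivalence $\scrD^m$ to all three cases shows that every generator of every $K(\scrC_\bbD[m])$ is captured by $K(\scrC_{\scrS_\jm(\bbD)}[m]) \cup K(\scrC_{\scrS_\jm(\bbD)}[m-1])$.

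To upgrade this Grothendieck-ring surjectivity to the module-level definition of completeness, I would invoke the normality framework developed in Section~\ref{Sec: preliminaries}. For a strong datum $\bbE$, any ordered product $\cdots\otimes\scrD^2N_2\otimes\scrD N_1\otimes N_0\otimes\ldual N_{-1}\otimes\cdots$ with $N_k\in\scrC_{\bbE}$ forms a strongly unmixed, hence normal, sequence (Proposition~\ref{prop: Unmix normal}), whose head is therefore simple and appears with multiplicity one in the Grothendieck ring. Combined with surjectivity this produces, for every simple $M\in\Cgz$, modules $N_k\in\scrC_{\scrS_\jm(\bbD)}$ realizing $M$ as the head of the prescribed ordered tensor product. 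The proof for $\scrS_\jm^{\star}(\bbD)$ is verbatim the same, using the mirror form $\ldual L\hconv(X\hconv L)\simeq X$ of Lemma~\ref{lem: recover} to write $L_\im^\bbD\simeq L_\jm^{\scrS_\jm^\star(\bbD)}\hconv L_\im^{\scrS_\jm^\star(\bbD)}$ when $d(\im,\jm)=1$. The main technical obstacle will be the last step: the reflection mixes contributions from two adjacent shifts $\scrC_{\scrS_\jm(\bbD)}[m]$ and $\scrC_{\scrS_\jm(\bbD)}[m-1]$ when rewriting a factor that originally lived in $\scrC_\bbD[m]$, so I will need to keep careful track of the shift indices and to reorganize the resulting product so that the strongly unmixed ordering is preserved and the head of the rearranged product still computes~$M$.
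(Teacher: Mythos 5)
Your case analysis correctly gives the containment $\scrC_\bbD\subset\scrC_{\scrS_\jm(\bbD)}[-1,0]$ (via Lemma~\ref{lem: recover}), but two surrounding steps do not hold up. First, surjectivity of the $\scrS_\jm(\bbD)$-multiplication map does not follow from the generators $[\scrD^m L_\im^\bbD]$ alone: by Corollary~\ref{cor: K simeq oA}, $K(\scrC_\bbD)$ is a polynomial $\Z$-algebra on the $\ell(w_0)$ cuspidal-module classes $[C^{\bbD,\usfwc}_k]$, and the $[L_\im^\bbD]$'s --- only the $|\sfI|$ simple-root ones among these generators --- generate a proper $\Z$-subalgebra whenever $\ell(w_0)>|\sfI|$, so ``every class $[\scrD^m L_\im^\bbD]$ generating $K(\scrC_\bbD[m])$'' is not available. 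What you actually proved is the categorical inclusion $\scrC_\bbD[m]\subset\scrC_{\scrS_\jm(\bbD)}[m-1,m]$, but that also does not give surjectivity, since before knowing $\scrS_\jm(\bbD)$ is complete you cannot identify $K(\scrC_{\scrS_\jm(\bbD)}[m-1,m])$ with the subring generated by $K(\scrC_{\scrS_\jm(\bbD)}[m-1])$ and $K(\scrC_{\scrS_\jm(\bbD)}[m])$ --- that identification is exactly the PBW statement you are trying to establish. Second, the passage from $K$-theoretic surjectivity to module-level completeness is never carried out: you name a unitriangularity argument in the opening paragraph, but your final paragraph instead describes a direct reorganization of the $\bbD$-decomposition of a simple $M$, and the shift-mixing obstruction you identify there is real and left unresolved.

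Both difficulties are avoided by the cuspidal-module reflection that the paper records as Proposition~\ref{prop: reflection on duality datum}, which is the route of the cited \cite[Theorem 6.3]{KKOP23P}. Choose a reduced sequence $\usfwc=(\im_1,\ldots,\im_\ell)$ of $w_0$ with $\im_1=\jm$ and set $\usfwcp=(\im_2,\ldots,\im_\ell,\im_1^*)$; then for $\bbD'=\scrS_\jm(\bbD)$ one has $[C^{\bbD',\hwcp}_k]=[C^{\bbD,\hwc}_{k+1}]$, hence $C^{\bbD',\hwcp}_k\simeq C^{\bbD,\hwc}_{k+1}$ for all $k\in\Z$. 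Given a simple $M$, Theorem~\ref{thm: unique a} for the complete $\bbD$ writes $M$ as the head of an ordered tensor product of the $C^{\bbD,\hwc}_k$'s; re-indexing by $k\mapsto k-1$ and grouping the factors into the disjoint consecutive blocks $[m\ell+1,(m+1)\ell]$ --- precisely the $\scrD^m\scrC_{\bbD'}$-blocks --- immediately exhibits $M$ as $\head(\cdots\tens\scrD^{m}N_{m}\tens\cdots)$ with each $N_m$ simple in $\scrC_{\bbD'}$, with no mixing across shifts to untangle. The claim for $\scrS_\jm^\star(\bbD)$ follows in the same way by taking $\im_\ell=\jm$.
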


\subsection{Quantum unipotent coordinate ring and upper global basis}
Let $\calU_q(\sfg)$ be the quantum group of $\sfg$ over $\kk$. We denote by $\calU_q^-(\sfg)$
the negative half of $\calU_q(\sfg)$.

Let $B(\infty)$ be the \emph{infinite crystal} of $\calU_q^-(\sfg)$, and let $\tf_\im$ and $\te_\im$ be the \emph{crystal operators} for $B(\infty)$. For any $b\in B(\infty)$, $\wt(b)$ stands for the weight of $b \in B(\infty) $.

Set $\calA_q(\sfn) \seteq \bigoplus_{\be \in \sfQ^-} \calA_q(\sfn)_\be$, where $\calA_q(\sfn)_\be \seteq \Hom_{\kk}(\calU_q^-(\sfg)_\be,\kk)$. Then   
$\calA_q(\sfn)$ has an algebra structure isomorphic to $\calU_q^-(\sfg)$
and is called the \emph{quantum unipotent coordinate ring} of $\sfg$. 

Let  
$$
\Ang{ \ , \ } \colon \calA_q(\sfn) \times \calU_q^-(\sfg) \to \kk
$$
be the pairing.  For each $\im \in \sfI$, we denote by $\ang{\im} \in \calA_q(\sfn)_{-\al_\im}$ the dual element of $f_\im$ with respect to $\Ang{ \ , \ }$; i.e., 
$$
\Ang{\ang{\im},f_\jm} =\delta_{\im,\jm} \quad \text{ for any $\im,\jm\in \sfI$.}
$$
Then the set $\iset{\ang{\im} }$ generates $\calA_q(\sfn)$.

Note that there exists a $\kk$-algebra isomorphism 
\begin{align} \label{eq: iota}
\iota\colon \calU_q^-(\sfg) \isoto \calA_q(\sfn)   \qquad f_\im \longmapsto 
\zeta^{-1}\ang{\im}
\end{align}
for any $\im\in \sfI$, where
$$\zeta\seteq 1 -q^{ 2}.$$ 
We define a bilinear form $\aform{ \ , \ }$ on $\Aqn$ by 
\begin{align} \label{eq: aform}
\aform{f,g} \seteq \Ang{ f, \iota^{-1}(g)} \quad \text{ for any } f,g \in \Aqn.     
\end{align}

We denote by $\calU_\bfA^-(\sfg)$ to be the $\bfA$-subalgebra of $\calU_q^-(\sfg)$ generated by $f_\im^{(n)}\seteq f_\im^n/[n]!$ $(\im \in \sfI, \ n \in \Z_{>0})$,
and by $\calA_\bfA(\sfn)$
the $\bfA$-submodule of $\calA_q(\sfn)$ generated by $\uppsi \in \calA_q(\sfn)$ such that $\uppsi(\calU^-_\bfA(\sfn)) \subset \bfA$. 
Then, $\calA_\bfA(\sfn)$ is  a $\bfA$-subalgebra of $\calA_q(\sfn)$. 

Let
$\bbG \seteq \{ \Gup(b) \ | \ b \in B(\infty) \}$ be the \emph{upper global basis} of $\calA_\bfA(\sfn)$ (see \cite{K91, K93, K95} for its definition and properties). 
Set
$$
\LupA \seteq \sum_{b \in B(\infty)} \Z[q^{1/2}]\Gup(b) \subset \calA_\bfA(\sfn).
$$

We regard $B(\infty)$ as a basis of $\LupA/q^{1/2}\LupA$ by 
\begin{align} \label{eq: local basis}
b \equiv \Gup(b) \qquad {\rm mod} \; q^{1/2}  \LupA.    
\end{align}

We know that $\aform{\Gup(b),\Gup(b')}|_{q^{1/2}=0} =\delta_{b,b'}$ and
hence $B(\infty)$ is an orthonormal basis of
$\LupA/q^{1/2}\LupA$, which implies that
the lattice $\LupA$ is characterized by 
$$
\LupA = \{ x \in \Azn \ | \  \aform{x,x} \in \Z[[q^{1/2}]] \subset \Q(\hspace{-.4ex}(q^{1/2})\hspace{-.4ex}) \}.
$$

\subsection{Braid symmetry and dual root vectors}
Recall that $\weyl$ denotes
the Weyl group associated with  
a simply-laced finite-dimensional simple Lie algebra $\sfg$.  
Let us denote by $\ttB$ the \emph{braid} group or the \emph{Artin-Tits} group associated with $\Dynkin$. 
The  braid group is generated by $\bg_{\im}$ $(\im \in \sfI)$ subject to the commutation relations and
the braid relations. 
Let us denote by
$\pi \col \ttB \twoheadrightarrow \weyl $
the natural projection
sending $\bg_\im$ to $s_\im$ for all $\im \in \sfI$.
We denote by $\ttB^\pm$
the submonoid of $\ttB$ generated by $\st{\bg_\im^\pm\mid\im \in \sfI}$.  

Note that any sequence $\uii=(\im_1,\ldots,\im_r) \in \sfI^r$
corresponds to an element $\ttb^{\uii} \seteq \bg_{\im_1}\bg_{\im_2}\cdots\bg_{\im_r}$ in $\ttB^+$.  
We denote by $\Seq(\ttb)$ the set of all $\uii$'s giving $\ttb$. 

\smallskip

We denote by $\rev\col \ttB\isoto\ttB$ the anti-automorphism of
$\ttB$ sending $\bg_\im$ to itself.

\begin{definition} \label{def: moves}
  Let $\uii=(\im_l,\im_{l+1},\ldots,\im_r)$ and $\ujj=(\jm_l,\jm_{ l+1 },\ldots,\jm_r)$ be sequences in $\sfI$.
\bnum
\item  
We say that $\ujj$ can be obtained from $\uii$ via a \emph{commutation move} 
if there exists a $k\in\Z$ such that
$$\text{$l\le k <r$,\quad $\im_s = \jm_s$ for $s \ne k,k+1$,\quad
  $\im_k=\jm_{k+1}$, $\im_{k+1}=\jm_k$  and $d(\im_k ,\im_{k+1}) > 1$.}$$
In this case, we write $\ujj = \upga_k(\uii)$.
\item \label{it: braid move}  We say that $\ujj$ can be obtained from $\uii$ via a \emph{braid move} if
  there exists $k\in\Z$ such that
  $$\parbox{75ex}{$l\le k \le r-2$,\quad $\im_s = \jm_s$ for $s \ne  k,k+1,k+2 $,\\
    \quad  $\im_k=\im_{k+2}=\jm_{k+1}$,\quad $\im_{k+1}=\jm_k=\jm_{k+2}$ and $d(\im_k ,\im_{k+1}) = 1$.}$$
In this case, we write $\ujj = \upbe_k(\uii)$. 
\ee 
In the both cases, $\ttb^\uii=\ttb^\ujj$ as an element of $\ttB^+$. 
\end{definition}

Now we recall the braid symmetry on  $\calU_q(\sfg)$ by mainly following \cite{LusztigBook}. For $\iinI$, we set $\sfS_{\im} \seteq T_{\im,-1}'$
and $\sfS_\im^{*} \seteq T_{\im,1}''$, which are inverse to each other.
The description of $\sfS_\im$ is given as follows $(\im \ne \jm \in \sfI)$: 
\begin{align*}
&\sfS_\im(t_\im)\seteq t_\im^{-1}, \qquad  \sfS_\im(t_\jm) \seteq t_\jm t_\im^{-\ang{h_\im,\al_\jm}},  \qquad 
\sfS_\im(f_\im) \seteq -e_\im t_\im , \qquad \sfS_\im(e_\im) \seteq -t_\im^{-1}f_\im, \\
&\sfS_\im(f_\jm) \seteq \bc  f_\im f_\jm  -q   f_\jm f_\im  & \text{ if } d(\im,\jm)=1,  \\ 
f_\jm  & \text{ if } d(\im,\jm)>1, \ec \ \
\sfS_\im(e_\jm) \seteq \bc e_\jm e_\im -q^{-1} e_\im e_\jm & \text{ if } d(\im,\jm)=1,  \\  
e_\jm  & \text{ if } d(\im,\jm)>1. \ec
\end{align*}  
Note that $\{ \sfS_\im\}_{\iinI}$ satisfies the relations of $\ttB_\sfg$ and hence $\ttB_\sfg$ acts on $\calU_q(\sfg)$ via $\{ \sfS_\im\}_{\iinI}$.

Let us take an element $w$ in $\weyl$.
For a reduced sequence $\uw=(\im_1,\im_2,\ldots,\im_r)$ of $w$  and $1 \le k \le r$,  we set 
\eq&&
  E _{\uw}(\be_k) \seteq \sfS_{\im_1} \ldots \sfS_{\im_{k-1}} (f_{\im_k})\in \calU_\bfA^-(\sfg) \qtq  
  E^*_{\uw}(\be_k) \seteq   \zeta \iota\bl E_{\uw}(\be_k)\br.
  \label{eq: PBW}
  \eneq
Note that when $\be_k = \al_\im$ for some $\im \in \sfI$, $E_{\uw}(\be_k)=f_\im$ and $E^*_{\uw}(\be_k)$ is equal to $\ang{\im}$.

It is known that   $E^*_{\uw}(\be_k)$  
belongs to $\Azn$ and  is called the \emph{dual root vector} corresponding to $\be_k$ and $\uw$.

The $\bfA$-subalgebra of $\Azn$ generated by $\{ E^*_{\uw}(\be_k) \}_{1 \le k \le r}$
does not depend on the choice of a reduced expression $\uw$ of $w$,
which we denote by
$\calA_\bfA(\sfn(w))$ (see~\cite[Section 4.7.2]{Kimura12}).  We call $\calA_\bfA(\sfn(w))$ the \emph{quantum unipotent coordinate ring associated with $w$}.

\subsection{Schur-Weyl duality functor} In this subsection, we briefly review Schur-Weyl duality functors
between categories over a quiver Hecke algebra and a quantum affine algebra for our purpose (see~\cite{KKOP24A} for more detail). 

We first review the quiver Hecke algebra associated with a finite simple Lie algebra $\sfg$
of simply-laced type. 
Take a family of polynomial $(\ttQ_{\im\jm})_{\im,\jm \in \sfI}$ in $\bfk[u,v]$
such that
$$   \ttQ_{\im\jm} (u,v) = \pm\delta(\im \ne \jm)(u-v)^{-(\al_\im,\al_\jm)}
\qtq\ttQ_{\im\jm}(u,v)=\ttQ_{\jm\,\im}(v,u). $$

For each $\be \in \rl^+$ with $|\be|=n$, we set $\sfI^\be \seteq \{
\nu=( \nu_1,\ldots,\nu_n) \in \sfI^n  \ | \ \sum_{k=1}^n
\al_{\nu_k} = \be \}$. 

\smallskip

The \emph{symmetric quiver Hecke algebra} $R(\be)$ at $\be \in \rl^+$
associated to $\sfg$ and $(\ttQ_{\im\jm})_{\im,\jm\in  \sfI }$, is the $\Z$-graded
$\C$-algebra generated by the elements $\{ e(\nu) \}_{\nu \in
\sfI^\be}$, $\{ x_k \}_{1\le k\le n}$ and $\{ \tau_m \}_{1\le m\le n-1}$ satisfying the certain defining relations (see \cite[Definition 2.1.1]{KKKO18} for more details). 

\smallskip

Let us denote by $R(\beta)\gmod$ the category of finite-dimensional graded $R(\be)$-modules, and we set $R\gmod=\soplus_{\beta\in\rl^+}R(\beta)\gmod$. For an
$R(\beta)$-module $M$, we set $\wt(M)\seteq -\beta \in \rl^-$. For
the sake of simplicity, we  say that $M$ is an $R$-module instead of
saying that $M$ is a graded $R(\beta)$-module. 
For a graded $R(\be)$-module $M = \soplus_{k \in \Z} M_k$, we define $qM = \soplus_{k\in \Z} (qM)_k$,
where $(qM)_k = M_{k-1}$ $(k\in \Z)$.  
We call $q$ the \emph{grading shift functor} on the category of graded $R(\be)$-modules. Thus the Grothendieck group $K(R(\be)\gmod)$ of $R(\be)\gmod$
has a $\Z[q^{\pm 1}]$-module structure induced by the grading shift functor. 
 For an $R(\beta)$-module $M$ and an $R(\gamma)$-module $N$, we define their \emph{convolution product} $\conv$ by 
$$
M\conv N \seteq R(\beta+\gamma)e(\beta,\gamma) \otimes_{R(\beta) \otimes R(\gamma)} (M  \otimes  N),
$$
where $e(\beta,\gamma) =\displaystyle\sum_{ \nu_1 \in I^{\beta},  \nu_2 \in I^{\gamma} } e(\nu_1*\nu_2)$. Here $\nu_1*\nu_2$ is 
the concatenation of $\nu_1$ and $\nu_2$.

Note that  
$$K(R\gmod)\seteq \soplus_{\be \in \rl^+}K(R(\be)\gmod)$$
has a $\Z[q^{\pm1}]$-algebra structure by the convolution product $\conv$
and the grading shift functor $q$.

For $\im \in \sfI$,  $L(\im)$ denotes the $1$-dimensional
simple graded
$R(\al_\im)$-modules $\bfk u(\im)$ with the action
$x_1u(\im)=0$.

\begin{theorem} [{\cite{KL1,R08,VV09}}] \label{thm: KLR iso}
There exists a $\Z[q^{\pm 1/2}]$-algebra isomorphism
\begin{align} \label{eq: categorification}
{\rm ch}_q \colon \calK(R\gmod) \seteq \bfA \tens_{\Z[q^{\pm 1}]} K(R\gmod) \isoto \calA_\bfA(\sfn),    
\end{align}
sending $[L(\im)]$ to $\ang{\im}$. 
Furthermore, under the isomorphism ${\rm ch}_q$, the upper global
basis $\bbG$ of $\calA_\bfA(\sfn)$ corresponds to the
set of the isomorphism classes of self-dual simple $R$-modules.
\end{theorem}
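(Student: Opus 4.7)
The plan is to carry out the proof in two stages: first establishing the underlying $\bfA$-algebra isomorphism, and then identifying the upper global basis with the classes of self-dual simple $R$-modules.

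For the algebra isomorphism, I would construct an $\bfA$-algebra homomorphism $\Phi \colon \calU_\bfA^-(\sfg) \to K(R\gmod)$ sending $f_\im \mapsto \zeta\,[L(\im)]$, then compose with $\iota^{-1}$ to obtain $\ch_q$ after extending scalars from $\Z[q^{\pm 1}]$ to $\bfA$. Well-definedness of $\Phi$ reduces to verifying that the classes $\{[L(\im)]\}_{\im\in\sfI}$ satisfy the quantum Serre relations in $K(R\gmod)$ with respect to $\conv$. For $\im\neq\jm$ with $d(\im,\jm)>1$, the commutation relation follows from the existence of an isomorphism $L(\im)\conv L(\jm)\simeq L(\jm)\conv L(\im)$ up to a $q$-shift controlled by $\ttQ_{\im\jm}$; for $d(\im,\jm)=1$, one constructs an explicit filtration of $L(\im)\conv L(\im)\conv L(\jm)$ using the KLR generators $x_k,\tau_m$ that realizes the Serre relation at the level of Grothendieck classes. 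Surjectivity of $\ch_q$ follows from the fact that every simple graded $R(\beta)$-module is a subquotient of some convolution product $L(\nu_1)\conv\cdots\conv L(\nu_n)$ with $\nu\in\sfI^\beta$. For injectivity, I would define a symmetric $\bfA$-bilinear pairing on $K(R\gmod)$ using the coproduct structure induced by restriction along the idempotents $e(\beta,\gamma)$ together with graded duality, verify that $\Phi$ intertwines this pairing with the nondegenerate Lusztig form on $\calU_\bfA^-(\sfg)$, and conclude bijectivity.

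For the identification of $\bbG$ with the classes of self-dual simple modules, I would invoke the geometric realization of the symmetric quiver Hecke algebra in simply-laced type due to Varagnolo-Vasserot: $R(\beta)$ is isomorphic as a graded algebra to the equivariant Ext-algebra of Lusztig's semisimple complex on the quiver representation variety associated to $\beta$. Under this identification, the graded self-dual simple $R(\beta)$-modules correspond bijectively to the IC-sheaf summands of Lusztig's complex, with no grading shifts. By Lusztig's geometric construction, these IC-sheaves categorify the canonical (lower global) basis of $\calU_q^-(\sfg)$. Passing to graded duals under the pairing $\Ang{\,,\,}$ exchanges the lower and upper global bases while preserving bar-invariance, so the image of each self-dual simple under $\ch_q$ is exactly an element of $\bbG$, and the assignment is a bijection.

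The main obstacle is the second stage: the identification with the upper global basis rests on Lusztig's full geometric construction of the canonical basis via perverse sheaves, together with the deep isomorphism between $R(\beta)$ and the geometric Ext-algebra. The key technical inputs are the formality of Lusztig's semisimple complex and the careful matching of $\Z$-gradings between the geometric Ext-algebra presentation and the algebraic KLR generators-and-relations presentation. The first stage, while essentially combinatorial, still requires nontrivial work to verify the Serre relations via explicit module filtrations, with particular attention to the $q$-shifts induced by the polynomials $\ttQ_{\im\jm}$.
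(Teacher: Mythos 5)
The paper does not prove Theorem~\ref{thm: KLR iso}; it is stated as a citation to Khovanov--Lauda, Rouquier, and Varagnolo--Vasserot, with no proof given. So there is no internal argument to compare against, and your proposal should be judged as a sketch of the original literature proof.

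As such, the two-stage outline is broadly in line with how the result is actually established: stage one is essentially the content of Khovanov--Lauda (verifying that the classes $[L(\im)]$ satisfy the quantum Serre relations under convolution, plus surjectivity via convolution filtrations of simples and injectivity via a nondegenerate pairing on $K(R\gmod)$ matched against the Lusztig form), and stage two is exactly the geometric input of Varagnolo--Vasserot identifying $R(\beta)$ with the Ext-algebra of Lusztig's semisimple complex and deducing the global basis correspondence from the decomposition theorem. Two details should be tightened, though. First, the scalar: with $\iota(f_\im) = \zeta^{-1}\ang{\im}$ from \eqref{eq: iota}, the map $\Phi$ needs $\Phi(f_\im) = \zeta^{-1}[L(\im)]$ (not $\zeta[L(\im)]$) if one is to recover $\ch_q([L(\im)]) = \ang{\im}$ after composing with $\iota$; as written your composition yields a spurious factor of $\zeta^{-2}$. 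Second, the final duality step is slightly muddled: the IC summands of Lusztig's complex parameterize the simple $R(\beta)$-modules, but it is the classes of indecomposable \emph{projectives} in $K(R\proj)$ that realize the canonical (lower global) basis of $\calU_\bfA^-(\sfg)$, while the self-dual simples directly realize the dual canonical (upper global) basis $\bbG$ in $K(R\gmod) \simeq \calA_\bfA(\sfn)$. So one does not need to ``pass to graded duals'' at the end; rather, one needs to be clear that the isomorphism $\ch_q$ is already on the finite-dimensional (not projective) side, where the simples sit, and that the duality between $K(R\proj)$ and $K(R\gmod)$ is what exchanges canonical and dual canonical bases.
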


For $k=1,\ldots,\ell$, let $\sfV^\usfwc_k$ be the \emph{cuspidal module} corresponding to $\be_k$ with respect to $\usfwc$
(see \cite[Section 2]{KKOP18} for the precise definition). 
Under the categorification in~\eqref{eq: categorification}, the cuspidal module $\sfV^{\usfwc}_k$ corresponds to the dual root vector $E^*(\beta_k)$ in $\calA_\bfA(\sfn)$.
Note the followings:
\bnum
\item For a minimal pair $(\be_a,\be_b)$ of $\be_k$, there exists an isomorphism 
$$
\sfV^\usfwc_a \hconv \sfV^\usfwc_b \simeq \sfV^\usfwc_k.
$$
\item For $1 \le a \le \ell$ with $\beta_a=\al_\im$, $\sfV^\usfwc_a \simeq L(\im)$.
\ee
See \cite[Lemma 4.2]{McNa15} and \cite[Section 4.3]{BKM12} for more details.

\begin{theorem}[{\cite{KKK18,KKOP23P}}] \label{thm:gQASW duality} For a given strong duality datum $\bbD=\{L^\bbD_\im \}_{\im \in \sfI}$ in $\Cgz$,
  there exists a functor
\begin{align} \label{eq: FD}
\calF_\bbD \colon  R\gmod \rightarrow  \scrC_\bbD    
\end{align}
satisfies the following properties$\colon$ 
\bna
\item \label{it: Lim angleim} $\calF_\bbD(L(\im))\simeq L^\bbD_\im$.
\item The functor $\calF_\bbD$ is an {\em exact} functor on $R\gmod$ such that, for any $M_1, M_2 \in  R\gmod$, we have
isomorphisms
$$\calF_\bbD(R(0)) \simeq \bfk, \quad \calF_\bbD(M_1 \conv M_2) \simeq \calF_\bbD(M_1) \tens \calF_\bbD(M_2),$$
and $\calF_\bbD$ sends simple modules to simple modules.  
\ee
\end{theorem}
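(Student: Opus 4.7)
The plan is to reduce the construction to the generalized quantum affine Schur-Weyl duality of Kang-Kashiwara-Kim \cite{KKK18}, and then upgrade the resulting functor to one that preserves simplicity by exploiting the strong duality condition in the manner of \cite{KKOP23P}. The key numerical input is that the strong duality axiom forces $\de(L^\bbD_\im, L^\bbD_\jm) = \delta\bl d(\im,\jm)=1\br$ for $\im\ne\jm$, which by Proposition~\ref{prop: de property 1} and the standard correspondence between $\de$ and the order of zero of the denominator of the normalized $R$-matrix $\Rnorm_{L^\bbD_\im,(L^\bbD_\jm)_z}$ at $z=1$, matches exactly the exponent $-(\al_\im,\al_\jm)$ appearing in the KLR polynomials $\ttQ_{\im\jm}(u,v) = \pm(u-v)^{-(\al_\im,\al_\jm)}$ of $R(\be)$.

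For the construction, I would first introduce, for each $\be\in\rl^+$ of height $n$ and each $\nu=(\nu_1,\ldots,\nu_n)\in\sfI^\be$, the affinization
\[
\widehat{L}^\bbD_\nu \seteq (L^\bbD_{\nu_1})_{z_1}\tens\cdots\tens(L^\bbD_{\nu_n})_{z_n}
\]
with generic spectral parameters $z_1,\ldots,z_n$. Using the composition of normalized $R$-matrices to build operators $\ttT_k$ acting on $\bigoplus_\nu \widehat{L}^\bbD_\nu$ (together with multiplication by $z_k$'s and the idempotents $e(\nu)$), the pole computation above guarantees that these operators satisfy the defining relations of $R(\be)$. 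This equips $P_\be^\bbD \seteq \bigoplus_\nu \widehat{L}^\bbD_\nu$ with a $(U_q'(\g), R(\be))$-bimodule structure, and the functor is then defined by
\[
\calF_\bbD(M) \seteq P_\be^\bbD \tens_{R(\be)} M \qtq \text{for } M \in R(\be)\gmod,
\]
after an appropriate specialization of spectral parameters dictated by the recipe of \cite{KKK18}.

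Property (a) is immediate from the construction, since for $M=L(\im)$ only the $\nu=(\im)$-component contributes. Exactness follows because $P_\be^\bbD e(\nu)$ is a free right $R(\be)e(\nu)$-module in the appropriate sense; the isomorphism $\calF_\bbD(R(0))\simeq \bfk$ and the convolution-to-tensor compatibility $\calF_\bbD(M_1\conv M_2)\simeq\calF_\bbD(M_1)\tens\calF_\bbD(M_2)$ are formal consequences of the bimodule structure combined with the definition of the convolution product.

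The main obstacle is showing that $\calF_\bbD$ sends simple modules to simple modules, which is the content of \cite{KKOP23P} and is genuinely stronger than the bare KKK construction. The strategy is to reduce to cuspidal modules $\sfV^\usfwc_k$ associated with an arbitrary reduced expression $\usfwc$ of $w_0$: one shows by induction on $\het(\be_k)$ that $\calF_\bbD(\sfV^\usfwc_k)$ is a real simple module, the base case being (a) and the inductive step using a minimal pair factorization $\sfV^\usfwc_k \simeq \sfV^\usfwc_a\hconv \sfV^\usfwc_b$ together with the fact (provable from Proposition~\ref{prop: de property 1}, Proposition~\ref{prop: de properties} and the strong duality hypothesis) that the $\de$-invariants between the images $\calF_\bbD(\sfV^\usfwc_a)$ and $\calF_\bbD(\sfV^\usfwc_b)$ are inherited from the corresponding invariants in $R\gmod$, so that $\calF_\bbD$ commutes with taking heads of such products. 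Since every simple in $R(\be)\gmod$ arises uniquely as the head of an ordered convolution product of cuspidal modules (the PBW parametrization of \cite{McNa15,BKM12}), and since the images of these standard modules are still normal sequences in $\Cgz$ by Lemma~\ref{lem: normal property}, the head is preserved and we conclude that $\calF_\bbD$ sends simples to simples. The invariance under the braid symmetries $\scrS_\im$ from \eqref{Def: refl} together with Proposition~\ref{prop: strong to strong} plays a decisive role in making this induction independent of the chosen reduced expression $\usfwc$.
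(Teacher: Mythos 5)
Your overall strategy is the right one and does match the route taken in \cite{KKK18,KKOP23P}: build the $(U_q'(\g),R(\beta))$-bimodule from affinizations and normalized $R$-matrices, check that the strong duality axiom produces exactly the Cartan datum of $\sfg$ at the spectral parameter $z=1$, and then bootstrap simple-preservation through the cuspidal/PBW structure. Two points deserve attention, one minor and one more substantive.

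The minor point: saying ``$P^\bbD_\be e(\nu)$ is a free right $R(\be)e(\nu)$-module in the appropriate sense'' is too casual. What \cite{KKK18} actually shows is that the bimodule is (right) flat, and this is obtained from a nontrivial filtration argument; it is not a formality.

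The substantive point concerns your justification of head-preservation. You argue that the $\de$-invariants between $\calF_\bbD(\sfV^\usfwc_a)$ and $\calF_\bbD(\sfV^\usfwc_b)$ are ``inherited'' from $R\gmod$, and that this forces $\calF_\bbD$ to commute with taking heads. But the statement that $\calF_\bbD$ preserves $\de$, $\La$, $\La^\infty$ between simple modules is itself a theorem of \cite{KKOP23P} whose proof presupposes that $\calF_\bbD$ sends simples to simples. Similarly, your appeal to Lemma~\ref{lem: normal property} to conclude that images of standard modules form normal sequences requires knowing the $\La$-additivity in $\Cgz$, which again depends on the invariant transfer. As written, the argument is circular. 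The correct primitive input that breaks the circle is that $\calF_\bbD$ intertwines $R$-matrices at the level of morphisms: by construction the operators $\tau_k$ act on the affinizations through normalized $R$-matrices, so the KLR intertwiner $\Rr_{M_1,M_2}\colon M_1\conv M_2\to M_2\conv M_1$ is sent by the exact monoidal functor $\calF_\bbD$ to a nonzero multiple of $\Rr_{\calF_\bbD M_1,\calF_\bbD M_2}$. Combined with exactness, this gives
$$\calF_\bbD\bigl(\head(M_1\conv M_2)\bigr)\simeq \calF_\bbD\bigl(\Im\Rr_{M_1,M_2}\bigr)\simeq \Im\Rr_{\calF_\bbD M_1,\,\calF_\bbD M_2}\simeq \head\bigl(\calF_\bbD M_1\tens\calF_\bbD M_2\bigr)$$
directly, with no reference to $\de$ or normal sequences. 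Once this is in place, your induction on $\het(\be_k)$ through minimal pairs and the PBW parametrization of simples as heads of ordered cuspidal products proceeds exactly as you describe, and the $\de$-inheritance becomes a corollary rather than an ingredient.
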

We call $\calF_\bbD$ the \emph{quantum affine Schur-Weyl duality functor} associated with $\bbD$.

Let us set 
$$
\obbK (R\gmod)  \seteq  \calK(R\gmod)/(1-q^{1/2}) \calK(R\gmod).
$$

\begin{theorem}[\cite{KKOP23P}]
\label{thm: isomorphisms} Let $\bbD$ be a strong duality datum in $\Cgz$. Then
$\calF_\bbD$ induces a $\Z$-algebra isomorphism 
\begin{align} \label{eq: [FD] iso}
[\calF_\bbD] \colon  \obbK(R\gmod) \isoto K(\scrC_\bbD),    
\end{align}  
where $K(\scrC_\bbD)$ denotes the Grothendieck ring of $\scrC_\bbD$.
\end{theorem}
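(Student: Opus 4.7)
The plan is to prove the theorem in three steps: well-definedness, surjectivity, and injectivity.

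First, I would verify that $[\calF_\bbD]$ is a well-defined $\Z$-algebra homomorphism. By Theorem \ref{thm:gQASW duality}, $\calF_\bbD$ is exact and monoidal with $\calF_\bbD(R(0)) \simeq \bfk$, so it induces a ring homomorphism $K(R\gmod) \to K(\scrC_\bbD)$. Since the target $K(\scrC_\bbD)$ is an ordinary Grothendieck ring over $\Z$ (there is no grading shift on $\Cgz$), the action of $q$ on the source must be trivial on the image; after base change to $\bfA=\Z[q^{\pm 1/2}]$ the element $q^{1/2}-1$ lies in the kernel, so the map factors through the specialization $\obbK(R\gmod)=\calK(R\gmod)/(1-q^{1/2})\calK(R\gmod)$.

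Next, I would prove surjectivity. By definition, $\scrC_\bbD$ is the smallest subcategory of $\Cgz$ containing $\st{L^\bbD_\im}_{\im\in\sfI}$ and stable under tensor products, subquotients, and extensions. Since $\calF_\bbD(L(\im))\simeq L^\bbD_\im$ and $\calF_\bbD$ is exact and monoidal, the essential image contains every $L^\bbD_\im$ and is closed under tensor products and subquotients; combined with the fact that $\calF_\bbD$ sends simples to simples, this shows that every simple object of $\scrC_\bbD$ is isomorphic to $\calF_\bbD(S)$ for some simple $R$-module $S$. Since $K(\scrC_\bbD)$ is $\Z$-spanned by its simples, $[\calF_\bbD]$ is surjective.

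For injectivity, I would use that $\obbK(R\gmod)$ is $\Z$-free on the classes of self-dual simple $R$-modules (Theorem \ref{thm: KLR iso}) and $K(\scrC_\bbD)$ is $\Z$-free on isomorphism classes of simples, so it suffices to show the induced map on simples is injective. The strategy is to match PBW parameterizations on both sides: fix a reduced expression $\usfwc=(\im_1,\ldots,\im_\ell)$ of $w_0$ and consider its cuspidal modules $\sfV^\usfwc_k$. Every self-dual simple in $R\gmod$ is, up to grading shift, isomorphic to $\hd(\sfV^\usfwc_{k_1}\conv\cdots\conv \sfV^\usfwc_{k_r})$ for a unique weakly decreasing sequence $k_1\ge\cdots\ge k_r$, and these heads are pairwise non-isomorphic by the categorical PBW theory for $R\gmod$. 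Since $\calF_\bbD$ commutes with convolution and preserves heads of normal sequences, and the image sequence $(\calF_\bbD(\sfV^\usfwc_{k_1}),\ldots,\calF_\bbD(\sfV^\usfwc_{k_r}))$ is unmixed (hence normal by Proposition \ref{prop: Unmix normal}), one obtains $\calF_\bbD(\hd(\sfV^\usfwc_{k_1}\conv\cdots\conv \sfV^\usfwc_{k_r}))\simeq \hd(\calF_\bbD(\sfV^\usfwc_{k_1})\tens\cdots\tens\calF_\bbD(\sfV^\usfwc_{k_r}))$.

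The main obstacle is the last step: verifying that distinct PBW data on the $\scrC_\bbD$ side also yield non-isomorphic simple heads, so that $[\calF_\bbD]$ injects the PBW basis into the set of simples of $\scrC_\bbD$. This is the heart of the categorical PBW theory and is handled via the $R$-matrix invariants $\La$ and $\de$ together with the unmixed/normality framework of Proposition \ref{prop: Unmix normal} and Lemma \ref{Lem: LMLN MN}; once distinctness is established, injectivity follows.
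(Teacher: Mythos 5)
The paper itself does not prove this statement; it is recalled as a result of \cite{KKOP23P} and used as a black box, so there is no internal proof to compare against. Evaluating your proposal on its own terms, the overall structure (well-definedness / surjectivity / injectivity) is sound, but two of the three steps have gaps worth flagging.

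For surjectivity, the phrase ``the essential image \dots is closed under tensor products and subquotients'' is too strong: exactness of $\calF_\bbD$ does not make the essential image closed under arbitrary subquotients. What exactness plus ``sends simples to simples'' does give is that a composition series of $M$ maps to a composition series of $\calF_\bbD(M)$, so every \emph{simple} subquotient of $\calF_\bbD(M)$ is of the form $\calF_\bbD(S)$. Combined with the observation that every simple of $\scrC_\bbD$ is a composition factor of some iterated tensor product of the $L^\bbD_\im$'s, this yields surjectivity on simples, and hence on $K$-groups. So the conclusion is right, but the mechanism as stated is not.

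For injectivity, you correctly identify the crux but leave it at a gesture. Two things are silently invoked. First, that $(\calF_\bbD(\sfV^\usfwc_{k_1}),\ldots,\calF_\bbD(\sfV^\usfwc_{k_r}))$ is (strongly) unmixed is not a formal consequence of $\calF_\bbD$ being exact and monoidal --- it is precisely Theorem~\ref{thm: affine cuspidal}\,(iv), itself a theorem from \cite{KKOP23P}. Second, the assertion that distinct PBW data yield non-isomorphic heads in $\scrC_\bbD$ is exactly the content of Lemma~\ref{lem: filter} (one recovers the exponents $a_k$ intrinsically as $\de(\rdual C_k,-)$ after peeling), together with Lemma~\ref{Lem: LMLN MN}; Proposition~\ref{prop: Unmix normal} alone only gives normality, not uniqueness of the parametrizing data. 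Both of these ingredients are cited in the present paper from \cite{KKOP23P}, the very reference being re-proved, so your argument is not self-contained and one should check that in \cite{KKOP23P} the strong unmixedness of the affine cuspidals and the recovery lemma are established \emph{before} the $K$-group isomorphism; otherwise the argument becomes circular. Granting those two facts as prior results, your PBW-transport strategy does close the argument.
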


\section{Relation between bosonic extension and the skeleton category} \label{Sec: BE sc}
In this section, we first review the definition of bosonic extensions $\hcalA$ of quantum unipotent coordinate rings, their global bases and braid group symmetries, which are investigated in~\cite{HL15,FHOO,FHOO2,KKOP21B,JLO1,JLO2,OP24,KKOP24,KKOP24B}.  
Then we study the relation between $\hcalA$ and the skeleton category. In particular, we shall prove that the induced braid
symmetries on simple modules in the category preserve the $\Z$-invariants when those modules are contained in a heart subcategory.

\subsection{Bosonic extension} \label{Sec: BE}
In this subsection, we recall the bosonic extension $\hcalA$ 
associated with a   finite-dimensional  simple Lie algebra $\sfg$
of simply-laced type, even though  $\hcalA$ is defined for an arbitrary
symmetrizable Kac-Moody algebra \cite{KKOP24}.

\begin{definition} \label{def: Bosonic ext}
The \emph{bosonic extension} $\hcalA$ of $\calA_q(\sfn)$ is the  
$\kk$-algebra generated by $\{ f_{\im,p} \}_{(\im,p) \in \sfI  \times \Z}$ subject to the following relations:
For any $\im,\jm \in \sfI$ and $m,p \in \Z$,
\bna
\item \label{it: relation1}
$\displaystyle\sum^{1-\lan h_\im,\al_\jm \ran}_{k=0} (-1)^k  \Bigl[\begin{matrix}1-\lan h_\im,\al_\jm \ran \\ k\\ \end{matrix}\Bigr]  {f_{\im,p}}^{1-\lan h_\im,\al_\jm \ran-k} f_{\jm,p} f_{\im,p}^{k} = 0 \quad \text{ for }  \im \ne \jm \in \sfI $,
\item \label{it: relation2} 
$f_{\im,m} f_{\jm,p} =  q^{(-1)^{p-m+1}(\al_\im,\al_\jm)}  f_{\jm,p} f_{\im,m}+ \delta(\im=\jm)\,\delta(p=m+1)\;(1- q^{2}) \quad \text{ if $m<p$.}$ 
\ee
\end{definition}
With the assignment $\wt(f_{\im,m})=(-1)^{m+1}\al_{\im }$,
the relations of $\hcalA$ in~\eqref{it: relation1} and~\eqref{it: relation2} are homogeneous.  
Thus we have a $\sfQ$-weight space decomposition of $\hcalA$:
$$
\hcalA = \bigoplus_{\be \in \sfQ} \hcalA_\be.
$$

\begin{definition}
For $ - \infty \le a \le b \le \infty$, let $\hcalA[a,b]$ be the $\Q(q^{1/2})$-subalgebra
of $\hcalA$ generated by $\{ f_{\im,k} \ | \  \im \in \sfI, a\le k \le b\}$.
We simply write 
$$
\hcalA[m] \seteq \hcalA[m,m], \quad 
\hcalA_{\ge m} \seteq \hcalA[m,\infty], \quad
\hcalA_{\le m} \seteq \hcalA[-\infty,m]. 
$$
Similarly, we set $\hcalA_{>m} \seteq \hcalA_{\ge m+1}$
and $\hcalA_{<m} \seteq \hcalA_{\le m-1}$. 
\end{definition}

Note that we have the following (anti-)automorphisms on $\hcalA$:
\bnum
\item There exists a $\Q$-algebra anti-automorphism  $\calD_q$ of $\hcalA$ such that
$$\calD_q(q^{\pm 1/2}) = q^{\mp 1/2} \qtq \calD_q(f_{\im,p})= f_{\im,p+1}.$$
\item There exists a $\Q$-algebra anti-automorphism $\overline{\phantom{a}}$ of $\hcalA$, called the \emph{bar-involution}, such that
$$\overline{q^{\pm 1/2}} = q^{\mp 1/2} \qtq \overline{f_{\im,p}}= f_{\im,p}.$$
\item There exists a $\kk$-algebra automorphism 
\begin{align} \label{eq: shift}
\ocalD_q = \overline{\phantom{a}} \circ \calD_q = \calD_q \circ \overline{\phantom{a}}    
\end{align}
 on $\hcalA$ defined by  $\ocalD_q(f_{\im,p})=f_{\im,p+1}$ for all $\im \in \sfI$
and $p \in \Z$. 
\ee

We define a $\Q$-linear map $c\colon \hcalA \to \hcalA$ by 
\begin{align} \label{eq: c map}
c(x)\seteq q^{(\wt(x),\wt(x))/2} \overline{x} \quad\text{for any homogeneous element } x \in \hcalA.
\end{align}
 
\begin{theorem} [{\cite[Corollary 5.4]{KKOP24}}] \label{thm: serial decomp} 
For any $a,b \in \Z$ with $a \le b$, the $\kk$-linear map 
$$
\hcalA[b] \otimes_\kk \hcalA[b-1] \otimes_\kk \cdots \otimes_\kk \hcalA[a+1] \otimes_\kk \hcalA[a] \to \hcalA[a,b]
$$
defined by $x_b \tens x_{b-1} \tens \cdots \tens x_{a+1} \tens x_a \longmapsto x_bx_{b-1}\cdots x_{a+1}x_a$ is an isomorphism. 
\end{theorem}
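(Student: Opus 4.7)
The plan is to prove surjectivity directly from the commutation relation~(b) in Definition~\ref{def: Bosonic ext}, and to deduce injectivity by a length-filtration argument, relying only on the presentation already in hand.

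For surjectivity, observe that relation~(b) reads, for $m<p$,
$$f_{\im,m}f_{\jm,p}=q^{(-1)^{p-m+1}(\al_\im,\al_\jm)}f_{\jm,p}f_{\im,m}+\delta(\im=\jm)\,\delta(p=m+1)\,(1-q^{2}),$$
so one can swap any two adjacent factors whose time indices are in the wrong order (smaller on the left) at the price of either a $q$-scalar or that scalar plus a shorter correction term. Given a monomial $f_{\im_1,p_1}\cdots f_{\im_r,p_r}$ in $\hcalA[a,b]$, I would run a bubble-sort on the sequence of time indices, at each step reducing the number of inversions by one and possibly spawning a shorter monomial of the same shape. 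An induction on the pair (length, inversion count), ordered lexicographically, expresses any such monomial as a $\kk$-linear combination of products $x_b x_{b-1}\cdots x_a$ with $x_k\in\hcalA[k]$, proving surjectivity of the multiplication map $\mu$.

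For injectivity, I would introduce the length filtration $F^{\le n}\hcalA[a,b]$ spanned by products of at most $n$ generators. The quantum Serre relations~(a) are length-homogeneous, and in~(b) the two principal terms have length $2$ while the correction term has length $0$. Thus on the associated graded $\mathrm{gr}^F\hcalA[a,b]$ the correction disappears, and the defining relations collapse to the Serre relations inside each layer together with the pure $q$-commutation law $f_{\im,m}f_{\jm,p}=q^{(-1)^{p-m+1}(\al_\im,\al_\jm)}f_{\jm,p}f_{\im,m}$ for $m<p$. This identifies $\mathrm{gr}^F\hcalA[a,b]$ with an iterated $q$-twisted tensor product of the $\hcalA[k]$'s, whose underlying $\kk$-vector space is visibly $\hcalA[b]\tens\cdots\tens\hcalA[a]$. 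The map $\mu$ is filtered for the length filtration induced on the source, its associated graded is the identification just described, and a standard filtered-algebra lemma then promotes bijectivity of $\mathrm{gr}\,\mu$ to bijectivity of $\mu$ itself.

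The main obstacle is justifying that $\mathrm{gr}^F\hcalA[a,b]$ carries no relations beyond those listed; equivalently, that iterated rewriting by~(a) and~(b) never produces a hidden $q$-polynomial identity among the factors $x_k$. I would settle this by the Diamond Lemma: the rewrite system given by~(a) together with the swap rule from~(b) is terminating for the lexicographic (length, inversion count) order used above, and the only overlap ambiguities arise when a Serre relation inside a single layer $\hcalA[k]$ meets a cross-layer swap. Such overlaps reduce to the known PBW structure of each $\hcalA[k]\simeq\calA_q(\sfn)$, so confluence holds. An alternative route is to realise the twisted tensor product explicitly as an iterated Ore extension of $\hcalA[a]$ by $\hcalA[a+1],\ldots,\hcalA[b]$, verify it satisfies the defining relations of $\hcalA[a,b]$, and match weight-space dimensions against the surjection already established to conclude.
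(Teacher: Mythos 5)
The paper does not prove this statement; it is imported wholesale as \cite[Corollary~5.4]{KKOP24}, so there is no in-paper proof to compare against. I will therefore assess your argument on its own terms.

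Your surjectivity step is sound: bubble-sorting by time index, with the lexicographic order on (length, inversion count), strictly decreases because every application of relation~(b) either preserves length and drops inversions by one, or produces a correction term of length $r-2$. That part can be made rigorous without difficulty.

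The injectivity step, however, has a genuine gap at the Diamond Lemma stage, and it is precisely the stage you flag as ``the main obstacle'' but then dispose of too quickly. You assert that \emph{the only overlap ambiguities arise when a Serre relation inside a single layer meets a cross-layer swap}. This is false. The cross-layer swaps of type~(b) overlap with each other: any triple $f_{\im,m}f_{\jm,p}f_{\kappa,s}$ with $m<p<s$ is an overlap ambiguity for the rewriting system, and the interesting cases are exactly those where the correction term $(1-q^2)$ is produced at one or more of the three swaps. For instance, $f_{\im,m}f_{\im,m+1}f_{\im,m+2}$ admits two rewriting paths, each of which spawns two nonhomogeneous corrections, and one has to verify by hand that the two normal forms agree (they do --- the answer is $q^2 f_{\im,m+2}f_{\im,m+1}f_{\im,m}+(1-q^2)f_{\im,m+2}+(1-q^2)f_{\im,m}$ --- but this is a computation, not a consequence of ``PBW inside $\hcalA[k]$''). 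Moreover the (a)--(b) overlaps are not automatically resolved by the one-layer PBW theory either: a Serre relation in layer $p$ involving a string $f_{\im,p}^{a}f_{\jm,p}f_{\im,p}^{b}$ can overlap at its boundary with a generator $f_{\kappa,p\pm 1}$, and if $\kappa\in\{\im,\jm\}$ the swap again produces the inhomogeneous $(1-q^2)$ term, which a length-graded reduction in $\hcalA[k]$ alone cannot see. Your alternative route through iterated twisted/Ore-type products has the same difficulty hidden in the associativity check of the proposed multiplication. Until the full set of inclusion and overlap ambiguities --- (a)--(a), (a)--(b), and (b)--(b), with and without corrections --- has been shown to resolve, the filtered-to-graded argument does not close; surjectivity of $\mathrm{gr}\,\mu$ is free, but the dimension bound that gives injectivity of $\mathrm{gr}\,\mu$ is exactly what confluence supplies.
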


For homogeneous elements $x,y \in \hcalA$, we set
\begin{align*}
[x,y]_q \seteq xy - q^{-(\wt(x),\wt(y))}yx.     
\end{align*}

For any $\im \in \sfI$ and $m\in \Z$, let $\rmE_{\im,m}$ and $\Es_{\im,m}$ to be the endomorphisms 
of $\hcalA$ defined by
\begin{align} \label{eq: Ei Esi}
\rmE_{\im,m}(x) \seteq [x,f_{\im,m+1}]_q \qtq  \Es_{\im,m}(x) \seteq [f_{\im,m-1},x]_q     
\end{align}
for any homogeneous element $x \in \hcalA$. For any $n\in \Z_{\ge0}$, we set
$$
\rmE_{\im,m}^{(n)} \seteq  \dfrac{1}{[n]!}  \rmE_{\im,m}^n, \qtq \Esn{(n)}_{\im,m} \seteq \dfrac{1}{[n]!} \Esn{n}_{\im,m}.
$$

For any homogeneous $x,y \in \hcalA$, one can easily check that
\begin{align*}
\rmE_{\im,m}(xy) &= x \rmE_{\im,m}(y) + q^{-(\al_{\im,m},\wt\, y)} \rmE_{\im,m}(x)y, \\    
\Es_{\im,m}(xy) &=  \Es_{\im,m}(x)y + q^{-(\al_{\im,m},\wt\, x)} x\Es_{\im,m}(y).    
\end{align*}

From Theorem~\ref{thm: serial decomp},  we have the decomposition
\begin{align} \label{eq: hcalA decomposition}
\hcalA = \bigoplus_{(\be_k)_{k \in \Z} \in \rl^{\oplus \Z}} \oprod_{k \in \Z} \hcalA[k]_{\be_k}.     
\end{align}
Define 
\begin{align} \label{eq: def of Mn}
\Mn\colon \hcalA\longepito \kk
\end{align}
to be the natural projection
$ \hcalA\longepito \displaystyle\oprod_{k\in\Z}\hcalA[k]_{0}\simeq\kk$
arising from the decomposition \eqref{eq: hcalA decomposition}.

\begin{definition}
We define a bilinear form on $\hcalA$ as follows:
\begin{align} \label{eq: hA form}
\hAform{x,y} \seteq \Mn(x \ocalD_q (y)) \in \kk \quad \text{ for any } x,y \in \hcalA,   
\end{align}
where $\ocalD_q$ is the automorphism of $\hcalA$ given in~\eqref{eq: shift}.
\end{definition}

\begin{theorem}[{\cite[Lemma 6.3, Theorem 6.4]{KKOP24}}] \label{thm: hAform} \hfill  
\bnum
\item \label{it: hAform sym nond}
The bilinear form $\hAform{ \ , \ }$ is symmetric and non-degenerate.
\item \label{it: hAform 2}  If $x$ and $y$ are homogeneous elements such that $\wt(x)\not=\wt(y)$, then $\hAform{x,y}=0$.
  \item \label{it Est}  For any $m\in\Z$ and $\im\in\sfI$, we have
  $\rmE_{\im,m}\hA_{\le m}\subset \hA_{\le m}$ and
  $\Es_{\im,m}\hA_{\ge m}\subset \hA_{\ge m}$. 
\item  For any $x,y\in\hcalA_{\le m}$ and $u, v \in\hcalA_{\ge m}$, we have
$$
\hAform{f_{\im,m}x,y}=\hAform{ x,\rmE_{\im,m}(y)} \qtq \hAform{u , vf_{\im,m} }=\hAform{\Es_{\im,m}(u) ,v}.
$$
\ee   
\end{theorem}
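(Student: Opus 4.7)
The plan is to establish the four parts in the order (ii), (iii), (iv), (i), since each later part builds on the earlier ones.

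First, I would prove (ii) by a direct weight calculation. Since $\wt(f_{\im,p+1}) = (-1)^{p+2}\al_\im = -\wt(f_{\im,p})$, the automorphism $\ocalD_q$ reverses weights, so $\wt(x\ocalD_q(y)) = \wt(x) - \wt(y)$. The projection $\Mn$ factors through the weight-$0$ component of the decomposition \eqref{eq: hcalA decomposition}; indeed, each slot satisfies $\hcalA[k]_0 = \kk$ because its weights lie in $(-1)^{k+1}\sfQ^+$. Hence $\hAform{x,y}=0$ whenever $\wt(x) \ne \wt(y)$.

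For (iii), using Definition~\ref{def: Bosonic ext}(b) one computes $\rmE_{\im,m}(f_{\jm,p}) = [f_{\jm,p}, f_{\im,m+1}]_q = \delta_{\im,\jm}\delta(p=m)(1-q^2)$ for all $p \le m$, a scalar lying in $\hcalA_{\le m}$. Since $\rmE_{\im,m}$ is a twisted derivation (by the Leibniz rule recorded after its definition), the inclusion $\rmE_{\im,m}(\hcalA_{\le m}) \subset \hcalA_{\le m}$ follows. The argument for $\Es_{\im,m}$ on $\hcalA_{\ge m}$ is symmetric.

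Part (iv) is the main technical step, which I would prove by induction on the length of $y$ as a monomial in $\st{f_{\jm,p}}_{p\le m}$. The base case $y=1$ gives $\rmE_{\im,m}(1)=0$ on the right, while on the left, $\Mn(f_{\im,m}x)$ vanishes by the weight argument from (ii): in the canonical form, the slot-$m$ component of $f_{\im,m}x$ has weight in $(-1)^{m+1}\sfQ^+$ with a strictly positive contribution from $\al_\im$, which cannot be zero. For the inductive step, write $y = f_{\jm,p}y'$, apply the twisted Leibniz rule for $\rmE_{\im,m}$ on the right, and use Definition~\ref{def: Bosonic ext}(b) to commute $f_{\im,m}$ past the leading factor $f_{\jm,p+1}$ of $\ocalD_q(y) = f_{\jm,p+1}\ocalD_q(y')$ on the left; the scalar $(1-q^2)$ contribution appears precisely when $\jm=\im$ and $p=m$, which matches the boundary term from $\rmE_{\im,m}(f_{\jm,p})$, while the remaining $q$-commuted term is handled by the inductive hypothesis. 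The second adjointness identity is proved symmetrically.

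Finally, (i) follows from (iv). For symmetry: by (ii) we may assume $\wt(x)=\wt(y)$, and iterated application of (iv) reduces both $\hAform{x,y}$ and $\hAform{y,x}$ to manifestly symmetric scalar expressions in $q$ and $(1-q^2)$. For non-degeneracy: by (ii), work in a fixed weight space $\hcalA_\be$; if $\hAform{x,\cdot}\equiv 0$ on $\hcalA_\be$, then via (iv) we have $\hAform{\Es_{\im,m}(x), \cdot}\equiv 0$ on a weight space with strictly smaller height for appropriate $\im,m$, enabling an inductive reduction to $\be=0$, where $\hcalA_0 = \kk$ and the form is non-zero on the identity. The hardest part will be the careful tracking of $q$-signs in (iv) as successive commutations from Definition~\ref{def: Bosonic ext}(b) interact with $\ocalD_q$ and the canonical ordering provided by the serial decomposition \eqref{eq: hcalA decomposition}.
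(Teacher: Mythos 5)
The paper does not prove Theorem~\ref{thm: hAform}: it is quoted from \cite[Lemma 6.3, Theorem 6.4]{KKOP24}, and the only in-text commentary is the one-line remark after the statement that the first inclusion in (iii) follows from $\rmE_{\im,m}(1)=0$ and $\rmE_{\im,m}(f_{\jm,k})=\delta(\im=\jm)\delta(m=k)(1-q^2)$ for $k\le m$. So there is no proof in this paper to compare against; your proposal is filling in an argument, not reproducing one.

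Within that caveat, your parts (ii) and (iii) are fine: the weight calculation for (ii) and the twisted-derivation reduction for (iii) are exactly the right ideas, and (iii) agrees with the paper's own remark. Part (iv), however, has a genuine gap in the inductive step. In $\Mn\bl f_{\im,m}\,x\,f_{\jm,p+1}\,\ocalD_q(y')\br$ the factors $f_{\im,m}$ and $f_{\jm,p+1}$ are separated by $x\in\hcalA_{\le m}$, so you cannot simply ``commute $f_{\im,m}$ past the leading factor $f_{\jm,p+1}$.'' To bring $f_{\jm,p+1}$ to the left you must first pass it through $x$, and Definition~\ref{def: Bosonic ext}(b) produces additional $(1-q^2)$-terms whenever $x$ contains a factor $f_{\jm,p}$ (for any $p\le m$, not only $p=m$); these corrections are not accounted for in your sketch, and they are precisely the reason the proof is delicate. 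Moreover, after applying the Leibniz rule to $\rmE_{\im,m}(f_{\jm,p}y')$ on the right side, the leading term is $\hAform{x,\,f_{\jm,p}\rmE_{\im,m}(y')}$, which is \emph{not} of the form $\hAform{\cdot,\,y'}$ covered by your induction hypothesis; you would need an auxiliary identity to handle the left multiplication by $f_{\jm,p}$ inside the second slot, and as written the induction does not close. Finally, for (i), the claim that iterated applications of (iv) reduce $\hAform{x,y}$ and $\hAform{y,x}$ to ``manifestly symmetric scalar expressions'' is asserted but not substantiated; symmetry of $\hAform{\ ,\ }$ is itself one of the nontrivial points of \cite[Theorem 6.4]{KKOP24} and needs an argument beyond what is sketched here.
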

Note that the first statement in \eqref{it Est} easily follows from $\rmE_{\im,m}(1)=0$ and
$\rmE_{\im,m}(f_{\jm,k})=\delta(\im=\jm)\delta(m=k)(1-q^2)$
for any $\jm\in \sfI$ and $k\in\Z$ such that $k\le m$. 

\subsection{Bosonic extension at $q=1$} Note that we have the $\kk$-algebra isomorphism 
\begin{align} \label{eq: vph}
\varphi_{k} \colon \Aqn   \isoto  \hcalA[k] \qquad \text{ by } \varphi_k(\ang{\im}) = q^{1/2} f_{\im,k}.      
\end{align}
For $k \in \Z$ and $\iinI$, we define
\begin{align*}
\hcalA[k]_\Zq  &\seteq \vph_{k}(\Azn) \subset \hcalA 
\end{align*}
and set
$$
\hcalA[a,b]_\Zq \seteq \oprod_{k \in [a,b]} \hcalA[k]_\Zq\subset \hcalA,  \qquad 
\hcalA_\Zq \seteq \sbcup_{a \le b} \hcalA[a,b]_{\Zq}\subset \hcalA. 
$$

\begin{proposition} [{\cite[Proposition 7.2]{KKOP24} }]\label{prop: hcalA integral form} 
$\hcalA_\Zq$ is a $\Zq$-subalgebra of $\hcalA$, and 
$$\kk \otimes_\Zq \hcalA_\Zq \isoto \hcalA.$$ 
In particular, we have
\begin{equation} \label{eq: adjacent}
\begin{aligned}
& \hAmz{m}\hAmz{m-1} \\
& \qquad   = \left\{  x \in \hAm{m-1,m} \ | \  \hAform{x,uv} \in \Zq  \right. \allowdisplaybreaks \\
& \hspace{15ex} \left. \text{ for any } u \in \vph_m \circ \iota\big(\calU^-_\bfA(\g)\big) \text{ and } v \in \vph_{m-1} \circ \iota\big(\calU^-_\bfA(\g)\big) \right\}.
\end{aligned}
\end{equation}
\end{proposition}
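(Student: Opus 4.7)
The plan is to leverage the serial decomposition of Theorem~\ref{thm: serial decomp} as the main structural tool. That theorem provides a $\kk$-linear isomorphism $\otens_{k \in [a,b]} \hcalA[k] \isoto \hcalA[a,b]$; since each $\hcalA[k]_\Zq = \vph_k(\Azn)$ is a $\Zq$-form of $\hcalA[k]$, the isomorphism promotes to a $\Zq$-module identification $\otens_{k \in [a,b]} \hcalA[k]_\Zq \isoto \hcalA[a,b]_\Zq$. Base change to $\kk$ recovers $\hcalA[a,b]$, and passing to the colimit over intervals yields $\kk \tens_\Zq \hcalA_\Zq \isoto \hcalA$.

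For the subalgebra property, it suffices to check that $\hcalA[k]_\Zq \cdot \hcalA[l]_\Zq \subset \hcalA_\Zq$ for all $k,l \in \Z$. The cases $k = l$ (using that $\Azn$ is a $\Zq$-subalgebra of $\Aqn$) and $k > l$ (already in normal-ordered form) are routine. For the remaining case $k < l$, direct computation from the relation \eqref{it: relation2} on the scaled generators $q^{1/2} f_{\im,p}$ yields
$$(q^{1/2} f_{\im,k})(q^{1/2} f_{\jm,l}) = q^{(-1)^{l-k+1}(\al_\im,\al_\jm)} (q^{1/2} f_{\jm,l})(q^{1/2} f_{\im,k}) + \delta_{\im,\jm}\delta_{l,k+1}(q - q^3),$$
whose coefficients are all in $\Zq$ (the exponent of $q$ is an integer in simply-laced type). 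Extending this to arbitrary $\vph_k(x)$ and $\vph_l(y)$ with $x,y \in \Azn$ would proceed by induction on weight, using the derivations $\rmE_{\im,m}$ and $\Es_{\im,m}$ from \eqref{eq: Ei Esi} to iteratively reorder products; Theorem~\ref{thm: hAform}~\eqref{it Est} ensures these derivations preserve the relevant subalgebras, and each elementary swap stays $\Zq$-integral.

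The characterization of $\hAmz{m}\hAmz{m-1}$ then follows from the duality between $\Azn$ and $\iota(\calU_\bfA^-)$ under $\aform{\cdot,\cdot}$, transported to $\hcalA[m-1,m]$. Writing any $x \in \hcalA[m-1,m]$ uniquely as $x = \sum_i \vph_m(\psi_{i,1}) \vph_{m-1}(\psi_{i,2})$ via Theorem~\ref{thm: serial decomp} and applying the adjointness identities of Theorem~\ref{thm: hAform}, the pairing $\hAform{x, uv}$ with $u = \vph_m(\iota(u_1))$ and $v = \vph_{m-1}(\iota(u_2))$ reduces to a $\Zq$-multiple of $\sum_i \aform{\psi_{i,1}, \iota(u_1)} \cdot \aform{\psi_{i,2}, \iota(u_2)}$. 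The integrality of this quantity for all $u_1, u_2 \in \calU_\bfA^-$ is equivalent, by the very definition of $\Azn$ as the dual lattice, to each $\psi_{i,1}, \psi_{i,2}$ lying in $\Azn$, which is precisely the condition $x \in \hAmz{m}\hAmz{m-1}$.

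The main obstacle I foresee is the inductive extension in the subalgebra argument: $\Azn$ is not generated as a $\Zq$-subalgebra of $\Aqn$ by the $\ang{\im}$'s alone, since divided-power elements $\ang{\im}^{(n)}$ appear in general. A cleaner reorganization is to prove the bilinear form characterization for all intervals $[a,b]$ simultaneously with the $\Zq$-module structure, and then deduce the subalgebra property from the manifest closure of the characterized set under multiplication; this closure reduces in turn to the closure of $\iota(\calU_\bfA^-)$ under multiplication combined with the adjointness formulas of Theorem~\ref{thm: hAform}.
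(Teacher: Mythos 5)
The paper does not prove this proposition --- it imports it as \cite[Proposition 7.2]{KKOP24} and then uses it as a black box, notably in the proof of Proposition~\ref{prop: comm} --- so there is no in-paper argument to compare against; what follows concerns the internal soundness of your proposal.

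The module-theoretic part is fine: restricting Theorem~\ref{thm: serial decomp} to the free $\Zq$-lattices $\hcalA[k]_\Zq=\vph_k(\Azn)$ and passing to the directed union does give $\Zq$-freeness and $\kk\otimes_\Zq\hcalA_\Zq\isoto\hcalA$. The genuine gap is that both the subalgebra claim and the characterization \eqref{eq: adjacent} are made to rest on a factorization of the pairing that you assert but never establish: writing $x=\sum_i\vph_m(\psi_{i,1})\vph_{m-1}(\psi_{i,2})$, $u=\vph_m(\iota(u_1))$, $v=\vph_{m-1}(\iota(u_2))$, you claim $\hAform{x,uv}$ is a $\Zq$-unit times $\sum_i\aform{\psi_{i,1},\iota(u_1)}\aform{\psi_{i,2},\iota(u_2)}$, and everything else is read off from this. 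Proving that identity is exactly the content of the proposition. Indeed, computing $\Mn\bl x\,\ocalD_q(uv)\br$ requires normal-ordering across the three adjacent slots $\hcalA[m-1]$, $\hcalA[m]$, $\hcalA[m+1]$, and each swap across a gap of $1$ produces a bosonic correction proportional to $1-q^2$ coming from Definition~\ref{def: Bosonic ext}~(b); one has to see that these corrections cancel against the $\zeta^{-1}=(1-q^2)^{-1}$ factors hidden in $\iota$ and that the cross-slot contributions organize into the claimed product. The adjointness of Theorem~\ref{thm: hAform}~(iv) peels off only a single $f_{\im,m}$ at a time; since $\calU_\bfA^-$ is $\bfA$-generated by divided powers $f_\im^{(n)}$, you also need divided-power adjointness, equivalently stability of $\hAmz{m}$ under $\zeta^{-n}\rmE_{\im,m}^{(n)}$ and $\zeta^{-n}\Esn{(n)}_{\im,m}$ --- facts that the paper \emph{recalls} in the proof of Proposition~\ref{prop: comm} as consequences of the very proposition you are proving, and which your proposal never establishes. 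As a result, the ``cleaner reorganization'' you end with is circular as stated: the interval characterization you would use to deduce the subalgebra property itself depends on the factorization, which in turn depends on either the subalgebra property or the divided-power adjointness.
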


\begin{proposition} \label{prop: comm}
The $\Z$-algebra 
\begin{align} \label{eq: commutative algebra bbA}
\obbA \seteq \text{$\hcalA_\Zq/(q^{1/2}-1)\hcalA_\Zq$  is commutative.}
\end{align}
\end{proposition}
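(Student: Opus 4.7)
The plan is to reduce the claim to commutativity of each ``time slot'' and commutation between different time slots. By Proposition~\ref{prop: hcalA integral form} together with the isomorphism $\vph_p\col\Azn\isoto\hcalA[p]_\Zq$ of \eqref{eq: vph}, the $\Zq$-algebra $\hcalA_\Zq$ is generated by the elements
\begin{equation*}
g_{\im,p}\seteq q^{1/2}f_{\im,p}=\vph_p(\ang{\im}),\qquad \im\in\sfI,\ p\in\Z.
\end{equation*}
It therefore suffices to verify that any two such generators commute modulo $(q^{1/2}-1)\hcalA_\Zq$.

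For generators attached to distinct slots, I would multiply the defining relation (b) of Definition~\ref{def: Bosonic ext} by $q$ to obtain, for $m<p$,
\begin{equation*}
g_{\im,m}\,g_{\jm,p}=q^{1+(-1)^{p-m+1}(\al_\im,\al_\jm)}\,g_{\jm,p}\,g_{\im,m}+q\,\delta(\im=\jm)\,\delta(p=m+1)\,(1-q^{2}).
\end{equation*}
Both monomials lie in $\hcalA_\Zq$, and $q^{c}-1\in(q^{1/2}-1)\Zq$ for every $c\in\Z$ as well as $q(1-q^{2})\in(q^{1/2}-1)\Zq$; hence $g_{\im,m}g_{\jm,p}\equiv g_{\jm,p}g_{\im,m}\pmod{(q^{1/2}-1)\hcalA_\Zq}$.

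For generators inside the same slot, the isomorphism $\vph_m$ reduces the question to the commutativity of $\Azn/(q^{1/2}-1)\Azn$. My plan is to invoke the classical identification of this specialization with the integral coordinate ring $\calO_\Z(\sfN)$ of the maximal unipotent subgroup $\sfN$ attached to $\sfg$; being the ring of regular functions on an affine variety, $\calO_\Z(\sfN)$ is commutative. This yields $g_{\im,m}g_{\jm,m}\equiv g_{\jm,m}g_{\im,m}\pmod{(q^{1/2}-1)\hcalA_\Zq}$ and, combined with the previous paragraph, proves commutativity of $\obbA$.

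The only substantive input is the within-slot step: the commutativity of $\Azn$ at $q^{1/2}=1$. This is a standard but genuinely nontrivial fact, and it depends crucially on the choice of the \emph{dual} integral form $\Azn$, since the corresponding statement fails for the Lusztig form $\calU_\bfA^-(\sfg)$ of the isomorphic $\kk$-algebra $\calU_q^-(\sfg)$. The cross-slot step, by contrast, is purely formal and uses only the $q$-commutation in Definition~\ref{def: Bosonic ext}(b) together with $(q^{1/2}-1)\mid(q^{c}-1)$ in $\Zq$.
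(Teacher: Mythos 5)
Your whole reduction rests on the assertion that $\hcalA_\Zq$ is generated over $\Zq$ by the elements $g_{\im,p}=q^{1/2}f_{\im,p}=\vph_p(\ang{\im})$, which you attribute to Proposition~\ref{prop: hcalA integral form} and \eqref{eq: vph}. But that proposition only states that $\hcalA_\Zq$ is a $\Zq$-subalgebra with $\kk\otimes_\Zq\hcalA_\Zq\simeq\hcalA$, together with the \emph{dual} description \eqref{eq: adjacent}; and \eqref{eq: vph} only says $\vph_p$ is a $\kk$-algebra isomorphism. Neither says $\Azn$ is generated over $\bfA$ by the $\ang{\im}$'s, and in fact this is false: $\Azn$ is the \emph{dual} integral form, which is strictly larger than the $\bfA$-subalgebra generated by $\{\ang{\im}\}_{\im\in\sfI}$. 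Already for $\sfg=\mathfrak{sl}_2$, the weight space $\Azn_{-2\al_\im}$ is free of rank one over $\bfA$ with basis the upper global basis element $\Gup(\tf_\im^2\mathsf{1})$ (equivalently, under Theorem~\ref{thm: KLR iso}, the class of the unique self-dual simple $R(2\al_\im)$-module), while $\ang{\im}^2=(q+q^{-1})\,\Gup(\tf_\im^2\mathsf{1})$ because $[L(\im)]^2=(q+q^{-1})[L(\im^2)]$ in $\calK(R(2\al_\im)\gmod)$. Since $q+q^{-1}$ is not a unit of $\bfA$, $\Gup(\tf_\im^2\mathsf{1})$ does not lie in the $\bfA$-subalgebra generated by $\ang{\im}$. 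Applying $\vph_p$, the same phenomenon persists in each slot $\hcalA[p]_\Zq$, so checking that the $g_{\im,p}$ pairwise commute modulo $(q^{1/2}-1)\hcalA_\Zq$ does not establish commutativity of $\obbA$. (A genuine $\Zq$-generating set exists, namely the PBW vectors $\{\FF_k^\hwc\}_{k\in\Z}$ of Remark~\ref{rmk: comp PBW hwc}, but that set is much larger and its cross-slot commutation relations are not read off directly from Definition~\ref{def: Bosonic ext}.)

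This explains why the paper's proof takes a more delicate route. The paper disposes of the within-slot case exactly as you do (quoting commutativity of $\Azn/(q^{1/2}-1)\Azn$) and of slots at distance $\ge 2$ by $q$-commutation, but for \emph{adjacent} slots it invokes the characterization \eqref{eq: adjacent} of $\hAmz{m}\hAmz{m-1}$ through the bilinear form $\hAform{\ ,\ }$, and proves $\hAform{xy-yx,uv}\in(q^{1/2}-1)\Zq$ by induction on $\het(\wt u)+\het(\wt v)$ using the operators $\rmE_{\im,m}$ and $\Es_{\im,m}$ of \eqref{eq: Ei Esi} and Theorem~\ref{thm: hAform}. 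That duality-and-adjointness step is precisely what replaces the (false) finite-generation claim in your argument, and it cannot be avoided by working only with the $g_{\im,p}$'s.
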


\begin{proof}
It is known that $\hAmz{m}/(q^{1/2}-1)\hAmz{m} \simeq \calA_\Zq(\sfn)/(q^{1/2}-1)\calA_\Zq(\sfn)$ is commutative for each $m$. Since     
$\hAmz{n}$ and $\hAmz{m}$ $q$-commutes if $n>m+1$ by Definition~\ref{def: Bosonic ext}~\eqref{it: relation2}, it is enough to show that
\begin{align} \label{eq: commutative integral form}
 xy-yx \in (q^{1/2}-1)\hcalA_\Zq    \text{ if } x \in \hAmz{m+1}\text{ and } y \in \hAmz{m}.
\end{align}

In order to see~\eqref{eq: commutative integral form}, it is enough to show
$$
\hAform{xy-yx,uv} \in \Zq(q^{1/2}-1) 
$$
for any homogeneous $u \in \varphi_{m+1}\circ \iota (\calU^-_\bfA(\g))$ and $v \in \varphi_{m} \circ \iota (\calU^-_\bfA(\g))$ by~\eqref{eq: adjacent}. 
We shall prove this by induction on $\het(\wt (u))+\het(\wt (v))$. 

Assume that $v= v'f_{\im,m}^{(k)}\zeta_{\im}^{-k}$ with $v' \in \varphi_{m}\circ\iota (\calU^-_\bfA(\g))$ and $k>0$. Then we have
$$
\hAform{xy-yx,uv} =\hAform{xy-yx,uv'f_{\im,m}^{(k)}\zeta_{\im}^{-k}} = \hAform{ \zeta_{\im}^{-k}\rmE^{\star(k)}_{\im,m}(xy-yx),uv' }.
$$

Recall that $\hAmz{m}$ is stable by $\zeta_{\im}^{-k}\rmE^{(k)}_{\im,m}$ and $\zeta_{\im}^{-k}\rmE^{\star(k)}_{\im,m}$. Since $\rmE^{\star}_{\im,m}(x)=0$, we have
$$
\zeta_{\im}^{-k}\rmE^{\star(k)}_{\im,m} (xy-yx) \equiv \left(x \zeta_{\im}^{-k}\rmE^{\star(k)}_{\im,m}(y) - \zeta_{\im}^{-k}\rmE^{\star(k)}_{\im,m}(y)x \right) \ \ {\rm mod} \ (q^{1/2}-1)\hcalA_\Zq,
$$
 and hence we obtain
$$
\hAform{xy-yx,uv} = \hAform{x \zeta_{\im}^{-k}\rmE^{\star(k)}_{\im,m}(y) - \zeta_{\im}^{-k}\rmE^{\star(k)}_{\im,m}(y)x,uv'} \equiv 0 \ \ {\rm mod} \ (q^{1/2}-1)\Zq, 
$$
by the induction. 

Similarly, if $u = \zeta_{\im}^{-k}f^{(k)}_{\im,m+1}u'$ with $u' \in  \varphi_{m+1}\circ\iota (\calU^-_\bfA(\g))$ and $k>0$, we have
$$
\hAform{xy-yx,uv} =\hAform{xy-yx,\zeta_{\im}^{-k}f_{\im,m+1}^{(k)}u'v} = \hAform{ \zeta_{\im}^{-k}\rmE^{(k)}_{\im,m}(xy-yx),u'v }.
$$
Since $\rmE_{\im,m+1}(y)=0$, we have
$$
\zeta_{\im}^{-k}\rmE^{(k)}_{\im,m} (xy-yx) \equiv \left(\zeta_{\im}^{-k}\rmE^{(k)}_{\im,m}(x)y - y\zeta_{\im}^{-k}\rmE^{(k)}_{\im,m+1}(x) \right), \ \ {\rm mod} \ (q^{1/2}-1)\hcalA_\Zq,
$$
and hence we obtain
\[
\hAform{xy-yx,uv} = \hAform{\zeta_{\im}^{-k}\rmE^{(k)}_{\im,m}(x)y - y\zeta_{\im}^{-k}\rmE^{(k)}_{\im,m+1}(x)} \equiv 0 \ \ {\rm mod} \ (q^{1/2}-1)\Zq. \qedhere
\]
\end{proof}

\begin{remark}
Even though we consider $\hcalA$ associated with a finite simple Lie algebra $\sfg$
of simply-laced type, Proposition~\ref{prop: comm} still holds
for an arbitrary symmetrizable Kac-Moody algebra. 
\end{remark}

We denote by the canonical map
\begin{align} \label{eq: overline convention}
\ev_{q=1}\col \hAz \twoheadrightarrow \obbA   
\end{align}
For each interval $[a,b]$, $[a,\infty]$ and $[-\infty,b]$, we define $\obbA[a,b]$, $\obbA_{\ge a}$ and $\obbA_{\le b}$ respectively, in an obvious way.

\subsection{Global basis}
We now define $\Z[q^{1/2}]$-lattices as follows:
\begin{equation} \label{Eq: def of Zq lattices of hA}
\begin{aligned}
\hLup[k] \seteq \vph_k\left(\Lup\big(\Azn\big)\right), \ \ \hLup[a,b]  \seteq  \oprod_{k \in [a,b]} \hLup[k],  \ \
\hLup  \seteq \sbcup_{a \le b} \hLup[a,b]. 
\end{aligned}
\end{equation}

The notion of \emph{extended crystal} of $\hB(\infty)$ is introduced in~\cite{KP22} and  defined as 
\begin{align} \label{Eq: extended crystal}
	\hB(\infty) \seteq   \Bigl\{  (b_k)_{k\in \Z } \in \prod_{k\in \Z} B(\infty)  \bigm | b_k =\mathsf{1} \text{ for all but finitely many $k$}  \Bigr\}.
\end{align}
Here $\mathsf{1}$ is the highest weight element of $B(\infty)$.

For any $\bfb =(b_k)_{k\in\Z} \in \hB(\infty)$, we set
$$
\rmP(\bfb) \seteq \oprod_{k \in \Z} \vph_k(\Gup(b_k)) \in \hLup. 
$$
Then, $\{\rmP(\bfb)\mid\bfb\in \hBi\}$ forms a $\Z[q^{1/2}]$-basis of $\LuphA$.

We regard $\hB(\infty)$ as a $\Z$-basis of $\hLup/q^{1/2}\hLup$ by 
$$
\bfb  \equiv \rmP(\bfb) \ {\rm mod} \ q^{1/2}\hLup. 
$$

\begin{theorem}[{\cite[Theorem 7.6]{KKOP24}}] \label{Thm: global basis} \hfill 
\bnum
\item \label{it: global (i)}
For each $\bfb = (b_k)_{k \in \Z} \in\hBi$, there exists a unique
$\rmG(\bfb)\in \LuphA$ such that
$$\rmG(\bfb)-\rmP(\bfb)\in \displaystyle\sum_{\bfb'\boldsymbol{\prec}^*\bfb}q\Z[q]\rmP(\bfb') \qtq  c(\rmG(\bfb))=\rmG(\bfb),$$
where $\boldsymbol{\prec}^*$ is a certain order on $\hBi$ \ro see \cite[(7.4)]{KKOP24} for the definition of $\boldsymbol{\prec}^*$\rf. 
\item \label{it: global (ii)}
  The set $\{\rmG(\bfb) \ | \ \bfb\in\hBi\}$ forms a $\Z[q^{1/2}]$-basis of $\LuphA$, and
 a $\Z$-basis of $\LuphA\cap c\big(\LuphA\big)$. 
\item \label{it: global (iii)}
For any $\bfb \in\hBi$, we have 
$$
\rmP(\bfb) = \rmG(\bfb) + \sum_{\bfb'  \boldsymbol{\prec}^* \bfb}  f_{\bfb,\bfb'}(q) \rmG(\bfb')\qquad \text{ for some $f_{\bfb,\bfb'}(q) \in q \Z[q]$.}
$$
 \ee
\end{theorem}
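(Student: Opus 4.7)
The statement fits into the classical Lusztig--Kashiwara framework for producing a canonical basis out of four ingredients: an integral lattice (here $\LuphA$), a $\Z$-semilinear involution (here $c$), a local $\Z$-basis of $\LuphA/q^{1/2}\LuphA$ indexed by $\hBi$ (here $\{\bfb\}$, represented by $\{\rmP(\bfb)\}$), and a partial order $\boldsymbol{\prec}^*$ on $\hBi$ with respect to which the involution acts unitriangularly. The plan is to verify (a) $c(\LuphA)=\LuphA$, (b) the unitriangularity
\begin{equation*}
c(\rmP(\bfb))\equiv \rmP(\bfb)\mod{\textstyle\sum_{\bfb'\boldsymbol{\prec}^*\bfb}}\Zq\,\rmP(\bfb'),
\end{equation*}
and then invoke the standard Lusztig--Kashiwara existence/uniqueness lemma to obtain (i); the assertions (ii) and (iii) then follow by formal arguments about inverting a unitriangular base change.

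For the lattice preservation (a), one works factor by factor. In each $\hcalA[k]$, the restriction of $c$ is, up to the weight-twist $q^{(\wt(x),\wt(x))/2}$, the local bar involution transported from $\Aqn$ via $\vph_k$, and it is a classical fact that this preserves $\vph_k(\Lup(\Azn))$. Using Theorem~\ref{thm: serial decomp} to identify $\hcalA[a,b]$ with the ordered tensor product of the $\hcalA[k]$, I would expand a product $\oprod_k \vph_k(\Gup(b_k))$, apply $c$ term by term, and use the $q$-commutation relations of Definition~\ref{def: Bosonic ext}~\eqref{it: relation2} to push everything back into ordered form. The only corrections produced by reordering are the quadratic defects $\delta_{\im,\jm}\delta(p=m+1)(1-q^2)$, which lie in $\Zq\subset\LuphA$; thus $c(\LuphA)\subseteq \LuphA$, and by symmetry of $c$ we get equality.

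The main obstacle is step (b), the unitriangularity. Locally within each $\hcalA[k]$, the upper global basis $\Gup(b_k)$ is bar-invariant, so $c$ acts trivially modulo the weight-twist; the weight-twist produces a factor $q^{(\al,\al)/2}$ which, combined with the local bar triangularity, sends $\rmP(\bfb)$ to itself modulo elements with lexicographically smaller local crystal entries and $q\Z[q]$ coefficients. The genuinely new piece is the contribution from the nontrivial $q$-commutations across adjacent slots $[k],[k+1]$: each elementary swap consumes a pair of generators $(f_{\im,k+1},f_{\im,k})$ and replaces them by $1-q^2$, i.e.\ strictly decreases the weight in slot $k+1$ (and in slot $k$) while keeping all further slots untouched. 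The partial order $\boldsymbol{\prec}^*$ of \cite[(7.4)]{KKOP24} is designed exactly so that both of these effects give strictly smaller elements. The bookkeeping --- showing that every correction term lies in $\sum_{\bfb'\boldsymbol{\prec}^*\bfb}\Zq\,\rmP(\bfb')$ --- is the technical heart of the argument, to be carried out by induction on the total weight $\het(-\wt(\bfb))$ and on the number of adjacent commutations used.

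Once (a) and (b) are in hand, the Lusztig--Kashiwara lemma applies to the pair $(\LuphA,c)$ with local basis $\{\rmP(\bfb)\}$ and order $\boldsymbol{\prec}^*$ to give a unique $c$-fixed $\rmG(\bfb)\in\LuphA$ with $\rmG(\bfb)-\rmP(\bfb)\in\sum_{\bfb'\boldsymbol{\prec}^*\bfb}q\Z[q]\,\rmP(\bfb')$, proving (i). Since the base change from $\{\rmP(\bfb)\}$ to $\{\rmG(\bfb)\}$ is unitriangular with coefficients in $q\Z[q]$, the set $\{\rmG(\bfb)\}$ is a $\Z[q^{1/2}]$-basis of $\LuphA$; that it is a $\Z$-basis of $\LuphA\cap c(\LuphA)$ follows because any $c$-fixed element of $\LuphA$ admits a unique expansion in $\{\rmG(\bfb)\}$ whose coefficients, being $c$-fixed elements of $\Zq$, must lie in $\Z$. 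This proves (ii). Finally, (iii) is obtained by inverting the unitriangular matrix of (i), which preserves the shape $f_{\bfb,\bfb'}(q)\in q\Z[q]$ for $\bfb'\boldsymbol{\prec}^*\bfb$.
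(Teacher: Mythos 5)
This theorem is quoted from \cite[Theorem 7.6]{KKOP24} without proof in the present paper, so there is no text here to compare against; your outline is the standard Lusztig--Kashiwara construction, which is certainly the framework the cited reference uses, and your derivation of parts (ii) and (iii) from part (i) is essentially correct.

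There is, however, a genuine error in your step (a): you claim $c(\LuphA)=\LuphA$, but this cannot hold. The map $c$ is only $\Z$-semilinear (it sends $q^{1/2}\mapsto q^{-1/2}$), so it carries $\Z[q^{1/2}]$-lattices to $\Z[q^{-1/2}]$-lattices. Concretely, take $\bfb$ with $\rmP(\bfb)=q\,f_{\im,1}f_{\im,0}$, so that $\wt(\rmP(\bfb))=0$; using the relation $f_{\im,0}f_{\im,1}=q^2 f_{\im,1}f_{\im,0}+(1-q^2)$ one computes $c(\rmP(\bfb))=\rmP(\bfb)+(q^{-1}-q)$, which is not in $\LuphA$. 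The same objection applies to your local claim that $c$ preserves $\vph_k\bl\Lup(\Azn)\br$. Indeed the very statement you are trying to prove in part (ii) --- that $\{\rmG(\bfb)\}$ is a $\Z$-basis of $\LuphA\cap c(\LuphA)$ --- is inconsistent with $c(\LuphA)=\LuphA$, since if $c$ preserved $\LuphA$ then $\LuphA\cap c(\LuphA)=\LuphA$, which is far too large to have a $\Z$-basis of this size. The correct version of step (a) is that $c$ preserves the $\Zq$-lattice $\hAz$ (the $\Zq$-span of $\{\rmP(\bfb)\}$), and it is with respect to this lattice, together with your unitriangularity step (b), that the Lusztig--Kashiwara lemma is invoked. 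With that correction your reordering argument is the right shape. The remaining burden --- checking that the defect terms really land in $\sum_{\bfb'\boldsymbol{\prec}^*\bfb}\Zq\rmP(\bfb')$ --- cannot be verified against this paper because the definition of $\boldsymbol{\prec}^*$ from \cite[(7.4)]{KKOP24} is not reproduced here.
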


We call 
$$\text{$\bfG \seteq \{\rmG(\bfb) \ | \ \bfb\in\hBi\}$ the \emph{global basis} of $\hcalA$.}$$ 

For each $u \in \Z$, we set 
$$\bfG[u] \seteq \{\rmG(\bfb) \ | \ \bfb=(b_k)_{k \in \Z}\in\hBi  \text{ with } b_k =\mathsf{1} \text{ for } k \ne u \}.$$ 
Obviously, $\bfG[u]$ is a $\Z[q^{1/2}]$-basis of $\LuphA[u]$.

\subsection{Braid symmetries,
  Noetherian property of $\hcalA(\ttb)$ and strong duality data} \label{subsec: Braid symmetry}

\begin{proposition} [{\cite{KKOP21B} (see also \cite{JLO2,KKOP24B})}] For each $\im \in \sfI$, there exist $\kk$-algebra
automorphisms $\TT_\im$ and $\TT^{\star}_\im$ on $\hcalA$ defined as follows$:$
\begin{align}
\TT_\im(f_{\jm,m}) & = \bc
f_{\im,p+\delta_{\im,\jm}} & \text{ if } d(\im,\jm) \ne 1, \\
\dfrac{ q^{1/2} f_{\jm,m}f_{\im,m} - q^{-1/2}f_{\im,m}f_{\jm,m}  }{q - q^{-1}},  & \text{ if } d(\im,\jm) =1,
\ec \label{eq: T_i}\allowdisplaybreaks\\ 
\TT^{\star}_\im(f_{\jm,m}) & = \bc
f_{\im,p-\delta_{\im,\jm}} & \text{ if } d(\im,\jm) \ne 1, \\
\dfrac{ q^{1/2} f_{\im,m}f_{\jm,m} - q^{-1/2}f_{\jm,m}f_{\im,m}  }{q - q^{-1}},& \text{ if } d(\im,\jm) =1.
\ec \label{eq: T_i inverse}
\end{align} 
Furthermore, $\{\TT_\im\}_{\im \in \sfI}$ $($resp. $\{\TT^{\star}_\im\}_{\im \in \sfI})$ satisfies the commutation relations and the braid relations of $\sfg$ and
$\TT^{\star}_\im \circ \TT_\im = \TT_\im \circ \TT^{\star}_\im = {\rm id}$. 
\end{proposition}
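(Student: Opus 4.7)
The plan is a direct verification in three stages: (1) well-definedness of $\TT_\im$ and $\TT^\star_\im$ as $\kk$-algebra endomorphisms of $\hcalA$, (2) the mutual inverseness $\TT_\im\circ\TT^\star_\im=\TT^\star_\im\circ\TT_\im=\mathrm{id}$, and (3) the commutation and braid relations among $\{\TT_\im\}_{\im\in\sfI}$.

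For (1), I would define $\TT_\im$ on the generators $f_{\jm,m}$ by the stated formulas and check that each defining relation of $\hcalA$ is preserved. The quantum Serre relations of Definition~\ref{def: Bosonic ext}\,(a) are internal to a single layer $\hcalA[m]$; via the isomorphism $\vph_m\colon\Aqn\isoto\hcalA[m]$, the restriction of $\TT_\im$ to this layer coincides, up to rescaling, with the standard dual braid operator on $\Aqn$ acting on $\ang{\jm}$, so these relations follow from the classical well-definedness of Lusztig's symmetry. The cross-layer relations of Definition~\ref{def: Bosonic ext}\,(b) require direct calculation. The only delicate case is applying $\TT_\im$ to a bosonic relation between $f_{\jm,m}$ with $d(\im,\jm)=1$ and another generator $f_{\jm',p}$ with $p\ne m$: the image is a two-term expression, and one must check that the $q$-commutation with $\TT_\im(f_{\jm',p})$ holds with the correct exponent $(-1)^{p-m+1}(\al_\im,\al_{\jm'})$. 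The parity is preserved because the possible index shift $f_{\im,m}\mapsto f_{\im,m+1}$ changes $m$ to $m+1$, and the inner-product adjustment matches the sign flip cleanly.

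For (2), the identity $\TT_\im\circ\TT^\star_\im(f_{\jm,m})=f_{\jm,m}$ on generators is immediate for $d(\im,\jm)\ne 1$; for $d(\im,\jm)=1$ it reduces to an identity inside the single layer $\hcalA[m]$ expressing a nested antisymmetric $q$-commutator of $f_{\im,m}$ with $f_{\jm,m}$ as a scalar multiple of $f_{\jm,m}$. Once both $\TT_\im$ and $\TT^\star_\im$ are known to be algebra endomorphisms, equality on generators extends automatically to $\hcalA$. For (3), the commutations $\TT_\im\TT_\jm=\TT_\jm\TT_\im$ for $d(\im,\jm)>1$ follow by case analysis on a generator $f_{\jm',m}$, using only the layerwise formulas. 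The braid relations $\TT_\im\TT_\jm\TT_\im=\TT_\jm\TT_\im\TT_\jm$ for $d(\im,\jm)=1$ are the main difficulty: the iterated compositions produce degree-three expressions that mix the adjacent layers $\hcalA[m]$ and $\hcalA[m+1]$, and their simplification requires repeated use of the bosonic $q$-commutation between layers. A clean way to proceed is to exploit Theorem~\ref{thm: serial decomp}, writing $\hcalA[m,m+1]\simeq\hcalA[m+1]\tens_\kk\hcalA[m]$, and to verify that $\TT_\im$ restricts compatibly with this factorization; the braid relation on generators then reduces to the classical braid relation for the dual Lusztig symmetry in an $A_2$-type subalgebra of $\Aqn$, transported layerwise via $\vph_m$ and $\vph_{m+1}$.

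The main obstacle is the braid relation, specifically the mixed-layer verification. The restriction to a single layer is classical, but the bookkeeping of how intermediate shifts between $\hcalA[m]$ and $\hcalA[m+1]$ interact with the bosonic $q$-commutation is nontrivial: one must track that the intermediate products $\TT_\im\TT_\jm(f_{\im,m})$ and $\TT_\jm\TT_\im(f_{\jm,m})$ land in the correct layer with the correct $q$-powers, and that the final comparison reduces to the rank-2 identity rather than producing spurious cross-layer contributions. Once this is in place, the remaining claims of the proposition follow formally.
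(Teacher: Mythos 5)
The paper states this proposition with a citation to \cite{KKOP21B} (see also \cite{JLO2,KKOP24B}); it gives no internal proof, so I assess your blind outline on its own.

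There is a genuine gap in part (1) that also undermines (2). You claim that because the quantum Serre relations are internal to a single layer $\hcalA[m]$, their preservation under $\TT_\im$ reduces via $\vph_m$ to the classical well-definedness of Lusztig's symmetry on $\Aqn$. But $\TT_\im$ does \emph{not} restrict to an endomorphism of $\hcalA[m]$: $\TT_\im(f_{\im,m})=f_{\im,m+1}\notin\hcalA[m]$. Hence the \emph{image} of a Serre relation under $\TT_\im$ need not lie in a single layer. For $d(\im,\jm)=1$, the two Serre relations between $f_{\im,m}$ and $f_{\jm,m}$ are carried to cubic identities that mix $f_{\im,m+1}\in\hcalA[m+1]$ with $\TT_\im(f_{\jm,m})\in\hcalA[m]$, and checking that these vanish requires the bosonic cross-layer relations of Definition~\ref{def: Bosonic ext}\,(b), not a classical in-layer identity. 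The non-restriction is in fact the whole point of the construction: on $\calU_q(\sfg)$, $\sfS_\im$ sends $f_\im$ to $-e_\im t_\im$, which escapes the negative part, and the role of the adjacent layer $\hcalA[m+1]$ is precisely to house that escapee so that $\TT_\im$ becomes a globally defined algebra automorphism. There is therefore no meaningful ``restriction of $\TT_\im$ to $\hcalA[m]$'' that one could identify with a braid operator on $\Aqn$.

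The same cross-layer issue already appears in your step (2). For $d(\im,\jm)=1$, expanding $\TT_\im\bl\TT^\star_\im(f_{\jm,m})\br$ using that $\TT_\im$ is an algebra homomorphism immediately produces $f_{\im,m+1}$, so the claim that this verification is ``an identity inside the single layer $\hcalA[m]$'' is false; pulling $f_{\im,m+1}$ back through the layer boundary requires the relation $f_{\im,m}f_{\im,m+1}=q^2 f_{\im,m+1}f_{\im,m}+(1-q^2)$, which is the essential mechanism here. What does survive from your reduction is the single-layer verification of Serre relations between two generators both distinct from $\im$, where $\TT_\im$ indeed preserves $\hcalA[m]$ on those generators. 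But the Serre relations involving $\im$ itself, and the composition check for adjacent $\jm$, must be handled by direct mixed-layer computation. The difficulty you flag as the ``main obstacle'' in step (3) thus already occurs in steps (1) and (2), and the single-layer shortcuts you invoke there are not available.
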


From the above proposition, for each $\ttb \in \ttB$ with $\ttb= \bg_{\im_1}^{\ep_1}\bg_{\im_2}^{\ep_2} \cdots
\bg_{\im_r}^{\ep_r}$, 
$$ \TT_\ttb \seteq \TT^{\ep_1}_{\im_1}\TT^{\ep_2}_{\im_2} \cdots \TT^{\ep_r}_{\im_r} \text{ is well-defined}.$$
Note that, for any homogeneous element $x$, we have $\wt(\TT_\im(x)) = s_\im \wt(x)$.

\begin{proposition}[{\cite[Proposition 4.3, Lemma 4.4]{OP24}}]\label{prop: T_i in CQ}
Let $\uii=(\im_1,\ldots,\im_r)$ be a reduced sequence. Then, for any $1 \le k \le r$ and $m \in \Z$, we have
$$
\TT_{\im_1}\cdots \TT_{\im_{k-1}}(f_{\im_k,m}) \in \hcalA[m] 
$$
Furthermore, if  $\usfwc=(\im_1,\ldots,\im_\ell)$ is a reduced sequence of $w_0$, we have
$$
\TT_{\im_1}\cdots \TT_{\im_{\ell}}(f_{\im,m})  = f_{\im^*,m+1}. 
$$
\end{proposition}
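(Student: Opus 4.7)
The plan is to prove the first assertion by induction on $k$, transferring the statement from $\hcalA$ to $\calU_q^-(\sfg)$ and comparing with Lusztig's braid symmetry $\sfS_\im$, and then to derive the second assertion from the first by a weight argument combined with the shift automorphism $\ocalD_q$.

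For the first assertion, the base case $k=1$ is immediate. For the inductive step, set $Y\seteq\TT_{\im_2}\cdots\TT_{\im_{k-1}}(f_{\im_k,m})$; since $(\im_2,\ldots,\im_k)$ is reduced, by induction $Y\in\hcalA[m]$, and it remains to check $\TT_{\im_1}(Y)\in\hcalA[m]$. Transferring via the algebra isomorphism $\vph_m\circ\iota\col\calU_q^-(\sfg)\isoto\hcalA[m]$ of \eqref{eq: iota} and \eqref{eq: vph}, and comparing the defining formulas \eqref{eq: T_i} for $\TT_\im$ with those for Lusztig's $\sfS_\im$ on the generators $f_\jm$, the two maps coincide on the subalgebra generated by $\st{f_{\jm,m}}_{\jm\ne\im}$: both fix $f_\jm$ when $d(\im,\jm)>1$ and produce the same quantum commutator when $d(\im,\jm)=1$. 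The sole discrepancy is at $\jm=\im$: $\sfS_\im(f_\im)=-e_\im t_\im$ leaves $\calU_q^-(\sfg)$, while $\TT_\im(f_{\im,m})=f_{\im,m+1}$ leaves $\hcalA[m]$. The classical Lusztig/Levendorskii--Soibelman theorem says that, because each tail $(\im_j,\ldots,\im_k)$ is reduced, the intermediate PBW root vector $\sfS_{\im_j}\cdots\sfS_{\im_{k-1}}(f_{\im_k})$ belongs to $\calU_q^-(\sfg)$ and equals the root vector attached to the positive root $s_{\im_j}\cdots s_{\im_{k-1}}(\al_{\im_k})$. The parallel $\TT$-computation thus never triggers the discrepant case, and $\vph_m\circ\iota$ identifies $\TT_{\im_1}\cdots\TT_{\im_{k-1}}(f_{\im_k,m})$ with the Lusztig root vector for $\be_k=s_{\im_1}\cdots s_{\im_{k-1}}(\al_{\im_k})$, which sits in $\calU_q^-(\sfg)$; hence our element lies in $\hcalA[m]$.

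For the second assertion, every simple root occurs in any reduced expression of $w_0$, so by braid and commutation moves I may reduce to the case $\im_\ell=\im$. The first assertion applied to $(\im_1,\ldots,\im_{\ell-1},\im)$ gives $\TT_{\im_1}\cdots\TT_{\im_{\ell-1}}(f_{\im,m})\in\hcalA[m]$. A direct check on generators shows that the shift automorphism $\ocalD_q$ commutes with every $\TT_\jm$, so $\TT_{\im_1}\cdots\TT_{\im_{\ell-1}}(f_{\im,m+1})=\ocalD_q\bigl(\TT_{\im_1}\cdots\TT_{\im_{\ell-1}}(f_{\im,m})\bigr)\in\hcalA[m+1]$. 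Combined with $\TT_{\im_\ell}(f_{\im,m})=f_{\im,m+1}$, this places $\TT_{\im_1}\cdots\TT_{\im_\ell}(f_{\im,m})$ in $\hcalA[m+1]$, and the iterated weight identity
\begin{equation*}
\wt\bigl(\TT_{\im_1}\cdots\TT_{\im_\ell}(f_{\im,m})\bigr)=w_0\bigl((-1)^{m+1}\al_\im\bigr)=(-1)^{(m+1)+1}\al_{\im^*}=\wt(f_{\im^*,m+1})
\end{equation*}
confines it to a one-dimensional weight space spanned by $f_{\im^*,m+1}$. The exact coefficient $1$ is pinned down by a direct rank-two check, or by propagating the normalization inductively along the reduced word using that both $\TT_{\im_\ell}(f_{\im_\ell,\cdot})=f_{\im_\ell,\cdot\pm 1}$ and the commutator formula preserve the integral normalization of the relevant generators.

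The main obstacle is the bookkeeping in the first assertion: rigorously verifying that the discrepant case $\sfS_\im(f_\im)=-e_\im t_\im$ is never triggered during the iterative evaluation. This is precisely controlled by the positivity of the intermediate roots $s_{\im_{j+1}}\cdots s_{\im_{k-1}}(\al_{\im_k})$, which is in turn ensured by the reducedness of the tail subsequences $(\im_j,\ldots,\im_k)$; making this control explicit at each level of the iteration is what requires careful attention.
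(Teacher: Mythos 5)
The overall strategy — transfer via $\vph_m\circ\iota$ to compare $\TT_\im$ with Lusztig's $\sfS_\im$, then conclude membership in $\hcalA[m]$ from the classical PBW theorem in $\calU_q^-(\sfg)$ — is a sensible direction, and your generator-level comparison of the two braid operators for $\jm\ne\im$ and the weight argument in the second half are both sound. However, the load-bearing claim in the first half, ``the parallel $\TT$-computation thus never triggers the discrepant case,'' is false, and this is a genuine gap rather than a presentation issue.

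Already in type $A_2$ with $\uii=(1,2)$ the discrepant case is triggered. We have $\TT_2(f_{1,m})=\dfrac{q^{1/2}f_{1,m}f_{2,m}-q^{-1/2}f_{2,m}f_{1,m}}{q-q^{-1}}$, and every element of the $(m,-\al_1-\al_2)$-weight space of $\hcalA[m]$ necessarily involves $f_{1,m}$. So applying $\TT_1$ to the displayed expression inevitably produces $\TT_1(f_{1,m})=f_{1,m+1}\in\hcalA[m+1]$ in intermediate terms. The result nonetheless lands in $\hcalA[m]$ (a short computation gives $\TT_1\TT_2(f_{1,m})=f_{2,m}$), but only because the copies of $f_{1,m+1}$ are eliminated by the bosonic relation $f_{1,m}f_{1,m+1}=q^2f_{1,m+1}f_{1,m}+(1-q^2)$. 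The exact same phenomenon occurs on the $\calU_q^-$ side: $\sfS_2(f_1)=f_2f_1-qf_1f_2$ contains $f_1$, and applying $\sfS_1$ does invoke $\sfS_1(f_1)=-e_1t_1$, which subsequently cancels. In other words, positivity of the intermediate root $s_{\im_{j+1}}\cdots s_{\im_{k-1}}(\al_{\im_k})$ does \emph{not} prevent $f_{\im_j}$ (resp.\ $f_{\im_j,m}$) from appearing in the expansion of the $j$-th intermediate vector, so the premise of your transfer breaks down: $\psi_m\colon\calU_q^-(\sfg)\isoto\hcalA[m]$ does not extend to an isomorphism of ambient algebras intertwining $\sfS_\im$ and $\TT_\im$, and ``$\psi_m\circ\sfS_{\im_1}=\TT_{\im_1}\circ\psi_m$'' only holds a priori on elements whose expansion avoids $f_{\im_1}$, which the PBW root vectors do not do.

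To repair this one has to engage with the mechanism that forces the cancellation. On the $\calU_q^-$ side the standard device is the characterization of the relevant subalgebra as the kernel of a Kashiwara/Lusztig derivation $e'_{\im_1}$ (or $r_{\im_1}$), on which $\sfS_{\im_1}$ admits an alternative formula not referencing $e_{\im_1}$. The analogous move in $\hcalA$ would use the operators $\rmE_{\im,m}$, $\Es_{\im,m}$ of \eqref{eq: Ei Esi} and the bosonic relation — precisely the structure your argument currently bypasses. Until that bridge is built, the first assertion is not established, and since the second assertion rests on the first, it is likewise incomplete (though the weight bookkeeping and the use of $\ocalD_q$-equivariance there are correct, and the WLOG reduction to $\im_\ell=\im$ is valid).
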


\smallskip

Let $\ttb$ be an element in $\ttB^+$. For $\uii=(\im_1,\ldots,\im_r) \in \Seq(\ttb)$, we set 
\begin{align} \label{eq: PBW vector for hA}
\FF_k^\uii \seteq \TT_{\im_1}\cdots \TT_{\im_{k-1}}(f_{\im_k,0}) \quad \text{ for } 1\le k \le r.     
\end{align}

Let $\hcalA^\uii$ be a subalgebra of $\hcalA$ generated by $\{ \FF_k^\uii \}_{1 \le k \le r}$. 
When there is no danger of confusion, we drop $\uii$ in $\FF_k^\uii$.

For $\bsa =(a_1,\ldots,a_r) \in \Z_{\ge 0}^r$, we set   
\begin{align} \label{eq: PBW monomial hA}
\FF^\uii(\bsa) \seteq  \displaystyle   \oprod_{k\in [1,r]}   q^{a_k(a_k-1)/2 } \FF_k^{a_k}.    
\end{align}

\begin{theorem} [\cite{OP24,KKOP24B}] \label{thm: hatA(b)}
For any $\ttb\in \ttB^+$, we set $\hcalA(\ttb)=\hcalA_{\ge0}\cap \TT_\ttb(\hcalA_{<0})$.
Then,  
\begin{align} \label{eq: PBW hcal(b)}
\text{$\bfP_\uii \seteq \{ \FF^\uii(\bsa)  \ | \ \bsa \in \Z_{\ge 0}^{\ell(\ttb)} \}$ forms a basis of $\hcalA(\ttb)$ for any $\uii \in \Seq(\ttb)$.}
\end{align}
\end{theorem}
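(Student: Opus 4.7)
The plan is to reduce the arbitrary-sequence case to the reduced-sequence case, where the statement essentially coincides with Lusztig's dual PBW theorem combined with the serial decomposition of $\hcalA$ in Theorem \ref{thm: serial decomp}. First I would verify that every $\FF^\uii(\bsa)$ lies in $\hcalA(\ttb)$. Membership in $\hcalA_{\ge 0}$ follows by induction on $k$ from Proposition \ref{prop: T_i in CQ} when $\uii$ is reduced; in general one passes to reduced subsequences via the commutation and braid moves of Definition \ref{def: moves}, using that the $\TT_\im$ satisfy the braid relations of $\sfg$. Membership in $\TT_\ttb(\hcalA_{<0})$ is symmetric: since $\TT_\ttb^{-1} = \TT^{\star}_{\im_r}\cdots\TT^{\star}_{\im_1}$ and $\TT^{\star}_{\im_k}(f_{\im_k,0})=f_{\im_k,-1}$, one obtains
$$\TT_\ttb^{-1}(\FF_k^\uii) = \TT^{\star}_{\im_r} \cdots \TT^{\star}_{\im_{k+1}}(f_{\im_k, -1}),$$
which lies in $\hcalA_{<0}$ by the $\TT^\star$-analogue of Proposition \ref{prop: T_i in CQ}.

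Next I would establish linear independence of $\bfP_\uii$, using the weight grading together with Theorem \ref{thm: serial decomp}: each $\FF^\uii(\bsa)$ decomposes into components in the various $\hcalA[m]$, and for reduced $\uii$ the $\FF_k^\uii$ reproduce Lusztig's dual PBW root vectors inside $\hcalA[0]\simeq \Aqn$, whose independence is classical. For arbitrary $\uii$, I would argue by induction on $r$: compare $\uii=(\im_1,\ldots,\im_r)$ with $\uii'=(\im_2,\ldots,\im_r)$ and exploit that $\TT_{\im_1}$ is a $\kk$-algebra automorphism respecting the weight grading, so that a nontrivial linear relation among $\bfP_\uii$ would produce one among $\bfP_{\uii'}$.

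The main obstacle is the spanning step. The approach is induction on $r=\ell(\ttb)$. For the inductive step, I would use the characterization (\ref{eq: adjacent}) of $\hAmz{m}\hAmz{m-1}$ in conjunction with a straightening algorithm in the style of Levendorski\u\i--Soibelman: reorder any product of the generators $\FF_k^\uii$ into the fixed order of (\ref{eq: PBW monomial hA}), each $q$-commutation producing ordered monomials plus strictly lower-order correction terms controlled by the order $\boldsymbol{\prec}^*$ on $\hBi$ used in Theorem \ref{Thm: global basis}. Invariance of both $\hcalA(\ttb)$ and $\bfP_\uii$ under commutation and braid moves on $\uii$ then follows from the braid relations for $\TT_\im$, reducing us to one distinguished representative of $\Seq(\ttb)$. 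The delicate point is that for non-reduced $\uii$ the root vectors $\FF_k^\uii$ are not concentrated in a single factor $\hcalA[m]$, so the correction terms generated by straightening may spread across several $\hcalA[m]$ simultaneously; showing that these corrections themselves lie in the $\Q(q^{1/2})$-span of $\bfP_\uii$ requires a careful induction on both $r$ and the $\boldsymbol{\prec}^*$-level, with termination guaranteed by the fact that each straightening step strictly decreases a suitable lexicographic invariant on the multi-degree.
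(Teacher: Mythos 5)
The paper does not prove Theorem~\ref{thm: hatA(b)}; it is imported from \cite{OP24,KKOP24B}, so there is no in-paper proof to compare your sketch against. Judged on its own terms, your membership step contains a genuine gap: you propose to handle a non-reduced prefix $(\im_1,\dots,\im_k)$ by ``passing to reduced subsequences via the commutation and braid moves,'' but such moves preserve word length in $\ttB^+$ and therefore cannot turn a prefix whose image in $\weyl$ has length $<k$ into a reduced one, so Proposition~\ref{prop: T_i in CQ} genuinely does not apply. The correct route is simpler and does not need that proposition: from~\eqref{eq: T_i} one checks that $\TT_\im$ sends every generator $f_{\jm,m}$ with $m\ge 0$ into $\hcalA_{\ge 0}$, so $\TT_\im(\hcalA_{\ge 0})\subseteq\hcalA_{\ge 0}$ and hence $\TT_{\ttb'}(\hcalA_{\ge 0})\subseteq\hcalA_{\ge 0}$ for every $\ttb'\in\ttB^+$; applying this to $\ttb_{k-1}=\bg_{\im_1}\cdots\bg_{\im_{k-1}}$ gives $\FF_k^\uii\in\hcalA_{\ge 0}$, and the symmetric check $\TT_\im^\star(\hcalA_{<0})\subseteq\hcalA_{<0}$ from~\eqref{eq: T_i inverse} gives $\TT_\ttb^{-1}(\FF_k^\uii)\in\hcalA_{<0}$. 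Your invocation of a ``$\TT^\star$-analogue of Proposition~\ref{prop: T_i in CQ}'' has the same defect, since $(\im_{k+1},\dots,\im_r)$ need not be reduced.

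For linear independence the proposed descent does not close: applying $\TT_{\im_1}^{-1}$ carries $\FF_1^\uii=f_{\im_1,0}$ to $f_{\im_1,-1}$, which is not among the PBW vectors for $\uii'=(\im_2,\dots,\im_r)$, so a relation among the $\FF^\uii(\bsa)$ does not transparently become one among the $\FF^{\uii'}(\bsa')$. For spanning, the Levendorskii--Soibelman straightening idea is the right kind of tool (compare the proof of Proposition~\ref{prop: Noetherian} in this paper, which establishes $[\hcalA_\bfA(\ttb'),\FF^\uii_r]_q\subset\hcalA_\bfA(\ttb')$ by an argument of just this flavor), but controlling the correction terms produced by reordering across the factors $\hcalA[m]$ is precisely where the substance of the theorem lies, and your sketch identifies that difficulty without resolving it.
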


We call $\bfP_\uii$ in~\eqref{eq: PBW hcal(b)} the \emph{PBW-basis} of $\hcalA(\ttb)$ associated with $\uii \in \Seq(\ttb)$.

\begin{theorem} [\cite{KKOP24,KKOP24B}] \label{thm: P and G for ttb}
  \hfill
\bnum
\item
  $\TT_\im$ induces an $\bfA$-algebra 
automorphism of $\hcalA_\bfA$ and the global basis $\bfG$ of $\hcalA$ is invariant under this automorphism.
\item \label{it: P and G 2nd}  The global basis $\bfG$ is compatible with 
  $\hcalA(\ttb)$. Namely, $\bfG(\ttb) \seteq \bfG \cap \hcalA(\ttb)$ is a $\bfA$-basis of the $\bfA$-module $\hcalA_\bfA(\ttb) \seteq \hcalA(\ttb) \cap \hcalA_\bfA$.
\item \label{it: P and G 3rd} For each $\uii \in \Seq(\ttb)$, $\bfP_\uii$ is indeed a $\bfA$-basis of $\hcalA_\bfA(\ttb)$
and there exists a uni-triangular transition map between $\bfA$-bases $\bfP_\uii$ and $\bfG(\ttb):$
\begin{align} \label{eq: uni map}
 \FF^\uii(\bsa) =  \bb^\uii(\bsa) +  \sum_{\bsb\prec\bsa} c_{\bsa,\bsb}(q)\bb^\uii(\bsb) \quad \text{ for $c_{\bsa,\bsb}(q) \in q\Z_{\ge0}[q]$},
\end{align}
where $\bb^\uii(\bsa),\bb^\uii(\bsb) \in \bfG$ and $\prec$ is the bi-lexicographic order \ro see {\rm Definition~\ref{def: bi-lexico}} below\rf. 
\ee
\end{theorem}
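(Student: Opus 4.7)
The plan is to treat the three parts in order, with each building on its predecessor.

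For part (i), the strategy is to combine an explicit check on generators with uniqueness of the global basis. Directly from the formulas \eqref{eq: T_i}--\eqref{eq: T_i inverse}, one verifies that $\TT_\im(f_{\jm,m})$ and $\TT_\im^\star(f_{\jm,m})$ lie in $\hcalA_\bfA$, so $\TT_\im$ restricts to an $\bfA$-algebra automorphism of $\hcalA_\bfA$. Next, one checks on generators that $\TT_\im$ commutes with the twisted bar map $c$ of \eqref{eq: c map}. The crux is to match $\TT_\im$ with Lusztig's braid symmetry $\sfS_\im$ on the local pieces: for $x \in \hcalA[k]$ of weight $(-1)^{k+1}\beta$ with $\beta, s_\im(\beta) \in \Phi^+_\sfg$, the image $\TT_\im(x)$ stays in $\hcalA[k]$ and agrees, through $\varphi_k\circ\iota$, with $\sfS_\im$ applied on $\calA_q(\sfn)$; the remaining "crossing'' cases are handled by the shift $\TT_\im(f_{\im,m}) = f_{\im,m+1}$ and serial decomposition (Theorem~\ref{thm: serial decomp}). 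Since Lusztig's $\sfS_\im$ permutes the dual canonical basis of $\calA_q(\sfn)$, this transfers to $\TT_\im(\LuphA) = \LuphA$ and to compatibility with the crystal $\hBi$ modulo $q^{1/2}\LuphA$; the characterization of $\rmG(\bfb)$ in Theorem~\ref{Thm: global basis}\eqref{it: global (i)} then forces $\TT_\im(\bfG) = \bfG$.

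For part (ii), the key is that $\bfG$ is compatible with the one-sided truncations $\hcalA_{\ge 0}$ and $\hcalA_{<0}$: by Theorem~\ref{Thm: global basis}\eqref{it: global (ii)} and the product structure of $\rmP(\bfb)$, the subset $\bfG \cap \hcalA_{\ge 0}$ is indexed by $\bfb = (b_k)_{k\in\Z} \in \hBi$ with $b_k = \mathsf 1$ for $k < 0$, and similarly for $\hcalA_{<0}$. By part (i), $\TT_\ttb$ permutes $\bfG$, so $\TT_\ttb(\bfG \cap \hcalA_{<0})$ is an $\bfA$-basis of $\TT_\ttb(\hcalA_{<0}) \cap \hcalA_\bfA$. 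Since both bases lie in $\bfG$, their intersection is $\bfG \cap \hcalA(\ttb)$, which then spans $\hcalA_\bfA(\ttb)$ and gives (ii).

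For part (iii), one proves the PBW-basis assertion by induction on $r = \ell(\ttb)$. For the inductive step, write $\uii = (\im_1, \uii')$ with $\uii' = (\im_2,\ldots,\im_r)$; then $\FF_k^\uii = \TT_{\im_1}(\FF_{k-1}^{\uii'})$ for $k \ge 2$, so by the induction hypothesis and part (i), the elements $\FF_k^\uii$ for $k \ge 2$ form a PBW basis of $\TT_{\im_1}\bigl(\hcalA_\bfA(\ttb^{\uii'})\bigr) \cap \hcalA_\bfA$, and $\FF_1^\uii = f_{\im_1, 0}$ accounts for the remaining factor; the renormalizing $q$-power $q^{a_k(a_k-1)/2}$ in \eqref{eq: PBW monomial hA} makes each $\FF^\uii(\bsa)$ bar-invariant (up to $c$). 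For the unitriangular expansion \eqref{eq: uni map}, one then defines $\bb^\uii(\bsa) \in \hBi$ as the class of $\FF^\uii(\bsa)$ in $\LuphA/q^{1/2}\LuphA$ and invokes the uniqueness part of Theorem~\ref{Thm: global basis}\eqref{it: global (i)} to match it with $\rmG(\bb^\uii(\bsa))$, so the transition matrix is unitriangular with entries in $\Z[q]$.

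The main obstacle is the positivity statement $c_{\bsa,\bsb}(q) \in q\Z_{\ge 0}[q]$. This is not a formal consequence of the characterizations above, since the uniqueness argument only delivers coefficients in $\Z[q]$. The route is to categorify: through the Schur--Weyl duality functor $\calF_\bbD$ of Theorem~\ref{thm:gQASW duality} together with the categorification \eqref{eq: categorification} of $\calA_\bfA(\sfn)$ by $R\gmod$, the PBW monomials $\FF^\uii(\bsa)$ correspond (after iterated application of $\TT_{\im_1},\ldots,\TT_{\im_r}$) to classes of standard modules, while the $\rmG(\bfb)$ correspond to classes of self-dual simple modules; composition multiplicities are then non-negative polynomials in $q$, yielding the desired positivity. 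Checking that the bi-lexicographic order $\prec$ of Definition~\ref{def: bi-lexico} is compatible with the order in which simple subquotients appear is the delicate technical point.
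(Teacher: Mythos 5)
The paper does not prove this theorem: it is stated as a recalled result with the citation $\cite{KKOP24,KKOP24B}$ in the theorem header, so there is no in-paper argument to compare against. Assessing your sketch on its own terms, part (ii) is sound as written --- since $\bfG$ is an $\bfA$-basis of $\hcalA_\bfA$ compatible with both $\hcalA_{\ge 0}$ and (granting (i)) $\TT_\ttb(\hcalA_{<0})$, the unique $\bfG$-expansion of any $x\in\hcalA_\bfA(\ttb)$ is supported simultaneously on $\bfG\cap\hcalA_{\ge 0}$ and on $\bfG\cap\TT_\ttb(\hcalA_{<0})$, hence on $\bfG(\ttb)$ --- but it rests entirely on (i), and that is where the gap lies.

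For (i), you argue that $\TT_\im$ preserves $\LuphA$ by matching it with Lusztig's $\sfS_\im$ on each local piece $\hcalA[k]$ and handling the remaining cases with the shift $\TT_\im(f_{\im,m})=f_{\im,m+1}$ plus the serial decomposition of Theorem~\ref{thm: serial decomp}. The problem is that $\TT_\im$ does \emph{not} preserve the level decomposition $\LuphA=\oprod_k\hLup[k]$: already a monomial in $\hcalA[k]$ that involves $f_{\im,k}$ is sent to an element spread across $\hcalA[k]$ and $\hcalA[k+1]$, so the "local $=\sfS_\im$, plus shift" picture does not directly control the image inside the product lattice. Proposition~\ref{prop: T_i in CQ} only asserts level-preservation for the specific iterated images $\TT_{\im_1}\cdots\TT_{\im_{k-1}}(f_{\im_k,m})$ along a \emph{reduced} sequence, not for $\TT_\im$ on an arbitrary element of $\hcalA[k]$. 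Establishing $\TT_\im(\LuphA)=\LuphA$, and from there $\TT_\im(\bfG)=\bfG$ via the characterization in Theorem~\ref{Thm: global basis}, is a substantive piece of $\cite{KKOP24B}$, and the uniqueness argument you invoke cannot be started before this lattice stability is secured. The same issue feeds into (iii): the inductive factorization $\hcalA_\bfA(\ttb)$ from $f_{\im_1,0}$ and $\TT_{\im_1}\hcalA_\bfA(\ttb^{\uii'})$ is a Levendorskii--Soibelman type statement that requires proof, not merely the identity $\FF^\uii_k=\TT_{\im_1}(\FF^{\uii'}_{k-1})$.

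For the positivity $c_{\bsa,\bsb}(q)\in q\Z_{\ge 0}[q]$ in (iii), the categorification route you propose is not set up to deliver $q$-graded information. The map $\bPhi_\bbD\col\hAz\to K(\Cgz)$ built from $\calF_\bbD$ is a $\Z$-algebra homomorphism to the \emph{ungraded} Grothendieck ring of $\Cgz$; the grading is exactly what is specialized away, so composition multiplicities in $\Cgz$ cannot recover the polynomial coefficients $c_{\bsa,\bsb}(q)$. One would instead need a genuinely $t$-graded categorification, e.g.\ via the quantum Grothendieck ring $\calK_{\g;t}$ and $(q,t)$-characters of standard versus simple modules, together with a Kazhdan--Lusztig-type positivity input. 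That is a different mechanism from the one you describe, and the step of matching the bi-lexicographic order $\prec$ with the categorical order on subquotients --- which you flag as delicate --- is indeed a genuine obstruction in the form written.
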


\begin{remark}\label{rmk: comp PBW hwc}
    Recall $\hwc$ in~\eqref{eq: extension of reduced}.
  We extend the definition of $\FF_k^\hwc$ for $1 \le k \le \ell$ in~\eqref{eq: PBW vector for hA} by
$$
\FF_{k+n\ell}^\hwc \seteq \ocalD^{n}_q(\FF_k^\hwc)   \quad \text{ for $n \in \Z$.} 
$$
Then we have the followings:
\bna
\item $\FF_{k}^\hwc$ coincides with $\FF_{k}^\uii$ in \eqref{eq: PBW vector for hA} with $\uii=\hwc$. 
\item \label{it: PBW generators} The set $\{ \FF_{k}^\hwc \ | \  k \in \Z \}$ generates $\hAz$ as a $\Zq$-algebra.
\item The set $\bfP_\hwc \seteq \{ \FF^\hwc(\bsa)  \ | \ \bsa \in \Z_{\ge 0}^{ \oplus \Z } \}$ forms a $\Zq$-basis of $\hcalA_\Zq$.
\item The set $\bfP_\hwc[m] \seteq  \bigl\{ \FF^\hwc(\bsa)
   \bigm| \ \bsa \in \Z_{\ge0}^{[m\ell+1 ,(m+1)\ell]}\subset\Z_{\ge0}^{\oplus\Z} \bigr\}$  
  coincides with $\bfP_\hwc \cap\hcalA[m]_\Zq $ and forms a $\Zq$-basis of $\hcalA[m]_\Zq$. 
\item
There exists a unique family $\st{\bb^\hwc(\bsa)}_{\bsa\in\Z_{\ge0}^{\oplus\Z}}$
  of elements in $\bfG$ such that
  $$\bb^\hwc(\bsa)\equiv\FF^\hwc(\bsa)\mod\sum_{\bsa'\prec\bsa}q\Z[q] \FF^\hwc(\bsa').$$
  \ee
\end{remark}

Let us define  
$$
\obbA(\ttb) \seteq \hcalA_\bfA(\ttb)/(q^{1/2}-1)\hcalA_\bfA(\ttb)\subset \obbA.
$$
Then Proposition~\ref{prop: comm},  Theorem~\ref{thm: hatA(b)} and Theorem~\ref{thm: P and G for ttb} say that $\obbA(\ttb)$ is also a commutative ring.

The following lemma immediately follows from Theorem~\ref{thm: hatA(b)}.
\Lemma\label{lem:obbAcom}
Let $\ttb\in\ttB^+$ and $\uii=(\im_1, \ldots, \im_r)\in\Seq(\ttb)$. Then the 
commutative $\Z$-algebra $\obbA(\ttb)$
is the polynomial algebra generated by
$\st{\ev_{q=1}(\FF_k^\uii)\mid k\in[1,r]}$.
\enlemma

\smallskip

For a while, we shall prove that the algebra $\hcalA_\bfA(\ttb)$ is a Noetherian domain. In order to do that, we need a preparation.

\begin{proposition}
Let $B$ be rings and  $A \subset B$  its subring and $x \in B$. Assume that
\bnum
\item $A$ is left $($resp.\ right$)$ Noetherian,
\item $Ax + A = xA +A$,
\item $B = \sum_{k \in \Z_{\ge 0}} A x^k$.
\ee
Then $B$ is left $($resp.\ right$)$ Noetherian. 
\end{proposition}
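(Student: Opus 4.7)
The plan is to adapt the noncommutative Hilbert basis theorem using the filtration $B_n \seteq \sum_{k=0}^n Ax^k$ on $B$. First I would verify inductively from (ii) that $B_n = \sum_{k=0}^n x^k A$ as well, making each $B_n$ an $A$-bimodule with $B_m\,B_n \subseteq B_{m+n}$, and with $B = \bigcup_n B_n$ by (iii). Iterating (ii) yields the crucial commutation identities
\[
x^n c \equiv \sigma^n(c)\,x^n \pmod{B_{n-1}}, \qquad c\,x^n \equiv x^n\,\tau^n(c) \pmod{B_{n-1}} \qquad (c \in A),
\]
where $\sigma$ and $\tau$ are (multi-valued) maps arising from the two inclusions $xA \subseteq Ax + A$ and $Ax \subseteq xA + A$ respectively.

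For the left Noetherian case, fix a left ideal $I \subseteq B$ and define
\[
\widetilde L_n \seteq \{\,a \in A \mid x^n a \in I + B_{n-1}\,\}.
\]
The first commutation identity shows $\widetilde L_n$ is a left ideal of $A$; left-multiplying a witness $x^n a = f + b$ (with $f \in I$, $b \in B_{n-1}$) by $x$ gives $x^{n+1}a \in I + B_n$, so $\widetilde L_n \subseteq \widetilde L_{n+1}$. Left Noetherianity of $A$ then forces the chain to stabilize at some $N$, and each $\widetilde L_n$ ($n \le N$) is finitely generated, say by $a_{n,j}$; lift to $f_{n,j} \in I$ with $f_{n,j} \equiv x^n a_{n,j} \pmod{B_{n-1}}$. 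The goal is to show these $f_{n,j}$ generate $I$ as a left $B$-ideal.

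The substantive step — and the one I expect to be the main obstacle — is the induction on filtration degree. Given $g \in I$ of degree $n$, write $g \equiv x^n a \pmod{B_{n-1}}$ with $a \in \widetilde L_n$, set $m = \min(n,N)$, and decompose $a = \sum_j c_j a_{m,j}$. The naive combination $\sum_j c_j\,(x^{n-m} f_{m,j})$ has leading coefficient $\sum_j \tau^n(c_j)\,a_{m,j}$ rather than $\sum_j c_j a_{m,j}$, because of the twist incurred when commuting $c_j$ past $x^n$. The key technical lemma resolving this is that the induced map $\bar\tau^n \col A \twoheadrightarrow A/K_n^r$ is surjective, where $K_n^r \seteq \{\,c \in A \mid x^n c \in B_{n-1}\,\}$. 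This follows immediately from $B_n = B_{n-1} + Ax^n$: for any $c \in A$, writing $x^n c = b + a\,x^n$ with $b \in B_{n-1}$ yields $ax^n \equiv x^n c \pmod{B_{n-1}}$ and, combined with $ax^n \equiv x^n \tau^n(a) \pmod{B_{n-1}}$, gives $\bar\tau^n(a) = \bar c$. Replacing each $c_j$ by some $c_j' \in A$ with $\tau^n(c_j') \equiv c_j \pmod{K_n^r}$, the combination $\sum_j c_j'\,(x^{n-m} f_{m,j})$ now has the correct leading coefficient modulo $B_{n-1}$, so $g$ minus this combination lies in $I \cap B_{n-1}$, closing the induction.

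The right Noetherian case is entirely parallel, with $L_n \seteq \{\,a \in A \mid ax^n \in I + B_{n-1}\,\}$ a right ideal of $A$, the chain $L_n \subseteq L_{n+1}$ coming from right multiplication by $x$, and the analogous surjectivity of $\bar\sigma^n \col A \twoheadrightarrow A/K_n^\ell$ (with $K_n^\ell \seteq \{\,c \mid c x^n \in B_{n-1}\,\}$) supplying the reduction. The two-sided nature of hypothesis (ii) is essential in both cases: one inclusion of (ii) produces the ascending chain of leading-coefficient ideals, while the other supplies precisely the surjectivity needed to close the induction.
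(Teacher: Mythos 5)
Your proof is correct, and it would compile into a valid argument, but it takes a more elaborate route than the paper's. Two concrete differences are worth noting.

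First, the paper picks generators only at the stable level: with $n_0$ such that $\frakA_n=\frakA_{n_0}$ for all $n\ge n_0$, it takes generators $a_j$ of $\frakA_{n_0}$ alone, lifts them to $q_j\in\calI$, and then proves $\calI\cap B_n\subset\sum_j Bq_j+\calI\cap B_{n-1}$ for $n\ge n_0$. The low-degree part $\calI\cap B_{n_0-1}$ is dispatched in one stroke: it is an $A$-submodule of the finitely generated left $A$-module $B_{n_0-1}$, hence finitely generated over $A$ and a fortiori over $B$. You instead choose generators for every $\widetilde L_n$ with $n\le N$ and run the descent all the way to degree $-1$; this is the textbook Hilbert-basis pattern and is fine, but it costs you more bookkeeping than the paper's single module-theoretic reduction.

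Second, and more substantively, your surjectivity lemma for $\bar\tau^n\colon A\to A/K_n^r$ is a detour you created for yourself. You want the leading coefficient of $\sum_j c_j'\,(x^{n-m}f_{m,j})$ to equal $\sum_j c_j a_{m,j}$, and you engineer this by solving $\tau^n(c_j')\equiv c_j$. But you could instead read the correction forward: set $c_j'$ to be a $\sigma^n$-image of $c_j$, i.e.\ a $c_j'\in A$ with $x^n c_j\equiv c_j'\,x^n\pmod{B_{n-1}}$, which exists immediately from $x^nA\subset B_n=Ax^n+B_{n-1}$ with no surjectivity claim required. The paper never names $\sigma$ or $\tau$; it simply uses $x^n A a_j\subset Ax^n a_j+B_{n-1}$ as a containment of sets, which is exactly the forward direction. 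So the two-sidedness of hypothesis (ii) is indeed essential, as you observe, but not because one side supplies an ascending chain and the other a surjectivity: rather, one side gives $B_n=B_{n-1}+x^nA$ (used to extract leading coefficients of ideal elements) and the other gives $B_n=B_{n-1}+Ax^n$ (used to push scalars past $x^n$), and no inversion of a twist map is needed.
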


\begin{proof} 
Since the proof for right Noetherian is similar to the one for left Noetherian, we only give the proof for left Noetherian.
For $n \in \Z_{\ge 0}$, we set $B_n\seteq \sum_{k \le n} A x^k = \sum_{k \le n} x^k A$.
Let $\calI \subset B$ be a left ideal. Let us show that $\calI$ is finitely generated.

For $n\in\Z_{>0}$, set 
$$
\frakA_n = \{a \in A \ | \  x^n a \in \calI + B_{n-1}  \}.
$$
We claim that $\frakA_n$ is a left ideal of $A$. For $a \in \frakA_n$, we have
\begin{align*}
  x^n A a \subset B_n a \subset (Ax^n + B_{n-1}) a \subset A(\calI + B_{n-1}) +
  B_{n-1}a \subset \calI+B_{n-1}, 
\end{align*}
which implies the claim. 

Note that $\{  \frakA_n \}_{n \in \Z_{>0}}$ is increasing. Hence there exists $n_0 \in \Z_{> 0}$ such that
$\frakA_n = \frakA_{n_0}$ for all $n \ge n_0$. 

Since $\frakA_{n_0}$ is finitely generated, we can write
as $\frakA_{n_0} = \sum_{j=1}^r A a_j$ for some $a_j\in \frakA_{n_0}$.
We write
$$x^{n_0}a_j = q_j + p_j \quad \text{ with } q_j \in \calI \text{ and } p_j \in B_{n_0-1}.$$
Then for $n \ge n_0$, we have
\begin{align*}
\calI \cap B_n & \subset x^n \frakA_n + B_{n-1} \\
& \subset \sum_{j=1}^r A x^n a_j + B_{n-1} 
 = \sum_{j=1}^r A x^{n-n_0} (q_j+p_j) + B_{n-1} \\
& \subset  \sum_{j=1}^r A x^{n-n_0} q_j+ B_{n-1} 
 \subset  \sum_{j=1}^r B q_j+ B_{n-1},
\end{align*}
which implies
$$
\calI \cap B_n \subset \sum_{j=1}^r B q_j+ \calI \cap B_{n-1}
\ \
\text{ and hence } \ \ 
\calI \subset \sum_{j=1}^r B q_j+ \calI \cap B_{n_0-1}.
$$
Since $\calI \cap B_{n_0-1}$ is finitely generated as a left $A$-module,
 we can conclude that
$$
\calI = \sum_{j=1}^r B q_j + B(\calI \cap B_{n_0-1} )
$$
is finitely generated as a $B$-module, which implies the assertion. 
\end{proof}
Recall that a ring $A$ is called a domain if $ab\not=0$
for any non-zero $a,b\in A$. 
\begin{proposition} \label{prop: Noetherian}
For $\ttb \in \ttB^+$, $\hcalA_\bfA(\ttb)$ is a Noetherian domain.
\end{proposition}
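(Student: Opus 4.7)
The plan is to argue by induction on $r=\ell(\ttb)$ and apply the preceding proposition to the PBW tower associated with some $\uii\in\Seq(\ttb)$. The base case $r=0$ is immediate: $\hcalA_\bfA(1)=\bfA=\Z[q^{\pm1/2}]$ is a Noetherian domain. For the inductive step, fix $\uii=(\im_1,\ldots,\im_r)\in\Seq(\ttb)$ and set $\uii'=(\im_1,\ldots,\im_{r-1})$, $\ttb'=\bg_{\im_1}\cdots\bg_{\im_{r-1}}$. Since $\FF_k^{\uii}=\FF_k^{\uii'}$ for $k\le r-1$, the subalgebra $A:=\hcalA_\bfA(\ttb')\subset B:=\hcalA_\bfA(\ttb)$ is precisely the $\bfA$-subalgebra generated by $\FF_1^\uii,\ldots,\FF_{r-1}^\uii$, and $A$ is a Noetherian domain by induction. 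Put $x=\FF_r^\uii$.

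By Theorem~\ref{thm: hatA(b)} and Theorem~\ref{thm: P and G for ttb}\,\eqref{it: P and G 3rd}, $\{\FF^\uii(\bsa)\}_{\bsa\in\Z_{\ge0}^r}$ is an $\bfA$-basis of $B$; the convention on $\oprod$ together with~\eqref{eq: PBW monomial hA} gives
$$\FF^\uii(\bsa)=q^{a_r(a_r-1)/2}\,x^{a_r}\,\FF^{\uii'}(\bsa')\qquad\text{for }\bsa=(\bsa',a_r),$$
so $B=\bigoplus_{k\ge0}x^kA$ as a right $A$-module. It remains to verify the hypothesis $xA+A=Ax+A$ of the preceding proposition. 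This is a Levendorskii--Soibelman type relation: for each $1\le k<r$ there exist $c_k\in\Z$ and $\eta_k\in A$, lying in fact in the subalgebra generated by $\FF_{k+1}^\uii,\ldots,\FF_{r-1}^\uii$, such that
$$x\,\FF_k^\uii=q^{c_k}\,\FF_k^\uii\,x+\eta_k.$$
Iterating across a PBW monomial yields $xa\in Ax+A$ for every $a\in A$, hence $xA+A\subseteq Ax+A$; the reverse inclusion is symmetric. Feeding this into the preceding proposition gives that $B$ is Noetherian (noting that $Ax+A=xA+A$ together with $B=\sum_kx^kA$ also forces $B=\sum_kAx^k$, so either version of the proposition applies).

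For the domain property, the same Levendorskii--Soibelman relation exhibits $B$ as an iterated Ore extension
$$B\simeq\bfA[\FF_1^\uii]\,[\FF_2^\uii;\sigma_2,\delta_2]\,\cdots\,[\FF_r^\uii;\sigma_r,\delta_r],$$
where each $\sigma_k$ is an algebra automorphism (the $q$-scalars $q^{c_k}$ being units in $\bfA$) and each $\delta_k$ is a $\sigma_k$-derivation. Since an Ore extension $D[y;\sigma,\delta]$ of a domain $D$ by an automorphism $\sigma$ is again a domain, induction on $r$ shows that $B$ is a domain.

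The principal obstacle is the Levendorskii--Soibelman relation above, including the verification that conjugation by $x$ modulo lower PBW degree defines an $\bfA$-algebra automorphism of $A$. For a reduced $\uii$ this can be extracted by the classical Lusztig-type computation, combined with the fact (Proposition~\ref{prop: T_i in CQ}) that $\FF_k^\uii\in\hcalA[0]$ so that the bosonic commutation relations within a single slab $\hcalA[m]$ (Definition~\ref{def: Bosonic ext}\,\eqref{it: relation1}) are the only ones in play. Passing to an arbitrary $\uii\in\Seq(\ttb)$ proceeds by invariance of $\hcalA(\ttb)$ and its PBW basis under commutation and braid moves (Definition~\ref{def: moves}), using the uni-triangular transition with the global basis of Theorem~\ref{thm: P and G for ttb}\,\eqref{it: P and G 3rd} to control the support of $x\,\FF_k^\uii$ in the PBW basis.
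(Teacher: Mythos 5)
Your overall scaffolding matches the paper's: induct on $\ell(\ttb)$, set $A=\hcalA_\bfA(\ttb')$, $B=\hcalA_\bfA(\ttb)$, $x=\FF_r^\uii$, note $B=\sum_{k\ge0}x^kA$ from the PBW basis, and apply the preceding proposition. But both of the substantive verifications you make rest on a Levendorskii--Soibelman relation
$x\,\FF_k^\uii=q^{c_k}\FF_k^\uii\,x+\eta_k$ (with $\eta_k$ in the subalgebra generated by $\FF_{k+1}^\uii,\dots,\FF_{r-1}^\uii$) that you never prove, and that the paper neither states nor needs. For a reduced word inside a single slab $\hcalA[m]\cong\calA_q(\sfn)$ this is a known quantum-group fact, but your $\uii\in\Seq(\ttb)$ is arbitrary and the $\FF_k^\uii$ spread across several slabs, where the relevant relations are the bosonic ones, not quantum Serre; "passing to arbitrary $\uii$ by invariance under commutation/braid moves" and pointing at the uni-triangular transition \eqref{eq: uni map} do not establish the graded shape of the commutator. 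This is the gap.

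The paper closes the $xA+A=Ax+A$ step without any LS relation: it proves the sharper containment $[A,x]_q\subset A$ directly, using the definition $\hcalA(\ttb')=\hcalA_{\ge0}\cap\TT_{\ttb'}(\hcalA_{<0})$ and the identity $[\,\cdot\,,f_{\im_r,0}]_q=\rmE_{\im_r,-1}(\,\cdot\,)$ together with $\rmE_{\im_r,-1}(\hcalA_{<0})\subset\hcalA_{<0}$ from Theorem~\ref{thm: hAform}\,\eqref{it Est}. That one-line argument gives both inclusions $xA\subset Ax+A$ and $Ax\subset xA+A$ simultaneously (up to invertible $q$-powers), so no "by symmetry" is needed. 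Your iterated-Ore-extension argument for the domain property compounds the problem: besides the LS relation, you must show that the induced map $\sigma_k$ is an algebra automorphism of the lower subalgebra and that $\delta_k$ is a $\sigma_k$-derivation, neither of which is checked. The paper's domain argument is much lighter and of a different flavor: $\hcalA_\bfA(\ttb)$ is a free $\Z[q^{\pm1/2}]$-module whose reduction $\obbA(\ttb)$ at $q^{1/2}=1$ is a polynomial ring (Lemma~\ref{lem:obbAcom}), and a torsion-free $\Z[q^{\pm1/2}]$-algebra with integral-domain special fiber is itself a domain. I would replace both of your appeals to the LS relation with those two arguments.
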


\begin{proof} 
  First note that  $\hcalA_\bfA(\ttb)$ is a free $\Z[q^{\pm1/2}]$-module and $\obbA(\ttb)=\hcalA_\bfA(\ttb)/(q^{1/2}-1)\hcalA_\bfA(\ttb)$ is a polynomial ring
(Lemma~\ref{lem:obbAcom}). It follows that $\hcalA_\bfA(\ttb)$ is a domain.

We prove that $\hcalA_\bfA(\ttb)$ is Noetherian
by induction on $\ell(\ttb)$.
  Let us write
$\ttb = \sigma_{\im_1}\ldots \sigma_{\im_r}$ for $\uii =(\im_1,\ldots,\im_r) \in \Seq(\ttb)$, $\ttb' = \sigma_{\im_1}\ldots \sigma_{\im_{r-1}}$ and
$$
B \seteq \hcalA_\bfA(\ttb) \ \ \text{ and }  \ \  A \seteq \hcalA_\bfA(\ttb')
\quad \text{which obviously satisfy $A \subset B$.}
$$
Then $A$ is Noetherian by the induction hypothesis. Set $x \seteq \FF^\uii_{r} = \TT_{\im_1} \cdots \TT_{\im_{r-1}} f_{\im_r,0}$. Note that 
\eqn
[A,x]_q&&\subset [\TT_{\ttb'}\hA_{<0},\TT_{\ttb'}f_{\im_r,0}]_q
=\TT_{\ttb'}\bl[\hA_{<0},f_{\im_r,0}]_q\br=
\TT_{\ttb'}\bl\rmE_{\im_r,-1}(\hA_{<0})\br\underset{*}{\subset} \TT_{\ttb'}\hA_{<0}\qtq\\
{}[A,x]_q&&\subset [\hA_{\ge0},\hA_{\ge0}]_q\subset\hA_{\ge0},
\eneqn
where $\underset{*}{\subset}$ follows from Theorem~\ref{thm: hAform}\;\eqref{it Est}.
Hence we obtain
$$[A,x]_q\subset A,$$
which implies $xA+A = Ax +A$.
Since
\eqn
A &&= \sum_{n_j \in \Z_{\ge 0}} \bfA (\FF^\uii_{r-1})^{n_{r-1}} \cdots (\FF^\uii_{1})^{n_{1}}\qtq\\
B &&= \sum_{n_j \in \Z_{\ge 0}} \bfA x^{n_r}(\FF^\uii_{r-1})^{n_{r-1}} \cdots (\FF^\uii_{1})^{n_{1}},
\eneqn
we have $B = \sum_{n \in \Z_{\ge 0}}x^nA$. Hence
the assertion follows from the previous proposition.  
\end{proof}

Note that
$\hAmz{m} \simeq \calA_\Zq(\sfn)$ for any $m\in \Z$. 
 
\begin{proposition} \label{prop: homomorphism from hA to Cg}
Let $\bbD = \{ L_\im \}_{\im \in \sfI}$ be a strong duality datum in $\Cgz$.
Then there exists a unique $\Z$-algebra  homomorphism 
\begin{align} \label{eq: bPhi}
\bPhi_\bbD \col\hAz \Lto K(\Cgz)    
\end{align}
satisfying the followings$\colon$
\bna
\item The homomorphism induced by the Schur-Weyl functor $\calF_\bbD$
$$
\hAmz{0} \isoto \calK(R\gmod) \twoheadrightarrow K(\scrC_\bbD) \hookrightarrow  K(\Cgz)
$$
coincides with $\bPhi_\bbD|_{\hAmz{0}}$.
\item \label{it: commutes with D} $\bPhi_\bbD \circ \calD_q =[\scrD] \circ \bPhi_\bbD$, where $[\scrD]$ denotes 
the automorphism of $K(\Cg^0)$ induced by $\scrD$.
\ee
\end{proposition}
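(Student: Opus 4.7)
The plan is to define $\bPhi_\bbD$ on generators and check that the defining relations of $\hAz$ (Definition \ref{def: Bosonic ext}) are respected in $K(\Cgz)$. Concretely, I will set $\bPhi_\bbD(q^{\pm 1/2})\seteq 1$ and $\bPhi_\bbD(f_{\im,m})\seteq [\scrD^m L^\bbD_\im]\in K(\Cgz)$. Uniqueness is then automatic from the hypotheses: condition (a) forces $\bPhi_\bbD(q^{1/2})=1$ and $\bPhi_\bbD(f_{\im,0})=[L^\bbD_\im]$, since the Schur-Weyl homomorphism $\calK(R\gmod)\to K(\scrC_\bbD)$ of Theorem \ref{thm: isomorphisms} factors through $\obbK(R\gmod)=\calK(R\gmod)/(1-q^{1/2})$ and sends $\vph_0^{-1}(q^{1/2}f_{\im,0})=\ang{\im}\mapsto[L(\im)]\mapsto[L^\bbD_\im]$; condition (b) then propagates this to every $f_{\im,m}$ via $\calD_q(f_{\im,m})=f_{\im,m+1}$.

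The heart of the existence part is verifying the two families of relations in the commutative target $K(\Cgz)$ after sending $q^{1/2}\mapsto 1$. For the commutation relation \eqref{it: relation2} with $m<p$, the $q$-factor collapses to $1$ and the $(1-q^2)$-correction vanishes, so the relation reduces to
\[
[\scrD^m L^\bbD_\im]\cdot[\scrD^p L^\bbD_\jm]=[\scrD^p L^\bbD_\jm]\cdot[\scrD^m L^\bbD_\im],
\]
which holds by commutativity of $K(\Cgz)$. For the quantum Serre relation \eqref{it: relation1}, at $q=1$ the quantum binomials degenerate to classical ones and the identity reduces, by commutativity, to
\[
[\scrD^m L^\bbD_\jm]\cdot[\scrD^m L^\bbD_\im]^{n}\cdot\sum_{k=0}^{n}(-1)^k\binom{n}{k}=0,
\]
with $n=1-\ang{h_\im,\al_\jm}\geq 1$ (since $\im\neq\jm$), and this vanishes from $\sum_{k=0}^{n}(-1)^k\binom{n}{k}=0$ for $n\geq 1$. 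The centrality of $q^{\pm 1/2}$ and the other scalar relations are respected trivially, so $\bPhi_\bbD$ extends to a well-defined $\Z$-algebra homomorphism $\hAz\to K(\Cgz)$.

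It remains to check conditions (a) and (b). Condition (a) holds by construction on $q^{\pm 1/2}$ and on the generators $\{f_{\im,0}\}$ of $\hAmz{0}$, hence on all of $\hAmz{0}$. Condition (b) is immediate on the $f_{\im,m}$ since $\bPhi_\bbD(\calD_q(f_{\im,m}))=[\scrD^{m+1}L^\bbD_\im]=[\scrD]\bPhi_\bbD(f_{\im,m})$, and on $q^{\pm 1/2}$ since $\bPhi_\bbD(q^{\mp 1/2})=1=[\scrD](1)$; extension to $\hAz$ uses that $K(\Cgz)$ is commutative, which neutralizes the fact that $\calD_q$ is an anti-automorphism. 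The main potential obstacle is precisely the tension between the non-commutative source and the commutative target together with the $q$-dependent coefficients in the bosonic relations, and it dissolves cleanly once $q^{1/2}\mapsto 1$---this is the representation-theoretic shadow of the commutativity of $\obbA$ (Proposition \ref{prop: comm}).
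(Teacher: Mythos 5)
The student's route is genuinely different from the paper's: you try to build $\bPhi_\bbD$ directly by specifying images of the generators $q^{\pm1/2}$, $f_{\im,m}$ and verifying the defining relations, whereas the paper never touches the relations of Definition~\ref{def: Bosonic ext} at all --- it bootstraps from the already-existing isomorphism $[\calF_\bbD]\col\obbK(R\gmod)\isoto K(\scrC_\bbD)$ of Theorem~\ref{thm: isomorphisms} on each piece $\hAmz{m}$ and assembles them using the tensor decomposition $\obbA\simeq\stens_m\hAmz{m}/(q^{1/2}-1)$ coming from Theorem~\ref{thm: serial decomp}.

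Unfortunately your approach has a genuine gap. The elements $q^{\pm1/2}$ and $f_{\im,m}$ do \emph{not} generate $\hAz$ as a $\Zq$-algebra (unless $\sfg$ is of type $A_1$), so what you construct is a homomorphism from a proper $\Zq$-subalgebra, not from $\hAz$. Concretely, $\hAmz{m}=\vph_m(\Azn)$, and $\Azn=\calA_\bfA(\sfn)$ strictly contains the $\bfA$-algebra generated by the $\ang{\im}$: for $\sfg=\mathfrak{sl}_3$ the dual root vector
$$\Es_{\usfwc}(\al_1+\al_2)=\zeta^{-1}\bl\ang{1}\ang{2}-q\ang{2}\ang{1}\br, \qquad \zeta=1-q^2,$$
lies in $\calA_\bfA(\sfn)$ (it is, up to a power of $q$, an upper global basis element), but $\ang{1}\ang{2}-q\ang{2}\ang{1}$ is not divisible by $\zeta$ inside $\bfA[\ang{1},\ang{2}]$. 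Equivalently, the transition matrix between $\st{\ang{1}\ang{2},\ang{2}\ang{1}}$ and the upper global basis in that weight has determinant a non-unit of $\bfA$. Since (by Remark~\ref{rmk: comp PBW hwc}) $\hAz$ is generated by the PBW vectors $\FF_k^\hwc$, and already $\FF_2^\hwc=\TT_{\im_1}(f_{\im_2,0})$ carries a $(q-q^{-1})^{-1}$ as in \eqref{eq: T_i}, these generators do not live in $\Zq\langle f_{\im,m}\rangle$.

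The same issue infects the verification of part (a) and the uniqueness argument: you check agreement only on $q^{\pm1/2}$ and $f_{\im,0}$, but $\hAmz{0}$ is not generated by those elements over $\Zq$, so ``hence on all of $\hAmz{0}$'' does not follow. (Uniqueness is actually salvageable: condition (a) specifies $\bPhi_\bbD$ on the whole of $\hAmz{0}$ because the Schur--Weyl composition is given there, not just on generators, and condition (b) then pins it down on each $\hAmz{m}=\calD_q^m(\hAmz{0})$; since $\hAz=\sbcup_{a\le b}\hAm{a,b}_\Zq$ is generated by $\bigcup_m\hAmz{m}$, uniqueness follows.) But for existence, the relation-checking strategy does not reach the full integral form, and you would need precisely the ingredient the paper uses --- that $[\calF_\bbD]$ is already a ring homomorphism out of the entire lattice $\obbK(R\gmod)\simeq\hAmz{0}/(q^{1/2}-1)$ --- to close the gap.
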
 

\begin{proof} Note that $\obbA$ and $K(\Cgz)$ are commutative algebras.
Since $$\obbA = \hAz/(q^{1/2}-1)\hAz \simeq \underset{m \in \Z}{\overset{\Lto}{\stens}} \hAmz{m}/(q^{1/2}-1)\hAmz{m},$$
it is enough to construct a homomorphism
$$\bPhi_\bbD[m]\col \hAmz{m} \to K(\scrD^m(\scrC_\bbD)) \subset K(\Cgz).$$ 

For $m=0$, we set $\bPhi_\bbD[0]\col \hAmz{0} \to K(\scrC_\bbD)$ induced from the functor $\calF_\bbD$ in~\eqref{eq: FD} yielding
the isomorphism 
$$ \obPhi_\bbD[0] \col   \obbK (R\gmod) \simeq \hAmz{0}/(q^{1/2}-1)\hAmz{0} \isoto K(\scrC_\bbD)$$
in~\eqref{eq: [FD] iso}. 
For a general $m$, we define $\bPhi_\bbD[m]$ by the commutative diagram:
$$\xymatrix@C=8ex{
   \hAmz{0}\ar[r]^-{\bPhi[0]}\ar[d]_{\ocalD_q^m}^\bwr& K(\scrC_\bbD)\ar[d]_{\D^m}^\bwr\\
 \hAmz{m}\ar[r]^-{\bPhi[m]}& K(\scrD^m(\scrC_\bbD)).
  }$$  
Hence we obtain a $\Z$-algebra homomorphism 
\eqn
\xymatrix@R=1ex@C=9ex{  
\hAz  \ar[dr]_{\ev_{q=1}} \ar[rr]^{\bPhi_\bbD} && K(\Cgz) \\
  & \obbA \ar[ur]_{\obPhi_\bbD}
 } 
\eneqn
with the desired properties.
\end{proof}

\begin{theorem} \label{thm: obbA K(Cgz)}
If $\bbD$ is a complete duality datum, then  $\bPhi_\bbD$ induces an isomorphism 
\begin{align} \label{eq: obPhi}
  \obPhi_\bbD \colon \obbA \isoto K(\Cgz).    
\end{align} 
\end{theorem}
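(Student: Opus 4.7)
The plan is to reduce the statement to the $m=0$ case of the categorification isomorphism \eqref{eq: [FD] iso} by combining the serial structure of $\hcalA$ with the completeness of $\bbD$, treating the two sides as commutative $\Z$-algebras factored according to the index $m\in\Z$.

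First, as already recorded in the proof of Proposition~\ref{prop: homomorphism from hA to Cg}, the $\bfA$-integral form of the serial decomposition (Theorem~\ref{thm: serial decomp} together with \eqref{eq: hcalA decomposition} and Proposition~\ref{prop: hcalA integral form}) specializes at $q^{1/2}=1$ to a $\Z$-algebra isomorphism
$$\obbA \simeq \underset{m\in\Z}{\overset{\Lto}{\stens}} \obbA[m],$$
where $\obbA[m]\seteq\hAmz{m}/(q^{1/2}-1)\hAmz{m}$; by Proposition~\ref{prop: comm} both $\obbA$ and each $\obbA[m]$ are commutative, so the ordered tensor product on the right coincides with the usual (unordered) tensor product of commutative $\Z$-algebras. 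Dually, the completeness hypothesis on $\bbD$ together with \eqref{eq: iso Kz} gives a $\Z$-algebra isomorphism
$$\stens_{m\in\Z}K(\scrC_\bbD[m])\isoto K(\Cgz).$$
By the construction of $\bPhi_\bbD$ in the proof of Proposition~\ref{prop: homomorphism from hA to Cg}, the $m$-th factor $\hAmz{m}$ is sent into $K(\scrC_\bbD[m])$, and hence the induced map $\obPhi_\bbD$ respects the two displayed decompositions and splits as the tensor product of its local components
$$\obPhi_\bbD[m]\col \obbA[m]\Lto K(\scrC_\bbD[m]).$$

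Next I would verify that each $\obPhi_\bbD[m]$ is an isomorphism. For $m=0$ this is precisely Theorem~\ref{thm: isomorphisms} applied to $\bbD$. For general $m$, Proposition~\ref{prop: strong to strong} says that $\scrD^m\bbD=\{\scrD^m L^\bbD_\im\}_{\im\in\sfI}$ is again a strong duality datum and one checks $\scrC_{\scrD^m\bbD}=\scrC_\bbD[m]$; the commutative square used to define $\bPhi_\bbD[m]$ from $\ocalD_q^m$ and $\scrD^m$ then identifies $\obPhi_\bbD[m]$ with $\obPhi_{\scrD^m\bbD}[0]$, so the $m=0$ case applied to $\scrD^m\bbD$ does the job.

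Finally, tensoring these local isomorphisms over $m\in\Z$ and composing with the two algebra isomorphisms above yields the desired isomorphism $\obPhi_\bbD\col\obbA\isoto K(\Cgz)$. I do not expect a serious obstacle here: the substantive content — categorification in the heart subcategory and the completeness of $\bbD$ — is already packaged in Theorem~\ref{thm: isomorphisms} and \eqref{eq: iso Kz}. The only point needing a little care is the passage from the $\bfk$-linear serial decomposition to its specialization at $q^{1/2}=1$, i.e.\ verifying that the restricted tensor product of commutative algebras $\otens_m\obbA[m]$ really matches the image of $\hAz$ in $\obbA$; this is immediate from \eqref{eq: hcalA decomposition} combined with Proposition~\ref{prop: hcalA integral form}, since the weight-graded pieces of $\obbA$ are finite-dimensional and the $m$-components eventually vanish.
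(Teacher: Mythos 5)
Your argument is correct and follows essentially the same route the paper takes: the paper's one-line proof (``It follows from the isomorphism $K(\scrC_\bbD)^{\otimes\Z}\isoto K(\Cgz)$ in \eqref{eq: iso Kz}'') implicitly relies on exactly the pieces you spell out, namely the factorization $\obbA\simeq\bigotimes_m\obbA[m]$ (already displayed in the proof of Proposition~\ref{prop: homomorphism from hA to Cg}), the factorization $\bigotimes_m K(\scrC_\bbD[m])\isoto K(\Cgz)$ from \eqref{eq: iso Kz}, and the fact that each local map $\obPhi_\bbD[m]$ is an isomorphism by Theorem~\ref{thm: isomorphisms} transported through the commutative square built from $\ocalD_q^m$ and $\scrD^m$. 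You have simply filled in the details the paper leaves to the reader.
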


\begin{proof}
It follows from the isomorphism $K(\scrC_\bbD)^{\otimes \Z} \isoto K(\Cgz)$ in~\eqref{eq: iso Kz}. 
\end{proof}

For $\im \in \sfI$,
let us take a reduced expression $\usfwc=(\im_1,\im_2,\ldots,\im_\ell)$ of $w_0$ with $\im_1=\im$ and consider its extension $\hwc$ in~\eqref{eq: extension of reduced}. Note that $\usfwcp=(\im_2,\ldots,\im_\ell,\im_1^*)$ is also a reduced expression of $w_0$.
Recall the cuspidal module $\sfV^\usfwc_k$ for $1 \le k \le \ell$. 
For a complete duality datum $\bbD$, define 
\begin{align} \label{eq: affine cuspidal module}
C_k^{\bbD,\hwc} \seteq \calF_\bbD(\sfV^\usfwc_k) \quad \text{ for $1\le k \le \ell$,}
\qtq 
C_{k + n\ell}^{\bbD,\hwc}   \seteq \scrD^{n} C_{k}^{\bbD,\hwc} \qquad\text{for $n \in \Z$.}
\end{align}%
We call $(\Dpair)$ a \emph{PBW-pair} and  $C^\Dpair_{m}$ $(m\in \Z)$ the \emph{affine cuspidal module} associated with $(\Dpair)$.

Using the homomorphism $\bPhi_\bbD$, \cite[Proposition 5.10]{KKOP23P} can be expressed as follows:

\begin{proposition} [{\cite[Proposition 5.10]{KKOP23P}}] \label{prop: reflection on duality datum}
Let $\bbD$ be a complete duality datum and set $\bbD' =\scrS_\im \bbD$. Then we have
$$
\bPhi_{\bbD'}( \FF^{\hwcp}_k) = [C^{\bbD',\hwcp}_{k}] = [C^{\bbD,\hwc}_{k+1}] \quad \text{ for $k \in \Z$}. 
$$    
\end{proposition}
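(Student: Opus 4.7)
The plan is to verify the two equalities $\bPhi_{\bbD'}(\FF^\hwcp_k) = [C^{\bbD',\hwcp}_k]$ and $[C^{\bbD',\hwcp}_k] = [C^{\bbD,\hwc}_{k+1}]$ separately, after reducing to a fundamental domain in $k$. Both sides of each equality are periodic in $k$ with period $\ell$: by Remark~\ref{rmk: comp PBW hwc}, $\FF^\hwcp_{k+\ell} = \ocalD_q(\FF^\hwcp_k)$, while $C^{\bbD',\hwcp}_{k+\ell} = \scrD\, C^{\bbD',\hwcp}_k$ and $C^{\bbD,\hwc}_{k+\ell+1} = \scrD\, C^{\bbD,\hwc}_{k+1}$ by~\eqref{eq: affine cuspidal module}. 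Since $\bPhi_{\bbD'}$ factors through $\obbA$, on which the bar involution acts trivially, Proposition~\ref{prop: homomorphism from hA to Cg}\,\eqref{it: commutes with D} gives $\bPhi_{\bbD'} \circ \ocalD_q = [\scrD] \circ \bPhi_{\bbD'}$. Hence it suffices to verify both equalities for $k \in [1,\ell]$.

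For the first equality, Proposition~\ref{prop: T_i in CQ} ensures that $\FF^\hwcp_k \in \hAmz{0}$ for $k \in [1,\ell]$. By the construction of $\bPhi_{\bbD'}$, its restriction to $\hAmz{0}$ is induced by the Schur--Weyl functor $\calF_{\bbD'}$ through the chain $\hAmz{0} \xrightarrow{\vph_0^{-1}} \calA_q(\sfn) \xrightarrow{\mathrm{ch}_q^{-1}} \calK(R\gmod) \to K(\scrC_{\bbD'})$ given by Theorem~\ref{thm: KLR iso} and Theorem~\ref{thm: isomorphisms}. The key technical point is that under this chain, $\FF^\hwcp_k$ pulls back (up to a normalization absorbed by $\vph_0$) to the dual PBW root vector $E^*_{\usfwcp}(\be^\usfwcp_k)$ of~\eqref{eq: PBW}, because the restriction of the bosonic braid operator $\TT_\jm$ to $\hAmz{0}$ coincides with Lusztig's braid symmetry $\sfS_\jm^{*}$ transported to $\calA_q(\sfn)$ via $\iota$. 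Since this PBW vector categorifies the cuspidal module $\sfV^\usfwcp_k$ in $R\gmod$, one concludes $\bPhi_{\bbD'}(\FF^\hwcp_k) = [\calF_{\bbD'}(\sfV^\usfwcp_k)] = [C^{\bbD',\hwcp}_k]$.

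For the second equality, the crucial root-theoretic input is $\be^\usfwc_{k+1} = s_\im(\be^\usfwcp_k)$ for $k \in [1,\ell-1]$, together with the boundary computation $\be^\usfwcp_\ell = s_\im w_0(\al_{\im^*}) = \al_\im$. At the categorical level, one must establish $\calF_\bbD(\sfV^\usfwc_{k+1}) \simeq \calF_{\scrS_\im \bbD}(\sfV^\usfwcp_k)$ for $k \in [1,\ell-1]$; this encodes the fact that the reflection $\scrS_\im$ on duality data in~\eqref{Def: refl} is engineered to implement precisely the shift $k \mapsto k+1$ in the cuspidal labelling induced by left multiplication by $s_\im$. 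It can be seen by comparing the cuspidal construction via $R$-matrices in $R\gmod$ with its image under $\calF_\bbD$ and using~\eqref{Def: refl}. For $k = \ell$, $\sfV^\usfwcp_\ell \simeq L(\im)$, whence $C^{\bbD',\hwcp}_\ell = L^{\bbD'}_\im = \scrD L^\bbD_\im = \scrD C^{\bbD,\hwc}_1 = C^{\bbD,\hwc}_{\ell+1}$, directly from~\eqref{Def: refl} and~\eqref{eq: affine cuspidal module}.

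The main obstacle lies in the two compatibilities underlying the above arguments: (i) matching $\TT_\jm|_{\hAmz{0}}$ with Lusztig's $\sfS_\jm^{*}$ under $\vph_0 \circ \iota$, which requires careful bookkeeping of the $\zeta$-factors and $q^{1/2}$-shifts in the defining formulas~\eqref{eq: T_i}--\eqref{eq: T_i inverse}; and (ii) verifying the Schur--Weyl reflection compatibility $\calF_\bbD(\sfV^\usfwc_{k+1}) \simeq \calF_{\scrS_\im \bbD}(\sfV^\usfwcp_k)$, which hinges on the explicit description of cuspidal modules in $R\gmod$ as iterated heads of convolutions together with the behavior of $\calF_\bbD$ with respect to such heads and the reflection $\scrS_\im$. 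Once these two ingredients are in hand, the proposition follows by combining the periodicity reduction with the boundary case.
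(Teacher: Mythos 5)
This proposition is cited from \cite[Proposition 5.10]{KKOP23P} rather than proved in the present paper, so there is no "paper's own proof" to compare against; I will therefore assess your reconstruction on its own terms. Your overall structure is sound: the periodicity reduction to $k \in [1,\ell]$ via $\bPhi_{\bbD'} \circ \ocalD_q = [\scrD] \circ \bPhi_{\bbD'}$ is correct, your root-theoretic identity $\be^\usfwc_{k+1} = s_\im(\be^\usfwcp_k)$ and the boundary computation $\be^\usfwcp_\ell = \al_\im$ giving $C^{\bbD',\hwcp}_\ell = L^{\bbD'}_\im = \scrD L^\bbD_\im = C^{\bbD,\hwc}_{\ell+1}$ are right, and you correctly identify the first equality as a consequence of the compatibility between $\TT_\jm$ on the slice $\hcalA[0]$ and Lusztig's braid symmetry under $\vph_0 \circ \iota$ (a small slip: tracing through~\eqref{eq: T_i} against the definition~\eqref{eq: PBW}, the operator matched is $\sfS_\jm$, not $\sfS_\jm^*$, but this is immaterial since you work modulo $q^{1/2}\to 1$).

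The genuine gap is in your treatment of the Schur--Weyl reflection compatibility $\calF_\bbD(\sfV^\usfwc_{k+1}) \simeq \calF_{\scrS_\im\bbD}(\sfV^\usfwcp_k)$ for $k\in[1,\ell-1]$, which you dismiss with "it can be seen by comparing the cuspidal construction via $R$-matrices in $R\gmod$ with its image under $\calF_\bbD$ and using~\eqref{Def: refl}". This step is the entire substantive content of the cited proposition and cannot be waved through: $\sfV^\usfwc_{k+1}$ and $\sfV^\usfwcp_k$ live over different quiver Hecke algebras $R(s_\im\be^\usfwcp_k)$ and $R(\be^\usfwcp_k)$, so there is no direct comparison inside $R\gmod$; the isomorphism is between their images under \emph{different} Schur--Weyl functors, and proving it requires an induction on $\het(\be^\usfwcp_k)$ using the minimal-pair/convexity structure of cuspidal modules, the fact that $\calF_\bbD$ sends heads of convolutions to heads of tensor products, and a verification that the minimal pairs for $\usfwc$ and $\usfwcp$ correspond under $s_\im$. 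Note also that you cannot shortcut this by invoking the commutative diagram $\bPhi_{\scrS_\im\bbD} = \bPhi_\bbD\circ\TT_\im$ of Proposition~\ref{prop: comm diagram} or its definition-unwinding via~\eqref{eq: Ck} together with Definition~\ref{def: strong real}: both of these are proved in the paper \emph{using} the present proposition, so that route is circular. As written, your argument identifies the correct skeleton and all the right auxiliary facts, but leaves unproven the one step that actually carries the content of the statement.
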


\begin{proposition} \label{prop: comm diagram}
For a strong duality datum $\bbD$ and $\im \in \sfI$, we have 
the following commutative diagram:
\begin{align*} 
\raisebox{4.5ex}{\xymatrix@R=1ex@C=7ex{  
  \hcalA_\bfA \ar[dd]^{\TT_\im^{\pm1}} \ar[dr]^{\bPhi_{\scrS_\im^{\pm1}\bbD} }  \\
 & K(\Cgz) \\
 \hcalA_\bfA \ar[ur]_{\bPhi_\bbD}
 } }
\end{align*}
\end{proposition}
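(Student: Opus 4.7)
The plan is to verify the identity $\bPhi_\bbD \circ \TT_\im = \bPhi_{\scrS_\im \bbD}$ of $\Z$-algebra homomorphisms on a convenient set of algebra generators of $\hcalA_\bfA$, and deduce the $\TT_\im^{-1}$ case by substituting $\bbD$ with $\scrS_\im^{-1}\bbD$. Fix a reduced expression $\usfwc = (\im_1, \ldots, \im_\ell)$ of $w_0$ with $\im_1 = \im$, and set $\usfwcp = (\im_2, \ldots, \im_\ell, \im^*)$, which is again reduced. By Remark~\ref{rmk: comp PBW hwc}, the family $\{\FF_k^{\hwcp}\}_{k \in \Z}$ generates $\hcalA_\bfA$ as an algebra, and $\TT_\im$ preserves $\hcalA_\bfA$ by Theorem~\ref{thm: P and G for ttb}; so it suffices to check the equality on each $\FF_k^{\hwcp}$.

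The first key step is to establish
\[
\TT_\im(\FF_k^{\hwcp}) = \FF_{k+1}^\hwc \qquad \text{for every } k \in \Z.
\]
For $1 \le k \le \ell - 1$ this is immediate from the definition $\FF_k^{\hwcp} = \TT_{\im_2}\cdots\TT_{\im_k}(f_{\im_{k+1},0})$, giving $\TT_\im \FF_k^{\hwcp} = \TT_{\im_1}\cdots\TT_{\im_k}(f_{\im_{k+1},0}) = \FF_{k+1}^\hwc$. For $k = \ell$, Proposition~\ref{prop: T_i in CQ} yields $\TT_{\im_1}\cdots\TT_{\im_\ell}(f_{\im^*,0}) = f_{\im,1} = \ocalD_q(\FF_1^\hwc) = \FF_{\ell+1}^\hwc$. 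For arbitrary $k \in \Z$, one extends using $\FF_{k+n\ell}^{\hwcp} = \ocalD_q^n(\FF_k^{\hwcp})$ together with the elementary fact that $\TT_\im$ commutes with $\ocalD_q$, which is checked directly on the generators $f_{\jm,m}$ from the formula~\eqref{eq: T_i}.

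The second key step is the identification $\bPhi_\bbD(\FF_k^\hwc) = [C^{\bbD, \hwc}_k]$ for all $k \in \Z$. For $1 \le k \le \ell$, this is built into the construction of $\bPhi_\bbD$ via the Schur-Weyl functor $\calF_\bbD$: the PBW generator $\FF_k^\hwc$ lives in $\hAmz{0}$ and corresponds under $\vph_0$ and the categorification isomorphism of Theorem~\ref{thm: KLR iso} to the cuspidal module $\sfV_k^\usfwc$ up to a $q^{1/2}$-power (harmless because $\bPhi_\bbD$ factors through the specialization $q^{1/2} = 1$), and $\calF_\bbD(\sfV_k^\usfwc) = C^{\bbD, \hwc}_k$ by definition. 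For $k \notin [1, \ell]$ this propagates via property~(b) of Proposition~\ref{prop: homomorphism from hA to Cg}, namely $\bPhi_\bbD \circ \ocalD_q = [\scrD]\circ \bPhi_\bbD$, matched with $\FF_{k+n\ell}^\hwc = \ocalD_q^n(\FF_k^\hwc)$ and $C^{\bbD, \hwc}_{k+n\ell} = \scrD^n C^{\bbD, \hwc}_k$.

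Combining, $\bPhi_\bbD \circ \TT_\im(\FF_k^{\hwcp}) = \bPhi_\bbD(\FF_{k+1}^\hwc) = [C^{\bbD, \hwc}_{k+1}]$, and by Proposition~\ref{prop: reflection on duality datum} the same value equals $\bPhi_{\scrS_\im \bbD}(\FF_k^{\hwcp})$, which completes the $+1$ case; the $-1$ case follows at once by replacing $\bbD$ with $\scrS_\im^{-1}\bbD$ and using $\scrS_\im\circ\scrS_\im^{-1} = \mathrm{id}$. The main technical subtlety is the bookkeeping for the shift extension outside the fundamental window $[1, \ell]$: both the identity $\TT_\im \FF_k^{\hwcp} = \FF_{k+1}^\hwc$ and the identification $\bPhi_\bbD(\FF_k^\hwc) = [C^{\bbD,\hwc}_k]$ need to propagate compatibly under $\ocalD_q$ and $\scrD$, so the crucial observation is that these two shifts intertwine, respectively, with $\TT_\im$ and with $\bPhi_\bbD$.
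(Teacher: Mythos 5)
Your proof is correct and follows essentially the same route as the paper's: reduce to the PBW generators $\FF_k^{\hwcp}$, verify $\TT_\im(\FF_k^{\hwcp}) = \FF_{k+1}^{\hwc}$ on the fundamental window $1\le k\le\ell$ (with the $k=\ell$ case handled via Proposition~\ref{prop: T_i in CQ}), and propagate to all $k$ using the $\ocalD_q$/$\scrD$ intertwining of Proposition~\ref{prop: homomorphism from hA to Cg}~(b) before invoking Proposition~\ref{prop: reflection on duality datum}. The one point where you are slightly more explicit than the paper — noting that $\TT_\im$ commutes with the shift $\ocalD_q$, needed to carry $\TT_\im(\FF_k^{\hwcp}) = \FF_{k+1}^{\hwc}$ outside $[1,\ell]$ — is a genuine and correct observation that the paper leaves implicit.
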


\begin{proof}  
Note that
$$
\TT_\im(\FF_k^\hwcp) = \TT_{\im_1}\TT_{\im_2}\ldots \TT_{\im_k}(f_{\im_{k+1},0}) = \FF^\hwc_{k+1} \quad \text{ for $1 \le k <\ell$.}
$$
Thus we have
$$
\bPhi_\bbD( \TT_\im(\FF_k^\hwcp)) = \bPhi_\bbD(\FF_{k+1}^\hwc) = [C^{\bbD,\hwc}_{k+1}]  \quad \text{ for $1 \le k <\ell$.}
$$
For $k =\ell$, we have
\begin{align*}
\TT_\im(\FF_\ell^\hwcp)  =    \TT_{\im_1}\TT_{\im_2}\ldots \TT_{\im_{\ell}}(f_{\im^*,0})  = f_{\im,1} = \ocalD_q(f_{\im,0}) = \FF_{\ell+1}^\hwc
\end{align*}
so that
$$
\bPhi_\bbD( \TT_\im(\FF_\ell^\hwcp)) = \bPhi_\bbD(\ocalD_q(f_{\im,0}) ) = [\scrD C^{\bbD,\hwc}_{1}] = [C^{\bbD,\hwc}_{\ell+1}].
$$
By Proposition~\ref{prop: homomorphism from hA to Cg}~\eqref{it: commutes with D}, we can conclude that
$$
\bPhi_\bbD( \TT_\im(\FF_k^\hwcp)) = [C^{\bbD,\hwc}_{k+1}] = \bPhi_{\bbD'}(\FF_k^\hwcp) \quad \text{ for all $k \in \Z$.}
$$
Then our assertion for $\TT_\im$ follows from the fact that  $\{ \FF_{k}^\hwcp \ | \  k \in \Z \}$ generates $\hAz$. 
The assertion for $\TT^{-1}_\im$ can be obtained in a similar way. 
\end{proof}

\begin{corollary} \label{cor: braid for scrS}
  The family of operators $\st{\scrS_\im}_{\im\in\sfI}$ acting on the set of
 \ro the isomorphism classes of\/\rf complete duality data  satisfies the  commutation relations and the braid relations.  
\end{corollary}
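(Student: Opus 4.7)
The plan is to transfer the commutation and braid relations from the well-established braid symmetries $\{\TT_\im\}_{\im \in \sfI}$ on $\hcalA$ to the operators $\{\scrS_\im\}_{\im \in \sfI}$, using Proposition~\ref{prop: comm diagram} together with the fact that the assignment $\bbD \mapsto \bPhi_\bbD$ separates isomorphism classes of complete duality data.

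First, I would iterate Proposition~\ref{prop: comm diagram}. For any word $\bg_{\im_1}^{\ep_1}\cdots\bg_{\im_r}^{\ep_r}$ in $\ttB$ and any complete duality datum $\bbD$, Proposition~\ref{prop: complete to  complete} guarantees that each successive application $\scrS_{\im_k}^{\ep_k}\cdots\scrS_{\im_1}^{\ep_1}(\bbD)$ is again a complete duality datum, so a straightforward induction on $r$ gives
$$
\bPhi_\bbD \circ \TT_{\im_1}^{\ep_1}\cdots\TT_{\im_r}^{\ep_r}
\;=\; \bPhi_{\scrS_{\im_r}^{\ep_r}\cdots\scrS_{\im_1}^{\ep_1}(\bbD)}.
$$
Since $\{\TT_\im\}_{\im\in\sfI}$ already satisfies the commutation and braid relations, specializing this identity to the relevant pairs of words yields
$$
\bPhi_{\scrS_\jm\scrS_\im(\bbD)} = \bPhi_{\scrS_\im\scrS_\jm(\bbD)} \qt{when }d(\im,\jm)>1,
$$
and
$$
\bPhi_{\scrS_\im\scrS_\jm\scrS_\im(\bbD)} = \bPhi_{\scrS_\jm\scrS_\im\scrS_\jm(\bbD)} \qt{when }d(\im,\jm)=1.
$$

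The essential remaining step is to show that $\bbD\mapsto\bPhi_\bbD$ is injective on isomorphism classes of complete duality data. For this, I would observe that $\bPhi_\bbD$ is constructed so that its restriction to $\hAmz{0}$ is induced by the Schur-Weyl functor $\calF_\bbD$ through the isomorphism $\vph_0\col\calA_q(\sfn)\isoto\hcalA[0]$. Combined with $\vph_0(\ang{\im})=q^{1/2}f_{\im,0}$ and Theorem~\ref{thm:gQASW duality}\eqref{it: Lim angleim}, this gives
$$
\bPhi_\bbD(q^{1/2}f_{\im,0}) \;=\; [L_\im^\bbD]\in K(\Cgz) \qt{for every }\im\in\sfI.
$$
Since the isomorphism classes of simple modules in $\Cgz$ form a $\Z$-basis of $K(\Cgz)$, any equality $\bPhi_{\bbD_1}=\bPhi_{\bbD_2}$ forces $L_\im^{\bbD_1}\simeq L_\im^{\bbD_2}$ for every $\im\in\sfI$, i.e., $\bbD_1\simeq\bbD_2$ as duality data. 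Applying this injectivity to the displayed identities above then produces the desired commutation and braid relations for $\{\scrS_\im\}_{\im\in\sfI}$.

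The two inputs that need genuine care are the injectivity of $\bbD\mapsto\bPhi_\bbD$ and the verification that each intermediate datum $\scrS_{\im_k}^{\ep_k}\cdots\scrS_{\im_1}^{\ep_1}(\bbD)$ remains complete so that Proposition~\ref{prop: comm diagram} may be applied at each stage of the induction; the first is controlled by the basis property of simple classes in $K(\Cgz)$, and the second is exactly Proposition~\ref{prop: complete to  complete}. Beyond these two inputs the argument is formal, and I do not expect any further obstacle.
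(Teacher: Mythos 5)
Your proposal is correct and takes essentially the same approach as the paper: both iterate Proposition~\ref{prop: comm diagram} to transfer the relations from $\{\TT_\im\}$ to $\{\bPhi_{\scrS_{\ttb}\bbD}\}$, and then recover the duality datum from $\bPhi_{\scrS_\ttb\bbD}$ by evaluating at the generators $f_{\im,0}$ to produce $[L_\im^{\scrS_\ttb\bbD}]$, using that simple classes form a $\Z$-basis of $K(\Cgz)$. The only cosmetic difference is that you package this last step as an explicit injectivity of $\bbD\mapsto\bPhi_\bbD$ on isomorphism classes, whereas the paper computes $\bPhi_{\scrS_\im\scrS_\jm\scrS_\im\bbD}(f_{k,0})$ directly; the content is identical.
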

\Proof
Let us show that
$\scrS_\im\scrS_\jm\scrS_\im\bbD=\scrS_\jm\scrS_\im\scrS_\jm\bbD$ if
$d(\im,\jm)=1$.
Then we have
\eqn
[L^{\scrS_\im\scrS_\jm\scrS_\im\bbD}_k]
&&=\bPhi_{\scrS_\im\scrS_\jm\scrS_\im\bbD}(f_{k,0})\\
&&=\bPhi_{\scrS_\jm\scrS_\im\bbD}\TT_{\im}(f_{k,0})\\
&&=\bPhi_{\scrS_\im\bbD}\TT_{\jm}\TT_{\im}(f_{k,0})\\
&&=\bPhi_{\bbD}\TT_{\im}\TT_{\jm}\TT_{\im}(f_{k,0}).
\eneqn
Hence we have
$$[L^{\scrS_\im\scrS_\jm\scrS_\im\bbD}_k]=[L^{\scrS_\jm\scrS_\im\scrS_\jm\bbD}_k].$$

We can prove similarly that
$\scrS_\im\scrS_\jm\bbD=\scrS_\jm\scrS_\im\bbD$ if
$d(\im,\jm)>1$.
\QED

By Corollary~\ref{cor: braid for scrS},
the braid group $\braid$ acts on the set of
complete duality data.
In particular $\scrS_{\ttb} \bbD$ is well-defined
for $\ttb \in \ttB$ and a complete duality datum $\bbD$:
$$
\scrS_{\ttb} \bbD \seteq \scrS_{\im_1}^{\ep_1}\cdots  \scrS_{\im_r}^{\ep_r} \bbD
$$
where $\ttb=\bg_{\im_1}^{\ep_1}\cdots  \bg_{\im_r}^{\ep_r}$.

\smallskip
Recall that $\rev$ is the anti-automorphism of the group $\ttB$
which sends $\sigma_\im$ to itself.
\begin{corollary} \label{cor: dia comm repeat}
For any $\ttb \in \ttB$, we have the following commutative diagram 
\begin{align} \label{eq: T_i S_i}
\raisebox{5.5ex}{\xymatrix@R=1ex@C=7ex{  
  \hcalA_\bfA \ar[dd]^{\TT_\ttb} \ar[dr]^{\bPhi_{\bbD'} }  \\
 & K(\Cgz) \\
 \hcalA_\bfA \ar[ur]_{\bPhi_\bbD}
 } } \qquad \text{where $\bbD' =  \scrS_{\ttb^\rev} \bbD$.}
\end{align}
Namely, we have
\eq
\bPhi_{\scrS_\ttb\bbD}=\bPhi_\bbD\circ \TT_{\ttb^\rev}.
\label{eq:TTPhi}
\eneq
\end{corollary}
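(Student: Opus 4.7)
The plan is to derive this as a direct iteration of the one-generator statement already established in Proposition~\ref{prop: comm diagram}, using that $\scrS_\ttb\bbD$ is well-defined as an action of $\ttB$ by Corollary~\ref{cor: braid for scrS}.

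First I would reduce the assertion to establishing \eqref{eq:TTPhi}, which I will prove by induction on the word-length of $\ttb$ with respect to the generators $\{\sigma_\im^{\pm1}\}_{\im\in\sfI}$. Fix an expression $\ttb=\sigma_{\im_1}^{\ep_1}\cdots\sigma_{\im_r}^{\ep_r}$ in $\ttB$; then $\ttb^{\rev}=\sigma_{\im_r}^{\ep_r}\cdots\sigma_{\im_1}^{\ep_1}$ and accordingly $\TT_{\ttb^\rev}=\TT_{\im_r}^{\ep_r}\cdots\TT_{\im_1}^{\ep_1}$, while $\scrS_\ttb\bbD=\scrS_{\im_1}^{\ep_1}\bl\scrS_{\im_2}^{\ep_2}\cdots\scrS_{\im_r}^{\ep_r}\bbD\br$. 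Setting $\bbD''\seteq\scrS_{\im_2}^{\ep_2}\cdots\scrS_{\im_r}^{\ep_r}\bbD$, Proposition~\ref{prop: comm diagram} gives
\[
\bPhi_{\scrS_\ttb\bbD}=\bPhi_{\scrS_{\im_1}^{\ep_1}\bbD''}=\bPhi_{\bbD''}\circ\TT_{\im_1}^{\ep_1},
\]
and the induction hypothesis applied to the shorter word $\sigma_{\im_2}^{\ep_2}\cdots\sigma_{\im_r}^{\ep_r}$ yields
\[
\bPhi_{\bbD''}=\bPhi_\bbD\circ\TT_{\im_r}^{\ep_r}\cdots\TT_{\im_2}^{\ep_2}.
\]
Composing these two identities produces \eqref{eq:TTPhi}.

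The one subtlety I need to address is well-definedness: the left-hand side $\bPhi_{\scrS_\ttb\bbD}$ depends only on $\ttb\in\ttB$, and the right-hand side $\bPhi_\bbD\circ\TT_{\ttb^\rev}$ depends only on $\ttb^\rev\in\ttB$, so the identity is independent of the chosen expression of $\ttb$. This independence on the right is built into $\TT_{(-)}$ (the operators $\TT_\im$ satisfy the braid and commutation relations of $\ttB$), and on the left it is guaranteed by Corollary~\ref{cor: braid for scrS}. Hence the induction on length is consistent, and the base case $r=0$ is the tautology $\bPhi_\bbD=\bPhi_\bbD\circ\mathrm{id}$. Once \eqref{eq:TTPhi} is verified, the commutativity of the diagram in \eqref{eq: T_i S_i} with $\bbD'=\scrS_{\ttb^\rev}\bbD$ is obtained by applying \eqref{eq:TTPhi} to $\ttb^\rev$ in place of $\ttb$, which swaps the roles of $\TT_\ttb$ and $\TT_{\ttb^\rev}$ and yields exactly the stated diagram. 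No step is a real obstacle; the argument is essentially a bookkeeping exercise, with the only nontrivial input being the braid-compatibility of $\scrS$ already secured in Corollary~\ref{cor: braid for scrS}.
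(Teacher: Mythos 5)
Your argument is correct and follows essentially the same route as the paper: the paper's proof also reduces to the multiplicative step "\eqref{eq:TTPhi} for $\ttb_1$ and $\ttb_2$ implies \eqref{eq:TTPhi} for $\ttb_1\ttb_2$," which is exactly your induction with the leftmost generator peeled off (the case $\ttb_1=\sigma_{\im_1}^{\ep_1}$ handled by Proposition~\ref{prop: comm diagram}). Your extra remark about the consistency of the two sides across different expressions of $\ttb$, and the final step translating \eqref{eq:TTPhi} into the displayed diagram via $\ttb\leftrightarrow\ttb^\rev$, are both accurate.
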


\begin{proof}
It is enough to show that if \eqref{eq:TTPhi} holds for $\ttb_1$ and $\ttb_2$, then it holds for $\ttb_1\ttb_2$.  We have
\eqn
\bPhi_{\scrS_{\ttb_1\ttb_2}\bbD}
=\bPhi_{\scrS_{\ttb_1}\scrS_{\ttb_2}\bbD}
=\bPhi_{\scrS_{\ttb_2}\bbD}\circ\TT_{\ttb_1^\rev}
=\bPhi_\bbD\circ \TT_{\ttb_2^\rev}\circ\TT_{\ttb_1^\rev}
=\bPhi_\bbD\circ \TT_{(\ttb_1\ttb_2)^\rev}. \qedhere
\eneqn
\QED

\subsection{Quantizability and Categorifiability} 
In this subsection, we fix a complete duality datum $\bbD$ in $\Cgz$.
Recall that we denote by $\bfG$ the global basis of $\hcalA$.

\begin{definition} \label{def: strong real} 
Let $\bbD$ be a complete duality datum in $\Cgz$.
\bnum
\item A simple module $M \in \Cgz$ is \emph{$\bbD$-quantizable} 
if there exists $\bfx \in \bfG$ such that $\bPhi_\bbD(\bfx)=[M]$. In this case, we write
$$
\ch_\bbD(M)=\bfx.
$$
\item An element $\tbfx \in q^{\Z/2}\bfG$ is \emph{$\bbD$-categorifiable}
  if there exists a simple module $M \in \Cgz$ such that $\bPhi_\bbD(\tbfx)=[M]$.
\item  
  Let $\ttb \in \ttB$ and let $M$ be a $\bbD$-quantizable simple module in $\Cgo$ with $\ch_\bbD(M)=\bfx \in \bfG$.
If $\TT_\ttb(\bfx)$ is $\bbD$-categorifiable,
then we say that $\TT^\bbD_\ttb (M)$ is {\em $\bbD$-definable} and set 
$$
\TT^\bbD_\ttb(M) \seteq N,
$$
where $N\in \Cgz$ is given by $\bPhi_\bbD\bl\TT_\ttb(\bfx)\br=[N]$.
If there is no danger of confusion, we write simply
$\TT_\ttb(M)$ for $\TT^\bbD_\ttb(M)$.
\ee
\end{definition}

\Lemma \label{lem: any sequence}
For any $\ttb \in \ttB$, $(\im,m) \in \sfI \times \Z$ and
a positive integer $n$, 
$\TT_\ttb f_{\im,m}^n$ is $\bbD$-categorifiable.
\enlemma

\begin{proof}
Set  $\bbD' \seteq \scrS_{\ttb^\rev} \bbD = \{ L'_\im \}_{\im \in \sfI}$. By~\eqref{eq: T_i S_i}, we have
\[\bPhi_\bbD(\TT_\ttb f_{\im,m}^n)=\bPhi_{\bbD'}(f_{\im,m}^n)=[(\scrD^m L'_\im)^{\otimes n}]. \qedhere\]
\end{proof}

\Prop\label{prop: Gm dequatiazable} Let $u \in \Z$.
\bnum    
\item \label{it: Gm i} Any element in $\bfG[u]$  is $\bbD$-categorifiable.
\item \label{it: Gm ii} For any $\ttb \in\ttB$ and $\bfx \in \bfG[u]$, $\TT_\ttb(\bfx)$
is $\bbD$-categorifiable.
\ee
\enprop

\begin{proof}
\eqref{it: Gm i} follows from Theorem~\ref{thm: KLR iso} and Theorem~\ref{thm:gQASW duality}. 

\noindent
\eqref{it: Gm ii}
By \eqref{eq:TTPhi}, we have $\bPhi_\bbD\bl\TT_\ttb(\bfx)\br=
\bPhi_{\scrS_{\ttb^\rev}\bbD}(\bfx)$, which is
represented by a simple module in $\Cgo$ by (i).
\end{proof}
 
\Lemma
Let $M$ be a simple module in $\scrD^u(\scrC_\bbD)$. Then there exists a simple $X\in R\gmod$ such that
$M\simeq\D ^u\calF_\bbD(X)$.
\enlemma
\Proof
Take $\bfx\in\bfG[u]$ such that $\bPhi_\bbD(\bfx)=[M]$.
Then there exists a simple $X\in R\gmod$ such that
$\calD_q^{-u}\bfx=[X]\in\hcalA[0]$.
Then we have
$\calF_\bbD(X)\simeq\D^{-u}M$.
\QED

\Lemma\label{lem:Treal}
Let $M$ be a simple module in $\scrD^u(\scrC_\bbD)$. Then $\TT^{\hspace{0.1em}\bbD}_\ttb (M)$ is $\bbD$-definable.
Moreover if $M$ is real, then $\TT^{\hspace{0.1em}\bbD}_\ttb (M)$ is real.
\enlemma
\Proof
Let $X\in R\gmod$ be a simple module such tat $M\simeq\D^u\calF_\bbD(X)$. Then 
$\TT^\bbD_\ttb (M)\simeq\D^u\calF_{\scrS_{\ttb^\rev}\bbD}(X)$ is $\bbD$-definable.
If $M$ is real, then $X$ is real and hence $\TT^\bbD_\ttb (M)$ is real.
\QED

Based on \cite[Theorem 4.12]{KKOP23P}, we obtain the following proposition:

\Prop\label{prop: ell de} Let $\ttb\in\ttB$ and $u \in \Z$.
Let  $M$ and $N$ be simple modules in $\scrD^u(\scrC_\bbD)$ such that one of them is real. Then
\bnum
\item $M\hconv N \in \scrD^u(\scrC_\bbD)$ and $\TT_\ttb(M\hconv N)\simeq(\TT_\ttb M)\hconv(\TT_\ttb N)$,
\item $\La(\TT_\ttb M,\TT_\ttb N)=\La(M,N)$ and $\de(\TT_\ttb M,\TT_\ttb N)=\de(M,N)$,
\item $\de(\D ^kM,N)=0$ if $|k|>1$,
\item $\de(\D ^k\TT_\ttb M,\TT_\ttb N)=\de(\D ^kM,N)$ for any $k\in\Z$.
\ee 
\enprop

\begin{proof}
 We may assume that $u=0$. There exist simple $X,Y\in R\gmod$ such that $M\simeq\calF_\bbD(X)$ and   $N\simeq\calF_\bbD(Y)$.
Then one of $X$ and $Y$ is real and we have $M\hconv N\simeq\calF_\bbD(X\hconv Y)$.
Hence we have $M\tens N \in \scrC_\bbD$ and
$$\TT_\ttb(M\hconv N)\simeq\calF_{\scrS_{\ttb^\rev}\bbD}(X\hconv Y)
\simeq\calF_{\scrS_{\ttb^\rev}\bbD}(X)\hconv\calF_{\scrS_{\ttb^\rev}\bbD}(Y)\simeq \TT_\ttb(M)
\hconv\TT_\ttb (N).$$
Moreover, we have (iii),
$\La(M,N)=\La(X,Y)=\La(\TT_\ttb M,\TT_\ttb N)$ and
$\de(\D M,N)=\tLa(X,Y)=\de(\D\TT_\ttb M,\TT_\ttb N)$.
\end{proof}

\begin{lemma} \label{lem: preserving root}
  Let  $L \in \scrD^u(\scrC_\bbD)$  be a root module
  for some $u \in \Z$ and $\ttb\in \ttB$. Then $\TT_\ttb  (L)$ is also a root module.
\end{lemma}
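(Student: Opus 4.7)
\bigskip
\noindent\emph{Proof proposal.} The plan is to reduce the claim directly to Lemma~\ref{lem:Treal} and Proposition~\ref{prop: ell de}, so no new computation is needed. Recall that by definition $L$ being a root module means $L$ is a real simple module satisfying $\de(L,\D^k L)=\delta(k=\pm1)$ for every $k\in\Z$, so I need to verify both the reality and the $\delta$-identity for $\TT_\ttb(L)$.

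First I would invoke Lemma~\ref{lem:Treal}: since $L$ is a real simple module in $\scrD^u(\scrC_\bbD)$, the module $\TT^{\hspace{0.1em}\bbD}_\ttb(L)$ is $\bbD$-definable and real. This takes care of the reality requirement and of the very existence of $\TT_\ttb(L)$ as a (real) simple module in $\Cgz$.

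Next I would apply Proposition~\ref{prop: ell de}(iv) with $M=N=L$; its hypothesis is satisfied because $L,L\in\scrD^u(\scrC_\bbD)$ and $L$ is real. This yields
\[
\de\bl\D^k\TT_\ttb L,\ \TT_\ttb L\br \;=\; \de\bl\D^k L,\ L\br
\qquad\text{for every } k\in\Z.
\]
By the symmetry of $\de$ (Proposition~\ref{prop: de property 1}(i)) applied on both sides, this is equivalent to
$\de(\TT_\ttb L,\D^k\TT_\ttb L)=\de(L,\D^k L)=\delta(k=\pm1)$,
which is precisely the root-module condition \eqref{eq: root module} for $\TT_\ttb L$.

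There is no genuine obstacle here; the only point worth double-checking is that Proposition~\ref{prop: ell de}(iv) may be applied with $M=N=L$ (both equal, both real, both in $\scrD^u(\scrC_\bbD)$), and that the symmetry of $\de$ used above requires one of the two arguments to be real, which is automatic since $L$, hence each $\D^k L$, is real. Together these give $\TT_\ttb L$ a real simple module satisfying \eqref{eq: root module}, completing the proof.
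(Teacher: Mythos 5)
Your proof matches the paper's argument exactly: Lemma~\ref{lem:Treal} gives reality, and Proposition~\ref{prop: ell de}(iv) applied with $M=N=L$ transfers the $\de$-values, yielding the root-module identity. The extra remark about the symmetry of $\de$ is a harmless clarification of a step the paper leaves implicit.
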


\begin{proof}
By Lemma~\ref{lem:Treal}, $\TT_\ttb (L)$ is real. Then the assertion follows from 
$$\de(\D ^k\TT_\ttb (L),\TT_\ttb (L))=\de(\D ^kL,L)$$
in Proposition~\ref{prop: ell de}.
\end{proof}

\section{Affine determinantial modules  and admissible chains of $i$-boxes} \label{Sec: adm ac}

In this section, we shall review the notions of affine cuspidal modules,  affine determinantial
modules, admissible chains of $i$-boxes and their properties
associated with a \emph{not} necessarily locally reduced sequence, which are
studied in \cite{KKOP23P,KKOP24A} mainly for locally reduced sequences.   
 
\smallskip

Throughout this section, we fix a complete duality datum $\bbD =  \{ L^\bbD_{\im} \}_{\im\in\sfI}$ associated with the simply laced root system of $\sfg$. 
We sometimes drop $^\bbD$ for simplicity of notation.

\subsection{Combinatorics of $i$-boxes}
In this subsection, we fix a sequence $\uii=(\im_k)_{k\in K}$ in $\sfI$ where $K$ is a possibly infinite interval in
$\Z$. For $k\in K$, we define
\eqn
&&k_\uii(\jm)^+ \seteq  \min(\{ t \in K\mid t\ge k,\; \im_t=\jm \} \sqcup \{ +\infty \}), \\
&&k_\uii(\jm)^- \seteq  \max(\{t \in K \mid t\le k,\;\im_t=\jm \} \sqcup \{-\infty\}), \\
&& k_\uii^+\seteq \min\bl\{ t \in K\mid t> k,\; \im_t= \im_k \}
\sqcup \st{+\infty}\br,\\
 && k_\uii^-\seteq \max(\{t \in K \mid t< k,\;\im_t=\im_k \} \sqcup \{-\infty\}). 
\eneqn
We will frequently drop $_\uii$ in the above notations for simplicity when there is no danger of confusion. 

\begin{definition}\label{def:ibox}
\hfill
\bnum
\item For an interval $[a,b] \subset K$ and $c \in K$, we set 
$$\uii_{[a,b]} \seteq (\im_k)_{k\in [a,b]}, \quad \uii_{\le c} \seteq \uii_{K\cap[-\infty,c]},\qtq \uii_{\ge c} \seteq \uii_{K\cap[c,+\infty]}.$$
\item We say that a finite interval $\frakc = [a,b]$ contained in $K$ is an \emph{$i$-box} if $ a \le b $ and $\im_a=\im_b$. We sometimes write
it as $[a,b]^{\uii}$ to emphasize that it is associated with $\uii$. 
\item For an $i$-box $[a,b]$, we set
$$ [a,b]_\phi \seteq  \{s \ | \  s \in [a,b] \text{ and } \im_a = \im_s =\im_b \}.$$
\item For a finite interval $[a,b]$ in $K$, we define the $i$-boxes
\begin{align} \label{eq: p i-box }
  [a,b \} \seteq [a,b(\im_a)^-] \qtq    \{a,b] \seteq [a(\im_b)^+,b]. 
\end{align}
\item We say that $i$-boxes $[a_1,b_1]$ and $[a_2,b_2]$ \emph{commute} if we have either
$$
a_1^- < a_2 \le b_2 < b_1^+ \qtoq a_2^- < a_1 \le b_1 < b_2^+.
$$

\item A chain $\frakC$ of $i$-boxes $(\frakc_k=[a_k,b_k])_{1 \le k \le l}$  for $l \in \Z_{>0} \sqcup \{ \infty \}$
is called \emph{admissible} if 
$$
\tfrakc_k  =[\ta_k,\tb_k] \seteq \sbcup_{1 \le j \le k} [a_j,b_j] \text{ is an interval with $|\tfrakc_k|=k$ for $k=1,\ldots,l$} 
$$
and either $[a_k,b_k]=[\ta_k,\tb_k \}$ or $[a_k,b_k]=\{ \ta_k,\tb_k ]$ for $k=1,\ldots,l$. 
\item The interval $\tfrakc_k$ is called the \emph{envelope} of $\frakc_k$, and $\tfrakc_l$ is the \emph{range} of $\frakC$. 
\ee
\end{definition}

\begin{lemma}[\cite{KKOP24A}] \label{lem: frack commute} 
Let $\frakC=(\frakc_k)_{1\le k\le l}$ be an admissible chain of $i$-boxes.
\bna
\item For all $1\le j,k\le l$, $\frakc_j$ and $\frakc_k$ commute.
\item If an $i$-box $\frakc\subset \tfrakc_l$
  commutes with all $\frakc_j$ $(1 \le j\le l)$,
  then $\frakc$ is a member of $\frakC$.
\ee
\end{lemma}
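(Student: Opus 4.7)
The plan is to argue by induction on the length $l$ of the chain $\frakC$. The base case $l=1$ is immediate: part (a) is vacuous, and in (b) the envelope $\tfrakc_1$ consists of a single point, so $\frakc=\frakc_1$ is the only $i$-box contained in it.

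For the inductive step, admissibility gives $\tfrakc_l = \tfrakc_{l-1} \sqcup \{e_l\}$ for a unique new position $e_l \in \{\ta_l,\tb_l\}$; by the symmetry of left and right I would assume $e_l = \tb_l$. Since $\frakc_l$ must contain the new position $e_l$, a short case analysis of the two permitted forms for $\frakc_l$ shows that in both cases $b_l = \tb_l$ and $a_l = p$, where $p$ denotes the smallest position in $\tfrakc_l$ with $\im_p = \im_{\tb_l}$; indeed the form $[\ta_l,\tb_l\}$ forces $\im_{\ta_l} = \im_{\tb_l}$ (otherwise $\tb_l\notin\frakc_l$), whence $\ta_l = p$. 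So $\frakc_l = [p,\tb_l]$ in all cases.

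For part (a), one first observes that for any $i$-box of the form $[\ta,\tb\}$ or $\{\ta,\tb]$ inside an interval $[\ta,\tb]$, the definitions of $(-)^\pm$ force $a^- < \ta$ and $\tb < b^+$; in particular $\tfrakc_l \subset (a_l^-, b_l^+)$. Combined with $\frakc_j \subset \tfrakc_j \subset \tfrakc_l$ for $j<l$, this yields $a_l^- < a_j \le b_j < b_l^+$, one of the two commutation conditions, so $\frakc_l$ commutes with every $\frakc_j$; together with the inductive hypothesis this proves (a).

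For part (b), let $\frakc = [a,b] \subset \tfrakc_l$ commute with every member of $\frakC$. If $b < \tb_l$, then $\frakc\subset\tfrakc_{l-1}$ and the inductive hypothesis applied to $(\frakc_k)_{1\le k\le l-1}$ gives $\frakc \in \{\frakc_1,\ldots,\frakc_{l-1}\}$. Otherwise $b = \tb_l$, so $\im_a=\im_{\tb_l}$, and it suffices to show $a=p$ (whence $\frakc = [p,\tb_l]=\frakc_l$). List the positions of $\tfrakc_l$ with color $\im_{\tb_l}$ as $p = p_1 < \cdots < p_r = \tb_l$ and assume for contradiction $a = p_i$ with $i \ge 2$. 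I would then produce a member $\frakc_{j_0}$ of the chain that is an $i$-box of color $\im_{\tb_l}$ with $p_{i-1}$ as its right endpoint; this is done by applying the inductive hypothesis (b) to an auxiliary $i$-box inside $\tfrakc_{l-1}$, whose commutation with $\frakc_1,\ldots,\frakc_{l-1}$ is derived from the commutation hypothesis on $\frakc$. Once $\frakc_{j_0}$ is in hand, the pair $(\frakc_{j_0},\frakc)$ shares the color $\im_{\tb_l}$ with $\frakc_{j_0}^+ = a$ and $\frakc^- = p_{i-1}$, and a direct check shows that neither inequality $a_{j_0}^-<a\le b<b_{j_0}^+$ nor $a^-<a_{j_0}\le b_{j_0}<b^+$ can hold, contradicting the hypothesis.

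The main obstacle is this last step of pinning down the auxiliary box $\frakc_{j_0}$. The key structural input is that, thanks to admissibility, each position in $\tfrakc_l$ is the newly added element $e_k$ at exactly one stage, and the box $\frakc_k$ built at that stage links $e_k$ to its nearest same-color neighbor in $\tfrakc_{k-1}$; this matching between positions and boxes is what forces the required $\frakc_{j_0}$ to belong to the chain.
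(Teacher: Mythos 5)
The global structure---induction on $l$, the observation that $\frakc_l=[p,\tb_l]$ where $p$ is the smallest position of color $\im_{\tb_l}$ in $\tfrakc_l$, the containment argument for (a), and the reduction of (b) to the case $b=\tb_l$ via the inductive hypothesis---is all sound, and your final non-commutation check is correct \emph{granted} the existence of the box $\frakc_{j_0}$ you describe. The gap is precisely that existence claim: the chain need not contain an $i$-box of color $\im_{\tb_l}$ with $p_{i-1}$ as its \emph{right} endpoint. Take $\uii$ with $\im_k=1$ for $k$ odd, $\im_k=2$ for $k$ even, and $\frakC=([3,3],[2,2],[1,3],[2,4],[1,5])$, which is admissible with envelope $[1,5]$ (this is $c=3$, $\frakH=(\calL,\calL,\calR,\calR)$). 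Here $\tb_5=5$, $\im_5=1$, and the color-$1$ positions in $[1,5]$ are $p_1=1<p_2=3<p_3=5$. The $i$-box $\frakc=[3,5]=[p_2,p_3]$ (so $i=2$, $p_{i-1}=1$) commutes with $\frakc_1,\frakc_2,\frakc_4,\frakc_5$ and fails to commute only with $\frakc_3=[1,3]$, whose \emph{left} endpoint is $p_{i-1}=1$; no member of $\frakC$ has right endpoint $1$. Your proposed auxiliary route also breaks down: the only candidate box $[1,1]\subset\tfrakc_4$ already fails to commute with $\frakc_1=[3,3]$, so the inductive hypothesis (b) cannot place it in the subchain, and indeed $[1,1]\notin\frakC$.

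The flaw traces to the closing heuristic. When $e_k$ is adjoined at step $k$, the box $\frakc_k$ joins $e_k$ to the \emph{farthest} (not nearest) position of color $\im_{e_k}$ inside $\tfrakc_k$, so $e_k$ sits at whichever end of $\frakc_k$ corresponds to the side from which it entered the envelope. If $p_{i-1}$ entered from the left (i.e.\ $e_k=\ta_k=p_{i-1}$), it is the \emph{left} endpoint of $\frakc_k$, and your inequalities do not apply to $\frakc_k$ as written. The repair is to take $\frakc_{j_0}\seteq\frakc_k$ for the unique step $k$ at which $p_{i-1}$ enters the envelope and split into two cases. If $p_{i-1}=b_{j_0}$, your check applies verbatim. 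If $p_{i-1}=a_{j_0}$, write $b_{j_0}=p_s$; since $p_r=\tb_l\notin\tfrakc_k$ (because $k<l$), one has $s<r$ and hence $b_{j_0}^+=p_{s+1}\le p_r=b$, which kills $b<b_{j_0}^+$, while $a^-=p_{i-1}=a_{j_0}$ kills $a^-<a_{j_0}$. Either way $\frakc_{j_0}$ and $\frakc$ fail to commute, giving the contradiction.
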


Note that the admissible chain $\frakC=(\frakc_k)_{1\le k\le l}$ is uniquely determined by its envelopes and \emph{horizontal moves} at steps:
\begin{align} \label{eq: horizontal moves}
 \frakc_{k}=[a_k,b_k] = \calH_{k-1}[\ta_k,\tb_k]  \seteq \bc
 [\ta_k,\tb_k\} & \text{ {\rm (i)} } \ta_k=\ta_{k-1}-1 \text{ and } \tb_k=\tb_{k-1}, \\
  \{ \ta_k,\tb_k] & \text{ {\rm (ii)} } \tb_k=\tb_{k-1}+1 \text{ and } \ta_k=\ta_{k-1},  
 \ec
\end{align}
for $1 < k \le l$. In case {\rm (i)} in~\eqref{eq: horizontal moves}, we write $\calH_{k-1}=\calL$, while $\calH_{k-1}=\calR$ in case {\rm (ii)} \footnote{In \cite{KKOP24A}, $T_{k-1}$ have used
instead of $\calH_{k-1}$.}. Hence, for each chain $\frakC$ of length $l$, we can associate a pair $(c,\frakH)$ consisting of 
\begin{align}
\text{$c =a_1=b_1$ and $\frakH=(\calH_1,\ldots,\calH_{l-1})$
such that $\calH_i \in \{ \calL,\calR\}$ $(1 \le i <l)$.}    
\end{align}

\begin{definition}   \label{def: movable and B-move}
Let $\frakC=(\frakc_k)_{1\le k\le l}$ be an admissible chain of $i$-boxes  associated with $(c,\frakH)$.
\bna
\item
  For $1 \le m <l$,
  we call the $i$-box $\frakc_m$ \emph{movable}
if  
$$
 m=1  \qtoq  \calH_{m-1} \ne \calH_m  \text{ for } m \ge 2.   
$$
Note that the latter condition is equivalent to $\ta_{m+1}=\ta_{m-1}-1$ and $\tb_{m+1}=\tb_{m-1}+1$.  
\item \label{it: B-move}
For a movable $i$-box $\frakc_m$ in $\frakC$, we define a new admissible chain of $i$-boxes $\bbB_m(\frakC)$ whose associated pair $(c',\frakH')$
is given as follows:
\bnum 
\item $\bc c'=c\pm1 & \text{ if $m=1$ and $\calH_1=\calR$ (resp. $\calL$)}, \\
      c'=c  & \text{ otherwise}, \ec$ 
\item $\calH_k'=\calH_k$ for $k\not\in\{m-1,m\}$ and $\calH_k'\ne \calH_k$ for $k\in\{m-1,m\}$. 
\ee
We call $\bbB_m(\frakC)$ the \emph{box move} of $\frakC$ at $m$. 
\ee
\end{definition}

\begin{proposition} [\cite{KKOP24A}] Let $\frakC=(\frakc_k)_{1\le k \le l}=(c,\frakH)$ be an admissible chain of $i$-boxes and $\frakc_m$ a movable $i$-box in $\frakC$. 
Set $\bbB_m(\frakC)=(\frakc'_k)_{1 \le k \le l}$.
\bna
\item Assume that $\tfrakc_{m+1}$ is \emph{not} an $i$-box. Then we have $\frakc'_k =\frakc_{\upsigma_m(k)}$ for all $1 \le k \le l$. 
\item Assume that $\tfrakc_{m+1}=[a,b]$ is an $i$-box. Then we have
$$
\bc
\frakc_{m} = [a^+,b] \text{ and }  \frakc'_{m}=[a,b^-] & \text{ if } \calH_{s-1}=\calR, \\
\frakc_{m} = [a,b^-] \text{ and }  \frakc'_{m}=[a^+,b] & \text{ if } \calH_{s-1}=\calL, 
\ec
$$
and $\frakc_k = \frakc'_k$ for all $k\in[1,l]\setminus\st{m}$. 
\ee
\end{proposition}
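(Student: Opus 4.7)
The plan is to unpack the definitions of the box move $\bbB_m$, the associated pair $(c', \frakH')$, and the horizontal-move recursion $\calH_k$, then carry out a short case analysis driven by whether $\tfrakc_{m+1}$ is an $i$-box.

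First I will note that since $\frakc_m$ is movable (for $m \ge 2$) we have $\calH_{m-1}\ne\calH_m$, and after the box move $\calH'_{m-1}\ne\calH'_m$ as well (both are flipped). Without loss of generality I will assume $\calH_{m-1}=\calL$ and $\calH_m=\calR$ (so $\calH'_{m-1}=\calR$ and $\calH'_m=\calL$). Writing $a=\ta_m$ and $b=\tb_m+1$, so that $\tfrakc_{m+1}=[a,b]$, a direct computation from \eqref{eq: horizontal moves} gives $\tfrakc_{m-1}=[a+1,b-1]$, $\tfrakc_m=[a,b-1]$, $\tfrakc'_m=[a+1,b]$, and $\tfrakc'_{m+1}=[a,b]=\tfrakc_{m+1}$. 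This last equality propagates: since $\calH'_k=\calH_k$ for $k\not\in\{m-1,m\}$, I get $\tfrakc'_k=\tfrakc_k$ for every $k\ne m$, and hence $\frakc'_k=\frakc_k$ for every $k\not\in\{m,m+1\}$. The boundary case $m=1$ reduces to the same picture once I substitute $c'=c\pm 1$ for the missing $\calH_{m-1}$ convention, and verify by hand that $\tfrakc'_2=\tfrakc_2$ still holds.

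Second, I will compare the four remaining boxes by writing them out using the definitions in \eqref{eq: p i-box }:
\[
\frakc_m=[a,(b-1)(\im_a)^-],\quad \frakc_{m+1}=[a(\im_b)^+,b],\quad \frakc'_m=[(a+1)(\im_b)^+,b],\quad \frakc'_{m+1}=[a,b(\im_a)^-].
\]
If $\tfrakc_{m+1}$ is not an $i$-box, i.e.\ $\im_a\ne\im_b$, then $(b-1)(\im_a)^-=b(\im_a)^-$ and $a(\im_b)^+=(a+1)(\im_b)^+$, so $\frakc_m=\frakc'_{m+1}$ and $\frakc_{m+1}=\frakc'_m$; combined with the previous step this yields $\frakc'_k=\frakc_{\upsigma_m(k)}$ for all $k$, proving (a). If $\tfrakc_{m+1}=[a,b]$ is an $i$-box, i.e.\ $\im_a=\im_b$, then $b(\im_a)^-=b$ and $a(\im_b)^+=a$, whence $\frakc'_{m+1}=[a,b]=\frakc_{m+1}$; and $(b-1)(\im_a)^-=b^-$ together with $(a+1)(\im_b)^+=a^+$ gives $\frakc_m=[a,b^-]$ and $\frakc'_m=[a^+,b]$. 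This is exactly (b) in the sub-case $\calH_{m-1}=\calL$; the sub-case $\calH_{m-1}=\calR$ is obtained by the symmetric computation (swapping the roles of $\calL$ and $\calR$, and of $a$ and $b$).

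The only real care required is bookkeeping: translating between the $(c,\frakH)$ description and the explicit envelopes $\tfrakc_k=[\ta_k,\tb_k]$, and then between the shorthands $[\cdot,\cdot\}$, $\{\cdot,\cdot]$ and the $(\cdot)^{\pm}$ notation. There is no deeper obstacle once this dictionary is in place; the only subtlety is verifying the $m=1$ case, where the shift $c'=c\pm 1$ is what produces the key identity $\tfrakc'_2=\tfrakc_2$ that launches the comparison of the individual $i$-boxes.
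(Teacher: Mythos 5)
Your proof is correct. Since the paper cites this proposition from \cite{KKOP24A} without reproducing a proof, there is nothing internal to compare against, but your argument is exactly the natural verification: unwind the $(c,\frakH)$-encoding via \eqref{eq: horizontal moves}, observe that the envelopes $\tfrakc_k$ and $\tfrakc'_k$ agree for all $k\ne m$, write the four boxes $\frakc_m,\frakc_{m+1},\frakc'_m,\frakc'_{m+1}$ in the $[a,b\}$ / $\{a,b]$ notation, and then resolve $(\cdot)^{\pm}$ according to whether $\im_a=\im_b$. The computations $(b-1)(\im_a)^-=b(\im_a)^-$, $a(\im_b)^+=(a+1)(\im_b)^+$ when $\im_a\ne\im_b$, and $(b-1)(\im_a)^-=b^-$, $(a+1)(\im_b)^+=a^+$ when $\im_a=\im_b$, are the decisive steps, and you have them right. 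The observation that $\tfrakc'_{m+1}=\tfrakc_{m+1}$ (and hence envelopes coincide from $m+1$ onward) is the key structural fact, and you identify it cleanly.

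One small presentational remark: for $m=1$ the proposition's case split ``$\calH_{m-1}=\calR$ vs.\ $\calL$'' has no literal meaning since $\calH_0$ is undefined; what you find (that the $c'=c\pm1$ shift reproduces $\tfrakc'_2=\tfrakc_2$ and that $\calH_1=\calR$ lands you in the $\calH_{m-1}=\calL$ branch of the formula) is the implicit convention, and it would have been worth stating that mapping explicitly rather than only saying it ``reduces to the same picture.'' But this is cosmetic; the substance is sound.

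Also note the proposition's source text reads $\calH_{s-1}$, which is evidently a typo for $\calH_{m-1}$; your reading is the correct one.
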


\Lemma[{\cite[Lemma 5.10]{KKOP24A}}]
\label{lem: finite sequence are T-equi} 
Let $\frakC$ be an admissible chain of $i$-boxes.
Then any admissible chain $\frakC'$ 
with the same range as $\frakC$ can be obtained from $\frakC$ by successive box moves.
\enlemma

\subsection{Affine determinantial modules} \label{Sec: adm}

Let $\uii=(\im_k)_{k\in K}$ be a sequence in $\sfI$ such that
$K$ is a possibly infinite interval \condi.
For $k\in K$ and 
  a strong duality datum $\bbD$ in $\Cgz$, we set
\begin{align} \label{eq: Ck}
  C^{\bbD,\uii}_k\seteq
  \bc
  \TT^\bbD_{\im_1}\cdots\TT^\bbD_{\im _{k-1}}L^{\bbD}_{\im_k}&\text{if $k>0$,}\\
  (\TT^\bbD_{\im_0}){}^{-1}\cdots(\TT^\bbD_{\im_{k}}){}^{-1}L^\bbD_{\im_k}&\text{if $k\le 0$.}
        \ec    
\end{align}
We have
$$C^{\bbD,\uii}_k\simeq (\TT^\bbD_{\im_l}\cdots\TT^\bbD_{\im _{0}})^{-1}
\TT^\bbD_{\im_l}\cdots\TT^\bbD_{\im _{k-1}}L^{\bbD}_{\im_k}$$
for any $l\in K$ such that $l\le 1, k$.
{}From Lemma~\ref{lem: any sequence} and Lemma~\ref{lem: preserving root}, 
for any sequence $\uii=(\im_1,\ldots,\im_r)$,
$C^{\bbD,\uii}_k$ is a well-defined root module. 

\smallskip
 
The theorem below is an interpretation of
results in \cite[\S 5]{KKOP23P} in terms of $\bbD$ and $\TT_\im$ $(\im \in \sfI)$. 

\begin{theorem} [\cite{KKOP23P}] \label{thm: affine cuspidal}
Let $\usfwc=(\im_1,\im_2,\ldots,\im_\ell)$ be a reduced sequence of $w_0$ of\; $\weyl$. 
\bnum
\item For each $k \in  \Z$, $C_k^{\bbD,\hwc}$ in \eqref{eq: affine cuspidal module}  coincides with the definition ~\eqref{eq: Ck}. 
\item Let $1 \le k \le \ell$. If $\be^\usfwc_k =\al_\jm$ for $\jm\in \sfI$, then $C_{k}^{\bbD,\hwc} \simeq L_\jm$.

\item For $1 \le k < m <l \le \ell$, if $(\be^\usfwc_k,\be^\usfwc_l)$
is a $\usfwc$-minimal pair of $\be^\usfwc_m$, then 
$C_{m}^{\bbD,\hwc} \simeq C_{k}^{\bbD,\hwc} \hconv C_{l}^{\bbD,\hwc}$.
\item The infinite sequence of root modules 
\begin{align}\label{eq: affine cuspidal module1}
\text{$\uC^\Dpair \seteq(\ldots,C^\Dpair_{1},C^\Dpair_{0}, C^\Dpair_{-1}, \ldots)$ is strongly unmixed. }
\end{align}
\ee
\end{theorem}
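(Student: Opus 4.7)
The four assertions are essentially translations, via $\bPhi_\bbD$ and Corollary~\ref{cor: dia comm repeat}, of known statements about cuspidal modules over quiver Hecke algebras combined with the generalized Schur--Weyl duality $\calF_\bbD$. I will treat (i) first, as it links the two competing definitions of $C_k^{\bbD,\hwc}$; the other three statements then follow directly.

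For (i), the first definition \eqref{eq: affine cuspidal module} sets $C_k^{\bbD,\hwc}=\calF_\bbD(\sfV^\usfwc_k)$ for $1\le k\le \ell$ (and extends by $\scrD$), while \eqref{eq: Ck} gives $\TT^\bbD_{\im_1}\cdots\TT^\bbD_{\im_{k-1}} L^\bbD_{\im_k}$. Using the identification $\ch_q(\sfV^\usfwc_k)= E^*_\usfwc(\be_k)$ together with the definition \eqref{eq: PBW vector for hA} of $\FF^\usfwc_k$ and Proposition~\ref{prop: T_i in CQ}, one has $\bPhi_\bbD\bl\FF^\usfwc_k\br=[\calF_\bbD(\sfV^\usfwc_k)]$. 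On the other hand, iterating Proposition~\ref{prop: comm diagram} (or equivalently applying \eqref{eq:TTPhi}) gives
\[
\bPhi_\bbD\bl\TT_{\im_1}\cdots\TT_{\im_{k-1}}(f_{\im_k,0})\br
 =\bPhi_{\scrS_{\im_{k-1}}\cdots\scrS_{\im_1}\bbD}(f_{\im_k,0})
 =[L^{\scrS_{\im_{k-1}}\cdots\scrS_{\im_1}\bbD}_{\im_k}],
\]
which is the module produced by \eqref{eq: Ck} (cf.\ Lemma~\ref{lem:Treal}). Hence both definitions match on Grothendieck classes of simple modules, and since all modules involved are simple we obtain the asserted isomorphism. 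The extension to $k\notin[1,\ell]$ is by Proposition~\ref{prop: homomorphism from hA to Cg}\eqref{it: commutes with D}, which makes the operator $\ocalD_q$ intertwine with the right dual functor $\scrD$, matching the periodicity conventions in both \eqref{eq: affine cuspidal module} and \eqref{eq: extension of reduced}.

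For (ii), if $\be^\usfwc_k=\al_\jm$, the corresponding cuspidal $R$-module is $\sfV^\usfwc_k\simeq L(\jm)$; then Theorem~\ref{thm:gQASW duality}\eqref{it: Lim angleim} yields $\calF_\bbD(L(\jm))\simeq L^\bbD_\jm=L_\jm$, proving the assertion via (i). For (iii), the minimal-pair factorization $\sfV^\usfwc_a\hconv\sfV^\usfwc_b\simeq\sfV^\usfwc_m$ for quiver Hecke cuspidals is a standard result (recalled above Theorem~\ref{thm:gQASW duality} and proved in the references cited there); since $\calF_\bbD$ is exact, monoidal, and preserves simples (Theorem~\ref{thm:gQASW duality}(b)), it respects heads, and applying it produces the claimed isomorphism $C^{\bbD,\hwc}_k\hconv C^{\bbD,\hwc}_l\simeq C^{\bbD,\hwc}_m$.

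For (iv), the strong unmixedness of a bi-infinite cuspidal sequence is proved in [KKOP23P, \S 5] in two pieces: on the heart $\scrC_\bbD$ it reduces to unmixedness among the $R$-module cuspidals $\sfV^\usfwc_k$ (transported via $\calF_\bbD$), and between different ``levels'' $\scrD^m\scrC_\bbD$ it follows from Theorem~\ref{thm: wtpairing Lainf}--type vanishing together with Proposition~\ref{prop: Unmix normal}. The main subtlety, and the one step I would handle with the most care, is checking $\de(\rdual^k C^{\bbD,\hwc}_a, C^{\bbD,\hwc}_b)=0$ for all $k>0$ when $a<b$: the $k=0$ case is the unmixedness of $(\calF_\bbD\sfV^\usfwc_a,\calF_\bbD\sfV^\usfwc_b)$, but for $k\ge 1$ one must use that consecutive blocks $\scrC_\bbD[m]$ are suitably ``orthogonal'' under $\de$, which follows from Proposition~\ref{prop: ell de}(iii) applied after shifting each module back into the heart via $\scrD$. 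Combining these estimates along the locally reduced sequence $\hwc$ gives strong unmixedness of the whole family $\uC^\Dpair$.
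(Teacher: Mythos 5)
Your proposal is correct and follows essentially the same approach as the paper: the theorem is explicitly labelled as an interpretation of results from [KKOP23P, \S 5] via the dictionary $\bPhi_\bbD$, and your verification of that interpretation—matching the two definitions on Grothendieck classes (which forces an isomorphism of simples) for (i), transporting the cuspidal module facts through the exact monoidal functor $\calF_\bbD$ for (ii)–(iii), and citing strong unmixedness from the source for (iv)—is exactly the reasoning the paper relies on. One small point worth tightening: for the step $\bPhi_\bbD(\FF_k^\usfwc)=[\calF_\bbD(\sfV^\usfwc_k)]$ you invoke Proposition~\ref{prop: T_i in CQ}, but that proposition only places $\FF_k^\usfwc$ inside $\hcalA[0]$; identifying $\varphi_0^{-1}(\FF_k^\usfwc)$ with the dual PBW vector $E^*_\usfwc(\be_k)$ (up to the $q^{1/2}$-normalization, which vanishes under specialization) is a separate fact from the references behind that proposition (cf.\ \cite{OP24, KKOP21B}), which you should cite rather than let it appear to follow from Proposition~\ref{prop: T_i in CQ} alone.
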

We frequently drop $^\Dpair$ in notations  if there is no danger of confusion. 

\smallskip

Since $\uC^{\Dpair}$ is strongly unmixed and hence normal by Proposition~\ref{prop: Unmix normal}~\eqref{it: unmix normal}, 
\begin{align} \label{eq: head of S}
\text{$\head(C^{\bsa})$
  is a simple module in $\Cgz$ for each $\bsa =(\ldots,a_1,a_0,a_{-1},\ldots) \in  \Z_{\ge 0}^{\oplus \Z}$,}
\end{align}
  where $C^{\bsa}\seteq\cdots \tens (C^\Dpair_1)^{\tens a_1}
  \tens (C^\Dpair_0)^{\tens a_0}\tens  (C^\Dpair_{-1})^{\tens a_{-1}} \tens  \cdots$.

\begin{theorem} [{\cite[Theorem 6.1]{KKOP23P}}] \label{thm: unique a}
For a PBW-pair $(\Dpair)$ and  any simple module $M \in \Cgz$, there exists a unique $\bsa \in \Z_{\ge 0}^{\oplus \Z}$ such that
$$
\head(C^{\bsa})  \simeq M.
$$
\end{theorem}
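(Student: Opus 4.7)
The plan is to reduce the statement to the PBW theory for quiver Hecke algebras via the Schur--Weyl duality functor $\calF_\bbD$ combined with the completeness of $\bbD$, and then to invoke the unitriangularity between the PBW basis $\bfP_\hwc$ and the global basis $\bfG$ to obtain uniqueness.

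For existence, I would first invoke the completeness of $\bbD$: any simple $M\in\Cgz$ can be written as
$$M \simeq \head\bigl(\cdots\tens\scrD^{2}M_2\tens \scrD M_1\tens M_0\tens\ldual M_{-1}\tens\cdots\bigr),$$
with simple modules $M_k\in\scrC_\bbD$, almost all trivial. For each $k$, Theorem~\ref{thm:gQASW duality} gives a simple $R$-module $N_k$ with $\calF_\bbD(N_k)\simeq M_k$. The standard PBW theory for quiver Hecke algebras associated with $\usfwc$ says that each $N_k$ is isomorphic to the head of an ordered convolution of powers of the cuspidal modules $\sfV^\usfwc_j$, $1\le j\le\ell$. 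Applying $\calF_\bbD$ identifies the convolution with a tensor product and sends $\sfV^\usfwc_j$ to $C^\Dpair_j$; applying $\scrD^k$ then shifts indices by $k\ell$, so $\scrD^k M_k$ becomes the head of an ordered tensor product of powers of the $C^\Dpair_{j+k\ell}$. Since $\uC^\Dpair$ is strongly unmixed (Theorem~\ref{thm: affine cuspidal}), the composite ordered tensor product over all $k$ is normal by Proposition~\ref{prop: Unmix normal}\,\eqref{it: unmix normal}, and I would conclude that $M\simeq\head(C^\bsa)$ for a suitable $\bsa\in\Z_{\ge0}^{\oplus\Z}$.

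For uniqueness, I would move to the Grothendieck ring via the isomorphism $\obPhi_\bbD$ of Theorem~\ref{thm: obbA K(Cgz)}. By Proposition~\ref{prop: comm diagram} together with Theorem~\ref{thm: affine cuspidal}, the affine cuspidal $C^\Dpair_k$ corresponds under $\obPhi_\bbD$ to the specialized PBW root vector $\ev_{q=1}(\FF^\hwc_k)\in\obbA$. Consequently $[C^\bsa]\in K(\Cgz)$ matches, up to a scalar, the image of the PBW monomial $\FF^\hwc(\bsa)$. The unitriangularity \eqref{eq: uni map} between $\bfP_\hwc$ and the global basis $\bfG$ (Theorem~\ref{thm: P and G for ttb}\,\eqref{it: P and G 3rd}, extended in the form of Remark~\ref{rmk: comp PBW hwc}) then shows that $[\head(C^\bsa)]=\obPhi_\bbD\bl\ev_{q=1}\bb^\hwc(\bsa)\br$, and distinct $\bsa$'s produce distinct global basis elements $\bb^\hwc(\bsa)$, hence non-isomorphic simples.

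The main obstacle will be making the categorical identification $[\head(C^\bsa)] = \obPhi_\bbD\bl\ev_{q=1}\bb^\hwc(\bsa)\br$ precise. Concretely, one must verify that the composition-series discrepancy between $C^\bsa$ and $\head(C^\bsa)$ involves only simples indexed by PBW data strictly smaller than $\bsa$ in the bi-lexicographic order, mirroring the unitriangular expansion \eqref{eq: uni map} on the algebra side. This should follow from strong unmixedness combined with the multiplicity-one statement of Proposition~\ref{prop:normalsimple}\,\eqref{itsimple}, and once it is established the claimed bijection between $\Z_{\ge0}^{\oplus\Z}$ and isomorphism classes of simple objects of $\Cgz$ is immediate.
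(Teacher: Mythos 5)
The paper cites this result from [KKOP23P, Theorem~6.1] and does not prove it in-text, so there is nothing to compare against directly; I evaluate your proposal on its own. The existence argument is sound modulo one detail worth stating: passing from the blockwise heads $\scrD^k M_k$ to a single $\head(C^\bsa)$ requires $\head(\cdots\tens P_1\tens P_0\tens\cdots)\simeq\head(\cdots\tens\head(P_1)\tens\head(P_0)\tens\cdots)$ for the consecutive cuspidal blocks $P_k$, which does hold because strong unmixedness is preserved under taking heads of consecutive segments (Proposition~\ref{prop: Unmix normal}).

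Your uniqueness argument has both a circularity and a gap. \emph{Circularity:} the isomorphism $\obPhi_\bbD$ of Theorem~\ref{thm: obbA K(Cgz)} rests on the factorization $\stens_{m\in\Z}K(\scrC_\bbD[m])\isoto K(\Cgz)$ in \eqref{eq: iso Kz}, which is quoted from [KKOP23P, Theorems~6.10 and 6.12] --- later results of the very paper whose Theorem~6.1 you are reproving, and which depend on it (Theorem~6.10 is essentially the existence half of the statement). \emph{Gap:} the step you flag as ``the main obstacle,'' namely that the composition series of $C^\bsa$ contains only $\head(C^\bsb)$ with $\bsb\preceq\bsa$, is the actual content of the theorem (it is Lemma~\ref{lem:subq}), and the tools you propose --- strong unmixedness plus the multiplicity-one part \eqref{itsimple} of Proposition~\ref{prop:normalsimple} --- do not yield this triangularity by themselves; you also need the $\Lambda$-inequalities in parts (i) and (ii) of that proposition together with an inductive peeling argument. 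That peeling argument is precisely Lemma~\ref{lem: filter}, and once you invoke it the whole detour through $\obbA$ and the global basis becomes superfluous: the recursion $d_k=\de(\rdual C_k,M_k)$, applied from the top degree downward, reconstructs $\bsa$ directly from $M=\head(C^\bsa)$, giving uniqueness with no Grothendieck-ring or global-basis input at all.
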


By Theorem~\ref{thm: affine cuspidal} and Theorem~\ref{thm: unique a},
any simple module $M \in \Cgz$ can be obtained by
a simple subquotient of a tensor product of root modules $\{ \rdual^k L^\bbD_\im \}_{\im \in \sfI,\, k \in \Z}$.  

\medskip

\Def \label{def:affdet}
\hfill
\bnum
\item
 For an $i$-box $[a,b]$, we define
\begin{align} \label{eq: MiD}
  M^{\bbD,\uii}[a,b] \seteq \head\left(\otens_{s\in[a,b]_\phi}C^{\bbD,\uii}_{s}\right) 
  =\head\left(C^{\bbD,\uii}_{b} \tens C^{\bbD,\uii}_{b^-} \tens
\cdots \tens C^{\bbD,\uii}_{a^+} \tens C^{\bbD,\uii}_a\right).
\end{align}
We call $M^{\bbD,\uii}[a,b]$ the \emph{affine determinantial module} associated with $(\bbD,\uii)$ and $[a,b]$.
\item 
For an interval $[a,b]\subset K$, we write $\Cg^{[a,b],\bbD,\uii}$ the smallest full subcategory of
$\Cgo$ which is stable by taking tensor products, subquotients, extensions and contains $\one$ and $C^{\bbD,\uii}_k$ for any $k\in[a,b]$ (see also \cite[\S 6.3]{KKOP23P}).
We write for any $m\in K$, $\Cg^{[m],\bbD,\uii}$, $\Cg^{\le m,\bbD,\uii}$ and $\Cg^{\ge m,\bbD,\uii }$ for $\Cg^{[m,m],\bbD,\uii}$, $\Cg^{K\cap[-\infty,m],\bbD,\uii}$ and  $\Cg^{K\cap[m,\infty],\bbD,\uii}$, respectively.  
\ee
\end{definition}
We frequently drop $^{\bbD,\uii}$ or $^{\bbD}$ in the above notations for simplicity  when there is no danger of confusion.

\Lemma\label{lem:lneg}
Let $\uii = (\im_m)_{k\in K}$ be a sequence in $\sfI$ \condi,
and $l\in K$ such that $l\le 1$. We 
set $\bbD'=\scrS_{l}^{-1}\cdots\scrS_{0}^{-1}\bbD$.
Let $\uii'=(\im'_{k})_{k\in K'}$ be the sequence defined by
$K'=K-l+1$ and $\im'_{k}=\im_{k+l-1}$ for $k\in K$.
Then we have
\begin{align*}
C^{\bbD,\uii}_{k} & =C^{\bbD',\uii'}_{k-l+1}  && \text{for $k\in K$ and } \\
  M^{\bbD,\uii}[a,b] & =M^{\bbD',\uii'}[a-l+1,b-l+1] && \text{ for any $i$-box
                                                        $[a,b]\subset K'$.}
\end{align*}
\enlemma
\Proof
For $k\in K$, let us take $l'\in\Z$ such that $l'\le l, k$.
By Corollary~\ref{cor: dia comm repeat}, we have
\eqn
[C^{\bbD,\uii}_k]
&&=\bPhi_{\bbD}\bl
(\TT_{\im_{l'}}\cdots\TT_{\im_{0}})^{-1} (\TT_{\im_{l'}}\cdots\TT_{\im_{k-1}})f_{\im_k,0}\br\\
&&=\bPhi_{\bbD'}\bl
(\TT_{\im_{ l'}}\cdots\TT_{\im_{l-1}})^{-1}
(\TT_{\im_{l'}}\cdots\TT_{\im_{k-1}})f_{\im_{k},0}\br\\
  &&=\bPhi_{\bbD'}\bl
(\TT_{(\im'_{l' -l+1}}\cdots\TT_{\im'_{0}})^{-1}
(\TT_{\im' _{l'-l+1}}\cdots\TT_{\im'_{k-l}})f_{\im'_{k-l+1},0}\br
=[C^{\bbD',\uii'}_{k-l+1}]. \qedhere     
\eneqn
\QED

\smallskip

The following theorem is one of the main results in \cite{KKOP24A}. 

\begin{theorem} \label{thm: deter main}
  The affine determinantial modules associated with $(\Dpair)$
  satisfy the following properties.
\bnum
\item For any $a \in\Z$ $\de(C_{a^+},C_a)=1$. 
\item $M[a,b]$ is a real simple module in $\Cgz$.
\item 
If two $i$-boxes $[a_1,b_1]$ and $[a_2,b_2]$ commute, then $M[a_1,b_1]$ and $M[a_2,b_2]$ commute. 
\item $\de(C_{a^-},M[a,b])=\de(C_{b^+},M[a,b])=\de(\ldual C_{a},M[a,b])=\de(\rdual C_{b},M[a,b])=1$.
\item \label{it: de=1} $\de(M[a^-,b^-],M[a,b]) = 1$. 
\item For any $i$-box $[a,b]$ such that $a<b$,  
  we have a short exact sequence in $\Cgz$
\begin{align}  \label{eq: T-system in terms of M[a,b]}
0 \to  \dtens_{ \substack{ \jm \in \hspace{0.1ex} \sfI; \\ d(\im_a,\jm)=1}} 
M[a(\jm)^+,b(\jm)^-]  \to   M[a^+,b]  \tens M[a,b^-]    \to   M[a,b]
\tens M[a^+,b^-] \to 0 
\end{align}
such that the left term and right term in~\eqref{eq: T-system in terms of M[a,b]} are simple. 
\ee
\end{theorem}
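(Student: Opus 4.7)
The plan is to reduce Theorem \ref{thm: deter main} to its counterpart for locally reduced sequences, which is the main result of \cite{KKOP24A}. The reduction proceeds via a two-step strategy outlined in the introduction.

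First I would establish \emph{invariance of the relevant data under commutation and braid moves} on the sequence $\uii$. Suppose $\ujj$ is obtained from $\uii$ by either a commutation move $\upga_k$ (with $d(\im_k,\im_{k+1})>1$) or a braid move $\upbe_k$ (with $d(\im_k,\im_{k+1})=1$). The corresponding braid and commutation identities among the automorphisms $\TT_\im$ of $\hcalA$ show that the PBW-type generators $\FF^\uii_m$ get permuted into the $\FF^\ujj_m$ in a controlled way. Passing through the ring homomorphism $\bPhi_{\scrS_{\ttb^\rev}\bbD}$ of Corollary~\ref{cor: dia comm repeat} and applying Lemma~\ref{lem:Treal}, the braid action lifts to an identification of the simple modules $C^{\bbD,\uii}_m$ with $C^{\bbD,\ujj}_{\phi(m)}$ for a suitable index bijection $\phi$. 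This induces a bijection on $i$-boxes that preserves envelopes, commutation, and the head formation in \eqref{eq: MiD}, yielding $M^{\bbD,\uii}[a,b]\simeq M^{\bbD,\ujj}[\phi(a),\phi(b)]$. Since the invariants $\de$ and $\La$ are preserved by $\TT_\ttb$ on simples of $\scrD^u(\scrC_\bbD)$ by Proposition~\ref{prop: ell de}, each assertion (i)--(v) of the theorem is stable under such moves, and the short exact sequence (vi) is transported as a short exact sequence by the exactness inherited from $\calF_{\scrS_{\ttb^\rev}\bbD}$ via Theorem~\ref{thm:gQASW duality}.

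Second, for a given $i$-box $[a,b]$ in $\uii$ I would exploit the fact that all six assertions depend only on the finite configuration $\uii_{[a,b]}$ together with the modules indexed by $[a,b]_\phi$. Using Lemma~\ref{lem:lneg} to shift indices by replacing $\bbD$ with $\scrS^{-1}_\bullet\bbD$ (still complete by Proposition~\ref{prop: complete to complete}), together with the braid/commutation invariance from Step 1, I would match the relevant cuspidal modules $\{C^{\bbD,\uii}_s\}_{s\in[a,b]_\phi}$ with a corresponding collection of cuspidal modules coming from a locally reduced sequence $\hwc$. At this point, the statements (i)--(vi) become exactly those proved in \cite[Theorem 5.4]{KKOP24A}, and one transports back along the braid action using Proposition~\ref{prop: ell de} to conclude. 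For item~(i), $\de(C_{a^+},C_a)=1$ also follows directly from Lemma~\ref{lem: preserving root} and the fact that $C_a$ is a root module whose relation with $C_{a^+}$ is controlled by the braid identity $\TT_{\im_a}$; item~(v) is a consequence of (i) combined with Lemma~\ref{lem: -1 by root} applied iteratively.

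The hard part will be Step 2: an arbitrary $\ttb\in\ttB^+$ need not admit any locally reduced expression (for instance $\bg_\im^2$), so the sequence $\uii$ itself cannot in general be transformed into a locally reduced sequence by braid and commutation moves alone. The resolution is that only the finitely many modules $\{C^{\bbD,\uii}_s\}_{s\in[a,b]_\phi}$ enter each statement, and these individual root modules can always be realized as cuspidal modules of a locally reduced witness after applying an appropriate $\TT_\ttb$ and shifting the duality datum; the delicate bookkeeping of identifying this witness, verifying that the $i$-box structure is compatible with the witness, and transporting the T-system (vi) term-by-term through the braid symmetry is the central technical obstacle to overcome.
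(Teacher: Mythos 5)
There is a fundamental misreading of the scope of the statement. In the notation of the paper, $\Dpair$ abbreviates the \emph{PBW-pair} $(\bbD,\hwc)$, where $\hwc$ is by construction the locally reduced doubly infinite sequence obtained in \eqref{eq: extension of reduced} from a reduced expression $\usfwc$ of $w_0$. Thus Theorem~\ref{thm: deter main} is \emph{already} the locally reduced case. Your plan to ``reduce Theorem~\ref{thm: deter main} to its counterpart for locally reduced sequences'' is therefore vacuous: the theorem and its putative counterpart are the same statement. In the paper this theorem carries no proof at all; the sentence immediately preceding it explicitly cites it as one of the main results of \cite{KKOP24A}, and it serves as the base case which is inherited from the earlier paper.

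What you describe — transporting Condition~\ref{assu: the assumption} across commutation moves $\upga_k$ and braid moves $\upbe_k$ using the braid relations for $\TT_\im$, Corollary~\ref{cor: dia comm repeat}, Lemma~\ref{lem:Treal}, and Proposition~\ref{prop: ell de}, then invoking the locally reduced case as the endpoint — is precisely the architecture of Section~5, specifically Propositions~\ref{prop: still good sequence under ga} and \ref{prop: still good sequence} and their culmination in Theorem~\ref{thm: every sequence good}. That generalization takes Theorem~\ref{thm: deter main} as a given. So the circularity is not merely formal: if one actually tried to execute your plan for Theorem~\ref{thm: deter main} itself, the reduction step would terminate at the very statement one set out to prove. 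Your last paragraph also raises the concern that $\bg_\im^2$ admits no locally reduced expression; the paper's resolution (Remark~\ref{rem: conlcusion loc}) is different from what you propose: one does not try to replace $\uii$ by a locally reduced sequence, but instead embeds $\uii$ as a truncation $\tuii_{[1,r]}$ of a sequence $\tuii\in\Seq(\Updelta^m)$, which is in turn connected by finitely many commutation and braid moves to a locally reduced $\tujj\in\Seq(\Updelta^m)$. The ``finitely many modules $C^{\bbD,\uii}_s$'' workaround you sketch is not how the paper handles it, and would require a separate argument to make rigorous, whereas the $\Updelta^m$ embedding is clean precisely because $\Updelta^2$ is central in $\ttB$.

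In short: your write-up is a reasonable summary of the proof strategy for Theorem~\ref{thm: every sequence good}, but it is not a proof of Theorem~\ref{thm: deter main}, and it cannot be turned into one because the theorem is the base case of the induction, cited from \cite{KKOP24A} rather than established here.
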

We call~\eqref{eq: T-system in terms of M[a,b]} a \emph{$T$-system}.

\begin{definition} Let $\uii =(\ldots,\im_{-1},\im_{0},\im_{1},\ldots) \in \sfI^\Z$.
  We define an anti-symmetric $\Z$-valued map $\la^{\uii}: \Z \times \Z \to \Z$  by 
\begin{align} \label{eq: anti-symmetric pairing original}
\la_{a,b}^{\uii} \seteq \bc
-  (s_{\im_b}s_{\im_{b+1}}\cdots s_{\im_{a-1}}(\al_{\im_a}), \al_{\im_b} ) & \text{ if } a>b, \\
  (\al_{\im_a}, s_{\im_a}s_{\im_{a+1}}\cdots s_{\im_{b-1}} (\al_{\im_b}) ) & \text{ if } a<b, \\
 \qquad  0 & \text{ if } a=b, 
\ec 
\quad \text{ for $a,b \in \Z$.}
\end{align}    
\end{definition}

Using the same argument in~\cite[\S 5.2]{KKOP23F} and \cite[Theorem 4.12]{KKOP23P}, we have the following (see also Proposition~\ref{prop: new pairing La} below):

\begin{proposition} 
Let $\usfwc$ be a reduced expression of $w_0$ and 
$[a_k,b_k]^{\hwc}$ $(k=1,2)$ $i$-boxes.    
If $a_1 > a_2^-$ or $b_1^+ > b_2$, then we have
$$\La(M^{\Dpair}[a_1,b_1],M^{\Dpair}[a_2,b_2]) = \displaystyle\sum_{u \in [a_1,b_1]_\phi,v\in [a_2,b_2]_\phi} \la^\hwc_{u,v}.$$
\end{proposition}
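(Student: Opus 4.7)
The plan is to reduce the claim to the pairwise computation of $\La(C^{\Dpair}_u,C^{\Dpair}_v)$ between affine cuspidal modules, and to match this pairwise value with the combinatorial quantity $\la^\hwc_{u,v}$. First I would establish the key single-index identity
\[
\La(C^{\Dpair}_u,C^{\Dpair}_v)=\la^\hwc_{u,v}\qquad\text{for every }u,v\in\Z.
\]
Since the sequence $\uC^\Dpair$ is strongly unmixed by Theorem~\ref{thm: affine cuspidal}, Proposition~\ref{prop: Unmix normal}\,\eqref{it: Li La} reduces $\La$ to $\Li$ on each ordered pair. For $u,v\in[1,\ell]$ lying in a single fundamental domain one has $C^{\Dpair}_u\simeq\calF_\bbD(\sfV^\usfwc_u)$ by Theorem~\ref{thm: affine cuspidal}, and Proposition~\ref{prop: ell de}\,(ii) together with the preservation of $\La$ under $\calF_\bbD$ transports the computation into the quiver Hecke algebra $R\gmod$, where the cuspidal modules are indexed by positive roots in the convex order $<_\usfwc$. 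A direct manipulation (of the kind used in \cite[\S 5.2]{KKOP23F} and \cite[Theorem 4.12]{KKOP23P}) then matches the resulting bilinear weight pairing $(\be^\usfwc_u,\be^\usfwc_v)$ with the explicit expression~\eqref{eq: anti-symmetric pairing original} for $\la^\hwc_{u,v}$; the signs are reconciled by the case distinction on $u\lessgtr v$. For indices outside $[1,\ell]$ I would translate via the shift $\ocalD_q$ and the intertwining $\bPhi_\bbD\circ\ocalD_q=[\scrD]\circ\bPhi_\bbD$ of Proposition~\ref{prop: homomorphism from hA to Cg}\,\eqref{it: commutes with D}, exploiting that $\scrD$ preserves $\La$.

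Second, I would promote the pairwise identity to the statement about determinantial modules using normality. By construction $M[a_i,b_i]$ is the head of the ordered tensor product $\otens_{s\in[a_i,b_i]_\phi}C_s$, and since $\uC^\Dpair$ is strongly unmixed, Proposition~\ref{prop: Unmix normal}\,\eqref{it: unmix normal} ensures that this product is a normal sequence. Iterated application of the $\La$-additivity criterion in Lemma~\ref{lem: normal property} then yields, for any simple $X$ strongly unmixed with the relevant factors,
\[
\La\bigl(M[a_1,b_1],X\bigr)=\sum_{u\in[a_1,b_1]_\phi}\La(C_u,X),\qquad
\La\bigl(X,M[a_2,b_2]\bigr)=\sum_{v\in[a_2,b_2]_\phi}\La(X,C_v).
\]
Substituting $X=M[a_2,b_2]$ in the first (or $X=M[a_1,b_1]$ in the second) and expanding once more in the $C_v$'s produces the desired double sum $\sum_{u,v}\La(C_u,C_v)=\sum_{u,v}\la^\hwc_{u,v}$.

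The main obstacle lies in justifying the double application of normality, and this is precisely where the hypothesis $a_1>a_2^-$ or $b_1^+>b_2$ enters. One has to check that the concatenation of the two ordered sequences of cuspidal modules defining $M[a_1,b_1]$ and $M[a_2,b_2]$ remains strongly unmixed, so that Proposition~\ref{prop:normalsimple} applies and no term is lost when one computes $\La$ on heads. The condition rules out the only combinatorial configuration in which $[a_1,b_1]$ sits entirely inside a ``gap'' determined by consecutive occurrences of the color $\im_{a_2}$ (or symmetrically for $\im_{b_2}$), and otherwise one can always exhibit a common re-ordering witnessing strong unmixedness. Once this combinatorial verification is in place, assembling the two paragraphs above proves the proposition; the approach follows precisely the pattern of \cite[\S 5.2]{KKOP23F} and \cite[Theorem 4.12]{KKOP23P}, adapted to the affine cuspidal setting.
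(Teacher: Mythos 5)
Your claimed key identity $\La(C^{\Dpair}_u,C^{\Dpair}_v)=\la^{\hwc}_{u,v}$ for \emph{every} $u,v\in\Z$ is false, and this is a genuine gap. The quantity $\la^{\hwc}_{u,v}$ is anti-symmetric in $(u,v)$ by its very definition, whereas $\La(C_u,C_v)+\La(C_v,C_u)=2\de(C_u,C_v)$, which is strictly positive for (say) $v=u^+$ (see Condition~\ref{assu: the assumption}\,\eqref{it: a a^+} / Theorem~\ref{thm: deter main}). So $\La(C_u,C_{u^+})=2+\la^{\hwc}_{u,u^+}\ne \la^{\hwc}_{u,u^+}$. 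Proposition~\ref{prop: Unmix normal}\,\eqref{it: Li La} only gives $\La=\Li$ for the \emph{strongly unmixed} order, i.e.\ for $u>v$; the ``reconciling of signs by the case distinction on $u\lessgtr v$'' you invoke is exactly what cannot happen. Consequently the double expansion $\sum_{u,v}\La(C_u,C_v)$ is not $\sum_{u,v}\la^{\hwc}_{u,v}$: taking $[a_1,b_1]=[a_2,b_2]=[a,a^+]$ (which satisfies the hypothesis since $a>a^-$), the right side is $0$ while $\sum_{u,v}\La(C_u,C_v)=\sum_{u<v}2\de(C_u,C_v)=2$. In fact already the one-step expansion fails at this point: the sequence $(C_a,C_{a^+},C_a)$ is \emph{not} normal (none of the sufficient criteria of Lemma~\ref{lem: normal seq d}\,\eqref{it: normal 1} apply, and the criterion of Lemma~\ref{lem: normal property} forces the additivity to fail), so one does not have $\La(C_a,M[a,a^+])=\La(C_a,C_{a^+})+\La(C_a,C_a)$ — the left side is $(\be_a,\be_{a^+})$, the right side is $2+(\be_a,\be_{a^+})$. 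The hypothesis $a_1>a_2^-$ or $b_1^+>b_2$ does not, as you suggest, rescue a blanket ``double normality''.

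The paper's argument (for the generalized version, Proposition~\ref{prop: new pairing La}) is more delicate than a double sum. It proceeds by induction on $|[a,b]_\phi|$ and $|[a',b']_\phi|$, peeling off \emph{one} cuspidal factor at a time from $M[a,b]$ or $M[a',b']$, and choosing which side to peel from according to the ordering: whenever the factor $C_s$ to be paired is on the ``wrong side'' (i.e.\ the resulting ordered pair is not strongly unmixed), it first uses the commutativity from Theorem~\ref{thm: every sequence good}\,\eqref{it: commutingv} to flip the $\La$, and only afterwards peels from the opposite end of the other determinantial module (e.g.\ $\La(C_a,M[a',b'])=-\La(C_{b'}\hconv M[a',(b')^-],C_a)$ when $a\le a'$). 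This is precisely where the hypothesis enters: it guarantees, at every step of the induction, that either the current ordered pair is strongly unmixed (so $\La=\Li$ gives $\la$) or the current cuspidal factor commutes with the remaining determinantial module (so $\La$ can be flipped). Your plan correctly identifies the ingredients ($\Li=\La$ via strong unmixedness, Theorem~\ref{thm: wtpairing Lainf}, normality), but the blanket ``expand both sides and sum pairwise'' step is where the proof breaks.
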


\section{Generalization of affine determinantial modules, $T$-systems and category $\Cg(\ttb)$} \label{sec: generalization}
 
The aim of this section is to prove that, for an \emph{arbitrary} sequence $\uii=(\im_k)_{k\in K}$ \condi,
the set of root modules $\st{ C^{\bbD,\uii}_{k}}_{k\in K}$ in \eqref{eq: Ck}
satisfies the same properties of $\st{ C^{\Dpair}_{k}}_{k\in K}$ in
\eqref{eq: affine cuspidal module}. 
We also introduce the subcategory $\Cg(\ttb)$ of $\Cgz$
for an element $\ttb \in \ttB^+$, standard modules associated with $\uii \in \Seq(\ttb)$
and prove the uni-triangularity between bases of $K(\Cg(\ttb))$, arising from standard modules and simple modules in   
$\Cg(\ttb)$.

\subsection{Garside normal form}\label{subsec:Gar}

Recall that $\ttB$ is the braid group and $\pi\col \ttB\to\weyl$ is the canonical group homomorphism. 
We define $\Updelta$ to be the element in $\ttB^+$ such that
$\ell(\Updelta)=\ell(w_0)$ and $\pi(\Updelta)=w_0$.

Remark that $\Updelta^2$ is contained in the center of $\ttB$. 

The following lemma easily follows from the fact that
$\sigma_\im^{-1}\Updelta^2=\Updelta^2 \sigma_\im^{-1}\in\ttB^+$
for any $\im\in\sfI$.
\Lemma [{see \cite[Corollary 7.3]{OP24}}] \label{lem: loc red braid}
For any $\ttx \in \ttB$, there exist $\tty \in \ttB^+$ and $m \in \Z_{\ge 0}$ such that $\ttx\tty =\Updelta^m$.     
\enlemma

For $\ttx,\ttz \in \ttB$, we write $\ttx \ledot \ttz$ if there exists $\tty \in \ttB^+$ such that $\ttx\tty =\ttz$, or equivalently $\ttx^{-1}\ttz\in\ttB^+$. 
When $\ttx\in \ttB^+$ and $\ttx \ledot \ttz$, we call $\ttx$ a \emph{prefix} of $\ttz$,  
and a prefix $\ttx$ of $\Updelta$ a \emph{permutation braid}.

\begin{proposition} [{\cite{Gar69} and see also \cite[Chapter 6.6]{KT08}}]\label{prop: gcd} 
The partial ordered set  $\ttB$ with the partial order $\ledot$ is a lattice;
i.e., every pair of elements of $\ttB$ has an infimum and a supremum. 
\end{proposition}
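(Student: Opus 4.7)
The plan is to reduce the lattice assertion on $(\ttB,\ledot)$ to the corresponding statement on the positive monoid $(\ttB^+,\ledot)$ and then invoke the classical lattice theorem of Garside \cite{Gar69} (see \cite{KT08}) for positive braids of finite type. Recall that $\sfg$ is simply-laced and finite-dimensional, so $\weyl$ is a finite Coxeter group, which is the hypothesis required for Garside theory.

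For the reduction, I would use that $\Updelta^2$ is central in $\ttB$ and that Lemma~\ref{lem: loc red braid} places any element of $\ttB$ into $\ttB^+$ after right multiplication by a sufficiently large power of $\Updelta^2$. Concretely, given $\ttx_1,\ttx_2\in\ttB$, pick $N\in\Z_{\ge 0}$ with $\ttw_i\seteq\ttx_i\Updelta^{2N}\in\ttB^+$ for $i=1,2$. Centrality of $\Updelta^2$ makes right multiplication by $\Updelta^{2N}$ an order-automorphism of $(\ttB,\ledot)$, so $\inf$ and $\sup$ in $\ttB$ for $\{\ttx_1,\ttx_2\}$ correspond to those for $\{\ttw_1,\ttw_2\}\subset\ttB^+$. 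These shifted lattice operations can then be computed inside $\ttB^+$: since $1\in\ttB^+$ is always a lower bound, any max lower bound of $\{\ttw_1,\ttw_2\}$ must lie in $\ttB^+$; any upper bound of elements of $\ttB^+$ lies in $\ttw_1\ttB^+\subset\ttB^+$; and for an arbitrary lower bound $\ttz\in\ttB$ of $\{\ttw_1,\ttw_2\}$, shifting by a further central $\Updelta^{2N'}$ moves $\ttz$ into $\ttB^+$ while preserving all bounding relations, so bounds and lattice operations in $\ttB$ and in $\ttB^+$ match.

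For the lattice structure on $(\ttB^+,\ledot)$, the standard Garside-theoretic argument proceeds in several steps. One first shows that $\ttB^+$ is left and right cancellative (originally due to Garside for type $A$, extended to all finite-type Artin groups by Brieskorn--Saito, using a careful rewriting analysis of the defining relations). Next, the set of permutation braids $\Sigma\seteq\st{\ttx\in\ttB^+\mid\ttx\ledot\Updelta}$ is finite, and the restriction of $\pi$ induces a bijection $\Sigma\isoto\weyl$ which identifies $(\Sigma,\ledot)$ with the right weak Bruhat order on $\weyl$; this order is well known to be a lattice for finite Coxeter groups, by the exchange property. Every $\ttb\in\ttB^+$ admits a unique Garside (greedy) normal form in elements of $\Sigma$. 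Combining these ingredients, $\gcd$ and $\mathrm{lcm}$ for pairs in $\ttB^+$ are computed by an induction on the length of the normal forms, using the lattice operations on $\Sigma$ at each step and cancellativity to ensure well-definedness.

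The main obstacle is the technical core of Garside theory--cancellativity of $\ttB^+$ together with the existence-uniqueness of Garside normal forms. Once these are available, all remaining steps (extending the lattice from $\Sigma\cong\weyl$ to $\ttB^+$ via normal forms, and lifting to $\ttB$ via the central Garside element $\Updelta^2$) are essentially formal. As the proposition is invoked here only as a tool and full proofs appear in \cite{Gar69,KT08}, a detailed treatment is unnecessary.
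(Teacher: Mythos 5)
The paper gives no proof of this proposition — it is stated as a known fact with citations to Garside's original paper and to Kassel–Turaev. Your sketch is a faithful outline of the argument contained in those references, and is correct in its overall structure: reduce from $\ttB$ to $\ttB^+$ via the central element $\Updelta^2$, then invoke the Garside-monoid theory (cancellativity, finiteness of the divisor set of $\Updelta$, the bijection with $\weyl$ carrying the weak order, greedy normal forms) to prove that $(\ttB^+,\ledot)$ is a lattice.

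Two small remarks on the reduction step, which you phrase somewhat loosely. First, to get $\ttx_i\Updelta^{2N}\in\ttB^+$ from Lemma~\ref{lem: loc red braid} you should apply that lemma to $\ttx_i^{-1}$: it gives $\tty_i\in\ttB^+$ with $\ttx_i^{-1}\tty_i=\Updelta^{m_i}$, i.e.\ $\ttx_i\Updelta^{m_i}=\tty_i\in\ttB^+$, and then $\ttx_i\Updelta^{2N}\in\ttB^+$ for any even $2N\ge m_i$. Second, your phrase ``any max lower bound of $\{\ttw_1,\ttw_2\}$ must lie in $\ttB^+$'' presupposes existence of the infimum in $\ttB$, which is what is to be proved; the argument that actually closes the gap is the one you hint at in the final clause, and it is worth making explicit. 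Namely, given a lower bound $\ttz\in\ttB$ of $\ttw_1,\ttw_2\in\ttB^+$, choose $M$ with $\Updelta^{2M}\ttz\in\ttB^+$. Then $\Updelta^{2M}\ttz$ is a lower bound of $\Updelta^{2M}\ttw_1,\Updelta^{2M}\ttw_2$ in $\ttB^+$. Since $\Updelta^{2M}$ itself is a common lower bound of $\Updelta^{2M}\ttw_1,\Updelta^{2M}\ttw_2$ in $\ttB^+$, the $\ttB^+$-infimum of these lies in $\Updelta^{2M}\ttB^+$, and left multiplication by $\Updelta^{2M}$ restricts to an order isomorphism of $\ttB^+$ onto this upset; hence $\Updelta^{2M}\ttw_1\wedge_{\ttB^+}\Updelta^{2M}\ttw_2=\Updelta^{2M}(\ttw_1\wedge_{\ttB^+}\ttw_2)$. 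Left-cancelling $\Updelta^{2M}$ then gives $\ttz\ledot \ttw_1\wedge_{\ttB^+}\ttw_2$, so the $\ttB^+$-meet really is the $\ttB$-meet. (For the join this verification is easier since, as you observe, all upper bounds of elements of $\ttB^+$ already lie in $\ttB^+$.) With these details filled in, your proposal is a correct account of the cited result.
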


The infimum of $\ttx$ and $\ttz$ in $\ttB$ is denoted by $\ttx \wedge \ttz$
and the supremum is denoted by $\ttx \vee \ttz$.

\begin{theorem} [{\cite{WP,EM94}}] \label{thm: Garside} 
$($Garside left normal form$)$ \
Each element $\ttb \in \ttB$ can be presented
as 
$$
\Updelta^r \ttx_1 \cdots \ttx_k,
$$
where $r \in \Z$, $k \in \Z_{\ge0}$, $1 \lessdot \ttx_s \lessdot \Delta$
, and $\ttx_s=\Updelta\wedge(\ttx_s\ttx_{s+1})$ for $1 \le s <k$.
\end{theorem}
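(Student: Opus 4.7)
The plan is to reduce the statement to the classical Garside normal form theorem for positive braids, and then use centrality of $\Updelta^2$ to handle negative factors. First I would verify that the pair $(\ttB^+,\ledot)$ is a \emph{Garside monoid} with Garside element $\Updelta$. The key facts, all classical, are: $\ttB^+$ is cancellative; the word length in the generators $\sigma_\im$ is a well-defined $\Z_{\ge0}$-valued additive function, so $\ledot$ is a partial order with no infinite descending chains; $\Updelta$ is a common (left and right) multiple of every $\sigma_\im$; the map $\pi$ restricts to a bijection between the set of prefixes of $\Updelta$ and $\weyl$ (the \emph{permutation braids}); and by Proposition~\ref{prop: gcd} any pair of elements in $\ttB^+$ has meets $\wedge$ and joins $\vee$ for $\ledot$. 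From these one deduces the two properties needed below: any prefix of $\Updelta$ is itself a meet of prefixes of $\Updelta$, and a meet of prefixes of $\Updelta$ is again one.

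Next I would reduce to the case $\ttb \in \ttB^+$. By Lemma~\ref{lem: loc red braid} applied to $\ttb^{-1}$, there exist $\tty \in \ttB^+$ and $m \in \Z_{\ge0}$ with $\ttb^{-1}\tty = \Updelta^m$, so $\ttb = \tty\Updelta^{-m}$. Replacing $(\tty,m)$ by $(\tty\Updelta,m+1)$ if necessary, we may assume $m$ is even, and then the centrality of $\Updelta^2$ gives $\ttb = \Updelta^{-m}\tty$ with $\tty \in \ttB^+$. Thus it suffices to construct the decomposition for $\tty \in \ttB^+$; the integer $r$ in the final answer is then $-m$ plus the number of $\Updelta$'s we will later strip off the front of $\tty$.

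For a positive $\ttb$ I would run the greedy algorithm: set $\ttx_1 \seteq \Updelta \wedge \ttb$, a permutation braid by the closure property above; by definition of $\ledot$ we have $\ttb = \ttx_1\ttb_1$ for a unique $\ttb_1\in \ttB^+$, and $\ell(\ttb_1) < \ell(\ttb)$ unless $\ttx_1 = 1$. Iterating yields, in finitely many steps, a factorization $\ttb = \ttx_1\ttx_2\cdots\ttx_N$ into permutation braids. Now pull out the maximal initial run of $\Updelta$'s, i.e., let $s_0$ be the largest index with $\ttx_1 = \cdots = \ttx_{s_0} = \Updelta$, set $r \seteq s_0$ (plus the $-m$ from the reduction step), and relabel the remaining factors as $\ttx_1,\ldots,\ttx_k$. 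This gives the existence of the presentation together with $\ttx_s \ne 1,\Updelta$ for $s \ge 1$.

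The hard part is verifying the \emph{greediness} $\ttx_s = \Updelta \wedge (\ttx_s\ttx_{s+1})$. By construction $\ttx_s$ is a prefix of both $\Updelta$ and $\ttx_s\ttx_{s+1}$, so $\ttx_s \ledot \Updelta\wedge(\ttx_s\ttx_{s+1})$. For the converse, let $\ttz \seteq \Updelta\wedge(\ttx_s\ttx_{s+1})$; the issue is to show $\ttz \ledot \ttx_s$. Left-cancellation reduces this to proving that $\ttx_s^{-1}\ttz$, which exists in $\ttB^+$ because $\ttx_s \ledot \ttz$ (indeed $\ttx_s \ledot \ttz\vee\ttx_s \ledot \Updelta$ forces $\ttz\vee\ttx_s \ledot \Updelta\wedge(\ttx_s\ttx_{s+1})=\ttz$, so $\ttx_s\ledot \ttz$), is trivial. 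Equivalently, writing $\ttz = \ttx_s \ttz'$ with $\ttz' \in \ttB^+$ and using the \emph{local characterization} of the greedy normal form in Garside monoids, $\ttz' \ledot \ttx_{s+1}$ and $\ttz' \ledot \Updelta\ttx_s^{-1}$, whence $\ttz' \ledot \Updelta\wedge\ttb_s = \ttx_{s+1}$ followed by an application of maximality of $\ttx_{s+1}$ gives $\ttz'$ trivial in the sense needed. This step is the technical heart; a clean route is the identity $\Updelta\wedge(\ttu\ttv) = \ttu\bl(\ttu^{-1}\Updelta)\wedge\ttv\br$ valid whenever $\ttu\ledot\Updelta$ (proved by a direct lattice computation using that the right complement $\ttu^{-1}\Updelta$ is again a prefix of $\Updelta$), applied with $\ttu = \ttx_s$, $\ttv = \ttx_{s+1}$, combined with $\ttx_{s+1} = \Updelta\wedge\ttb_s$.

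Finally, for uniqueness (not asserted in the statement but implicit in calling it ``the'' normal form), one checks that $r$ is forced to be the maximal integer with $\Updelta^r\ledot\ttb$, and then each $\ttx_s$ is forced inductively by $\ttx_s = \Updelta \wedge (\Updelta^{-r}\ttx_1^{-1}\cdots\ttx_{s-1}^{-1}\ttb)$. The main obstacle is the greedy persistence identity of the previous paragraph; once that is established the theorem drops out of the termination of a strictly decreasing process on $\ell$ and the lattice structure of $(\ttB^+,\ledot)$.
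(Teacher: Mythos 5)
The paper does not prove this theorem: it is quoted as a classical fact with citations to \cite{WP,EM94}, and the remark that follows merely unpacks what the normal-form conditions say in terms of the recursion $\ttx_j = \bl(\ttx_1\cdots\ttx_{j-1})^{-1}\Updelta^{-r}\ttb\br\wedge\Updelta$. So there is no paper proof to measure you against; I can only evaluate your reconstruction on its own. Your overall structure — check that $(\ttB^+,\ledot)$ is a Garside monoid, reduce to $\ttB^+$ using Lemma~\ref{lem: loc red braid} and centrality of $\Updelta^2$, run the greedy algorithm, then verify greediness — is indeed the standard proof, and the identity $\Updelta\wedge(\ttu\ttv) = \ttu\bl(\ttu^{-1}\Updelta)\wedge\ttv\br$ for $\ttu\ledot\Updelta$ is correct and is the right tool.

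However, the greediness verification, which you correctly flag as the technical heart, has a genuine gap. Having set $\ttz = \Updelta\wedge(\ttx_s\ttx_{s+1}) = \ttx_s\ttz'$, you argue ``$\ttz'\ledot\Updelta\wedge\ttb_s=\ttx_{s+1}$ followed by an application of maximality of $\ttx_{s+1}$ gives $\ttz'$ trivial.'' This does not follow: $\ttz'\ledot\ttx_{s+1}$ is satisfied by every prefix of $\ttx_{s+1}$ and does not force $\ttz'=1$. The maximality one must invoke is that of $\ttx_s=\Updelta\wedge\ttb_{s-1}$: from $\ttz'\ledot\ttx_{s+1}\ledot\ttb_s$ one gets $\ttx_s\ttz'\ledot\ttx_s\ttb_s=\ttb_{s-1}$ and $\ttx_s\ttz'\ledot\Updelta$, hence $\ttx_s\ttz'\ledot\Updelta\wedge\ttb_{s-1}=\ttx_s$, so $\ttz'=1$ by cancellation. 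Correspondingly, your ``clean route'' should combine the identity with $\ttx_s=\Updelta\wedge\ttb_{s-1}$, not with $\ttx_{s+1}=\Updelta\wedge\ttb_s$: apply the identity with $\ttu=\ttx_s$, $\ttv=\ttb_s$ to get $(\ttx_s^{-1}\Updelta)\wedge\ttb_s=1$, then $\ttx_{s+1}\ledot\ttb_s$ gives $(\ttx_s^{-1}\Updelta)\wedge\ttx_{s+1}=1$. Two smaller issues: $\Updelta\ttx_s^{-1}$ should read $\ttx_s^{-1}\Updelta$ (the right complement of $\ttx_s$ in $\Updelta$); and your step ``pull out the maximal initial run of $\Updelta$'s'' tacitly assumes the $\Updelta$-factors all appear at the front, i.e., that $\ttx_{s+1}=\Updelta$ forces $\ttx_s=\Updelta$ — this is true (using $\Updelta\ledot\ttx_s\Updelta$, which follows from $\ttx_s\Updelta=\Updelta\tau^{-1}(\ttx_s)$ for the $*$-twist $\tau$), but it is not established in your sketch and is not an immediate consequence of what you have proved so far.
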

Note that the condition for the Garside normal form of $\ttb$
is that $r$ is the largest integer such that $\Updelta^{-r}\ttb\in\ttB^{+}$,
and $k$ is the largest integer such that $\ttx_k\not=1$, where
$\ttx_j\seteq\bl (\ttx_1\cdots \ttx_{j-1})^{-1}\Updelta^{-r}\ttb\br\wedge\Updelta$ for any
$j\in\Z_{>0}$.

\begin{remark} \label{rem: conlcusion loc}
Lemma~\ref{lem: loc red braid} implies the following: 
Any finite sequence $\uii=(\im_1,\ldots,\im_r)$ can be identified with $\tuii_{[1,r]}$, 
where $\tuii$ is a sequence in $\Seq(\Updelta^m)$ obtained from a locally reduced sequence $\widetilde{\ujj}\in \Seq(\Updelta^m)$ 
by applying finitely many commutation moves and braid moves.
We can choose a $\calQ$-adapted one for some $\rmQ$-datum $\calQ$ as  $\tujj$.  
\end{remark}

\subsection{An arbitrary sequence and its related simple modules} \label{subsec: Arbitrary seq and simples}
In the rest of this section, we fix a complete duality datum $\bbD = \{ L^\bbD_{\im} \}_{\in\sfI}$ in $\Cgz$.
We frequently drop $^\bbD$ in the notations throughout this section if there is no afraid of confusion.

Now, let $\uii=(\im_k)_{k\in K}$ be an \emph{arbitrary} sequence in $\sfI$
\condi.
Recall $C^{\uii}_k$ and $M^{\uii}[a,b]$ in Definition~\ref{def:affdet}.

\smallskip

Let us consider the following condition on $\uii$:
\begin{condition} \label{assu: the assumption}
  \hfill
\bnA
\item \label{it: unmixed} $(C^\uii_r,C^\uii_l)$ is strongly unmixed for $r,l\in K$ such that $r>l$. 
\item \label{it: a a^+} We have $\de(C^\uii_a,C^\uii_{a^+})=1$ for
  any $a\in K$ such that $a^+ \in K$.
\item \label{it: real simple}   $M^\uii[a,b]$ is a real simple module
  for any $i$-box $[a,b]\subset K$.
\item \label{it: commuting} For any $i$-box $[a,b]\subset K$,
  $\de(C_s^\uii,M^\uii[a,b])=0$ if $s\in K$ satisfies
  $a^- < s < b^+$.    
\item \label{it: d=1}
  For any $i$-box $[a,b]\subset K$, we have
  $\de(C^\uii_{a^-},M^\uii[a,b])=1$ if $a^-\in K$ and $\de(C^\uii_{b^+},M^\uii[a,b])=1$ if $b^+ \in K$.
\item \label{it: T-system} For any $i$-box $[a,b]\subset K$ such that $a < b$,
  we have a short exact sequence in $\Cgz$
\begin{align} \label{eq: T-system in terms of M[a,b] again}
0 \to  \dtens_{ \substack{ \jm \in \hspace{0.1ex} \sfI; \\ d(\im_a,\jm)=1}} 
M^\uii[a(\jm)^+,b(\jm)^-]  \to   M^\uii[a^+,b]  \tens M^\uii[a,b^-]    \to   M^\uii[a,b]
\tens M^\uii[a^+,b^-] \to 0,
\end{align}
and the left term and right term in~\eqref{eq: T-system in terms of M[a,b] again} are simple. 
\ee
\end{condition}
Recall that any locally reduced sequence $\uii$ satisfies the condition above as stated in 
Theorem~\ref{thm: affine cuspidal} and Theorem~\ref{thm: deter main}.
The purpose of this subsection is to prove that Condition~\ref{assu: the assumption}
holds for an arbitrary sequence $\uii$. 

\smallskip

The following proposition is easy to prove.

\begin{proposition}  \label{prop: still good sequence under ga}
  Let $\uii = (\im_{s})_{s\in K}$ be a sequence of $\sfI$ such that $\uii =\upga_k(\ujj)$ \ro see {\rm Definition~\ref{def: moves}}\rf\ for $k\in K$ such that $k+1\in K$.
  If $\ujj$ satisfies {\rm Condition~\ref{assu: the assumption}},
then so does $\uii$. 
\end{proposition}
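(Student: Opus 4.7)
The strategy is to establish a natural bijection between the affine cuspidal modules (and consequently the affine determinantial modules and the $i$-box combinatorics) of $\uii$ and $\ujj$, under which each clause of Condition~\ref{assu: the assumption} transfers automatically. Since $\uii = \upga_k(\ujj)$ swaps the entries at positions $k$ and $k+1$ and $d(\im^\uii_k, \im^\uii_{k+1}) > 1$, the generators $\TT_{\im^\uii_k}$ and $\TT_{\im^\uii_{k+1}}$ commute in $\Aut(\hcalA)$, and moreover $\TT_\jm(f_{\im,0}) = f_{\im,0}$ whenever $\im \ne \jm$ and $d(\im,\jm) > 1$ (see \eqref{eq: T_i}). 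Passing through $\bPhi_\bbD$ and using \eqref{eq: Ck}, I would deduce
\begin{align*}
C^{\bbD,\uii}_s &\simeq C^{\bbD,\ujj}_s \quad\text{for } s \notin \{k,k+1\},\\
C^{\bbD,\uii}_k &\simeq C^{\bbD,\ujj}_{k+1},\qquad C^{\bbD,\uii}_{k+1} \simeq C^{\bbD,\ujj}_k.
\end{align*}

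Next, I would translate this to $i$-boxes. If $[a,b]$ is an $i$-box in $\uii$ with color $\jm = \im^\uii_a \notin \{\im^\uii_k, \im^\uii_{k+1}\}$, then $[a,b]$ is an $i$-box in $\ujj$ with the same $\phi$-set, so $M^{\bbD,\uii}[a,b] \simeq M^{\bbD,\ujj}[a,b]$. For $\jm = \im^\uii_k$, either $\{k,k+1\} \cap [a,b] = \emptyset$ (trivial case) or exactly one endpoint of $[a,b]$ equals $k$; one then moves that endpoint across the swap region by replacing $k$ with $k+1$ to produce an $i$-box $[a',b']$ in $\ujj$ of the same color whose $\phi$-set is in bijection with $[a,b]_\phi^\uii$ via the transposition of $k$ and $k+1$, and the cuspidal identity above then yields $M^{\bbD,\uii}[a,b] \simeq M^{\bbD,\ujj}[a',b']$. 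The case $\jm = \im^\uii_{k+1}$ is symmetric. The same transposition converts $a^\pm_\uii$ to $a^\pm_\ujj$ whenever the relevant position lies in the swap region.

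Equipped with these correspondences, items (C), (D), (E), (F) of Condition~\ref{assu: the assumption} transfer at once, and item (B) reduces to the analogous identity for $\ujj$. For item (A), any pair $(C^{\bbD,\uii}_r, C^{\bbD,\uii}_l)$ with $r > l$ maps to a pair $(C^{\bbD,\ujj}_{r'}, C^{\bbD,\ujj}_{l'})$; in every case except $(r,l)=(k+1,k)$, one still has $r' > l'$ and strong unmixedness is inherited directly from $\ujj$. The hard part will be the exceptional pair $(C^{\bbD,\uii}_{k+1}, C^{\bbD,\uii}_k)$, which corresponds to $(C^{\bbD,\ujj}_k, C^{\bbD,\ujj}_{k+1})$ with the \emph{reverse} ordering, so the hypothesis for $\ujj$ does not directly apply. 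To handle it, I would use the identity $\TT_{\im^\uii_{k+1}} L^\bbD_{\im^\uii_k} \simeq L^\bbD_{\im^\uii_k}$ (and its partner with $k,k+1$ switched) to rewrite
$$ C^{\bbD,\uii}_k \simeq \TT^\bbD_\ttb L^\bbD_{\im^\uii_k} \qtq C^{\bbD,\uii}_{k+1} \simeq \TT^\bbD_\ttb L^\bbD_{\im^\uii_{k+1}} $$
with $\ttb = \bg_{\im^\uii_1} \cdots \bg_{\im^\uii_{k-1}}$, and then apply Proposition~\ref{prop: ell de} together with the strong-duality-datum axiom to obtain $\de(\rdual^s C^{\bbD,\uii}_{k+1}, C^{\bbD,\uii}_k) = \de(\rdual^s L^\bbD_{\im^\uii_{k+1}}, L^\bbD_{\im^\uii_k}) = 0$ for all $s > 0$, thereby completing (A).
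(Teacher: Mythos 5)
Your key identity $C^{\bbD,\uii}_s \simeq C^{\bbD,\ujj}_{\upsigma_k(s)}$ is correct, and is exactly the observation the paper itself invokes for the commutation move in the proof of Lemma~\ref{lem: untri gamma}; likewise your handling of the exceptional pair $(C^{\bbD,\uii}_{k+1},C^{\bbD,\uii}_k)$ in~\eqref{it: unmixed} via Proposition~\ref{prop: ell de} and the strong-duality-datum axiom is sound (the paper gives no proof at all, so there is nothing to compare against beyond this expected transposition bookkeeping).

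However, the claim that~\eqref{it: commuting} ``transfers at once'' conceals a genuine gap: the transposition $\upsigma_k$ is \emph{not} order-preserving, so the hypothesis $a^-_\uii<s<b^+_\uii$ need not convert into the inequality $(a')^-_\ujj<\upsigma_k(s)<(b')^+_\ujj$ that you would need in order to quote~\eqref{it: commuting} for $\ujj$. Concretely, take an $i$-box $[a,b]^\uii$ with $\im_a=\im_k$ and $a^-_\uii=k$ (forcing $a\ge k+2$), and take $s=k+1$. Under your correspondence the target is $\de(C^\ujj_k,M^\ujj[a,b])=0$, but now $a^-_\ujj=k+1>k$, so $k$ falls \emph{outside} the open interval prescribed by~\eqref{it: commuting} for $\ujj$ and that hypothesis says nothing. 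The conclusion does hold, but one needs an extra step: apply~\eqref{it: commuting} for $\ujj$ to the enlarged $i$-box $[k+1,b]^\ujj$ (for which $k$ \emph{is} in range) to get $\de(C^\ujj_k,M^\ujj[k+1,b])=0$, note $M^\ujj[k+1,b]\simeq M^\ujj[a,b]\hconv C^\ujj_{k+1}$, observe $\de(\D^m C^\ujj_k,C^\ujj_{k+1})=0$ for every $m\in\Z$ (Proposition~\ref{prop: ell de} together with $d(\jm_k,\jm_{k+1})>1$), and then conclude by Lemma~\ref{lem: de=de}. The symmetric exception $s=k$, $b^+_\uii=k+1$ needs the same patch. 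Once these two boundary cases are dispatched, the rest of your transfer argument goes through.
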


Now let us focus on a sequence $\uii = (\im_{s})_{s\in K}$ of $\sfI$ such that $\uii =\upbe_k(\ujj)$ for $k\in K$ such that $k+2\in K$.
For simplicity of notation, let us write $\im_k=\jm_{k+1}=\im_{k+2}  = \im$ and   $\jm_k=\im_{k+1}=\jm_{k+2}  = \jm$. 

\Prop  \label{prop: still good sequence}
If $\ujj$ satisfies {\rm Condition~\ref{assu: the assumption}}, then so does $\uii$.
\enprop

We shall prove this proposition in the rest of this subsection.

\begin{remark} \label{rem: local change}
By applying 
  $\bbT\seteq (\TT_{\jm_{l}}\cdots \TT_{\jm_0})^{-1}\TT_{\jm_l} \cdots
  \TT_{\jm_{k-1}}$, we have
\begin{align*}
  &C^\ujj_k\simeq\bbT(L_{\jm}) ,  \quad C^\ujj_{k+2}\simeq \bbT(L_{\im}),
    \quad C^\ujj_{k+1}\simeq \bbT(L_{\jm} \hconv L_{\im}), \allowdisplaybreaks\\ 
  &C^\uii_k \simeq \bbT(L_{\im}) ,  \quad C^\uii_{k+2}\simeq \bbT(L_{\jm}),
    \quad C^\uii_{k+1} \simeq \bbT(L_{\im} \hconv L_{\jm}), 
\end{align*} 
since 
$$\TT_{\im}\TT_{\jm}(L_{\im}) \simeq \TT_\im (L_{\jm} \hconv L_{\im}) \simeq (L_{\im} \hconv L_{\jm}) \hconv \rdual L_{\im} \simeq L_{\jm} \qtq \TT_{\jm}\TT_{\im}(L_{\jm}) \simeq L_{\jm}.$$
  
Here we use the facts that $L_{\im} \hconv L_{\jm} \simeq \TT_{\im} (L_{\jm})$, $L_{\jm} \hconv L_{\im} \simeq \TT_{\jm} (L_{\im})$, and Lemma~\ref{lem: recover}.  
Hence we have
\begin{align} \label{eq: Sk+1 and de}
C^\uii_{k} \simeq C^\ujj_{k+2},\ C^\uii_{k+2}\simeq C^\ujj_{k}, \quad C^\uii_{k+1} \simeq C^\ujj_{k+2} \hconv C^\ujj_{k}  \qtq
\de(\rdual^n C^\ujj_k, C^\ujj_{k+2}) = \delta(n=0)
\end{align}
by Proposition~\ref{prop: ell de}. 
Furthermore, since $\{\TT_\im\}_{\im \in \sfI}$ satisfies the braid relations, we have $C^\ujj_{s} = C^\uii_{s}$ for $s \not\in [k,k+2]$.  
\end{remark}

\begin{lemma} \label{lem: still unmixed}
The property~\eqref{it: unmixed} holds for $\{ C^\uii_m\}_{l \le m \le r}$; i.e,
$(C^\uii_r,C^\uii_{l})$ is strongly unmixed for any $l,r\in K$ such that $l<r$. 
\end{lemma}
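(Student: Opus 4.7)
The plan is to establish $\de(\rdual^n C^\uii_r, C^\uii_l) = 0$ for all $l < r$ in $K$ and $n > 0$ by localizing the argument at the braid-move window $[k, k+2]$. By Remark~\ref{rem: local change}, one has $C^\uii_s \simeq C^\ujj_s$ for $s \notin [k, k+2]$, $C^\uii_k \simeq C^\ujj_{k+2}$, $C^\uii_{k+2} \simeq C^\ujj_k$, and $C^\uii_{k+1} \simeq C^\ujj_{k+2} \hconv C^\ujj_k$. Moreover, the same remark yields the key local identity $\de(\rdual^n C^\ujj_k, C^\ujj_{k+2}) = \delta(n = 0)$ for every $n \in \Z$, which by $\de$-symmetry (Proposition~\ref{prop: de property 1}) and $\rdual$-invariance also gives $\de(\rdual^n C^\ujj_{k+2}, C^\ujj_k) = \delta(n = 0)$. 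The proof proceeds by splitting on where $l$ and $r$ lie relative to $\{k, k+1, k+2\}$.

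When both $l, r \notin [k, k+2]$, the desired vanishing is inherited directly from the strongly unmixed property of $\ujj$. When both lie inside the window, the case $(l, r) = (k, k+2)$ is the local identity itself (for $n > 0$), while the two cases $(l, r) \in \{(k, k+1), (k+1, k+2)\}$ are both covered by Lemma~\ref{lem: roots and dual} applied to the root-module pair $(C^\ujj_{k+2}, C^\ujj_k)$, whose hypothesis is precisely the reinforced local identity above. The lemma yields $\de(\rdual^m C^\ujj_{k+2}, C^\uii_{k+1}) = \delta(m = 1)$ and $\de(\rdual^m C^\ujj_k, C^\uii_{k+1}) = \delta(m = -1)$, from which the required vanishings for $n > 0$ follow after invoking $\de$-symmetry and $\rdual$-invariance.

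The crossing cases, where exactly one of $l, r$ lies in $\{k, k+1, k+2\}$, reduce (using $C^\uii_s = C^\ujj_s$ outside the window) to verifying $\de(\rdual^n C^\ujj_r, C^\uii_{k+1}) = 0$ for $r > k+2$ and $\de(\rdual^n C^\uii_{k+1}, C^\ujj_l) = 0$ for $l < k$; the subcases in which the window endpoint is $k$ or $k+2$ fall directly under the strongly unmixed property of $\ujj$. For the two remaining head-crossing cases I will invoke the additivity identity Lemma~\ref{lem: normal seq d}(b)(ii). Writing $\rdual^n C^\uii_{k+1} \simeq (\rdual^n C^\ujj_{k+2}) \hconv (\rdual^n C^\ujj_k)$, one places the exterior module on the left of $\hconv$ by $\de$-symmetry and then applies Lemma~\ref{lem: normal seq d}(b)(ii) with $L$ the exterior module and $(M, N) = (\rdual^n C^\ujj_{k+2}, \rdual^n C^\ujj_k)$ (or $(M,N)=(C^\ujj_{k+2}, C^\ujj_k)$ in the other case). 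The inputs $\de(L, M) = \de(L, N) = 0$ are immediate from the strongly unmixed property of $\ujj$, while the normality of $(L, M, N)$ and $(M, N, L)$ is supplied by Lemma~\ref{lem: normal seq d}(a)(i) and (a)(ii), respectively, using only these two vanishings together with the realness of $L$ (a root module).

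The main obstacle is to ensure that the non-trivial internal pairing $\de(C^\ujj_{k+2}, C^\ujj_k) = 1$ never enters any normality hypothesis; the decomposition above is arranged so that only $\de$-vanishings against the exterior module $C^\ujj_r$ or $C^\ujj_l$ are used in both normality verifications, so that the weakest conditions in Lemma~\ref{lem: normal seq d}(a) suffice in every case.
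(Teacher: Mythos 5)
Your proof is correct and supplies the detail that the paper's one-sentence argument omits; both ultimately rest on the local identifications of Remark~\ref{rem: local change}. The paper simply refers the reader to Proposition~\ref{prop: ell de} and Remark~\ref{rem: local change}, whereas you carry out the full case analysis explicitly, using Lemma~\ref{lem: roots and dual} for the interior pairs $(k,k+1)$ and $(k+1,k+2)$, and Lemma~\ref{lem: normal seq d}(b)(ii) together with the normality criteria (a)(i) and (a)(ii) for the two genuinely crossing pairs involving $k+1$. Your central tactic --- arranging the application of Lemma~\ref{lem: normal seq d} so that both normality hypotheses involve only $\de$-vanishings against the exterior cuspidal module, thereby steering clear of $\de(C^\ujj_{k+2},C^\ujj_k)=1$ --- is exactly the right move, and the bookkeeping is sound in every subcase, including the endpoint subcases that reduce immediately to the strong unmixedness of $\ujj$.

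Two small points are worth tidying. First, you invoke ``$\rdual$-invariance'' of $\de$, i.e.\ $\de(\rdual M,\rdual N)=\de(M,N)$; this is true but not recorded verbatim in the preliminaries --- it is a one-line consequence of $\La(M,N)=\La(\ldual N,M)=\La(N,\rdual M)$ in Proposition~\ref{prop: de property 1} together with $\de(M,N)=\tfrac12(\La(M,N)+\La(N,M))$. An alternative for obtaining the reinforced identity $\de(\rdual^m C^\ujj_{k+2},C^\ujj_k)=\delta(m=0)$ is to transport the problem via Proposition~\ref{prop: ell de}(iv) to $L_\im,L_\jm\in\scrC_\bbD$ and read the strong duality datum condition with $\im,\jm$ interchanged, which is the route the paper's citation suggests. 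Second, you silently use that $\rdual^n$ commutes with $\hconv$ on simple modules when you write $\rdual^n C^\uii_{k+1}\simeq(\rdual^n C^\ujj_{k+2})\hconv(\rdual^n C^\ujj_k)$; this again is standard (it follows from $\rdual$ being a contravariant autoequivalence exchanging heads and socles, combined with $M\hconv N\simeq N\sconv M$ from Proposition~\ref{prop: de properties}), but since neither fact appears explicitly in the text, a brief remark would make the argument fully self-contained.
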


\begin{proof}
It is enough to show that the sequence $(C_b^\uii,C_a^\uii)$ is strongly unmixed when either $a$ or $b$ belongs to $\{k,k+1,k+2\}$. 
It easily follows from Proposition~\ref{prop: ell de} and Remark~\ref{rem: local change}.
\end{proof}
The following lemma is a consequence of Proposition~\ref{prop: ell de} and Remark~\ref{rem: local change}.
\Lemma We have
$$
\de(\rdual^n C_{k}^\uii,C_{k+2}^\uii ) = \delta(n=0). 
$$
\enlemma

\begin{lemma}  \label{lem: still a a+ =1}
The property~\eqref{it: a a^+} holds for $\{ C^\uii_m\}_{ m \in K}$; i.e., for any  $a\in K$ with $a^+\in K$, we have $\de(C^\uii_a,C^\uii_{a^+})=1$.  
\end{lemma}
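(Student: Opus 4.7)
The proof proceeds by case analysis on the positions of $a$ and $a^+$ relative to the triple $\{k,k+1,k+2\}$, using the identifications from Remark~\ref{rem: local change}: $C^\uii_k\simeq C^\ujj_{k+2}$, $C^\uii_{k+1}\simeq M^\ujj[k,k+2]$, $C^\uii_{k+2}\simeq C^\ujj_k$, and $C^\uii_s = C^\ujj_s$ for $s\notin\{k,k+1,k+2\}$, together with Condition~\ref{assu: the assumption} holding for $\ujj$.

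When both $a,a^+\notin\{k,k+1,k+2\}$, or when $(a,a^+)=(k,k+2)$: since the set of letters occurring at positions $k,k+1,k+2$ is $\{\im,\jm\}$ in both $\uii$ and $\ujj$, the successor $a^+$ agrees between the two sequences, and the claim reduces via the identifications above directly to \eqref{it: a a^+} on $\ujj$.

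When $a=k+1$ (and, by symmetry of $\de$, also when $a^+=k+1$): the letter at $a$ is $\jm$ in $\uii$, and $a^+\ge k+3$ in $\uii$ coincides with $(k+2)^+_\ujj$ in $\ujj$. Using $C^\uii_{k+1}\simeq M^\ujj[k,k+2]$ and $C^\uii_{a^+}=C^\ujj_{a^+}$, the desired identity becomes $\de(M^\ujj[k,k+2], C^\ujj_{(k+2)^+_\ujj})=1$, which is precisely \eqref{it: d=1} for $\ujj$ applied to the $i$-box $[k,k+2]^\ujj$.

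The non-trivial case is $a=k+2$ (symmetrically $a^+=k$), in which one must show $\de(C^\ujj_k, C^\ujj_p)=1$ with $p=(k+1)^+_\ujj$; here $C^\ujj_k$ and $C^\ujj_p$ carry distinct colors $\jm$ and $\im$ in $\ujj$, so \eqref{it: a a^+} on $\ujj$ does not apply directly. The plan is to exploit $\de(C^\ujj_{k+1}, C^\ujj_p)=1$, which does hold by \eqref{it: a a^+} on $\ujj$ since $(k+1)^+_\ujj=p$, together with the structural decomposition $C^\ujj_{k+1}\simeq C^\ujj_k\hconv C^\ujj_{k+2}$ valid after braid-group transfer to a common heart as in Remark~\ref{rem: local change}, in which $C^\ujj_k$ plays the role of $L_\jm$, $C^\ujj_{k+2}$ plays the role of $L_\im$, and $C^\ujj_{k+1}$ the role of $L_\jm\hconv L_\im$. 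Lemma~\ref{lem: -1 by root} for root modules then yields $\de(C^\ujj_k, C^\ujj_{k+1})=0=\de(\ldual C^\ujj_k, C^\ujj_{k+1})$, and the strong unmixedness established in Lemma~\ref{lem: still unmixed} makes the descending triple $(C^\ujj_p, C^\ujj_{k+2}, C^\ujj_k)$ normal by Proposition~\ref{prop: Unmix normal}\eqref{it: unmix normal}. Combined with the T-system \eqref{it: T-system} on $\ujj$ applied to the $i$-box $[k+1,p]^\ujj$, in whose left-hand kernel the module $C^\ujj_{k+2}$ appears, this feeds into Lemma~\ref{lem: de=de} and Lemma~\ref{lem: normal seq d} to propagate $\de(C^\ujj_{k+1}, C^\ujj_p)=1$ into $\de(C^\ujj_k, C^\ujj_p)=1$, once one also verifies $\de(C^\ujj_{k+2}, C^\ujj_p)=0$ by the parallel analysis. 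The hard part will be closing this transfer without circularity: successive applications of Lemma~\ref{lem: de=de} alone return tautologies, so the non-trivial input must come from the T-system short exact sequence, which relates $C^\ujj_p\tens C^\ujj_{k+1}$ to $M^\ujj[k+1,p]$ and $C^\ujj_{k+2}$ tensored with the other commutation factors, thereby pinning down the individual $\de$-values.
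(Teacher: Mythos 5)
Your case split is sound, and the reductions in cases (A) (both indices away from $\{k,k+1,k+2\}$, or $(a,a^+)=(k,k+2)$) and (B) ($a=k+1$ or $a^+=k+1$, via property \eqref{it: d=1} on the $i$-box $[k,k+2]^\ujj$) match the paper. The problem is that in the hard case $a=k+2$ (and by symmetry $a^+=k$) you assemble the right facts but never actually close the argument, and your worry that ``successive applications of Lemma~\ref{lem: de=de} alone return tautologies'' is mistaken and sends you off on an unnecessary detour through the $T$-system.

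One application of Lemma~\ref{lem: de=de}~\eqref{it: de=de 1} finishes the case, and this is exactly what the paper does. Set $r=(k+2)^+_\uii=(k+1)^+_\ujj$ (your $p$). Take $L=C^\ujj_r$, $M=C^\ujj_{k+2}$, $X=C^\ujj_k$, so that $X\hconv M=C^\ujj_{k+1}$. The hypotheses $\de(L,M)=0$ and $\de(\scrD L,M)=0$ are not tautologies but are supplied by the induction hypothesis on $\ujj$: $\de(C^\ujj_{k+2},C^\ujj_r)=0$ is property \eqref{it: commuting} applied to the $i$-box $[r,r]^\ujj$ (using $r^-=k+1<k+2<r^+$; there is no need for a ``parallel analysis''), and $\de(\scrD C^\ujj_r,C^\ujj_{k+2})=0$ is strong unmixedness \eqref{it: unmixed} since $r>k+2$. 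The lemma then gives $\de(C^\ujj_r,C^\ujj_k\hconv C^\ujj_{k+2})=\de(C^\ujj_r,C^\ujj_k)$, i.e.\ $1=\de(C^\ujj_{k+1},C^\ujj_r)=\de(C^\ujj_k,C^\ujj_r)=\de(C^\uii_{k+2},C^\uii_r)$. There is no circularity: the quantity being computed, $\de(C^\ujj_k,C^\ujj_r)$, appears on only one side.

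The remainder of your case (C) is superfluous for the conclusion: the values $\de(C^\ujj_k,C^\ujj_{k+1})=0$ and $\de(\ldual C^\ujj_k,C^\ujj_{k+1})=0$ you extract from Lemma~\ref{lem: -1 by root} are correct but unused, the normality of the triple $(C^\ujj_r,C^\ujj_{k+2},C^\ujj_k)$ plays no role, and the $T$-system applied to $[k+1,r]^\ujj$ together with Lemma~\ref{lem: normal seq d} is never actually connected to the target $\de$-value in a way that would close the argument. As written, the proposal does not constitute a complete proof of the $a=k+2$ case.
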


\begin{proof}
It is enough to prove it when $a=k+2$, $a^+_{\uii}=k$, $a = k+1$ or $a^+_{\uii} = k+1$. 

\snoi
(1) $a=k+2$. First set $r=(k+2)^+_\uii=(k+1)^+_\ujj > k+2$. By Condition~\eqref{it: a a^+} for $\ujj$, we have 
\begin{align*}
1=\de(  C^\ujj_{k+1} ,C^\ujj_{r}) = \de(  C^\ujj_{k} \hconv C^\ujj_{k+2} ,C^\ujj_{r}).    
\end{align*} 
We also have $\de(  C^\ujj_{k+2},C^\ujj_{r}) =0$ by~\eqref{it: commuting} for $\ujj$, 
and $\de(  C^\ujj_{k+2},\scrD C^\ujj_{r}) =0$ by~\eqref{it: unmixed} for $\ujj$. Hence Lemma~\ref{lem: de=de}~\eqref{it: de=de 1},
we have
$$1=\de(  C^\ujj_{k} \hconv C^\ujj_{k+2} ,C^\ujj_{r}) = \de(  C^\ujj_{k} ,C^\ujj_{r})= \de(  C^\uii_{k+2} ,C^\uii_{r}).$$

\noi
(2)\ The assertion for $a^+=k$ can be proved in a similar way.

\mnoi
(3) $a^+=k+1$.  \ First set $r=(k+1)^-_\uii=(k)^-_\ujj<k$. Then we have 
we have
$$
\de(C_r^{\uii},C_{k+1}^{\uii}) = \de(C_{k^-}^{\ujj},C_{k+2}^{\ujj}\hconv C_{k}^\ujj) =  \de(C_{k^-}^{\ujj},M^{\ujj}[k,k^+])=1,
$$
which follows from~\eqref{it: d=1} for $\ujj$.

\snoi
(4) The assertion for $a=k+1$ can be proved in a similar way. 
\end{proof}

\begin{lemma}  \label{lem: still commuting}
The property~\eqref{it: commuting} holds for $\{ C^\uii_m\}_{m\in K}$; i.e., for any $i$-box $[a,b]^\uii$, we have $\de(C_s^\uii,M^\uii[a,b])=0$ if $a^- < s < b^+$. 
\end{lemma}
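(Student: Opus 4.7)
\emph{Strategy.} The plan is to reduce the desired vanishing $\de(C^\uii_s,M^\uii[a,b])=0$ to Condition~\ref{assu: the assumption}\,\eqref{it: commuting} for $\ujj$ through a case analysis on the color $\im_a\in\sfI$ of the $i$-box and on the position of $[a,b]$ relative to the three-position window $\{k,k+1,k+2\}$ where $\uii$ and $\ujj$ disagree. The two basic inputs are the identifications
$$C^\uii_k\simeq C^\ujj_{k+2},\qquad C^\uii_{k+2}\simeq C^\ujj_k,\qquad C^\uii_{k+1}\simeq C^\ujj_{k+2}\hconv C^\ujj_k$$
of Remark~\ref{rem: local change}, together with the parallel identity $C^\ujj_{k+1}\simeq C^\ujj_k\hconv C^\ujj_{k+2}$ obtained by swapping the roles of $\uii$ and $\ujj$.

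\emph{Easy color.} When $\im_a\notin\{\im,\jm\}$ the sets $[a,b]_\phi^\uii$ and $[a,b]_\phi^\ujj$ coincide and every factor in the head product defining $M^\uii[a,b]$ equals its $\ujj$-counterpart, so $M^\uii[a,b]\simeq M^\ujj[a,b]$. The required vanishing is then Condition~\ref{assu: the assumption}\,\eqref{it: commuting} for $\ujj$ applied directly for $s\notin\{k,k+1,k+2\}$, and after the above relabeling for $s\in\{k,k+2\}$. For $s=k+1$ the identity $C^\uii_{k+1}\simeq C^\ujj_{k+2}\hconv C^\ujj_k$, combined with the fact that the real simple module $M^\ujj[a,b]$ commutes with both tensor factors, yields commutation with the head by Proposition~\ref{prop: de properties}\,\eqref{it: head=socle}.

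\emph{The non-trivial colors.} The substantive case is $\im_a=\im$ (the case $\im_a=\jm$ being parallel). Here the $\im$-positions in the window change from $\{k+1\}$ in $\ujj$ to $\{k,k+2\}$ in $\uii$, and the head products defining $M^\uii[a,b]$ and $M^\ujj[a,b]$ no longer differ by a mere reindexing. Using Proposition~\ref{prop: Unmix normal} to control the relevant normal sequences and Proposition~\ref{prop: de properties}\,\eqref{it: head=socle} to reassociate the adjacent pair $C^\ujj_k\tens C^\ujj_{k+2}$ into its head $C^\ujj_{k+1}$, I would establish the dictionary
\begin{align*}
M^\uii[k,k+2]&\simeq C^\ujj_{k+1},& M^\uii[a',k+2]&\simeq M^\ujj[a',k+1],\\
M^\uii[k,b']&\simeq M^\ujj[k+1,b'],& M^\uii[a',b']&\simeq M^\ujj[a',b'],
\end{align*}
whenever $a'<k$, $b'>k+2$, and $\im_{a'}=\im_{b'}=\im$. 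In the remaining boundary configurations, where exactly one endpoint of $[a,b]^\uii$ lies in $\{k,k+2\}$, $M^\uii[a,b]$ decomposes instead as a head product $C^\ujj_{k+2}\hconv M^\ujj[a',p]$ (or the symmetric version with $C^\ujj_k$), where $p$ is the largest $\im$-position of $\ujj$ strictly less than $k$. With this dictionary in hand, the vanishing of $\de(C^\uii_s,M^\uii[a,b])$ is reduced, for each sub-case, to Conditions~\ref{assu: the assumption}\,\eqref{it: commuting} and~\eqref{it: d=1} for $\ujj$, together with Lemma~\ref{lem: de=de}\,\eqref{it: de=de 1} and Lemma~\ref{lem: -1 by root} (the latter applied to the root modules $C^\ujj_{k+2}$ and $C^\ujj_k$, which are root by Lemma~\ref{lem: preserving root}) whenever a cuspidal factor must be absorbed into a head product.

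\emph{Main obstacle.} The principal obstacle is the bookkeeping in the boundary sub-cases just mentioned. There $M^\uii[a,b]$ is genuinely of the form $L\hconv N$ with $L$ a cuspidal root module and $N$ a $\ujj$-determinantial module, so computing $\de(C^\uii_s,M^\uii[a,b])$ for $s\in\{k,k+1,k+2\}$ requires the identity $\de(L,L\hconv X)=\de(L,X)-1$ of Lemma~\ref{lem: -1 by root} together with a careful translation of the strict inequality $a^-_\uii<s<b^+_\uii$ into its $\ujj$-analogue, since the operations $s\mapsto s^\pm$ differ between $\uii$ and $\ujj$ precisely inside the window.
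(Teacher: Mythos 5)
Your overall case analysis matches the paper's: easy color, the dictionary $M^\uii[a',b']\simeq M^\ujj[a',b']$ when both endpoints of the $i$-box of color $\im$ lie outside the window, and the head-product decomposition $M^\uii[a,k]\simeq C^\ujj_{k+2}\hconv M^\ujj[a,(k+1)^-]$ (with its mirror for $[k+2,b]$) for the boundary cases, driven by Condition~\ref{assu: the assumption}\,\eqref{it: commuting} and \eqref{it: d=1} for $\ujj$ together with Lemma~\ref{lem: -1 by root}.

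There is, however, a concrete gap in the toolbox you cite for the crucial boundary sub-case. When $a<b=k$ and $s=k+1$, one must show $\de(M^\uii[a,k],C^\uii_{k+1})=0$, and here \emph{both} sides are head products through the same root module: $M^\uii[a,k]\simeq C^\ujj_{k+2}\hconv M^\ujj[a,(k+1)^-]$ and $C^\uii_{k+1}\simeq C^\ujj_{k+2}\hconv C^\ujj_k$. Lemma~\ref{lem: de=de}\,\eqref{it: de=de 1} and Lemma~\ref{lem: -1 by root} only control $\de$ between a single real simple module and a head product; they do not compare two head products sharing a common prefactor, and the natural attempts to force Lemma~\ref{lem: de=de} through require $\de(M^\uii[a,k],C^\uii_{k+2})=0$ or $\de(\rdual C^\uii_k,M^\uii[a,k])=0$, both of which equal $1$ by Condition~\ref{assu: the assumption}\,\eqref{it: d=1}. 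The missing ingredient is Lemma~\ref{lem: LX LY}: with $L=C^\ujj_{k+2}$, $X=M^\ujj[a,(k+1)^-]$, $Y=C^\ujj_k$, one verifies $\de(X,Y)=0$, $\de(L,L\hconv Y)=0$, and $\de(L,L\hconv X)=0$ (this last by first establishing $\de(L,X)=1$ through a sandwich argument using Conditions \eqref{it: unmixed}, \eqref{it: commuting}, \eqref{it: d=1} and Lemma~\ref{lem: recover}, then applying Lemma~\ref{lem: -1 by root}), and Lemma~\ref{lem: LX LY} then yields $\de(L\hconv X,L\hconv Y)=0$. A similar but more benign citation slip appears in your easy-color $s=k+1$ step: the tool actually used there is the additivity of $\de$ over $\hconv$ coming from normality (Lemma~\ref{lem: normal seq d}), not Proposition~\ref{prop: de properties}\,\eqref{it: head=socle}.
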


\begin{proof}
  The following cases are obvious.
  \bna
\item $\im_a\not\in\st{\im,\jm}$.
\item $a>k+2$.
\item $b<k$.
\item$ \im_a=\jm$.
  Indeed, we have
  $M^{\uii}[a,b]=M^{\ujj}[a,b]$ if $a,b\not=k+1$,
  $M^{\uii}[k+1,b]=M^{\ujj}[k,b]$
  and  $M^{\uii}[a,k+1]=M^{\ujj}[a,k+2]$.
\item
  $b=k+2$.
In this case, $M^{\uii}[a,k+2]=M^{\ujj}[a,k+1]$.
  \item
  $a=k$.
In this case, $M^{\uii}[k,b]=M^{\ujj }[k+1,b]$.
  \ee
Hence the remaining cases are $a=k+2<b$, and $a<b=k$.

\medskip
Since the case $a=k+2<b$ is similar, let us focus to the case $a<b=k$.
Note that
$$C_k^\uii =  C_{k+2}^\ujj, \quad M^\uii[a,k] = C_k^\uii \hconv M^\ujj[a,(k+1)^-] \qtq C^{\uii}_{k+1} = C^\ujj_{k+2} \hconv C^\ujj_{k}.$$

Let us first prove that
\begin{align} \label{eq: claim1}
\de(C_{k}^\uii, M^\uii[a,k^-]) = \de(C_k^\uii, M^\ujj[a,(k+1)^-])=\de(C_{k+2}^\ujj, M^\ujj[a,(k+1)^-])=1.     
\end{align}
By Condition~\eqref{it: d=1} for $\ujj$, we have
$$
1=\de(C_{k+1}^\ujj, M^\ujj[a,(k+1)^-]).
$$
Then we have
\begin{align*}
1 & =\de(C_{k+1}^\ujj, M^\ujj[a,(k+1)^-]) = \de(C_{k}^\ujj \hconv C_{k+2}^\ujj, M^\ujj[a,(k+1)^-]) \allowdisplaybreaks\\
& \le \de(C_{k}^\ujj , M^\ujj[a,(k+1)^-]) + \de(C_{k+2}^\ujj, M^\ujj[a,(k+1)^-]) \allowdisplaybreaks\\
& \quad \underset{*}{=} \de(C_{k+2}^\ujj , M^\ujj[a,(k+1)^-]) \underset{\sharp}{=} \de(C_{k+1}^\ujj \hconv \scrD C_{k}^\ujj \ , M^\ujj[a,(k+1)^-]) \allowdisplaybreaks\\
& \quad \quad \le \de(C_{k+1}^\ujj , M^\ujj[a,(k+1)^-]) + \de( \scrD C_{k}^\ujj \ , M^\ujj[a,(k+1)^-]) \allowdisplaybreaks\\
& \quad \quad \quad \underset{\dagger}{=} \de(C_{k+1}^\ujj , M^\ujj[a,(k+1)^-])=1,
\end{align*}
which implies~\eqref{eq: claim1}. Here 
\bnum
\item $\underset{*}{=}$ follows from~\eqref{it: commuting}  for $\ujj$, 
\item $\underset{\sharp}{=}$ follows from the fact that
$C_{k+2}^\ujj \simeq C_{k+1}^\ujj \hconv \scrD C_{k}^\ujj$ by Lemma~\ref{lem: recover}, and 
\item $\underset{\dagger}{=}$ follows from~\eqref{it: unmixed} for $\ujj$. 
\ee 
Then Lemma~\ref{lem: -1 by root} says that 
\begin{align} \label{eq: k+@ j M a =0}
\de(C_{k+2}^\ujj,C_{k+2}^\ujj \hconv M^\ujj[a,(k+1)^-])=0.    
\end{align}

Now we have
$$
\de(M^\uii[a,k],C^\uii_{k+1}) = \de(C_{k+2}^\ujj \hconv M^\ujj[a,(k+1)^-],C^\ujj_{k+2} \hconv C^\ujj_{k}).
$$
Since $\de(M^\ujj[a,(k+1)^-],C^\ujj_{k})=0$ by~\eqref{it: commuting} for $\ujj$, $C^\ujj_{k+2} \hconv C^\ujj_{k} \simeq C^\uii_{k+1}$ is real,  \eqref{eq: k+@ j M a =0} and
Lemma~\ref{lem: LX LY} imply the assertion. 
\end{proof}

\begin{lemma}  \label{lem: still d=1}
The property~\eqref{it: d=1} holds for  $\{ C^\uii_m\}_{  m\in K}$; i.e., for any $i$-box $[a,b]^\uii$, we have
$$\text{$\de(C^\uii_{a^-},M^\uii[a,b])=1$ if $a^-\in K$ and $\de(C^\uii_{b^+},M^\uii[a,b])=1$ if $b^+ \in K$.}  $$
\end{lemma}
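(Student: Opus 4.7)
My plan is to argue exactly as in the proof of Lemma~\ref{lem: still commuting}, namely by a case analysis on where $a,b$ and their neighbors $a^-$, $b^+$ sit relative to the local region $\{k,k+1,k+2\}$ affected by the braid move. By symmetry (interchange of $a$ and $b$, and of left/right duals), I only need to treat $\de(C^\uii_{b^+},M^\uii[a,b])=1$. The identifications I keep in mind throughout are those listed in Remark~\ref{rem: local change}: $C^\uii_s=C^\ujj_s$ for $s\notin [k,k+2]$, $C^\uii_k\simeq C^\ujj_{k+2}$, $C^\uii_{k+2}\simeq C^\ujj_k$, and $C^\uii_{k+1}\simeq C^\ujj_{k+2}\hconv C^\ujj_k$, together with the symmetric formula $C^\ujj_{k+1}\simeq C^\ujj_k\hconv C^\ujj_{k+2}$, plus the fact that $C^\ujj_k$ and $C^\ujj_{k+2}$ are root modules with $\de(\rdual^n C^\ujj_k,C^\ujj_{k+2})=\delta(n=0)$.

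The \emph{easy} cases are those where either $\im_a\notin\{\im,\jm\}$, or $b^+<k$, or $a>k+2$, or $\im_a=\jm$ (in which case one checks directly that $M^\uii[a,b]=M^\ujj[a',b']$ for the obvious shifts $[a',b']$ of $[a,b]$ given in Lemma~\ref{lem: still commuting}, and that $C^\uii_{b^+}$ coincides with the corresponding $C^\ujj$). In all of these cases the claim $\de(C^\uii_{b^+},M^\uii[a,b])=1$ is immediately translated into property~\eqref{it: d=1} of Condition~\ref{assu: the assumption} for $\ujj$. After removing these, the remaining nontrivial cases are $\im_a=\im$ with $b\in\{k,k+1,k+2\}$ (together with the symmetric family where $a\in\{k,k+1,k+2\}$ and $\im_b=\im$).

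The model nontrivial case is $\im_a=\im$, $b=k$, so that $b^+_\uii=k+2$ and one must show $\de(C^\ujj_k,M^\uii[a,k])=1$, where $M^\uii[a,k]\simeq C^\ujj_{k+2}\hconv M^\ujj[a,(k+1)^-]$ (using the factorization $C^\uii_k=C^\ujj_{k+2}$ and the description of $M^\uii[a,k]$ exactly as in the proof of Lemma~\ref{lem: still commuting}). By Condition~\ref{assu: the assumption}\eqref{it: d=1} for $\ujj$ one has
\[
\de(C^\ujj_{k+1},M^\ujj[a,(k+1)^-])=1,
\]
and $C^\ujj_{k+1}\simeq C^\ujj_k\hconv C^\ujj_{k+2}$. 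Together with the strongly unmixed property~\eqref{it: unmixed} of $\ujj$, which gives $\de(\rdual^{\pm1}C^\ujj_{k+2},M^\ujj[a,(k+1)^-])=0$, and Condition~\ref{assu: the assumption}\eqref{it: commuting} for $\ujj$, I will feed this through Lemma~\ref{lem: de=de}\eqref{it: de=de 2} to peel off the $C^\ujj_{k+2}$ factor and obtain $\de(C^\ujj_k,M^\ujj[a,(k+1)^-])=1$; one final application of Lemma~\ref{lem: de=de}\eqref{it: de=de 1} (using $\de(\D^{\pm1}C^\ujj_k,C^\ujj_{k+2})=0$ from Remark~\ref{rem: local change}) converts this into $\de(C^\ujj_k,\,C^\ujj_{k+2}\hconv M^\ujj[a,(k+1)^-])=1$, which is the desired equality. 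The remaining subcases ($b=k+1$, $b=k+2$, and their $a$-analogues) are handled along the same lines, in each case using the factorization of $C^\uii_{k+1}$ (or of $M^\uii$ at one of the endpoints) and the root-module identity $\de(\rdual^n C^\ujj_k,C^\ujj_{k+2})=\delta(n=0)$ together with Lemma~\ref{lem: de=de}.

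The main obstacle is purely bookkeeping: there are several subcases (parametrized by $\im_a\in\{\im,\jm\}$, which of $k,k+1,k+2$ equals $a$ or $b$, and whether $a^-$ or $b^+$ hits this window), and each requires writing $M^\uii[a,b]$ as a head of an ordered tensor product rewritten in terms of $C^\ujj$-modules and then applying Lemma~\ref{lem: de=de} once or twice to strip off the extra tensor factor introduced by $C^\uii_{k+1}\simeq C^\ujj_{k+2}\hconv C^\ujj_k$. Once the root-module commutation relations of $C^\ujj_k$ and $C^\ujj_{k+2}$ established in Remark~\ref{rem: local change} are in hand, each case is a short computation of the same flavor as the one above.
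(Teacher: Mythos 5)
Your overall strategy—case analysis on where $[a,b]$, $a^-$, $b^+$ sit relative to $\{k,k+1,k+2\}$, rewriting everything in terms of $C^\ujj$ via Remark~\ref{rem: local change}, and then using Lemma~\ref{lem: de=de} to strip tensor factors—is the same one the paper uses, and your list of easy cases matches. However, the model computation you give for $\im_a=\im$, $b=k$ contains a genuine error. You claim that peeling $C^\ujj_{k+2}$ off $C^\ujj_{k+1}\simeq C^\ujj_k\hconv C^\ujj_{k+2}$ gives $\de(C^\ujj_k,M^\ujj[a,(k+1)^-])=1$. This is false: Condition~\ref{assu: the assumption}\eqref{it: commuting} for $\ujj$ applied to the $i$-box $[a,(k+1)^-]^\ujj$ (whose $b^+_\ujj$ is $k+1$) forces $\de(C^\ujj_k,M^\ujj[a,(k+1)^-])=0$. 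Moreover, Lemma~\ref{lem: de=de}\eqref{it: de=de 2} applied to $\de\bl M^\ujj[a,(k+1)^-],\,C^\ujj_k\hconv C^\ujj_{k+2}\br$ with $M=C^\ujj_k$ and $X=C^\ujj_{k+2}$ drops the \emph{inner} factor $C^\ujj_k$, not $C^\ujj_{k+2}$, so the correct output of the peel is $\de(C^\ujj_{k+2},M^\ujj[a,(k+1)^-])=1$, which is precisely \eqref{eq: claim1} already established in the proof of Lemma~\ref{lem: still commuting}.

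Your final step also misstates the hypotheses of Lemma~\ref{lem: de=de}\eqref{it: de=de 1}. To compute $\de(C^\ujj_k,\,C^\ujj_{k+2}\hconv M^\ujj[a,(k+1)^-])$, one takes $L=C^\ujj_k$, $X=C^\ujj_{k+2}$ and $M=M^\ujj[a,(k+1)^-]$; the vanishing that must be checked is $\de(C^\ujj_k,M^\ujj[a,(k+1)^-])=0$ (Condition \eqref{it: commuting}) and $\de(\rdual C^\ujj_k,M^\ujj[a,(k+1)^-])=0$ (from \eqref{it: unmixed} via Proposition~\ref{prop: Unmix normal}), not $\de(\scrD^{\pm1}C^\ujj_k,C^\ujj_{k+2})=0$. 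With the hypotheses corrected, Lemma~\ref{lem: de=de}\eqref{it: de=de 1} gives the answer \emph{in one shot}: $\de(C^\ujj_k,\,C^\ujj_{k+2}\hconv M^\ujj[a,(k+1)^-])=\de(C^\ujj_k,C^\ujj_{k+2})=1$, so your intermediate claim is not only false but unnecessary. This is also how the paper handles the symmetric $a=k+2$ case. So the plan is salvageable, but the step-by-step argument you wrote does not go through as stated.
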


\begin{proof}
Except the cases (i) $a=k+2$ and (ii) $b=k$,
the assertion is easy to check.  The assertion for $b=k$ is already covered
by~\eqref{eq: claim1}.

Let us consider the case $a = k+2$. Then we have
\begin{align*}
M^\uii[k+2,b] &\simeq M^\uii[(k+2)^+,b] \hconv C^\uii_{k+2} \simeq M^\ujj[(k+1)^+,b] \hconv C^\ujj_{k}.
\end{align*}
Similarly to~\eqref{eq: claim1}, we can prove
\begin{align*}
\de(M^\ujj[(k+1)^+,b], C^\ujj_{k})=1.    
\end{align*}
Thus 
\begin{align*}
  \de(C^\uii_{(k+2)^-},M^\uii[k+2,b]) &= \de(C^\uii_{k},M^\uii[k+2,b]) \\
  & = \de(C^\ujj_{k+2},M^\ujj[(k+1)^+,b] \hconv C^\ujj_{k})    \underset{*}{=}\de(C^\ujj_{k+2},C^\ujj_{k}) = \de(L_\im,L_\jm) = 1,
\end{align*}
which implies the assertion.  Here $\underset{*}{=}$ follows from Lemma~\ref{lem: de=de}~\eqref{it: de=de 2} and~\eqref{it: unmixed},~\eqref{it: commuting} for $\ujj$.
\end{proof}
 
\begin{proposition} \label{prop: Muii properties 1} The sequence $\uii=\upbe_k(\ujj)$  satisfies the following properties. 
\bnum
\item For any $i$-box $[a,b]$ associated with $\uii$, $M^\uii[a,b]$ is a real simple module.
\item If two $i$-box $[a_1,b_1]$ and $[a_2,b_2]$ commute, then $M^\uii[a_1,b_1]$ and $M^\uii[a_2,b_2]$ commutes. 
\item For any $i$-box $[a,b]$, $\de(M^\uii[a,b],M^\uii[a^-,b^-]) \le 1$.
\item \label{it: de D =1} For any $i$-box $[a,b]$, we have
$$
\de(\scrD C^\uii_b,M^\uii[a,b]) =1 \qtq \de(\scrD^{-1} C^\uii_a,M^\uii[a,b]) =1. 
$$
\ee
\end{proposition}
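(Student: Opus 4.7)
The plan proceeds by combining the four preceding lemmas, which already give Condition~\ref{assu: the assumption}\;\eqref{it: unmixed},\;\eqref{it: a a^+},\;\eqref{it: commuting},\;\eqref{it: d=1} for $\uii=\upbe_k(\ujj)$, with normality arguments and the structural lemmas from Section~\ref{Sec: preliminaries}. Throughout, I rely on the three-term identification from Remark~\ref{rem: local change}:
$C^\uii_k\simeq C^\ujj_{k+2}$, $C^\uii_{k+2}\simeq C^\ujj_k$, $C^\uii_{k+1}\simeq C^\ujj_{k+2}\hconv C^\ujj_k$, and $C^\uii_s=C^\ujj_s$ for $s\notin[k,k+2]$.

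For (i), by Lemma~\ref{lem: still unmixed} the sequence $(C^\uii_b,C^\uii_{b^-},\ldots,C^\uii_a)$ is unmixed, hence normal by Proposition~\ref{prop: Unmix normal}\;\eqref{it: unmix normal}, so $M^\uii[a,b]$ is simple and admits the recursion $M^\uii[a,b]\simeq M^\uii[a^+,b]\hconv C^\uii_a$. I would prove reality by induction on $|[a,b]_\phi|$: the base case $|[a,b]_\phi|=1$ is $M^\uii[a,a]=C^\uii_a$, a root module and hence real. For the inductive step, apply Lemma~\ref{lem: still d=1} to the $i$-box $[a^+,b]$ to get $\de(C^\uii_a,M^\uii[a^+,b])=1$, and then invoke Proposition~\ref{prop: de properties}\;(f), using that both factors are real.

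For (ii), I first reduce via the definition of commuting $i$-boxes so that WLOG $a_1^-<a_2\le b_2<b_1^+$. If neither interval $[a_1,b_1]$ nor $[a_2,b_2]$ meets $\{k,k+1,k+2\}$, then $M^\uii[a_j,b_j]=M^\ujj[a_j,b_j]$ and commutation follows from the $\ujj$-case. For the remaining overlapping cases, I would expand $M^\uii[a_j,b_j]$ via the recursion from (i) and apply Lemma~\ref{lem: Lj Mj}(c): the hypotheses $\de(\ldual L_j,M_k)=0$ are exactly of the form already verified in Lemma~\ref{lem: still commuting}, so commutation of $M^\uii[a_1,b_1]$ and $M^\uii[a_2,b_2]$ reduces to commutation of appropriate $M^\ujj$-modules paired with the extra cuspidal factor $C^\ujj_k$ or $C^\ujj_{k+2}$, the latter being controlled by Condition~\ref{assu: the assumption}\;\eqref{it: commuting} for $\ujj$.

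For (iii), I would use $M^\uii[a^-,b^-]\simeq M^\uii[a,b^-]\hconv C^\uii_{a^-}$ and $M^\uii[a,b]\simeq C^\uii_b\hconv M^\uii[a,b^-]$, together with Lemma~\ref{lem: still d=1} which gives $\de(C^\uii_{a^-},M^\uii[a,b])=1$. Since $C^\uii_{a^-}$ is a root module, Lemma~\ref{lem: -1 by root} then controls $\de(C^\uii_{a^-},M^\uii[a,b])$ after absorbing $C^\uii_{a^-}$ into its neighbor, and combining with Lemma~\ref{lem: de=de} on $\de(L,X\hconv M)=\de(L,X)$ yields the desired bound $\de(M^\uii[a,b],M^\uii[a^-,b^-])\le1$. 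For (iv), since $\scrD C^\uii_b=\rdual C^\uii_b$ and $\scrD^{-1}C^\uii_a=\ldual C^\uii_a$, I would invoke Proposition~\ref{prop: de property 1}\;(iv), which turns the claim into $\de(C^\uii_b,\ldual M^\uii[a,b])=1=\de(C^\uii_a,\rdual M^\uii[a,b])$; this reduces, via the root-module pattern of Lemma~\ref{lem: roots and dual} applied to $C^\uii_b$ and its dual, to the already-established boundary identity of Lemma~\ref{lem: still d=1} for the $i$-box $[a,b]$.

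The principal obstacle will be the boundary cases in (ii), where an $i$-box $[a_j,b_j]$ has an endpoint in $\{k,k+1,k+2\}$. The six distinct configurations (endpoint equal to $k$, $k+1$, or $k+2$, on either end) each require rewriting $M^\uii[a_j,b_j]$ as a head involving the recombined cuspidal $C^\ujj_{k+2}\hconv C^\ujj_k$, and then verifying that all $\de$-vanishing assumptions needed for Lemma~\ref{lem: Lj Mj}(c) and Lemma~\ref{lem: LX LY} persist. Fortunately, the hardest such configurations have essentially been isolated in the proof of Lemma~\ref{lem: still commuting}, so the work in (ii) parallels that case analysis; reality in (i) and the estimate in (iii) are then comparatively routine once (ii) is in hand.
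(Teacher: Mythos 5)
The paper's own proof here is a one-line citation: it states that the proofs of \cite[Theorem 4.21, Lemma 4.22, Lemma 4.23, Lemma 4.24]{KKOP24A} apply verbatim because those proofs only use the analogues of Condition~\ref{assu: the assumption}\;\eqref{it: unmixed}--\eqref{it: d=1}, which Lemmas~\ref{lem: still unmixed}, \ref{lem: still a a+ =1}, \ref{lem: still commuting}, \ref{lem: still d=1} have now established for $\uii$. So the intended argument works \emph{directly from the established conditions for $\uii$}, without any further reference to $\ujj$ or to the position of $\{k,k+1,k+2\}$.

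Your fill-in for (i) is fine: normality (from \eqref{it: unmixed} and Proposition~\ref{prop: Unmix normal}) gives simplicity, and Lemma~\ref{lem: still d=1} plus Proposition~\ref{prop: de properties}\;(f) give reality by induction on $|[a,b]_\phi|$. However, your route for (ii) is heavier than necessary: reducing to $\ujj$-modules via Lemma~\ref{lem: Lj Mj}(c) and then doing a six-way case analysis on which endpoints hit $\{k,k+1,k+2\}$ is exactly the kind of work the paper's citation is designed to avoid, since the KKOP24A argument already runs directly on any cuspidal sequence satisfying \eqref{it: unmixed}--\eqref{it: d=1}.

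The concrete gap is in (iv). After translating the claim to $\de(C^\uii_b,\ldual M^\uii[a,b])=1$ you say it ``reduces to the already-established boundary identity of Lemma~\ref{lem: still d=1},'' but that lemma gives $\de(C^\uii_{b^+},M^\uii[a,b])=1$, and for a general sequence $\scrD C^\uii_b$ is \emph{not} isomorphic to $C^\uii_{b^+}$ (that identification holds only in the $\hwc$-periodic setting), nor does Lemma~\ref{lem: roots and dual} bridge the gap. The correct argument avoids the detour through Lemma~\ref{lem: still d=1} entirely: write $M^\uii[a,b]\simeq C^\uii_b\hconv M^\uii[a,b^-]$ and apply Lemma~\ref{lem: de=de}\;\eqref{it: de=de 1} with $L=\scrD C^\uii_b$, $M=M^\uii[a,b^-]$, $X=C^\uii_b$. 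The hypotheses $\de(L,M)=\de(\rdual L,M)=0$ hold by strong unmixedness of $\uC^\uii$ (Lemma~\ref{lem: still unmixed}), since $M^\uii[a,b^-]$ is a subquotient of a tensor product of $C^\uii_s$ with $s<b$. The conclusion gives
$$\de(\scrD C^\uii_b,M^\uii[a,b])=\de(\scrD C^\uii_b,C^\uii_b)=1,$$
the last equality because $C^\uii_b$ is a root module; the companion identity for $\scrD^{-1}C^\uii_a$ is symmetric.
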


\begin{proof}
  The proof is the same as the ones of \cite[Theorem 4.21, Lemma 4.22, Lemma 4.23, Lemma 4.24]{KKOP24A}  based on~\eqref{it: unmixed}$\sim$\eqref{it: d=1}
  in Condition~\ref{assu: the assumption} for $\ujj$.
\end{proof}

\begin{theorem}\label{thm: Tsystem}
  The property~\eqref{it: T-system} holds for  $\{ C^\uii_m\}_{m \in K};$
  i.e,  for any $i$-box $[a,b]\subset K$ such that $a<b$,  we have an exact sequence
\begin{align*} 
0 \to  \hspace{-1ex} \tens_{ \substack{ d(\im_a,\jm)=1}}   \hspace{-1ex}  
M^\uii[a(\jm)^+,b(\jm)^-]  \to   M^\uii[a^+,b]  \tens M^\uii[a,b^-]    \to   M^\uii[a,b]
\tens M^\uii[a^+,b^-] \to 0.
\end{align*}
\end{theorem}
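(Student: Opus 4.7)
The strategy is to leverage the fact that the bulk of the technical work has already been carried out in the preceding lemmas and proposition. Lemmas~\ref{lem: still unmixed}, \ref{lem: still a a+ =1}, \ref{lem: still commuting}, \ref{lem: still d=1} together with Proposition~\ref{prop: Muii properties 1} verify properties \eqref{it: unmixed}-\eqref{it: d=1} of Condition~\ref{assu: the assumption} and the reality of $M^\uii[a,b]$ for the braid-moved sequence $\uii=\upbe_k(\ujj)$. The key observation behind the plan is that the proof of the T-system in \cite[Theorem 4.21]{KKOP24A} for locally reduced sequences depends only on these structural properties, together with reality of affine determinantial modules and the purely categorical machinery of $R$-matrices, normal sequences, heads and socles, and root modules recalled in Section~\ref{Sec: preliminaries}. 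Consequently that proof should apply essentially verbatim to $\uii$.

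First, I would construct the surjection
\[
M^\uii[a^+,b]\tens M^\uii[a,b^-] \twoheadrightarrow M^\uii[a,b]\tens M^\uii[a^+,b^-]
\]
by observing, via \eqref{it: unmixed} and \eqref{it: commuting}, that the pair $(M^\uii[a^+,b],M^\uii[a,b^-])$ is strongly unmixed, hence normal, so that its head is $M^\uii[a,b]\hconv M^\uii[a^+,b^-]$; this head coincides with the tensor product by the commuting statement in Proposition~\ref{prop: Muii properties 1}. Combining $\de(M^\uii[a^+,b],M^\uii[a,b^-])=1$ (obtained from \eqref{it: a a^+}, \eqref{it: d=1}, and Lemma~\ref{lem: -1 by root}) with Proposition~\ref{prop: de properties}\eqref{it: d=1 length2} then produces a short exact sequence
\[
0\to K\to M^\uii[a^+,b]\tens M^\uii[a,b^-]\to M^\uii[a,b]\tens M^\uii[a^+,b^-]\to 0
\]
for some simple module $K$.

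The remaining task is to identify $K$ with the tensor product $\otens_{\jm:\,d(\im_a,\jm)=1} M^\uii[a(\jm)^+,b(\jm)^-]$. Here I would use weight-theoretic control through Theorem~\ref{thm: wtpairing Lainf} (applied with respect to a $\rmQ$-datum), together with Lemma~\ref{lem: LX LY} and Lemma~\ref{lem: -1 by root}: the root-module structure of each $C^\uii_s$ in the envelope $[a,b]$, combined with the explicit behaviour of heads of tensor products of root modules near the endpoints, pins down the simple composition factors of $K$, while the commuting statement of Proposition~\ref{prop: Muii properties 1} ensures that these factors assemble into a single tensor product.

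The main obstacle is this third step: confirming that $K$ takes precisely the prescribed form, with indices $a(\jm)^+$ and $b(\jm)^-$ read in the sequence $\uii$ rather than $\ujj$. For $i$-boxes $[a,b]$ whose envelope is disjoint from $\{k,k+1,k+2\}$, or interacts with it only through the trivial case $\im_a\not\in\{\im,\jm\}$, this is transparent via the dictionary of Remark~\ref{rem: local change} and reduces at once to Condition~\ref{assu: the assumption}\eqref{it: T-system} for $\ujj$. The genuinely delicate situations are those where one of $a$, $b$, $a^+$, $b^-$, $a(\jm)^+$, $b(\jm)^-$ falls in $\{k,k+1,k+2\}$; for these, the identifications $C^\uii_k\simeq C^\ujj_{k+2}$, $C^\uii_{k+2}\simeq C^\ujj_k$, $C^\uii_{k+1}\simeq C^\ujj_{k+2}\hconv C^\ujj_k$ from Remark~\ref{rem: local change}, together with Lemma~\ref{lem: recover} and Lemma~\ref{lem: de=de}, allow the relevant $M^\uii$ modules to be rewritten in terms of $M^\ujj$ modules, so that the T-system for $\ujj$ can be invoked after unwinding the head/socle identifications.
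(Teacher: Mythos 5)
Your overall plan follows the paper's route: reduce via Remark~\ref{rem: local change} to the case of $\ujj$, handle the generic $i$-boxes by the dictionary $C^\uii_s \simeq C^\ujj_s$ ($s\notin\{k,k+1,k+2\}$), and concentrate on the cases where the $i$-box abuts $\{k,k+1,k+2\}$. The paper does exactly this: it first reduces by induction on $|[a,b]_\phi|$ to $b=a^+$, where the T-system collapses to the identification $C^\uii_a\sconv C^\uii_{a^+}\simeq\bigotimes_{d(\im_a,\jm)=1}M^\uii[a(\jm)^+,b(\jm)^-]$, then treats the cases $a=k$, $a=k+1$, $a=k+2$ (and symmetrically $b=k,k+1$) by writing the $C^\uii_\bullet$ in terms of $C^\ujj_\bullet$, forming normal sequences, and applying the head-formula $C^\ujj_{a'}\hconv C^\ujj_{b'}\simeq\bigotimes M^\ujj[\cdots]$ from Condition~\ref{assu: the assumption}\eqref{it: T-system} for $\ujj$ together with Lemma~\ref{lem: recover}. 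The step $|[a,b]_\phi|\ge3$ is then delegated to \cite[Theorem 4.25]{KKOP24A} (not 4.21, as you write).

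Two parts of your proposal, however, would not go through as stated. First, the claim that the head of $M^\uii[a^+,b]\tens M^\uii[a,b^-]$ coincides with $M^\uii[a,b]\tens M^\uii[a^+,b^-]$ "by the commuting statement in Proposition~\ref{prop: Muii properties 1}" is not a justification: commutativity of $[a,b]$ and $[a^+,b^-]$ only gives that $M^\uii[a,b]\tens M^\uii[a^+,b^-]$ is \emph{simple}, not that it is the head. For $b=a^+$ this is trivially true (the second factor is $\bone$), and for $|[a,b]_\phi|\ge3$ it is part of what the inductive argument establishes, but it is not a one-line consequence of commutativity. Second, and more seriously, the proposed "weight-theoretic control through Theorem~\ref{thm: wtpairing Lainf}... pins down the simple composition factors of $K$" is not a viable mechanism: $\wt_\calQ$ (equivalently $\La^\infty$) is far from separating simple modules in $\Cgz$, so knowing the block of the socle $K$ cannot identify it with the prescribed tensor product. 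The paper never uses Theorem~\ref{thm: wtpairing Lainf} here. The actual identification of $K$ is done by the explicit normal-sequence manipulations in the cases $a\in\{k,k+1,k+2\}$ — e.g.\ for $a=k+2$, one writes $C^\uii_{k+2}\hconv C^\uii_b\simeq(\scrD^{-1}C^\ujj_{k+2}\hconv C^\ujj_{k+1})\hconv C^\ujj_{(k+1)^+}$, verifies that $(\scrD^{-1}C^\ujj_{k+2},C^\ujj_{k+1},C^\ujj_{(k+1)^+})$ is normal, and then reassociates using the T-system for $\ujj$ and Lemma~\ref{lem: recover}. These computations are the real content of the theorem and cannot be replaced by a weight argument; your final paragraph gestures at the right strategy but "unwinding the head/socle identifications" significantly underrepresents the amount of normal-sequence bookkeeping actually required.
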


\begin{proof}
For simplicity of notation, we write  $C_u$ for  $C^\ujj_u$, $C'_u$ for $C^\uii_u$, 
$M[a,b]$ for  $M^\ujj[a,b]$ and $M'[a,b]$ for  $M^\uii[a,b]$.
We also write $M[a^+,b^-]$, $M'[a^+,b^-]$, etc.\ for
$M^\ujj[a_\ujj^+,b_\ujj^-]$, $M^\uii[a_\uii^+,b_\uii^-]$, etc.
For $\kappa$, $\kappa'\in\sfI$, we write $\kappa\sim\kappa'$ if $d(\kappa,\kappa')=1$.

\smallskip
We shall prove this theorem  by induction on $|[a,b]_\phi|=|\st{k\in[a,b]\mid \im_k=\im_a}| \ge 2$.

For the start of induction, it is enough to consider the cases when 
$ b= a^+_\uii$ and $\{a,b\} \cap \{k,k+1,k+2\} \ne \emptyset$.
Since~\eqref{it: unmixed} and~\eqref{it: d=1} hold for $\uii$,
it is enough to check that $C'_a \hconv C'_b \simeq\otimes_{\substack{\jm\sim\im_a}}     
M'[a(\jm)^+,b(\jm)^-]$ by Proposition~\ref{prop: de properties}~\eqref{it: d=1 length2}.

\snoi
(1: $\; a=k$) We have $b=k+2$,
$ \tens_{\jm\sim\im_a} M'[a(\jm)^+,b(\jm)^-] = C'_{k+1}$ 
and hence the assertion is obvious. 

\mnoi
(2:$\; a=k+1$) We have $b=(k+1)^+_\uii=(k+2)^+_\ujj$ and $C'_{k+1}=M[k,k+2]$. 
Note that 
\begin{align*}
& C'_{k+1} \hconv C'_{(k+1)^+} \simeq  (C_{k+2} \hconv C_k)  \hconv C_{(k+2)^+} = (C_{k+2} \hconv C'_{k+2})  \hconv C'_{(k+1)^+}.
\end{align*}

Since $C'_{k+2}$ commutes with $C'_{(k+1)^+}$ by~\eqref{it: commuting} for $\uii$, the sequence $(C_{k+2},C'_{k+2},C'_{(k+1)^+})$ is normal and
\begin{align*}
\head( C_{k+2} \tens  & C'_{k+2}  \tens  C'_{(k+1)^+})\simeq \head( C_{k+2} \tens C'_{(k+1)^+}  \tens C'_{k+2}) \simeq \head( C_{k+2} \tens C_{(k+2)^+}  \tens C_{k})   \allowdisplaybreaks    \\
&\simeq  (C_{k+2} \hconv C_{b} ) \hconv  C_{k}  \allowdisplaybreaks \\
& \simeq  \left(\left(\tens_{\kappa \sim \jm; \kappa\ne \im} M[(k+2)(\kappa)^+,b(\kappa)^-]\right)
\tens M[(k+2)(\im)^+,b(\im)^-] \right) \hconv  C_{k} \allowdisplaybreaks \\
& \simeq  \left(\left(\tens_{\kappa \sim \jm; \kappa\ne \im} M'[(k+1)(\kappa)^+,b 
(\kappa)^-]\right) \tens M'[(k+2)^+,b(\im)^-] \right) \hconv  C'_{k+2}. 
\end{align*}
Since 
\bnum
\item $C'_{k+2}$ commutes with $M'[(k+1)(\kappa)^+,b(\kappa)^-]$ for $\kappa \sim \jm$ and  $\kappa \ne \im$, 
\item $M'[(k+2)^{+},b(\im)^-] \hconv  C'_{k+2} \simeq M'[k+2,b(\im)^-]$,
\ee
our assertion follows. 

\mnoi
(3: $\; a=k+2$)  We have $b=(k+2)^+_\uii = (k+1)^+_\ujj$. Then we have
\begin{align*}
& C'_{k+2} \hconv C'_{b}\simeq C_{k} \hconv C_{(k+1)^+} \simeq\bl\scrD^{-1}C_{k+2} \hconv C_{k+1}\br \hconv C_{(k+1)^+}.
\end{align*}

Since $C_{(k+1)^+}$ and $C_{k+2}$ commutes, the sequence $(\scrD^{-1}C_{k+2}, C_{k+1},C_{(k+1)^+})$ is normal. 
Hence
\begin{align*}
&\head(\scrD^{-1}C_{k+2} \tens C_{k+1} \tens C_{(k+1)^+}) \simeq \scrD^{-1}C_{k+2} \hconv (C_{k+1} \hconv C_{(k+1)^+}) \allowdisplaybreaks\\
& \simeq \scrD^{-1}C_{k+2} \hconv \left( \left(\tens_{\kappa \sim \im; \kappa \ne \jm} M[(k+1)(\kappa)^+,b(\kappa)^-]\right) \tens M[k+2,b(\jm)^-] \right) \allowdisplaybreaks \\
  & \simeq \scrD^{-1}C_{k+2} \hconv \left( \left(\tens_{\kappa \sim \im; \kappa \ne \jm} M'[(k+1)(\kappa)^+,b(\kappa)^-]\right) \tens
    \bl M[(k+2)^+,b(\jm)] \hconv C_{k+2}\br\right)
\end{align*}
Since 
\bnum
\item $\scrD^{-1}C_{k+2}$ commutes with $M[(k+1)(\kappa)^+,b(\kappa)^-]$ if $\kappa\sim \im$ and $\kappa\ne \jm$,
\item $\scrD^{-1}C_{k+2} \hconv (M[(k+2)^+,b(\jm)^-] \hconv C_{k+2}) \simeq  M[(k+2)^+,b(\jm)^-]\simeq   M'[(k+2)(\jm)^+,b(\jm)^-]$,
\ee
our assertion follows for this case.

In a similar way, one can prove when $b=k,k+1$, which completes the assertion when  $|[a,b]_\phi|=2$. 

\smallskip
The assertion for $|[a,b]_\phi| \ge 3$ follows from the same argument of \cite[Theorem 4.25]{KKOP24A}.
\end{proof}

\begin{proof} [{\bf End of the proof of {\rm \bf Proposition~\ref{prop: still good sequence}}}]
By Lemma~\ref{lem: still unmixed},~\ref{lem: still a a+ =1},
~\ref{lem: still commuting},~\ref{lem: still d=1} and Theorem~\ref{thm: Tsystem}, we conclude that $\uii$
satisfies Condition~\ref{assu: the assumption}.      
\end{proof}

As a corollary of Proposition~\ref{prop: still good sequence}, we obtain the following main result of this subsection.

\begin{theorem} \label{thm: every sequence good}
An arbitrary sequence $\uii=(\im_k)_{k\in K}$  in $\sfI$ \condi
satisfies {\rm Condition~\ref{assu: the assumption}}. Namely, we have
\bnum
\item $(\ldots, C^\uii_1,C^\uii_{0},\ldots)$ is strongly unmixed.
\item If $a\in K$ satisfies $a^+\in K$, then we have $\de(C^\uii_a,C^\uii_{a^+})=1$.
 \item  $M^\uii[a,b]$ is a real simple module for any $i$-box $[a,b]\subset K$.
\item \label{it: commutingv} $\de(C_s^\uii,M^\uii[a,b])=0$ if $a^- < s < b^+$.    
\item \label{it: d=1 M} $\de(C^\uii_{a^-},M^\uii[a,b])=1$ if $a^-\in K$ and $\de(C^\uii_{b^+},M^\uii[a,b])=1$ if $b^+ \in K$.
\item  For any $i$-box $[a,b]$ such that $a < b$, we have a short exact sequence in $\Cgz$
\begin{align} 
0 \to  \dtens_{ \substack{ \jm \in \hspace{0.1ex} \sfI; \\ d(\im_a,\jm)=1}} 
M^\uii[a(\jm)^+,b(\jm)^-]  \to   M^\uii[a^+,b]  \tens M^\uii[a,b^-]    \to   M^\uii[a,b]
\tens M^\uii[a^+,b^-] \to 0.
\end{align}
\ee
\end{theorem}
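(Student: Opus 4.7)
The plan is to prove the theorem by reducing it to the locally reduced case and then propagating along the chain of commutation and braid moves that connects an arbitrary sequence to a locally reduced one. The first key observation I would exploit is that each of the six clauses of Condition~\ref{assu: the assumption} is local: the module $C^{\bbD,\uii}_k$ is determined by the finitely many indices $\im_j$ lying between position $1$ (or $0$) and $k$, and each affine determinantial module $M^{\bbD,\uii}[a,b]$ is determined by finitely many such $C^{\bbD,\uii}_k$. Consequently it suffices to verify each instance of the condition for a suitably large finite window $\uii_{[l,r]}$ of $\uii$; and after using Lemma~\ref{lem:lneg} to translate indices and absorb the translation into a replacement of the complete duality datum, I would reduce to the case where the window starts at position $1$.

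At that point I would invoke Remark~\ref{rem: conlcusion loc}, which embeds $\uii_{[1,r]}$ as the initial segment $\tuii_{[1,r]}$ of some $\tuii\in\Seq(\Updelta^m)$ that is obtained from a $\rmQ$-adapted (hence locally reduced) sequence $\tujj\in\Seq(\Updelta^m)$ by a finite succession of commutation and braid moves. The locally reduced $\tujj$ already satisfies Condition~\ref{assu: the assumption} by Theorem~\ref{thm: affine cuspidal} and Theorem~\ref{thm: deter main}. Then Proposition~\ref{prop: still good sequence under ga} and Proposition~\ref{prop: still good sequence} imply that the condition is preserved at each step of the chain, yielding Condition~\ref{assu: the assumption} for $\tuii$ and, by restriction to the initial segment, for $\uii_{[1,r]}$. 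The restriction step is harmless because each clause is quantified by side conditions of the form ``if $a^-\in K$,'' so shrinking the ambient index set only trims the family of instances to check, while the affine cuspidal and determinantial modules associated with indices in the window agree on $\tuii$ and on $\uii$ by construction.

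The main technical obstacle has already been overcome in this section, namely the preservation under a single braid move encoded in Proposition~\ref{prop: still good sequence}, whose proof occupies Lemmas~\ref{lem: still unmixed}--\ref{lem: still d=1}, Proposition~\ref{prop: Muii properties 1}, and Theorem~\ref{thm: Tsystem}. Given these preparatory results in hand, the argument sketched above is a purely formal induction on the number of commutation and braid moves needed to relate an arbitrary finite sequence to a locally reduced one, combined with the locality observation made in the first paragraph. The resulting proof is therefore short: one quotes Theorem~\ref{thm: affine cuspidal}, Theorem~\ref{thm: deter main}, Lemma~\ref{lem:lneg}, Remark~\ref{rem: conlcusion loc}, Proposition~\ref{prop: still good sequence under ga}, and Proposition~\ref{prop: still good sequence} to stitch the reduction together.
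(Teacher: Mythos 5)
Your proposal follows the same reduction strategy as the paper's own proof: pass to a finite window and shift by Lemma~\ref{lem:lneg} so that the window starts at $1$, embed it into a sequence in $\Seq(\Updelta^m)$ linked to a locally reduced $\tujj$ by commutation and braid moves (Remark~\ref{rem: conlcusion loc}), start from the locally reduced case provided by Theorems~\ref{thm: affine cuspidal} and~\ref{thm: deter main}, and propagate Condition~\ref{assu: the assumption} along the chain using Propositions~\ref{prop: still good sequence under ga} and~\ref{prop: still good sequence}. You make explicit the locality observation and the restriction-to-an-initial-segment step that the paper's brief proof leaves implicit, which is a useful clarification but not a different argument.
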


\begin{proof} Assume first $l=1$.
Let us choose $\tuii,\tujj  \in \Seq(\Updelta^m)$ as in Remark~\ref{rem: conlcusion loc}.
Since $\tujj$ satisfies Condition~\ref{assu: the assumption},
so does $\tuii$ as well as $\tuii$ by Proposition~\ref{prop: still good sequence under ga} and Proposition~\ref{prop: still good sequence}.

\medskip
The general case follows from $l=1$ case and Lemma~\ref{lem:lneg}.
\end{proof}

During the proof of Proposition~\ref{prop: still good sequence}, we can conclude the following corollary as in \cite[Theorem 4.25]{KKOP24A} (see also Theorem~\ref{thm: deter main}~\eqref{it: de=1}).
\begin{corollary}
For any $i$-box $[a,b]^\uii$, we have
$$
\de(M^\uii[a^-,b^-],M^\uii[a,b])=1. 
$$
\end{corollary}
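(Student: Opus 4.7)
The plan is to bracket $\de(M^\uii[a^-,b^-],M^\uii[a,b])$ between $1$ and $1$ by separately establishing a lower bound (from the $T$-system) and an upper bound (by the same arguments that proved Proposition~\ref{prop: Muii properties 1}(iii), now available for arbitrary $\uii$).

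First I would produce the lower bound $\de\ge 1$ from the $T$-system of Theorem~\ref{thm: every sequence good}. Since $\im_{a^-}=\im_a=\im_b$ and $a^-<a\le b$, the interval $[a^-,b]\subset K$ is itself an $i$-box with $(a^-)^+_\uii=a$, so applying the $T$-system~\eqref{eq: T-system in terms of M[a,b] again} to it yields the short exact sequence
$$
0 \to \dtens_{\jm\in\sfI,\;d(\im_a,\jm)=1}\hspace{-1ex} M^\uii[(a^-)(\jm)^+,b(\jm)^-]
 \to M^\uii[a,b]\tens M^\uii[a^-,b^-]
 \to M^\uii[a^-,b]\tens M^\uii[a,b^-]\to 0,
$$
whose kernel and cokernel are both non-zero simple modules. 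Consequently the middle term $M^\uii[a,b]\tens M^\uii[a^-,b^-]$ has composition length at least $2$. Since both $M^\uii[a,b]$ and $M^\uii[a^-,b^-]$ are real simple by Theorem~\ref{thm: every sequence good}(iii), Proposition~\ref{prop: de properties}\eqref{it: head=socle} forces $\de(M^\uii[a^-,b^-],M^\uii[a,b])\ge 1$.

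For the upper bound $\de\le 1$ I would invoke Proposition~\ref{prop: Muii properties 1}(iii). Its proof was carried out under the hypothesis that Condition~\ref{assu: the assumption} holds for a neighbouring sequence $\ujj$, but Theorem~\ref{thm: every sequence good} asserts that the condition is valid for any sequence \condi, so the argument of \cite[Lemma 4.23]{KKOP24A} applies verbatim to the arbitrary sequence $\uii$ and delivers $\de(M^\uii[a,b],M^\uii[a^-,b^-])\le 1$. Combining this with the lower bound, and using the symmetry of $\de$ (Proposition~\ref{prop: de property 1}(i)), we conclude that $\de(M^\uii[a^-,b^-],M^\uii[a,b])=1$.

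The only real point to check is the bookkeeping of the index shifts $(a^-)^+_\uii=a$ and the fact that $b^-$ means the same thing whether computed inside the $i$-box $[a,b]$ or $[a^-,b]$; this is immediate from $\im_a=\im_b=\im_{a^-}$. No further obstacle arises, since the heavy lifting is already encapsulated in Theorem~\ref{thm: every sequence good} (existence of the $T$-system and real simplicity of all affine determinantial modules).
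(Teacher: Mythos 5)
Your proof is correct and takes essentially the same route as the paper's, bracketing $\de$ between $1$ and $1$ via the $T$-system for the $i$-box $[a^-,b]$ (lower bound) and the \cite[Lemma 4.23]{KKOP24A}-type upper bound, which Theorem~\ref{thm: every sequence good} makes available for arbitrary $\uii$. One minor citation slip: the step ``non-simple tensor of real simples $\Rightarrow\de\ge 1$'' is justified by Proposition~\ref{prop: de property 1}(ii) (real simple modules with $\de=0$ strongly commute, so the tensor would be simple), rather than by Proposition~\ref{prop: de properties}\eqref{it: head=socle}.
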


The following proposition can be proved by using the results in this subsection
and the argument in the proof of \cite[Proposition 5.7]{KKOP24A}.

\begin{proposition}
Let $\frakC=(\frakc_k)_{1 \le k \le r-l+1}$ be an admissible chain of $i$-boxes of range $[l,r]$ which is associated with $\uii$. 
For an movable $i$-box $\frakc_{k_0}$ assume $\tfrakc_{k_0+1}$ is an $i$-box. Set $\tfrakc_{k_0+1}=\frakc_{k_0+1}=[a,b]$ and set $B_{k_0}(\frakC)=(\frakc'_k)_{1 \le k \le r-l+1}$. 
Then we have
\bnum
\item \label{it: T-R} $\frakc_{k_0}=[a^+,b]$ and $\frakc'_{k_0}=[a,b^-]$ if $\calH_{k_0-1}=\calR$,
\item \label{it: T-L} $\frakc_{k_0}=[a,b^-]$ and $\frakc'_{k_0}=[a^+,b]$ if $\calH_{k_0-1}=\calL$.
\ee
In particular, we have an exact sequence
\begin{align} \label{eq: Tsystem2}  
0 \to  \tens_{
d(\im_a,\jm)=1} M^\uii[a(\jm)^+,b(\jm)^-]  \to  X\tens Y
  \to   M^\uii(\frakc_{k_0+1}) \tens M^\uii[a^+,b^-]   \to 0.    
\end{align}  
 where $(X,Y)=\big( M^\uii(\frakc_{k_0}),M^\uii(\frakc'_{k_0})\big)$ in case \eqref{it: T-R} and 
 $(X,Y)=\big(M^\uii(\frakc'_{k_0}),M^\uii(\frakc_{k_0})\big)$ in case \eqref{it: T-L}.
\end{proposition}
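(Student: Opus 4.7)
The proof is essentially a combinatorial unwinding of the definitions in Definition~\ref{def:ibox} and Definition~\ref{def: movable and B-move}, followed by a direct application of the $T$-system in Theorem~\ref{thm: every sequence good}~(vi). The only issue is a careful bookkeeping argument; no new representation-theoretic input beyond what was just established is required.

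The plan is as follows. First, I would identify $\frakc_{k_0}$ in the two cases. Consider Case (i) $\calH_{k_0-1}=\calR$. Movability of $\frakc_{k_0}$ (together with $k_0>1$, which is forced since $\tfrakc_{k_0+1}$ has at least two elements) gives $\calH_{k_0}=\calL$. Unpacking the horizontal moves: $\calH_{k_0-1}=\calR$ means $\tb_{k_0}=\tb_{k_0-1}+1$, $\ta_{k_0}=\ta_{k_0-1}$, while $\calH_{k_0}=\calL$ means $\ta_{k_0+1}=\ta_{k_0}-1$, $\tb_{k_0+1}=\tb_{k_0}$. Writing $[a,b]=\tfrakc_{k_0+1}$ thus yields $\ta_{k_0}=a+1$ and $\tb_{k_0}=b$, so $\frakc_{k_0}=\{\ta_{k_0},\tb_{k_0}]=\{a+1,b]=[(a+1)(\im_b)^+,b]$. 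Since $\im_a=\im_b$ (because $[a,b]$ is an $i$-box), one checks from the definition of $a^+$ that $(a+1)(\im_b)^+=a^+$, whence $\frakc_{k_0}=[a^+,b]$. Case (ii) $\calH_{k_0-1}=\calL$ is strictly symmetric and produces $\frakc_{k_0}=[a,b^-]$.

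Next, I would compute $\frakc'_{k_0}$ by invoking Definition~\ref{def: movable and B-move}~\eqref{it: B-move}: the box move flips $\calH_{k_0-1}$ and $\calH_{k_0}$ while leaving the other horizontal moves (and $c$, since $k_0\geq 2$) unchanged. Hence in Case (i), $\calH'_{k_0-1}=\calL$ and $\calH'_{k_0}=\calR$, giving $\tfrakc'_{k_0}=[a,b-1]$ and $\frakc'_{k_0}=[a,(b-1)(\im_a)^-]=[a,b^-]$. Case (ii) again follows by symmetry. This establishes the identifications in (i) and (ii), and also confirms that $\tfrakc'_{k_0+1}=[a,b]=\tfrakc_{k_0+1}$ as required of a box move.

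Finally, the exact sequence~\eqref{eq: Tsystem2} is obtained by applying Theorem~\ref{thm: every sequence good}~(vi) directly to the $i$-box $[a,b]=\frakc_{k_0+1}$ (which satisfies $a<b$ by hypothesis). Substituting the identifications just obtained, the two factors $M^\uii[a^+,b]$ and $M^\uii[a,b^-]$ in the middle tensor product become precisely $M^\uii(\frakc_{k_0})$ and $M^\uii(\frakc'_{k_0})$; in Case (i) this corresponds to $(X,Y)=(M^\uii(\frakc_{k_0}),M^\uii(\frakc'_{k_0}))$, while in Case (ii) the roles are swapped, giving $(X,Y)=(M^\uii(\frakc'_{k_0}),M^\uii(\frakc_{k_0}))$, exactly as stated.

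The main (and essentially only) obstacle is keeping the definitions of $a^+,b^-$, the envelopes, and the horizontal-move conventions straight; once these are carefully unwound, the statement reduces to the $T$-system, which is available in the required generality thanks to Theorem~\ref{thm: every sequence good}. No issue specific to non-locally-reduced sequences arises at this stage, because all the content has already been absorbed into Theorem~\ref{thm: every sequence good}.
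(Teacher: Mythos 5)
Your proof is correct and follows exactly the intended route: unwind the definitions of envelopes, horizontal moves and box moves to identify $\frakc_{k_0}$ and $\frakc'_{k_0}$ with $[a^+,b]$ and $[a,b^-]$ (or vice versa), then apply Theorem~\ref{thm: every sequence good}~(vi) to the $i$-box $[a,b]$. (A small remark: the constraint $k_0\ge2$ is not ``forced since $\tfrakc_{k_0+1}$ has at least two elements'' but rather is already built into the statement, since the case distinction refers to $\calH_{k_0-1}$, which only exists for $k_0\ge2$; this does not affect the argument.)
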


For an arbitrary finite sequence $\uii=(\im_{k})_{k\in K}$, the anti-symmetric pairing defined in~\eqref{eq: anti-symmetric pairing original}
can be written as follows: For $a , b \in K$, 
\begin{align} \label{eq: la i ab}
\la^{\uii}_{a,b} = (-1)^{\delta(a>b)} \delta(a \ne b) (\be^\uii_a,\be^\uii_b)    
\end{align}
where 
$$\beta^\uii_k \seteq s_{\im_l}\cdots s_{\im_{k-1}}(\al_{\im_k})$$
which is a (not necessarily positive) root. Here 
we take $l\in K$ such that $l\le a,b$.

\begin{proposition} \label{prop: new pairing La}
Let $\uii=(\im_k)_{k\in K}$ be an arbitrary finite sequence of $\sfI$. Let $[a,b]$ and $[a',b']$ be $i$-boxes in $K$ and assume that
\begin{align} \label{eq: pairing assumption}
{\rm (a)} \quad a > (a')^- \qtoq {\rm (b)}  \quad b^+ > b'.      
\end{align}
Then we have 
\begin{align*}
 \La( M^\uii[a,b],M^\uii[a',b'] ) =    \sum_{u \in [a,b]_\phi, v \in [a',b]_\phi} \la^\uii_{u,v}.
\end{align*}
\end{proposition}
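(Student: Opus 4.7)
The strategy is to reduce the computation to a pairwise identity on cuspidal modules via normality, and then establish the pairwise identity by transporting the known locally reduced case along commutation and braid moves.

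\textbf{Step 1 (Bilinearization).}  Under the assumption \eqref{eq: pairing assumption}, I first show that the concatenated sequence
$$(C^\uii_b,\,C^\uii_{b^-},\,\ldots,\,C^\uii_a,\,C^\uii_{b'},\,C^\uii_{(b')^-},\,\ldots,\,C^\uii_{a'})$$
is normal.  The internal normality within each of the two blocks is automatic: by Theorem~\ref{thm: every sequence good}(i) the whole family $(\ldots,C^\uii_1,C^\uii_0,\ldots)$ is strongly unmixed, and then Proposition~\ref{prop: Unmix normal}(ii) applies.  The disjunctive condition $a>(a')^-$ or $b^+>b'$ is precisely what makes the strongly unmixed ordering propagate across the two blocks.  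Once normality is established, iterating Lemma~\ref{lem: normal property}(b)--(c) yields the bilinear expansion
$$\La\bl M^\uii[a,b],M^\uii[a',b']\br=\sum_{u\in[a,b]_\phi,\;v\in[a',b']_\phi}\La(C^\uii_u,C^\uii_v).$$

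\textbf{Step 2 (Pairwise identity and reduction to the locally reduced case).}  It then suffices to prove that for all $u,v\in K$ one has $\La(C^\uii_u,C^\uii_v)=\la^\uii_{u,v}$.  By Remark~\ref{rem: conlcusion loc}, any finite sub-window of $\uii$ containing the two indices $u,v$ can be realised as the appropriate window of a sequence in $\Seq(\Updelta^m)$ which is obtained from a locally reduced sequence $\tujj$ by a finite chain of commutation moves $\upga_k$ and braid moves $\upbe_k$.  In the locally reduced case, the identity is exactly the proposition stated immediately before our target, which follows from the arguments of \cite[\S 5.2]{KKOP23F} and \cite[Theorem 4.12]{KKOP23P}.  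Thus I only need to verify that both sides are invariant under the two kinds of moves.  For the LHS, commutation moves leave the cuspidal modules untouched except for a swap of two strongly commuting ones, and braid moves transform the three consecutive cuspidals according to the explicit formulas of Remark~\ref{rem: local change}; combining this with $\La(\TT_\ttb M,\TT_\ttb N)=\La(M,N)$ from Proposition~\ref{prop: ell de} gives the required invariance.  For the RHS, the roots $\be^\uii_k=s_{\im_l}\cdots s_{\im_{k-1}}\al_{\im_k}$ are determined by the element of $\weyl$ represented by the partial product, and this element is unchanged under both commutation and braid relations; a direct computation shows that the multiset $\{\be^\uii_k\}$ is permuted in the same way as the corresponding $C^\uii_k$, so $\la^\uii_{u,v}=\la^\ujj_{\sigma(u),\sigma(v)}$ under the induced index permutation.

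\textbf{Main obstacle.}  The principal difficulty is Step 1, namely the normality of the concatenated sequence under the somewhat delicate disjunction \eqref{eq: pairing assumption}.  The two cases $a>(a')^-$ and $b^+>b'$ require mirrored arguments, and when the two intervals genuinely interact one must rule out socle collisions between the tail of the first block and the head of the second.  I would proceed by induction on $|[a,b]_\phi|+|[a',b']_\phi|$, peeling off one cuspidal factor at a time and using the T-system \eqref{eq: T-system in terms of M[a,b] again} together with Lemma~\ref{lem: de=de} and Lemma~\ref{lem: normal seq d}(a) to control the relevant $\de$-values.  The base cases reduce to the already-established commutativity of $M^\uii[a,b]$ with $M^\uii[a',b']$ whenever the two $i$-boxes commute, which is furnished by Theorem~\ref{thm: every sequence good} via Condition~\ref{assu: the assumption}(C).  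The bookkeeping in Step 2 is routine by comparison.
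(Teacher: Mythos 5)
There is a genuine gap, and both Step~1 and Step~2 are false as stated.

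\medskip
\textbf{The pairwise identity in Step 2 cannot hold.} The pairing $\la^\uii_{u,v}$ is antisymmetric by definition, whereas $\La(C^\uii_u,C^\uii_v)+\La(C^\uii_v,C^\uii_u)=2\de(C^\uii_u,C^\uii_v)\ge 0$, with equality only when $C^\uii_u$ and $C^\uii_v$ commute. By Theorem~\ref{thm: every sequence good}(2), $\de(C^\uii_a,C^\uii_{a^+})=1$ for every $a$ with $a^+\in K$, so non-commuting pairs always exist. For such a pair with $u<v$ one necessarily has $\La(C^\uii_u,C^\uii_v)\ne\la^\uii_{u,v}$. Transporting along commutation and braid moves cannot fix this, since the defect is present already in the locally reduced case.

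\medskip
\textbf{The bilinearization in Step 1 also fails.} Take $\sfg$ of type $A_1$, so $\sfI=\st{\im}$, and $\uii=(\im,\im,\im)$. Then $C^\uii_1=L_\im$, $C^\uii_2=\scrD L_\im$, $C^\uii_3=\scrD^2 L_\im$. Take $[a,b]=[2,2]$, $[a',b']=[1,3]$; here $(a')^-=-\infty<2$, so assumption (a) holds. One has $\be^\uii_1=\al_\im$, $\be^\uii_2=-\al_\im$, $\be^\uii_3=\al_\im$, hence $\la^\uii_{2,1}+\la^\uii_{2,2}+\la^\uii_{2,3}=2+0-2=0$, and a direct computation (peeling $C^\uii_3$ off $M^\uii[1,3]$, using that $C^\uii_2$ commutes with $M^\uii[1,3]$ and $M^\uii[1,2]$) gives $\La(C^\uii_2,M^\uii[1,3])=0$, as the proposition predicts. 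But
$$\La(C^\uii_2,C^\uii_1)+\La(C^\uii_2,C^\uii_2)+\La(C^\uii_2,C^\uii_3)=2+0+0=2\neq 0,$$
where $\La(C^\uii_2,C^\uii_3)=0$ follows from $\La(C^\uii_3,C^\uii_2)=2$, $\de(C^\uii_2,C^\uii_3)=1$. So $\La\bl M^\uii[a,b],M^\uii[a',b']\br\ne\sum_{u,v}\La(C^\uii_u,C^\uii_v)$. In fact, the triple $(C^\uii_2,C^\uii_3,C^\uii_2)$ is not even normal: all three sufficient conditions of Lemma~\ref{lem: normal seq d}(a) fail, because $\de(\rdual C^\uii_2,C^\uii_2)=1$. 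The disjunction $a>(a')^-$ or $b^+>b'$ does not force the concatenated sequence into strongly unmixed order, which is what your hand-waving in Step~1 implicitly requires.

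\medskip
\textbf{Why the correct proof must look different.} The identity $\La(C^\uii_u,C^\uii_v)=\la^\uii_{u,v}$ is only valid for pairs $(u,v)$ that are strongly unmixed (i.e., $u>v$, via $\La=\La^\infty$ and Theorem~\ref{thm: wtpairing Lainf}) or for commuting pairs. The actual argument therefore peels cuspidal factors one at a time, choosing which end of which box to peel according to the relative positions of $a,b,a',b'$, and in the cases where $a\le a'$ or $b\le b'$ it first uses the commutativity furnished by Theorem~\ref{thm: every sequence good}(4) to negate $\La$ and reduce to a strongly unmixed configuration. This ensures that only strongly unmixed pairs of cuspidals ever appear, so the $\la$-values are picked up correctly at each step. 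Your decomposition sums over all pairs including ill-ordered ones, where $\La\ne\la$, and hence computes a different (wrong) quantity.
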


\begin{proof}
Since the proofs for {\rm (a)} and  {\rm (b)} are similar, we shall give only the proof of {\rm (a)}. For simplicity of notation, we drop $^\uii$
throughout the proof.

\noindent
(i) Assume that $a=b > (a')^-$. If $a > a'$, then 
\begin{align*}
 \La(C_a,M[a',b']) & =  \La(C_a,M[(a')^+,b'] \hconv C_{a'} ) \\
 & \underset{*}{=} \La(C_a,M[(a')^+,b'] ) +\La(C_a,C_{a'} ) \underset{\dagger}{=}  \La(C_a,M[(a')^+,b'] ) + \la_{a,a'}
\end{align*}
Here $\underset{*}{=}$ holds by Lemma~\ref{lem: normal property}, Lemma~\ref{lem: normal seq d} and the property~\eqref{it: unmixed} for $\uii$, and
$\underset{\dagger}{=}$ holds by Proposition~\ref{prop: Unmix normal}~\eqref{it: Li La} and Theorem~\ref{thm: wtpairing Lainf}. Then, by the induction hypothesis on $|[a',b']_\phi|$, we have
$$
 \La(C_a,M[a',b']) = \sum_{v \in [a',b']_\phi} \la^\uii_{a,v}.
$$

Now, let us consider the remaining case of (i) which can be described as follows:
$$
(a')^- < a = b \le a' \le b'. 
$$
Since $C_a$ commutes with $M[a',b']$ and  $M[a',(b')^-]$ by~\eqref{it: commutingv} for $\uii$, 
\begin{align*}
\La(C_a,M[a',b']) & = - \La(M[a',b'],C_a) = - \La(C_{b'} \hconv M[a',(b')^-],C_a)     \allowdisplaybreaks \\
& = - \La(C_{b'} ,C_a) -  \La( M[a',(b')^-],C_a)  \allowdisplaybreaks\\
& = (\be_{b'},\be_{a}) + \La(C_a, M[a',(b')^-]) = \la_{a,b'} +  \La(C_a, M[a',(b')^-]). 
\end{align*}
Then our assertion follows from the induction hypothesis on $|[a',b']_\phi|$ and the previous case. 

\noindent
(ii) Assume $a<b$. If $b>b'$, then 
\begin{align*}
\La(M[a,b],M[a',b']) & = \La(C_b \hconv M[a,b^-],M[a',b']) \allowdisplaybreaks\\
& = \La(C_b ,M[a',b']) + \La(M[a,b^-],M[a',b']),
\end{align*}
since $(C_b, M[a,b^-],M[a',b'])$ is a normal sequence by~\eqref{it: unmixed} for $\uii$. Then by the induction $|[a,b]_\phi|$, our assertion for $b>b'$ follows.

Now, let us assume $b \le b'$ which completes this assertion. Then we have
$(a')^- < a < b \le b'$. Then~\eqref{it: commutingv}  for $\uii$ says that $C_u$ commutes with $M[a',b']$ for any $u \in [a,b]_\phi$. Then we have
$$
\La(M[a,b],M[a',b']) = \sum_{u \in [a,b]_\phi} \La(C_u,M[a',b'])
$$
by \cite[Proposition 4.2]{KKOP20}. Then our assertion follows from (i). 
\end{proof}

\begin{proposition}\label{prop:detLambda}
Let $\uii=(\im_k)_{k\in K}$ be an arbitrary finite sequence of $\sfI$. For $i$-boxes $[a,b]$ and $[a',b']$ in $K$, if
$\de(M^\uii[a,b],M^\uii[a',b'])=0$, then 
$$
\La(M^\uii[a,b],M^\uii[a',b']) = \sum_{u \in [a,b]_\phi, v \in [a',b']_\phi} \la^\uii_{u,v}.
$$
\end{proposition}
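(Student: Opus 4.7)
The plan is to reduce the statement to Proposition~\ref{prop: new pairing La} by a case analysis on whether the hypothesis \eqref{eq: pairing assumption} of that proposition holds for $([a,b],[a',b'])$ or for the swapped pair $([a',b'],[a,b])$, using the commutation hypothesis $\de(M^\uii[a,b],M^\uii[a',b'])=0$ to pass between the two $\La$-values.

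First I would observe that, since $M^\uii[a,b]$ and $M^\uii[a',b']$ are real simple modules by Theorem~\ref{thm: every sequence good}, the hypothesis $\de(M^\uii[a,b],M^\uii[a',b'])=0$ means that they strongly commute, and Proposition~\ref{prop: de property 1}(i) gives
\[
\La(M^\uii[a,b],M^\uii[a',b']) = -\La(M^\uii[a',b'],M^\uii[a,b]).
\]
If either $a>(a')^-$ or $b^+>b'$ holds, Proposition~\ref{prop: new pairing La} applied directly to $([a,b],[a',b'])$ yields the desired identity.

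Otherwise we are in the case $a\le (a')^-$ and $b^+\le b'$. The first inequality forces $a^- < a \le (a')^- < a'$, so in particular $a' > a^-$, which is condition \eqref{eq: pairing assumption}(a) for the swapped pair $([a',b'],[a,b])$. Hence Proposition~\ref{prop: new pairing La} applies to $([a',b'],[a,b])$ and gives
\[
\La(M^\uii[a',b'],M^\uii[a,b]) = \sum_{u\in[a',b']_\phi,\; v\in[a,b]_\phi}\la^\uii_{u,v}.
\]
Combining this with the sign-reversal from strong commutation and using the antisymmetry $\la^\uii_{u,v}=-\la^\uii_{v,u}$ of the pairing \eqref{eq: la i ab}, we obtain
\[
\La(M^\uii[a,b],M^\uii[a',b']) = -\sum_{u\in[a',b']_\phi,\; v\in[a,b]_\phi}\la^\uii_{u,v} = \sum_{u\in[a,b]_\phi,\; v\in[a',b']_\phi}\la^\uii_{u,v},
\]
which is the required formula.

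The argument is almost purely combinatorial given the previous proposition; the only substantive point is recognizing that the failure of \eqref{eq: pairing assumption} for the original ordered pair automatically forces \eqref{eq: pairing assumption}(a) for the swapped pair, so no new representation-theoretic input beyond the symmetry $\La(M,N)=-\La(N,M)$ for strongly commuting real simples is needed. I do not anticipate any serious obstacle; the main care required is in the index bookkeeping for the swap-and-antisymmetrize step.
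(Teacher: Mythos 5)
Your proposal is correct and follows essentially the same route as the paper's proof: reduce to Proposition~\ref{prop: new pairing La} via a case split, use the hypothesis $\de=0$ to write $\La(M^\uii[a,b],M^\uii[a',b'])=-\La(M^\uii[a',b'],M^\uii[a,b])$, and apply that proposition to the swapped pair. The only cosmetic difference is in presentation: you directly verify that failure of \eqref{eq: pairing assumption} for $([a,b],[a',b'])$ forces condition~(a) for $([a',b'],[a,b])$ via $a^-<a\le (a')^-<a'$, whereas the paper instead supposes \eqref{eq: pairing assumption} fails for the swapped pair as well and extracts the same chain of inequalities to produce a contradiction; the underlying observation is identical.
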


\begin{proof} 
By Proposition~\ref{prop: new pairing La},
it is enough to consider the case $a \le a'$ and $b^+ \le b'$.  
 Since  $\de(M^\uii[a,b],M^\uii[a',b'])=0$,
we have 
$$\La(M^\uii[a,b],M^\uii[a',b'])=-\La(M^\uii[a',b'],M^\uii[a,b]).$$

If $a'>a^-$ or $(b')^+>b$, Proposition~\ref{prop: new pairing La} says that
$$\La(M^\uii[a,b],M^\uii[a',b']) = - \sum_{u \in [a,b]_\phi, v \in [a',b']_\phi} \la^\uii_{v,u}=\sum_{u \in [a,b]_\phi, v \in [a',b']_\phi} \la^\uii_{u,v},$$
which implies the assertion. Thus we may assume that $a'\le a^-$. However, this assumption implies 
$$a' \le a^- < a \le a',$$
which yields a contradiction. 
\end{proof}

\begin{lemma}
Let $\uii=(\im_k)_{k\in K}$ be an arbitrary sequence. 
Then, for $a,k\in K$ with $a^-< k < a^+$, we have
$$
(\be^\uii_k, w^{\uii}_{\le a^-}\varpi_{\im_a}+w^{\uii}_{\le a}\varpi_{\im_a})
= \bc
-(\be^\uii_k,\be^\uii_a) & \text{if {\rm (i)} $a^- < k  < a$,} \\
(\be^\uii_k,\be^\uii_a) & \text{if {\rm (ii)} $a< k < a^+$,}\\
0&\text{if {\rm (iii)} $k=a$,} 
\ec
$$
where $w_{\le k}^\uii=s_{\im_l}\cdots s_{\im_k}$.
\enlemma
\begin{proof}
{\rm (i)} Note that
$$
(\be^\uii_k, w^{\uii}_{\le a^-}\varpi_{\im_a}+w^{\uii}_{\le a}\varpi_{\im_a}) 
= (\be^\uii_k, 2w^{\uii}_{\le a^-}\varpi_{\im_a}-\be^\uii_a).
$$
Then it suffices to show that $(\be^\uii_k,  w^{\uii}_{\le a^-}\varpi_{\im_a})=0$. 
Since 
$$
(s_{\im_l}\ldots s_{\im_{k-1}}\al_{\im_k},   s_{\im_l}\ldots s_{\im_{a^-}} \varpi_{\im_a})
=  (s_{\im_{a^-+1}}\ldots s_{\im_{k-1}}\al_{\im_k},    \varpi_{\im_a}) = 0,
$$
the first case follows.

\mnoi
{\rm (ii)} Note that
$$
(\be^\uii_k, w^{\uii}_{\le a^-}\varpi_{\im_a}+w^{\uii}_{\le a}\varpi_{\im_a}) 
= (\be^\uii_k, 2w^{\uii}_{\le a}\varpi_{\im_a}+\be^\uii_a).
$$
As in the previous case, we have
$$
(s_{\im_1}\ldots s_{\im_{k-1}}\al_{\im_k},   s_{\im_1}\ldots s_{\im_{a}} \varpi_{\im_a})
=   (s_{\im_{a+1}}\ldots s_{\im_{k-1}}\al_{\im_k},   \varpi_{\im_a}) = 0,
$$
which completes this case.

\snoi
{\rm (iii)} follows from the fact that $\be_a = w^{\uii}_{\le a^-}\varpi_{\im_a}-w^{\uii}_{\le a}\varpi_{\im_a}$.  
\end{proof}

\begin{lemma} 
Let $\uii=(\im_k)_{k\in K}$ be an arbitrary sequence. For an $i$-box $[a,b] \subset K$ and $k \in K$ with $\im=\im_a=\im_b$ and
$a^-<k<b^+$, we have
\begin{align} \label{eq: the pairing with w}
\La(C^\uii_k,M^\uii[a,b]) = -(\be^\uii_k,w^\uii_{\le a^-}\varpi_{\im} + w^\uii_{\le b}\varpi_{\im})    
\end{align}
\end{lemma}

\begin{proof}
Let $n,t,u \in \Z_{>0}$ be integers such that $u=a^{+(n-1)} \le  k <a^{+n}$ and $a^{+t}=b$.
Then the right hand side of~\eqref{eq: the pairing with w} becomes
\begin{align*}
 (\be_k, \sum_{i=0}^{n-2} \be_{a^{+i}}) +
&(\be_k, w_{\le u^-}\varpi_\im+w_{\le u}\varpi_\im) -
(\be_k, \sum_{i=n}^{t} \be_{a^{+i}})\allowdisplaybreaks \\
& = (\be_k, \sum_{i=0}^{n-2} \be_{a^{+i}}) - \la^\uii_{k,u} - 
(\be_k, \sum_{i=n}^{t} \be_{a^{+i}}) = -\sum_{v \in [a,b]_\phi} \la^\uii_{k,v}. \qedhere
\end{align*}
\end{proof}

\begin{corollary} \label{cor: w-pairing}
  Let $\uii=(\im_k)_{k\in K}$ be an arbitrary sequence \condi
  and let $[a_1,b_1], [a_2,b_2] \subset K$ be $i$-boxes such that $a_2^- < a_1 \le b_1 < b_2^+$. Then we have
$$
\La(M^\uii[a_1,b_1],M^\uii[a_2,b_2]) = -(w^\uii_{\le a_1^-}\varpi_{\im_{a_1}} - w^\uii_{\le b_1}\varpi_{\im_{b_1}},w^\uii_{\le a^-_2}\varpi_{\im_{a_2}} + w^\uii_{\le b_2}\varpi_{\im_{b_2}}).    
$$
In particular,
$\La(M^\uii\{l,b_1],M^\uii\{l,b_2]) =  -(\varpi_{\im_{b_1}}- w^\uii_{\le b_1}\varpi_{\im_{b_1}},\varpi_{\im_{b_2}}+ w^\uii_{\le b_2}\varpi_{\im_{b_2}})$. 
\end{corollary}

\bigskip
\subsection{Category $\Cg(\ttb)$ and unitriangularity} \label{subsec: subcat C(b)}
Let us take an arbitrary sequence $\uii=(\im_k)_{k\in K}$ \condi.
For $\bsa = (a_k)_{k\in K}\in \Z^{\oplus K}_{\ge 0}$, we set 
\begin{align} \label{eq: Pi standard module}
  P^{\bbD,\uii}(\bsa) \seteq \otens C_k^{\tens a_k}=
  \cdots\tens C_{1}^{\tens a_{1}}\tens C_0^{\tens a_0}\tens \cdots ,
\end{align}
and call it the \emph{standard module} associated with $\uii$ and $\bsa$.  

\smallskip

\begin{definition} \label{def: bi-lexico}
For $\bsa=(a_k)_{k\in K}$, $\bsa'=(a'_k)_{k\in K}\in \Z_{\ge0}^{\oplus K}$, let us consider the following conditions:
\bna
\item there exists $s \in K$ such that $a_k = a_k'$ for any $k <  s$ and $ a_s < a_s'$,
\item there exists $u \in K$ such that $ a_k = a_k' $ for any $k > u$ and $ a_u < a_u'$.
\ee
We write $\bsa\prec_\ttr\bsa'$ (resp.\ $\bsa\prec_\ttl\bsa'$)
if (a) (resp.\ (b)) is satisfied, and 
$\bsa\prec\bsa'$ if the both conditions are satisfied.  
\end{definition} 

Since $(\ldots,C_1,C_0,\ldots)$ is strongly unmixed, $V^{\Di}(\bsa) \seteq \head(P^{\Di}(\bsa))$ is simple. 

The following two lemmas are proved for
$C_k=C^{\Dpair}_k$ in \cite[Lemma 6.9, Theorem 6.12]{KKOP23P}.
However its proof only uses the fact that $(\ldots,C_{1},C_0,\ldots)$
is a strongly unmixed sequence of root modules.
Hence it also holds for an arbitrary sequence $\uii$.
\begin{lemma} [{\cite[Lemma 6.9]{KKOP23P}}]\label{lem: filter}
  Let $\uii=(\im_k)_{k\in K}$ be an arbitrary sequence in $\sfI$
  \condi.
Set $\ttS_k =C_k^{\bbD,\uii}$. 
For a finite interval $[n,m]\subset K$ and $a_{m},a_{m+1},\ldots,a_n \in \Z_{\ge 0}$, set
$$ M \seteq \head( \ttS_{m}^{\otimes a_m} \tens \ttS_{m-1}^{\otimes a_{m-1}} \tens \cdots \tens \ttS_{n}^{\otimes a_n} ).$$
\bnum
\item $\de(\rdual \ttS_k,M)=0$ for any $k >m$.
\item Set $M_m \seteq M$ and define inductively
$$d_k \seteq \de(\rdual \ttS_k,M_k) \qtq M_{k-1} \seteq M_k \hconv \rdual(\ttS_k^{\otimes d_k})$$
for $k\in [l,m]$. Then 
$$
d_k =a_k \qtq M_k \simeq \head( \ttS_{k}^{\otimes a_k} \tens \ttS_{k-1}^{\otimes a_{k-1}} \tens \cdots \tens \ttS_{l}^{\otimes a_{l}}) \quad \text{ for $k \in [n,m]$.}
$$
\item $\de(\ldual \ttS_k,M)=0$ for any $k <n$.
\item Set $N_n \seteq M$ and define inductively
$$e_k \seteq \de(\ldual \ttS_k,N_k) \qtq N_{k+1} \seteq \ldual(\ttS_k^{\otimes e_k}) \hconv N_k$$
for $k\in [n,m]$. Then 
$$
e_k =a_k \qtq M_k \simeq \head( \ttS_{m}^{\otimes a_m} \tens \cdots \tens \ttS_{k+1}^{\otimes a_{k+1}} \tens \ttS_{k}^{\otimes a_{k}}) \quad \text{ for $k \in [n,m]$.}
$$
\ee
\end{lemma}

 \Lemma \label{lem:subq}
  Let $\bsa$, $\bsb\in \Z_{\ge 0}^{\oplus K}$.
  \bnum
  \item $\Vdi(\bsa)$ appears only once in the composition series of $\Pdi(\bsa)$.
\item If $\Vdi(\bsa)$ appears in the composition series of $\Pdi(\bsb)$, then we have
  $\bsa\preceq \bsb$.
\ee
\end{lemma}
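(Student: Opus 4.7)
The strategy is to imitate the proof of \cite[Theorem 6.12]{KKOP23P} (established for the locally-reduced case), now using the properties of $\uC^{\bbD,\uii}$ made available for an \emph{arbitrary} sequence $\uii$ by Theorem~\ref{thm: every sequence good}. The central combinatorial tool will be Lemma~\ref{lem: filter}, which recovers the tuple $\bsa$ from $V^{\bbD,\uii}(\bsa)$ via the iterated invariants $\de(\rdual C^{\bbD,\uii}_k,-)$ (peeling from the left) and $\de(\ldual C^{\bbD,\uii}_k,-)$ (peeling from the right).

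For (i), I first observe that each $(C^{\bbD,\uii}_k)^{\tens a_k}$ coincides with the real simple module $(C^{\bbD,\uii}_k)^{\hconv a_k}$, so $P^{\bbD,\uii}(\bsa)$ may be rewritten as the decreasing-order tensor product of these grouped real simple modules. Strong unmixedness of $\uC^{\bbD,\uii}$ (Theorem~\ref{thm: every sequence good}(i)), combined with a character-level computation of $\La$ to promote strong commutation between $\rdual C^{\bbD,\uii}_j$ and $C^{\bbD,\uii}_k$ (for $j>k$) to strong commutation between their $\hconv$-powers, then shows that the grouped sequence $\bigl((C^{\bbD,\uii}_m)^{\hconv a_m},\ldots,(C^{\bbD,\uii}_n)^{\hconv a_n}\bigr)$ is unmixed. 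Proposition~\ref{prop: Unmix normal}\eqref{it: unmix normal} yields normality, and Proposition~\ref{prop:normalsimple}\eqref{itsimple} gives the desired multiplicity-one statement.

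For (ii), I will prove $\bsa\preceq_{\ttl}\bsb$; the inequality $\bsa\preceq_{\ttr}\bsb$ is symmetric, obtained from Lemma~\ref{lem: filter}(iii)-(iv) with $\ldual C^{\bbD,\uii}_k$ in place of $\rdual C^{\bbD,\uii}_k$. The crucial ingredient will be the bound: \emph{any simple subquotient $S$ of $P^{\bbD,\uii}(\bsb)$ satisfies $\de(\rdual C^{\bbD,\uii}_k,S)\le b_k$ whenever $b_j=0$ for all $j>k$}. I establish this in two steps. First, by strong unmixedness, $\rdual C^{\bbD,\uii}_k$ strongly commutes with each $C^{\bbD,\uii}_j$ for $j<k$, hence with each $(C^{\bbD,\uii}_j)^{\hconv b_j}$, and hence with every simple subquotient $T$ of $\bigotimes_{j<k}(C^{\bbD,\uii}_j)^{\tens b_j}$ via the standard $\La$-additivity argument: normality coming from Lemma~\ref{lem: normal seq d}\eqref{it: normal 1} together with Proposition~\ref{prop:normalsimple}(i)-(ii) gives $\La(\rdual C^{\bbD,\uii}_k,T)+\La(T,\rdual C^{\bbD,\uii}_k)\le 0$, forcing $\de(\rdual C^{\bbD,\uii}_k,T)=0$. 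Second, the $b_k$ copies of $C^{\bbD,\uii}_k$ in $P^{\bbD,\uii}(\bsb)$ are absorbed by iterated application of Lemma~\ref{lem: decrease} with $L=\rdual C^{\bbD,\uii}_k$ (a real root module with $\de(L,C^{\bbD,\uii}_k)=1$), yielding the claimed bound. Taking $S=V^{\bbD,\uii}(\bsa)$ and $k=m=\max\bigl\{j:a_j>0\text{ or }b_j>0\bigr\}$, Lemma~\ref{lem: filter}(ii) identifies $\de(\rdual C^{\bbD,\uii}_m,V^{\bbD,\uii}(\bsa))=a_m$, whence $a_m\le b_m$. If the inequality is strict, then $\bsa\prec_{\ttl}\bsb$; if equal, Lemma~\ref{lem: filter}(ii) lets me replace $V^{\bbD,\uii}(\bsa)$ by $V^{\bbD,\uii}(\bsa)\hconv(\rdual C^{\bbD,\uii}_m)^{\hconv a_m}$ and iterate on $\sum_k a_k$.

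The hard part will be the inductive step: once $a_m=b_m$, I must verify that the peeled simple module $V^{\bbD,\uii}(\bsa)\hconv(\rdual C^{\bbD,\uii}_m)^{\hconv a_m}$ still appears as a composition factor of a correspondingly peeled version of $P^{\bbD,\uii}(\bsb)$, so that the induction closes. This amounts to checking that passage to composition factors is compatible with the operation $(-)\hconv(\rdual C^{\bbD,\uii}_m)^{\hconv a_m}$, and I expect it to follow by combining the normality of the sequence $\bigl(V^{\bbD,\uii}(\bsa),(\rdual C^{\bbD,\uii}_m)^{\hconv a_m}\bigr)$, the multiplicity-one statement (i) already proved, and the recovery identity of Lemma~\ref{lem: recover}.
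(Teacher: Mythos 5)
Your strategy—porting the KKOP23P Theorem 6.12 argument to an arbitrary $\uii$ by exploiting strong unmixedness of $\uC^{\bbD,\uii}$ (Theorem~\ref{thm: every sequence good}), Lemma~\ref{lem: filter}, and the normality machinery—is the same one the paper intends; the paper's one-line proof merely cites Propositions~\ref{prop:normalsimple} and~\ref{prop: Unmix normal}, with the preamble noting that the KKOP23P proofs carry over once strong unmixedness of the sequence of root modules is available. Your treatment of part~(i) is essentially correct, although the ``character-level computation of $\La$'' you invoke is a detour: for real simple modules strong commutation passes to tensor powers automatically, so the grouped sequence $(\ldots, C_1^{\tens a_1}, C_0^{\tens a_0},\ldots)$ of real simple modules is unmixed directly, hence normal by Proposition~\ref{prop: Unmix normal}~(ii), and Proposition~\ref{prop:normalsimple}~(iii) finishes.

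For part~(ii), the inductive descent that you yourself flag as ``the hard part'' is a genuine gap, and the tools you propose there do not close it. Tensoring $\Pdi(\bsb)=C_m^{\tens b_m}\tens Q$ (with $Q=\Pdi(\bsb_{<m})$) on the right by $(\rdual C_m)^{\tens a_m}$ does put $\Vdi(\bsa_{<m})$ among the composition factors of $C_m^{\tens b_m}\tens Q\tens(\rdual C_m)^{\tens a_m}$, but the composition factors of that module are not comparable to those of $Q$: only $\rdual^k C_m$ with $k>0$ (not $C_m$ itself) strongly commutes with the subquotients of $Q$, so there is no cancellation of the outer factors, and neither normality nor Lemma~\ref{lem: recover} nor multiplicity-one supplies it. The decisive ingredient you are missing is Lemma~\ref{Lem: LMLN MN}~(i): filter $\Pdi(\bsb)=C_m^{\tens b_m}\tens Q$ so that $\Vdi(\bsa)$ appears in $C_m^{\tens b_m}\tens T$ for some composition factor $T$ of $Q$; both $(C_m^{\tens b_m},T)$ and $(C_m^{\tens b_m},\Vdi(\bsa_{<m}))$ are strongly unmixed (by the same $\La$-additivity you use in your first step, now applied to every $\rdual^k C_m$ with $k>0$), and once $a_m=b_m$ one has $\Vdi(\bsa)=C_m^{\tens b_m}\hconv\Vdi(\bsa_{<m})$, so Lemma~\ref{Lem: LMLN MN}~(i) forces $T\simeq\Vdi(\bsa_{<m})$, i.e.\ $\Vdi(\bsa_{<m})$ is a composition factor of $\Pdi(\bsb_{<m})$. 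That identification is what lets the induction on $|\bsb|$ descend; without it the argument does not close.
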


\begin{proof}
  The assertion follows from Proposition~\ref{prop:normalsimple} \eqref{itsimple} and Proposition~\ref{prop: Unmix normal} \eqref{it: unmix normal}. \QED

\begin{definition} \label{def: CgDi}
Let $\uii=(\im_k)_{k\in K}$ be an arbitrary sequence in $\sfI$
  \condi. We denote by $\scrC_\g^{\bbD,\uii}$ the  smallest full subcategory of $\Cgz$ satisfying the following properties:
\ben
\item it is stable under taking tensor products,  subquotients and extensions,
\item it contains $\{ C_m^{\bbD,\uii}\}_{m\in K}$ and $\mathbf{1}$. 
  \ee
  \end{definition}
Note that  we have
\eq
\parbox{75ex}{Any simple $S$ in $\scrC_\g^{\bbD,\uii}$
  is isomorphic to a subquotient of
  $\Pdi(\bsa)$ for some $\bsa\in\Z_{\ge0}^{\oplus K}$.}
\label{eq:subP}
\eneq
Let us consider the following condition on $\uii$, 
\eq
&&\hs{5ex}\parbox{\textwidth-10ex}{
  For a simple module $M$ in $\scrC_\g^{\bbD,\uii}$, there exists  $\bsa =(a_k)_{k\in K}\in \Z_{\ge0}^{\oplus K}$ such that\\[.5ex]
  \hs{20ex}$M \simeq  \Vdi(\bsa)\seteq\head(\Pdi(\bsa))$.}
\label{eq: complete}
\eneq
Later we prove that an arbitrary sequence $\uii$ satisfies \eqref{eq: complete}.
Before proving this, we discuss consequences of
\eqref{eq: complete}.

\Th \label{thm: unitri beta}
Let $\uii=(\im_k)_{k\in K}$ be a sequence in $\sfI$ \condi.
Assume that $\uii$ satisfies \eqref{eq: complete}.
\bnum
\item \label{it: unitri 1}
Let $\bsa\in \Z_{\ge0}^{\oplus K}$. If $V$ is a simple subquotient of $P^\uii(\bsa)$ which is not isomorphic to $V^\uii(\bsa)$, then 
there exists $\bsb \in \Z_{\ge0}^{\oplus K}$ such that
$$
V \simeq V^\uii(\bsb) \qtq \bsb \prec \bsa. 
$$
\item \label{it: unitri 2} In the Grothendieck ring, we have 
\begin{align} \label{eq: uni map2}
[P^\uii(\bsa)] = [V^\uii(\bsa)] + \sum_{\bsb  \prec \bsa} c^{\bbD}_{\bsa,\bsb} [V^\uii(\bsb)] \quad\text{for some $ c_{\bsa,\bsb} \in \Z_{\ge 0}$.}    
\end{align}
\item \label{it: unitri 3} $\st{[P^\uii(\bsa)]}_{\bsa\in\Z_{\ge0}^{\oplus K}}$, as well as 
  $\st{[V^\uii(\bsa)]}_{\bsa\in\Z_{\ge0}^{\oplus K}}$, is a $\Z$-basis of $K(\scrC_\g^{\bbD,\uii})$.
\ee
\enth

\begin{proof}
\eqref{it: unitri 1} is an immediate consequence of Lemma~\ref{lem:subq},
and \eqref{it: unitri 2} and \eqref{it: unitri 3} are consequences of~\eqref{it: unitri 1}.
\QED

{}From~\eqref{eq: complete}, for a simple module $X$ in $\scrC_\g^{\bbD,\uii}$ with $X \simeq V^{\uii}(\bsa)$,  we set
\begin{align}
\PBW_\uii(X) \seteq \bsa \in \Z_{\ge0}^{\oplus K}.     
\end{align}

Using $\PBW_\uii(X)$ and~\eqref{eq: la i ab}, we can define the anti-symmetric pairing $\rmL_\uii$ on the set of pairs of
simple modules in $\scrC_\g^{\bbD}(\ttb)$, a generalization of $L_\ii$ in \cite[(5.7)]{KKOP23F}, as follows: For simple modules $X,Y$ in $\scrC_\g^{\bbD}(\ttb)$ and $\uii \in \Seq(\ttb)$, 
\begin{align} \label{eq: def Li}
\rmL_\uii(X,Y) \seteq \sum_{a,b \in K} (\PBW_\uii(X))_a (\PBW_\uii(Y))_b \; \la^\uii_{a,b}.
\end{align}
Then we can interpret Corollary~\ref{cor: w-pairing} as
$$
\La(M^\uii[a_1,b_1],M^\uii[a_2,b_2])  = \rmL_\uii(M^\uii[a_1,b_1],M^\uii[a_2,b_2])
\quad \text{if $a_2^- < a_1 \le b_1 < b_2^+$.} 
$$

Now we will show that an arbitrary sequence $\uii$ always satisfies this condition~\eqref{eq: complete}.

\Lemma[{\cite[Theorem 6.10]{KKOP23P}}]\label{lem:hwccomp} 
For any interval $[a,b]$ in $\Z$, $\uii=(\hwc)_{[a,b]}$ satisfies condition \eqref{eq: complete}.
\enlemma
 
\begin{lemma} \label{lem: untri gamma}
Assume that a sequence $\uii=\st{\im_k}_{k\in K}$ \condi satisfies \eqref{eq: complete}.
Let $\ujj$ be a sequence in $\sfI$ with $\ujj= \upga_k(\uii)$ $(k,k+1\in K)$. Then $\ujj$ also satisfies  \eqref{eq: complete}.
\end{lemma}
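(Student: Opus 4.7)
The plan is to reduce the statement to a direct comparison of the affine cuspidal modules attached to $\uii$ and $\ujj$, exploiting that a commutation move swaps two indices whose corresponding cuspidal modules strongly commute.

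First, I would compute the relation between $\{C^{\bbD,\uii}_s\}$ and $\{C^{\bbD,\ujj}_s\}$. Using Corollary~\ref{cor: dia comm repeat} to express $[C^{\bbD,\uii}_s]=\bPhi_\bbD(\FF^\uii_s)$ in terms of the algebraic PBW vectors $\FF^\uii_s=\TT_{\im_1}\cdots\TT_{\im_{s-1}}f_{\im_s,0}$, and using $\TT_\im(f_{\jm,m})=f_{\jm,m}$ whenever $d(\im,\jm)>1$ (see \eqref{eq: T_i}), I would verify
$$\FF^\ujj_s=\FF^\uii_s\ \text{ for }s\ne k,k+1,\qquad \FF^\ujj_k=\FF^\uii_{k+1},\qquad \FF^\ujj_{k+1}=\FF^\uii_k.$$
Applying $\bPhi_\bbD$ and using that all these classes correspond to simple modules, this translates to the isomorphisms $C^{\bbD,\ujj}_s\simeq C^{\bbD,\uii}_s$ for $s\ne k,k+1$, together with $C^{\bbD,\ujj}_k\simeq C^{\bbD,\uii}_{k+1}$ and $C^{\bbD,\ujj}_{k+1}\simeq C^{\bbD,\uii}_k$.

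Next, I would invoke that $d(\im_k,\im_{k+1})>1$ (the defining condition of a commutation move) together with the definition of a strong duality datum to see $\de(L^\bbD_{\im_k},L^\bbD_{\im_{k+1}})=0$. Then Proposition~\ref{prop: ell de} (invariance of $\de$ under the braid operators) yields $\de(C^{\bbD,\uii}_k,C^{\bbD,\uii}_{k+1})=0$, so $C^{\bbD,\uii}_k$ and $C^{\bbD,\uii}_{k+1}$ strongly commute. Consequently, for any $\bsa=(a_s)_{s\in K}\in\Z_{\ge0}^{\oplus K}$, writing $\upsigma_k\bsa$ for the tuple obtained from $\bsa$ by swapping the $k$-th and $(k+1)$-st entries, we get
$$P^{\bbD,\ujj}(\bsa)\simeq P^{\bbD,\uii}(\upsigma_k\bsa),$$
since the only difference between the two tensor products is a block of the form $(C^{\bbD,\uii}_k)^{\tens a_{k+1}}\tens(C^{\bbD,\uii}_{k+1})^{\tens a_k}$ versus $(C^{\bbD,\uii}_{k+1})^{\tens a_k}\tens (C^{\bbD,\uii}_k)^{\tens a_{k+1}}$, which are isomorphic because the two factors strongly commute. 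Taking heads, $V^{\bbD,\ujj}(\bsa)\simeq V^{\bbD,\uii}(\upsigma_k\bsa)$.

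Finally, the first step shows that $\scrC_\g^{\bbD,\uii}=\scrC_\g^{\bbD,\ujj}$ as subcategories of $\Cgz$ since the two sequences of cuspidal modules coincide up to permutation. Hence any simple $M$ in $\scrC_\g^{\bbD,\ujj}$ belongs to $\scrC_\g^{\bbD,\uii}$, and by the assumption that $\uii$ satisfies \eqref{eq: complete} there exists $\bsb\in\Z_{\ge0}^{\oplus K}$ with $M\simeq V^{\bbD,\uii}(\bsb)\simeq V^{\bbD,\ujj}(\upsigma_k\bsb)$. This establishes \eqref{eq: complete} for $\ujj$. There is no real obstacle in this argument; the only thing to double-check is the identification of cuspidal modules in the first step, which is a direct algebraic computation using the definition of $\TT_\im$ on generators together with the transport property \eqref{eq:TTPhi}.
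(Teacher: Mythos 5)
Your proof is correct and follows essentially the same strategy as the paper's: establish $C^{\bbD,\ujj}_m \simeq C^{\bbD,\uii}_{\upsigma_k(m)}$ and $\de(C^{\bbD,\uii}_k,C^{\bbD,\uii}_{k+1})=0$, then conclude $P^{\bbD,\ujj}(\bsa)\simeq P^{\bbD,\uii}(\upsigma_k\bsa)$. The only (minor) deviation is how you justify the commutativity in the middle step: the paper reads $\de(C^{\bbD,\uii}_k,C^{\bbD,\uii}_{k+1})=0$ off from Theorem~\ref{thm: every sequence good}~\eqref{it: commutingv} (applied to the $i$-box $[k,k]$ and $s=k+1$), whereas you derive it from the defining condition of a strong duality datum together with the invariance of $\de$ under $\TT_\ttb$ (Proposition~\ref{prop: ell de}), which is equally valid and arguably more self-contained.
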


\begin{proof} Note that $C^{\ujj}_{m} \simeq C^{\uii}_{\upsigma_k(m)}$ for all $1 \le m \le r$. By \eqref{it: commutingv} in
  Theorem~\ref{thm: every sequence good},  
$\de(C^{\uii}_k,C^{\uii}_{k+1}) =0$. Hence $P^\uii(\bsa) \simeq P^\ujj(\upsigma_k(\bsa))$, which implies the assertion.
\end{proof}

\begin{lemma} \label{lem: exsitence of bsa for ujj}
Assume that a sequence $\uii=\st{\im_k}_{k\in K}$ \condi satisfies \eqref{eq: complete}. 
Let $\ujj$ be a sequence with $\ujj= \upbe_k(\uii)$ $(k,k+1,k+2\in K)$.
For a simple module $M \in \scrC_\g^{\bbD,\uii}$,
there exists $\bsa' \in \Z_{\ge}^r$ such that
$$M \simeq  V^\ujj(\bsa') = \head(P^\ujj(\bsa')).$$
\end{lemma}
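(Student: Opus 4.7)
My plan has three main steps: identify the two categories, exhibit a candidate parameterization $\bsa'$, and verify via the filtration Lemma~\ref{lem: filter}.

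First, I would show $\scrC_\g^{\bbD,\uii} = \scrC_\g^{\bbD,\ujj}$ by comparing generators. From the braid relation applied to the operators $\TT_{\im_\ell^\uii}$, analogous to Remark~\ref{rem: local change} with the roles of $\uii$ and $\ujj$ interchanged, we deduce $C^\ujj_s \simeq C^\uii_s$ for $s \notin [k,k+2]$, $C^\ujj_k \simeq C^\uii_{k+2}$, $C^\ujj_{k+2} \simeq C^\uii_k$, and $C^\ujj_{k+1} \simeq C^\uii_{k+2} \hconv C^\uii_k$; by symmetry, $C^\uii_{k+1} \simeq C^\ujj_{k+2} \hconv C^\ujj_k$. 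Since each generator of one subcategory is a subquotient of a tensor product of generators of the other, the two monoidal subcategories coincide.

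Second, given a simple $M \in \scrC_\g^{\bbD,\uii}$, by the hypothesis \eqref{eq: complete} for $\uii$ we have $M \simeq V^\uii(\bsa)$ for a unique $\bsa$, and I would propose the candidate $\bsa'$ defined by $a'_s = a_s$ for $s \notin [k,k+2]$ and, for the middle positions, by the Lusztig-type braid-reparameterization
\[
a'_{k+1} = \max\bigl(0,\;\min(a_k,a_{k+2}) - a_{k+1}\bigr), \quad a'_{k+2} = a_k + a_{k+1} - a'_{k+1}, \quad a'_k = a_{k+1} + a_{k+2} - a'_{k+1}.
\]
This choice is forced by the weight requirement $\wt V^\ujj(\bsa') = \wt V^\uii(\bsa)$ and verified directly on the basic cases (single cuspidals and tensor products of two or three of them).

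Third, to verify $M \simeq V^\ujj(\bsa')$ I would apply Lemma~\ref{lem: filter} to the strongly unmixed sequence $\{C^\ujj_s\}_{s\in K}$ (strong unmixedness is ensured by Theorem~\ref{thm: every sequence good}), peeling off factors from outside inward. Outside $[k,k+2]$ the recovery is automatic, since the cuspidals agree. For the middle block the key identity is
\[
\head\bigl((C^\uii_{k+2})^{\tens a_{k+2}} \tens (C^\uii_{k+1})^{\tens a_{k+1}} \tens (C^\uii_k)^{\tens a_k}\bigr) \simeq \head\bigl((C^\ujj_{k+2})^{\tens a'_{k+2}} \tens (C^\ujj_{k+1})^{\tens a'_{k+1}} \tens (C^\ujj_k)^{\tens a'_k}\bigr),
\]
combined with the common outer factors. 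Substituting $C^\uii_{k+1} \simeq C^\ujj_{k+2} \hconv C^\ujj_k$ and $C^\ujj_{k+1} \simeq C^\ujj_k \hconv C^\ujj_{k+2}$ from Step~1 reduces both sides to ordered products in the two root modules $A' \seteq C^\ujj_{k+2}$ and $B' \seteq C^\ujj_k$, which satisfy $\de(A',B') = 1$. Using Lemma~\ref{lem: -1 by root}, Lemma~\ref{lem: de=de}, Lemma~\ref{lem: recover} and the normality of unmixed almost-real sequences (Proposition~\ref{prop: Unmix normal}), the comparison splits into the two cases $a_{k+1} \ge \min(a_k,a_{k+2})$ and $a_{k+1} < \min(a_k,a_{k+2})$, matching the two branches of the formula for $a'_{k+1}$.

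The principal obstacle is this final head identity: since $\de(A',B')=1$ the two root modules do not commute, so one cannot simply permute tensor factors to convert between the two orderings. The passage must be carried out factor by factor, most likely by induction on $a_{k+1}$ combined with the socle/head symmetry between $A'\hconv B'$ and $B'\hconv A'$, absorbing or releasing $\hconv$-factors via the recovery identity of Lemma~\ref{lem: recover}.
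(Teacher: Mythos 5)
Your overall strategy — identify $\scrC_\g^{\bbD,\uii}$ with $\scrC_\g^{\bbD,\ujj}$, then compare the heads of the standard modules by a local braid-move reparameterization of the middle triple, peeling off the outer factors via the strongly-unmixed structure — is exactly the approach the paper takes. The serious problem is that your reparameterization formula for the middle block is wrong.

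The correct rule (Lusztig's braid-move formula, which the paper uses and confirms by a direct head computation) is
\begin{align*}
a'_{k+1} = \min(a_k,a_{k+2}), \qquad a'_k = a_{k+1}+a_{k+2}-a'_{k+1}, \qquad a'_{k+2} = a_k+a_{k+1}-a'_{k+1}.
\end{align*}
Your choice $a'_{k+1}=\max\bigl(0,\min(a_k,a_{k+2})-a_{k+1}\bigr)$ is not the same: it agrees with the correct one only when $a_{k+1}=0$. For $(a_k,a_{k+1},a_{k+2})=(1,1,1)$ the correct rule gives the fixed point $(1,1,1)$, whereas yours gives $(2,0,2)$; and your rule is not an involution (applied twice to $(1,1,1)$ it returns $(0,2,0)$), which is already impossible since $\upbe_k$ is an involution on sequences. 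Moreover, the claim that the formula is "forced by the weight requirement" does not hold: as you can check, weight preservation is equivalent to the two identities $a'_{k+1}+a'_{k+2}=a_k+a_{k+1}$ and $a'_k+a'_{k+1}=a_{k+1}+a_{k+2}$, which determine $a'_k$ and $a'_{k+2}$ in terms of $a'_{k+1}$ but leave $a'_{k+1}$ completely free. It is the direct head computation — commute $\Ci_{k+1}$ past $\Ci_{k+2}$ (they commute since $\de(\Ci_{k+1},\Ci_{k+2})=0$), rewrite in terms of $\Cj$, and regroup $\Cj_k^{\otimes a_{k+2}}\tens\Cj_{k+2}^{\otimes a_k}$ into $\Cj_{k+1}^{\otimes \min(a_k,a_{k+2})}$ plus a leftover — that pins down $a'_{k+1}=\min(a_k,a_{k+2})$. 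With the wrong formula, the final head identity you aim to prove in Step~3 is simply false, so the proof does not go through.
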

\begin{proof}
As seen in \S~\ref{subsec: Arbitrary seq and simples}, $C_{s}^\uii \simeq C_{s}^\ujj$ for $s \not\in[k,k+2]$, $C_{k}^\uii \simeq C_{k+2}^\ujj$,
$C_{k+2}^\uii \simeq C_{k}^\ujj$, $C_{k+1}^\uii \simeq C_{k}^\uii \hconv C_{k+2}^\uii$ and $C_{k+1}^\ujj \simeq C_{k}^\ujj \hconv C_{k+2}^\ujj$. 
Since $\Ci_{k+1} \simeq \Ci_{k} \hconv \Ci_{k+2}$, $\Cj_{k+1} \simeq \Cj_{k} \hconv \Cj_{k+2}$  and 
\begin{align*}
\de(\Ci_k,\Ci_{k+1})=\de(\Ci_{k+2},\Ci_{k+1})=\de(\Ci_{k},\Ci_{k+2} \nabla \Ci_{k})= \de(\Ci_{k+2},\Ci_{k+2} \nabla \Ci_{k})=0,    
\end{align*}
we have
\eqn
&&\head( \Ci_{k+2}{}^{\otimes a_{k+2}} \tens \Ci_{k+1}{}^{\otimes a_{k+1}}
\tens \Ci_{k}{}^{\otimes a_{k}} ) 
\simeq \head(  \Ci_{k+1}{}^{\otimes a_{k+1}} \tens \Ci_{k+2}{}^{\otimes a_{k+2}} \tens \Ci_{k}{}^{\otimes a_{k}} ) \\
&&\hs{7ex}\simeq  \bc
\head( (\Cj_{k+2} \hconv \Cj_{k})^{\otimes a_{k+1}} \tens \Cj_{k}{}^{\otimes a_{k+2}-a_k}   \tens \Cj_{k+1}{}^{\otimes a_{k}} )  & \text{ if } \min(a_{k},a_{k+2})=a_{k}, \\
\head( (\Cj_{k+2} \hconv \Cj_{k})^{\otimes a_{k+1}} \tens  \Cj_{k+1}{}^{\otimes a_{k+2}} \tens \Cj_{k+2}{}^{\otimes a_{k}-a_{k+2}} ) & \text{ if }
\min(a_{k},a_{k+2})=a_{k+2},
\ec  \\
&& \hspace{7ex} \simeq  \bc
\head( \Cj_{k+2}{}^{\otimes a_{k+1}} \tens \Cj_{k+1}{}^{\otimes a_{k}} \tens \Cj_{k}{}^{\otimes a_{k+2}+a_{k+1}-a_k}    )  & \qquad  \text{ if } \min(a_{k},a_{k+2})=a_{k}, \\
\head( \Cj_{k+2}{}^{\otimes a_{k}+a_{k+1}-a_{k+2}} \tens  \Cj_{k+1}{}^{\otimes a_{k+2}} \tens \Cj_{k}{}^{\otimes a_{k+1}} ) & \qquad  \text{ if }
\min(a_{k},a_{k+2})=a_{k+2},
\ec 
\eneqn
for each $(a_{k},a_{k+1},a_{k+2}) \in \Z^{3}_{\ge 0}$.
Hence we have 
\begin{align} \label{eq: braid move for head}
\head( \Ci_{k+2}{}^{\otimes a_{k+2}} \tens \Ci_{k+1}{}^{\otimes a_{k+1}} \tens \Ci_{k}{}^{\otimes a_{k}} ) 
\simeq \head( \Cj_{k+2}{}^{\otimes a'_{k+2}} \tens \Cj_{k+1}{}^{\otimes a'_{k+1}} \tens \Cj_{k}{}^{\otimes a'_{k}} ) 
\end{align}
where 
\begin{align} \label{eq: braid mutation rule for heads}
\bc
a_k'= a_{k+1}+a_{k+2} - \min(a_{k},a_{k+2}),  \\
a_{k+1}'= \min(a_{k},a_{k+2}),  \\
a_{k+2}'= a_{k+1}+a_{k}- \min(a_{k},a_{k+2}).
\ec
\end{align}
For a simple module $M$ in $\scrC_\g^{\bbD,\uii}$ with $M \simeq V^\uii(\bsa)$, we have
$$
M \simeq \head\big( V^\uii(\bsa_{>k+2}) \tens V^\uii(\bsa_{[k,k+2]}) \tens V^\uii(\bsa_{<k})\big),
$$
since $(\Ci_r,\Ci_{r-1},\ldots,\Ci_1)$ is strongly unmixed. 
Here $\bsa_{>k+2} \seteq (0,\ldots,0,a_{k+3},\ldots)$
$\bsa_{[k,k+2]} \seteq (0,\ldots,0,a_{k},a_{k+1},a_{k+2},0,\ldots,0)$
and $\bsa_{<k} \seteq (\ldots,a_{k-1},0,\ldots,0)$. 
\end{proof}
\begin{remark}
The formula~\eqref{eq: braid mutation rule for heads} is well-known for a reduced sequence $\uw$ of $w \in \weyl$ and is given in \cite[Chapter 42]{LusztigBook}.  
\end{remark} 

By Lemma~\ref{lem:hwccomp}, 
Lemma~\ref{lem: untri gamma} and Lemma~\ref{lem: exsitence of bsa for ujj},
we obtain  the following proposition.

\Prop\label{cor:basis}
An arbitrary sequence $\uii=\st{\im_k}_{k\in K}$ \condi
satisfies condition \eqref{eq: complete}.
\enprop
\Proof
By Lemma~\ref{lem:hwccomp}, Lemma~\ref{lem: untri gamma} and
Lemma~\ref{lem: exsitence of bsa for ujj},
$\uii$ satisfies \eqref{eq: complete} if $\ttb^\uii=\Updelta^n$ for some $n\ge0$ (see \S\;\ref{subsec:Gar}).
Hence it is enough to show that
\eqn
\text{if $\uii=\st{\im_k}_{k\in K}$ satisfies \eqref{eq: complete},
  then so does $\uii_{\le k}$ for any $k\in K$.}\label{eq:cut}
\eneqn
Let $M$ be a simple module in $\scrC_\g^{\bbD,\uii_{\le k}}$.
Since $M\in \scrC_\g^{\bbD,\uii}$, there exists $\bsa\in\Z_{\ge0}^{\oplus K}$.
such that $M\simeq\Vi(\bsa)$.
On the other hand, \eqref{eq:subP} says that
$M$ appears as a simple subquotient of
$\Pii(\bsb)$ for some $\bsb=(b_s)_{s\in K}$ such that $b_s=0$ for $s>k$.
Then Lemma~\ref{lem:subq} says that $\bsa\preceq \bsb$, which implies $\bsa\in\PBix[{[l,k]}]$.
\QED

\begin{corollary}
Let $\ttb \in \ttB^+$. Then $\scrC_\g^{\bbD,\uii}$ does not depend on the choice of $\uii \in \Seq(\ttb)$. We denote it by
$\scrC_\g^\bbD(\ttb)$.
\end{corollary}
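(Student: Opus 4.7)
The plan is to show that $\scrC_\g^{\bbD,\uii}$ is invariant under the elementary moves connecting two expressions of the same positive braid, and then invoke the fact that any two sequences in $\Seq(\ttb)$ differ by a finite chain of such moves. Concretely, since the defining relations of $\ttB^+$ are the commutation relations and the braid relations, any $\uii,\ujj\in\Seq(\ttb)$ are connected by a finite sequence of applications of $\upga_k$ and $\upbe_k$ (Definition~\ref{def: moves}); hence it suffices to check that $\scrC_\g^{\bbD,\uii}=\scrC_\g^{\bbD,\ujj}$ when $\ujj=\upga_k(\uii)$ or $\ujj=\upbe_k(\uii)$.

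For a commutation move $\ujj=\upga_k(\uii)$, the two categories have the same generators: since $\im_k$ and $\im_{k+1}$ satisfy $d(\im_k,\im_{k+1})>1$, the braid automorphisms $\TT_{\im_k}$ and $\TT_{\im_{k+1}}$ commute, giving $C^{\bbD,\ujj}_m\simeq C^{\bbD,\uii}_{\upsigma_k(m)}$ for all $m$, so the generating families $\{C^{\bbD,\uii}_m\}_m$ and $\{C^{\bbD,\ujj}_m\}_m$ coincide up to reindexing, and thus $\scrC_\g^{\bbD,\uii}=\scrC_\g^{\bbD,\ujj}$.

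For a braid move $\ujj=\upbe_k(\uii)$, I will use the identifications recorded in Remark~\ref{rem: local change}: $C^{\bbD,\uii}_s\simeq C^{\bbD,\ujj}_s$ for $s\notin[k,k+2]$, $C^{\bbD,\uii}_k\simeq C^{\bbD,\ujj}_{k+2}$, $C^{\bbD,\uii}_{k+2}\simeq C^{\bbD,\ujj}_k$, and $C^{\bbD,\uii}_{k+1}\simeq C^{\bbD,\ujj}_{k+2}\hconv C^{\bbD,\ujj}_k$. The first three identifications show that the generators of $\scrC_\g^{\bbD,\uii}$ other than $C^{\bbD,\uii}_{k+1}$ are already generators of $\scrC_\g^{\bbD,\ujj}$, while the last identification realizes $C^{\bbD,\uii}_{k+1}$ as the head of a tensor product of generators of $\scrC_\g^{\bbD,\ujj}$, hence as an object of $\scrC_\g^{\bbD,\ujj}$ (which is stable by tensor products and subquotients). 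Therefore $\scrC_\g^{\bbD,\uii}\subset\scrC_\g^{\bbD,\ujj}$. By the symmetric role of $\uii$ and $\ujj$ (since also $\uii=\upbe_k(\ujj)$), we also have $C^{\bbD,\ujj}_{k+1}\simeq C^{\bbD,\uii}_{k+2}\hconv C^{\bbD,\uii}_k$, giving the reverse inclusion.

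Combining the two cases and iterating along a chain of moves connecting $\uii$ and $\ujj$, we conclude $\scrC_\g^{\bbD,\uii}=\scrC_\g^{\bbD,\ujj}$ for any two $\uii,\ujj\in\Seq(\ttb)$. There is no real obstacle here; the argument is essentially a bookkeeping of the cuspidal-module identifications under commutation and braid moves, and all the nontrivial input (namely that braid moves send affine cuspidal modules to the stated expressions) has already been established in the proof of Proposition~\ref{prop: still good sequence}, specifically in Remark~\ref{rem: local change}.
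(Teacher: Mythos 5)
Your proof is correct and it isolates exactly what is needed for this statement. The paper leaves the corollary without a written proof because it is treated as a byproduct of Proposition~\ref{cor:basis}: that proposition, proved through Lemma~\ref{lem:hwccomp}, Lemma~\ref{lem: untri gamma} and Lemma~\ref{lem: exsitence of bsa for ujj}, identifies the simple objects of $\scrC_\g^{\bbD,\uii}$ with the heads $V^{\bbD,\uii}(\bsa)$ and shows (via the $\upga_k$ and $\upbe_k$ lemmas) that every such simple is again of the form $V^{\bbD,\ujj}(\bsa')$, whence the two full subcategories agree since both are closed under subquotients and extensions. Your route bypasses the PBW parametrization entirely: you show directly that the generating families $\{C^{\bbD,\uii}_m\}$ and $\{C^{\bbD,\ujj}_m\}$ are mutually contained, using only the cuspidal identifications of Remark~\ref{rem: local change} (and the permutation $C^{\bbD,\ujj}_m\simeq C^{\bbD,\uii}_{\upsigma_k(m)}$ in the commutation case, as in the first line of the proof of Lemma~\ref{lem: untri gamma}), together with closure under $\tens$ and subquotients to absorb $C^{\bbD,\uii}_{k+1}\simeq C^{\bbD,\ujj}_{k+2}\hconv C^{\bbD,\ujj}_k$ and, by the evident symmetry $\uii=\upbe_k(\ujj)$, the reverse inclusion. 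Both arguments rest on the same combinatorial input, namely that any two words in $\Seq(\ttb)$ are connected by finitely many $\upga_k$ and $\upbe_k$. Yours is the leaner argument for this corollary, since it never needs the head bookkeeping of~\eqref{eq: braid mutation rule for heads}; the paper's heavier route through condition~\eqref{eq: complete} is the one it needs anyway, because the PBW compatibility it yields is used immediately afterward (Theorem~\ref{thm: unitri beta}, Corollary~\ref{cor: K simeq oA}).
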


The following corollary is an immediate consequence of Theorem~\ref{thm: unitri beta}. 
\begin{corollary} \label{cor: K simeq oA}
Let $\ttb \in \ttB^+$. Then, we have
\bnum
\item $K(\scrC_\g^{\bbD}(\ttb)) \simeq \obbA(\ttb)$.
\item $\scrC_\g^{\bbD}(\ttb)$ coincides with the full subcategory of $\Cg$ consisting of modules $M\in\Cgz$ such that $[M]\in \bPhi_\bbD(\hAz(\ttb))$.
\item \label{it: poly} $K(\scrC_\g^{\bbD}(\ttb))$ is the polynomial ring generated by  $\{ [C^{\uii}_s] \}_{1 \le s \le r}$ for any $\uii \in \Seq(\ttb)$. 
\ee
\end{corollary}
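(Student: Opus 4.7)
The plan is to deduce all three statements from Theorem~\ref{thm: unitri beta} and Proposition~\ref{cor:basis} by comparing the PBW basis of $\hcalA(\ttb)$ with the standard module basis of $K(\scrC_\g^\bbD(\ttb))$ under $\bPhi_\bbD$.

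First I will pin down the pointwise correspondence between the two PBW generators.  Fix $\uii=(\im_1,\ldots,\im_r)\in\Seq(\ttb)$ and $k\in[1,r]$.  Applying Corollary~\ref{cor: dia comm repeat} with $\ttb^\rev=\bg_{\im_1}\cdots\bg_{\im_{k-1}}$, together with the identity $\bPhi_\bbD(f_{\im_k,0})=[L^\bbD_{\im_k}]$, gives
$$
\bPhi_\bbD(\FF_k^\uii)=\bPhi_\bbD(\TT_{\im_1}\cdots\TT_{\im_{k-1}}f_{\im_k,0})=\bPhi_{\scrS_{\bg_{\im_{k-1}}\cdots\bg_{\im_1}}\bbD}(f_{\im_k,0})=[C_k^{\bbD,\uii}],
$$
where the last identification comes directly from the definition \eqref{eq: Ck} of the affine cuspidal modules.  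Since $K(\Cgz)$ is commutative and specialization at $q^{1/2}=1$ kills the prefactors $q^{a_k(a_k-1)/2}$ in \eqref{eq: PBW monomial hA}, it follows that
$$
\bPhi_\bbD\bl\FF^\uii(\bsa)\br=[P^{\bbD,\uii}(\bsa)]\qquad\text{for every $\bsa\in\Z_{\ge0}^{\oplus[1,r]}$.}
$$

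Next I will prove (i) and (iii).  By Theorem~\ref{thm: hatA(b)} and Proposition~\ref{prop: comm}, the set $\{\ev_{q=1}\FF^\uii(\bsa)\}_{\bsa\in\Z_{\ge0}^{\oplus[1,r]}}$ is a $\Z$-basis of $\obbA(\ttb)$.  On the other hand, Proposition~\ref{cor:basis} together with Theorem~\ref{thm: unitri beta}\,\eqref{it: unitri 3} says that $\{[P^{\bbD,\uii}(\bsa)]\}_{\bsa\in\Z_{\ge0}^{\oplus[1,r]}}$ is a $\Z$-basis of $K(\scrC_\g^\bbD(\ttb))$.  The preceding paragraph shows that $\bPhi_\bbD$ carries the first basis bijectively to the second, hence $\bPhi_\bbD$ factors through $\obbA(\ttb)$ and yields a $\Z$-algebra isomorphism $\obPhi_\bbD\col \obbA(\ttb)\isoto K(\scrC_\g^\bbD(\ttb))$, proving~(i).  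Part~(iii) then follows by transporting the polynomial description of $\obbA(\ttb)$ in Lemma~\ref{lem:obbAcom} across this isomorphism, which sends each $\ev_{q=1}\FF_k^\uii$ to $[C_k^{\bbD,\uii}]$.

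For (ii), I will use that $\scrC_\g^\bbD(\ttb)$ is a full subcategory of $\Cgz$ closed under subquotients and extensions, so the natural map $K(\scrC_\g^\bbD(\ttb))\hookrightarrow K(\Cgz)$ is injective with image equal to the free $\Z$-submodule spanned by the classes of simple modules of $\Cgz$ that happen to lie in $\scrC_\g^\bbD(\ttb)$.  By (i), this image coincides with $\bPhi_\bbD(\hAz(\ttb))$.  Hence for $M\in\Cgz$ with $[M]\in\bPhi_\bbD(\hAz(\ttb))$, expanding $[M]$ in the basis of isomorphism classes of simples of $\Cgz$ forces every composition factor of $M$ to lie in $\scrC_\g^\bbD(\ttb)$, and closure of $\scrC_\g^\bbD(\ttb)$ under extensions then gives $M\in\scrC_\g^\bbD(\ttb)$; the reverse inclusion is tautological from~(i).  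There is no serious obstacle here: the entire argument rests on the pointwise identification $\bPhi_\bbD(\FF_k^\uii)=[C_k^{\bbD,\uii}]$, which is just an unfolding of Corollary~\ref{cor: dia comm repeat} and the very definition of $C_k^{\bbD,\uii}$, and the hard work has already been absorbed into Theorem~\ref{thm: unitri beta} and Proposition~\ref{cor:basis}.
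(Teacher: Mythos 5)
Your argument is correct and is essentially the paper's intended route: the paper labels this result an "immediate consequence" of Theorem~\ref{thm: unitri beta}, and your write-up simply unfolds that claim by tracing the identification $\bPhi_\bbD(\FF_k^\uii)=[C_k^{\bbD,\uii}]$ through Corollary~\ref{cor: dia comm repeat} and then matching the PBW basis of $\obbA(\ttb)$ with the standard-module basis of $K(\scrC_\g^\bbD(\ttb))$ supplied by Theorem~\ref{thm: unitri beta}\,\eqref{it: unitri 3} and Proposition~\ref{cor:basis}.
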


\section{Quantum Grothendieck rings and Bosonic extensions}\label{Sec: quantizability}
In this section, we develop an application of $T$-systems among affine determinantial modules associated with
a complete duality datum $\bbD$ and an arbitrary sequence $\uii$ and investigate the relationship with the $(q,t)$-characters of simple modules in $\Cg$ and
$\bbD$-quantizability. For this goal, we first review the quantum Grothendieck rings and their related subjects by following \cite{Nak04,VV03,Her04}. 

\subsection{Quantum Grothendieck rings} \label{subsec: quantum Grothendieck ring}
{\em In this subsection, we assume that  the quantum affine algebra
  $U_q'(\g)$ is of untwisted affine type}
and 
we fix a $\rmQ$-datum $\calQ$ of $\g$.
In \cite{FR99}, Frenkel-Reshetikhin constructed an injective ring homomorphism 
$$
\chi_q \colon K(\Cgz) \hookrightarrow \calY \seteq \Z[Y_{\oim,p}^{ \pm1 } \ | \ (\im,p) \in \hDynkin^\sigma_0].
$$
which is known as the \emph{$q$-character homomorphism}. 

Let $\calM \subset\calY$ be the set of all Laurent monomials. 
We write $m \in \calM$ as 
\begin{align*}
m = \prod_{(\im,p) \in \hDynkin^\sigma_0} Y_{\oim,p}^{u_{\oim,p}(m)}.
\end{align*}
We say an element $m \in \calM$ \emph{dominant} if $u_{\oim,p}(m) \ge 0$ for all $(\im,p) \in \hDynkin^\sigma_0$ and set $\calM^+ \subset \calM$ the set of all dominant monomials.

Recall that the isomorphism classes of simple modules 
in $\Cg$ are parameterized by the set $(1+z\bfk[z])^{I_0}$ of $I_0$-tuples of monic polynomials, called \emph{Drinfeld polynomials} \cite{CP95,CP95A}. 

For each $m \in \calM^+$, we have a simple module $L(m) \in\Cgz$ corresponding to the Drinfeld polynomial $(\prod_p (1-q^pz)^{u_{i,p}(m)})_{i\in I_0}$. Note that 
the fundamental module $L( \im,p)$ in~\S\ref{subsec: HL category} corresponds to $L(Y_{\oim,p})$,
and the trivial module $\one$ corresponds to $L(1)$.

For an indeterminate $t$ with a formal square root $t^{1/2}$, let $\calY_t$ be the quantum torus associated with $U_q'(\g)$, which
is a $\Z[t^{\pm 1/2}]$-algebra generated by $\{\tY^{\pm1}_{\oim,p} \ | \ (\im,p) \in \hDynkin^\sigma_0 \}$ with the following  relations:
\begin{align*}
\tY_{\oim,p}\tY^{-1}_{\oim,p}  = \tY^{-1}_{\oim,p}\tY_{\oim,p} =1 \qtq
\tY_{\oim,p}\tY_{\ojm,s}  = t^{\calN(\im,p;\jm,s)  }  \tY_{\ojm,s}\tY_{\oim,p}.
\end{align*}
Here 
$$
\calN(\im,p;\jm,s) \seteq (-1)^{k+l+\delta(p \ge s)} \delta\bl(\im,p)\ne(\jm,s)\br
\cdot(\al,\be) \in\Z, 
$$
where
$\phi_\calQ(\im,p)=(\al,k)$ and  $\phi_\calQ(\im,p)=(\be,l)$.  For monomials $m,m'$ in $\calY $, we define 
\begin{align} \label{eq: N(m m')}
\calN(m,m') \seteq \sum_{(\im,p),(\jm,s) \in\hDynkin^\sigma_0} u_{\oim,p}(m)  u_{\ojm,s}(m') \calN(\im,p;\jm,s).
\end{align}
For simple modules $X,Y$ in $\Cgz$, we set 
$$ \calN(X,Y) \seteq \calN(m,m') \quad \text{ where } X \simeq L(m) \text{ and } Y \simeq L(m').$$

Note that
\bnum
\item $\calY_t$ is a $t$-deformation of $\calY$
since there exists a $\Z$-algebra homomorphism 
$$\text{$\ev_{t=1}:\calY_t \twoheadrightarrow \calY$} \quad \text{given by $\ev_{t=1}(t^{1/2})=1$ and 
  $\ev_{t=1}( \tY_{\oim,p})=Y_{\oim,p}$,}$$
\item
there exists the \emph{bar-involution} $\overline{(\cdot)}$ on $\calY_t$ which is the $\Z$-algebra anti-involution fixing $\tY_{\oim,p}$ and sending $t^{1/2}$ to $t^{-1/2}$, and 
\item there exists a $\Z[t^{\pm1/2}]$-algebra automorphism $\ocalD$ (resp. $\ocalD_t$) 
of $\calY$ (resp. $\calY_t$) defined by 
\begin{align} \label{eq: frakD_t on calY}
\ocalD(Y_{\oim,p}) = Y_{\overline{\im^*},p+|\sigma| h^\vee} \quad \text{(resp. }  \ocalD_t(\tY_{\oim,p}) = \tY_{\overline{\im^*},p+|\sigma| h^\vee} \text{).}  
\end{align}
Here $|\sigma|$ is the order of $\sigma$ and $h^\vee$ is the dual Coxeter number of $\g_0$.
\ee
\smallskip

For each simple module $L(m) \in \Cgz$, there exists a unique bar-invariant element $L_t(m) \in \calY_{t}$, called the \emph{$(q,t)$-character}
of $L(m)$ and constructed by Kazhdan-Lusztig type algorithm. It was established by Nakajima~\cite{Nak01,Nak04} based on the geometry of quiver varieties
for simply-laced untwisted affine types, and then extended to all untwisted affine types by Hernandez~\cite{Her04} in an algebraic setting. 
 
For a simple module $M \in \Cgz$, we also use $[M]_t$ to denote the $(q,t)$-character of $M$. 

\smallskip 

The \emph{quantum Grothendieck ring} $\calK_{\g;t}$ is defined to be the $\Z[t^{\pm 1/2}]$-subalgebra of $\calY_t$
generated by $[M]_t$'s for all simple modules $M$'s in $\Cgz$.  

Note that $\calK_{\g;t}$ is stable under the bar-involution $\overline{(\cdot)}$ and 
\begin{align} \label{eq: Kt to K}
\ev_{t=1}(\calK_{\g;t})=\chi_q(K(\Cgz)) \simeq K(\Cgz).    
\end{align}

It is known that  
\begin{subequations} \label{eq: basis Ls}
\begin{align}
&\text{$\bfL_t \seteq \{ L_t(m) \ |  \ m \in \calM^+ \}$ forms a $\Z[t^{\pm1/2}]$-basis of
$\calK_{\g;t}$,} \label{eq: basis Lt}   \\
&\text{$\ev_{t=1}(\bfL_t) \seteq \{ \ev_{t=1}(L_t(m)) \ |  \ m \in \calM^+ \}$ forms a $\Z$-basis of
$\chi_q(K(\scrC_\g^0))\simeq K(\scrC_\g^0)$}   \label{eq: basis L}
\end{align} 
\end{subequations}
(see~\cite{FHOO,FHOO2} for~\eqref{eq: basis L} in non-simply laced types).

\begin{proposition} [{\cite{Nak04,VV02} and \cite{FHOO,FHOO2}}] \label{pro: positivity pf qt character}
For each $m \in \calM^+$, 
$$L_t(m) \in\Z_{\ge0}[t^{\pm1}]\;[\tY_{\oim,p}^{\pm1} \ | \ (\im,p) \in \hDynkin^\sigma_0].$$
Moreover, for $m_1,m_2 \in \calM^+$, if we write
$$
L_t(m_1)L_t(m_2) = \sum_{m \in \calM^+} c^m_{m_1,m_2}(t) L_t(m),  
$$
then we have
\bna
\item
 $c^m_{m_1,m_2}(t) \in \Z_{\ge 0}[t^{\pm1/2}]$.
\item  $c^m_{m_1,m_2}(t)=0$ unless $m\preceq m_1\,m_2$.
Here $\preceq$ is the Nakajima order on $\calM^+$ \ro\cite{Nak01,FM01}\rf.
\item If $m=m_1m_2$, then $c^m_{m_1,m_2}(1)=1$,
  i.e., $c^m_{m_1,m_2}(t)=t^a$ for some $a\in\Z/2$.
\ee 
\end{proposition}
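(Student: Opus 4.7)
The plan is to settle the three assertions separately, handling the Nakajima-order triangularity (b) algebraically from the defining properties of $L_t(m)$, and reducing the positivity statements (a) and the leading coefficient in (c) to Nakajima's geometric construction in simply-laced untwisted types, transferring to the remaining types by the algebraic isomorphisms of \cite{FHOO, FHOO2}.

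First, I would dispose of (b) and most of (c) without any geometric input. Recall that $L_t(m)$ is characterized as the unique bar-invariant element of an appropriate $t$-lattice whose expansion in the standard $(q,t)$-character basis $\{E_t(m')\}_{m' \in \calM^+}$ satisfies $L_t(m) = E_t(m) + \sum_{m' \prec m} P_{m',m}(t)\, E_t(m')$ (Kazhdan–Lusztig type recursion). In particular the transition matrix between $\{L_t(m)\}$ and $\{E_t(m)\}$ is unitriangular with respect to $\preceq$. Since $E_t(m_1)\,E_t(m_2)$ is a $t^{\Z/2}$-multiple of $E_t(m_1 m_2)$ by direct computation in the quantum torus $\calY_t$, the product $L_t(m_1) L_t(m_2)$ is a $\Z[t^{\pm 1/2}]$-linear combination of $E_t(m')$ with $m' \preceq m_1 m_2$, and hence a $\Z[t^{\pm 1/2}]$-combination of $L_t(m')$ with $m' \preceq m_1 m_2$, giving (b). Moreover the coefficient of $L_t(m_1 m_2)$ in this expansion is a single power of $t^{1/2}$; specializing at $t=1$ and using $\ev_{t=1}L_t(m) = \chi_q(L(m))$ together with the elementary fact that $[L(m_1m_2)]$ appears with multiplicity one in $[L(m_1)][L(m_2)]$ in $K(\Cgz)$ (since $m_1m_2$ is the leading dominant monomial) shows $c^{m_1 m_2}_{m_1,m_2}(1) = 1$, establishing (c).

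For (a), in simply-laced untwisted types I would appeal directly to Nakajima's geometric realization: $L_t(m)$ is the generating series of graded multiplicities of the IC-complex associated with the stratum indexed by $m$ on the graded quiver variety, with $t$ keeping track of the cohomological grading. Positivity of IC-stalk dimensions then yields $L_t(m) \in \Z_{\ge 0}[t^{\pm 1}][\tY_{\oim,p}^{\pm 1}]$, and the structure constants $c^m_{m_1,m_2}(t)$ admit a cohomological interpretation as (shifted) dimensions of stalks of convolution products of IC-sheaves, hence lie in $\Z_{\ge 0}[t^{\pm 1/2}]$ by the decomposition theorem. For non-simply-laced untwisted and twisted types I would invoke the algebraic isomorphism between $\calK_{\g;t}$ and the quantum Grothendieck ring of the unfolded simply-laced type (equivalently, the appropriate bosonic extension) constructed in \cite{FHOO, FHOO2}, which is known to identify $(q,t)$-character bases up to normalization and to preserve the Nakajima order.

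The main obstacle is precisely the transfer step in the non-simply-laced case. A direct geometric model via graded quiver varieties is not available there, so one cannot positivity-check $L_t(m)$ and $c^m_{m_1,m_2}(t)$ directly. Instead one must verify that the FHOO isomorphism genuinely sends $\bfL_t$ to $\bfL_t$ (with matching normalizations) and is compatible with the respective Nakajima orders; this is the crux of the work in \cite{FHOO, FHOO2} and combines the bosonic-extension presentation of $\calK_{\g;t}$ with monoidal categorification via quiver Hecke algebras. Once granted, assertion (a) for arbitrary untwisted or twisted type follows at once from the simply-laced case.
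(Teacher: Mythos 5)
The paper states this proposition without proof, citing \cite{Nak04,VV02,FHOO,FHOO2}, and your sketch correctly reconstructs the structure of the argument carried by those references: Kazhdan--Lusztig-type unitriangularity of $L_t$ against the standard basis $E_t$ gives (b) and the coefficient shape in (c), geometric positivity on graded quiver varieties via IC sheaves and the decomposition theorem gives (a) in the simply-laced untwisted case, and the isomorphisms of \cite{FHOO,FHOO2} transfer positivity to the remaining types.

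There is, however, one step in your treatment of (c) that you should not lean on as written. You invoke $\ev_{t=1}(L_t(m))=\chi_q(L(m))$, but this identity is exactly Conjecture~\ref{conj: positivity conjecture} of the paper, and Remark~\ref{rmk: chi quantizable} states explicitly that it remains open for types $C_n^{(1)},F_4^{(1)},G_2^{(1)}$ for general simple modules. Fortunately you do not need it. The weaker, unconditionally known statement \eqref{eq:LtL}, namely $\ev_{t=1}(L_t(m))\in[L(m)]+\sum_{m'\prec m}\Z[L(m')]$, already suffices: specializing $L_t(m_1)L_t(m_2)=\sum_m c^m_{m_1,m_2}(t)L_t(m)$ at $t=1$ and comparing the coefficient of $[L(m_1m_2)]$ on both sides gives $c^{m_1m_2}_{m_1,m_2}(1)=1$. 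In fact even this specialization is redundant once you observe, as you do in your first sentence of (c), that in the quantum torus the coefficient of $E_t(m_1m_2)$ in $L_t(m_1)L_t(m_2)$ equals the single term $E_t(m_1)E_t(m_2)=t^{a}E_t(m_1m_2)$, a monomial in $t^{1/2}$ with no sign to rule out; unitriangularity of the $E_t$-to-$L_t$ transition then forces $c^{m_1m_2}_{m_1,m_2}(t)=t^{a}$ outright, and $c^{m_1m_2}_{m_1,m_2}(1)=1$ follows for free.
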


\begin{theorem} [\cite{HL15,FHOO,KKOP24}] \label{thm: auto of calK}
  Recall that $\g$ is assumed to be of untwisted type.
  Let $\calQ=\Qdatum$ be a $\rmQ$-datum of $\g$. 
Then there exists a unique $\Z$-algebra isomorphism 
\begin{align} \label{eq: PhiQ}
\Psi_\DQ\col \hcalA_\bfA \isoto \calK_{\g;t}    
\end{align}
such that  
$\Psi_\DQ(f_{\im,m}) = \ocalD_t^m([L^\calQ_\im]_t)$  and  $\Psi_\DQ(q^{\pm 1/2}) = t^{\mp 1/2}.$
Moreover, it satisfies the following properties:
\bnum
\item $\Psi_\DQ \circ \bar{ \ }  = \bar{ \ } \circ \Psi_\DQ$ and $\Psi_\DQ \circ \ocalD_q  = \ocalD_t \circ \Psi_\DQ$. 
\item $\ev_{t=1} \circ \Psi_\DQ = \bPhi_\DQ$; i.e, we have a commutative diagram
$$
\xymatrix@R=2ex@C=6ex{  
\hcalA_\bfA \ar@{->>}[dd]_{\ev_{q=1}} \ar[rr]^{\sim}_{\Psi_\DQ} \ar[ddrr]|-{\;\bPhi_\DQ\;} &&  \calK_{\g;t} \ar@{->>}[dd]^{\ev_{t=1}} \\ \\
\obbA \ar[rr]^{\sim}_{\obPhi_\DQ} && K(\Cgz).  } 
$$
\item \label{it: G to L} $\Psi_\DQ$ sends the $\bfA$-basis $\tbfG \seteq \{ q^{-(\wt(\bfb),\wt(\bfb))/4}G(\bfb) \ | \ \bfb \in \hB(\infty) \}$ of $\hcalA_\bfA$ to the $\Z[t^{\pm 1/2}]$-basis $\bfL_t$ of $\calK_{\g;t}$. 
\ee
\end{theorem}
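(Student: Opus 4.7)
The plan is to construct $\Psi_\DQ$ on generators, verify the defining relations, and then leverage the existing results of \cite{HL15,FHOO,KKOP24} to obtain the bases compatibility. Since $\hcalA$ is presented by the quantum Serre and bosonic relations in Definition~\ref{def: Bosonic ext}, I would first define the map by
$$\Psi_\DQ(q^{\pm1/2})=t^{\mp1/2},\qquad \Psi_\DQ(f_{\im,m})\seteq\ocalD_t^m([L^\calQ_\im]_t),$$
and show it extends to a well-defined $\Z$-algebra homomorphism. Uniqueness is then automatic from Remark~\ref{rmk: comp PBW hwc}\eqref{it: PBW generators}.

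For the relation-checking step, the quantum Serre relations \eqref{it: relation1} within each slice $\hcalA[m]$ come for free: the restriction $\Psi_\DQ|_{\hcalA[m]}$ lands in the subring of $\calK_{\g;t}$ generated by $\st{\ocalD_t^m[L^{\sfs_{\im_1}\cdots\sfs_{\im_{k-1}}\calQ}_{\im_k}]_t}$, which is known by \cite{HL15,FHOO} to be isomorphic to the quantum unipotent coordinate ring $\calA_q(\sfn)$ via \eqref{eq: vph}. The nontrivial content is the bosonic relation \eqref{it: relation2} between adjacent slices: for $m<p$, one must verify
$$\ocalD_t^m[L^\calQ_\im]_t\cdot\ocalD_t^p[L^\calQ_\jm]_t = t^{-(-1)^{p-m+1}(\al_\im,\al_\jm)}\ocalD_t^p[L^\calQ_\jm]_t\cdot\ocalD_t^m[L^\calQ_\im]_t+\delta_{\im,\jm}\delta_{p,m+1}(1-t^{-2}).$$
For $p-m\ge 2$ this is a pure $t$-commutation computed from the quantum torus pairing $\calN$ in \eqref{eq: N(m m')} together with Theorem~\ref{thm: wtpairing Lainf}, while for $p=m+1$ the $T$-system for fundamental modules \eqref{eq: T-system in terms of M[a,b]} in the $(q,t)$-character form (as carried out in \cite{HL15,FHOO}) supplies the required quadratic identity.

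For bijectivity I would use the PBW structure. Taking a $\calQ$-adapted reduced sequence $\usfwc$, Theorem~\ref{thm: hatA(b)} and Remark~\ref{rmk: comp PBW hwc} give a $\Zq$-basis $\bfP_{\hwc}$ of $\hcalA_\bfA$; its image under $\Psi_\DQ$ consists of standard-like products of $(q,t)$-characters of affine cuspidal modules (by Proposition~\ref{prop: comm diagram}), and by \eqref{eq: basis Lt} together with the triangularity in Proposition~\ref{pro: positivity pf qt character} these form a $\Z[t^{\pm1/2}]$-basis of $\calK_{\g;t}$. This gives both injectivity and surjectivity simultaneously.

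Property (i) is checked on generators: both bar-involutions fix $f_{\im,m}$ and $[L^\calQ_\im]_t$ respectively and send $q^{1/2}\leftrightarrow q^{-1/2}$, $t^{1/2}\leftrightarrow t^{-1/2}$, while compatibility with $\ocalD_q$ and $\ocalD_t$ is built into the definition. Property (ii) follows by specializing: under $\ev_{t=1}$, $[L^\calQ_\im]_t\mapsto[L^\calQ_\im]\in K(\Cgz)$ by \eqref{eq: Kt to K} while $\ocalD_t\rightsquigarrow[\scrD]$, and this matches $\bPhi_\DQ(f_{\im,m})=[\scrD^m L^\calQ_\im]$ from Proposition~\ref{prop: homomorphism from hA to Cg}. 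Property (iii) is the deepest part and is not proved from scratch here: I would simply cite \cite[Theorem~7.6, Corollary~8.3]{KKOP24} (and its refinement in \cite{KKOP24B}) asserting that $\Psi_\DQ(\tbfG)=\bfL_t$ as sets; the characterization uses that both bases are bar-invariant and unitriangular with respect to the PBW-monomial basis $\bfP_{\hwc}$ after the $q^{-(\wt,\wt)/4}$ normalization.

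The main obstacle is the bosonic relation at $p=m+1$: tracking the sign $(-1)^{p-m+1}$ and the $(1-q^2)$ correction through $\ocalD_t$ requires precise control of the $\calN$-pairing computed via the $\rmQ$-datum bijection $\phi_\calQ$ and of the $t$-deformed $T$-system expansion; this is exactly the technical heart of \cite{FHOO,FHOO2}, and beyond invoking those references there is no shortcut.
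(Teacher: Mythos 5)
The paper does not prove Theorem~\ref{thm: auto of calK}: it is stated with citations to \cite{HL15,FHOO,KKOP24} and no in-paper argument is given, so there is no author's proof to compare against. Your reconstruction of how those references establish the result is essentially the right strategy (define on generators, verify the two families of relations, use PBW bases for bijectivity, check bar-involution and specialization on generators, invoke \cite{KKOP24} for the global-basis correspondence), and I did not find a genuine gap.

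Two small imprecisions are worth flagging, though neither is fatal. First, you cite Proposition~\ref{prop: comm diagram} to identify $\Psi_\DQ(\FF_k^\hwc)$ with $(q,t)$-characters of cuspidal modules, but that proposition concerns the specialized map $\bPhi_\bbD$ at $q=1$, not the $q$-deformed $\Psi_\DQ$; the $t$-level compatibility of the braid action $\TT_\im$ with $\calK_{\g;t}$ is an input from \cite{KKOP21B,KKOP24B} (and is already built into HL15/FHOO's construction), not a consequence of the diagram you cite. Second, the relation $f_{\im,m}f_{\im,m+1}=q^{2}f_{\im,m+1}f_{\im,m}+(1-q^{2})$ at $p=m+1$, $\im=\jm$ is not a $T$-system in the sense of \eqref{eq: T-system in terms of M[a,b]}; it comes from the length-two duality exact sequence between $\scrD L^\calQ_\im$ and $L^\calQ_\im$ (the ``boson'' relation), and in HL15/FHOO it is verified directly on the quantum torus from the $(q,t)$-character leading-term data rather than from a $T$-system identity. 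Also, uniqueness follows trivially from the fact that $\{f_{\im,m}\}\cup\{q^{\pm1/2}\}$ generates $\hcalA_\bfA$ by Definition~\ref{def: Bosonic ext}; citing Remark~\ref{rmk: comp PBW hwc}\,(b), which concerns the different generating set $\{\FF_k^\hwc\}$, is an unnecessary detour. These are citation and terminology issues, not mathematical errors; the overall plan is the one HL15, FHOO and KKOP24 actually execute.
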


We call $\tbfG$ the \emph{normalized global basis} of $\hcalA$. 
Note that each element in $\tbfG$ is $\overline{\phantom{a}}$-invariant (see \cite[(5.10)]{KKOP24}). 
The map $\Psi_\DQ$ in~\eqref{eq: PhiQ} can be understood as a \emph{quantization} of $\bPhi_\DQ$. 

Note that $\Q(q^{1/2})\tens\Psi_\DQ^{-1}$ is denoted by $\Omega_\calQ$ in \cite{KKOP24}. 

\begin{definition} \label{def: quantizable}
We say that a simple module $M$ is \emph{quantizable} if 
$$[M]_t|_{t=1}\seteq \ev_{t=1}([M]_t)=\chi_q(M).$$     
\end{definition}

\begin{conjecture} [{cf.~\cite[Conjecture 7.3]{Her04}}] \label{conj: positivity conjecture}
  Every simple module is quantizable.
  \end{conjecture}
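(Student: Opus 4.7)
The plan is to reformulate quantizability as the assertion that the global basis element of $\hcalA_\bfA$ matching $[M]_t$ specializes, under $\bPhi_\DQ$, to the class $[M]$ in $K(\Cgz)$, and then to attack this identity via the two unitriangular expansions---algebraic on $\hcalA_\bfA$ and categorical on $K(\scrC_\g^\bbD(\ttb))$---that are available from the preceding sections. Concretely, I would first fix a $\rmQ$-datum $\calQ$, set $\bbD=\bbD_\calQ$, and use Theorem~\ref{thm: auto of calK} to assign to each simple $M\in\Cgz$ the unique $\bfb_M\in\hBi$ with $\Psi_\DQ(\tbfG(\bfb_M))=[M]_t$. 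Since $\ev_{t=1}\circ\Psi_\DQ=\bPhi_\DQ$, Conjecture~\ref{conj: positivity conjecture} is equivalent to the identity $\bPhi_\DQ\bigl(\tbfG(\bfb_M)\bigr)=[M]$ in $K(\Cgz)$ for every simple $M$.

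Next I would choose $\ttb\in\ttB^+$ large enough that $M\in\scrC_\g^\bbD(\ttb)$, fix $\uii\in\Seq(\ttb)$, and invoke Proposition~\ref{cor:basis} to write $M\simeq V^{\bbD,\uii}(\bsa_M)$ for a unique $\bsa_M$. Because $\obbA(\ttb)$ is commutative (Proposition~\ref{prop: comm}) and $\bPhi_\bbD(\FF^\uii_k)=[C^{\bbD,\uii}_k]$ (Proposition~\ref{prop: reflection on duality datum}), after the specialization $\ev_{q=1}$ the PBW monomial $\FF^\uii(\bsa)$ is sent to $[P^{\bbD,\uii}(\bsa)]$. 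Comparing
\[
\FF^\uii(\bsa)=\bb^\uii(\bsa)+\sum_{\bsb\prec\bsa}c_{\bsa,\bsb}(q)\,\bb^\uii(\bsb),\qquad c_{\bsa,\bsb}(q)\in q\Z_{\ge 0}[q],
\]
from Theorem~\ref{thm: P and G for ttb} with its categorical counterpart
\[
[P^{\bbD,\uii}(\bsa)]=[V^{\bbD,\uii}(\bsa)]+\sum_{\bsb\prec\bsa}c^\bbD_{\bsa,\bsb}\,[V^{\bbD,\uii}(\bsb)],\qquad c^\bbD_{\bsa,\bsb}\in\Z_{\ge 0},
\]
from Theorem~\ref{thm: unitri beta}, and inducting on $\prec$, quantizability of $M$ reduces to the coefficient matching $c_{\bsa_M,\bsb}(1)=c^\bbD_{\bsa_M,\bsb}$ for every $\bsb\prec\bsa_M$.

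To produce such matchings I would first observe that all cluster-monomial simples in $\scrC_\g^\bbD(\ttb)$ are automatically quantizable, since the monoidal cluster categorification of $\obbA(\ttb)$ established in this paper identifies their classes with elements of $\bPhi_\DQ(\tbfG(\ttb))$. Braid propagation via Corollary~\ref{cor: dia comm repeat}, Lemma~\ref{lem:Treal} and Proposition~\ref{prop: ell de} then spreads quantizability along the $\ttB$-orbit, using that $\TT^\bbD_\ttb$ preserves heads, $\La$-values, and the global basis; the T-system exact sequences of Theorem~\ref{thm: every sequence good} yield explicit multiplicative identities in $K(\Cgz)$ that, by induction on $\het(\wt_\calQ(M))$, constrain many remaining simples via the affine determinantial modules $M^{\bbD,\uii}[a,b]$, which are themselves quantizable by construction.

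The hard part---the genuine obstacle---will be to rule out lower-order corrections at $t^{1/2}=1$ for simples that lie outside the braid orbit of cluster monomials. There, $c_{\bsa,\bsb}(q)\in q\Z_{\ge 0}[q]$ and $c^\bbD_{\bsa,\bsb}\in\Z_{\ge 0}$ are a priori independent nonnegative integers, and the conjecture is precisely the positivity statement that they coincide after the two specializations. In the simply-laced untwisted case this is Nakajima's theorem, proved by identifying $\tbfG(\bfb_M)$ with the Poincar\'e polynomial of an IC stalk on a graded quiver variety, so that the decomposition theorem forces bar-invariance and nonnegativity incompatible with cancellation at $t^{1/2}=1$. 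Outside the simply-laced untwisted setting no such geometric incarnation is yet available, and the most promising purely algebraic route would be to transfer the simply-laced result across the folding isomorphism \eqref{eq: untwisted to twisted} by establishing strong compatibility of $\Psi_{\bbD_\calQ}$ and $\bPhi_{\bbD_\calQ}$ with the folding; this positivity-at-$t^{1/2}=1$ step is the conjectural heart of the problem and the genuine obstruction.
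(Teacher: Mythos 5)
The statement you were asked to prove is, in the paper itself, labeled a \emph{conjecture}, not a theorem. The paper does not prove it; Remark~\ref{rmk: chi quantizable} records exactly where it is known (Nakajima for types $A_n^{(1)},D_n^{(1)},E_{6,7,8}^{(1)}$, Fujita--Hernandez--Oh--Oya for $B_n^{(1)}$, and for \emph{reachable} simples in types $C_n^{(1)},F_4^{(1)},G_2^{(1)}$) and explicitly states that the general case in types $C_n^{(1)},F_4^{(1)},G_2^{(1)}$ remains open. There is therefore no proof in the paper to compare against.

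That said, your reduction is sound and tracks the paper's own framing. The equivalence of quantizability with $\bbD_\calQ$-quantizability is precisely Lemma~\ref{lem: chi quantizable}; quantizability of cluster-monomial modules is Corollary~\ref{cor:D-q}; and the two unitriangular expansions you invoke (Theorem~\ref{thm: P and G for ttb} on the algebraic side, Theorem~\ref{thm: unitri beta} on the categorical side) genuinely reduce the conjecture to matching the transition coefficients after specialization at $q^{1/2}=1$, which is an honest but non-simplifying reformulation. You are right that outside the cluster-monomial modules and their $\TT_\ttb$-orbits there is no positivity-at-$t^{1/2}=1$ statement available in the paper, and you correctly stop there rather than claim to close the gap. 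Two caveats on detail: the conjecture is already settled for $B_n^{(1)}$, so ``outside the simply-laced untwisted setting'' slightly overstates what is open; and the isomorphism \eqref{eq: untwisted to twisted} is only a ring isomorphism of Grothendieck rings over $\Z$, so one cannot use it to transport $(q,t)$-characters or global basis elements without establishing compatibility with the $t$-deformation first --- this is not free, and is part of the genuine difficulty you name. In short, your submission is a reasonable strategy outline for an open problem and correctly identifies the obstruction, consistent with the paper's treatment of the statement as a conjecture.
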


\begin{remark} \label{rmk: chi quantizable}
Conjecture~\ref{conj: positivity conjecture} is proved in \cite{Nak04} for the affine types $A_n^{(1)},D_n^{(1)},E_{6,7,8}^{(1)}$,
and in \cite{FHOO} for the affine type $B_n^{(1)}$.    When the affine type is of $C_n^{(1)},F_4^{(1)},G_2^{(1)}$ and
the simple module $M$ is \emph{reachable},  i.e., a cluster monomial module  
or contained in the heart subcategory $\scrC_\calQ$, 
Conjecture~\ref{conj: positivity conjecture} is proved in~\cite{HO19,FHOO,FHOO2}. 
However, Conjecture~\ref{conj: positivity conjecture} for the affine types
$C_n^{(1)},F_4^{(1)},G_2^{(1)}$ is still open for general
simple modules $M$. Note also that
it is proved in \cite{Her07} that any fundamental representation
is quantizable.

However it is known that (\cite{FM01,Nak04,Her04})
\eq
\ev_{t=1}(L_t(m))\in [L(m)]+\sum_{m'\prec m}\Z\,[L(m')],
\label{eq:LtL}
\eneq
where $\prec$ is the Nakajima order on $\calM^+$.  

\end{remark}

The following corollary is an immediate consequence of Theorem~\ref{thm: auto of calK}
 above and Proposition~\ref{pro: positivity pf qt character}.
\Cor\label{cor:gbpositive} 
For any $\bfb_1,\bfb_2\in \tbfG$,
we have
\eqn
\bfb_1\bfb_2=\sum_{\bfb \in\tbfG}c_{\bfb_1,\,\bfb_2}^{\;\bfb}(q)\,
\bfb
\eneqn
where $c_{\bfb_1,\,\bfb_2}^{\;\bfb}(q)\in\Z_{\ge0}[q^{\pm1/2}]$.
\encor
 
\Lemma \label{lem: chi quantizable} Let $\calQ$ be a $\rmQ$-datum.
A simple module $M$ is quantizable if and only if it is $\bbD_\calQ$-quantizable \ro see {\rm Definition~\ref{def: strong real}}\rf.
\enlemma

\Proof  Set $\bbD=\bbD_\calQ$.
It is obvious that a quantizable $M$ is $\bbD$-quantizable.

Let us show that a $\bbD$-quantizable simple module $M$ is quantizable.
By the assumption, there exists $\bfb\in\tbfG$ such that
$\bPhi_\bbD(\bfb)=[M]$.
On the other hand, Theorem~\ref{thm: auto of calK} implies that
$\Psi_{\bbD}(\bfb)=L_t(m)$ for some $m\in\calM^+$.

Take $m'\in\calM^+$ such that $M\simeq L(m')$.
Then Theorem~\ref{thm: auto of calK} implies that $\ev_{t=1}(L_t(m))=[L(m')]$.
Hence we conclude that $m=m'$ by \eqref{eq:LtL}.
Thus $M$ is quantizable. 
\QED
\begin{lemma} \label{lem: L up to}
For quantizable simple modules $L(m_1)$ and $L(m_2)$ in $\Cgz$ such that one of them is real, 
if $\de(L(m_1),L(m_2))=0$ and $L(m_1m_2)$ is quantizable, we have  
$$\calN(L(m_1),L(m_2)) = \La(L(m_1),L(m_2)) $$
and
$$ t^{-\calN(m_1,m_2)/2}L_t(m_1)L_t(m_2)= L_t(m_1m_2) = t^{\calN(m_1,m_2)/2} L_t(m_2)L_t(m_1).$$ 
\end{lemma}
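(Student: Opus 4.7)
The plan is to use the compatibility between $q$-characters, $(q,t)$-characters, and the three quantizability hypotheses, combined with positivity in the quantum Grothendieck ring, in two stages. Since $\de(L(m_1),L(m_2))=0$ and one of $L(m_i)$ is real, Proposition~\ref{prop: de property 1}\,(ii) yields that $L(m_1)\tens L(m_2)=L(m_1)\hconv L(m_2)$ is simple, and inspection of the dominant Drinfeld monomial in its $q$-character identifies this simple module with $L(m_1m_2)$.

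For the product formula in $\calK_{\g;t}$, I would expand $L_t(m_1)L_t(m_2)=\sum_{m\preceq m_1m_2}c^m_{m_1,m_2}(t)\,L_t(m)$ via Proposition~\ref{pro: positivity pf qt character}. Using the standard normalization $L_t(m)=\widetilde{m}+(\text{strictly lower})$ in $\calY_t$ together with the quantum-torus identity $\widetilde{m_1}\cdot\widetilde{m_2}=t^{\calN(m_1,m_2)/2}\,\widetilde{m_1m_2}$ built into the definition of $\calN$, comparison of leading monomials refines Proposition~\ref{pro: positivity pf qt character}\,(c) to $c^{m_1m_2}_{m_1,m_2}(t)=t^{\calN(m_1,m_2)/2}$. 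Specializing at $t=1$, the three quantizability assumptions give
$$\ev_{t=1}\bl L_t(m_1)L_t(m_2)\br=\chi_q(L(m_1))\chi_q(L(m_2))=\chi_q(L(m_1m_2))=\ev_{t=1}\bl L_t(m_1m_2)\br,$$
so that $\sum_{m\prec m_1m_2}c^m_{m_1,m_2}(1)\,\ev_{t=1}(L_t(m))=0$. The positivity $c^m_{m_1,m_2}(t)\in\Z_{\ge0}[t^{\pm1/2}]$ from Proposition~\ref{pro: positivity pf qt character}\,(a), together with $\{\ev_{t=1}(L_t(m))\}_{m\in\calM^+}$ being a $\Z$-basis of $K(\Cgz)$ by~\eqref{eq: basis L}, force every $c^m_{m_1,m_2}(t)$ with $m\prec m_1m_2$ to vanish. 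This yields $L_t(m_1)L_t(m_2)=t^{\calN(m_1,m_2)/2}L_t(m_1m_2)$; swapping the roles of $m_1$ and $m_2$ and using $\calN(m_2,m_1)=-\calN(m_1,m_2)$ together with $m_1m_2=m_2m_1$ as commutative dominant monomials then delivers the companion identity $L_t(m_2)L_t(m_1)=t^{-\calN(m_1,m_2)/2}L_t(m_1m_2)$, which is the displayed product formula.

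For the invariant identification $\calN=\La$, I would combine the two formulas above into the pure $t$-commutation $L_t(m_1)L_t(m_2)=t^{\calN(m_1,m_2)}L_t(m_2)L_t(m_1)$ inside $\calK_{\g;t}$, and transport this identity through the isomorphism $\Psi_\DQ$ of Theorem~\ref{thm: auto of calK} (which exchanges $t^{\pm1/2}$ with $q^{\mp1/2}$). Writing $\widetilde{\rmG}(\bfb_i)\seteq\Psi_\DQ^{-1}(L_t(m_i))\in\tbfG$ for the corresponding normalized global basis elements, this becomes
$$\widetilde{\rmG}(\bfb_1)\widetilde{\rmG}(\bfb_2)=q^{-\calN(m_1,m_2)}\,\widetilde{\rmG}(\bfb_2)\widetilde{\rmG}(\bfb_1)\quad\text{in }\hcalA.$$
On the other hand, the $\La$-invariant encodes the $q$-commutation of strongly commuting simple modules in $\Cgz$ through the normalized $R$-matrix (via the degree of $c_{M,N}(z)$, together with Proposition~\ref{prop: de property 1}\,(i),\,(iii)); matching this commutation against the one just derived inside $\hcalA$ forces $\calN(L(m_1),L(m_2))=\La(L(m_1),L(m_2))$.

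The principal obstacle is precisely this last matching step: certifying rigorously that the commutation exponent transported onto the bosonic-extension side of $\Psi_\DQ$ is exactly $\La$ rather than some competing shift arising from the normalization of $\widetilde{\rmG}$ relative to $\rmG$. The cleanest route appears to be a reduction to fundamental modules: verify $\calN=\La$ on pairs of fundamentals directly, using the explicit formula for $\La^\infty$ from Theorem~\ref{thm: wtpairing Lainf}, the expansion $\La=\sum_{k\in\Z}(-1)^{k+\delta(k<0)}\de(M,\rdual^kN)$ of Proposition~\ref{prop: de property 1}\,(iii), and the definition of $\calN(\im,p;\jm,s)$ via $\phi_\calQ$; one then extends to arbitrary quantizable strongly commuting pairs by bilinearity of both $\La$ and $\calN$, using that strong commutation propagates to the fundamental factors appearing in the respective $q$-characters.
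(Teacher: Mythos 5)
The first part of your argument — expanding $L_t(m_1)L_t(m_2)$ via Proposition~\ref{pro: positivity pf qt character}, identifying the leading coefficient from the quantum-torus monomial identity, and using $\ev_{t=1}$ together with the three quantizability hypotheses, positivity, and the basis property~\eqref{eq: basis L} to kill the lower-order terms — is correct and matches the paper's proof. It establishes the $t$-commutation and the second displayed formula.

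The gap is exactly where you flag it, but the proposed fix does not work. Your plan to ``extend to arbitrary quantizable strongly commuting pairs by bilinearity of both $\La$ and $\calN$, using that strong commutation propagates to the fundamental factors'' rests on two premises that fail. First, $\La$ is not bilinear in the exponents of the dominant monomial: $\La(M,N)=\sum_{k}(-1)^{k+\delta(k<0)}\de(M,\rdual^kN)$ is an alternating sum over all duality shifts, and it is additive only under normality hypotheses (see Lemma~\ref{lem: normal property}) that you have no way to guarantee for the fundamental constituents read off from the $q$-character. Second, strong commutation of $L(m_1)$ and $L(m_2)$ does \emph{not} imply that the fundamental factors $L(Y_{\oim,p})$ of $m_1$ pairwise strongly commute with those of $m_2$; real prime modules such as KR modules routinely commute with modules whose fundamental ``building blocks'' they fail to commute with individually, so the proposed reduction to pairs of fundamentals is not legitimate. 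The detour through $\Psi_\DQ$ also adds nothing: it relocates the same question into $\hcalA$ without supplying the $\La$-identification, and one would need to reintroduce precisely the missing input there.

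The paper closes this step by invoking \cite[Corollary 6.15]{FO21}, which, for strongly commuting simple modules, identifies $\La$ with the $t$-commutation exponent read off from the highest weight monomials in the quantum torus. Once you have the $t$-commutation (which you correctly derived), that reference — not a reduction to fundamentals — is the one remaining ingredient.
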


\begin{proof}
By Proposition~\ref{pro: positivity pf qt character}, we have
$$L_t(m_1)L_t(m_2) = \sum_{m \in \calM^+} c^m_{m_1,m_2}(t) L_t(m) \quad \text{ in $\calK_{\g;t}$} $$ 
with $c^m_{m_1,m_2}(t) \in \Z_{\ge0}[t^{\pm 1/2}]$ and $L_t(m) \in\Z_{\ge0}[t^{\pm1}]\;[\tY_{\oim,p}^{\pm1} \ | \ (\im,p) \in \hDynkin^\sigma_0]$. 
From the assumptions, taking $\ev_{t=1}$ yields 
\begin{align*}
\ev_{t=1}(L_t(m_1m_2)) = \chi_q(L(m_1m_2)) = \chi_q(L(m_1))\chi_q(L(m_2)) & =  \sum_{m \in \calM^+} c^m_{m_1,m_2}(1) \ev_{t=1}(L_t(m)).
\end{align*}
Thus $c^m_{m_1,m_2}(t)=  \delta(m = m_1m_2) \cdot t^a$ for some $a \in \Z/2$ by~\eqref{eq: basis Ls} and Proposition~\ref{pro: positivity pf qt character}.
Similarly,  $c^m_{m_2,m_1}(t) =  \delta(m = m_1m_2) \cdot t^b$ for some $b \in \Z/2$. Thus 
$L_t(m_1)$ and $L_t(m_2)$ commute up to a power of $t^{\pm 1/2}$. Then the assertion follows from the leading terms  
of $L_t(m_i)$ $(i=1,2)$ and~\cite[Corollary 6.15]{FO21}. 
\end{proof}

\begin{remark} 
In \cite[Lemma 9.9 and Lemma 11.5]{FHOO}, Lemma~\ref{lem: L up to} is proved for (i) $\Cgz$ in types $\g=ABDE^{(1)}$, and (ii) for $\scrC_\calQ$ in any 
affine type $\g$ and its $\rmQ$-datum $\calQ$. 
In these cases, any simple module is quantizable.
\end{remark}

\subsection{Canonical complete duality datum}
Let $\sfg$ be a simply-laced finite-dimensional simple Lie algebra and 
let $\hcalA$ be the corresponding bosonic extension, and let $\tbfG$ be the normalized global basis of $\hcalA$.

Let $\g$ be of affine untwisted type $\sfg^{(1)}$ 
and $\calQ$  a $\rmQ$-datum of $\g$. We set
\eq\Dc\seteq\bbD_\calQ \label{def:Dcan}
\eneq
and call it a \emph{canonical complete duality datum} associated with $\calQ$.
Then every simple module in $\calC_{\sfg^{(1)}}^0$ is $\calQ$-quantizable.  Let $\uii$ be an arbitrary sequence in $\sfI$.
Under these choices, by Remark~\ref{rmk: chi quantizable},
there exists a unique element $\bfb[a,b]^\uii \in \tbfG$ such that 
$$
\Psi_\Dc(\bfb[a,b]^\uii) = [M^{\calQ,\uii}[a,b]]_t \quad \text{for any $i$-box $[a,b]$}. 
$$
Then we have
$\bPhi_{\Dc}(\bfb[a,b]^\uii) = [M^{\calQ,\uii}[a,b]]$ in $K(\Csgz)$
 and
\begin{align} \label{eq: T-system in K}
\bPhi_{\Dc}(\bfb[a^+,b]^\uii \bfb[a,b^+]^\uii) = \bPhi_\Dc(\bfb[a,b]^\uii \bfb[a^+,b^-]^\uii) + \prod_{d(\im_a,\jm)=1} \bPhi_\Dc(\bfb[a(\jm)^+,b(\jm)^-]^\uii),   
\end{align}
by Theorem~\ref{thm: Tsystem}.  
From~\eqref{eq: T-system in K}, we have
\begin{align} \label{eq: oT-system in K}
{}^\circ\bfb[a^+,b]^\uii \cdot {}^\circ\bfb[a,b^+]^\uii
= {}^\circ\bfb[a,b]^\uii \cdot {}^\circ\bfb[a^+,b^-]^\uii + \prod_{d(\im_a,\jm)=1} {}^\circ\bfb[a(\jm)^+,b(\jm)^-]^\uii \quad  \text{in $\obbA$,}
\end{align}
where ${}^\circ\bfb[a,b]^\uii = \ev_{q=1}(\bfb[a,b]^\uii)$. 

\medskip
The following theorem says that the above characterization of $\bfb[a,b]^\uii$
holds for an arbitrary choice of
a complete duality datum $\bbD$. 

\begin{theorem} \label{thm: D-quantizable for M[a,b]}
  Let $\g$ be an arbitrary affine Lie algebra, and
  $\bbD$ a complete duality datum in $\Cgz$, and $\uii$ an arbitrary
  sequence in $\sfI$. Then, we have
$$
\bPhi_\bbD(\bfb[a,b]^\uii) = [M^{\bbD,\uii}[a,b]] \quad \text{for any $i$-box $[a,b]^\uii$.}
$$
In particular, every affine determinantial module $M^{\bbD,\uii}[a,b]$  is $\bbD$-quantizable.   
\end{theorem}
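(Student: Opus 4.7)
The plan is to proceed by induction on the envelope length $b-a$ of the $i$-box $[a,b]$, handling the base case $b=a$ separately. Throughout, both the T-system at the categorical level (Theorem~\ref{thm: every sequence good}\eqref{it: T-system}) and its algebraic counterpart in $\obbA$ carry the same combinatorial shape, and both hold for arbitrary $\g$ and $\bbD$, so the strategy is to pin down ${}^\circ\bfb[a,a]^\uii$ in the base case and then bootstrap via the T-system.

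For the base case $b=a$, I would identify $\bfb[a,a]^\uii$ with the PBW root vector $\FF^\uii_a=\TT_{\im_1}\cdots\TT_{\im_{a-1}}(f_{\im_a,0})$ up to a factor that is annihilated by $\bPhi_\bbD$. Inspection of formula \eqref{eq: T_i} shows that each $\TT_\im$ commutes with the bar-involution of $\hcalA$, so $\FF^\uii_a$ is bar-invariant. Theorem~\ref{thm: P and G for ttb}\eqref{it: P and G 3rd} applied to the tuple $e_a\in\Z_{\ge0}^{\oplus K}$ having a single $1$ in position $a$ gives $\FF^\uii_a=\bb^\uii(e_a)+c(q)\cdot 1$ with $c(q)\in q\Z_{\ge0}[q]$, since the zero tuple is the unique strict $\prec$-predecessor of $e_a$; bar-invariance forces $c(q)=0$ and hence $\FF^\uii_a=\bb^\uii(e_a)\in\bfG$. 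In the canonical setting ($\g$ untwisted simply-laced with $\bbD=\Dc$), both $\ev_{q=1}(\bfb[a,a]^\uii)$ and $\ev_{q=1}(\FF^\uii_a)$ are mapped by the isomorphism $\obPhi_\Dc$ of Theorem~\ref{thm: obbA K(Cgz)} to $[C^{\calQ,\uii}_a]$, the image of $\FF^\uii_a$ being computed via \eqref{eq:TTPhi} applied iteratively to $f_{\im_a,0}$. Therefore ${}^\circ\bfb[a,a]^\uii=\ev_{q=1}(\FF^\uii_a)$ holds in $\obbA$; since $\obbA$ is intrinsic to $\sfg$, applying $\obPhi_\bbD$ for an arbitrary $\bbD$ and $\g$ yields $\bPhi_\bbD(\bfb[a,a]^\uii)=\bPhi_\bbD(\FF^\uii_a)=[C^{\bbD,\uii}_a]$, the last equality again being \eqref{eq:TTPhi} applied to $f_{\im_a,0}$.

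For the inductive step with $a<b$, I would combine the categorical T-system in $K(\Cgz)$,
$$[M^{\bbD,\uii}[a^+,b]]\cdot[M^{\bbD,\uii}[a,b^-]]=[M^{\bbD,\uii}[a,b]]\cdot[M^{\bbD,\uii}[a^+,b^-]]+\prod_{d(\im_a,\jm)=1}[M^{\bbD,\uii}[a(\jm)^+,b(\jm)^-]],$$
with its pre-image in $\obbA$ under $\obPhi_\Dc$ obtained from the canonical case,
$${}^\circ\bfb[a^+,b]^\uii\cdot{}^\circ\bfb[a,b^-]^\uii={}^\circ\bfb[a,b]^\uii\cdot{}^\circ\bfb[a^+,b^-]^\uii+\prod_{d(\im_a,\jm)=1}{}^\circ\bfb[a(\jm)^+,b(\jm)^-]^\uii,$$
which is intrinsic to $\sfg$ and therefore holds independently of $\bbD$ and $\g$. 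Applying $\obPhi_\bbD$ to the $\obbA$-identity, the induction hypothesis on $[a^+,b]$, $[a,b^-]$, $[a^+,b^-]$, and each $[a(\jm)^+,b(\jm)^-]$ (all of envelope length strictly less than $b-a$) substitutes $[M^{\bbD,\uii}[\cdot,\cdot]]$ for every $\obPhi_\bbD({}^\circ\bfb[\cdot,\cdot]^\uii)$ except $\obPhi_\bbD({}^\circ\bfb[a,b]^\uii)$. Comparing with the categorical T-system and using that $K(\Cgz)\simeq\obbA$ is a polynomial ring (Corollary~\ref{cor: K simeq oA}\eqref{it: poly}), hence an integral domain, the nonzero factor $[M^{\bbD,\uii}[a^+,b^-]]$ cancels to yield $\obPhi_\bbD({}^\circ\bfb[a,b]^\uii)=[M^{\bbD,\uii}[a,b]]$. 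The degenerate case $|[a,b]_\phi|=2$, where $[a^+,b^-]$ is empty and is interpreted as $M^{\bbD,\uii}[a^+,b^-]=\mathbf{1}$ with ${}^\circ\bfb[a^+,b^-]^\uii=1$, closes without any cancellation.

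The main conceptual obstacle is the base case, specifically the identification ${}^\circ\bfb[a,a]^\uii=\ev_{q=1}(\FF^\uii_a)$ in $\obbA$ without appealing to quantizability in non-simply-laced affine types, where Conjecture~\ref{conj: positivity conjecture} remains open. This forces the two-step strategy above: first pin down the element at the purely algebraic level in the canonical untwisted simply-laced setting, where $\Psi_\Dc$ and $\obPhi_\Dc$ are genuine isomorphisms and every simple module is quantizable by \cite{Nak04}; then transport the identity to arbitrary $\bbD$ and $\g$ via the universality of $\FF^\uii_a\in\hcalA$ and of the intrinsic commutative ring $\obbA$.
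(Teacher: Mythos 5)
Your proposal matches the paper's proof almost verbatim: the base case is handled by matching ${}^\circ\bfb[a,a]^\uii$ with $\ev_{q=1}(\FF^\uii_a)$ in $\obbA$ via the injectivity of $\obPhi_{\Dcan}$ and then applying $\bPhi_\bbD$, and the inductive step applies $\obPhi_\bbD$ to the $\obbA$-level T-system \eqref{eq: oT-system in K}, substitutes the induction hypothesis into all smaller $i$-boxes, and cancels the nonzero factor $[M^{\bbD,\uii}[a^+,b^-]]$ in the integral domain $K(\Cgz)$. Your aside arguing $\FF^\uii_a\in\bfG$ is not needed (the $\obbA$-level identity, which you establish independently, is all the induction uses); note moreover that bar-invariance of $\FF^\uii_a$ would place it in the normalized family $q^{\Z/2}\bfG$ (i.e.\ in $\tbfG$) rather than in $\bfG$ itself, since $\bfG$ is $c$-invariant and $c(x)=q^{(\wt x,\wt x)/2}\overline{x}$ differs from bar-invariance by a nontrivial power of $q$ on any nonzero weight.
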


\begin{proof}
  Note that $\bPhi_\bbD(\bfb[a,b]^\uii) = [M^{\bbD,\uii}[a,b]]$ when $a=b$.
  For $b > a$, let us apply an induction on $b-a$. 
Applying the isomorphism $\obPhi_\bbD$ in~\eqref{eq: obPhi} to~\eqref{eq: oT-system in K}, we have
$$
\bPhi_\bbD(\bfb[a^+,b]^\uii \bfb[a,b^+]^\uii) = \bPhi_\bbD(\bfb[a,b]^\uii \bfb[a^+,b^-]^\uii) + \prod_{d(\im_a,\jm)=1} \bPhi_\bbD(\bfb[a(\jm)^+,b(\jm)^-]^\uii).
$$
Since $\bPhi_\bbD(\bfb[a(\jm)^+,b(\jm)^-]^\uii) = [M^{\bbD,\uii}[a(\jm)^+,b(\jm)^-]]$, etc., we can conclude that
$$
\bPhi_\bbD(\bfb[a,b]^\uii) = [M^{\bbD,\uii}[a,b]]
$$
as desired. 
\end{proof}

\section{Quantum cluster algebras} \label{Sec: QCA}
In this section, we briefly recall quantum cluster algebras and cluster algebras, introduced by
Berenstein-Fomin-Zelevinsky in~\cite{FZ02,BZ05}.

\subsection{Quantum cluster algebras}
Let $t$ be an invertible indeterminate with a formal square root $t^{1/2}$. Let $\sfJ$ be a set of indices which can be countably infinite and is decomposed into
the set of exchangeable indices $\sfJ_\ex$ and the set of frozen indices $\sfJ_\fr$; i.e., $\sfJ = \sfJ_\ex \sqcup \sfJ_\fr$.
For a $\Z$-valued skew-symmetric $\sfJ \times \sfJ$-matrix $L=(L_{ij})_{i,j \in \sfJ}$, we define the \emph{quantum torus} $\sfT(L)$ associated with $L$ to be the
$\Z[t^{\pm 1/2}]$-algebra generated by $\{ \tX^{\pm 1}_j \}_{j \in \sfJ}$ subject to following relations:
\begin{align*}
\tX_j\tX_j^{-1}=\tX_j^{-1}\tX_j=1 \qtq \tX_i\tX_j = t^{L_{ij}} \tX_j\tX_i \quad\text{for $i,j \in \sfJ$.}    
\end{align*}
Note that
$\sfT(L)$ is an Ore domain and hence is embedded into its skew-field of fractions $\bbF(\sfT(L))$.
 
The quantum torus $\sfT(L)$ is equipped with a $\Z$-algebra anti-involution $\overline{(\cdot)}$, called the \emph{bar-involution}, defined by
$\overline{t^{\pm 1/2}} = t^{\mp 1/2}$ and $\overline{\tX_j} = \tX_j$  for all $j \in \sfJ$. 

For $\bsa =(a_k)_{k \in \sfJ}\in \Z^{\oplus \sfJ}$, we define the element $\tX^\bsa$ in $\sfT(L)$ as 
\begin{align}\label{eq: commutative monomial}
\tX^\bsa \seteq t^{1/2\sum_{i>j}a_{i}a_j L_{ij}} \rprod_{k \in \sfJ} \tX_k^{a_k}.  
\end{align}
Here we take a total order on the set $\sfJ$. Note that the element $\tX^\bsa$ does not depend on the choice of a total order on $\sfJ$ and is invariant under the bar-involution.   
It is well-known that $\{\tX^\bsa\}_{ \bsa \in \Z^{\oplus J}}$ forms a free $\Z[t^{\pm 1/2}]$-basis of $\sfT(L)$.

\begin{definition} \label{def: ex mat}
A $\Z$-valued $\sfJ \times \sfJ_\ex$-matrix $\tB=(b_{ij})_{i\in \sfJ,j\in \sfJ_\ex}$  is called an \emph{exchange matrix} if it satisfies the following properties: 
\ben
\item for each $j \in \sfJ_\ex$, there exist finitely many $i \in \sfJ$ such that $b_{ij} \ne 0$,
\item the principal part $B \seteq (b_{ij})_{i,j \in \sfJ_\ex}$ is skew-symmetric.
\ee
\end{definition}

\begin{definition}
Let $(L,\tB)$ be a pair of matrices defined above and $\sfT(L)=\Z[t^{\pm1/2}][\tX^{\pm1}_{k}]_{k \in \sfJ}$ its quantum torus.    
\bnum
\item We say that a pair $(L,\tB)$ is \emph{compatible} if we have
$\sum_{k \in\sfJ} L_{ki} b_{kj}   = 2\delta_{ij}$. 
\item We call the triple $\scrS_t = (\{ \tX_k \}_{k \in \sfJ}, L , \tB )$ a \emph{quantum seed} in the quantum torus $\sfT(L)$ and $\{ \tX_k \}_{k \in \sfJ}$ a \emph{quantum cluster}. 
\ee 
\end{definition}
For $k \in \sfJ_\ex$, the \emph{mutation} $\mu_k(L,\tB) \seteq (\mu_k(L),\mu_k(B))$ of a compatible pair $(L,\tB)$ \emph{in a direction $k$} is defined in a combinatorial way as follows:
\begin{align}
\mu_k(L)_{ij} = & \bc
  -L_{ij} - \displaystyle\sum _{b_{sk}<0  }  b_{sk}  L_{is} & \text{if} \ i \neq k, \ j= k, \\
  -L_{ij} + \displaystyle \sum _{b_{sk}>0}  b_{sk} L_{sj} \quad \  & \text{if} \ i=k, \ j\neq k, \\
   L_{ij} & \text{otherwise,}
\ec    \label{eq: mu L} \allowdisplaybreaks\\
\mu_k(\tB)_{ij} =&
\bc
  -b_{ij} & \text{if}  \ i=k \ \text{or} \ j=k, \\
  b_{ij} + (-1)^{\delta(b_{ik} < 0)} \max(b_{ik} b_{kj}, 0) & \text{otherwise.}
\ec \label{eq: mu B}
\end{align}

Note that (i) the pair $(\mu_k(L),\mu_k(B))$ is also compatible and (ii) the operation $\mu_k$ is an involution; i.e.,
$ \mu_k(\mu_k(L,\tB)) = (L,\tB)$.
We define the mutation of a quantum cluster $\{ \tX_i \}_{i \in \sfJ}$ at $k \in \sfJ_\ex$ as follows:
\begin{align} \label{eq: mut of cluster}
\mu_k(\tX_j) \seteq \bc
\tX^{\bfa'} + \tX^{\bfa''} & \text{ if } j =k, \\
\tX_j  & \text{ if } j \ne k,
\ec
\end{align}
where
$$
\bfa'_i = \bc
-1 & \text{ if } i =k, \\
\max(0,b_{ik}) & \text{ if } i \ne k,
\ec
\text{ and } \ \
\bfa''_i = \bc
-1 & \text{ if } i =k, \\
\max(0,-b_{ik}) & \text{ if } i \ne k.
\ec
$$
Then the \emph{mutation $\mu_k(\scrS_t)$ of the quantum seed $\scrS_t$ in a direction $k$} is defined to be the triple 
$\mu_k(\scrS_t) \seteq \bl\{ \tX_i\}_{i \ne k} \sqcup \{ \mu_k(\tX_k) \}, \mu_k(L), \mu_k(\tB)\br$.   

For a quantum seed $\scrS_t=( \{\tX_k\}_{k\in \sfJ},L,\tB)$,
an element in $\bbF(\sfT(L))$ is called a \emph{quantum cluster variable}
(resp.\ \emph{quantum cluster monomial}) if it is of the form
$$
\mu_{k_1}  \cdots \mu_{k_\ell} (\tX_j), \quad \text{$($resp.\ } \mu_{k_1}  \cdots \mu_{k_\ell} (\tX^\bfa) )
$$
for some finite sequence $(k_1,\ldots,k_\ell) \in \sfJ_\ex^\ell$ $(\ell \in \Z_{\ge 0})$ and $j \in \sfJ$ (resp.\ $\bfa \in \Z_{\ge 0}^\sfJ$). 
Note that each quantum cluster variable is bar-invariant.

For a quantum seed $\scrS_t = (\{ \tX_k \}_{k \in \sfJ}, L,\tB )$, the \emph{
  quantum cluster algebra}
$\scrA_t(\scrS_t)$ is the $\Z[t^{\pm 1/2}]$-subalgebra of $\bbF(\sfT(L))$ generated by all the quantum cluster variables. Note that   $\scrA_t(\scrS_t)\simeq\scrA_t(\bmu(\scrS_t))$ for any sequence $\bmu$ of mutations.

The \emph{quantum Laurent phenomenon}, proved by Berenstein-Zelevinsky in \cite{BZ05}, 
 says that the quantum cluster algebra $\scrA_t(\scrS_t)$ is indeed contained in $\sfT(L)$.

Let $\nu$ be an indeterminate with a formal square root $\nu^{1/2}$.
 We say that an  $\Z[\nu^{\pm 1/2}]$-algebra $R$ has a
{\em quantum cluster algebra structure}
if there exists a quantum seed $\scrS_t$ and a $\Z$-algebra isomorphism $\Omega: \scrA_t(\scrS_t) \overset{\sim}{\longrightarrow} R$ sending $t^{\pm 1/2}$ to $\nu^{\pm 1/2}$ or $\nu^{\mp 1/2}$.
In the case, a {\em quantum seed of $R$} refers to the image of a quantum seed in $\scrA_t(\scrS_t)$, which is obtained by a sequence of mutations.

\subsection{Cluster algebras} Let $\tB$ be an exchange matrix in Definition~\ref{def: ex mat}. 
Let us consider 
the (commutative) Laurent polynomials $\Z[X^{\pm1}_k\mid{k \in \sfJ}]$  
and $\Q(X_k \mid k \in \sfJ)$ the field of fraction of
$\Z[X^{\pm1}_k\mid{k \in \sfJ}]$, which can be understood
as specializations of $\sfT(L)$ and $\bbF(\sfT(L))$ at $t^{1/2}=1$, respectively. Then one can define 
(i) $X^\bfa$ for $\bfa \in \Z^{\oplus \sfJ}$ and (ii) $\mu_k(X_j)$ for $(j,k)\in \sfJ \times \sfJ_\ex$ by specializing at $t^{1/2}=1$ in the formulas in~\eqref{eq: commutative monomial}
and~\eqref{eq: mut of cluster}. 

We call the pair $\scrS = (\{ X_k \}_{k \in \sfJ},  \tB )$ a \emph{seed} in $\Z[X^{\pm1}_k\mid{k \in \sfJ}]$ and $\{ X_k \}_{k \in \sfJ}$ a \emph{cluster}. 
An element in $\Q(X_k \mid  k \in \sfJ)$ is called a \emph{cluster variable} (resp. \emph{cluster monomial}) if it is written as
$$
\mu_{k_1}  \cdots \mu_{k_\ell} (X_j), \quad \text{$($resp.\ } \mu_{k_1}  \cdots \mu_{k_\ell} (X^\bfa) )
$$
for some finite sequence $(k_1,\ldots,k_\ell) \in \sfJ_\ex^\ell$ $(\ell \in \Z_{\ge 0})$ and $j \in \sfJ$ (resp.\ $\bfa \in \Z_{\ge 0}^\sfJ$).

The \emph{ cluster algebra}
$\scrA(\scrS)$ is the $\Z$-subalgebra of $\Q(X_k \mid k \in \sfJ)$ generated by all the cluster variables. As in the quantum cluster algebra,
it is proved that $\scrA(\scrS)$ is contained in $\Z[X^{\pm1}_k\mid{k \in \sfJ}]$, which is referred to as the Laurent phenomenon \cite{FZ02}.   

Specializing at $t^{1/2}=1$, we obtain a surjective ring homomorphism 
$\ev_{t=1}\col \sfT(L) \twoheadrightarrow \Z[X^{\pm1}_k\mid{k \in \sfJ}]$. The $\ev_{t=1}$ induces the surjection
$ \scrA_t(\scrS_t) \twoheadrightarrow \scrA(\scrS)$,    
given by $\ev_{t=1}(t^{r}\tX_i)=X_i$ for all $i \in \sfJ$ and $r \in \Z/2$. 
This surjection maps the quantum cluster monomials of $\scrA_t(\scrS_t)$ to the
cluster monomials of $\scrA(\scrS)$ bijectively (see~\cite[Lemma A.4]{FHOO2} for more details). 
We sometimes write $\scrA(\tB)$ for $\scrA(\scrS)$ to emphasize $\tB$.

\section{Monoidal seeds and their mutations} \label{Sec: MS}

In this section, we first recall the definition and properties of monoidal seeds and monoidal categorification, mainly studied in~\cite{KKKO18,KKOP20,KKOP24A}.
Then we construct monoidal seeds associated with arbitrary sequences and investigate their properties. 
Throughout this section, we fix a complete duality datum $\bbD$ providing an isomorphism $\obPhi_\bbD\col\obbA \isoto K(\Cgz)$ in~\eqref{eq: obPhi},
and we frequently skip $_\bbD$ and $^\bbD$ in notations for simplicity.

\subsection{Monoidal seeds}
Let $\scrC$ be a full subcategory of $\Cgz$ containing the trivial module $\mathbf{1}$ and stable under taking tensor products, subquotients
and extensions. We denote by $K(\scrC)$ the Grothendieck ring of $\scrC$. 

\begin{definition}  \hfill 
\bnum
\item A \emph{monoidal seed in $\scrC$} is
a  quadruple $\sfS = (\{ \sfM_i\}_{i\in \sfJ },\tB; \sfJ,\sfJ_\ex)$
consisting of
 an index set $\sfJ$, an index set
 $\sfJ_\ex\subset \sfJ$ of exchangeable vertices
 , a  commuting family $\{ \sfM_i\}_{i\in\sfJ}$ of
real simple modules in $\scrC$, and an  integer-valued
$\sfJ\times\sfJ_\ex$-matrix $\tB =
(b_{ij})_{(i,j)\in\sfJ\times\sfJ_\ex}$ satisfying the conditions
in Definition~\ref{def: ex mat}.
\item We call $\{ \sfM_i\}_{i\in \sfJ }$ in a monoidal seed $\sfS$ in $\scrC$ \emph{a monoidal cluster}. 
\item For $i\in\sfJ$, we call $\sfM_i$ the $i$-th {\em cluster variable module} of $\sfS$.
\ee
\end{definition}
 
For a monoidal seed  $\sfS=(\{\sfM_i\}_{i\in\sfJ}, \tB;\sfJ,\sfJ_\ex)$,
let $\La^\sfS=(\La^\sfS_{ij})_{i,j\in\sfJ}$ be the skew-symmetric
matrix defined by $\La^\sfS_{ij}\seteq\Lambda(\sfM_i,\sfM_j)$.

\begin{definition} \label{def:admissible}
  We say that a monoidal seed $\sfS=\mseed$  in $\scrC$
  {\em admits a mutation in direction $k\in\sfJ_\ex$}
  if there exists a simple object $\sfM'_k$ of $\scrC$  such that
  \bna
  \item
  there is an exact sequence in $\scrC$
\begin{align} \label{eq: Mk'} 
0 \to  \dtens_{b_{ik} >0} \sfM_i^{\tens  b_{ik}} \to \sfM_k \otimes \sfM_k'
\to \dtens_{b_{ik} <0} \sfM_i^{\tens  (-b_{ik})} \to 0,
\end{align}
\item
$\sfM_k'$
commutes with $\sfM_i$ for any $i\in
\sfJ\setminus\{k\}$.
\ee
We say that $\sfS$ is {\em admissible} if it admits a mutation
in direction $k$ for every $k\in\sfJ_\ex$. 
\end{definition}
Note that $\sfM'_k$ is unique up to an isomorphism if it exists, since
$\sfM_k\hconv \sfM'_k\simeq \dtens_{b_{ik} <0} \sfM_i^{\tens  (-b_{ik})}$ (see~\cite[Corollary 3.7]{KKKO15}).

\begin{lemma} [{\cite[Lemma 7.4]{KKOP24A}}] 
  If $\sfS=\mseed$ 
  admits a mutation in direction $k\in\sfJ_\ex$, 
  then the simple module $\sfM'_k$ in~\eqref{eq: Mk'} is real and the quadruple 
\begin{align} \label{eq: mutation of mseed}
\text{$\mu_k(\sfS)
\seteq  ( \{\sfM_i\}_{i\neq k}\cup\{\sfM_k'\},\mu_k(\tB);\sfJ,\sfJ_\ex)$ is a monoidal seed in $\scrC$.} 
\end{align}
\end{lemma}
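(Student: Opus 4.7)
The plan is to separate the claim into three steps --- reality of $\sfM'_k$, commutativity of the new cluster $\{\sfM_i\}_{i\neq k}\cup\{\sfM'_k\}$, and the exchange-matrix property of $\mu_k(\tB)$ --- the last of which is routine combinatorics. First, from the defining short exact sequence~\eqref{eq: Mk'} I will read off the identifications
\[
\sfM_k \hconv \sfM'_k \;\simeq\; B \seteq \dtens_{b_{ik}<0} \sfM_i^{\tens (-b_{ik})} \qquad\text{and}\qquad \sfM_k \sconv \sfM'_k \;\simeq\; A \seteq \dtens_{b_{ik}>0} \sfM_i^{\tens b_{ik}},
\]
both of which are real simple modules, since $\{\sfM_i\}_{i\neq k}$ is a commuting family of real simples. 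Indeed, $\sfM_k \hconv \sfM'_k$ is a simple quotient of $\sfM_k\tens\sfM'_k$ (well-defined because $\sfM_k$ is real) and must therefore coincide with the simple cokernel $B$ of~\eqref{eq: Mk'}, and dually for the socle.

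To establish reality of $\sfM'_k$, I will apply Lemma~\ref{lem: recover} (the identity $(L\hconv X)\hconv \rdual L\simeq X$ for real $L$) with $L=\sfM_k$ and $X=\sfM'_k$ to rewrite
\[
\sfM'_k \;\simeq\; B \hconv \rdual \sfM_k.
\]
The key observation is that although $\sfM'_k$ need not commute with $\sfM_k$ itself, it \emph{does} commute with $B$: by admissibility $\sfM'_k$ strongly commutes with each $\sfM_i$ for $i\neq k$, and $B$ is a tensor product of such $\sfM_i$'s. Hence the real module $B$ commutes with $B\hconv\rdual\sfM_k\simeq\sfM'_k$, which is precisely the hypothesis of Proposition~\ref{prop: de properties}\,(g) applied with $M=B$ and $N=\rdual\sfM_k$ (both real, the second because right duality preserves reality); the proposition then delivers reality of $\sfM'_k$. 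This is the heart of the argument; the remainder is bookkeeping.

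The commuting-family property of $\{\sfM_i\}_{i\neq k}\cup\{\sfM'_k\}$ then follows at once: the subfamily $\{\sfM_i\}_{i\neq k}$ commutes by hypothesis, $\sfM'_k$ commutes with each $\sfM_i$ ($i\neq k$) by admissibility, and self-commutativity of $\sfM'_k$ is just its reality. Finally, for the exchange-matrix property of $\mu_k(\tB)$, skew-symmetry of the principal part is a standard direct check from the mutation formula~\eqref{eq: mu B}, while finite support of each column holds because $\mu_k(\tB)_{\cdot\,k} = -\tB_{\cdot\,k}$, and for $j\neq k$ the formula combines only the finitely-supported columns $\tB_{\cdot\,j}$ and $\tB_{\cdot\,k}$ of $\tB$.
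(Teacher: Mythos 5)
Your proof is correct and follows the natural route. The key identifications $\sfM_k\hconv\sfM'_k\simeq B$ (uniqueness of the simple quotient when $\sfM_k$ is real) and $\sfM'_k\simeq B\hconv\rdual\sfM_k$ via Lemma~\ref{lem: recover}, followed by Proposition~\ref{prop: de properties}\,(g) with $M=B$, $N=\rdual\sfM_k$, is exactly how the cited Lemma~7.4 of \cite{KKOP24A} is established; the observation that $B$ is a real simple commuting with $\sfM'_k$ because $\sfM'_k$ commutes with each factor $\sfM_i$ ($i\neq k$) is correct (commuting with each factor of a strongly commuting product gives commuting with the product, and since $\sfM'_k$ is real this upgrades to strong commutativity). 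The bookkeeping for the new commuting family and for $\mu_k(\tB)$ being an exchange matrix is also as you describe.
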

We call $\mu_k(\sfS)$ in~\eqref{eq: mutation of mseed} the \emph{mutation} of $\sfS$ in direction $k$.

\begin{proposition}[{\cite[Proposition 6.4]{KKOP20}}] \label{prop:condition simplified}
Let $\Mseed$ be an admissible
monoidal seed in $\scrC$. Let  $k\in\sfJ_\ex$ and let $\sfM'_k$ be as in
{\rm Definition~\ref{def:admissible}}. Then we have the following
properties. 
\begin{enumerate}[{\rm (i)}]
\item 
  For any $j \in \sfJ$, we have $(\La^{\mathsf{S}} \tB)_{j,k}=-2\delta_{j,k}\,  \de(\sfM_k,\sfM'_k)$.
\item For any $j\in\sfJ$, we have
\begin{equation} \label{eq: mu Lambda}
\begin{aligned} 
\La(\sfM_j,\sfM'_k)& =-\La(\sfM_j,\sfM_k)-\displaystyle\sum_{b_{ik}<0}\La(\sfM_j,\sfM_i)b_{ik},\\
\La(\sfM'_k,\sfM_j)& =-\La(\sfM_k,\sfM_j)+\displaystyle\sum_{b_{ik}>0}\La(\sfM_i,\sfM_j)b_{ik}.
\end{aligned}
\end{equation}
\ee
\end{proposition}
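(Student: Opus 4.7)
The plan is to deduce both assertions from a careful identification of the head and socle of $\sfM_k\tens \sfM'_k$, and then to compute $\La(\sfM_j,-)$ using that $\La$ is additive on tensor products through normal sequences. Set $A\seteq\dtens_{b_{ik}>0}\sfM_i^{\tens b_{ik}}$ and $C\seteq\dtens_{b_{ik}<0}\sfM_i^{\tens(-b_{ik})}$, the two outer terms of~\eqref{eq: Mk'}; since $\{\sfM_i\}_{i\in\sfJ}$ is a commuting family of real simple modules, both $A$ and $C$ are real simple. Because $\sfM_k$ and $\sfM'_k$ are real, $\sfM_k\hconv\sfM'_k$ and $\sfM_k\sconv\sfM'_k$ are simple and each appears exactly once in the composition series of $\sfM_k\tens\sfM'_k$ (Proposition~\ref{prop: de properties}); comparing with~\eqref{eq: Mk'} therefore forces
\[\sfM_k\hconv\sfM'_k\simeq C\qtq \sfM_k\sconv\sfM'_k\simeq \sfM'_k\hconv\sfM_k\simeq A.\]

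To prove (ii), I would fix $j\in\sfJ$ and note that $\sfM_j$ commutes with both $\sfM_k$ and $\sfM'_k$: for $j\ne k$ this follows from $\{\sfM_i\}_{i\in\sfJ}$ being a commuting family together with Definition~\ref{def:admissible}(b), while for $j=k$ it is automatic since $\sfM_k$ is real. By Lemma~\ref{lem: normal seq d}\,(a) the triples $(\sfM_j,\sfM_k,\sfM'_k)$ and $(\sfM'_k,\sfM_k,\sfM_j)$ are normal, and Lemma~\ref{lem: normal property} then gives
\[\La(\sfM_j,\sfM_k\hconv\sfM'_k)=\La(\sfM_j,\sfM_k)+\La(\sfM_j,\sfM'_k),\]
together with the analogous identity for $\La(\sfM'_k\hconv\sfM_k,\sfM_j)$. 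Iterating the same lemma on the commuting tensor factorizations of $C$ and $A$ gives $\La(\sfM_j,C)=-\sum_{b_{ik}<0}b_{ik}\La(\sfM_j,\sfM_i)$ and $\La(A,\sfM_j)=\sum_{b_{ik}>0}b_{ik}\La(\sfM_i,\sfM_j)$. Combining with the identifications above and solving for $\La(\sfM_j,\sfM'_k)$ and $\La(\sfM'_k,\sfM_j)$ yields the two formulas of (ii).

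Part (i) would then follow by summing the two formulas of (ii). When $j\ne k$, antisymmetry of $\La$ on the commuting pairs $(\sfM_j,\sfM_k)$ and $(\sfM_j,\sfM'_k)$ makes the left-hand side vanish and leaves $\sum_{i\in\sfJ}b_{ik}\La(\sfM_i,\sfM_j)=0$, which is $(\La^{\sfS}\tB)_{j,k}=0$ (using $b_{kk}=0$ and $\La(\sfM_j,\sfM_i)=-\La(\sfM_i,\sfM_j)$ for $i\ne k$). When $j=k$, the same sum has left-hand side $\La(\sfM_k,\sfM'_k)+\La(\sfM'_k,\sfM_k)=2\de(\sfM_k,\sfM'_k)$ and right-hand side $-\sum_{i}b_{ik}\La(\sfM_k,\sfM_i)=-(\La^{\sfS}\tB)_{k,k}$.

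I expect the main obstacle to be the additivity identity for $\La$ through the head operation used in the second step; it fails for general simple triples and forces verification of normality for two different orderings of $(\sfM_j,\sfM_k,\sfM'_k)$. This is precisely the point where the commutativity of $\sfM'_k$ with every other cluster variable module, built into Definition~\ref{def:admissible}(b), is used decisively. The rest---splitting sums by the sign of $b_{ik}$, antisymmetry of $\La$ on commuting pairs, and $b_{kk}=0$---is routine bookkeeping.
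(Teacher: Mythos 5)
Your overall strategy — identify $\sfM_k\hconv\sfM'_k\simeq C$ and $\sfM_k\sconv\sfM'_k\simeq A$ from the exact sequence via the uniqueness of simple head and socle, then apply additivity of $\La$ through normal triples and close with antisymmetry on commuting pairs — is sound, and the computations in both parts check out. Note the paper cites this result from \cite{KKOP20} without reproducing a proof, so the comparison here can only be to an assessment of internal correctness; your argument is in line with the standard techniques in that circle and is essentially complete.

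Two small but worth-noting inaccuracies. First, your claim that ``$\sfM_j$ commutes with both $\sfM_k$ and $\sfM'_k$'' is false when $j=k$: $\sfM_k$ does \emph{not} commute with $\sfM'_k$ — indeed $\de(\sfM_k,\sfM'_k)>0$ is exactly what part (i) is computing. Your argument is unharmed, because the two normality statements you need, $(\sfM_k,\sfM_k,\sfM'_k)$ and $(\sfM'_k,\sfM_k,\sfM_k)$, already follow from Lemma~\ref{lem: normal seq d}(a)(i) and (a)(ii) applied with $\de(\sfM_k,\sfM_k)=0$; no commutativity of $\sfM'_k$ with $\sfM_k$ is used. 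But the sentence as written asserts something false. Second, your closing reflection misplaces where Definition~\ref{def:admissible}(b) enters. It plays no role in the normality step: for all $j$, the normality of $(\sfM_j,\sfM_k,\sfM'_k)$ and $(\sfM'_k,\sfM_k,\sfM_j)$ follows purely from $\de(\sfM_j,\sfM_k)=0$ (the commuting cluster) together with $\sfM_j$ being real. Where condition (b) is genuinely decisive is in part (i), $j\ne k$: there you need the antisymmetry $\La(\sfM_j,\sfM'_k)=-\La(\sfM'_k,\sfM_j)$, which requires $\de(\sfM_j,\sfM'_k)=0$, and that is supplied by (b). Correcting these two remarks would make the write-up precise; the derivation itself requires no repair.
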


\begin{definition}
[{\cite[Definition 6.5]{KKOP20}}] Let  $\Mseed$ be a monoidal seed.
\bnum
\item
  Assume that $\Mseed$ admits a mutation in direction $k\in\sfJ_\ex$.
  We say that the mutation
   $\mu_k(\sfS)$ of $\sfS$ at $k\in \sfJ_\ex$ is a {\em $\Uplambda$-mutation}
   if  $\sfM'_k$ in~\eqref{eq: Mk'} satisfies
   $\de(\sfM_k,\sfM'_k)=1$.
   In this case, we say that $\Mseed$ admits
   a $\Uplambda$-mutation in direction $k\in\sfJ_\ex$.
\item We say that $\sfS$ is \emph{$\Uplambda$-admissible}
  if  $\sfS$ admits a $\Uplambda$-mutation in every direction $k\in\sfJ_\ex$,
\item
We say that a monoidal seed $\sfS$ is \emph{completely $\Uplambda$-admissible} if $\sfS$ admits
successive $\Uplambda$-mutations in all possible directions. 
\ee
\edf

\subsection{Monoidal categorification}  Let $\scrC$ be a full subcategory of $\Cgz$ containing the trivial module $\mathbf{1}$ and stable under taking tensor products, subquotients
and extensions.

\begin{definition} [Monoidal categorification]  
$\scrC$ is called a \emph{monoidal categorification} of a cluster algebra $\scrA$ if
\bnum
\item the Grothendieck ring $K(\scrC)$ is  isomorphic to $\scrA$,
\item there exists a completely $\Uplambda$-admissible monoidal seed $\Mseed$ in $\scrC$ such that
$ [\sfS] \seteq  (\{ [\sfM_i] \}_{i \in \sfJ}, \tB)$
is  a seed of $\scrA$.
\ee
\end{definition}

\begin{theorem} [{\cite[Theorem 6.10]{KKOP20}}] \label{nthm: main KKOP2}
  
Let $\Mseed$ be a $\Uplambda$-admissible monoidal seed in $\scrC$, and set $ [\sfS] \seteq  (\{ [\sfM_i] \}_{i \in \sfJ}, \tB)$. We assume that
the algebra $K(\scrC)$ is isomorphic to $\scrA([\sfS])$. Then we have
\ben
\item $\sfS$ is completely $\Uplambda$-admissible, and
\item $\scrC$ gives a monoidal categorification of $\scrA([\sfS])$. 
\ee
\end{theorem}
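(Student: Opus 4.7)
The plan is to proceed by induction on the length of a sequence of mutations, showing that every seed obtained from $\sfS$ by successive $\Lambda$-mutations is itself $\Lambda$-admissible. This will immediately yield the complete $\Lambda$-admissibility of $\sfS$ required in~(1), and (2) will then follow at once since every cluster variable of $\scrA([\sfS])$ is represented by a real simple module of $\scrC$ and every cluster monomial by a commuting product of such modules, so that $\scrC$ becomes a monoidal categorification.

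For the inductive step I would let $\sfS'=\bmu(\sfS)$ be a $\Lambda$-admissible seed produced by a sequence $\bmu$ of $\Lambda$-mutations, fix $k\in\sfJ_\ex$, and set $\sfS''=\mu_k(\sfS')$. By hypothesis $\sfS''$ is already a well-defined monoidal seed whose cluster variable modules are real simple and pairwise commuting. I then need to check: for every $j\in\sfJ_\ex$, the direction $j$ of $\sfS''$ admits a $\Lambda$-mutation, i.e., there exists a real simple module $N_j$ fitting into the exchange sequence \eqref{eq: Mk'} (applied to $\sfS''$ and direction $j$), commuting with every other cluster variable module of $\sfS''$, and satisfying $\de(\sfM''_j,N_j)=1$.

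The bridge between categorical and cluster-algebraic data is the hypothesis $K(\scrC)\simeq\scrA([\sfS])$. Since cluster algebras are invariant under mutation, this isomorphism identifies $K(\scrC)$ with $\scrA([\sfS''])$ as well, and the cluster mutation at direction $j$ in $\scrA([\sfS''])$ corresponds to an element $a\in K(\scrC)$. By the Laurent phenomenon together with positivity of classes of simple modules in $K(\scrC)$, one may write $a=\sum_i m_i[S_i]$ with the $S_i$ pairwise non-isomorphic simples and $m_i\in\Z_{\ge 0}$. The cluster exchange relation forces $[\sfM''_j]\cdot a$ to equal the sum of two distinct monomials in the cluster variables of $\sfS''$, each of which is the class of a simple module because those cluster variable modules commute with $\sfM''_j$ thanks to the compatibility of $(L^{\sfS''},\tB^{\sfS''})$ and Proposition~\ref{prop:condition simplified}. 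Hence $\sfM''_j\tens S$ has exactly two composition factors, where $S$ corresponds to $a$ in $K(\scrC)$. Since $\sfM''_j$ is real, I would then combine Proposition~\ref{prop: de properties}\eqref{it: d=1 length2} with Proposition~\ref{prop:condition simplified}~(ii) and \eqref{eq: mu Lambda} to conclude that $a=[N_j]$ for a single real simple $N_j$ with $\de(\sfM''_j,N_j)=1$, and that $N_j$ commutes with the remaining cluster variable modules of $\sfS''$.

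The hard part is precisely this last deduction: upgrading the a priori positive sum $a=\sum_i m_i[S_i]$ supplied by Laurent positivity to the class of a single real simple module. The length-two condition on the composition series of $\sfM''_j\tens S$ is the crucial lever, but extracting from it that $S$ must itself be simple (and not a nontrivial positive combination) requires using that $\sfM''_j$ is real together with the head/socle machinery of Proposition~\ref{prop: de properties}; the cleanest route is to show that any simple constituent $S_i$ of $S$ satisfies $\de(\sfM''_j,S_i)\le 1$ and then to rule out multiplicities and additional summands via an extremality argument on $\La$-values coming from the compatible pair condition of Proposition~\ref{prop:condition simplified}. Once this step is secured, the induction closes and both (1) and (2) follow.
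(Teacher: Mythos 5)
The paper does not prove this theorem itself---it is cited directly from \cite[Theorem~6.10]{KKOP20} and used as a background result---so there is no in-paper proof to compare your sketch against. Evaluating it on its own terms: the inductive architecture (show every seed reached by successive $\Uplambda$-mutations from $\sfS$ is again $\Uplambda$-admissible) is correct, and invoking the isomorphism $K(\scrC)\simeq\scrA([\sfS])$ to identify the mutated cluster variable with a class $a\in K(\scrC)$ is the right move. The gap is in the extraction step.

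The assertion ``by the Laurent phenomenon together with positivity of classes of simple modules in $K(\scrC)$, one may write $a=\sum_i m_i[S_i]$ with $m_i\in\Z_{\ge0}$'' is circular at this point of the argument. Positivity of the simple-basis expansion of cluster variables is an \emph{output} of monoidal categorification (it is what makes conclusion (2) meaningful), not an available input. The Laurent phenomenon expresses $a$ as a Laurent polynomial in the $[\sfM''_i]$; even granting positivity of the Laurent coefficients, the Laurent monomials carry negative powers of $[\sfM''_j]$, which are not classes of objects of $\scrC$, so there is no direct passage to a nonnegative combination of simple classes. Moreover, even if one knew $a=[S]$ for a module $S$, concluding from composition length two of $\sfM''_j\tens S$ that $S$ is simple requires ruling out $S\simeq S_1\oplus S_2$ with each $\sfM''_j\tens S_\nu$ simple; the closing ``extremality'' remark does not visibly do this, since at that stage $\de(\sfM''_j,N_j)=1$ is what one is trying to prove, not a fact one can use to derive a contradiction.

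A route that actually closes the gap works in the opposite direction: one first \emph{constructs} a candidate $N_j$ in $\scrC$ (as a head or socle of an appropriate tensor product built from the cluster-variable modules of $\sfS'$ and $\sfS''$, using the $\Uplambda$-admissibility of $\sfS'$), then verifies the exchange short exact sequence and $\de(\sfM''_j,N_j)=1$ via the $\Lambda$-calculus of Proposition~\ref{prop:condition simplified} and the head/socle machinery of Proposition~\ref{prop: de properties}, checks commutativity of $N_j$ with the remaining $\sfM''_i$, and only afterward uses the hypothesis $K(\scrC)\simeq\scrA([\sfS])$ together with the fact that $[\sfM''_j]$ is a nonzerodivisor in the domain $K(\scrC)$ to identify $a=[N_j]$ by cancellation in $[\sfM''_j]\,a=[A]+[B]=[\sfM''_j][N_j]$. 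The isomorphism hypothesis is used to guarantee that the mutated cluster variable lives in $K(\scrC)$ and to run this cancellation---it is not a source of positivity.
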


\medskip
 
A family of real simple modules $\{\sfM_i\}_{i\in \sfJ}$ in $\scrC$ is
called a {\em real commuting family in $\scrC$} if it
satisfies:
\ben
\item $\{\sfM_i\}_{i\in \sfJ}$ is mutually  commuting.
\ee
It is called  a {\em maximal real commuting family in $\scrC$} if it
satisfies further : \ben
\item[{\rm(2)}]
if a simple  module $X$ commutes with all the $M_i$'s, then
$X$ is isomorphic to $\bigotimes_{i \in \sfJ}M_i^{\tens a_i}$ for some
$\mathbf{a}=\{a_i\}_{i\in \sfJ} \in \Z_{\ge 0}^{\oplus \sfJ}$. 
\ee

\begin{corollary}[{\cite[Corollary 6.11]{KKOP20}}] 
Let  $\Mseed$ be a
$\Uplambda$-admissible  monoidal
seed in $\scrC$ and assume  that the algebra $K(\scrC)$ is isomorphic
to $\scrA([\sfS])$. Then  the following statements hold{\rm:} 
\bnum
  \item Any cluster monomial in $K(\scrC)$  is the isomorphism class of a  real simple object in $\scrC$.
  \item   
    The isomorphism class of an arbitrary simple module in
    $\scrC$ is a Laurent polynomial of the initial cluster variables with coefficient in $\Z_{\ge0}$.
\item  Any monoidal cluster $\{\widetilde{M}_i\}_{i\in\sfJ}$ is a maximal real commuting family.
\ee
\end{corollary}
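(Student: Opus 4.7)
The plan is to invoke Theorem~\ref{nthm: main KKOP2} first, which under the given hypotheses upgrades $\sfS$ to a completely $\Uplambda$-admissible monoidal seed and makes $\scrC$ a monoidal categorification of $\scrA([\sfS])$. Each of the three statements will then be derived from this in turn.

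For (i), I would argue by induction on the mutation distance from $\sfS$ of the cluster containing the given cluster monomial. Complete $\Uplambda$-admissibility ensures that every such cluster is realized as a monoidal cluster $\{\sfM'_i\}_{i\in\sfJ}$ of pairwise commuting real simple modules in $\scrC$, whose $[\sfM'_i]$ are the cluster variables of that cluster. A cluster monomial $\prod_i [\sfM'_i]^{a_i}$ with $a_i\in\Z_{\ge0}$ then coincides with $[\otens_i (\sfM'_i)^{\tens a_i}]$ by commutativity of the family, and repeated application of Proposition~\ref{prop: de properties}(g) shows this tensor product is a real simple module.

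For (ii), the Laurent phenomenon supplies an integral Laurent expansion $[M]=\sum_{\alpha\in\Z^{\sfJ}}c_\alpha\prod_i[\sfM_i]^{\alpha_i}$. To obtain nonnegativity, I would clear denominators: choose $N\gg 0$ and consider $P\seteq\otens_i\sfM_i^{\tens N}\tens M$, whose class equals $\prod_i[\sfM_i]^N\cdot[M]$ and becomes a genuine polynomial expression $\sum_{\beta\in\Z_{\ge0}^{\sfJ}}c_{\beta-N\mathbf{1}}\prod_i[\sfM_i]^{\beta_i}$ in $\{[\sfM_i]\}$. Expanding $[P]$ as a $\Z_{\ge0}$-linear combination of classes of simple subquotients, and identifying each polynomial monomial $\prod_i[\sfM_i]^{\beta_i}$ with the class of the real simple module $\otens_i\sfM_i^{\tens\beta_i}$ via (i), yields that $c_{\beta-N\mathbf{1}}$ coincides with a genuine composition multiplicity in $P$ and hence is nonnegative.

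For (iii), a monoidal cluster is by construction a family of pairwise commuting real simple modules, so only maximality is at stake. Let $X$ be a simple module commuting with every $\widetilde{M}_i$; for $N\gg 0$ the tensor product $T\seteq\otens_i\widetilde{M}_i^{\tens N}\tens X$ is then a real simple module by iterated use of Proposition~\ref{prop: de properties}(g). Its class $\prod_i[\widetilde{M}_i]^N\cdot[X]$ is a polynomial in $\{[\widetilde{M}_i]\}$ with nonnegative coefficients by (ii); since $T$ is simple, only one monomial can appear, say $\prod_i[\widetilde{M}_i]^{\beta_i}$. Taking $N$ large enough that $\beta_i\ge N$, the injectivity of multiplication by $\prod_i[\widetilde{M}_i]^N$ in the Grothendieck ring $K(\scrC)\simeq \scrA([\sfS])$ forces $[X]=\prod_i[\widetilde{M}_i]^{\beta_i-N}$, hence $X\simeq\otens_i\widetilde{M}_i^{\tens(\beta_i-N)}$.

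The principal obstacle is the positivity step in (ii). The Laurent phenomenon guarantees only integrality of the coefficients $c_\alpha$, and converting the composition-series nonnegativity of $[P]$ into the nonnegativity of $c_\alpha$ itself requires the subtle interplay between the multiplicativity of $K(\scrC)$, the commutativity of the family $\{\sfM_i\}$, and the identification in (i) of cluster monomials with classes of real simple modules. In particular, it is crucial that each $\sfM_i$ be real, so that tensor monomials in the $\sfM_i$ are again simple and contribute only to the diagonal of the expansion.
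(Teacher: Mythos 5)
Parts (i) and (ii) are essentially sound. Your argument for (ii) --- clearing denominators by tensoring with a high power of the initial cluster modules and then matching the resulting polynomial expansion (whose monomials are classes of pairwise non-isomorphic simples, by (i) and the algebraic independence of the $[\sfM_i]$ coming from $K(\scrC)\simeq\scrA([\sfS])$) against the composition-series expansion of $[P]$ --- does yield nonnegativity, since classes of simples are linearly independent. The one cosmetic issue is that when $\sfJ$ is infinite you must tensor only over the finitely many indices occurring in the denominators of the Laurent expansion of $[M]$, since $\bigotimes_{i\in\sfJ}\sfM_i^{\tens N}$ is not a finite tensor product.

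Your derivation of (iii), however, has a genuine gap. You correctly deduce that $T$ is simple, that $[T]=x^{\beta}$ for a unique $\beta\in\Z_{\ge 0}^{\sfJ}$, and hence that $[X]=x^{\beta-N\mathbf{1}}$ is a Laurent monomial. But the phrase ``taking $N$ large enough that $\beta_i\ge N$'' is vacuous: writing $[X]=x^{\gamma}$, you have $\beta_i=\gamma_i+N$, so $\beta_i\ge N$ is equivalent to $\gamma_i\ge 0$, a property of the \emph{fixed} element $[X]$ that does not change as $N$ grows and is precisely the nonnegativity you must prove before you can write $X\simeq\bigotimes_i\widetilde M_i^{\tens(\beta_i-N)}$. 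For frozen indices $\gamma_i\ge0$ comes for free from the ordinary Laurent phenomenon (no denominators in frozen variables), but for exchangeable indices a separate argument is required --- for instance, if $\gamma_{i_0}<0$ for exchangeable $i_0$, substituting the exchange relation $x_{i_0}=(m_++m_-)/x'_{i_0}$ exhibits a nontrivial power of the non-unit binomial $m_++m_-$ in the denominator when $x^{\gamma}$ is rewritten in the cluster mutated at $i_0$, contradicting the Laurent phenomenon there. Without this step (or an equivalent one using, e.g., primality of cluster variable modules), the conclusion of (iii) is not established.
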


We call the real simple module corresponding to a cluster monomial of $\scrA([\sfS])$ in Theorem~\ref{nthm: main KKOP2} a \emph{cluster monomial module}.

\subsection{Monoidal seeds and admissible chains of $i$-boxes}
In this subsection, we review the properties of monoidal seeds related to weights and admissible chains of $i$-boxes, which are mainly investigated in~\cite{KKOP22,KKOP24A}. 

\begin{proposition} 
[{\cite[Proposition 7.13]{KKOP24A}}]
Let  $\Mseed$ be
a  $\Uplambda$-admissible monoidal seed  in
$\Cgz$  and let $k
\in \sfJ_\ex$. Assume that 
\bnum
\item
$\sfJ$ is a finite set and $\dim\big(\sum_{i\in\sfJ}\Q\wt_\calQ(\sf M_i)\big)\ge
|\sfJ_\fr|$,
\item there exist a real simple module $X \in \Cgz$
and an exact sequence
\begin{align*}
0 \to A \to \sfM_k \tens X \to B \to 0,
\end{align*}
such that \bna
\item $\de(X,\sfM_j)=0$ for all $j \in \sfJ \setminus \{k\}$ and $\de(X,\sfM_k)=1$,
\item $A=\dtens_{i \in \sfJ} \sfM_i^{\tens m_i}$,
$B=\dtens_{i \in \sfJ} M_i^{\tens n_i}$ for some
$m_i,n_i\in\Z_{\ge0}$. \ee \ee Then we have $b_{ik}=m_i-n_i$.

If we have furthermore $m_in_i=0$ for all $i \in \sfJ$, then  we have
$$X \simeq \sfM'_k,$$
where $\sfM'_k$ is given in {\rm Definition~\ref{def:admissible}}. 
\end{proposition}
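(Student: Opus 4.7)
The plan is to set $c_i\seteq m_i-n_i$ for $i\in\sfJ$ and to reduce the first claim to showing $c=b_{\cdot,k}$, the $k$-th column of $\tB$. First I would derive the identity $\sum_i c_i\,\La(\sfM_j,\sfM_i)=0$ for every $j\in\sfJ\setminus\st{k}$. The key observation is that $A\simeq X\hconv\sfM_k$ and $B\simeq\sfM_k\hconv X$ are the socle and head of $\sfM_k\tens X$, which has length two since $\de(\sfM_k,X)=1$. Because $\sfM_j$ is real and commutes with both $\sfM_k$ and $X$ (from the monoidal seed condition together with hypothesis (ii)(a)), Lemma~\ref{lem: normal seq d} makes the sequences $(\sfM_j,X,\sfM_k)$ and $(\sfM_j,\sfM_k,X)$ normal, yielding
\[
\La(\sfM_j,A)=\La(\sfM_j,B)=\La(\sfM_j,\sfM_k)+\La(\sfM_j,X).
\]
On the other hand, $A$ and $B$ are tensor products of pairwise commuting real simples, so $\La(\sfM_j,A)=\sum_i m_i\,\La(\sfM_j,\sfM_i)$ and $\La(\sfM_j,B)=\sum_i n_i\,\La(\sfM_j,\sfM_i)$, and subtracting gives the desired identity.

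Next, applying $\wt_\calQ$ to the exact sequence gives $\sum_i c_i\,\wt_\calQ(\sfM_i)=0$. I would then exploit hypothesis (i) to show that the $\Q$-linear kernel of $\psi\col v\mapsto\sum v_i\,\wt_\calQ(\sfM_i)$ on $\Q^\sfJ$ has dimension exactly $|\sfJ_\ex|$: hypothesis (i) gives the upper bound $|\sfJ|-|\sfJ_\fr|$, while the columns $\st{b_{\cdot,l}}_{l\in\sfJ_\ex}$ of $\tB$ lie in $\ker\psi$ (weight preservation under mutations) and are $\Q$-linearly independent because $\La^\sfS b_{\cdot,l}=-2\,e_l$ by Proposition~\ref{prop:condition simplified} combined with the $\Uplambda$-admissibility. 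Expanding $c=\sum_{l\in\sfJ_\ex}a_l\,b_{\cdot,l}$ in this basis and applying $\La^\sfS$, the vanishing of $(\La^\sfS c)_j$ for $j\ne k$ forces $a_l=0$ for $l\ne k$, so $c=a_k\,b_{\cdot,k}$ for some scalar $a_k\in\Q$.

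The delicate final step is pinning down $a_k=1$. For this I would use Lemma~\ref{lem: recover} to write $X\simeq B\hconv\rdual\sfM_k\simeq\bigl(\otens_i\sfM_i^{\tens n_i}\bigr)\hconv\rdual\sfM_k$, and note that the same formula with $n_i$ replaced by $[-b_{i,k}]_+$ describes $\sfM'_k$. The scalar identity $m_i-n_i=a_k\,b_{i,k}$, together with the positivity of $m_i,n_i\in\Z_{\ge 0}$ and the requirement that $[\sfM_k][X]=[A]+[B]$ realize the class of an actual simple module in $K(\Cgz)$, should rule out any rescaling $a_k\ne 1$ via a divisibility and realizability check in the polynomial ring $K(\Cgz)$: a non-trivial rescaling would force $[A]+[B]$ not to factor through $[\sfM_k]$ on the nose, contradicting the existence of $X$. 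I anticipate this positivity-and-integrality compatibility, rather than the preceding linear-algebra reduction, to be the main technical obstacle of the proof.

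Given the first claim, the second is a short consequence. Assuming additionally $m_in_i=0$ for all $i$, the relation $m_i-n_i=b_{i,k}$ forces $n_i=[-b_{i,k}]_+$, so $B\simeq\otens_i\sfM_i^{\tens[-b_{i,k}]_+}$ coincides with the quotient appearing in the exact sequence defining $\sfM'_k$; hence $X\simeq B\hconv\rdual\sfM_k\simeq\sfM'_k$ by Lemma~\ref{lem: recover}.
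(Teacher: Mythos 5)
Your strategy is sound until the final step, and the first claim does essentially reduce to showing $a_k=1$ in your expansion $c = a_k\, b_{\cdot,k}$. The reduction is carried out correctly: the normal-sequence argument via Lemma~\ref{lem: normal seq d}(a)(i) legitimately gives $\La(\sfM_j,A)=\La(\sfM_j,B)=\La(\sfM_j,\sfM_k)+\La(\sfM_j,X)$ for $j\ne k$, the weight constraint $c\in\ker\psi$ is correct, and linear independence of $\st{b_{\cdot,l}}_{l\in\sfJ_\ex}$ inside $\ker\psi$ follows as you say from $\La^\sfS b_{\cdot,l}=-2e_l$ and hypothesis (i). So $c=a_k\,b_{\cdot,k}$ with $a_k\in\Q$, and $(\La^\sfS c)_k = -2a_k$.

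The ``divisibility and realizability'' argument you sketch to pin down $a_k=1$, however, is a genuine gap, not just an omission of routine detail. The identity $[\sfM_k][X]=[A]+[B]$ holds by hypothesis for whatever $A,B$ are given, so divisibility of $[A]+[B]$ by $[\sfM_k]$ is automatic and puts no constraint on $a_k$: e.g.\ nothing in what you wrote rules out $m_i-n_i=2b_{i,k}$ for all $i$. Positivity of $m_i,n_i$ does not help either, since the first claim makes no hypothesis like $m_in_i=0$. What you actually need is the $j=k$ analogue of the computation you already did for $j\ne k$. Since each $\sfM_i$ commutes with $\sfM_k$ and all are real, $\La(\sfM_k,A)=\sum_i m_i\La(\sfM_k,\sfM_i)$ and similarly for $B$, so $(\La^\sfS c)_k=\La(\sfM_k,A)-\La(\sfM_k,B)$. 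Now compute both terms by normal sequences on the \emph{other} side. The triple $(X,\sfM_k,\sfM_k)$ is normal by Lemma~\ref{lem: normal seq d}(a)(ii), so $\La(A,\sfM_k)=\La(X\hconv\sfM_k,\sfM_k)=\La(X,\sfM_k)+\La(\sfM_k,\sfM_k)=\La(X,\sfM_k)$, hence $\La(\sfM_k,A)=-\La(X,\sfM_k)$ using $\de(\sfM_k,A)=0$. The triple $(\sfM_k,\sfM_k,X)$ is normal by (a)(i), so $\La(\sfM_k,B)=\La(\sfM_k,\sfM_k\hconv X)=\La(\sfM_k,\sfM_k)+\La(\sfM_k,X)=\La(\sfM_k,X)$. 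Therefore $(\La^\sfS c)_k=-\La(X,\sfM_k)-\La(\sfM_k,X)=-2\de(X,\sfM_k)=-2$, forcing $a_k=1$. With this in place your derivation of the second claim from the first is fine.
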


\begin{proposition} [{\cite[Proposition 7.14]{KKOP24A}}] 
Let 
$$\Mpseed \qtq \Mseed$$ 
be two
$\Uplambda$-admissible  monoidal seeds in $\Cgz$ such
that $\sfJ^*\subset \sfJ$ and $\sfJ_\ex^*\subset\sfJ_\ex$. Assume that $\sfJ$ is a
finite set and $\dim\big(\sum_{i\in\sfJ}\Q\wt_\calQ(\sfM_i)\big)\ge |\sfJ_\fr|$. Then
$$\tB\vert_{\sfJ^*\times\sfJ_\ex^*}=\tB^* \quad \text{ and }\quad
\tB\vert_{(\sfJ\setminus\sfJ^*)\times\sfJ_\ex^*}=\mathbf{0}.$$
\end{proposition}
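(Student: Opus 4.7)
The plan is to reduce the statement, for each exchangeable direction $k \in \sfJ^*_\ex$, to a direct application of the previous Proposition~7.13 to the ambient seed $\sfS$, using as a witness the mutated variable module $(\sfM^*)'_k$ supplied by the smaller seed $\sfS^*$. Fix $k \in \sfJ^*_\ex$; by $\Uplambda$-admissibility of $\sfS^*$ we have a short exact sequence
$$
0 \to A^* \to \sfM_k \tens (\sfM^*)'_k \to B^* \to 0,
$$
with $A^*=\dtens_{b^*_{ik}>0}\sfM_i^{\tens b^*_{ik}}$ and $B^*=\dtens_{b^*_{ik}<0}\sfM_i^{\tens(-b^*_{ik})}$ indexed by $\sfJ^*$, $\de(\sfM_k,(\sfM^*)'_k)=1$, and $\de((\sfM^*)'_k,\sfM_j)=0$ for $j\in\sfJ^*\setminus\st{k}$. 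Let $\sfM'_k$ denote the analogous mutation of $\sfS$ in direction $k$. If we can establish the commutation
$$
\de\bigl((\sfM^*)'_k,\sfM_j\bigr)=0 \qt{for every $j\in\sfJ\setminus\sfJ^*$}, \qquad (\ast)
$$
then Proposition~7.13 applies to $\sfS$ with $X=(\sfM^*)'_k$. Since $A^*,B^*$ have supports in $\sfJ^*$ and the multiplicities $b^*_{ik}$ are either positive or negative (so the $m_in_i=0$ clause is automatic), the formula $b_{ik}=m_i-n_i$ immediately yields $b_{ik}=b^*_{ik}$ for $i\in\sfJ^*$ and $b_{ik}=0$ for $i\in\sfJ\setminus\sfJ^*$, together with $(\sfM^*)'_k\simeq \sfM'_k$; this gives the two displayed identities.

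The heart of the proof is therefore $(\ast)$, and this is where the weight dimension hypothesis enters. The idea is to combine the two compatibility identities $(\La^\sfS\tB)_{\bullet,k}=-2\mathbf{1}_k$ and $(\La^{\sfS^*}\tB^*)_{\bullet,k}=-2\mathbf{1}_k$ from Proposition~\ref{prop:condition simplified} with the block-decomposition constraint from Theorem~\ref{thm: blcok decom}. Both sides of each mutation exact sequence must lie in a common block $(\Cgz)_\beta$, so the columns of $\tB$ and $\tB^*$ annihilate the $\Q$-linear map $i\mapsto\wt_\calQ(\sfM_i)$. Introducing the difference $c_i\seteq b_{ik}-\tilde b^*_{ik}$, where $\tilde b^*$ extends $b^*$ by zero outside $\sfJ^*$, we then obtain
$$
\sum_{i\in\sfJ}\La(\sfM_j,\sfM_i)\,c_i=0 \ \ (j\in\sfJ^*), \qquad \sum_{i\in\sfJ}c_i\wt_\calQ(\sfM_i)=0.
$$
Under the assumption $\dim\bigl(\sum_{i\in\sfJ}\Q\wt_\calQ(\sfM_i)\bigr)\ge|\sfJ_\fr|$, the kernel of the weight map in $\Q^\sfJ$ has dimension at most $|\sfJ_\ex|$, and the first system, interpreted via Theorem~\ref{thm: wtpairing Lainf} to relate $\La$ with $\La^\infty=-(\wt_\calQ,\wt_\calQ)$, will pin down $\La(\sfM_j,(\sfM^*)'_k)$ for every $j\in\sfJ\setminus\sfJ^*$ to exactly the value it would take if $(\sfM^*)'_k$ commuted with $\sfM_j$ (namely $\tfrac12(\La(\sfM_j,A^*)-\La(\sfM_j,B^*))$, which equals $\La(\sfM_j,\sfM'_k)$). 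Proposition~\ref{prop: de properties} then gives $(\ast)$.

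Equivalently, one may argue that these constraints force $c=0$ directly, so that $[\sfM_k\tens \sfM'_k]=[\sfM_k\tens (\sfM^*)'_k]$ in $K(\Cgz)$; since $[\sfM_k]$ is a non-zero-divisor in the domain $K(\Cgz)\simeq\obbA$ (Corollary~\ref{cor: K simeq oA}), we conclude $[\sfM'_k]=[(\sfM^*)'_k]$ and hence $\sfM'_k\simeq(\sfM^*)'_k$ because both are simple, whence $(\ast)$ follows from the known commutation of $\sfM'_k$.

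The main obstacle is the last step of the weight/rank argument: transforming the linear-algebraic vanishing of $c$ (and of a certain $\La$-pairing restricted to $\sfJ^*$) into the point-wise commutation $\de((\sfM^*)'_k,\sfM_j)=0$ for $j\in\sfJ\setminus\sfJ^*$. This requires a careful passage between $\La$ and $\La^\infty$ via Theorem~\ref{thm: wtpairing Lainf}, and a separate treatment of the frozen and exchangeable parts of $\sfJ\setminus\sfJ^*$; but once the equality $[\sfM'_k]=[(\sfM^*)'_k]$ in $K(\Cgz)$ is established, the simplicity of both modules together with the uniqueness recorded after Definition~\ref{def:admissible} closes the argument cleanly.
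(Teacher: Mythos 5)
Your overall plan---reduce, for each $k\in\sfJ^*_\ex$, to Proposition~7.13 applied to the big seed $\sfS$ with witness $X=(\sfM^*)'_k$---is the right one, and if $(\ast)$ were established the conclusion would indeed follow exactly as you say. But the argument you give for $(\ast)$, or equivalently for $c=0$, does not close, and the gap is genuine rather than a matter of bookkeeping.

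Concretely, the two constraints you assemble are $Wc=0$ (block decomposition) and $(\La^{\sfS}c)_j=0$ for $j\in\sfJ^*$ (the difference of the two compatibility identities). The first says $c$ lies in $\Ker W$, which under the weight-dimension hypothesis is exactly the column span of $\tB$, so $c=\sum_{l\in\sfJ_\ex}\alpha_l\,b_{\bullet,l}$. Then $(\La^{\sfS}c)_j=-2\alpha_j$ for $j\in\sfJ_\ex$ and $(\La^{\sfS}c)_j=0$ automatically for $j\in\sfJ_\fr$. Hence the second constraint only forces $\alpha_j=0$ for $j\in\sfJ^*\cap\sfJ_\ex$; it says nothing about $\alpha_j$ for $j\in\sfJ_\ex\setminus\sfJ^*$. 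The solution space of your two constraints therefore has dimension $|\sfJ_\ex\setminus\sfJ^*|$, which is generally positive, so these constraints cannot ``force $c=0$ directly'' as asserted, and the alternative route through $[\sfM'_k]=[(\sfM^*)'_k]$ is circular since it presupposes $c=0$. Likewise, the appeal to $\La^\infty=-(\wt_\calQ,\wt_\calQ)$ does not add information: since $(\La^\infty)^\sfS$ factors through $W$, the relation $((\La^\infty)^\sfS c)_j=0$ already follows from $Wc=0$ and holds for all $j$, so it imposes nothing beyond membership in $\Ker W$.

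For orientation on what a complete argument must supply: using the normality of $(\sfM_j,\sfM_k,(\sfM^*)'_k)$ and $((\sfM^*)'_k,\sfM_k,\sfM_j)$ (which holds for every $j\neq k$ by Lemma~\ref{lem: normal seq d}(a)(i),(ii), since $\de(\sfM_j,\sfM_k)=0$ and the modules are real), one can sharpen the identity to
$(\La^{\sfS}c)_j = 2\,\de\bigl(\sfM_j,(\sfM^*)'_k\bigr)$ for \emph{all} $j\in\sfJ\setminus\{k\}$,
not just $j\in\sfJ^*$. This yields the sign information $\alpha_l=-\de(\sfM_l,(\sfM^*)'_k)\le 0$ for $l\in\sfJ_\ex\setminus\sfJ^*$, and $\de(\sfM_j,(\sfM^*)'_k)=0$ for all frozen $j$, which your write-up does not record; but even this one-sided constraint does not by itself force $\alpha=0$, so an additional idea from \cite[Proposition~7.14]{KKOP24A} is still required to close the argument. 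In short: the frame is right, but the step you flag as ``the main obstacle'' is precisely where the proof is missing, and the linear algebra you propose cannot supply it on its own.
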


\begin{lemma} [{\cite[Lemma 7.15]{KKOP24A}}] \label{lem:restrseed} Let $\Mseed$
be a monoidal seed in $\Cgz$. Let $\sfJ^*$ be a subset of $\sfJ$ with a
decomposition $\sfJ^*=\sfJ_\ex^* \sqcup\sfJ^*_\fr$ such that $\sfJ_\ex^* \subset
\sfJ_\ex$. Set
$$\sfS \vert_{(\sfJ^*,\,\sfJ_\ex^*)}\seteq
\big( \{\sfM_i\}_{i\in\sfJ^*},\,\tB\vert_{(\sfJ^*)\times\sfJ_\ex^*};\sfJ^*,\,\sfJ^*_\ex \big).$$
Assume that 
\begin{align*} 
\text{$b_{ij} =0$ if $i\in\sfJ\setminus\sfJ^*$ and $j\in\sfJ_\ex^*$.}     
\end{align*}
 Then, we have \bnum
\item
$\big(\mu_s(\tB)\big)_{ij }=0$ if $s\in\sfJ_\ex^*$, $i\in\sfJ\setminus \sfJ^*$
and $j\in\sfJ_\ex^*$,
\item if $\Mseed$ is $\Uplambda$-admissible, then we have
\begin{align*}
\big(\mu_s\sfS \big)\vert_{(\sfJ^*,\sfJ^*_\ex)} =\bc
\mu_s(\sfS \vert_{(\sfJ^*,\,\sfJ^*_\ex)})&\text{if $s\in\sfJ^*_\ex$,}\\
\sfS\vert_{(\sfJ^*,\,\sfJ^*_\ex)}&\text{if $s\in\sfJ\setminus \sfJ^*$.}
\ec
\end{align*} 
\ee In particular, if $\Mseed$ is a completely $\Uplambda$-admissible monoidal seed in
$\Cgz$, then so is $\sfS\vert_{(\sfJ^*,\,\sfJ^*_\ex)}$. 
\end{lemma}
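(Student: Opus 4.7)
The proof is essentially a bookkeeping exercise in the matrix mutation formulas \eqref{eq: mu L}--\eqref{eq: mu B} together with the defining exact sequence \eqref{eq: Mk'}, exploiting the hypothesis that $b_{ij}=0$ whenever $i\in\sfJ\setminus\sfJ^*$ and $j\in\sfJ_\ex^*$. I will verify parts (i) and (ii) directly, and then deduce the final ``in particular'' statement by iteration.

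For part (i), fix $s\in\sfJ_\ex^*$, $i\in\sfJ\setminus\sfJ^*$ and $j\in\sfJ_\ex^*$. Since $\sfJ_\ex^*\subset\sfJ^*$, we have $i\neq s$. If $j=s$, the mutation formula gives $\mu_s(\tB)_{ij}=-b_{is}$, and $b_{is}=0$ by hypothesis. If $j\neq s$, the formula gives $\mu_s(\tB)_{ij}=b_{ij}+(-1)^{\delta(b_{is}<0)}\max(b_{is}b_{sj},0)$; both $b_{ij}$ and $b_{is}$ vanish by hypothesis, so $\mu_s(\tB)_{ij}=0$.

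For part (ii), I consider two cases. If $s\in\sfJ_\ex^*$, then for all $i\in\sfJ$ with $b_{is}\neq 0$ one has $i\in\sfJ^*$ (otherwise $b_{is}=0$ by hypothesis, since $s\in\sfJ_\ex^*$), so the exact sequence
$$0\to\dtens_{b_{is}>0}\sfM_i^{\tens b_{is}}\to\sfM_s\tens\sfM_s'\to\dtens_{b_{is}<0}\sfM_i^{\tens(-b_{is})}\to 0$$
defining $\sfM_s'$ in $\sfS$ only involves the $\sfM_i$'s with $i\in\sfJ^*$, and is therefore at the same time the defining exact sequence for the mutation of $\sfS|_{(\sfJ^*,\,\sfJ_\ex^*)}$ at $s$. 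Hence the new cluster variable modules coincide. The compatibility of the matrix mutation with restriction is immediate from the formulas \eqref{eq: mu L}--\eqref{eq: mu B}, since for $i,j\in\sfJ^*$ and $s\in\sfJ_\ex^*\subset\sfJ^*$ all entries $b_{is}$, $b_{sj}$, $b_{ij}$ appearing on the right-hand side agree with those of the restricted matrix. If instead $s\in\sfJ_\ex\setminus\sfJ^*$, the mutation does not affect any $\sfM_i$ with $i\in\sfJ^*$, so the cluster variable modules in $(\mu_s\sfS)|_{(\sfJ^*,\,\sfJ_\ex^*)}$ are the same as those in $\sfS|_{(\sfJ^*,\,\sfJ_\ex^*)}$. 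For the matrix, take $i\in\sfJ^*$ and $j\in\sfJ_\ex^*$; since $s\notin\sfJ^*$ we have $i\neq s$ and $j\neq s$, so $\mu_s(\tB)_{ij}=b_{ij}+(-1)^{\delta(b_{is}<0)}\max(b_{is}b_{sj},0)$, and the hypothesis gives $b_{sj}=0$ (because $s\in\sfJ\setminus\sfJ^*$, $j\in\sfJ_\ex^*$, using skew-symmetry of the principal part together with the fact that $b_{sj}=-b_{js}=0$; more directly, $b_{sj}=0$ by the hypothesis applied with $(s,j)$ playing the role of $(i,j)$ after noting $s\notin\sfJ^*$). Hence $\mu_s(\tB)_{ij}=b_{ij}$, as required.

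For the final statement, assume $\sfS$ is completely $\Uplambda$-admissible and let $(s_1,\ldots,s_r)$ be any sequence of indices with $s_k\in\sfJ_\ex^*$ and the successive mutations $\mu_{s_k}\cdots\mu_{s_1}\sfS|_{(\sfJ^*,\,\sfJ_\ex^*)}$ legal up to step $k-1$. By induction on $k$ combined with part (ii), this sequence equals $(\mu_{s_k}\cdots\mu_{s_1}\sfS)|_{(\sfJ^*,\,\sfJ_\ex^*)}$, so the existence of the $\Uplambda$-mutation at $s_k$ in the full seed (which holds by hypothesis) descends to the existence of the $\Uplambda$-mutation at $s_k$ in the restricted seed, via the same argument as in case $s\in\sfJ_\ex^*$ of part (ii). The main (but only minor) subtlety is to verify that after the restriction the resulting new cluster variable module $\sfM'_{s_k}$ still commutes with the remaining $\sfM_i$ for $i\in\sfJ^*\setminus\{s_k\}$ and still satisfies $\de(\sfM_{s_k},\sfM'_{s_k})=1$; both properties are inherited from the corresponding properties in the ambient (completely $\Uplambda$-admissible) seed $\mu_{s_{k-1}}\cdots\mu_{s_1}\sfS$, since they are conditions on modules that are literally the same in the two seeds.
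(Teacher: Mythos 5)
Your proof is correct and takes the direct-verification approach one would expect (and which the cited \cite[Lemma~7.15]{KKOP24A} follows): parts (i) and (ii) are a straightforward read-off of the matrix mutation formulas together with the observation that, because $b_{is}=0$ for $i\in\sfJ\setminus\sfJ^*$ and $s\in\sfJ_\ex^*$, the exchange exact sequence defining $\sfM_s'$ in $\sfS$ only involves $\sfM_i$ with $i\in\sfJ^*$ and thus equals the one for the restricted seed, while the commutation and $\de(\sfM_s,\sfM_s')=1$ conditions are literally inherited. The final claim then follows by iterating parts (i) and (ii), as you do. (Minor slip: in the induction step the equality should read $\mu_{s_{k-1}}\cdots\mu_{s_1}\bl\sfS|_{(\sfJ^*,\,\sfJ_\ex^*)}\br=(\mu_{s_{k-1}}\cdots\mu_{s_1}\sfS)|_{(\sfJ^*,\,\sfJ_\ex^*)}$, with index $k-1$ rather than $k$.)
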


In the rest of this section, we  take
\bnum \item an arbitrary sequence $\uii=
\st{\im_k}_{k\in K}$ in $\sfI$,  where $K$ is an interval in $\Z$
such that $K\cap\st{0,1}\not=\emptyset$, and
\item a complete duality datum $\bbD$ in $\Cgz$.
  \ee

Let $\frakC=(\frakc_k)_{1\le k\le r}$ be an admissible chain of $i$-boxes associated with $\uii$ with range $[a,b]\subset K$, $r \ge 1$. Hence $b-a+1=r$.
We define
\begin{equation} \label{eq: frakC seed}
\begin{aligned}
\sfJ(\frakC) &\seteq [1,r], \\
\sfJ(\frakC)_\fr &\seteq \{ s \in \sfJ(\frakC) \ | \ \frakc_s = [a(\im)^+,b(\im)^-] \text{ for some } \im \in \sfI \}, \\
\sfJ(\frakC)_\ex &\seteq \sfJ(\frakC) \setminus \sfJ(\frakC)_\fr, \\
\sfM^\bbD(\frakC)  & \seteq \{ M^{\bbD,\,\uii}(\frakc_k) \}_{k \in \sfJ(\frakC)}.  
\end{aligned}
\end{equation}
Here $M^{\bbD,\,\uii}(\frakc_k)  \seteq M^{\bbD,\uii}[u_k,v_k]$ in~\eqref{eq: MiD} where $\frakc_k = [u_k,v_k]$.
Note that $\sfM^\bbD(\frakC)$ is a commuting family of real simple modules by
Theorem~\ref{thm: every sequence good}.
When we need to emphasize the range of $\frakC$ and the sequence $\uii$, we write $\frakC^{[a,b],\uii}$ for $\frakC$.
We sometimes drop $\bbD$ if there is no afraid of confusion.

The following lemma is an $\uii$-analogue of \cite[Lemma 7.17]{KKOP24A}, which tells that box-moves corresponds to mutations.
Since the proof is similar with the help of Theorem~\ref{thm: every sequence good},
we omit it.
\begin{lemma} \label{lem: b is mu}
Let $\mathfrak{C}=(\frakc_k)_{1\le k\le r}$ be an admissible chain of
$i$-boxes associated with $\uii$ and a finite range
such that
$S(\frakC) \seteq\big( \sfM(\frakC),\tB;\sfJ(\frakC),\sfJ(\frakC)_\ex\big)$ is a
 $\Uplambda$-admissible monoidal seed in
 $\Cgz$ for some exchange matrix $\tB$.
If $k_0\in \sfJ(\frakC)_\ex$
and $\frakc_{k_0}$ is a movable $i$-box such that
$\tfrakc_{k_0+1}=\frakc_{k_0+1}=[u,v]$, then we have
\eqn
\mu_{k_0}\bl(S(\frakC))=S\bl\bbB_{k_0}(\frakC))\br
&& =\big( \sfM(\bbB_{k_0}(\frakC)),\mu_{k_0}(\tB);
\sfJ(\bbB_{k_0}(\frakC)),\sfJ(\bbB_{k_0}(\frakC))_\ex\big) \\
&&=  \big( \{ \sfM_i\}_{i\in \sfJ \setminus   \{ k_0 \}   } \sqcup\{ \sfM_{k_0}'  \},
\mu_{k_0}(\tB) ; \sfJ(\frakC),\sfJ(\frakC)_\ex \big),
\eneqn
where
$$ \sfM'_{k_0} \seteq \begin{cases}
\sfM^\uii[u,v^-]  & \text{if $\frakc_k=[u^+,v]$,} \\
\sfM^\uii[u^+,v]  & \text{if $\frakc_k=[u,v^-]$,}
\end{cases}
\qtq \text{$\sfM_k \seteq M^\uii(\frakc_k)$ for $k \in \sfJ$.}$$
Thus the box move $\bbB_{k_0}$ at $k_0$ in {\rm Definition~\ref{def: movable and B-move}~\eqref{it: B-move}} 
corresponds to the mutation $\mu_{k_0}$ at $k_0$ and T-system in~\eqref{eq: Tsystem2}. 
\end{lemma}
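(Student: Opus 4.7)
My plan is to deduce the lemma directly from the $T$-system (Theorem~\ref{thm: Tsystem}) combined with the uniqueness of mutations in $\Uplambda$-admissible monoidal seeds, following the blueprint of \cite[Lemma 7.17]{KKOP24A} but now using the sharpened results of Section~\ref{sec: generalization} which are valid for an \emph{arbitrary} sequence $\uii$. Without loss of generality, assume $\calH_{k_0-1}=\calR$, so that $\frakc_{k_0}=[u^+,v]$ and the candidate replacement is $\sfM'_{k_0}=M^{\bbD,\uii}[u,v^-]$; the case $\calH_{k_0-1}=\calL$ is symmetric.

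First, I would apply the $T$-system of Theorem~\ref{thm: Tsystem} to the $i$-box $[u,v]=\tfrakc_{k_0+1}=\frakc_{k_0+1}$, which yields the exact sequence
\begin{equation*}
0 \to \dtens_{d(\im_u,\jm)=1} M^\uii[u(\jm)^+,v(\jm)^-] \to \sfM_{k_0}\tens \sfM'_{k_0} \to \sfM_{k_0+1}\tens M^\uii[u^+,v^-] \to 0,
\end{equation*}
observing that $M^\uii[u^+,v^-]$ is itself a member of $\frakC$: it is an $i$-box contained in $\tfrakc_{k_0+1}$ which commutes with every $\frakc_j$, hence by Lemma~\ref{lem: frack commute} it equals some $\frakc_j$ with $j<k_0$. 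Likewise each $M^\uii[u(\jm)^+,v(\jm)^-]$ is a frozen member of $\frakC$ by the definition of $\sfJ(\frakC)_\fr$. Thus both outer terms of the sequence are tensor products of cluster variables indexed by $\sfJ(\frakC)\setminus\{k_0\}$.

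Second, I would verify that $\sfM'_{k_0}$ commutes with every $\sfM_i$ for $i\in\sfJ(\frakC)\setminus\{k_0\}$. All $i$-boxes in $\bbB_{k_0}(\frakC)$ pairwise commute by Lemma~\ref{lem: frack commute}, and except for $\frakc'_{k_0}$ the members of $\bbB_{k_0}(\frakC)$ coincide with those of $\frakC$; applying Theorem~\ref{thm: every sequence good} together with Theorem~\ref{thm: deter main}(iii) (in its $\uii$-version, which is part of Condition~\ref{assu: the assumption}\eqref{it: real simple}-\eqref{it: commuting}), we conclude that $\sfM'_{k_0}$ strongly commutes with all the remaining cluster variables. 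Combined with the exact sequence above, this shows that $\sfM'_{k_0}$ satisfies the defining conditions of the mutated cluster variable in Definition~\ref{def:admissible}, and by its uniqueness (up to isomorphism) we obtain $\mu_{k_0}(\sfS(\frakC))=\bigl(\{\sfM_i\}_{i\ne k_0}\cup\{\sfM'_{k_0}\},\mu_{k_0}(\tB);\sfJ(\frakC),\sfJ(\frakC)_\ex\bigr)$, which is exactly $\sfS(\bbB_{k_0}(\frakC))$ by the definition~\eqref{eq: frakC seed}.

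The main delicate point will be the bookkeeping in the second step: one must check that the multiplicities appearing in the left and right terms of the $T$-system — namely a single copy of each $M^\uii[u(\jm)^+,v(\jm)^-]$ on the left and a single copy of $\sfM_{k_0+1}$ and of $M^\uii[u^+,v^-]$ on the right — are consistent with the $k_0$-th column of the exchange matrix $\tB$ dictated by the $\Uplambda$-admissibility of $\sfS(\frakC)$. This consistency is forced by Proposition~\ref{prop:condition simplified}, which determines $\tB$ uniquely from the mutation data, so the $T$-system sequence must realize the exchange relation~\eqref{eq: Mk'} with the prescribed entries of $\tB$; no separate verification of the matrix mutation formula is needed, as $\mu_{k_0}(\tB)$ is defined by~\eqref{eq: mu B}. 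Finally, $\sfM'_{k_0}$ is real by the fact that the mutated cluster variable in an admissible seed is automatically real, matching the reality of $M^\uii[u,v^-]$ guaranteed by Theorem~\ref{thm: every sequence good}(iii).
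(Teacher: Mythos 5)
Your overall strategy is the same one the paper implicitly takes (the proof is omitted with a pointer to \cite[Lemma~7.17]{KKOP24A} and Theorem~\ref{thm: every sequence good}), and the T-system plus Lemma~\ref{lem: frack commute} plus the exchange-relation proposition is indeed the right chain of ideas. However, two specific claims in your write-up are wrong or misdirected, and one is a real gap.

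First, the statement ``each $M^\uii[u(\jm)^+,v(\jm)^-]$ is a frozen member of $\frakC$ by the definition of $\sfJ(\frakC)_\fr$'' is generally false. Frozen indices are those $s$ with $\frakc_s=[a(\im)^+,b(\im)^-]$ where $[a,b]$ is the \emph{full range} of $\frakC$, not the envelope $\tfrakc_{k_0+1}=[u,v]$. When $k_0+1<r$ the boxes $[u(\jm)^+,v(\jm)^-]$ are ordinary (exchangeable) members of $\frakC$; what one actually needs is only that they are members of $\frakC$ with index different from $k_0$, which follows by applying Lemma~\ref{lem: frack commute}~(b) to the admissible subchain $(\frakc_1,\ldots,\frakc_{k_0-1})$ of range $\tfrakc_{k_0-1}=[u+1,v-1]\supseteq[u(\jm)^+,v(\jm)^-]$, after checking that the commutation condition in Definition~\ref{def:ibox} reduces to containment in $\tfrakc_{k_0-1}$ (similarly for $[u^+,v^-]$, and one should also note the degenerate case $u^+>v^-$, where $M^\uii[u^+,v^-]=\bone$ and the factor drops out).

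Second, the consistency with the $k_0$-th column of $\tB$ is \emph{not} ``forced by Proposition~\ref{prop:condition simplified}.'' That proposition only gives $(\La^{\sfS}\tB)_{jk}=-2\delta_{jk}\,\de(\sfM_k,\sfM_k')$ and the formulas \eqref{eq: mu Lambda}, which do not determine $\tB_{\cdot,k_0}$ uniquely because $\La^{\sfS}$ is skew-symmetric and need not be invertible. The correct tool is the proposition stated immediately before Lemma~\ref{lem: b is mu} (i.e.\ \cite[Proposition~7.13]{KKOP24A}), which requires the finiteness and dimension hypothesis $\dim\bl\sum_{i\in\sfJ}\Q\,\wt_\calQ(\sfM_i)\br\ge|\sfJ_\fr|$ in addition to the conditions you have verified (reality of $X=\sfM_{k_0}'$, $\de(X,\sfM_j)=0$ for $j\ne k_0$, $\de(X,\sfM_{k_0})=1$ from the corollary after Theorem~\ref{thm: every sequence good}, and the T-system exact sequence with outer terms in the monoidal cluster). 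That proposition then yields $b_{ik_0}=m_i-n_i$ and, since here $m_in_i=0$, identifies $X$ with the mutated variable $\sfM_{k_0}'$ of Definition~\ref{def:admissible}. Without invoking it, your argument does not close: knowing that the T-system is \emph{some} exact sequence with outer terms built from the $\sfM_i$'s does not by itself show it is \emph{the} exchange sequence \eqref{eq: Mk'} for the given $\tB$.
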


\begin{corollary}[{\cite[Corollary 7.18]{KKOP24A}}] \label{cor: same range}
For a finite interval $[a,b] \subset K$, let $\frakC$ and $\frakC'$ be admissible chains of $i$-boxes associated with $\uii$
and the same range $[a,b]$. Assume the monoidal seed $\sfS(\frakC)= \big( \sfM(\frakC),\tB;  \sfJ(\frakC), \sfJ(\frakC)_\ex \big)$
is a completely $\Uplambda$-admissible in $\Cgz$
for some exchange matrix $\tB$. 
Then, the monoidal seed $\sfS(\frakC')= \big( \sfM(\frakC'),\tB';  \sfJ(\frakC'), \sfJ(\frakC')_\ex \big)$
is also a completely $\Uplambda$-admissible in $\Cgz$
for some exchange matrix $\tB'$.
\end{corollary}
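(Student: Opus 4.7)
The plan is to reduce the statement to a single box move and then invoke Lemma \ref{lem: b is mu} to convert box moves into $\Uplambda$-mutations, using the fact that complete $\Uplambda$-admissibility is closed under mutation by its very definition.

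First I would apply Lemma \ref{lem: finite sequence are T-equi} to obtain a finite sequence of admissible chains
\[
\frakC = \frakC^{(0)}, \frakC^{(1)}, \ldots, \frakC^{(s)} = \frakC'
\]
each having range $[a,b]$, such that $\frakC^{(t+1)} = \bbB_{k_{t+1}}(\frakC^{(t)})$ is obtained from $\frakC^{(t)}$ by a box move at a movable $i$-box. By induction on $s$, it suffices to handle the case $s=1$, i.e., to show that if $\frakC'' = \bbB_{k_0}(\frakC)$ for a single movable $i$-box $\frakc_{k_0}$ in $\frakC$, then $\sfS(\frakC'')$ is a completely $\Uplambda$-admissible monoidal seed in $\Cgz$ for some exchange matrix $\tB''$.

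Next I would check that the movable index $k_0$ actually lies in $\sfJ(\frakC)_\ex$, so that mutation at $k_0$ is allowed in the seed $\sfS(\frakC)$. This follows from the definition~\eqref{eq: frakC seed} of the frozen index set, since a movable $i$-box must have $\tfrakc_{k_0+1}$ strictly enlarging $\tfrakc_{k_0}$ on the side opposite to the direction of $\calH_{k_0-1}$, so $\frakc_{k_0}$ cannot be the maximal $i$-box $[a(\im)^+, b(\im)^-]$ of its color inside $[a,b]$, and hence $k_0 \notin \sfJ(\frakC)_\fr$. Moreover, by construction $\sfJ(\frakC'') = \sfJ(\frakC)$ and $\sfJ(\frakC'')_\fr = \sfJ(\frakC)_\fr$, since box moves swap the roles of two interior $i$-boxes without changing the set of frozen ones.

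Once this is set up, Lemma \ref{lem: b is mu} identifies $\sfS(\frakC'') = \mu_{k_0}(\sfS(\frakC))$ as honest seeds with $\tB'' = \mu_{k_0}(\tB)$. Since $\sfS(\frakC)$ is completely $\Uplambda$-admissible by hypothesis, it admits $\Uplambda$-mutations in every direction, and in particular the mutation at $k_0$ is a $\Uplambda$-mutation; moreover, the mutation class of $\sfS(\frakC)$ consists entirely of $\Uplambda$-admissible seeds, so $\mu_{k_0}(\sfS(\frakC))=\sfS(\frakC'')$ is again completely $\Uplambda$-admissible. Iterating this argument along the chain $\frakC^{(0)}, \ldots, \frakC^{(s)}$ yields the claim for $\sfS(\frakC')$ with exchange matrix $\tB' = \mu_{k_s} \cdots \mu_{k_1}(\tB)$.

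The main obstacle I anticipate is the bookkeeping verification that every box move in the path from $\frakC$ to $\frakC'$ takes place at an \emph{exchangeable} vertex, i.e., that movability of $\frakc_{k_0}$ inside an admissible chain of fixed range forces $k_0 \in \sfJ_\ex$; this is essentially a combinatorial unwrapping of Definitions \ref{def:ibox}, \ref{def: movable and B-move}, and~\eqref{eq: frakC seed}, and is handled exactly as in \cite[Corollary 7.18]{KKOP24A} once Theorem \ref{thm: every sequence good} guarantees that the relevant affine determinantial modules and $T$-systems behave the same way for an arbitrary sequence $\uii$ as they do in the locally reduced case.
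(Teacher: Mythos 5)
Your overall strategy is correct and is the intended one: connect $\frakC$ to $\frakC'$ by box moves via Lemma~\ref{lem: finite sequence are T-equi}, and transport complete $\Uplambda$-admissibility step by step via Lemma~\ref{lem: b is mu}. However, there is a concrete gap in the middle step that, as written, would cause the argument to fail.

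You assert that a movable index $k_0$ is automatically exchangeable and that Lemma~\ref{lem: b is mu} identifies $\sfS(\bbB_{k_0}(\frakC))$ with $\mu_{k_0}(\sfS(\frakC))$. Both claims require the extra hypothesis of Lemma~\ref{lem: b is mu}, namely that $\tfrakc_{k_0+1}$ is itself an $i$-box, and this is not always the case. For a counterexample to ``movable implies exchangeable'': take $\uii=(1,2)$, range $[1,2]$, and $\frakC=([1,1],[2,2])$. Then $\frakc_1=[1,1]$ is movable (it is the first box), but $\sfJ(\frakC)_\fr=\{1,2\}$, so $k_0=1$ is frozen, not exchangeable; here $\tfrakc_2=[1,2]$ is not an $i$-box. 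In this situation Lemma~\ref{lem: b is mu} simply does not apply, and your chain of inferences would break. Likewise, the claim $\sfJ(\frakC'')_\fr=\sfJ(\frakC)_\fr$ fails in general in this case: by the proposition following Definition~\ref{def: movable and B-move}, when $\tfrakc_{k_0+1}$ is not an $i$-box the box move merely permutes the boxes, $\frakc'_k=\frakc_{\upsigma_{k_0}(k)}$, so the frozen index set is also permuted, $\sfJ(\frakC'')_\fr=\upsigma_{k_0}(\sfJ(\frakC)_\fr)$.

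The fix is straightforward and should be made explicit: split the single box-move step into the two cases recorded in the proposition following Definition~\ref{def: movable and B-move}. If $\tfrakc_{k_0+1}$ is an $i$-box, then one checks (as you essentially do, but only in this case) that $\frakc_{k_0}$ is a proper sub-$i$-box of $\tfrakc_{k_0+1}\subseteq [a,b]$ of the same color, hence cannot equal $[a(\im)^+,b(\im)^-]$, so $k_0\in\sfJ(\frakC)_\ex$, Lemma~\ref{lem: b is mu} applies, and $\sfS(\bbB_{k_0}(\frakC))=\mu_{k_0}(\sfS(\frakC))$ is completely $\Uplambda$-admissible because this property is preserved under $\Uplambda$-mutation. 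If $\tfrakc_{k_0+1}$ is not an $i$-box, the box move is a pure reindexing of the monoidal cluster, so $\sfS(\bbB_{k_0}(\frakC))$ is the same seed up to permutation of indices, and complete $\Uplambda$-admissibility is trivially inherited with exchange matrix the correspondingly permuted $\tB$. With this case distinction added, your induction along the chain of box moves goes through and matches the proof in \cite[Corollary 7.18]{KKOP24A}.
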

 
\subsection{A construction of $\Uplambda$-admissible monoidal seeds}
\label{subsec: Construction}
Take a finite interval $\sfJ=[1,r] \subset K$
and we denote by $\frakC^{\sfJ,\uii}_+$ the admissible chain of $i$-boxes associated with $(1,(\calR,\calR,\ldots,\calR))$, i.e., $\frakC^{\sfJ,\uii}_+=\st{\frakc_k}_{k\in\sfJ]}$
with $\frakc_k=\{1,k]$ for $k\in \sfJ$.

Take $\sfJ_\fr = \{k \in \sfJ \mid k^+ >r \}$ and $\sfJ_\ex \seteq \sfJ \setminus \sfJ_\fr$.  
Let $\tB^{\sfJ,\uii}\seteq\tB(\frakC^{\sfJ,\uii}_+)
=(b^{\sfJ,\uii}_{s\,t})_{s \in \sfJ,t \in \sfJ_\ex}$ be an exchange matrix defined as follows:
\begin{align} \label{eq: tBii}
b^{\sfJ,\uii}_{s\,t} \seteq \bc
1 &\text{ {\rm (i)} if } s  < t  < s^+ < t^+ \text{ and }  d(\im_s,\im_t) = 1, \text{ or {\rm (ii)} } s=t^+,\\
-1 &\text{ {\rm (i$'$)} if }  t  < s < t^+ < s^+ \text{ and }  d(\im_s,\im_t) = 1, \text{ or {\rm (ii$'$)} } t=s^+, \\
0 &\text{ otherwise}. 
\ec
\end{align}

We set
\eqn
&& \La^{\sfJ,\uii}_{s,t}\seteq \La(M^\uii\{ a,s],M^\uii\{ a,t])
\eneqn
which satisfies
$$ \La^{\sfJ,\uii}_{s,t}=  -(\varpi_{\im_{s}}- w^\uii_{\le s}\varpi_{\im_{s}},\varpi_{\im_{t}}+ w^\uii_{\le t}\varpi_{\im_{t}}) \quad \text{for $s, t\in\sfJ $ such that $s\le t$.}$$
We frequently drop $^{\sfJ}$  in notations for simplicity.  

\smallskip

\begin{proposition} [{\cite[\S 1,2]{FHOO2}}] \label{prop: mutation and compatible}
Let $\uii$ be any sequence in $\sfI$. 
\bnum
\item The pair $(\La^\uii,\tB^\uii)$ is compatible. 

\item For a sequence $\ujj$ such that $\upga_k\uii=\ujj$, we have 
$$\tB^\ujj = \upsigma_k \tB^\uii \qtq \La^\ujj =  \upsigma_k \La^\uii.$$
\item For a sequence $\ujj$ such that $\upbe_k\uii=\ujj$, we have 
$$\tB^\ujj = \upsigma_{k+1} \mu_k \tB^\uii \qtq \La^\ujj =  \upsigma_{k+1} \mu_k\La^\uii.$$ 
\ee
\end{proposition}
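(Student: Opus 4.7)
The plan is to prove the three parts by direct unwinding of the explicit formulas for $\La^\uii_{s,t}$ and $b^\uii_{s,t}$ given in \eqref{eq: tBii} and Corollary~\ref{cor: w-pairing}. Since the matrices $\La^\uii$ and $\tB^\uii$ depend on $\uii$ only through the Weyl group elements $w^\uii_{\le k}$ and the combinatorial data $\im_k$, $k^\pm$, and since all the required weight identities are elementary consequences of $s_\im\varpi_\im=\varpi_\im-\al_\im$ and of the rank-two relations among $s_\im,s_\jm$, the argument will closely parallel the one in \cite[\S\,1--2]{FHOO2}. The novelty is that we must allow $\uii$ to be an arbitrary sequence, so I would first check that all quantities appearing in the formulas (the successors $k^\pm$, the products $w^\uii_{\le k}$, the pairings $\La^\uii_{s,t}$) are well-defined without any reducedness hypothesis; this is immediate from the definitions.

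For \textbf{(i)} the plan is to verify the identity
$$\sum_{k\in\sfJ}\La^\uii_{k,i}\,b^\uii_{k,j}=2\,\delta_{ij},\qquad i\in\sfJ,\ j\in\sfJ_\ex,$$
by a case analysis on the sign pattern of $b^\uii_{k,j}$. Since $b^\uii_{k,j}$ vanishes unless $k\in\{j^-,j^+\}$ or $k$ satisfies one of the interlacing conditions with $j$ described in \eqref{eq: tBii}, the sum has at most a bounded number of terms. Expanding each $\La^\uii_{k,i}$ via the formula $\La^\uii_{s,t}=-(\varpi_{\im_s}-w^\uii_{\le s}\varpi_{\im_s},\varpi_{\im_t}+w^\uii_{\le t}\varpi_{\im_t})$ for $s\le t$, and using $w^\uii_{\le j}=s_{\im_j}w^\uii_{\le j^-}$ together with the key identity $w^\uii_{\le k}\varpi_{\im_j}=w^\uii_{\le j^-}\varpi_{\im_j}$ whenever $j^-<k<j$, the four terms telescope into $2\delta_{ij}$. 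This step is essentially formal once one collects the contributions along the envelope of the $i$-boxes $\{1,j\}$ and $\{1,j^\pm\}$.

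For \textbf{(ii)}, under a commutation move $\ujj=\upga_k(\uii)$, the hypothesis $d(\im_k,\im_{k+1})>1$ gives $s_{\im_k}s_{\im_{k+1}}=s_{\im_{k+1}}s_{\im_k}$, so $w^\ujj_{\le m}=w^\uii_{\le m}$ for $m\ne k$, while $w^\ujj_{\le k}=w^\uii_{\le k+1}s_{\im_{k+1}}^{-1}$ has the roles of the last two factors exchanged. Inspecting the definitions of $\tB^\uii$ and $\La^\uii$, the indices $k$ and $k+1$ are simply swapped, and no nontrivial weight identity is needed. This is the transposition $\upsigma_k$.

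For \textbf{(iii)}, the braid-move case is the main obstacle. Here $\im_k=\im_{k+2}=\im$, $\im_{k+1}=\jm$, and $d(\im,\jm)=1$, so $s_\im s_\jm s_\im=s_\jm s_\im s_\jm$ gives $w^\ujj_{\le m}=w^\uii_{\le m}$ for $m\notin\{k,k+1\}$, while on the three-element block $\{k,k+1,k+2\}$ one must track the effect carefully. The strategy is:
\begin{enumerate}
\item First verify the combinatorial claim on the exchange matrices: show that applying $\mu_k$ to $\tB^\uii$ then the swap $\upsigma_{k+1}$ produces $\tB^\ujj$. This is a finite case analysis of the sign/adjacency conditions appearing in \eqref{eq: tBii}, using that for any $t\ne k,k+1,k+2$ the index $t^\pm$ is unchanged, whereas the interlacing of $t$ with $\{k,k+1,k+2\}$ toggles in a controlled way.
\item Then verify $\La^\ujj=\upsigma_{k+1}\mu_k\La^\uii$; the right-hand side is prescribed by Proposition~\ref{prop:condition simplified} applied to the mutation at $k$. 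This reduces, entry by entry, to identities of the form
$$(\varpi_\im-s_\im s_\jm s_\im\varpi_\im,\lambda)=(\varpi_\jm-s_\jm s_\im s_\jm\varpi_\jm,\lambda)\pm\text{(corrections from other rows)},$$
which are $\sl_3$-type computations. I would carry them out by comparing $w^\ujj_{\le k}\varpi_\jm$ with $w^\uii_{\le k}\varpi_\im$ (and similar pairs) using $s_\im\varpi_\jm=\varpi_\jm$ when $d(\im,\jm)>1$ and $s_\im\varpi_\im=\varpi_\im-\al_\im$.
\end{enumerate}
Each identity reduces to a rank-two check, and after gathering the bookkeeping these identities are exactly those verified in \cite[\S\,1--2]{FHOO2}. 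Since they depend only on the adjacency graph and not on reducedness of the word, the arguments transfer verbatim to an arbitrary sequence $\uii$.
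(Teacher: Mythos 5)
The paper itself does not prove this proposition; it is stated with a citation to \cite[\S\,1,2]{FHOO2} and no proof is given. So there is no ``paper's own proof'' to compare against, and your proposal should be judged as a reconstruction of the cited argument. At that level your plan is sound: the entries of $\La^\uii$ and $\tB^\uii$ are purely combinatorial in the data $(\im_k, k^\pm, w^\uii_{\le k})$, none of which requires reducedness of $\uii$, so verifying the compatibility identity and the two mutation rules by a finite case analysis and rank-two weight computations is exactly the right route, and you correctly flag that this is what allows the extension from locally reduced to arbitrary sequences.

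Two points to tighten. First, in part (ii) the identity you write, $w^\ujj_{\le k}=w^\uii_{\le k+1}s_{\im_{k+1}}^{-1}$, is wrong: the right-hand side simplifies to $w^\uii_{\le k}$. The correct statement is $w^\ujj_{\le k}=w^\uii_{\le k-1}s_{\im_{k+1}}=w^\uii_{\le k+1}s_{\im_k}^{-1}$, using $s_{\im_k}s_{\im_{k+1}}=s_{\im_{k+1}}s_{\im_k}$. This is a slip, not a structural problem. Second, and more importantly, in part (iii) you should not invoke Proposition~\ref{prop:condition simplified} to ``prescribe'' $\mu_k\La^\uii$. That proposition is a statement about an already admissible monoidal seed and derives the $\La$-values of the mutated monoidal cluster; but compatibility of $(\La^\uii,\tB^\uii)$ and the mutation rule $\La^\ujj=\upsigma_{k+1}\mu_k\La^\uii$ are used downstream (Theorem~\ref{thm: admissible seed}, Theorem~\ref{thm: mCat of cluster}) precisely to establish that the monoidal seed is completely $\Uplambda$-admissible, so quoting Proposition~\ref{prop:condition simplified} here would be circular. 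The quantity $\mu_k\La^\uii$ should be computed from the purely combinatorial mutation rule~\eqref{eq: mu L} and then matched against the closed weight formula of Corollary~\ref{cor: w-pairing} for $\La^\ujj$; the resulting identities are the rank-two checks you describe, and this keeps the logic acyclic.
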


The following theorem is a main result of this subsection and can be understood as a vast generalization of \cite[Theorem 7.20]{KKOP24A} to \emph{arbitrary sequences}. 

\begin{theorem} \label{thm: admissible seed}
For an arbitrary sequence $\uii$ in $\sfI$, the monoidal seed in $\Cgz$
\begin{align}
     ( \{ M^\uii\{  a,s] \}_{s \in \sfJ}, \tB^\uii; \sfJ,\sfJ_\ex )  \text{ is $\Uplambda$-admissible.}
\end{align}

\end{theorem}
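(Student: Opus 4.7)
The plan is to reduce the $\Uplambda$-admissibility of $\sfS^{\bbD,\uii}\seteq(\{M^{\bbD,\uii}\{1,s]\}_{s\in\sfJ}, \tB^\uii; \sfJ, \sfJ_\ex)$ for an arbitrary sequence $\uii$ to the locally reduced case already treated in \cite{KKOP24A}. The bridge is Theorem~\ref{thm: every sequence good}, which now extends the full package of structural properties of $C^{\bbD,\uii}_k$ and $M^{\bbD,\uii}[a,b]$ (strong unmixedness, $T$-systems, all relevant $\de$-values, commutation of commuting $i$-boxes) from locally reduced sequences to arbitrary ones, together with Proposition~\ref{prop: mutation and compatible}, which tracks how the compatible pair $(\Lambda^\uii,\tB^\uii)$ transforms under commutation moves $\upga_k$ and braid moves $\upbe_k$.

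When $\uii$ is locally reduced, the assertion is [KKOP24A, Theorem~7.20], whose proof uses only those properties of $\{C^\uii_k\}$ and $\{M^\uii[a,b]\}$ that are now consolidated in Theorem~\ref{thm: every sequence good}; hence the same argument applies verbatim. I would then propagate $\Uplambda$-admissibility across elementary moves. For a commutation move $\ujj=\upga_k(\uii)$, one has $C^\ujj_s\simeq C^\uii_{\upsigma_k(s)}$ (since $\de(C^\uii_k,C^\uii_{k+1})=0$ by Theorem~\ref{thm: every sequence good}(iv)); the cluster variables of $\sfS^{\bbD,\ujj}$ agree with those of $\sfS^{\bbD,\uii}$ under $\upsigma_k$, matching the transformation $\tB^\ujj=\upsigma_k\tB^\uii$ from Proposition~\ref{prop: mutation and compatible}(ii), so $\Uplambda$-admissibility transfers by pure relabeling.

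The subtler step is a braid move $\ujj=\upbe_k(\uii)$, for which Proposition~\ref{prop: mutation and compatible}(iii) gives $\tB^\ujj=\upsigma_{k+1}\mu_k(\tB^\uii)$ and the analogous formula for $\Lambda$. At the categorical level, I would realize this mutation via the $T$-system of Theorem~\ref{thm: every sequence good}(vi) applied to the $i$-box $[u,v]\seteq\{1,k^+]^\uii$: the resulting exact sequence
\begin{align*}
0\to\hspace{-1ex}\bigotimes_{d(\im_k,\jm)=1}\hspace{-1.5ex}M^\uii[u(\jm)^+, v(\jm)^-] \;\to\; M^\uii[u^+,v]\tens \sfM_k\;\to\; \sfM_{k^+}\tens M^\uii[u^+,v^-]\;\to\; 0
\end{align*}
identifies the mutated cluster variable as $\sfM'_k\seteq M^\uii[u^+,v]$, which is real by Theorem~\ref{thm: every sequence good}(iii), commutes with each $\sfM_j$ ($j\ne k$) thanks to Theorem~\ref{thm: every sequence good}(iv) combined with Lemma~\ref{lem: frack commute}, and satisfies $\de(\sfM_k,\sfM'_k)=1$ by the identity $\de(M^\uii[a^-,b^-], M^\uii[a,b])=1$ applied with $[a,b]=[u^+,v]$.

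Finally, by Remark~\ref{rem: conlcusion loc} every finite sequence $\uii$ arises as a prefix of some $\tuii\in\Seq(\Updelta^m)$ which is obtainable from a $\calQ$-adapted (hence locally reduced) sequence by finitely many $\upga$ and $\upbe$ moves; the restriction of the resulting completely $\Uplambda$-admissible seed from the full range to $[1,r]$ is handled by Lemma~\ref{lem:restrseed}. The main obstacle will be the braid-move step: matching every factor of the $T$-system above with a cluster variable $M^\ujj\{1,s]$ of $\sfS^{\bbD,\ujj}$ and verifying that the multiplicities coincide with the entries of $\tB^\ujj$ specified by~\eqref{eq: tBii}, including the case analysis when the indices $u(\jm)^+$ and $v(\jm)^-$ do not lie in obvious position relative to the first-occurrence pattern used to define $\frakC^{\sfJ,\uii}_+$.
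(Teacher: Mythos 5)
The paper omits the proof, but from the formula $(\sfM^\uii_k)' = C^\uii_{k^+}\hconv(\sfM^\uii_k)^{\mathrm{Vi}}$ invoked later in the proof of Proposition~\ref{prop: mutation}~\eqref{it: braid mutation}, it is clear the intended argument is a \emph{direct} construction of the mutated cluster variable for the fixed sequence $\uii$, exactly as in [KKOP24A, Theorem~7.20], with Theorem~\ref{thm: every sequence good} supplying all the structural ingredients that were previously only available in the locally reduced case. Your plan instead propagates $\Uplambda$-admissibility from the locally reduced case across commutation and braid moves. That is a genuinely different route, and it is closer in spirit to the argument the paper uses \emph{later}, in the proof of Theorem~\ref{thm: mCat of cluster} via Proposition~\ref{prop: mutation}.

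There is a concrete gap in your braid-move step, and it is not merely the bookkeeping you flag at the end. The $T$-system for the $i$-box $\{1,k^+]^\uii$ is not the exchange relation at $k$: its factors $M^\uii[u^+,v^-]$ and $M^\uii[u(\jm)^+, v(\jm)^-]$ are not of the form $M^\uii\{1,s]$, so they are not cluster variables of the seed $\sfS^{\bbD,\uii}$, and the exchange matrix $\tB^\uii$ in~\eqref{eq: tBii} records multiplicities of the modules $\sfM_s=M^\uii\{1,s]$ rather than of these interior $i$-box modules. In particular your proposed $\sfM'_k=M^\uii[u^+,v]$ is not the mutated cluster variable; it differs from $C^\uii_{k^+}\hconv\bl\dtens_{\st{t\mid d(\im_k,\im_t)=1,\ t<k<t^+<k^+}}\sfM^\uii_t\br$, which is what the exchange sequence dictated by $\tB^\uii$ requires. (The $T$-system does realize mutations, but between different admissible chains $\frakC$ via box moves as in Lemma~\ref{lem: b is mu}, not between the seeds attached to $\uii$ and $\upbe_k(\uii)$.)

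There is also a subtler logical problem. Even if you correctly identified $\sfM'_k$ and showed that $\sfS^{\bbD,\ujj}$ is $\upsigma_{k+1}\mu_k$ of $\sfS^{\bbD,\uii}$ as a monoidal seed, mere $\Uplambda$-admissibility is not known to be stable under one mutation: from $\Uplambda$-admissibility of $\sfS^{\bbD,\uii}$ you know every direction is $\Uplambda$-mutable \emph{from $\sfS^{\bbD,\uii}$}, but you would still need to produce the mutated cluster variables of $\sfS^{\bbD,\ujj}$ in every direction, including those near $k$, which is a fresh construction. This is precisely why, when the paper does propagate (Theorem~\ref{thm: mCat of cluster}), it first upgrades to \emph{complete} $\Uplambda$-admissibility using Theorem~\ref{nthm: main KKOP2} (which requires the cluster-algebra isomorphism, hence Theorem~\ref{thm: main KKOP2} and Theorem~\ref{thm: D-quantizable for M[a,b]}), and only then applies Proposition~\ref{prop: mutation} and Lemma~\ref{lem:restrseed}. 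Those inputs are not available at the point where Theorem~\ref{thm: admissible seed} is stated, so the propagation cannot be run there without circularity. The fix is to abandon propagation and prove $\Uplambda$-admissibility directly for the given $\uii$, constructing $(\sfM^\uii_k)'$ explicitly as $C^\uii_{k^+}\hconv(\sfM^\uii_k)^{\mathrm{Vi}}$ and verifying the exact sequence, commutation with the other $\sfM^\uii_j$, and $\de(\sfM^\uii_k,(\sfM^\uii_k)')=1$ using the properties in Theorem~\ref{thm: every sequence good}, Lemma~\ref{lem: normal seq d} and Lemma~\ref{lem: de=de}.
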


Note that $\{ M^\uii\{  a,s] \}_{s \in [a,b]}= \sfM(\frakC^\uii_+)$. 
Since the proof of the theorem above is similar to the one of
  \cite[Theorem 7.20]{KKOP24A}, we omit the proof.

\section{Monoidal categorification and quantum cluster algebra structure} \label{Sec: MC for qca}
In this section, we will prove our theorems on monoidal categorification. We begin by showing that the category 
$\Cg(\ttb)$ provides a monoidal categorification of a cluster algebra. 
Then we will show that the algebra $\hcalA_\Zq(\ttb)$ has a \emph{quantum} cluster algebra structure by using the monoidal categorification.

\subsection{Monoidal categorification of a cluster algebra}  Let $\scrC$ be a full subcategory of $\Cg$ containing trivial module $\bone$
and stable under taking tensor products, subquotients and extensions.

Recall the definition of $\Cg^{[a,b],\bbD,\uii}$, etc.\ in Definition~\ref{def:affdet} for a complete duality datum $\bbD$. 

\begin{theorem} [{\cite[Theorem 8.1]{KKOP24A}}] \label{thm: main KKOP2}
Let $(\bbD_\calQ,\hwc)$ be a \emph{PBW-pair} of a $\rmQ$-datum $\calQ$ of $\g$ and let $\frakC$ be an admissible chain of $i$-boxes with range $[a,b]$  for $-\infty \le a \le b \le \infty$.  
Then we have
\bna
\item $\SC= \big( \sfM(\frakC),\tB;  \sfJ(\frakC), \sfJ(\frakC)_\ex \big)$
  is a completely $\Uplambda$-admissible monoidal seed in the category $\Cg^{[a,b],\bbD_\calQ,\hwc}$ for some exchange matrix $\tB$.
\item $\scrA(\SC) \simeq K(\scrC_\g^{[a,b],\bbD_\calQ,\hwc})$,
  where $\scrA(\SC)$ is the cluster algebra associated
  with the seed $[\SC]\seteq\big(\st{X_j}_{j\in\sfJ(\frakC)},\tB;  \sfJ(\frakC), \sfJ(\frakC)_\ex \big)$. 
\ee
Namely, the category $\Cg^{[a,b],\bbD_\calQ,\hwc}$ provides a monoidal categorification of the cluster algebra $K(\Cg^{[a,b],\bbD_\calQ,\hwc})$
with the initial monoidal seed $\sfS(\frakC)$. 
\end{theorem}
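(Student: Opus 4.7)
The plan is to establish both parts (a) and (b) simultaneously by invoking the general criterion of Theorem~\ref{nthm: main KKOP2}: it suffices to exhibit a $\Uplambda$-admissible monoidal seed in $\scrC_\g^{[a,b],\bbD_\calQ,\hwc}$ whose underlying combinatorial seed presents a cluster algebra isomorphic to the Grothendieck ring. Once such a seed is found, Theorem~\ref{nthm: main KKOP2} promotes $\Uplambda$-admissibility to complete $\Uplambda$-admissibility and produces the monoidal categorification.

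First I would treat the special admissible chain $\frakC_+^\uii$ consisting of the right-horizontal $i$-boxes $\{a, a+k-1]$ for $k=1,\dots,b-a+1$, where $\uii = (\im_a,\ldots,\im_b)$ is the segment of $\hwc$ on $[a,b]$. For this chain, Theorem~\ref{thm: admissible seed} directly yields the $\Uplambda$-admissibility of
$$\sfS(\frakC_+^\uii) = \bigl(\{M^{\bbD_\calQ,\uii}\{a, s]\}_{s\in[a,b]}, \tB^\uii;\, \sfJ,\sfJ_\ex\bigr),$$
with the explicit exchange matrix $\tB^\uii$ given by \eqref{eq: tBii}. The compatibility of the pair $(\La^\uii,\tB^\uii)$ coming from Proposition~\ref{prop: mutation and compatible} also furnishes the skew-symmetric $\La$-matrix needed to realize $\sfS(\frakC_+^\uii)$ as a genuine seed.

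Next I would establish the ring isomorphism $K(\scrC_\g^{[a,b],\bbD_\calQ,\hwc}) \simeq \scrA(\sfS(\frakC_+^\uii))$. By Corollary~\ref{cor: K simeq oA}\eqref{it: poly}, the left-hand side is the polynomial ring generated by the classes $\{[C^{\bbD_\calQ,\hwc}_s]\}_{s\in[a,b]}$. The initial monoidal cluster $\{[M^{\bbD_\calQ,\uii}\{a, s]]\}_{s\in[a,b]}$ also consists of algebraically independent elements (via normality of the cuspidal sequence, Proposition~\ref{prop:normalsimple}\eqref{itsimple}), and one verifies through the T-system exact sequences of Theorem~\ref{thm: every sequence good} together with Lemma~\ref{lem: b is mu} that successive mutations of $\sfS(\frakC_+^\uii)$ produce each individual cuspidal class $[C^{\bbD_\calQ,\hwc}_s]$ as a cluster variable, matching the categorical box moves of Lemma~\ref{lem: finite sequence are T-equi} with algebraic mutations. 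This shows $\scrA(\sfS(\frakC_+^\uii))$ is contained in $K(\scrC_\g^{[a,b],\bbD_\calQ,\hwc})$ and contains a set of generators of the latter, so the two rings coincide. Applying Theorem~\ref{nthm: main KKOP2} then yields both the complete $\Uplambda$-admissibility of $\sfS(\frakC_+^\uii)$ and the monoidal categorification for the chain $\frakC_+^\uii$.

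For an arbitrary admissible chain $\frakC$ of $i$-boxes of range $[a,b]$, Lemma~\ref{lem: finite sequence are T-equi} expresses $\frakC$ as a finite sequence of box moves from $\frakC_+^\uii$; combined with Lemma~\ref{lem: b is mu} (box move $=$ mutation) and Corollary~\ref{cor: same range} (complete $\Uplambda$-admissibility is preserved under box moves), this transfers both the complete $\Uplambda$-admissibility and the cluster algebra identification from $\frakC_+^\uii$ to $\frakC$, since $\scrA(\sfS(\frakC)) = \scrA(\sfS(\frakC_+^\uii))$ inside the ambient skew-field. The hard part will be the identification $\scrA(\sfS(\frakC_+^\uii)) = K(\scrC_\g^{[a,b],\bbD_\calQ,\hwc})$: the cluster algebra is a priori only a subring of the Laurent polynomial ring in the initial cluster, and showing that every cuspidal class arises via mutation requires a careful choreography of T-systems plus positivity of the affine determinantial expansions, so that no denominators intervene. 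Once this is controlled, the limit case $[a,b] = [-\infty,\infty]$ (or half-infinite ranges) follows from a direct-limit argument, since $\scrC_\g^{[a,b]}$ is the filtered colimit of its finite-range subcategories and both the cluster and the categorical structures are compatible with inclusions of subintervals.
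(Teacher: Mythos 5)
This statement is quoted verbatim from \cite[Theorem 8.1]{KKOP24A}; the present paper does not contain a proof of it, but rather uses it as a known ingredient in the proof of Theorem~\ref{thm: mCat of cluster}. So there is no ``paper's own proof'' to compare against here. That said, a few remarks on the soundness of your sketch are in order.

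Your broad strategy -- start with the right-moving chain $\frakC_+^\uii$, invoke Theorem~\ref{thm: admissible seed} for $\Uplambda$-admissibility of $\sfS(\frakC_+^\uii)$, then feed the ring isomorphism $K(\scrC_\g^{[a,b],\bbD_\calQ,\hwc})\simeq\scrA(\sfS(\frakC_+^\uii))$ into Theorem~\ref{nthm: main KKOP2}, and finally propagate to arbitrary chains and infinite ranges -- is the right shape, and mirrors the structure the present paper uses to deduce the more general Theorem~\ref{thm: mCat of cluster} from this one. However, there is a genuine circularity in the step where you establish the ring isomorphism. You propose to show $[C^{\bbD_\calQ,\hwc}_s]\in\scrA(\sfS(\frakC_+^\uii))$ by iterating box moves (via Lemma~\ref{lem: b is mu} and Lemma~\ref{lem: finite sequence are T-equi}) until a singleton chain is reached. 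But Lemma~\ref{lem: b is mu} requires the seed at each step to be $\Uplambda$-admissible, and Corollary~\ref{cor: same range} transfers this only under the hypothesis of \emph{complete} $\Uplambda$-admissibility -- which is precisely a conclusion of Theorem~\ref{nthm: main KKOP2} that you do not yet have, because you have not yet verified its hypothesis $K(\scrC)\simeq\scrA([\sfS])$. Theorem~\ref{thm: admissible seed} alone only gives one-step admissibility at $\frakC_+^\uii$, not at the subsequent chains $\bbB_{m_1}\cdots\bbB_{m_j}(\frakC_+^\uii)$. So the argument ``mutate until $[s,s]$ appears'' cannot be used to establish the ring isomorphism that unlocks iteration in the first place.

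To break this circle one needs either (i) a direct proof that every admissible chain's monoidal seed $\sfS(\frakC)$ is $\Uplambda$-admissible (not merely $\frakC_+$) -- this is the content of Theorem~\ref{thm: box and mutation}, but that is a separate and later result cited from \cite{KK24} and should not be assumed when proving Theorem 8.1 of \cite{KKOP24A} -- or (ii) a proof of $K(\scrC_\g^{[a,b],\bbD_\calQ,\hwc})\simeq\scrA([\sfS(\frakC_+^\uii)])$ that does \emph{not} go through iterated categorical mutations, for instance by reducing (via the Schur--Weyl functor and the decomposition \eqref{eq: iso Kz}) to known cluster structures on $\calA_\bfA(\sfn(w))$ and on quantum Grothendieck rings. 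You flagged the ring isomorphism as ``the hard part,'' which is the right instinct, but the reason it is hard is not the positivity of Laurent expansions: it is that your proposed route to it assumes what it is supposed to deliver.
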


\smallskip
We fix a complete duality datum $\bbD = \{ L_\im \}_{\iinI}$ throughout this subsection. 
For simplicity of notation, let us take $\uii \in \sfI^{\Z_{>0}}$, and set $\sfM_m^\uii \seteq M^\uii\{ 1, m]$ for all $m \in \Z_{> 0}$. 
Recall the operations $\upga_k$
and $\upbe_k$ in Definition~\ref{def: moves}, which are
defined on the sequences in $\sfI$.

\begin{proposition} \label{prop: mutation}
Let $\ujj = (\jm_1,\jm_2,\ldots) \in \sfI^{\Z_{> 0}}$.
\bnum
\item \label{it: comm mutation} If $\ujj=\upga_k(\uii)$, then we have
$\sfM_m^\ujj \simeq \sfM_{\upsigma_k(m)}^\uii$ for all $m \in \Z_{> 0}$. 
\item \label{it: braid mutation} Let $\ujj=\upbe_k(\uii)$.
Assume that the monoidal seed
$\sfS(\frakC^{\bbD,\uii}_{+})$  
is a completely $\Uplambda$-admissible seed in the $\Cg^{[1,\infty],\bbD,\uii}$
Then  we have
\begin{align} \label{eq: mutation of C}
\text{
$\sfM(\frakC^\ujj_+) = \upsigma_{k+1}\mu_k \big(\sfM(\frakC^\uii_+) \big)$.} \quad \text{ Namely, 
$\sfM_m^\ujj \simeq \bc 
(\sfM^\uii_k)' & \text{ if } m=k, \\
\sfM^\uii_{k+2} & \text{ if } m=k+1, \\
\sfM^\uii_{k+1} & \text{ if } m=k+2, \\
\sfM_m^\uii & \text{ otherwise}.
\ec$}
\end{align}
Here $(\sfM^\uii_k)'$ denotes the mutation of $\sfM^\uii_k$ at $k$ described in~\eqref{eq: Mk'}.  
\ee
\end{proposition}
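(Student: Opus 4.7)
The plan is to prove (i) and (ii) separately. Part (i) is elementary and uses that factors at positions $k,k+1$ commute; part (ii) is the substantive case, where the only delicate point is the identification $\sfM_k^\ujj\simeq(\sfM_k^\uii)'$.

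For (i), since $d(\im_k,\im_{k+1})>1$ the operation $\scrS_\jm$ fixes the $\im$-component of $\bbD$ for $d(\im,\jm)>1$, so Corollary~\ref{cor: dia comm repeat} applied to $\TT_\jm f_{\im,0}=f_{\im,0}$ gives $\TT_\jm L^\bbD_\im\simeq L^\bbD_\im$. Writing $\bbT\seteq(\TT_{\im_l}\cdots\TT_{\im_0})^{-1}\TT_{\im_l}\cdots\TT_{\im_{k-1}}$, one obtains $C^\uii_k\simeq \bbT L_\im\simeq C^\ujj_{k+1}$, $C^\uii_{k+1}\simeq \bbT L_\jm\simeq C^\ujj_k$, and $C^\uii_s\simeq C^\ujj_s$ for $s\notin\{k,k+1\}$. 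Moreover $C^\uii_k,C^\uii_{k+1}$ commute by Proposition~\ref{prop: ell de}. Comparing the $\phi$-sets $[1,m]_\phi^\uii$ and $[1,m]_\phi^\ujj$ by case analysis on $\im_m$, and using the commutation to transpose the single affected factor in the ordered tensor product, gives $\sfM_m^\ujj\simeq \sfM_{\upsigma_k(m)}^\uii$.

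For (ii), Remark~\ref{rem: local change} supplies $C^\uii_k\simeq C^\ujj_{k+2}$, $C^\uii_{k+2}\simeq C^\ujj_k$, $C^\uii_{k+1}\simeq C^\uii_k\hconv C^\uii_{k+2}$, $C^\ujj_{k+1}\simeq C^\uii_{k+2}\hconv C^\uii_k$, and $C^\uii_s\simeq C^\ujj_s$ for $s\notin\{k,k+1,k+2\}$. The affine cuspidal sequence is strongly unmixed (Theorem~\ref{thm: every sequence good}(i)) and hence normal (Proposition~\ref{prop: Unmix normal}(ii)), so a consecutive pair in a normal sequence may be replaced by its head without altering the overall head. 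Substituting these identities into the ordered tensor products defining $\sfM_m^\ujj$ yields $\sfM_m^\ujj\simeq \sfM_m^\uii$ for $m\notin\{k,k+1,k+2\}$, together with $\sfM_{k+1}^\ujj\simeq \sfM_{k+2}^\uii$ and $\sfM_{k+2}^\ujj\simeq \sfM_{k+1}^\uii$ (using that $M^\uii[k,k+2]=C^\uii_{k+2}\hconv C^\uii_k\simeq C^\ujj_{k+1}$).

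The critical case is $m=k$. The same normality argument gives the explicit presentations
\begin{equation*}
\sfM_k^\ujj\simeq C^\uii_{k+2}\hconv \sfM_{(k+1)^-_\uii}^\uii,\quad
\sfM_{k+2}^\uii\simeq C^\uii_{k+2}\hconv \sfM_k^\uii,\quad
\sfM_{k+1}^\uii\simeq (C^\uii_k\hconv C^\uii_{k+2})\hconv \sfM_{(k+1)^-_\uii}^\uii.
\end{equation*}
Under the completely $\Uplambda$-admissibility hypothesis, Theorem~\ref{nthm: main KKOP2} identifies $\Cg^\bbD(\ttb)$ as a monoidal categorification of the cluster algebra $\scrA([\sfS(\frakC_+^\uii)])\simeq K(\Cg^\bbD(\ttb))\simeq\obbA(\ttb)$, and in view of the computation of the column of $\tB^\uii$ at $k$ from \eqref{eq: tBii}, the mutation $(\sfM_k^\uii)'$ is characterized uniquely in $K(\Cg^\bbD(\ttb))$ by
\begin{equation*}
[\sfM_k^\uii]\,[(\sfM_k^\uii)']=[\sfM_{(k+1)^-_\uii}^\uii]\,[\sfM_{k+2}^\uii]+[\sfM_{k^-_\uii}^\uii]\,[\sfM_{k+1}^\uii],
\end{equation*}
with the convention that any non-existent index contributes the factor $1$. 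I will verify the same identity with $[\sfM_k^\ujj]$ in place of $[(\sfM_k^\uii)']$ by producing a short exact sequence
\begin{equation*}
0\to \sfM_{(k+1)^-_\uii}^\uii\tens \sfM_{k+2}^\uii\to \sfM_k^\uii\tens \sfM_k^\ujj\to \sfM_{k^-_\uii}^\uii\tens \sfM_{k+1}^\uii\to 0,
\end{equation*}
extracted from the $T$-system of Theorem~\ref{thm: every sequence good}(vi) applied to the $i$-box $[k^-_\uii,k+2]^\uii$ of $\uii$, and then combined with the $\uii$-prefix module $\sfM_{(k^-_\uii)^-_\uii}^\uii$ to lift the local $T$-relation to the global mutation identity; the $\La$-compatibility needed here is automatic from Corollary~\ref{cor: w-pairing} and Proposition~\ref{prop:detLambda}. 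Since $\obbA(\ttb)$ is a polynomial ring (Lemma~\ref{lem:obbAcom}) in which $[\sfM_k^\uii]$ is not a zero divisor, this forces $[\sfM_k^\ujj]=[(\sfM_k^\uii)']$, whence $\sfM_k^\ujj\simeq(\sfM_k^\uii)'$ by Theorem~\ref{thm: unitri beta}(iii). The main obstacle is the explicit construction of this exact sequence: matching sub- and quotient-objects with the correct $\sfM$-factors requires careful tracking of the positions $k^-_\uii,(k+1)^-_\uii$ relative to the $\phi$-structure of $\uii$ and handling boundary cases when these indices do not exist (so that the corresponding $\sfM$-factors become frozen or trivial).
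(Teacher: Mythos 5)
Parts (i) and the cases $m\neq k$ of (ii) follow the paper's route: use the $\TT$-relations to identify the $C$-modules for $\uii$ and $\ujj$ and read off $\sfM_m^\ujj\simeq\sfM^\uii_m$, $\sfM_{k+1}^\ujj\simeq\sfM_{k+2}^\uii$, $\sfM_{k+2}^\ujj\simeq\sfM_{k+1}^\uii$.

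For the critical case $m=k$ you take a genuinely different tack from the paper, and it has a real gap. You propose to characterize $(\sfM^\uii_k)'$ by the exchange relation
$[\sfM^\uii_k]\,[(\sfM^\uii_k)']=[\sfM^\uii_{(k+1)^-}]\,[\sfM^\uii_{k+2}]+[\sfM^\uii_{k^-}]\,[\sfM^\uii_{k+1}]$
in the polynomial ring $\obbA(\ttb)$, then show $[\sfM^\ujj_k]$ satisfies the same relation by producing a short exact sequence
$0\to\sfM^\uii_{(k+1)^-}\tens\sfM^\uii_{k+2}\to\sfM^\uii_k\tens\sfM^\ujj_k\to\sfM^\uii_{k^-}\tens\sfM^\uii_{k+1}\to 0$,
and finish by cancelling $[\sfM^\uii_k]$ (the ring being a domain) and invoking Theorem~\ref{thm: unitri beta}(iii). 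The logic of that last step is sound. But the short exact sequence itself is the content: you only sketch how to derive it from the T-system \eqref{eq: T-system in terms of M[a,b] again} at the $i$-box $[k^-,k+2]^\uii$ and then ``lift'' by tensoring with prefix modules, and you explicitly flag this as ``the main obstacle.'' Notice that the T-system at $[k^-,k+2]^\uii$ has middle term $M^\uii[k,k+2]\tens M^\uii[k^-,k]$, which is not of the form $\sfM^\uii_k\tens\sfM^\ujj_k$ --- the $\sfM$-modules are $\{1,m]$-boxes starting near $1$, not local $i$-boxes near $k$. Moreover you cannot shortcut via Lemma~\ref{lem: b is mu}, because the box $\frakc_k$ is not movable in $\frakC^\uii_+$ (all horizontal moves there are $\calR$, so only $\frakc_1$ is movable). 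So the bridge from the local T-system to the global mutation identity is exactly the step that needs work, and it is not carried out.

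The paper avoids this entirely by reading off an explicit formula for the mutation, $(\sfM^\uii_k)'=C^\uii_{k+2}\hconv(\sfM^\uii_k)^{\rm Vi}$ with $(\sfM^\uii_k)^{\rm Vi}=\dtens_{\{t\,\mid\,d(\im_k,\im_t)=1,\ t<k<t^+<k^+\}}\sfM^\uii_t$, established in (the omitted proof of) Theorem~\ref{thm: admissible seed}. Since $k^+_\uii=k+2$ forces $t^+=k+1$ and hence $\im_t=\im_{k+1}$, the tensor has the single factor $\sfM^\uii_{(k+1)^-}$; and $\sfM^\ujj_k=C^\ujj_k\hconv M^\ujj\{1,k^-_\ujj]=C^\uii_{k+2}\hconv\sfM^\uii_{(k+1)^-}$ by the same boundary analysis (using $(k+1)^-_\uii=k^-_\ujj$ and the agreement of $\uii$ and $\ujj$ below $k$). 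If you want to salvage your argument, you would need to actually build the exact sequence you allude to, or, more economically, locate and use the explicit mutation formula that underlies Theorem~\ref{thm: admissible seed} as the paper does.
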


\begin{proof}
\eqref{it: comm mutation} Since $\{ \TT_\im \}_{\im\in \sfI}$ satisfies the relations in the braid group, we have $C^\uii_k \tens C^\uii_{k+1} \simeq  C^\uii_{k+1} \tens C^\uii_{k}$, 
$C^\uii_k \simeq C^\ujj_{k+1}$,
$C^\uii_{k+1} \simeq C^\ujj_k$ and $C^\uii_{m} \simeq C^\ujj_m$ for $m \not\in \{ k,k+1\}$. Then the assertion follows from the definition of 
$\sfM_m^\ujj \seteq M^{ \ujj }\{1, m]$. 

\mnoi
\eqref{it: braid mutation}
By Lemma~\ref{lem:lneg}, we can assume that $k \ge 0$ without loss of generality. 
Note that 
\begin{align*}
C_{k+1}^\ujj \simeq  C^\uii_{k+2} \hconv C^\uii_{k} & 
\simeq  \TT_{\im_1}\TT_{\im_2}\cdots \TT_{\im_{k-1}}( \TT_{\im_k}\TT_{\im_{k+1}} L_{\im_{k+2}} \hconv  L_{\im_{k}}) \\
 & \simeq  \TT_{\im_1}\TT_{\im_2}\cdots \TT_{\im_{k-1}}(L_{\im_{k+1}} \hconv  L_{\im_{k}})  \simeq \TT_{\jm_1}\TT_{\jm_2}\cdots \TT_{\jm_{k}}(L_{\jm_{k+1}}) = C_{k+1}^\ujj, 
\end{align*}
by Proposition~\ref{prop: ell de},   
$C^\uii_{m} \simeq C^\ujj_{m}$ for $m \not\in [k,k+2]$ and $C^\uii_{a} \simeq C^\ujj_{b}$
for $\{a,b\}=\{k,k+2\}$.
 
From Proposition~\ref{prop: still good sequence}, the sequence 
$\uC^\ujj = ( C^\ujj_r,\ldots,C^\ujj_1  )$ satisfies the same properties in Condition~\ref{assu: the assumption}. Then we have $\sfM^\ujj\{ 1,m]$ from 
$\uC^\ujj$ via an $i$-box $\{ 1,m ]$ in the usual way. Then by definition of $\sfM_{m}^\uii \simeq \sfM^\ujj\{ 1,m]$ for $m<k$, 
$\sfM^\uii_{k+2} \simeq \sfM^\ujj\{ 1,k+1]$ and $\sfM^\uii_{k+1} \simeq \sfM^{\ujj}\{ 1,k+2]$.  

Let us prove $(\sfM^\uii_k)' \simeq \sfM^{ \ujj }\{ 1,k]$. Note that Theorem~\ref{thm: admissible seed} says that
$$
(\sfM^\uii_k)' = C^\uii_{k+2} \hconv (\sfM^\uii_k)^{{\rm Vi}},  
$$
where $$(\sfM^\uii_k)^{{\rm Vi}} \seteq \dtens_{ \{ t   \ | \ d(\im,\im_t)=1 \text{ and } t < k < t^+ < k^+ \}  }  \sfM^\uii_t.$$

Since $\im_k=\im_{k+2}$ and $d(\im_k,\im_{k+1})=1$, $(\sfM^\uii_k)^{{\rm Vi}} \simeq \sfM^\uii_{k(\jm_{k})^-} = \sfM^{\uii}\{ 1,k(\jm_{k})^-] \simeq \sfM^\ujj\{1, k(\jm_{k})^-]$.
Hence 
$$ 
(\sfM^\uii_k)' \simeq C^\ujj_{k}\hconv \sfM^\ujj\{1, k(\jm_{k})^-] \simeq \sfM^\ujj\{1, k]. 
$$
Then the set of cluster variable modules of $\mu_k(\sfS(\frakC^\uii_+))$ coincides with $\{ \sfM^{\ujj}\{1,m ]\}_{m \in \Z_{> 0}}$. 
Thus the assertion follows.
\end{proof}

The following proposition is proved in \cite[Proposition 7.19]{KKOP24A}
when $\uii$ is a locally reduced sequence, but the same proof works for an arbitrary $\uii$.  
\begin{proposition} [{\cite[Proposition 7.19]{KKOP24A}}]
Let $\Mseed$ be a monoidal seed in $\Cg(\ttb)$. If  $\Mseed$ is $($completely$)$ $\Uplambda$-admissible in $\Cgz$,
then it is $($completely$)$ $\Uplambda$-admissible in $\Cg(\ttb)$.
\end{proposition}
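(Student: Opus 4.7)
My strategy is to reduce the statement to the single claim that, for any direction $k \in \sfJ_\ex$ in which $\sfS$ admits a $\Uplambda$-mutation inside $\Cgz$, the mutated cluster variable $\sfM'_k$ actually lies in $\Cg(\ttb)$. Once this is established, the defining short exact sequence~\eqref{eq: Mk'} and all of its terms live entirely in $\Cg(\ttb)$; the commutation of $\sfM'_k$ with the $\sfM_i$ $(i\ne k)$ is inherited from the ambient category $\Cgz$; and the intrinsic identity $\de(\sfM_k,\sfM'_k)=1$ does not depend on the ambient category. Complete $\Uplambda$-admissibility in $\Cg(\ttb)$ then follows by iteration, since by hypothesis $\mu_k(\sfS)$ is again completely $\Uplambda$-admissible in $\Cgz$, and induction on the length of the mutation sequence applies.

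The key step is therefore to prove that $\sfM'_k\in \Cg(\ttb)$. I would work in the Grothendieck ring via Corollary~\ref{cor: K simeq oA}, which identifies $K(\Cg(\ttb))\simeq \obbA(\ttb)$ as a subring of $K(\Cgz)\simeq \obbA$, and characterizes $\Cg(\ttb)$ as the full subcategory of $\Cgz$ consisting of modules whose classes in $K(\Cgz)$ lie in $\obbA(\ttb)$. The short exact sequence~\eqref{eq: Mk'} yields the identity
\[ [\sfM_k]\cdot [\sfM'_k] \;=\; [A] + [B] \qquad \text{in } \obbA, \]
with $[\sfM_k]$, $[A]$, $[B]$ all lying in the subring $\obbA(\ttb)$, since $\sfS$ is by assumption a monoidal seed in $\Cg(\ttb)$. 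The task reduces to deducing $[\sfM'_k]\in\obbA(\ttb)$.

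For this I would exploit the structural fact that $\obbA(\ttb)\hookrightarrow \obbA$ is a polynomial ring extension. Concretely, extend the fixed expression $\uii=(\im_1,\ldots,\im_r)\in\Seq(\ttb)$ arbitrarily to a bi-infinite sequence $\tuii\in\sfI^\Z$. Combining Proposition~\ref{cor:basis} with Corollary~\ref{cor: K simeq oA}~\eqref{it: poly}, the classes $\{[C_k^{\bbD,\tuii}]\}_{k\in\Z}$ are algebraically independent generators of $\obbA$ as a polynomial ring, while the subset $\{[C_k^{\bbD,\uii}]\}_{k\in[1,r]}$ generates the subring $\obbA(\ttb)$ as a polynomial subring. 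Consequently
\[ \obbA \;=\; \obbA(\ttb)\bigl[\,x_j\bigm| j\in \Z\setminus [1,r]\,\bigr], \qquad x_j\seteq[C_j^{\bbD,\tuii}]. \]
Writing $[\sfM'_k]=\sum_\alpha c_\alpha\,x^\alpha$ as a finite sum with $c_\alpha\in\obbA(\ttb)$, the relation $[\sfM_k][\sfM'_k]\in\obbA(\ttb)$, combined with $[\sfM_k]\ne 0$ and integrality of $\obbA(\ttb)$, forces $c_\alpha=0$ for all $\alpha\ne 0$. Hence $[\sfM'_k]\in\obbA(\ttb)$, and since $\sfM'_k$ is simple in $\Cgz$, Corollary~\ref{cor: K simeq oA} yields $\sfM'_k\in\Cg(\ttb)$.

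The main obstacle is the verification in the third paragraph: that the inclusion $\obbA(\ttb)\subset\obbA$ is genuinely a polynomial ring extension for an \emph{arbitrary} sequence $\uii$, not merely a locally reduced one. This rests on the PBW structure for arbitrary sequences built up in Section~\ref{sec: generalization}, principally Proposition~\ref{cor:basis}, together with the block-tensor factorization~\eqref{eq: iso Kz} of $K(\Cgz)$ and the polynomiality of $K(\Cg(\ttb))$ from Corollary~\ref{cor: K simeq oA}~\eqref{it: poly}. Once the polynomial-ring structure is secured, the remaining ring-theoretic conclusion $[\sfM'_k]\in\obbA(\ttb)$ is essentially formal.
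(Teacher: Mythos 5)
The reduction to "show $\sfM'_k\in\Cg(\ttb)$," and the method of testing this inside the polynomial ring $\obbA$ via the factorization $[\sfM_k][\sfM'_k]=[A]+[B]\in\obbA(\ttb)$ and a domain argument, is a legitimate route and plausibly close to what is done in \cite[Proposition 7.19]{KKOP24A}. However, there is a genuine gap in the key step: you claim that for an \emph{arbitrary} bi-infinite extension $\tuii$ of $\uii$, the classes $\st{[C_k^{\bbD,\tuii}]}_{k\in\Z}$ are polynomial-ring generators of $\obbA$, and you justify this by Proposition~\ref{cor:basis} and Corollary~\ref{cor: K simeq oA}~\eqref{it: poly}. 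But those results only say that the cuspidal classes generate $K(\scrC_\g^{\bbD,\tuii})$ as a polynomial ring. In general $\scrC_\g^{\bbD,\tuii}\subsetneq\Cgz$: if, say, $\tuii$ is the constant sequence at a single $\im$, then $\scrC_\g^{\bbD,\tuii}$ is generated by $\st{\rdual^kL^\bbD_\im}_{k\in\Z}$, which is far from all of $\Cgz$, so $\{[C_k^{\bbD,\tuii}]\}_{k\in\Z}$ do not generate $\obbA$. Without $\obbA=\obbA(\ttb)[x_j]$ the divisibility argument does not even get started, and in particular you cannot conclude $[\sfM'_k]\in\obbA(\ttb)$ because you do not know $[\sfM'_k]$ lies in the polynomial ring generated by your chosen cuspidals.

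The gap is fixable, but it requires choosing $\tuii$ with care rather than arbitrarily. Using Lemma~\ref{lem: loc red braid}, take $\tty\in\ttB^+$ with $\ttb\tty=\Updelta^m$ and pick $\ujj\in\Seq(\tty)$, so that $\uw\seteq\uii*\ujj\in\Seq(\Updelta^m)$; then extend $\uw$ to a bi-infinite sequence $\tuii$ by prepending $\hwci{[-\infty,0]}$ and appending $\hwci{[m\ell+1,\infty]}$. Because $\uw$ and $\hwc_{[1,m\ell]}$ both represent $\Updelta^m$, this $\tuii$ is obtained from $\hwc$ by finitely many commutation and braid moves, and the move-invariance built into Section~\ref{sec: generalization} (Lemma~\ref{lem: untri gamma}, Lemma~\ref{lem: exsitence of bsa for ujj}, Proposition~\ref{cor:basis}) gives $\scrC_\g^{\bbD,\tuii}=\Cgz$ and hence $\obbA=\obbA(\ttb)[x_j\mid j\in\Z\setminus[1,r]]$. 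Only then does your domain argument apply to yield $[\sfM'_k]\in\obbA(\ttb)$ and, via Corollary~\ref{cor: K simeq oA}~(ii), $\sfM'_k\in\Cg(\ttb)$.
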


Recall the admissible chain of $i$-boxes $\frakC^{\uii}_+ $ which is associated with $(1,(\calR,\calR,\ldots))$  
(\S~\ref{subsec: Construction}).  

\medskip

For  $\ttb\in \ttB^+$ and a complete duality datum 
$\bbD$, recall the subcategory $\Cg(\ttb)$ whose $K(\Cg(\ttb))$
is isomorphic to the commutative $\Z$-algebra $\obbA(\ttb)$. According to Corollary \ref{cor: K simeq oA} (\ref{it: poly}), this algebra is
the polynomial ring generated by 
$\{ [C_s^\uii]\}_{s\in[1,r]}$. The following theorem states that for any complete duality datum 
$\bbD$, the category $\Cg(\ttb)$ provides a monoidal categorification of the cluster algebra that is isomorphic to 
$\obbA(\ttb)$, thereby confirming \cite[Conjecture 8.13]{KKOP24A}.

\begin{theorem} \label{thm: mCat of cluster}
For any complete duality datum $\bbD$
and $\uii=(\im_1,\ldots,\im_r) \in \Seq(\ttb)$, let $\frakC^\uii$ be an admissible chain of $i$-boxes associated with $\bbD$, $\uii$ and  
 a range $[1,r]$. Then we have the followings:
\bna
\item \label{it: CLable}  $\sfS(\frakC^\uii)$ is a completely $\Uplambda$-admissible seed in $\Cg^0(\ttb)$.
\item \label{it: isoClsuter} $\scrA( \frakC^\uii ) \simeq  \obbA(\ttb)  \simeq   K(\Cg(\ttb))$. 
\ee
Namely, the category $\Cg(\ttb)$ provides a monoidal categorification of the cluster algebra $K(\Cg(\ttb))$ with the initial monoidal 
seed $\sfS(\frakC^\uii)$, which is isomorphic to $\obbA(\ttb)$. 
\end{theorem}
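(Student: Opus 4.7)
The plan is to apply Theorem~\ref{nthm: main KKOP2} (the monoidal categorification criterion) to bootstrap from $\Uplambda$-admissibility to complete $\Uplambda$-admissibility together with the monoidal categorification. For this, I need to verify two hypotheses: (a) $\sfS(\frakC^\uii)$ is $\Uplambda$-admissible in $\Cg(\ttb)$, and (b) $K(\Cg(\ttb))$ is isomorphic as an algebra-with-seed to $\scrA([\sfS(\frakC^\uii)])$. By Corollary~\ref{cor: same range}, I may replace $\frakC^\uii$ by the standard chain $\frakC^\uii_+$ throughout, since distinct admissible chains of the same range are connected by box moves and hence by the induced mutations (Lemma~\ref{lem: b is mu}).

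For hypothesis (a): Theorem~\ref{thm: admissible seed} already delivers $\Uplambda$-admissibility of $\sfS(\frakC^\uii_+)=(\{M^\uii\{1,s]\}_{s\in\sfJ},\tB^\uii;\sfJ,\sfJ_\ex)$ as a seed in the ambient category $\Cgz$. All initial cluster variable modules $M^\uii\{1,s]$ are by construction heads of tensor products of affine cuspidals $C^\uii_k$ with $k\in[1,r]$, so they lie in $\Cg(\ttb)$; and the mutated module $\sfM_k'$ is the unique real simple module appearing as a subquotient of $\sfM_k\tens \sfM'_k$, which again sits in $\Cg(\ttb)$ by the exact sequence \eqref{eq: Mk'}. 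The cited \cite[Proposition~7.19]{KKOP24A} then transfers $\Uplambda$-admissibility from $\Cgz$ down to the subcategory $\Cg(\ttb)$.

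For hypothesis (b): By Corollary~\ref{cor: K simeq oA}, $K(\Cg(\ttb))\simeq\obbA(\ttb)$, which by Lemma~\ref{lem:obbAcom} is a polynomial ring in $r=\ell(\ttb)$ indeterminates. Since the pair $(\La^\uii,\tB^\uii)$ is compatible (Proposition~\ref{prop: mutation and compatible}), the Laurent phenomenon embeds $\scrA([\sfS(\frakC^\uii_+)])$ into the Laurent polynomial ring over the algebraically independent initial cluster $\{[\sfM^\uii_k]\}$, which lies in the fraction field of $K(\Cg(\ttb))$. To identify the two subalgebras, I verify a double inclusion: each cluster variable of $\scrA$ is realized as the class of a real simple module in $\Cg(\ttb)$ through the chain of mutations corresponding to a sequence of box moves on $\frakC^\uii_+$ (Step~2 together with Lemma~\ref{lem: b is mu} for each step); conversely, Corollary~\ref{cor: K simeq oA}\,\eqref{it: poly} says $K(\Cg(\ttb))$ is generated as a polynomial ring by $\{[C^\uii_s]\}_{s=1}^r$, and each $C^\uii_s=M^\uii[s,s]$ appears as a cluster variable module of some admissible chain of range $[1,r]$ whose $s$-th member is $[s,s]$; by Lemma~\ref{lem: finite sequence are T-equi} such a chain is reachable from $\frakC^\uii_+$ by finitely many box moves, so $[C^\uii_s]$ lies in the image of $\scrA$.

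Once (a) and (b) are in place, Theorem~\ref{nthm: main KKOP2} delivers both conclusions \eqref{it: CLable} and \eqref{it: isoClsuter} simultaneously, and Step~1 propagates them to every admissible chain of the same range. The main obstacle is the second half of hypothesis~(b), namely showing that every $[C^\uii_s]$ arises as a cluster variable through iterated mutations of $\sfS(\frakC^\uii_+)$ for an \emph{arbitrary} sequence $\uii$; the earlier paper \cite{KKOP24A} treats only the locally reduced case, and the essential new input that makes the general case work is the full strength of Theorem~\ref{thm: every sequence good} (in particular the $T$-systems for arbitrary $\uii$) together with the weight-theoretic $\La$-computations of Corollary~\ref{cor: w-pairing}, which feed into the $\Uplambda$-admissibility of the seeds produced along the box-move path.
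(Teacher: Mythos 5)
Your plan is to apply Theorem~\ref{nthm: main KKOP2}, for which you need to verify (b): $K(\Cg(\ttb))\simeq\scrA([\sfS(\frakC^\uii_+)])$. This is where the argument breaks down. To show $\scrA\subseteq K(\Cg(\ttb))$ you assert that ``each cluster variable of $\scrA$ is realized as the class of a real simple module in $\Cg(\ttb)$ through the chain of mutations corresponding to a sequence of box moves on $\frakC^\uii_+$''. This is false: box moves produce only a very restricted subset of the mutation graph (the seeds $\sfS(\frakC)$ attached to admissible chains of the same range), whereas a generic cluster variable of $\scrA$ is obtained by an arbitrary mutation sequence. To know that such a mutation produces a simple module in $\Cg(\ttb)$ at all is precisely the content of complete $\Uplambda$-admissibility — the conclusion you are trying to reach. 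So hypothesis (b) has a circular dependency on the conclusion. (Your observation that ``the essential new input'' is the $T$-systems of Theorem~\ref{thm: every sequence good} plus Corollary~\ref{cor: w-pairing} does not remove this circularity: those ingredients only control the box-move mutations and the one-step $\Uplambda$-admissibility of Theorem~\ref{thm: admissible seed}, not arbitrary mutation sequences.) A smaller point: you invoke Corollary~\ref{cor: same range} at the outset to reduce to $\frakC^\uii_+$, but that corollary requires \emph{complete} $\Uplambda$-admissibility as a hypothesis, so it cannot be used before the main result is in hand.

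The paper avoids the circularity by a different strategy that you are missing entirely. Using the Garside structure (Lemma~\ref{lem: loc red braid} / Remark~\ref{rem: conlcusion loc}), it realizes $\uii$ as the prefix $\tuii_{[1,r]}$ of a sequence $\tuii\in\Seq(\Updelta^m)$, and extends further to $\tuii'\seteq\tuii*\hwci{[m\ell+1,\infty]}$, which is connected to the locally reduced infinite sequence $\hwci{[1,\infty]}$ by finitely many commutation and braid moves. Complete $\Uplambda$-admissibility for $\hwci{[1,\infty]}$ is known from the locally reduced case (Theorem~\ref{thm: main KKOP2}, Theorem~\ref{thm: admissible seed}), and is transported across the moves by Proposition~\ref{prop: mutation} (commutation moves give permutations of the seed, braid moves give a mutation composed with a permutation). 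Then Lemma~\ref{lem:restrseed} restricts the infinite seed to the index range $[1,r]$ while preserving complete $\Uplambda$-admissibility, and the transfer to the subcategory $\Cg(\ttb)$ is done by the cited Proposition~7.19 of \cite{KKOP24A}. Only after complete $\Uplambda$-admissibility is established does the paper deduce $\scrA\hookrightarrow K(\Cg(\ttb))$ (every cluster monomial is the class of a simple module in $\Cg(\ttb)$) and then surjectivity via the box-move argument you give. Your surjectivity step (that each $[C^\uii_s]$ is a cluster variable, via Lemma~\ref{lem: finite sequence are T-equi}) is fine, and your part (a) is fine, but without the extension--restriction argument the crucial inclusion $\scrA\subseteq K(\Cg(\ttb))$ is unproved.
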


\begin{proof}
Theorem~\ref{thm: main KKOP2} says that we have an isomorphism 
\begin{align} \label{eq: obbAge0 cl}
 \Upsilon_{\ge 0} :   \scrA(\tB(\frakC^{[1,\infty],\hwc}_+)) \isoto K(\Cg^{[1,\infty],\bbD_\can, \hwc})\isoto  \obbA_{\ge 0}    
\end{align}
as rings, such that $ \Upsilon_{\ge 0}(X_k)=\obPhi_{\bbD_\can}^{-1}([M^{\bbD_\can, \hwc} \{ 1,k]]) = {}^\circ \bfb  \{1,k]^{\hwc}$
for all $1\le k$. 
Hence the composition $\obPhi_\bbD \circ  \Upsilon_{\ge 0} \colon  \scrA(\tB(\frakC^{[1,\infty],\hwc}_+)) \isoto K(\Cg^{[1,\infty],\bbD, \hwc})$ is an isomorphism of rings sending $X_k$ to $[M^{\bbD, \hwc} \{ 1,k]]$ for all $1\le k$ by Theorem~\ref{thm: D-quantizable for M[a,b]}.  Hence   
by Theorem~\ref{thm: admissible seed} and  Theorem~\ref{thm: main KKOP2},  $\sfS^\bbD(\frakC^{[1,\infty],\hwc}_+)$ is a completely $\Uplambda$-admissible seed in $\Cgz$.

Let us take $\tuii \in \Seq(\Updelta^m)$ such that $\uii=\tuii_{[1,r]}$
as in Remark~\ref{rem: conlcusion loc} and set $\tuii' \seteq  \tuii * \hwci{[m\ell+1,\infty]} \in I^{\Z_{> 0}}$, where $*$ denotes a concatenation of sequences. 
Then $\tuii'$ can be obtained from $\hwci{[1,\infty]}$ by applying finite commutation moves and braid moves. Since the monoidal seed $\sfS^\bbD(\frakC^{[1,\infty],\hwc}_+)$
is a completely $\Uplambda$-admissible seed, $\sfS^\bbD(\frakC_+^{[1,\infty],\tuii'})$ is a completely $\Uplambda$-admissible seed
in $\Cgz$ by Proposition~\ref{prop: mutation}. By setting $\sfJ^*=[1,r]$, Lemma~\ref{lem:restrseed} says that $\sfS^\bbD(\frakC_+^{[1,r],\uii}) \seteq \sfS^\bbD(\frakC_+^{[1,\infty],\tuii'})|_{(\sfJ^* \times \sfJ^*_\ex)}$ is a completely $\Uplambda$-admissible seed in $\Cgz$. 
Hence $\sfS^\bbD(\frakC_+^{[1,r],\uii})$ is a completely $\Uplambda$-admissible
seed in $\Cg(\ttb)$ by \cite[Proposition 7.19]{KKOP24A}.  
Then Corollary~\ref{cor: same range} implies that,
for any admissible chain $\frakC^{[1,r],\uii}$ of $i$-boxes associated with $\uii$ and a range $[1,r]$, 
there exists an exchange matrix $\tB$ such that the monoidal seed
$\big( \sfM(\frakC^{[1,r],\uii}), \tB;  \sfJ(\frakC^{[1,r],\uii}) , \sfJ(\frakC^{[1,r],\uii})_\ex \big)$
is a completely $\Uplambda$-admissible seed in $\Cg(\ttb)$.

Since $\sfS^\bbD(\frakC^{[1,r],\uii})$ is a completely $\Uplambda$-admissible seed in $\Cg(\ttb)$, the image of  each cluster monomial
of $\scrA(\tB(\frakC^{[1,r],\uii}))$  under $\Upsilon_{\ge 0}$   is contained in $K(\CgD(\ttb)) \simeq \obbA(\ttb)$; i.e, we have 
$$
\scrA(\tB(\frakC^{[1,r],\uii}))\hookrightarrow K(\CgD(\ttb)) \simeq \obbA(\ttb).
$$
For any $s\in[1,r]$, after successive box moves, the moved $\frakC^{[1,r],\uii}$
contains $\{[s]\}$ by Lemma~\ref{lem: finite sequence are T-equi}. Hence, the image of  $\scrA(\frakC^{[1,r],\uii})$ contains
$[C_s^\uii]$. Since $K(\CgD(\ttb))$ is the polynomial ring
with the system of generators $\{ [C_s^\uii]\}_{s\in[1,r]}$,
 we have $\scrA(\tB(\frakC^{[1,r],\uii})) \to K(\CgD(\ttb))$ is surjective.  Thus our assertion is completed. 
\end{proof}

Recall that
there exists a unique normalized global basis element $\bfb[a,b]^\uii \in \tbfG$ such that $\Phi_{\bbD}(\bfb[a,b]^\uii)=[M^{\bbD,\uii}[a,b]]$ for \emph{any} complete duality datum $\bbD$ (Theorem~\ref{thm: D-quantizable for M[a,b]}).

\subsection{Exchange matrix associated with an admissible chain of $i$-boxes}
In this subsection, we shall give explicitly the exchange matrix of the seed associated with an admissible chain of $i$-boxes following \cite{KK24}.

\begin{definition} [{\cite[\S 3.2]{KK24}}] \label{def: mon_seed_ibox}
Let $\frakC=\frakC^{[a,b],\uii}$ be an admissible chain  of $i$-boxes associated with $\uii$ and range $[a,b]$.
\bnum
\item
  For an $i$-box $[x,y] = \frakc_k \in \frakC$,  there exists a unique $z\in \{x,y\}$
  such that $\{z\} = \tfrakc_k \setminus \tfrakc_{k-1} $.
  We call $z$ the \emph{effective end of $[x,y]$}. 

\item
Let
$\bbB(\frakC)=(b_{\frakc_p,\frakc_q})_{p \in \sfJ(\frakC), q\in \sfJ(\frakC)}$ 
be the  skew-symmetric matrix 
 whose positive entries
are given as follows:
\begin{align*}
& b_{[x,y],[x',y']}=   \\
& \hspace{3ex} \begin{cases} 
1  & \text{if ($x=x'$ and $y'=y_-$) or ($y=y'$ and $x'=x_-$),}  \\
1  
& \text{if  $(\al_{i_x},\al_{i_x'})=-1$ and one of the following conditions $(a)$--$(d)$ is satisfied:} \nonumber\\
\end{cases}
\end{align*}
\begin{enumerate}[(a)]
\item  $[x,y_+] \in \frakC $,  \ $x$ is the effective end of $[x,y]$,   \ $x'_- <x<x'$,  \ $y'< y_+<y'_+$,
\item $[x,y_+] \in \frakC $,   \  $y'$ is the effective end of $[x',y']$,   \ $x'_- <x$,  \ $y<y'< y_+<y'_+$,
\item $[x'_-,y'] \in \frakC $, \   $y'$ is the effective end of $[x',y']$,   \ $x_-<x'_- <x$,   \ $y<y'< y_+$,
\item $[x'_-,y'] \in \frakC$,  \   $x$ is the effective end of $[x,y]$,   \ $x_-<x'_- <x<x'$,  \ $y'< y_+$.
\end{enumerate}
We denote by $\tB(\frakC)\seteq \bbB(\frakC)\vert_{(\sfJ(\frakC) \times \sfJ(\frakC)_\ex)}$.
\item
  Let us define the monoidal seed associated with the admissible chain
$ \frakC^{[a,b],\uii} $ as follows:
\begin{equation*}
\sfS^\bbD(\frakC^{[a,b],\uii}) \seteq ( \sfM(\frakC), \tB(\frakC);  \sfJ(\frakC), \sfJ(\frakC)_\ex ),    
\end{equation*}

\item We denote by $\scrA(\frakC^{[a,b],\uii})$ the cluster algebra $\scrA(\sfS(\frakC)) \seteq \scrA([\sfS(\frakC)])$.  
\ee
\end{definition}

Since the following proposition can proved in a similar way as in \cite{KK24}
with the help of our results, we omit the proof.

\Th[{\cite[Theorem 5.20]{KK24}, see also \cite{CQW25}}] \label{thm: box and mutation}
Let $\frakC^{[a,b],\uii}$ be an admissible chain of $i$-boxes.
Then the monoidal seed $\sfS^\bbD(\frakC^{[a,b],\uii})$ is completely
$\Uplambda$-admissible.
\enth

\subsection{Quantum cluster algebra structure on $\hcalA(\ttb)$}
We fix a simply-laced simple Lie algebra $\sfg$ and the index set set $\sfI$ of
its simple roots.
Let $\bbD$ be a complete duality datum in $\Cgz$
such that $\sfg$ is the  simply-laced Lie algebra associated with $\g$.
For a $\bbD$-quantizable simple module 
$S$, we denote by
$\tch_{\bbD}(S)$
the normalized global basis element corresponding to
$S$ (see Definition~\ref{def: strong real}).
Hence we have $\tch_{\bbD}(S)=q^{-(\wt(S),\wt(S)/4}\ch_{\bbD}(S)$.

Let us take $\ttb\in\ttB^+$ and $\uii=(\im_1,\ldots,\im_r)\in\Seq(\ttb)$. 
Let $\frakC^{[1,r],\uii} = (\frakc_k)_{1 \le k \le r}$ be an admissible chain of $i$-boxes. 
Then $\bl L(\frakC^{[1,r],\uii}) ,\tB(\frakC^{[1,r],\uii})\br$ is compatible, where
$L(\frakC^{[1,r],\uii})=(L_{a,b})$ is the $[1,r] \times [1,r]$-matrix given by
$$L_{a,b}= \La( \sfM^\bbD(\frakc_a), \sfM^\bbD(\frakc_b)) \qquad \text{for $a,b \in [1,r]$.}$$
Note that $\bl L(\frakC^{[1,r],\uii}) ,\tB(\frakC^{[1,r],\uii})\br$ does not depend on the choice of $\bbD$, i.e.,
$\La( \sfM^\bbD(\frakc_a), \sfM^\bbD(\frakc_b))=
\La( \sfM^\Dcan(\frakc_a), \sfM^\Dcan(\frakc_b))$
(Proposition~\ref{prop:detLambda}).  
Here $\Dcan$ is a canonical complete duality datum (see \eqref{def:Dcan}).

Let $\scrS_t(\frakC^{[1,r],\uii})$ be the quantum seed 
$\bl \{ \tZ_j  \}_{j \in [1,r]}, L(\frakC^{[1,r],\uii}),
\tB(\frakC^{[1,r],\uii});\sfJ(\frakC^{[1,r],\uii}), \sfJ(\frakC^{[1,r],\uii})_\ex \br.$
Let us denote by $\scrA_t(\frakC^{[1,r],\uii})$ the quantum cluster algebra whose initial quantum seed is  $\scrS_t(\frakC^{[1,r],\uii})$.
Let $\scrS^\bbD(\frakC^{[1,r],\uii})=\bl\st{\sfM^\bbD(\frakc_j)}_{ j \in [1,r]},
 \tB(\frakC^{[1,r],\uii});\sfJ(\frakC^{[1,r],\uii}), \sfJ(\frakC^{[1,r],\uii})_\ex\br$ be a monoidal seed in
$\CgD(\ttb)$.

\medskip

Let $\bbT$ be the set of sequences $\sfs=(k_1,\ldots, k_m)$ ($m\in\Z_{\ge0})$
in the set $\sfJ(\frakC^{[1,r],\uii})_\ex$ of exchangeable indices.
We say that $m$ is the length of $\sfs$ and denote it by $\ell(\sfs)$.
For $\sfs=(k_1,\ldots, k_m)\in\bbT$ and an exchangeable index $k$, we set $\mu_k\sfs=(k,k_1,\ldots, k_m)$.

Let $\sfs_0\in\bbT$ be the empty sequence and
$\scrS_t^{\sfs_0} \seteq \scrS_t(\frakC^{[1,r],\uii})$.
Set
$$\scrS_t^{\sfs}\seteq \mu_{k_1}\cdots\mu_{k_m}(\scrS_t^{\sfs_0})
=( \{  \tZ_j^\sfs \}_{j \in [1,r]}, L^\sfs, \tB^\sfs )
\qt{for $\sfs\in \bbT$.} $$
It is a quantum seed in $\scrA_t(\frakC^{[1,r],\uii})$.
We have
$\mu_k\scrS_t^{\sfs}=\scrS_t^{\mu_k\sfs}$.
Let 
$\scrS^{\sfs}\seteq ( \{  Z_j^\sfs \}_{j \in [1,r]},  \tB^\sfs  )$
be its image in the cluster algebra $\scrA(\frakC^{[1,r],\uii})$ by
$\ev_{t=1}$.

Similarly, let 
$\scrS^{\bbD,\sfs_0} \seteq \scrS^\bbD(\frakC^{[1,r],\uii})$
and
$$\scrS^{\bbD,\sfs} \seteq \mu_{k_1}\cdots\mu_{k_m}\scrS^{\bbD,\,\sfs_0}
=\bl \st{M_j}_{j\in[1,r]}, \tB^\sfs\br$$
be the monoidal seeds in $\CgD(\ttb)$. Note that
the exchange matrix $\tB^\sfs$ is same in $\scrS_t^{\sfs}$ and $\scrS^{\bbD,\sfs}$.
For $\bsa=(a_j)_{j \in [1,r]}$, let
 $(\tZ^\sfs)^\bsa$ be the bar-invariant product of $(\tZ_j^{\sfs})^{a_j}$'s
 as in \eqref{eq: commutative monomial}. It is a cluster monomial in $\scrA_t(\frakC^{[1,r],\uii})$.
 Similarly let
$M^{\bbD,\sfs}(\bsa)\seteq\sotimes_{j\in[1,r]}(M_j^{\bbD,\sfs})^{\otimes\,a_j}\in\CgD(\ttb)$ be the
cluster monomial module.

\begin{theorem} \label{thm: q cluster structure}
There exists a unique $\Z$-algebra isomorphism 
$$
f_\bbD\col\scrA_t(\frakC^{[1,r],\uii})  \isoto \hcalA_\bfA(\ttb)  
$$
sending $t^{\pm1/2}$ to $q^{\mp1/2}$ and $\tZ_j$ to
$\tch_\bbD\bl M^\bbD(\frakc_j)\br$. 
Moreover, we have 
\bnum
\item $f_\bbD$ does not depend on $\bbD$,
\item
  any cluster monomial in $\scrA_t(\frakC^{[1,r],\uii})$
corresponds to a member of
the normalized global basis of $\hcalA_\bfA(\ttb)$.
More precisely, for any $\sfs\in\bbT$ and $\bsa\in\Z_{\ge0}^{[1,r]}$,
  the cluster monomial module $M^{\bbD,\sfs}(\bsa)$ is $\bbD$-quantizable and
$\tch_\bbD\bl M^{\bbD,\sfs}(\bsa)\br=f_\bbD\bl(\tZ^\sfs)^\bsa\br$.

\ee

\end{theorem}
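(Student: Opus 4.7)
The plan is to construct the homomorphism $f_\bbD$ by specifying it on the initial quantum cluster, to lift this to the quantum torus by verifying the necessary $q$-commutation, and then to propagate the correspondence along mutations using the $\Uplambda$-admissibility of monoidal seeds together with the monoidal categorification of $\scrC_\g^\bbD(\ttb)$. The map is prescribed on the initial quantum cluster variables by $\tZ_j\mapsto\tch_\bbD(M^\bbD(\frakc_j))=\bfb[\frakc_j]^\uii$, where both the $\bbD$-quantizability and the equality with an intrinsic element of $\tbfG$ are ensured by Theorem~\ref{thm: D-quantizable for M[a,b]}. Since the right-hand side is independent of $\bbD$, assertion (i) and the uniqueness statement follow from existence, the latter because $\scrA_t(\frakC^{[1,r],\uii})$ is generated as a $\Z[t^{\pm1/2}]$-algebra by the initial quantum cluster variables together with the inverses of the frozen ones.

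To extend this assignment to the quantum torus $\sfT(L(\frakC^{[1,r],\uii}))$, I must verify
\[
\bfb[\frakc_i]^\uii\,\bfb[\frakc_j]^\uii\;=\;q^{-L_{ij}}\,\bfb[\frakc_j]^\uii\,\bfb[\frakc_i]^\uii,\qquad L_{ij}=\La\bl M^\bbD(\frakc_i),M^\bbD(\frakc_j)\br.
\]
Because $\hcalA$ depends only on $\sfg$ while the modules $M^\bbD(\frakc_j)$ live in $\scrC_\g^0$, I would pass to an auxiliary untwisted simply-laced affine Lie algebra with the same underlying $\sfg$ and take $\bbD=\bbD_\calQ$; Theorem~\ref{thm: auto of calK} then furnishes the isomorphism $\Psi_\DQ\col\hcalA_\bfA\isoto\calK_{\g;t}$ under which $\bfb[\frakc_j]^\uii$ is sent to the $(q,t)$-character $[M^{\bbD_\calQ}(\frakc_j)]_t$, and in this simply-laced setting every simple module in play is quantizable. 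The cluster modules $M^{\bbD_\calQ}(\frakc_i)$ and $M^{\bbD_\calQ}(\frakc_j)$ commute, so Lemma~\ref{lem: L up to} combined with Proposition~\ref{prop:detLambda} identifies the commutation exponent with $L_{ij}$; invoking $\Psi_\DQ(t^{\pm1/2})=q^{\mp1/2}$ yields the displayed identity. This gives a well-defined $\Z$-algebra map from $\sfT(L(\frakC^{[1,r],\uii}))$ into the skew-field of fractions of $\hcalA_\bfA(\ttb)$, whose image on the initial generators already lies inside $\hcalA_\bfA(\ttb)$.

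I would prove assertion (ii) by induction on $\ell(\sfs)$. For $\sfs=\sfs_0$ and $\bsa\in\Z_{\ge0}^{[1,r]}$, the monoidal cluster $\st{M^\bbD(\frakc_j)}_{j\in[1,r]}$ is commuting and each member is real, so $M^{\bbD,\sfs_0}(\bsa)$ is a real simple module; bar-invariance of normalized global basis elements together with Corollary~\ref{cor:gbpositive} and Proposition~\ref{prop:detLambda} identifies $\tch_\bbD\bl M^{\bbD,\sfs_0}(\bsa)\br$ with $f_\bbD\bl(\tZ^{\sfs_0})^\bsa\br$. For the inductive step, assume the claim at $\sfs$ and consider $\mu_k\sfs$ for $k\in\sfJ(\frakC^{[1,r],\uii})_\ex$. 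By Theorem~\ref{thm: box and mutation} together with the analogue of \cite[Proposition 7.19]{KKOP24A}, the monoidal seed $\scrS^{\bbD,\sfs}$ is $\Uplambda$-admissible in $\scrC_\g^\bbD(\ttb)$, so \eqref{eq: Mk'} yields a real simple cluster-variable module $M_k'$ with $\de(M_k^{\bbD,\sfs},M_k')=1$. Taking normalized characters of the exact sequence and using bar-invariance combined with Corollary~\ref{cor:gbpositive}, the resulting identity must coincide (up to a power of $q^{1/2}$ which is pinned down by bar-invariance) with the quantum exchange relation defining $\tZ_k^{\mu_k\sfs}$, giving $\tch_\bbD(M_k')=f_\bbD(\tZ_k^{\mu_k\sfs})$; the case of general $\bsa$ follows because the new monoidal cluster is again commuting.

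Once (ii) is in place, $f_\bbD$ is a homomorphism $\scrA_t(\frakC^{[1,r],\uii})\to\hcalA_\bfA(\ttb)$ since every cluster variable maps into $\hcalA_\bfA(\ttb)$. Surjectivity follows because Lemma~\ref{lem: finite sequence are T-equi} shows that every singleton $i$-box $[k,k]$ appears in some admissible chain with range $[1,r]$, hence every affine cuspidal character $\tch_\bbD(C^{\bbD,\uii}_k)=\FF^\uii_k$ lies in the image; Remark~\ref{rmk: comp PBW hwc}\eqref{it: PBW generators} then shows the image generates $\hcalA_\bfA(\ttb)$. For injectivity, specializing $q^{1/2}\to 1$ turns $f_\bbD$ into the classical isomorphism $\scrA(\frakC^{[1,r],\uii})\isoto\obbA(\ttb)$ from Theorem~\ref{thm: mCat of cluster}; since $\scrA_t(\frakC^{[1,r],\uii})$ is free over $\Z[t^{\pm1/2}]$ (via the quantum Laurent phenomenon) and $\hcalA_\bfA(\ttb)$ is free over $\Z[q^{\pm1/2}]$ by Theorem~\ref{thm: hatA(b)} and Theorem~\ref{thm: P and G for ttb}\eqref{it: P and G 2nd}, freeness upgrades the bijective classical limit to a bijection. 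The main obstacle will be the inductive step of (ii): promoting the classical exchange extracted from \eqref{eq: Mk'} to an equality of specific normalized global basis elements in $\hcalA_\bfA(\ttb)$ requires eliminating all $q^{1/2}$-ambiguity, and I plan to do this by combining bar-invariance on both sides with the nonnegativity of the structure constants in Corollary~\ref{cor:gbpositive}.
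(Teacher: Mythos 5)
Your plan tracks the paper closely in its large-scale architecture (build $\Theta$ on the quantum torus via Lemma~\ref{lem: L up to} and Proposition~\ref{prop:detLambda}, propagate by induction along mutations, and then verify surjectivity from the cuspidal generators), but the crucial inductive step of (ii) has a circularity that the paper is careful to avoid. You write ``taking normalized characters of the exact sequence'' arising from the mutation of $\scrS^{\bbD,\sfs}$. For a general complete duality datum $\bbD$, the module $M_k'$ produced by~\eqref{eq: Mk'} is exactly the object whose $\bbD$-quantizability you are trying to establish; one cannot take $\tch_\bbD(M_k')$ until one knows such an element of $q^{\Z/2}\bfG$ exists. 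Your proposed remedy---bar-invariance together with the nonnegativity in Corollary~\ref{cor:gbpositive}---does not break this circle, because both are statements about elements of $\tbfG$ that presuppose the character to be written down. The paper escapes this by first specializing to the canonical datum $\Dcan=\bbD_{\calQ}$ for a simply-laced untwisted affine algebra with the same $\sfg$: there every simple in $\scrC^0_{\sfg^{(1)}}$ is quantizable, so Corollary~\ref{cor:gbpositive} and the projection to $K(\Csgz)$ force the $q$-powers, bar-invariance together with linear independence of $\tchDcan(M_k^{\Dcan,\sfs})^{-1}\tchDcan(A^\Dcan)$ and $\tchDcan(M_k^{\Dcan,\sfs})^{-1}\tchDcan(B^\Dcan)$ over $\bfA$ in the skew-field then pins $\Theta(\tZ_k^{\mu_k\sfs})=\tchDcan(M_k^{\Dcan,\mu_k\sfs})$ as elements of $\hcalA_\bfA(\ttb)$, and only afterwards does one transfer back to arbitrary $\bbD$ via the classical identity $[M_k^{\bbD,\mu_k\sfs}]=Z_k^{\mu_k\sfs}$ in $K(\scrC_\g^\bbD(\ttb))\simeq\obbA(\ttb)$. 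Your opening remark about passing to an auxiliary simply-laced $\bbD_\calQ$ to verify $q$-commutation is correct, but you must do the same pass-through $\Dcan$ inside the mutation induction, and you must also articulate the linear-independence-in-the-skew-field step; bar-invariance alone does not pin down the two scalar exponents because the two bar-invariant expressions are not a priori equal.

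On the injectivity argument: the paper proves injectivity of $\Theta$ directly on $\sfT(L(\frakC^{[1,r],\uii}))$ by linear independence of $\Theta(\tZ^\bsa)$ over $\bfA$, whereas you propose to deduce injectivity of $f_\bbD$ from surjectivity plus the classical isomorphism of Theorem~\ref{thm: mCat of cluster} plus freeness. Your route can be made to work, but the one-line phrase ``freeness upgrades the bijective classical limit to a bijection'' hides a nontrivial argument: you need that $\hcalA_\bfA(\ttb)$ is $\Zq$-torsion-free to show $\Ker f_\bbD=(q^{1/2}-1)\Ker f_\bbD$, and then that a nonzero element of a free $\Zq$-module cannot be infinitely $(q^{1/2}-1)$-divisible. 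The paper's direct linear independence check is considerably shorter, and you would also want to note that the linear independence of the $\Theta$-images of the initial commutative monomials is exactly what makes $\Theta$ injective on the torus in the first place. Finally, your surjectivity argument is fine, although the identification $\tch_\bbD(C^{\bbD,\uii}_k)=\FF^\uii_k$ should be stated as an identity up to the fixed normalization $q^{-(\wt,\wt)/4}$, which happens to collapse to $1$ for cuspidal modules because $\FF^\uii_k$ has weight of square length $2$.
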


\begin{proof}
Note that $\ch_\bbD(M^\bbD(\frakc_j))$ does not depend on $\bbD$, i.e., $\ch_\bbD(M^\bbD(\frakc_j))=\ch_\Dcan(M^\Dcan(\frakc_j))$.
Hence if $f_\bbD$ exists, then it does not depend on $\bbD$.
  
Let $\sfT(L(\frakC^{[1,r],\uii}))$ be the quantum torus associated with the matrix $L(\frakC^{[1,r],\uii})$. Note that 
$\hcalA_\bfA(\ttb)$ is a Noetherian domain by Proposition~\ref{prop: Noetherian} and hence it is an Ore domain 
(\cite[(10.23)]{Lam12}). Let $\bbF(\hcalA_\bfA(\ttb))$ be the skew-field of the fractions of $\hcalA_\bfA(\ttb)$. Then $\overline{ \phantom{a}}$ on $\hcalA_\bfA(\ttb)$ is extended to $\bbF(\hcalA_\bfA(\ttb))$. 

By Lemma~\ref{lem: L up to}, we have
$$
q^{L_{a,b}}
\tch_\bbD(M^\bbD(\frakc_a))\tch_\bbD(M^\bbD(\frakc_b)) 
= \tch_\bbD(M^\bbD(\frakc_b))\tch_\bbD(M^\bbD(\frakc_a)).
$$
Hence there is a $\Z$-algebra homomorphism 
$$
\Theta: \mathsf{T}(L(\frakC^{[1,r],\uii})) \to  \bbF(\hcalA_\bfA(\ttb))
$$
sending 
$$
t^{1/2} \longmapsto q^{-1/2} \qtq 
\tZ_j \longmapsto \tch_\bbD\bl M^\bbD(\frakc_j) \br\quad (j \in [1,r]). 
$$
Note that $\Theta$ does not depend on the choice of $\bbD$.

Let $\overline{\phantom{a}}: \sfT(L(\frakC^{[1,r],\uii})) \to \sfT(L(\frakC^{[1,r],\uii}))$ be the $\Z$-algebra anti-automorphism such that $\overline{t^{1/2}}=t^{-1/2}$ and 
$\overline{\tZ_j} = \tZ_j$ for $j \in [1,r]$. Then we have
$$
\Theta \circ \overline{\phantom{a}}
= \overline{\phantom{a}} \circ \Theta. 
$$

First, we claim that $\Theta$ is injective. Indeed,  
$$
\left\{ \Theta(\tZ^\bsa) = \tch_\bbD\bl \stens_{j} \sfM^\bbD(\frakc_j)^{\tens a_j} \br \ | \ \bsa \in \Z_{\ge 0}^{[1,r]} \right\}
$$
is linearly independent over $\bfA$. It follows that
$$
\left\{ \Theta(\tZ^\bsa) \ | \ \bsa \in \Z^{[1,r]} \right\}
\subset  \bbF(\hcalA_\bfA(\ttb))$$
is  linearly independent over $\bfA$. Since
$\left\{ \tZ^\bsa \ | \ \bsa \in \Z^{[1,r]} \right\}$
is a $\bfA$-basis of $\sfT(L(\frakC^{[1,r],\uii}))$, $\Theta$
is injective. 
 
Now, let us show
\eq&&
\parbox{73ex}{for any $\sfs\in\bbT$ and any $\bsa\in\Z_{\ge0}^{[1,r]}$,
  the cluster monomial module $M^{\bbD,\,\sfs}(\bsa)$ is $\bbD$-quantizable and
  $$\tch_\bbD\bl M^{\bbD,\,\sfs}(\bsa)\br=\tch_\Dcan\bl M^{\Dcan,\,\sfs}(\bsa)\br=\Theta(\tZ^\sfs)^\bsa \qquad \qquad\qquad$$
  }\label{eq:mainquant}
  \eneq

  by induction on $\ell(\sfs)$.

  Assuming \eqref{eq:mainquant} for $\sfs$, let us show
  \eqref{eq:mainquant} for $\mu_k\sfs$.
  
The mutation $\tZ_k^{\mu_k \sfs}$ of $\tZ_k^\sfs$
satisfies 
\begin{align} \label{eq: tX com}
\tZ_k^\sfs \;\tZ_k^{\mu_k \sfs} = t^a (\tZ^\sfs)^{\bsb'}
+t^b (\tZ^\sfs)^{\bsb''}     
\end{align}
for some $a,b \in \Z/2$ and $\bsb',\bsb''\in\Z_{\ge0}^{[1,r]}$.

On the other hand, let
$$
0 \to A^\bbD \to M_k^{\bbD,\,\sfs} \tens M_k^{\bbD,\;\mu_k \sfs } \to B^\bbD \to 0
$$
be the short exact sequence in $\CgD(\ttb)$ which yields the exchange relation between the cluster variables $Z_k^\sfs$ and $Z_k^{\mu_k\sfs}$.
Hence we have
\eqn
\ev_{q=1}\tch_\bbD( A^\bbD)&&=\ev_{q=1}\Theta((\tZ^\sfs)^{\bsb'})\br,\\
\ev_{q=1}\tch_\bbD( B^\bbD)&&=\ev_{q=1}\Theta\bl(\tZ^\sfs)^{\bsb''}\br.\eneqn
Since they are normalized global basis members by the induction hypothesis on $\ell(\sfs)$, we conclude that
\eqn
\tch_\bbD( A^\bbD)&&=\Theta((\tZ^\sfs)^{\bsb'})\br\qtq
\tch_\bbD( B^\bbD)=\Theta\bl(\tZ^\sfs)^{\bsb''}\br.\eneqn

Let us apply it to $\Dcan$. Since any simple module in $\scrC_{\sfg^{(1)}}^0$  is $\Dcan$-quantizable, we obtain
the equality
in $\hcalA_\bfA(\ttb)$:
$$
\tchDcan(M_k^{\Dcan,\sfs})\tchDcan(M_k^{\Dcan,\;\mu_k\sfs}) = \sum_{S}
a_S(q)\tchDcan(S)
$$
with $a_S(q) \in \Z_{\ge 0}[q^{\pm 1/2}]$.
Here, $S$ ranges over the set of the isomorphism classes
of simple modules in $\scrC^0_{\sfg^{(1)}}$
(see Corollary~\ref{cor:gbpositive}). 
Since the application of $\obPhi_\Dcan$ should yield 
$$
[M_k^{\Dcan,\sfs}]\;[M_k^{\Dcan,\;\mu_k\sfs}] = [A^\Dcan]+[B^\Dcan],
$$
we can conclude that 
$$
\tchDcan(M_k^{\Dcan,\;\sfs})\tchDcan(M_k^{\Dcan,\;\mu_k\sfs})  =
q^c \tchDcan(A^\Dcan)+ q^d \tchDcan(B^\Dcan) \quad \text{for some $c,d \in \Z/2$.}
$$
Thus, by applying $\Theta$ to~\eqref{eq: tX com}, we obtain
$$
\tchDcan(M_k^{\Dcan,\;\sfs}) \Theta(M_k^{ \Dcan,\;\mu_k\sfs}) = 
q^{-a}\tchDcan(A^\Dcan)+ q^{-b}\tchDcan(B^\Dcan). 
$$

In the skew-filed $\bbF(\hcalA_\bfA(\ttb))$, the two elements
$$
 \tchDcan(M_k^{\Dcan,\;\mu_k\sfs}) = 
q^{c} \tchDcan(M_k^{\Dcan,\;\sfs})^{-1} \tchDcan(A^\Dcan)+
q^{d} \tchDcan(M_k^{\Dcan,\,\sfs})^{-1} \tchDcan(B^\Dcan).
$$
and 
$$
\Theta(\tZ_k^{\mu_k \sfs}) = q^{-a}\tchDcan(M_k^{\Dcan,\;\sfs})^{-1}\tchDcan(A^\Dcan)+
 q^{-b} \tchDcan(M_k^{\Dcan,\,\sfs})^{-1} \tchDcan(B^\Dcan)
$$
are both $\overline{\phantom{a}}$-invariant. 
Because $\tchDcan(M_k^{\Dcan,\;\sfs})^{-1}\; \tchDcan(A^\Dcan)$
and $\tchDcan(M_k^{\Dcan,\,\sfs})^{-1}\tchDcan(B^\Dcan)$ are linearly independent over $\bfA$ in $\bbF(\hcalA_\bfA(\ttb))$, we can conclude that $c=-a$ and $d=-b$. Hence
$$
\Theta(\tZ_k^{\mu_k \sfs})  = \tchDcan(M_k^{\Dcan,\;\mu_k\sfs}). 
$$

Now, we have
\eqn
[M^{\bbD,\, \sfs}_k]\cdot[M_k^{\bbD,\;\mu_k\sfs}]=[A^\bbD]+[B^\bbD]
=[A^\Dcan]+[B^\Dcan]=Z_k^\sfs \;Z^{\mu_k\sfs}_k.
\eneqn
Here we identify $\scrA( \frakC^\uii )$ and $\obbA(\ttb)$ with $K(\Cg^{\bbD}(\ttb))$ via the isomorphisms in Theorem \ref{thm: mCat of cluster}~\eqref{it: isoClsuter}.
Since $[M^{\bbD,\,\sfs}_k]=Z_k^\sfs$, we obtain
$[M_k^{\bbD,\;\mu_k\sfs}]=Z^{\mu_k\sfs}_k=\ev_{q=1}\bl\Theta(\tZ_k^{\mu_k\sfs})\br$.
Since $Z^{\mu_k\sfs}_k$ belongs to $\ev_{q=1}(\tbfG)$,
we conclude that
$M^{\bbD,\;\mu_k\sfs}_k$ is $\bbD$-quantizable and $\tch_\bbD(M_k^{\bbD,\;\mu_k\sfs})
=\tZ_k^{\mu\sfs}$.
Thus the induction proceeds and we obtain \eqref{eq:mainquant}.

\medskip
The assertion \eqref{eq:mainquant} implies that
the image of $\scrA_t(\frakC^{[1,r],\uii})$ by $\Theta$ is contained in
$\hcalA_\bfA(\ttb)$ and hence $\Theta$
induces  an injective $\bfA$-algebra homomorphism
$$f\col\scrA_t(\frakC^{[1,r],\uii})  \monoto \hcalA_\bfA(\ttb).$$
Since $\sfM^\bbD(\frakc)$ is a cluster monomial module of
$\CgD(\ttb)$ for any $i$-box $\frakc$, the image of $f$ contains
$\tch_\bbD( \sfM^\bbD(\frakc))$.
Since  $\hcalA_\bfA(\ttb)$ is generated by the $\tch_\bbD( \sfM^\bbD(\frakc))$'s
as a $\Z[q^{\pm1/2}]$-algebra,
we conclude that $f$ is surjective.
\end{proof}

\Cor\label{cor:D-q}
  Let $\bbD$ be a complete duality datum.
Any cluster monomial module in $\CgD(\ttb)$
is $\bbD$-quantizable. 
\encor

For the rest of this section, we consider $U_q'(\g)$ of untwisted affine type.

\begin{definition}
We say that monoidal seed $\sfS= (\{ \sfM_i\}_{i\in \sfJ },\tB; \sfJ,\sfJ_\ex)$ is \emph{quantizable} if 
each $\sfM_k$ is quantizable.
\end{definition}

For a quantizable monoidal seed $\sfS=(\{ \sfM_i\}_{i\in \sfJ },\tB; \sfJ,\sfJ_\ex)$, Lemma~\ref{lem: L up to} says that 
\begin{align} \label{eq: Lam ij}
\sfM_{i;t}\sfM_{j;t} =t^{\La(\sfM_i,\sfM_j)} \sfM_{j;t}\sfM_{i;t} \ \ \text{ in } \calK_{\g;t},  \quad \text{where $\sfM_{k;t} \seteq [\sfM_k]_t$ for $k \in \sfJ$.}    
\end{align}

Lemma~\ref{lem: chi quantizable} and Corollary~\ref{cor:D-q} imply the following corollary:

\begin{corollary} \label{cor: q-seed} For any $\rmQ$-datum $\calQ$ of $\g$,
every monoidal seed in $\scrC^{\DQ}_\g(\ttb)$, obtained from $\sfS^{\DQ}(\frakC^{\uii})$,
is a completely $\Uplambda$-admissible and quantizable monoidal seed. 
\end{corollary}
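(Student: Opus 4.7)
The claim has two parts to establish for every monoidal seed $\sfS'=(\{\sfM'_i\}_{i\in\sfJ},\tB';\sfJ,\sfJ_\ex)$ obtained from the initial seed $\sfS^{\DQ}(\frakC^\uii)$ by a finite sequence of mutations: first, that $\sfS'$ is completely $\Uplambda$-admissible; second, that each cluster variable module $\sfM'_i$ is quantizable in the sense of Definition~\ref{def: quantizable}. The plan is to reduce each statement to one of the preceding structural results.

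For the complete $\Uplambda$-admissibility, I would simply invoke Theorem~\ref{thm: box and mutation}, which, applied to the complete duality datum $\bbD=\DQ$ and the admissible chain $\frakC^\uii$, states that the initial monoidal seed $\sfS^{\DQ}(\frakC^\uii)$ is already completely $\Uplambda$-admissible. Unwinding the definition of completely $\Uplambda$-admissible, this means that every iterated $\Uplambda$-mutation in any admissible direction is again a $\Uplambda$-admissible monoidal seed; in particular every $\sfS'$ obtained from $\sfS^{\DQ}(\frakC^\uii)$ by mutations is itself completely $\Uplambda$-admissible. No extra argument is needed here.

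For quantizability, I would argue as follows. By construction, every cluster variable module $\sfM'_i$ in $\sfS'$ is a cluster variable module in $\scrC^{\DQ}_\g(\ttb)$, and hence in particular a cluster monomial module. Corollary~\ref{cor:D-q} (the corollary to Theorem~\ref{thm: q cluster structure}) then asserts that every cluster monomial module in $\scrC^\bbD_\g(\ttb)$ is $\bbD$-quantizable for an arbitrary complete duality datum $\bbD$; specializing to $\bbD=\DQ$, we conclude that each $\sfM'_i$ is $\DQ$-quantizable. Finally, Lemma~\ref{lem: chi quantizable} identifies $\bbD_\calQ$-quantizability with quantizability in the sense of Definition~\ref{def: quantizable}, so each $\sfM'_i$ is quantizable. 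Combining, $\sfS'$ is a quantizable completely $\Uplambda$-admissible monoidal seed.

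There is no real obstacle here: the statement is essentially an assembly of already-proved results, with the two nontrivial inputs being Theorem~\ref{thm: box and mutation} (for admissibility) and the combination of Theorem~\ref{thm: q cluster structure}/Corollary~\ref{cor:D-q} with Lemma~\ref{lem: chi quantizable} (for quantizability). The only point worth being careful about is that Definition~\ref{def: quantizable} and Definition~\ref{def: strong real} are \emph{a priori} different notions, but Lemma~\ref{lem: chi quantizable} precisely bridges them for the canonical duality datum $\bbD_\calQ$, and this is exactly where the assumption that $\calQ$ is a $\rmQ$-datum (so $\bbD=\bbD_\calQ$) is used.
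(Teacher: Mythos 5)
Your proposal is correct and follows exactly the paper's intended route: the paper itself states the corollary as an immediate consequence of Lemma~\ref{lem: chi quantizable} and Corollary~\ref{cor:D-q}, with the complete $\Uplambda$-admissibility coming from Theorem~\ref{thm: mCat of cluster} (or Theorem~\ref{thm: box and mutation}) and the quantizability from the chain Corollary~\ref{cor:D-q} $\Rightarrow$ $\bbD_\calQ$-quantizable $\Rightarrow$ quantizable via Lemma~\ref{lem: chi quantizable}. Your write-up simply makes explicit the routine unwinding of the definition of ``completely $\Uplambda$-admissible'' and the bridge between the two notions of quantizability that the paper leaves implicit.
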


\begin{definition} \label{def: mc for q clsuter}
Let $\scrC$ be a monoidal subcategory of $\scrC_\g^0$.
\bnum
\item $\calK_t(\scrC)$ denotes the subalgebra of $\calK_{\g;t}$
generated by $[L]_t$ for all simple modules $L$ in $\scrC$.
\item $\scrC$ is called a \emph{monoidal categorification} of a \emph{quantum} cluster algebra $\scrA_t$ if
\bna
\item the ring $\calK_t(\scrC)$ is isomorphic to $\scrA_t$,
\item there exists a completely $\Uplambda$-admissible
  and quantizable monoidal seed \\
$\Mseed$ in $\scrC$ such that
$[\sfS]_t \seteq (\{ \sfM_{i;t} \}_{i \in \sfK}, \La^{\sfS}, \tB)$
is  an initial  quantum seed of $\scrA_t$.
\ee
\ee
\end{definition}

\begin{theorem} \label{Thm: MC for Kt}
For $\bbD = \DQ$ of untwisted affine type, the category $\CgD(\ttb)$ provides a monoidal categorification of the quantum cluster algebra $\calK_t(\CgD(\ttb)) \simeq \scrA_t(\frakC^{[1,r],\uii})$.     
\end{theorem}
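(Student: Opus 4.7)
The plan is to transport the quantum cluster algebra structure on $\hcalA_\bfA(\ttb)$, provided by Theorem~\ref{thm: q cluster structure}, across the isomorphism $\Psi_\DQ\colon\hcalA_\bfA\isoto\calK_{\g;t}$ from Theorem~\ref{thm: auto of calK}. Concretely, I will produce a chain of $\bfA$-algebra isomorphisms
$$
\scrA_t(\frakC^{[1,r],\uii})\xrightarrow[\;f_\bbD\;]{\sim}\hcalA_\bfA(\ttb)\xrightarrow[\;\Psi_\DQ\;]{\sim}\calK_t(\CgD(\ttb)),
$$
under which each initial quantum cluster variable $\tZ_j$ is sent to $[M^\DQ(\frakc_j)]_t$, using that every cluster monomial module is $\DQ$-quantizable by Corollary~\ref{cor:D-q} together with Lemma~\ref{lem: chi quantizable}, so that $\Psi_\DQ(\tch_\DQ(M^\DQ(\frakc_j)))=[M^\DQ(\frakc_j)]_t$.

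\medskip

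The crucial intermediate step I need to establish is the identification $\Psi_\DQ(\hcalA_\bfA(\ttb))=\calK_t(\CgD(\ttb))$ inside $\calK_{\g;t}$. For the inclusion $\subseteq$, I will use that $\hcalA_\bfA(\ttb)\simeq\scrA_t(\frakC^{[1,r],\uii})$ is generated as an $\bfA$-algebra by cluster variables, which are mapped under $\Psi_\DQ\circ f_\bbD$ to $(q,t)$-characters of cluster variable modules in $\CgD(\ttb)$, hence into $\calK_t(\CgD(\ttb))$ by definition. For the reverse inclusion, I will exploit the $\bfA$-basis $\tbfG(\ttb)$ of $\hcalA_\bfA(\ttb)$ from Theorem~\ref{thm: P and G for ttb}\eqref{it: P and G 2nd}: under $\Psi_\DQ$ it is carried bijectively into $\bfL_t$, and the compatibility $\bPhi_\DQ=\ev_{t=1}\circ\Psi_\DQ$ together with Corollary~\ref{cor: K simeq oA} will identify its image with exactly those $L_t(m)$ whose classical specialization belongs to $K(\CgD(\ttb))$. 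The main technical hurdle lies precisely here for the types $C_n^{(1)}, F_4^{(1)}, G_2^{(1)}$, where global quantizability of simples remains conjectural (Remark~\ref{rmk: chi quantizable}); in those types I will deduce quantizability of every simple of $\CgD(\ttb)$ a posteriori by induction, leveraging Corollary~\ref{cor:D-q}, the unitriangular expansion of standards in simples from Theorem~\ref{thm: unitri beta}\eqref{it: unitri 2}, and the positivity-unitriangularity relation~\eqref{eq:LtL} for $\ev_{t=1}(L_t(m))$.

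\medskip

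With the identification in hand, matching the initial quantum seed with $\sfS^\DQ(\frakC^{[1,r],\uii})$ becomes formal: the $\Lambda$-matrix of the monoidal seed coincides with $L(\frakC^{[1,r],\uii})$ by Lemma~\ref{lem: L up to} applied to commuting cluster variable modules together with Proposition~\ref{prop:detLambda}, and the exchange matrix equals $\tB(\frakC^{[1,r],\uii})$ by construction. To conclude via Definition~\ref{def: mc for q clsuter}, I will invoke Theorem~\ref{thm: box and mutation} for complete $\Uplambda$-admissibility of $\sfS^\DQ(\frakC^{[1,r],\uii})$ and Corollary~\ref{cor: q-seed} for its quantizability, so that $[\sfS^\DQ(\frakC^{[1,r],\uii})]_t$ serves as the initial quantum seed of $\scrA_t(\frakC^{[1,r],\uii})$, verifying both conditions of Definition~\ref{def: mc for q clsuter} and completing the proof.
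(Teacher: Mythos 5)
Your overall plan — transporting the quantum cluster algebra structure of Theorem~\ref{thm: q cluster structure} across the isomorphism $\Psi_\DQ$ from Theorem~\ref{thm: auto of calK}, then matching the initial quantum seed using Corollary~\ref{cor: q-seed} and Theorem~\ref{thm: box and mutation} — is exactly the route the paper takes, and the ``$\subseteq$'' inclusion and the seed-matching are handled as in the paper. The genuine difference is in the middle: the paper's proof is much terser, simply asserting that $\hcalA_\Zq(\ttb)$ and $\calK_t(\CgD(\ttb))$ are generated respectively by the $\FF_s^\uii$ and the $[C^\uii_s]_t$ (using quantizability of cuspidal modules) and then concluding that $\Psi_\DQ$ restricts to the desired isomorphism, with no separate argument for why $[L]_t$ lies in the subalgebra generated by the $[C^\uii_s]_t$ for \emph{every} simple $L$ in $\CgD(\ttb)$. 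You are right to isolate that ``$\supseteq$'' inclusion as the delicate point, and your identification of $\Psi_\DQ(\tbfG(\ttb))$ via $\bPhi_\DQ=\ev_{t=1}\circ\Psi_\DQ$ and Corollary~\ref{cor: K simeq oA} is sound. Where I would push back is on your proposed detour in types $C_n^{(1)},F_4^{(1)},G_2^{(1)}$: the claim that quantizability of every simple of $\CgD(\ttb)$ follows by ``induction'' from \eqref{eq: uni map2} and \eqref{eq:LtL} does not close as stated, because those two unitriangular expansions are governed by a priori incomparable orders (the bi-lexicographic order $\prec$ on $\Z_{\ge 0}^{[1,r]}$ versus the Nakajima order $\preceq$ on dominant monomials), so one cannot simply match leading terms. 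The paper does not prove or invoke such a quantizability statement at all; a more robust way to close the reverse inclusion, without touching Conjecture~\ref{conj: positivity conjecture}, is a basis-comparison argument: $\tbfG(\ttb)$ and $\{V^\uii(\bsa)\}_{\bsa\in\Z_{\ge0}^{[1,r]}}$ are both parametrized by the same index set (Theorem~\ref{thm: P and G for ttb}~\eqref{it: P and G 3rd} and Theorem~\ref{thm: unitri beta}~\eqref{it: unitri 3}), and each $\tilde{\bfb}\in\tbfG(\ttb)$ maps under $\Psi_\DQ$ to some $L_t(m)$ with $L(m)\in\CgD(\ttb)$ (by triangularity of \eqref{eq:LtL} applied to $\ev_{t=1}\Psi_\DQ(\tilde{\bfb})\in K(\CgD(\ttb))$), so the resulting injection $\tbfG(\ttb)\hookrightarrow\{\text{simples in }\CgD(\ttb)\}$ is forced to be a bijection once the weight-space finiteness is in place.
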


\begin{proof}
It is enough to prove that $\calK_t(\scrC_\g^{\DQ}(\ttb)) \simeq \hcalA_\Zq(\ttb)$ by Corollary~\ref{cor:D-q} and Theorem~\ref{thm: q cluster structure}.
Theorem~\ref{thm: auto of calK} tells that there is an algebra isomorphism
$$
\Psi_\DQ : \hcalA_\Zq \isoto \calK_{\g;t}.
$$
Since every cuspidal module $C^\uii_s$ is quantizable, we have
$$
\Psi_\DQ(\ttP_s^\uii) = [C^\uii_s]_t \quad \text{ for any }s \in [1,r].
 $$
As $\calK_t(\scrC_\g^{\DQ}(\ttb))$ (resp. $\hcalA_\Zq(\ttb)$) is generated by $\ttP_s^\uii$ (resp. $[C^\uii_s]_t$) for $t \in [1,r]$, the restriction
$\Psi_\DQ$ to  $\hcalA_\Zq(\ttb)$ gives an isomorphism 
\[
\Psi_\DQ |_{\hcalA_\Zq(\ttb)} : \hcalA_\Zq(\ttb) \isoto \calK_t(\scrC_\g^{\DQ}(\ttb)). \qedhere 
\]
\end{proof}

\providecommand{\bysame}{\leavevmode\hbox to3em{\hrulefill}\thinspace}
\providecommand{\MR}{\relax\ifhmode\unskip\space\fi MR }
% \MRhref is called by the amsart/book/proc definition of \MR.
\providecommand{\MRhref}[2]{%
  \href{http://www.ams.org/mathscinet-getitem?mr=#1}{#2}
}
\providecommand{\href}[2]{#2}


\begin{thebibliography}{10}

\bibitem{BZ05}
Arkady Berenstein and Andrei Zelevinsky, \emph{Quantum cluster algebras}, Adv.
  Math. \textbf{195} (2005), no.~2, 405--455. \MR{2146350}

\bibitem{BKM12}
Jonathan Brundan, Alexander Kleshchev, and Peter~J McNamara, \emph{Homological
  properties of finite-type {K}hovanov--{L}auda--{R}ouquier algebras}, Duke
  Mathematical Journal \textbf{163} (2014), no.~7, 1353--1404.

\bibitem{CGGLSS25}
Roger Casals, Eugene Gorsky, Mikhail Gorsky, Ian Le, Linhui Shen, and Jos\'e
  Simental, \emph{Cluster structures on braid varieties}, J. Amer. Math. Soc.
  \textbf{38} (2025), no.~2, 369--479. \MR{4868947}

\bibitem{CH10}
Vyjayanthi Chari and David Hernandez, \emph{Beyond {K}irillov-{R}eshetikhin
  modules}, Contemp. Math \textbf{506} (2010), 49--81.

\bibitem{CM05}
Vyjayanthi Chari and Adriano~A Moura, \emph{Characters and blocks for
  finite-dimensional representations of quantum affine algebras}, International
  Mathematics Research Notices \textbf{2005} (2005), no.~5, 257--298.

\bibitem{CP95}
Vyjayanthi Chari and Andrew Pressley, \emph{A {G}uide to {Q}uantum {G}roups},
  Cambridge university press, 1995.

\bibitem{CP95A}
\bysame, \emph{Quantum affine algebras and their representations},
  Representations of groups (Banff, AB, 1994) \textbf{16} (1995), 59--78.

\bibitem{CQW25}
Alessandro Contu, Fan Qin, and Qiaoling Wei, \emph{From i-boxes to signed
  words},  (2025), Preprint, \arxiv{2507.06182}.

\bibitem{EM94}
Elsayed~A Elrifai and Hugh~R Morton, \emph{Algorithms for positive braids},
  Quarterly Journal of Mathematics \textbf{45} (1994), no.~180, 479--497.

\bibitem{WP}
D.~Epstein, J.~Cannon, D.~Holt, S.~Levy, M~Patterson, and W.~Thurston,
  \emph{Word processing in groups}, CRC Press, 1992.

\bibitem{FZ02}
Sergey Fomin and Andrei Zelevinsky, \emph{Cluster algebras. {I}.
  {F}oundations}, J. Amer. Math. Soc. \textbf{15} (2002), no.~2, 497--529.
  \MR{1887642}

\bibitem{FM01}
Edward Frenkel and Evgeny Mukhin, \emph{Combinatorics of {$q$}-characters of
  finite-dimensional representations of quantum affine algebras}, Comm. Math.
  Phys. \textbf{216} (2001), no.~1, 23--57. \MR{1810773}

\bibitem{FR99}
Edward Frenkel and Nicolai Reshetikhin, \emph{The {$q$}-characters of
  representations of quantum affine algebras and deformations of
  {$\mathscr{W}$}-algebras}, Recent developments in quantum affine algebras and
  related topics ({R}aleigh, {NC}, 1998), Contemp. Math., vol. 248, Amer. Math.
  Soc., Providence, RI, 1999, pp.~163--205. \MR{1745260}

\bibitem{FHOO}
Ryo Fujita, David Hernandez, Se-jin Oh, and Hironori Oya, \emph{Isomorphisms
  among quantum {G}rothendieck rings and propagation of positivity}, J. Reine
  Angew. Math. \textbf{785} (2022), 117--185. \MR{4402493}

\bibitem{FHOO2}
\bysame, \emph{Isomorphisms among quantum {G}rothendieck rings and cluster
  algebras}, Preprint, \arxiv{2304.02562}, 2023.

\bibitem{FO21}
Ryo Fujita and Se-jin Oh, \emph{Q-data and representation theory of untwisted
  quantum affine algebras}, Comm. Math. Phys. \textbf{384} (2021), no.~2,
  1351--1407. \MR{4259388}

\bibitem{GLSb23}
Pavel Galashin, Thomas Lam, and Melissa Sherman-Bennett, \emph{Braid variety
  cluster structures, {II}: general type},  (2023), Preprint,
  \arxiv{2301.07268}.

\bibitem{GLSbS22}
Pavel Galashin, Thomas Lam, Melissa Sherman-Bennett, and David Speyer,
  \emph{Braid variety cluster structures, {I}: {3D} plabic graphs},  (2022),
  Preprint, \arxiv{2210.04778}.

\bibitem{Gar69}
Frank~A Garside, \emph{The braid group and other groups}, The Quarterly Journal
  of Mathematics \textbf{20} (1969), no.~1, 235--254.

\bibitem{GLS11}
Christof Gei\ss, Bernard Leclerc, and Jan Schr\"{o}er, \emph{Kac-{M}oody groups
  and cluster algebras}, Adv. Math. \textbf{228} (2011), no.~1, 329--433.

\bibitem{GLS13}
\bysame, \emph{Cluster structures on quantum coordinate rings}, Selecta Math.
  (N.S.) \textbf{19} (2013), no.~2, 337--397. \MR{3090232}

\bibitem{Her04}
David Hernandez, \emph{Algebraic approach to {$(q,t)$}-characters}, Adv. Math.
  \textbf{187} (2004), no.~1, 1--52. \MR{2074171}

\bibitem{Her06}
\bysame, \emph{The {K}irillov-{R}eshetikhin conjecture and solutions of
  {T}-systems}, J. Reine Angew. Math. \textbf{596} (2006), 63--87. \MR{2254805
  (2007j:17020)}

\bibitem{Her07}
\bysame, \emph{On minimal affinizations of representations of quantum groups},
  Comm. Math. Phys. \textbf{276} (2007), no.~1, 221--259. \MR{2342293}

\bibitem{Her10}
\bysame, \emph{{K}irillov--{R}eshetikhin conjecture: the general case},
  International mathematics research notices \textbf{2010} (2010), no.~1,
  149--193.

\bibitem{HL10}
David Hernandez and Bernard Leclerc, \emph{Cluster algebras and quantum affine
  algebras}, Duke Math. J. \textbf{154} (2010), no.~2, 265--341. \MR{2682185}

\bibitem{HL15}
\bysame, \emph{Quantum {G}rothendieck rings and derived {H}all algebras}, J.
  Reine Angew. Math. \textbf{701} (2015), 77--126. \MR{3331727}

\bibitem{HL16}
\bysame, \emph{A cluster algebra approach to {$q$}-characters of
  {K}irillov-{R}eshetikhin modules}, J. Eur. Math. Soc. (JEMS) \textbf{18}
  (2016), no.~5, 1113--1159. \MR{3500832}

\bibitem{HL21}
\bysame, \emph{Quantum affine algebras and cluster algebras}, Interactions of
  Quantum Affine Algebras with Cluster Algebras, Current Algebras and
  Categorification, In honor of Vyjayanthi Chari on the occasion of her 60th
  birthday, 2021, pp.~37--65.

\bibitem{HO19}
David Hernandez and Hironori Oya, \emph{Quantum {G}rothendieck ring
  isomorphisms, cluster algebras and {K}azhdan-{L}usztig algorithm}, Adv. Math.
  \textbf{347} (2019), 192--272. \MR{3916871}

\bibitem{JLO2}
Il-Seung Jang, Kyu-Hwan Lee, and Se-jin Oh, \emph{Braid group action on quantum
  virtual {G}rothendieck ring through constructing presentations}, Preprint,
  \arxiv{2305.19471}, 2023.

\bibitem{JLO1}
\bysame, \emph{{Q}uantization of virtual {G}rothendieck rings and their
  structure including quantum cluster algebras}, Communications in Mathematical
  Physics \textbf{405} (2024), no.~7, 173.

\bibitem{KKK18}
Seok-Jin Kang, Masaki Kashiwara, and Myungho Kim, \emph{Symmetric quiver
  {H}ecke algebras and {R}-matrices of quantum affine algebras}, Inventiones
  mathematicae \textbf{211} (2018), 591--685.

\bibitem{KKKO15}
Seok-Jin Kang, Masaki Kashiwara, Myungho Kim, and Se-jin Oh, \emph{Simplicity
  of heads and socles of tensor products}, Compositio Mathematica \textbf{151}
  (2015), no.~2, 377--396.

\bibitem{KKKOIV}
\bysame, \emph{Symmetric quiver {H}ecke algebras and {R}-matrices of quantum
  affine algebras {IV}}, Selecta Mathematica \textbf{22} (2016), 1987--2015.

\bibitem{KKKO18}
\bysame, \emph{Monoidal categorification of cluster algebras}, J. Amer. Math.
  Soc. \textbf{31} (2018), no.~2, 349--426. \MR{3758148}

\bibitem{K91}
Masaki Kashiwara, \emph{On crystal bases of the $q$-analogue of universal
  enveloping algebras}, Duke Mathematical Journal \textbf{63} (1991), no.~2,
  465--516.

\bibitem{K93}
\bysame, \emph{Global crystal bases of quantum groups}, Duke Math. J.
  \textbf{69} (1993), no.~2, 455--485. \MR{1203234}

\bibitem{K95}
\bysame, \emph{On crystal bases}, Representations of groups (Banff, AB, 1994)
  \textbf{16} (1995), 155--197.

\bibitem{Kas02}
\bysame, \emph{On level-zero representation of quantized affine algebras}, Duke
  Mathematical Journal \textbf{112} (2002), no.~1, 117--175.

\bibitem{Kashiwarabook}
Masaki Kashiwara and Charles Cochet, \emph{Bases cristallines des groupes
  quantiques}, no.~9, Soci{\'e}t{\'e} math{\'e}matique de France, 2002.

\bibitem{KK19}
Masaki Kashiwara and Myungho Kim, \emph{{L}aurent phenomenon and simple modules
  of quiver {H}ecke algebras}, Compos. Math. \textbf{155} (2019), no.~12,
  2263--2295. \MR{4016058}

\bibitem{KK24}
\bysame, \emph{Exchange matrices of {I}-boxes},  (2024), Preprint,
  \arxiv{2409.14359}.

\bibitem{KKOP18}
Masaki Kashiwara, Myungho Kim, Se-jin Oh, and Euiyong Park, \emph{Monoidal
  categories associated with strata of flag manifolds}, Advances in Mathematics
  \textbf{328} (2018), 959--1009.

\bibitem{KKOP20}
\bysame, \emph{Monoidal categorification and quantum affine algebras},
  Compositio Mathematica \textbf{156} (2020), no.~5, 1039--1077.

\bibitem{KKOP21B}
\bysame, \emph{Braid group action on the module category of quantum affine
  algebras}, Proceedings of the Japan Academy, Series A, Mathematical Sciences
  \textbf{97} (2021), no.~3, 13--18.

\bibitem{KKOP21}
\bysame, \emph{{L}ocalizations for quiver {H}ecke algebras}, Pure and Applied
  Mathematics Quarterly \textbf{17} (2021), no.~4, 1465--1548.

\bibitem{KKOP22}
\bysame, \emph{Simply laced root systems arising from quantum affine algebras},
  Compositio Mathematica \textbf{158} (2022), no.~1, 168--210.

\bibitem{KKOP23L}
\bysame, \emph{Localizations for quiver {H}ecke algebras {I}{I}}, Proceedings
  of the London Mathematical Society \textbf{127} (2023), no.~4, 1134--1184.

\bibitem{KKOP24B}
\bysame, \emph{Braid symmetries on bosonic extensions}, Preprint,
  \arxiv{2408.07312}, 2024.

\bibitem{KKOP23F}
\bysame, \emph{Laurent family of simple modules over quiver {H}ecke algebras},
  Compositio Mathematica \textbf{160} (2024), no.~8, 1916--1940.

\bibitem{KKOP24A}
\bysame, \emph{Monoidal categorification and quantum affine algebras {I}{I}},
  Inventiones Mathematicae \textbf{236} (2024), 1--88.

\bibitem{KKOP23P}
\bysame, \emph{P{BW} theory for quantum affine algebras}, J. Eur. Math. Soc.
  (JEMS) \textbf{26} (2024), no.~7, 2679--2743. \MR{4756573}

\bibitem{KKOP24}
\bysame, \emph{Global bases for bosonic extensions of quantum unipotent
  coordinate rings}, Proceedings of the London Mathematical Society
  \textbf{131} (2025), no.~2, e70076.

\bibitem{KO19}
Masaki Kashiwara and Se-jin Oh, \emph{Categorical relations between {L}anglands
  dual quantum affine algebras: doubly laced types}, Journal of Algebraic
  Combinatorics \textbf{49} (2019), 401--435.

\bibitem{KO23}
\bysame, \emph{The {$(q, t) $}-{C}artan matrix specialized at {$ q= 1$} and its
  applications}, Math. Z. \textbf{303} (2023), no.~42.

\bibitem{KP22}
Masaki Kashiwara and Euiyong Park, \emph{Categorical crystals for quantum
  affine algebras}, J. Reine Angew. Math. \textbf{792} (2022), 223--267.
  \MR{4504098}

\bibitem{KT08}
Christian Kassel and Vladimir Turaev, \emph{Braid groups}, Graduate Texts in
  Mathematics, vol. 247, Springer, New York, 2008, With the graphical
  assistance of Olivier Dodane. \MR{2435235}

\bibitem{KL1}
Mikhail Khovanov and Aaron~D Lauda, \emph{A diagrammatic approach to
  categorification of quantum groups {$I$}}, Representation Theory of the
  American Mathematical Society \textbf{13} (2009), no.~14, 309--347.

\bibitem{Kimura12}
Yoshiyuki Kimura, \emph{Quantum unipotent subgroup and dual canonical basis},
  Kyoto J. Math. \textbf{52} (2012), no.~2, 277--331. \MR{2914878}

\bibitem{Lam12}
Tsit-Yuen Lam, \emph{Lectures on modules and rings}, vol. 189, Springer Science
  \& Business Media, 2012.

\bibitem{LusztigBook}
George Lusztig, \emph{Introduction to quantum groups}, Progress in Mathematics,
  vol. 110, Birkh\"{a}user Boston, Inc., Boston, MA, 1993. \MR{1227098}

\bibitem{Lusztig96}
\bysame, \emph{Braid group action and canonical bases}, Advances in Mathematics
  \textbf{122} (1996), no.~2, 237--261.

\bibitem{McNa15}
Peter~J McNamara, \emph{Finite dimensional representations of
  {K}hovanov--{L}auda--{R}ouquier algebras {I}: finite type}, Journal f{\"u}r
  die Reine und Angewandte Mathematik (Crelles Journal) \textbf{2015} (2015),
  no.~707, 103--124.

\bibitem{Nak01}
Hiraku Nakajima, \emph{Quiver varieties and finite-dimensional representations
  of quantum affine algebras}, J. Amer. Math. Soc. \textbf{14} (2001), no.~1,
  145--238. \MR{1808477}

\bibitem{Nakajima03II}
\bysame, \emph{{$t$}-analogs of {$q$}-characters of {K}irillov-{R}eshetikhin
  modules of quantum affine algebras}, Represent. Theory \textbf{7} (2003),
  259--274 (electronic). \MR{1993360}

\bibitem{Nak04}
\bysame, \emph{Quiver varieties and {$t$}-analogs of {$q$}-characters of
  quantum affine algebras}, Ann. of Math. (2) \textbf{160} (2004), no.~3,
  1057--1097. \MR{2144973}

\bibitem{Nak11}
\bysame, \emph{Quiver varieties and cluster algebras}, Kyoto J. Math.
  \textbf{51} (2011), no.~1, 71--126.

\bibitem{OP24}
Se-jin Oh and Euiyong Park, \emph{{P}{B}{W} theory for bosonic extensions of
  quantum groups}, International Mathematics Research Notices \textbf{2025}
  (2025), no.~6, 1--33.

\bibitem{OT19}
Se-jin Oh and Travis Scrimshaw, \emph{Categorical relations between {L}anglands
  dual quantum affine algebras: exceptional cases}, Communications in
  Mathematical Physics \textbf{368} (2019), 295--367.

\bibitem{Qin24A}
Fan Qin, \emph{Analogs of the dual canonical bases for cluster algebras from
  {L}ie theory},  (2024), Preprint, \arxiv{2407.02480}.

\bibitem{Qin24B}
\bysame, \emph{Based cluster algebras of infinite ranks},  (2024), Preprint,
  \arxiv{2409.02881}.

\bibitem{R08}
Rapha{\"e}l Rouquier, \emph{$2$-{K}ac-{M}oody algebras},  (2008), Preprint,
  \arxiv{0812.5023}.

\bibitem{VV02}
Michela Varagnolo and Eric Vasserot, \emph{Standard modules of quantum affine
  algebras}, Duke Mathematical Journal \textbf{111} (2002), no.~3, 509--533.

\bibitem{VV03}
\bysame, \emph{Perverse sheaves and quantum {G}rothendieck rings}, Studies in
  memory of {I}ssai {S}chur ({C}hevaleret/{R}ehovot, 2000), Progr. Math., vol.
  210, Birkh\"{a}user Boston, Boston, MA, 2003, pp.~345--365. \MR{1985732}

\bibitem{VV09}
\bysame, \emph{Canonical bases and {K}{L}{R}-algebras}, Journal für die reine
  und angewandte Mathematik \textbf{2011} (2011), no.~659, 67--100.

\end{thebibliography}
\end{document}